\newsavebox{\pullback}
\sbox\pullback{%
\begin{tikzpicture}%
\draw (0,0) -- (1ex,0ex);%
\draw (1ex,0ex) -- (1ex,1ex);%
\end{tikzpicture}}
\newcommand{\id} {\mathrm{id}}
\newcommand{\Id} {\mathrm{Id}}
\newcommand{\gennattrans}{\rightsquigarrow}
\newcommand{\ob} {\mathrm{ob \,}}
\newcommand{\op}{\mathrm{op}}
\newcommand{\dom}{\mathrm{dom}}
\newcommand{\cod}{\mathrm{cod}}
\newcommand{\Dom}{\mathrm{Dom}}
\newcommand{\Cod}{\mathrm{Cod}}
\newcommand{\mor}{\mathrm{mor \,}}
\newcommand{\Hom}{\mathrm{Hom}}
\newcommand{\Loc}{\mathrm{Loc}}
\newcommand{\To}{\Rightarrow}
\newcommand{\nat}{\mathbb{N}}
\newcommand{\pr}[1]{\mathrm{pr}_{#1 } }
\newcommand{\Eq}[1]{\mathrm{Eq}[#1]}
\newcommand{\CoEq}[1]{\mathrm{CoEq}[#1]}
\newcommand{\pHeyt}{\mathsf{Heytpre}}
\newcommand{\im}{\mathrm{im}}
\newcommand{\trans}{\mathrm{transfer}}
\newcommand{\mono}{\hookrightarrow}
\newcommand{\Sq}{\mathrm{Sq}}
\newcommand{\imply}{\Rightarrow}
\newcommand{\const}{\mathrm{const}}
\newcommand{\app}[1]{\mathrm{app}(#1)}
\newcommand{\RT}[1]{\mathrm{RT}[#1]}
\newcommand{\EqRA}[1]{\mathrm{EqRel}(\Asm(#1))}
\newcommand{\nm}[1]{\lceil #1 \rceil}
\newcommand{\Preord}{\mathsf{Preord}}
\newcommand{\Alg}{\mathrm{Alg}}
\newcommand{\pupw}{\pitchfork}
\newcommand{\lift}{\mathrm{lift}}
\newcommand{\pullp}{\mathrm{pullp}}
\newcommand{\htpy}[1]{\partial_{#1}}
\newcommand{\coe}[2]{#1 \downarrow #2}
\newcommand{\I}{\mathbb{I}}
\newcommand{\Grp}{\mathbbm{Grpd}}
\newcommand{\Asm}{\mathrm{Asm}}
\newcommand{\pAsm}{\mathrm{pAsm}}
\newcommand{\pGrA}[1]{\mathrm{p}\mathbbm{Grpd}(\mathrm{Asm}(#1))}
\newcommand{\GrpA}[1]{\Grp(\Asm(#1))}
\newcommand{\Fib}{\mathrm{Fib}}
\newcommand{\Top}{\mathrm{Top}}
\newcommand{\Bot}{\mathrm{Bot}}
\newcommand{\arr}{\mathbbm{2}}
\newcommand{\twarr}{\mathrm{3}}
\newcommand{\epi}{\twoheadrightarrow}
\newcommand{\lmbd}[2]{\langle #1 \rangle #2 }
\newcommand{\ifthen}[3]{\text{if }#1\text{ then }#2\text{ else }#3}
\newcommand{\bimp}{\Leftrightarrow}
\newcommand{\pow}[1]{\mathcal{P}(#1)}
\newcommand{\powf}[1]{\mathcal{P^{*}}(#1)}
\newcommand{\poly}{\mathcal{P}}
\newcommand{\SubTree}{\mathrm{SubTree}}
\newcommand{\DirST}{\mathrm{DirST}}
\newcommand{\Paths}{\mathrm{Paths}}
\newcommand{\adj}{\dashv}
\newcommand{\loc}[2]{#1[#2^{-1}]}
\newcommand{\GE}{\mathrm{GrpdEq}}
\newcommand{\RE}{\mathrm{RegEq}}
\newcommand{\Ho}{\mathrm{Ho}}
\newcommand{\HGA}[1]{\Ho(\GrpA{#1})}
\newcommand{\EGA}[1]{\loc{\GrpA{#1}}{\RE}}
\newcommand{\Part}{\mathrm{Part}}
\newcommand{\prt}[2]{(#1.#2,#2^{*})}
\newcommand{\atom}{\mathrm{atom}}
\newcommand{\Clus}{\mathrm{Clus}}
\newcommand{\incl}{\mathrm{incl}}
\newcommand{\PEq}[1]{\Set\langle #1 \rangle}
\newcommand{\Equ}{\mathrm{Equiv}}
\newcommand{\nno}{\mathbbm{N}}
\newcommand{\fixf}{\text{fixf}}
\newcommand{\fix}{\text{fix}}
\newcommand{\SOA}{\mathrm{SOA}}
\DeclareSymbolFont{bbold}{U}{bbold}{m}{n}
\DeclareSymbolFontAlphabet{\mathbbb}{bbold}
\newcommand{\cat}[1]{\textup{\textsf{#1}}}
\newcommand{\Cat}{\mathbb{C}\cat{at}}
\newcommand{\Set}{\mathrm{Set}}
\newcommand{\Circ}{\mathbb{S}^{1}}
\newcommand{\cC}{{\mathcal{C}}}
\newcommand{\cD}{{\mathcal{D}}}
\newcommand{\cE}{{\mathcal{E}}}
\newtheorem{thm}{Theorem}[subsection]
\newtheorem{cor}[thm]{Corollary}
\newtheorem{prop}[thm]{Proposition}
\newtheorem{lem}[thm]{Lemma}
\newtheorem{conj}[thm]{Conjecture}
\theoremstyle{definition}
\newtheorem{defn}[thm]{Definition}
\newtheorem{rmk}[thm]{Remark}
\newtheorem{ex}[thm]{Example}
\theoremstyle{remark}
\let\c@equation\c@thm
\numberwithin{equation}{subsection}
\title[ Type Theory in Groupoid Assemblies]{A Model of Type Theory in Groupoid Assemblies}
\author{Anthony Agwu}
\email{aagwu1@jh.edu}
\address{Department of Mathematics, Johns Hopkins University, Baltimore, MD 21218}
\thanks{This work was partially supported by the NSF through grant DMS-2204304 and also the Air Force Office of Scientific Research through grant FA9550-21-1-0009.}
\begin{document}

\newpage

\begin{abstract} 
We consider the category $\GrpA{A}$ of groupoids defined internally to the category of assemblies on a partial combinatory algebra $A$. In this thesis we exhibit the structure of a $\pi$-tribe on $\GrpA{A}$ showing the category to be a model of type theory. We also show that $\GrpA{A}$ has $W$-types with reductions and univalent object classifier for assemblies and modest assemblies, where the latter is an impredicative object classifier. Using the $W$-types with reductions, we show that $\GrpA{A}$ has a model structure. Finally, we construct $\pGrA{A}$, the full subcategory of partitioned groupoid assemblies, and show that $\pGrA{A}$ has finite bilimits and bicolimits as well as showing that the homotopy category of the full subcategory of the $0$-types of $\pGrA{A}$ is $\RT{A}$, the realizability topos of $A$.

\end{abstract}

\maketitle

\setcounter{tocdepth}{2}
\tableofcontents

\section{Introduction}

Homotopy Type Theory is an extension of Martin-L\"{o}f type theory jointly discovered by Steve Awodey, Michael Warren, and Vladimir Voevodsky as a system of type theory better suited for reasoning about ``higher-dimensional structures'' and computer verification. Steve Awodey and Michael Warren discovered the notion of paths as an interpretation of paths as proofs of equality in \cite{AWODEY_2009}. Voevodsky was motivated to propose homotopy type theory as a new foundation system after finding mistakes in papers concerning his Fields medal work on motivic cohomology \cite{Voevodsky2014}.\\

One feature of Homotopy Type Theory that is usually distinct from regular Martin-L\"{o}f type theory is the rejection of uniqueness of identity proofs (UIP). In Homotopy Type Theory, for any type $X$ and elements $x,y:X$, the type of identity proofs $(x =_{X} y)$ can have more then one element. Instead the identity type is interpreted as the type of paths from $x$ to $y$ in $X$. There exists a canonical element $\id_{x}: (x =_{X} x)$ which is a proof of the reflexive property of identity. We also have an induction rule for path types:\\

Given a universe $U$ and predicates $P: \Pi_{x,y:X} (x =_{X} y) \to U$ and $i: \Pi_{x:X} P(x,x,\id_{x})$, there is a predicate $f: \Pi_{x,y:X} \Pi_{p:(x = _{X} y)} P(x,y,p)$ such that $f(x,x,\id_{x}) \equiv i(x) $.\\

Consequently, for $p,q: (x =_{X} y)$, there is an iterated identity type $(p =_{(x =_{X} y)} q)$, and this process continues. As a result types in Homotopy Type Theory can be interpreted as ``infinity groupoids". This further induces a shift from the notion of equality as being ``sameness" to ``interchangeability". A basic example is found in algebra. Given two three-element groups: $\{e_{X},x,x^{-1}\}$ and $\{e_{Y},y,y^{-1}\}$, these are different sets but are obviously the ``same" group. This is exhibited by an isomorphism between the groups. It is well understood that the proper notion of ``equality" of groups is that of isomorphism of groups as the  only thing that matters when discussing groups is that of the structure of the groups. In addition, there are two isomorphisms between the three-element group. Hence, equality between groups fails UIP. This understanding of equality is pervasive in category theory, as notions in category that make use of strict equality or ``sameness" are considered ``evil".\\

In light of this, Homotopy Type Theory can be considered a more complete extension of the Curry-Howard correspondence as it considers the type of proofs of equality and accounts for situations discussed in the previous paragraph. Of course there is the question of what are the equality types of universes (i.e. for $A,B:U$, what is $(A =_{U} B)$?). This leads us to the {\bf Univalence Axiom} which states $(A =_{U} B) \simeq (A \simeq B)$ where $(A \simeq B)$ is the type of equivalences between $A$ and $B$. Universes satisfying this axiom are called {\bf univalent}. \\

Realizability entails a study of objects such as partial combinatory algebras which are models of untyped lambda calculus. Realizability is intended to give a semantics for intuitionistic logic and the Curry-Howard correspondence. In particular, realizability comes out of the work of S.C. Kleene \cite{Kleene1973}. In the realizability paradigm, a proposition $\phi$ are true if there exists a term $a  \in A$ which serves as ``evidence" for $\phi$. We call $a$ a {\bf realizer} of $\phi$. The term $a$ is typically represents an encoding of some computation. In the case of Kleene, $A$ is the natural numbers object $\nno$ where each natural number encodes a Turing machines or a partial recursive function on $\nno$. This is known as Kleene's First Algebra.\\

	Thomas Streicher and Martin Hoffman gave an interpretation of type theory in groupoids in their 1996 paper showing the independence of Martin-L\"{o}f type theory with Uniqueness of Identity Proofs \cite{HofmannStreicher1998}. They also exhibit the groupoid universe of sets giving the first example of a univalent universe making this paper a precursor to homotopy type theory. Much later it was shown by Mike Shulman that homotopy type theory can be modeled in any $(\infty,1)$-topos \cite{Shulman2019}. With realizability, there comes a desire to construct an realizability $(\infty,1)$-topos, a place where one can do computable homotopy type theory. It has been show by Taichi Uemura that there are non-trivial models of homotopy type theory and realizability; in particular models that have a univalent and impredicative universe \cite{Uemura2018}. The goal of obtaining an realizability $(\infty,1)$-topos is at least feasible. What this paper hopes to accomplish is to be a stepping stone towards the realizability $(\infty,1)$-topos. To do that, this paper seeks to present a model of Martin-L\"{o}f type theory in groupoid assemblies on a partial combinatory algebra $A$. \\
	
We chose internal groupoids in assemblies since that will not only serve as a update of sorts to the 1996 Streicher-Hoffman paper but it will also provide a glimpse into what to expect for constructing the eventual realizability $(\infty,1)$-topos. Additionally, this may even give insight into questions regarding a proper definition and understanding of what an elementary ``2-topos" might be as well as questions about groupoidal realizability. Finally, one would not be wrong to see this as an extension of the usual 1-categorical setting of realizability into a 2-categorical setting.\\

	Given a partial combinatory algebra $A$, one can construct its category of assemblies $\Asm(A)$ which is a locally cartesian closed regular category with object classifiers. The category of groupoid assemblies on $A$, $\Grp(\Asm(A))$, has finite limits and colimits, exponents, dependent sums and a natural numbers object; however it does not have dependent products. For this, we impose an algebraic weak factorization system on $\Grp(\Asm(A))$ whose right maps are the normal isofibrations. It turns out that dependent products along the normal isofibrations exist and satisfy the Frobenius property. This will make $\Grp(\Asm(A))$ a $\pi$-tribe and thus a model of dependent type theory. $\Grp(\Asm(A))$ will also have $W$-types with reductions which gives us $W$-types, localizations, truncations and factorizations similar to the ones given by the small object argument. From this we will have an algebraic model structure on $\Grp(\Asm(A))$. We will also show how to construct homotopy colimits like the circle and suspensions. $\Grp(\Asm(A))$ will most certainly have univalent universes of assemblies and modest assemblies. The univalent universe of modest assemblies will also be an impredicative universe.\\
	
	We would also like to talk about the $0$-types of $\Grp(\Asm(A))$ which are the internal equivalence relations of $\Asm(A)$. In $\Grp(\Asm(A))$ they aren't of particular note, but if we restrict to $\pGrA{A}$, a full subcategory of groupoid assemblies whose underlying assembly of objects is a partitioned assembly, the $0$-types present a category equivalent to the realizability topos on $A$ by inverting the categorical equivalences. \\

We give an overview of what we will discuss in each section:
\begin{itemize}
\item In Section 2, we will review the notion of partial combinatory algebra as well as that of the realizability topos on $A$ and the category of assemblies. Finally we will introduce our main setting: the category of groupoid assemblies $\Grp(\Asm(A))$.\\

\item In Section 3, we will construct a fibrant algebraic weak factorization system for $\Grp(\Asm(A))$. The right maps of this algebraic weak factorization system are the normal isofibrations. We will also construct dependent products along the normal isofibrations.\\

\item In Section 4, we will construct $W$-types with reductions of groupoid assemblies. As an application, we will construct a version of the small object argument in $\Grp(\Asm(A))$ which will produce algebraic weak factorization systems.\\

\item In Section 5, we will exhibit $\Grp(\Asm(A))$ as a model of type theory as well as construct a univalent universe of assemblies and an impredicative univalent universe of modest assemblies.\\

\item In Section 6, we characterize $\Ho{\Grp(\Asm(A))}$ as well as showing that $\Grp(\Asm(A))$ has finite bilimits and bicolimits. By constructing two biadjunctions one between $\Grp(\Asm(A))$ and $\pGrA{A}$ and the other on $\Grp(\Asm(A))$ we show that $\pGrA{A}$ also has finite bilimits and bicolimits. We also show that the homotopy category of the $0$-types of $\pGrA{A}$ is $\RT{A}$, the realizability topos on $A$.\\

\item Finally, Section 7 will review the key results of this thesis as well as present future problems and questions.\\
\end{itemize}

There is similar work being done in \cite{HC2025} by Calum Hughes where he constructs an algebraic model structure on $\Cat(\cE)$,the category of internal categories of a lextensive cartesian closed category $\cE$. He then restricts the algebraic model structure to $\Grp{\cE}$, the category of internal groupoids of $\cE$, to obtain a model of type theory. There is also the work of Steve Awodey and Jacopo Emmenegger where they construct a potential candidate of the effective $2$-topos \cite{ASEJ2025}.\\

I would like to thank Steve Awodey, Jonas Frey, Andrew Swan, and Emily Riehl for the discussion we had that helped me to formulate the ideas in this thesis.\\




\section{Preliminaries}

In this section we will give an overview an partial combinatory algebras, important combinators, and exhibiting partial combinatory algebras as models of first order logic by constructing a $\Set$-tripos from a partial combinatory algebra. From there will will introduce realizability topos, $\RT{A}$, on a partial combinatory algebra $A$ and the category of assemblies, $\Asm(A)$. The category of assemblies will be our main focus as we will show that $\Asm(A)$ has finite limits and colimits, exponents, dependent products and sums, and $W$-types with reductions as well as showing other important properties of $\Asm(A)$. We will also introduce partitioned assemblies and their important properties which will especially come into play in section 7. Finally, we will introduce the category of groupoid assemblies, $\GrpA{A}$, the main setting of this thesis.\\

\subsection{Partial Combinatory Algebras}
In this section, we will introduce the notion of partial combinatory algebra as well as important combinators that will be used throughout this thesis.

\begin{defn}
A set $A$ with a partial binary operation $\cdot : A \times A \rightharpoonup A$ is a {\bf partial combinatory algebra} if there exists elements $k,s : A$ such that:  

\begin{enumerate}

  \item $(k \cdot x) \cdot y = x$
  \item $((s\cdot a) \cdot b)$ is defined
  \item $((s \cdot a) \cdot b) \cdot c = (a \cdot c) \cdot (b \cdot c)$ when both sides are defined
\end{enumerate}

Elements of $A$ are called {\bf combinators}. One can think of an element $a \in A$ as representing a partial function on $A$. In particular $k$ maps a combinator $a$ to the combinator $k \cdot a$  which represents the constant function on $a$ and $s$ takes combinators $a$ and $b$ to $(s \cdot a) \cdot b$ which can be though of as function application dependent on a combinator $c$.


\end{defn}

Given $a \in A$, we have that $((s \cdot k) \cdot k) \cdot a = (k \cdot a) \cdot (k \cdot a) = a$. We denote $\iota = ((s \cdot k) \cdot k) $ and call $\iota$ the identity combinator.\\

Given a partial combinatory algebra $A$, we can define the set of terms on $A$, $T(A)$, by induction as follows:
\begin{enumerate}

  \item $A \subseteq T(A)$
  \item $V \subseteq T(A)$ where $V$ is a set of variables 
  \item For $a,b \in T(A)$, $a \cdot b \in T(A)$ 

\end{enumerate} 

\begin{defn}

We define a new term $\lmbd{x}{t} \in T(A)$ for every term $t \in T(A)$ and variable $x$:

\begin{enumerate}

  \item $\lmbd{x}{t} = k \cdot t$ when $t \in A$ or $t$ is variable different from $x$
  \item $\lmbd{x}{x} = \iota$
  \item $\lmbd{x}{t_{1} \cdot t_{2}} = (s \cdot (\lmbd{x}{t_{1}})) \cdot (\lmbd{x}{t_{2}})$

\end{enumerate} 

Whenever we have a term $\lmbd{x}{\lmbd{y}{t}}$, we denote it as $\lmbd{x,y}{t}$. This gives us a way to represent lambda terms in a partial combinatory algebra.

\end{defn}

This is useful when defining important terms such as:

\begin{itemize}
\item $\overline{k} = \lmbd{x,y}{y} = k \cdot \iota.$ \\
\item $[-,-] = \lmbd{x,y,z}{((z\cdot x)\cdot y)} \text{: This term is used to create a pair of elements. } [a,b] = \lmbd{z}{((z \cdot a)\cdot b)}$.\\
\item $p_{0} = \lmbd{x}{(x \cdot k)}\text{: This combinator projects the first element of a pair.}$\\
\item $p_{1} = \lmbd{x}{(x \cdot \overline{k})}\text{: This combinator projects the second element of a pair.}$\\
\item $\ifthen{t}{x}{y} = \lmbd{t,x,y}{(((t \cdot \lmbd{z}{x}) \cdot \lmbd{z}{y}) \cdot k )}\text{: This combinator works by setting }k\text{ as true and }\overline{k}\text{ as false.}$\\
\item $\app{f,x} = \lmbd{f,x}{(f \cdot x)}\text{: This is the function application combinator.}$

\end{itemize}

We also have fixed points operators which enables recursive constructions:\\

We first take the realizers $$\kappa_{0} = \lmbd{w,u,v}{u \cdot ((w \cdot w) \cdot u) \cdot v}$$ and $$\kappa_{1} = \lmbd{u,v}{v \cdot (u \cdot u \cdot v)}$$ and define realizers $\fix = \kappa_{1} \cdot \kappa_{1}$ and $\fixf = \kappa_{0} \cdot \kappa_{0}$ so that $$\text{fixf} \cdot f = (\kappa_{0} \cdot \kappa_{0}) \cdot f = \lmbd{v}{f \cdot ((\kappa_{0} \cdot \kappa_{0}) \cdot f) \cdot v} = \lmbd{v}{f \cdot (\text{fixf} \cdot f) \cdot v}$$ and $$\text{fix} \cdot f = (\kappa_{1} \cdot \kappa_{1}) \cdot f = \lmbd{v}{v \cdot (\kappa_{1} \cdot \kappa_{1} \cdot v)} \cdot f = f \cdot (\kappa_{1} \cdot \kappa_{1} \cdot f) = f \cdot (\fix \cdot f)$$ when both sides of both equations are defined.



\subsection{Tripos-to-Topos Construction}
In this section, we will introduce the notion of $\Set$-tripos which is a model of first order logic. From there we will construct a topos from any $\Set$-tripos. For any partial combinatory algebra $A$, we will construct the $\Set$-tripos $\pow{A}^{-}$ and the subsequent realizability topos $\RT{A}$.

\begin{defn}
A set $H$ is called a {\bf Heyting prealgebra} iff it is a cartesian closed preorder with coproducts.

\end{defn} 

$\pHeyt$ is the category of Heyting prealgebras and preorder morphisms that preserve the bicartesian closed structure.
$\pHeyt$ is enriched in $\Preord$ by imposing a pointwise order on morphisms in the set $\pHeyt(G,H)$.

\begin{defn}
A pseudofunctor $P: \Set^{op} \to \pHeyt$ is a $\bf \Set$-{\bf Tripos} when for each function $f:X \to Y$, the morphism $P(f): P(Y) \to P(X)$ has both a left and right adjoint $\exists_{f}, \forall_{f}:P(X) \to P(Y)$ in $\Preord$ satisfying the Beck-Chevalley condition. \\

Furthermore, there exists a object $\Sigma$ and an {\bf generic element} $\sigma \in P(\Sigma)$ s.t. for all sets $X$ and  $\phi \in P(X)$ there exists a function $[\phi]:X \to \Sigma$ s.t. $P([\phi])(\sigma) \cong \phi$.
\end{defn}

\begin{rmk}
If $\cC$ is a cartesian closed category, you get a $\cC${-Tripos} by replacing $\Set$ with $\cC$.
\end{rmk}


\begin{defn}
Given a functor $P: \Set^{op} \to \pHeyt$, the {\bf Beck-Chevalley condition} states that given a pullback square in $\Set$:
\[\begin{tikzcd}
	X &&&& Y \\
	\\
	\\
	Z &&&& W
	\arrow["f", from=1-1, to=1-5]
	\arrow["h", from=1-5, to=4-5]
	\arrow["g"', from=1-1, to=4-1]
	\arrow["k"', from=4-1, to=4-5]
	\arrow["\lrcorner"{anchor=center, pos=0.125}, draw=none, from=1-1, to=4-5]
\end{tikzcd}\]
we have that $\forall_{f} \circ P(g) \cong P(h) \circ \forall_{k}$. As a corollary, we have that $\exists_{g} \circ P(f) \cong P(k) \circ \exists_{h}$.\\

\end{defn}


\begin{lem}

Given a $\Set${-tripos} $P$, a function $f: X \to Y$, and predicates $\phi \in P(X)$ and $\psi \in P(Y)$, we have $\exists_{f}(\phi \land P(f)(\psi)) \bimp (\exists_{f}(\phi)) \land \psi$. We call this the {\bf Frobenius condition}.

\end{lem}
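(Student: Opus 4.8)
The plan is to derive the Frobenius condition purely from the defining adjunction $\exists_{f} \adj P(f)$ together with the fact that, since $P$ lands in $\pHeyt$, the reindexing map $P(f) \colon P(Y) \to P(X)$ is a morphism of Heyting prealgebras and hence preserves finite meets and the Heyting implication $\imply$. Notably, the Beck--Chevalley condition plays no role here; only functoriality of $P$ and the adjunction are needed. The cleanest route is a Yoneda-style argument in the preorder $P(Y)$: to prove $\exists_{f}(\phi \land P(f)(\psi)) \bimp (\exists_{f}\phi) \land \psi$ it suffices to show that the two sides have the same principal up-sets, i.e. that for every $\chi \in P(Y)$ one has $\exists_{f}(\phi \land P(f)(\psi)) \leq \chi$ if and only if $(\exists_{f}\phi) \land \psi \leq \chi$.

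First I would fix an arbitrary $\chi \in P(Y)$ and run the following chain of equivalences, reading each step off the indicated principle:
\begin{align*}
\exists_{f}(\phi \land P(f)(\psi)) \leq \chi
&\iff \phi \land P(f)(\psi) \leq P(f)(\chi) && (\exists_{f} \adj P(f)) \\
&\iff \phi \leq P(f)(\psi) \imply P(f)(\chi) && (\text{cartesian closure of } P(X))\\
&\iff \phi \leq P(f)(\psi \imply \chi) && (P(f) \text{ preserves } \imply)\\
&\iff \exists_{f}\phi \leq \psi \imply \chi && (\exists_{f} \adj P(f))\\
&\iff (\exists_{f}\phi) \land \psi \leq \chi && (\text{cartesian closure of } P(Y)).
\end{align*}
Every step is a genuine equivalence, so the two endpoints have identical principal up-sets in the preorder $P(Y)$.

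Taking $\chi \coloneqq (\exists_{f}\phi) \land \psi$ in the chain yields $\exists_{f}(\phi \land P(f)(\psi)) \leq (\exists_{f}\phi) \land \psi$, and taking $\chi \coloneqq \exists_{f}(\phi \land P(f)(\psi))$ yields the reverse inequality; together these give the claimed bi-implication. The only point that requires care is the middle step $P(f)(\psi \imply \chi) \bimp P(f)(\psi) \imply P(f)(\chi)$: this is exactly preservation of the Heyting implication, which holds because $P(f)$ is by definition a morphism in $\pHeyt$ and so preserves the bicartesian closed structure. I expect this to be the main (and essentially only) obstacle; everything else is a formal manipulation of the adjunction and of the cartesian closed structure of the fibers. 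If one preferred to avoid implication, the forward inequality can instead be obtained directly from $\phi \land P(f)(\psi) \leq \phi$ and $\phi \land P(f)(\psi) \leq P(f)(\psi)$ together with the counit $\exists_{f} P(f)(\psi) \leq \psi$; but the reverse inequality still forces one to invoke preservation of $\imply$ (via the unit $\phi \leq P(f)\exists_{f}\phi$), so the Yoneda chain above is the most economical presentation.
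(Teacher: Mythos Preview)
Your proof is correct and uses the same essential ingredients as the paper: the adjunction $\exists_f \dashv P(f)$ together with the fact that $P(f)$ preserves the Heyting structure (in particular $\imply$). The organization differs slightly: the paper (following van Oosten) proves the two inequalities separately, obtaining $\exists_{f}(\phi \land P(f)(\psi)) \leq (\exists_{f}\phi) \land \psi$ directly from the unit and preservation of $\land$ (no implication needed), and then running exactly your chain, but only as one-way inequalities with $\chi = \exists_{f}(\phi \land P(f)(\psi))$, for the reverse direction. Your Yoneda packaging is more economical in that a single chain of bi-implications delivers both directions at once; the paper's version has the minor advantage that the easy direction is seen to need only meet-preservation rather than preservation of $\imply$.
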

\begin{proof}
We give the argument in  \cite[pg. 60]{vanOosten2008} since it is straightforward. First we observe that $P(f)$ commutes with products and $\psi \leq P(f)(\exists_{f}(\psi))$ is the unit of $\exists_{f} \adj P(f)$ so that we have $\psi \land P(f)(\phi) \leq P(f)(\exists_{f}(\psi)) \land P(f)(\phi) \leq P(f)(\exists_{f}(\psi) \land \phi)$, hence $\exists_{f}(\phi \land P(f)(\psi)) \leq (\exists_{f}(\phi)) \land \psi$.\\

Since $P(f)$ also commutes with implication, we have the following chain of inequalities:
\begin{align*} 
\phi \land P(f)(\psi) \leq P(f)(\exists_{f}(\phi \land P(f)(\psi))) \\
\phi \leq P(f)(\psi) \imply P(f)(\exists_{f}(\phi \land P(f)(\psi))) \\
\phi \leq P(f)(\psi \imply \exists_{f}(\phi \land P(f)(\psi)))\\
\exists_{f}(\phi) \leq \psi \imply \exists_{f}(\phi \land P(f)(\psi))\\
\exists_{f}(\phi)\land \psi \leq \exists_{f}(\phi \land P(f)(\psi))
\end{align*}
where the first inequality is the unit of $\exists_{f} \adj P(f)$. Therefore, $\exists_{f}(\phi \land P(f)(\psi)) \bimp (\exists_{f}(\phi)) \land \psi$.

\end{proof}

Given a partial combinatory algebra $A$, a set $X$, and $\phi_{0}, \phi_{1} \in \pow{A}^{X}$, we say that $\phi_{0} \leq \phi_{1}$ when there exists an element $l \in A$ s.t for all $x \in X$ and $ a \in \phi_{0}(x)$, $l \cdot a$ exists and $l \cdot a \in \phi_{1}(x)$. In this case we say that $l$ realizes $\phi_{0} \leq \phi_{1}$. This induces a preorder structure on $\pow{A}^{X}$. 

\begin{lem} The preorder $\pow{A}^X$ has a bicartesian closed structure, and moreover any function $f \colon X \to Y$ induces an adjoint triple: 
\[ 
\begin{tikzcd}[column sep=large]
\pow{A}^X  \arrow[r, bend left, "{\exists_f}", "\perp"'] \arrow[r, bend right, "{\forall_f}"', "\perp"]  & \pow{A}^Y \arrow[l, "{\pow{A}^f}" description]
\end{tikzcd}
\] 
Furthermore, the adjoint triple satisfies the Beck-Chevalley condition.
\end{lem}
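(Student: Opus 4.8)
The plan is to endow $\pow{A}^X$ with the usual realizability structure and to check each clause by exhibiting an explicit realizer; this is precisely the realizability tripos of \cite{vanOosten2008}, so I would only record the witnessing combinators. Concretely I would set $\top_X(x) = A$ and $\bot_X(x) = \emptyset$, and define
\[ (\phi \land \psi)(x) = \{[a,b] : a \in \phi(x),\ b \in \psi(x)\}, \qquad (\phi \imply \psi)(x) = \{c \in A : \forall a \in \phi(x),\ c\cdot a \in \psi(x)\}, \]
with the join $(\phi \lor \psi)(x) = \{[k,a] : a \in \phi(x)\} \cup \{[\overline{k},b] : b \in \psi(x)\}$ tagging its two summands by $k$ and $\overline{k}$. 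For a function $f \colon X \to Y$ I would take reindexing $\pow{A}^f(\psi) = \psi \circ f$ together with the fibrewise union and intersection
\[ \exists_f(\phi)(y) = \bigcup_{f(x) = y} \phi(x), \qquad \forall_f(\phi)(y) = \bigcap_{f(x) = y} \phi(x), \]
adopting the conventions $\bigcup \emptyset = \emptyset$ and $\bigcap \emptyset = A$ on empty fibres.

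I would then verify the universal properties by hand. For the meet, $p_0$ and $p_1$ realize the two projections, and if $l_0$ realizes $\chi \leq \phi$ and $l_1$ realizes $\chi \leq \psi$ then $\lmbd{c}{[l_0\cdot c, l_1\cdot c]}$ realizes $\chi \leq \phi \land \psi$; the terminal case $\phi \leq \top$ is realized by $\iota$ and the initial case $\bot \leq \phi$ holds vacuously. For the join, $\lmbd{a}{[k,a]}$ and $\lmbd{b}{[\overline{k},b]}$ realize the injections, while a copairing of $l_0$ and $l_1$ is given by the combinator that branches on the tag $p_0 \cdot c$ and applies the appropriate $l_i$ to $p_1 \cdot c$, i.e. $\ifthen{p_0\cdot c}{l_0\cdot(p_1\cdot c)}{l_1\cdot(p_1\cdot c)}$. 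For implication, currying $\lmbd{a,b}{r\cdot[a,b]}$ and uncurrying $\lmbd{c}{r\cdot(p_0\cdot c)\cdot(p_1\cdot c)}$ turn a realizer of $\chi \land \phi \leq \psi$ into one of $\chi \leq (\phi \imply \psi)$ and back, yielding the exponential adjunction $\chi \land \phi \leq \psi \bimp \chi \leq (\phi \imply \psi)$.

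For the adjoint triple the decisive observation is that $\leq$ is witnessed by a single realizer uniform in $x$. Unwinding the definitions, $\phi \leq \pow{A}^f(\psi)$ asserts the existence of $r$ with $r\cdot a \in \psi(f(x))$ for all $x$ and all $a \in \phi(x)$, which is word for word the statement $\exists_f(\phi) \leq \psi$ after rewriting the quantifier over $x$ as a quantifier over $y=f(x)$ and unfolding the union defining $\exists_f$; hence the very same $r$ transposes $\exists_f \adj \pow{A}^f$ in both directions. Dually, $\pow{A}^f(\psi) \leq \phi$ and $\psi \leq \forall_f(\phi)$ are each realized by a single $r$ landing in $\bigcap_{f(x)=y}\phi(x)$, which gives $\pow{A}^f \adj \forall_f$. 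No computation beyond this bookkeeping of fibres is required.

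Finally, Beck-Chevalley I would settle by a pointwise comparison. Since the top-left corner of the square is the pullback $Z \times_W Y$, for each $y \in Y$ the map $g$ restricts to a bijection between the fibre $f^{-1}(y)$ and the fibre $k^{-1}(h(y))$, and along it the sets being intersected agree; thus $\forall_f(\pow{A}^g(\psi))(y)$ and $\pow{A}^h(\forall_k(\psi))(y)$ are literally the same subset of $A$, and the analogous equality of unions gives the existential corollary. The mediating isomorphism is therefore witnessed on the nose by $\iota$. The only steps that demand genuine care are in the second paragraph, namely producing a correct realizer for the join (whose conditional must reduce without divergence) and for the exponential transpose; by contrast the adjunction transpositions and Beck-Chevalley are essentially tautologies once the fibrewise definitions are unwound.
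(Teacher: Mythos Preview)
Your treatment of the Heyting prealgebra structure and of $\exists_f$ is correct and essentially matches the paper. The gap is in your right adjoint. Defining $\forall_f(\phi)(y)$ as the literal intersection $\bigcap_{f(x)=y}\phi(x)$ does not yield a right adjoint to $\pow{A}^f$ over a \emph{partial} combinatory algebra, and your claim that a single $r$ transposes $\pow{A}^f(\psi)\leq\phi$ into $\psi\leq\forall_f(\phi)$ fails on points $y$ outside the image of $f$: there your target is $A$, so you still need $r\cdot a$ to be \emph{defined} for every $a\in\psi(y)$, yet the hypothesis says nothing about such $a$. This is not repairable by a cleverer realizer. In $K_1$, take $f$ the inclusion $\{0,1\}\hookrightarrow\{0,1,2\}$ and $\phi(i)=\{i\}$; any realizer of $\psi\leq\forall_f^{\mathrm{stu}}(\phi)$ with $\psi$ equal to the genuine right adjoint would be a total recursive function sending every index of the constant-$0$ map to $0$ and every index of the constant-$1$ map to $1$, which the recursion theorem rules out. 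So your $\forall_f^{\mathrm{stu}}$ is strictly below the actual right adjoint and is not itself one.

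The paper avoids this by packaging witnesses as constant functions:
\[
\forall_f(\phi)(y)=\{\,a\in A : \forall b\in A\ \forall x\in f^{-1}(y)\ (a\cdot b\ \text{is defined and }a\cdot b\in\phi(x))\,\}.
\]
Given $p$ realizing $\pow{A}^f(\psi)\leq\phi$, the transpose is (morally) $\lmbd{z,w}{p\cdot z}$; its one-argument application is an $s$--$k$ term of the form $s\cdot(k\cdot p)\cdot(k\cdot z)$ and is therefore always defined, which is exactly what handles the empty-fibre case. Your Beck--Chevalley argument---literal equality via the bijection of fibres in a pullback---is perfectly sound for your definition and carries over verbatim to the paper's $\forall_f$, where the same bijection makes both comparison maps realized by $\iota$. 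Only the definition of $\forall_f$ needs to change.
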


\begin{proof}
We define the finite products, finite coproducts, and exponentials as follows:
 \begin{enumerate}

\item $(\phi \land \psi)(x) = \{[a,b] | a \in \phi(x), b \in \psi(x) \}$
\item $(\phi \lor \psi)(x) = \{[k,a]  | a \in \phi(x)  \} \cup \{[\overline{k}, b]  | b \in \psi(x)  \}$
\item $(\phi \imply \psi)(x) = \{c \in A | \forall a \in \phi(x) \space ~  ~ c \cdot a \in \psi(x)\}$ 
\item $\top = \const_{A}$ 
\item $\bot = \const_{\emptyset}$ 
\end{enumerate}

For products, we have $(\phi \land \psi) \leq \phi$ realized by $p_{0}$ and $(\phi \land \psi) \leq \psi$ realized by $p_{1}$. Suppose we have an element $\alpha \in \pow{A}^{X}$ and realizers $p$ and $q$ respectively for $\alpha \leq \phi$ and $\alpha \leq \psi$, then we have $\alpha \leq (\phi \land \psi) $ realized by the term $\lmbd{y}{[(p \cdot y),(q \cdot y)]}$. For any $\alpha \in \pow{A}^{X}$, $ \alpha \leq \top$ is realized by $\iota$.\\

For coproducts, $\phi \leq (\phi \lor \psi)$ is realized by $\lmbd{y}{[k,y]}$ and $\psi \leq (\phi \lor \psi)$ is realized by $\lmbd{y}{[\overline{k},y]}$. Suppose we have $\alpha \in \pow{A}^{X}$ and realizers $p$ and $q$ respectively for $\phi \leq \alpha$ and $\psi \leq \alpha$, then $(\phi \lor \psi) \leq \alpha$ is realized by $\lmbd{y}{((\ifthen{p_{0} \cdot y}{p}{q}) \cdot (p_{1} \cdot y))}$. For any $\alpha \in \pow{A}^{X}$, $ \bot \leq \alpha$ is realized trivially by $\iota$.\\

For exponentials, $((\phi \imply \psi) \land \phi) \leq \psi$ is realized by $\lmbd{y}{\app{p_{0} \cdot y, p_{1} \cdot y}}$. Suppose we have $\alpha \in \pow{A}^{X}$ and realizer $p$ for $(\alpha \land \phi) \leq \psi $, then $\alpha \leq (\phi \imply \psi)$ is realized by $\lmbd{y,z}{(p \cdot [y,z])}$.\\

Given a function on sets $f:X \to Y$, we have a function on Heyting prealgebras $\pow{A}^{f}: \pow{A}^{Y} \to \pow{A}^{X}$ taking $\phi : \pow{A}^{Y}$ to $\phi \circ f : \pow{A}^{X}$. 
We define $\forall_{f}, \exists_{f}: \pow{A}^{X} \to \pow{A}^{Y}$ as follows:
\begin{align*} 
\forall_{f}(\phi)(y) &=  \{a \in A | \forall b \in A \forall x \in X (f(x) = y \Rightarrow a \cdot b \space ~ \text{is defined and}\space ~ a \cdot b \in \phi(x))\}   \\
\exists_{f}(\phi)(y) &= \bigcup_{f(x) = y} \phi(x)   
\end{align*}

It should be clear that $\pow{A}^{f}$ is a pre-Heyting algebra morphism as the ordering relation as well as $\land$, $\lor$, $\imply$, $\top$ and $\bot$ are defined pointwise. $\exists_{f}$ is order preserving since if $c$ realizes $\phi(x) \leq \psi(x)$ for all $x \in X$, then $c$ also realizes $$\bigcup_{f(x) = y} \phi(x)  \leq \bigcup_{f(x) = y} \psi(x).$$ Finally if $c$ realizes $\phi(x) \leq \psi(x)$ for all $x \in X$, then take $y \in Y$ and $a \in \forall_{f}(\phi)(y)$ then for $b \in A$ and $x \in X$ if $f(x) = y$ then $a \cdot b$ is defined and $a \cdot b \in \phi(x)$. Then if $f(x) = y$, $c\cdot (a \cdot b) \in \psi(x)$. So $\forall_{f}(\phi)(y) \leq \forall_{f}(\psi)(y)$ is realized by $\lmbd{v_{0},v_{1}}{(c \cdot (v_{0} \cdot v_{1}))}$. Therefore $\forall_{f}$ is order preserving.\\

We will show the following: $$\phi \circ f \leq  \psi \text{ if and only if }\phi \leq \forall_{f}(\psi).$$ Suppose we have a realizer $p$ for $\phi \circ f   \leq  \psi $, then $\phi \leq \forall_{f}(\psi)$ is realized by $\lmbd{z}{(k \cdot (p \cdot z))}$. To see this, take $y \in Y$ and $c \in \phi(y)$. For all $b \in A$ and $x \in X$, if $f(x) = y$ then $\phi(f(x)) = \phi(y)$, then $c \in \phi(f(x))$ and $p \cdot c$ is defined and $p \cdot c \in \psi(x)$.  $k \cdot (p \cdot c)$ gives the constant function on $p\cdot c$ so $(k \cdot (p \cdot c)) \cdot b = p \cdot c$. Thus $ k \cdot (p \cdot c) \in \forall_{f}(\psi)(y)$.\\

Now suppose we have a realizer $p$ for $\phi \leq \forall_{f}(\psi)$. Take $x \in X$ and $c \in \phi(f(x))$, then $p \cdot c \in \forall_{f}(\psi)(f(x))$. Then for all $b \in A$, $(p \cdot c) \cdot b$ is defined and $(p \cdot c) \cdot b \in \psi(x)$. Then $(p \cdot c) \cdot (p \cdot c) \in \psi(x)$. Thus $\phi \circ f \leq  \psi $ is realized by the term $\lmbd{z}{((p \cdot z) \cdot (p \cdot z))}$. With this we have shown that $\forall_{f}$ is a right adjoint to $\pow{A}^{f}$. \\

Now we will show that: $$\psi  \leq \phi \circ f \text{ if and only if }\exists_{f}(\psi)  \leq \phi.$$ Suppose we have a realizer $p$ for $\psi  \leq \phi \circ f$. Take $y \in Y$ and $c \in \exists_{f}(\psi)(y)$. Then there exists an $x \in X$ such that $f(x)  = y$ and $c \in \psi(x)$. Then $p\cdot c \in \phi(f(x))$. So $p$ is also a realizer for $\exists_{f}(\psi)  \leq \phi$.\\

Suppose that $p$ is a realizer for $\exists_{f}(\psi)  \leq \phi$, then $\psi  \leq \phi \circ f$ is also realized by $p$. This because if we have a realizer $c \in \psi(x)$, then $c \in \exists_{f}(\psi)(f(x))$ and $p\cdot c \in \phi(f(x))$. Thus $\exists_{f}$ is the left adjoint to $\pow{A}^{f}$. 

Given a pullback square in $\Set$:
\[\begin{tikzcd}
	X &&&& Y \\
	\\
	\\
	Z &&&& W
	\arrow["f", from=1-1, to=1-5]
	\arrow["h", from=1-5, to=4-5]
	\arrow["g"', from=1-1, to=4-1]
	\arrow["k"', from=4-1, to=4-5]
	\arrow["\lrcorner"{anchor=center, pos=0.125}, draw=none, from=1-1, to=4-5]
\end{tikzcd}\]

We seek to find a realizer for $\forall\phi \in \pow{A}^{Z} (\forall_{f}(\phi \circ g) \cong \forall_{k}(\phi) \circ h )$ where $\phi \cong \psi$ denotes $\phi \leq \psi$ and $\psi \leq \phi$. Given $\phi \in \pow{A}^{Z}$, $y \in Y$ and a realizer $c \in \forall_{f}(\phi \circ g)(y)$, then for all $b \in A$, $x \in X$ if $f(x) = y$, then $c \cdot b$ is defined and $c \cdot b \in \phi(g(x))$. Then given some $b \in A$ and $z \in Z$, if $h(y) = k(z)$ then by the universal property of pullbacks there is a unique $x$ such that $f(x) = y$ and $g(x) = z$. Then $c \cdot b$ is defined and $c \cdot b \in \phi(g(x))$. Hence $c \in \forall_{k}(\phi)(h(y))$ and $(\forall_{f}(\phi \circ g) \leq \forall_{k}(\phi) \circ h )$ is realized by $\iota$.\\

Given a realizer $c \in \forall_{k}(\phi)(h(y))$ then then for all $b \in A$, $z \in Z$ if $k(z) = h(y)$, then $ c \cdot b$ is defined and $c \cdot b \in \phi(z)$. Then given some $b \in A$ and $x \in X$, if $f(x) = y$ then $k(g(x)) = h(f(x)) = h(y)$ and thus $c \cdot b$ is defined and $c \cdot b \in \phi(g(x))$. Hence $c \in \forall_{f}(\phi \circ g)(y)$ and $(\forall_{f}(\phi \circ g) \geq \forall_{k}(\phi) \circ h )$ is also realized by $\iota$.\\  

Thus $\forall\phi \in \pow{A}^{Z} (\forall_{f}(\phi \circ g) \cong \forall_{k}(\phi) \circ h )$ is realized by $[\iota,\iota]$. This means that $\forall_{f} \circ \pow{A}^{g} \cong \pow{A}^{h} \circ \forall_{k}$ which is the Beck-Chevalley condition. This completes the proof. \qedhere \\

\end{proof}

The generic element is the identity morphism on $\pow{A}$. This should be plain as any element $\phi: \pow{A}^{X}$ is literally a function $\phi:X \to \pow{A}$. So $\pow{A}^{\phi}(\id_{\pow{A}}) = \id_{\pow{A}} \circ \phi = \phi$. This proves the following theorem:\\

\begin{thm}
$\pow{A}^{-}$ is s $\Set$-tripos.

\end{thm}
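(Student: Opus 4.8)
The plan is to assemble the theorem from the preceding lemma, which already supplies the two substantive ingredients: that each fibre $\pow{A}^X$ is a Heyting prealgebra, and that every reindexing map $\pow{A}^f$ carries an adjoint triple $\exists_f \adj \pow{A}^f \adj \forall_f$ satisfying the Beck--Chevalley condition. What remains is to package these fibres into a genuine pseudofunctor $\pow{A}^{-} \colon \Set^{op} \to \pHeyt$ and to exhibit a generic element with its classifying maps.

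First I would check functoriality. For $f \colon X \to Y$ the assignment $\pow{A}^f(\phi) = \phi \circ f$ is plainly contravariant, and in fact \emph{strict}: the identities $\pow{A}^{\id_X} = \id$ and $\pow{A}^{g \circ f} = \pow{A}^f \circ \pow{A}^g$ follow immediately from associativity of function composition, so $\pow{A}^{-}$ is an honest functor (a fortiori a pseudofunctor) on $\Set^{op}$. I would then confirm that each $\pow{A}^f$ is a morphism in $\pHeyt$, i.e.\ preserves the bicartesian closed structure. Since $\land, \lor, \imply, \top, \bot$ are all defined pointwise on $\pow{A}^X$ and precomposition by $f$ commutes with pointwise operations, the same realizers witnessing each structural (in)equality in the fibre $\pow{A}^Y$ witness the corresponding (in)equality in $\pow{A}^X$; hence $\pow{A}^f$ preserves the structure on the nose. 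Together with the lemma's adjoint triple and Beck--Chevalley, this discharges the first clause of the definition of a $\Set$-tripos.

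Next I would produce the generic element. Take $\Sigma = \pow{A}$ (a set) and $\sigma = \id_{\pow{A}} \in \pow{A}^{\pow{A}}$, viewing an element of $\pow{A}^{\pow{A}}$ as a function $\pow{A} \to \pow{A}$. For any set $X$ and predicate $\phi \in \pow{A}^X$, the datum of $\phi$ is exactly a function $X \to \pow{A}$, which I take to be the classifying map $[\phi]$. Then $\pow{A}^{[\phi]}(\sigma) = \sigma \circ [\phi] = \id_{\pow{A}} \circ \phi = \phi$, so in particular $\pow{A}^{[\phi]}(\sigma) \cong \phi$, as required. This is precisely the content sketched in the paragraph preceding the theorem statement.

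The genuine mathematical difficulty has already been absorbed into the preceding lemma (the explicit construction of $\exists_f, \forall_f$ and the realizer-level verification of Beck--Chevalley); the theorem itself is an assembly of these pieces. The only point demanding care is bookkeeping the variance --- confirming that $\pow{A}^{-}$ is contravariant and that $P(f)$ for $f \colon X \to Y$ is the map $\pow{A}^Y \to \pow{A}^X$ used in the lemma --- and observing that here the classification is realized by an on-the-nose equality rather than merely an order-isomorphism $\cong$, which is stronger than the definition demands.
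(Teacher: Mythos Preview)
Your proposal is correct and follows essentially the same approach as the paper: invoke the preceding lemma for the Heyting-prealgebra structure, the adjoint triple, and Beck--Chevalley, then take $\Sigma = \pow{A}$ with generic element $\sigma = \id_{\pow{A}}$ and classifying map $[\phi] = \phi$, so that $\pow{A}^{[\phi]}(\sigma) = \phi$ on the nose. Your explicit remarks on strict functoriality and pointwise preservation of the Heyting structure are slightly more detailed than the paper's treatment but match its reasoning.
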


$\Set$ already interprets classical first order logic. However, using $P$ we can equip $\Set$ with a model of intuitionistic first order logic. Consequently, we can construct categories that can interpret intuitionstic first order logic.\\

\begin{defn}

Given a $\Set${-Tripos} $P$, a set $X$, an element $ \sim \space ~ \in P(X\times X)$ is called a {\bf partial equivalence relation} on $X$ iff it satisfies the following in the internal language of $P$:

\begin{enumerate}

\item $\forall x,y \in X (x \sim y  \imply  y \sim x)$\text{ (Symmetry)}
\item $\forall x,y,z \in X(x \sim y \land  y \sim z   \imply   x \sim z)$\text{ (Transitivity)}

\end{enumerate}

These conditions are interpreted respectively as: 

\begin{enumerate}

\item $\top_{P(1)} \leq  \forall_{!_{X\times X}}(\sim \space ~  \imply P(\text{swap}_{X})(\sim))$ 
\item $\top_{P(1)} \leq  \forall_{  !_{X\times X \times X}  }( P(\pr{0,1}^{X,X,X})(\sim) \land P(\pr{1,2}^{X,X,X})(\sim)  \imply P(\pr{0,2}^{X,X,X})(\sim) )$

\end{enumerate}

Where $!_{Z}$ is the morphism from $Z$ into the terminal object $1$ and $\text{swap}_{X}: X\times X \to X \times X$ takes a pair $(x,y)$ to $(y,x)$.

\end{defn}


\begin{lem}
Given a $\Set${-Tripos} $P$, one can construct a category $\Set[P]$ whose objects are pairs $(X, \sim_{X})$ s.t $X$ is a set and $\sim_{X} \in  P(X)$ is a partial equivalence relation on $X$, and, for objects, $(X, \sim_{X})$ and $(Y, \sim_{Y})$ the morphisms are isomorphism classes of elements $F \in P(X \times Y)$ which are functional relations.
\end{lem}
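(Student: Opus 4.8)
The plan is to verify the category axioms by first spelling out, in the internal logic of $P$, what it means for an element $F \in P(X\times Y)$ to be a \emph{functional relation}, and then checking that the proposed identities and composites stay within this class and satisfy unitality and associativity. Writing $F(x,y)$ for $F$ regarded in the context $x:X,\ y:Y$, I would impose the four standard conditions: strictness, $F(x,y) \leq (x \sim_X x)\land(y \sim_Y y)$; extensionality, $F(x,y)\land(x\sim_X x')\land(y\sim_Y y') \leq F(x',y')$; single-valuedness, $F(x,y)\land F(x,y') \leq (y\sim_Y y')$; and totality, $(x\sim_X x) \leq \exists y.\,F(x,y)$. Two functional relations name the same morphism exactly when $F \bimp G$ in the preorder $P(X\times Y)$; since the morphisms are these equivalence classes, any construction that is monotone in its arguments automatically descends to well-defined operations on morphisms.

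For identities I would take $\id_{(X,\sim_X)}$ to be $\sim_X$ itself, viewed in $P(X\times X)$; the symmetry and transitivity axioms from the preceding definition are precisely what is needed to verify strictness, extensionality, single-valuedness, and totality of $\sim_X$. For composition of $F\colon (X,\sim_X)\to(Y,\sim_Y)$ and $G\colon(Y,\sim_Y)\to(Z,\sim_Z)$ I would use relational composition,
\[
G\circ F \;=\; \exists_{\pr{0,2}}\bigl(P(\pr{0,1})(F)\land P(\pr{1,2})(G)\bigr)\in P(X\times Z),
\]
i.e. the internal formula $(x,z)\mapsto \exists y.\,F(x,y)\land G(y,z)$. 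This is monotone in both $F$ and $G$, so it respects isomorphism classes.

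The bulk of the work is checking that $G\circ F$ is again functional. Strictness and extensionality follow directly from the corresponding conditions on $F$ and $G$ together with transitivity of the relations. Single-valuedness follows by taking the two middle witnesses $y,y'$ produced by two instances of the composite, using single-valuedness of $F$ to get $y\sim_Y y'$, extensionality of $G$ to transport, and then single-valuedness of $G$. Totality is the delicate step: from $x\sim_X x$, totality of $F$ yields a witness $y$, strictness of $F$ gives $y\sim_Y y$, and totality of $G$ then yields $z$ with $G(y,z)$; assembling these into $\exists y.\,F(x,y)\land G(y,z)$ requires pushing an existential past a conjunction, which is exactly the Frobenius condition proved above, together with Beck--Chevalley to commute the quantifiers with reindexing along projections.

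Finally I would establish the unit and associativity laws. For the units, $F\circ \sim_X \cong F$ and $\sim_Y\circ F \cong F$ follow by supplying the reflexive witness via totality and strictness in one direction and absorbing the relation via extensionality of $F$ in the other. For associativity, both $(H\circ G)\circ F$ and $H\circ(G\circ F)$ unfold to the single formula $\exists y,z.\,F(x,y)\land G(y,z)\land H(z,w)$, and identifying the two bracketings is a bookkeeping exercise in reassociating and commuting the two existentials, where Beck--Chevalley for the relevant pullback squares of projections supplies the required isomorphisms. I expect the one substantive obstacle to be the totality of the composite, since it is the only place that genuinely combines existential elimination with the Frobenius and Beck--Chevalley laws rather than proceeding by monotonicity alone; the remaining verifications are formal manipulations in the internal language.
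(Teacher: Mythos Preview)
Your proposal is correct and follows essentially the same route as the paper: the same four axioms on functional relations, $\sim_X$ as the identity, relational composition via $\exists y.\,F(x,y)\land G(y,z)$, and the same verifications of unit and associativity laws. The paper argues these points more informally in the internal language and defers the check that composites are functional to a one-line remark, whereas you are more explicit about where Frobenius and Beck--Chevalley enter; both amount to the same argument.
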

\begin{proof}

 For objects $(X, \sim_{X})$ and $(Y, \sim_{Y})$, we call $F \in P(X \times Y) $ a { \bf functional relation} iff it satisfies the following propositions in $P$:

\begin{enumerate}

\item $\forall x,y(F(x,y) \imply x \sim_{X} x \land  y \sim_{Y} y)$
\item $\forall x,x',y,y'(F(x,y) \land x \sim_{X} x' \land y \sim_{Y} y' \imply F(x',y')) $
\item $\forall x,y,y' (F(x,y) \land F(x,y') \imply y \sim_{Y} y')$
\item $\forall x (x \sim_{X} x \imply \exists y F(x,y))$

\end{enumerate}
interpreted respectively as:

\begin{enumerate}

\item $\top_{P(1)} \leq \forall_{!_{X \times Y}}(F \imply (P( \Delta_{X} \circ \pr{1}^{X,Y})( \sim_{X})  \land  P(\Delta_{Y} \circ \pr{0}^{X,Y})(\sim_{Y}))  )$
\item $\top_{P(1)} \leq \forall_{!_{X \times X \times Y \times Y}}(P(\pr{0,2}^{X,X,Y,Y})(F) \land P(\pr{0,1}^{X,X,Y,Y})(\sim_{X}) \land P(\pr{2,3}^{X,X,Y,Y})(\sim_{Y}) \imply P(\pr{1,3}^{X,X,Y,Y}(F))   )$

\item $\top_{P(1)} \leq \forall_{!_{X\times Y \times Y}}(P(\pr{0,1}^{X,Y,Y})(F) \land P(\pr{0,2}^{X,Y,Y})(F) \imply P(\pr{1,2}^{X,Y,Y})(\sim_{Y})  )$
\item $\top_{P(1)} \leq \forall_{!_{X}}(P(\Delta_{X})(\sim_{X}) \imply \exists_{\pr{1}^{X,Y}}(F)  )$

\end{enumerate}

We refer to the isomorphism class of $F$ as $[F]$. We define identity morphisms by $\id_{(X, \sim_{X})}  = [\sim_{X}]$.
Now we show that $\sim_{X}$ is a functional relation on $(X,\sim_{X})$. Suppose that $x \sim_{X} x'$, by symmetry we have $x' \sim_{X} x$ and by transitivity we have both $x \sim_{X} x$ and $x' \sim_{X} x'$ satisfying $(i)$.Suppose we have $x \sim_{X} y \land x \sim_{X} x' \land y \sim_{X} y'$. Applying transitivity to the first and third formula and symmetry to the second gives us $x \sim_{X} y' \land x' \sim_{X} x$. Apply transitivity gives us $x' \sim_{X} y'$ satisfying $(ii)$. If we have $x \sim_{X} y \land x \sim_{X} y'$, $y \sim_{X} y'$ follows from symmetry and transitivity satisfying $(iii)$. If $x \sim_{X} x$, then obviously $\exists y(x \sim_{X} y)$ satisfying $(iv)$. Therefore $\sim_{X}$ is a function relation.\\

 Given morphisms $[F]: (X, \sim_{X}) \to (Y, \sim_{Y})$ and $[G]: (Y, \sim_{Y}) \to (Z, \sim_{Z})$, we set $$[G] \circ [F] = [\exists y (F(x,y) \land G(x,y))] = [\exists_{\pr{0,2}^{X,Y,Z}}(P(\pr{0,1}^{X,Y,Z})(F') \land  P(\pr{1,2}^{X,Y,Z})(G'))]$$ for any given choice $F' \in [F]$ and $G' \in [G]$. This is well defined since the operations and functors preserve isomorphisms. Furthermore this composition of functional relations can be shown to be a functional relation by applying the properties of functional relations to the original functional relations $F$ and $G$.\\


Given a functional relation $F$ on $(X, \sim_{X})$ and $(Y,\sim_{Y})$, we seek to show that $[F]  = [\sim_{Y}] \circ [F] = [F] \circ [\sim_{X}]$. For all $x \in X$ and $y \in Y$, we have $F(x,y) \imply y \sim_{Y} y$ and $F(x,y) \imply x \sim_{X} x$ by condition $(i)$. Thus we have $F(x,y) \imply \exists y'(F(x,y') \land y' \sim_{Y} y)$ and $F(x,y) \imply \exists x'(x \sim_{X} x' \land F(x',y))$. If for $x \in X$ and $y \in Y$, we have $\exists y'(F(x,y') \land y' \sim_{Y} y)$, then by conditions $(i)$ and $(ii)$ we have $\exists y'(F(x,y') \land y' \sim_{Y} y) \imply F(x,y)$. We have a similar case for $\sim_{X}$. So we have that $F(x,y) \Leftrightarrow \exists y'(F(x,y') \land y' \sim_{Y} y)  \bimp  \exists x'(x \sim_{X} x' \land F(x',y))$ for all $x$ and $y$. Therefore, $[F]  = [\sim_{Y}] \circ [F] = [F] \circ [\sim_{X}]$.\\

Finally we will show associativity of composition. Given a sequence of functional relations:

\[\begin{tikzcd}
	{(X,\sim_{X})} && {(Y,\sim_{Y})} && {(Z,\sim_{Z})} && {(B,\sim_{B})}
	\arrow["F", from=1-1, to=1-3]
	\arrow["G", from=1-3, to=1-5]
	\arrow["H", from=1-5, to=1-7]
\end{tikzcd}\]

we want to show that for all $x \in X$ and $b \in B$:

$$\exists z (\exists y (F(x,y) \land G(y,z)) \land H(z,b)) \bimp \exists y (F(x,y) \land \exists z (G(y,z) \land H(z,b))).$$
 
This follows from the fact that if either statement held, then we have a $y$ and a $z$ such that $F(x,y)$, $G(y,z)$ and $H(z,b)$ holds. Thus the opposing statement can easily be formulated. Hence, the above proposition holds and $[H] \circ ([G] \circ [F]) = ([H] \circ [G]) \circ [F]$.\\

Therefore, $\Set[P]$ is a category.
\end{proof}

\begin{rmk}

Given two objects $(X, \sim_{X})$ and $(Y,\sim_{Y})$ and two functional relations $F$ and $G$ between those objects, if either $F \leq G$ or $G \leq F$ then $F \bimp G$. To see why, suppose $F \leq G$, and $G(x,y)$ holds. Then $x \sim_{X} x$ which means that for some $y' \in Y$, $F(x,y')$ holds. Thus $G(x,y')$ holds. Which means that $y \sim_{Y} y'$ holds. Finally we have that $F(x,y)$ holds. Hence $F \leq G$ implies $G \leq F$.\\

\end{rmk}

\begin{defn}

We call a category $\cC$ an {\bf elementary topos} iff it is has finite limits, is cartesian closed and contains an object $\Omega$ and a morphism $\top_{\Omega}: 1 \to \Omega$ such that for all monomorphisms $\phi : U \to X$ there exists a unique $[\phi]: X \to \Omega$ such that we have a pullback square:

\[\begin{tikzcd}
	U && 1 \\
	\\
	X && \Omega
	\arrow[from=1-1, to=1-3]
	\arrow["\phi"', from=1-1, to=3-1]
	\arrow["\lrcorner"{anchor=center, pos=0.125}, draw=none, from=1-1, to=3-3]
	\arrow["{\top_{\Omega}}", from=1-3, to=3-3]
	\arrow["{[\phi]}"', from=3-1, to=3-3]
\end{tikzcd}\]

We call $\Omega$ the {\bf subobject classifier}.

\end{defn}


\begin{prop}
$\Set[P]$ is an elementary topos.

\end{prop}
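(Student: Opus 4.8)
The plan is to verify directly the three defining properties of an elementary topos for $\Set[P]$: finite limits, cartesian closure, and the existence of a subobject classifier. Throughout, the guiding principle is uniform: equip the relevant underlying set with its evident set-theoretic structure, define the accompanying partial equivalence relations and functional relations in the internal first-order logic of $P$, and then discharge each universal property by producing explicit realizers, using the adjoint triple $\exists_f \adj P(f) \adj \forall_f$, the Beck--Chevalley condition, and the Frobenius condition established above as the main computational tools. A recurring simplification is the Remark that for parallel functional relations $F, G$ between the same objects, $F \leq G$ already forces $F \bimp G$; this collapses most uniqueness arguments, since any two candidate morphisms that are comparable as predicates coincide.

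For finite limits, I would first exhibit a terminal object $(1, \top_{P(1)})$, where $1$ is a singleton and the partial equivalence relation is the top predicate; the unique map out of any $(X, \sim_X)$ is the functional relation named by $x \sim_X x$. Binary products are carried by $(X \times Y, \sim_{X \times Y})$ with $\sim_{X\times Y}((x,y),(x',y')) = (x \sim_X x') \land (y \sim_Y y')$, the projections and pairing being the evident functional relations, and the product axioms are checked in the internal logic. Equalizers of $[F], [G] \colon (X,\sim_X) \to (Y,\sim_Y)$ are carried by $(X, \sim_E)$, where $\sim_E$ is the restriction of $\sim_X$ to the predicate $\exists y\,(F(x,y) \land G(x,y))$ cutting out where $F$ and $G$ agree, together with the inclusion functional relation. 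Since a category with a terminal object, binary products, and equalizers has all finite limits, this step is complete once these universal properties are confirmed; I expect every one of them to be routine realizer bookkeeping.

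For cartesian closure and the subobject classifier, the efficient route is to construct power objects and then invoke the standard fact that a finitely complete category with power objects is an elementary topos, with $\Omega = \pow{1}$ and exponentials recovered from power objects and finite limits (alternatively one may build exponentials by hand). The power object $\pow{X,\sim_X}$ has underlying set $\Sigma^X$, where $\Sigma$ is the generic object of the tripos, with a partial equivalence relation declaring two functions equivalent when the predicates they name via $\sigma$ are bi-implied on the support of $\sim_X$, subject to strictness and relationality; membership $\in_X \mono (X,\sim_X) \times \pow{X,\sim_X}$ is evaluation through the generic element $\sigma \in P(\Sigma)$. The subobject classifier then specializes to $\Omega = (\Sigma, \sim_\Sigma)$ with $\sim_\Sigma = (P(\pr{0}^{\Sigma,\Sigma})(\sigma) \imply P(\pr{1}^{\Sigma,\Sigma})(\sigma)) \land (P(\pr{1}^{\Sigma,\Sigma})(\sigma) \imply P(\pr{0}^{\Sigma,\Sigma})(\sigma))$ and truth map $\top_\Omega \colon 1 \to \Omega$ naming the class of $\top$; the classifying map of a mono $m \colon (U,\sim_U) \mono (X,\sim_X)$, which I would first identify with a strict relational predicate $\mu \in P(X)$ (its extent), is the functional relation assembled from the classifying function $[\mu] \colon X \to \Sigma$ supplied by the tripos axiom, and the required pullback square reduces precisely to the genericity equation $P([\mu])(\sigma) \cong \mu$.

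The main obstacle is this power-object and subobject-classifier step, and it is here that the full strength of the tripos hypothesis --- the generic element $\sigma$, rather than merely a first-order hyperdoctrine --- is indispensable. Two points deserve care. First, I must check that the monomorphisms of $\Set[P]$ correspond bijectively to strict relational predicates on $(X,\sim_X)$, so that $\Sub(X,\sim_X)$ is computed by predicates, and that the classification is well defined on isomorphism classes of functional relations, i.e.\ invariant under bi-implication; the Frobenius and Beck--Chevalley conditions are exactly what make transposition along $\sigma$ respect this quotient. Second, uniqueness of the classifying map must be established, which is where the Remark on comparable functional relations does the decisive work. By contrast, the finite-limit verifications, though numerous, are mechanical, so the genuine content of the theorem is concentrated in transposing relations through the generic element and confirming that this transposition is both natural and monic-classifying.
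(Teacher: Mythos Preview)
Your proposal is correct and your finite-limit constructions match the paper's exactly. The genuine divergence is in the second half: the paper does \emph{not} go through power objects but instead builds exponentials directly, taking $(X,\sim_X) \to (Y,\sim_Y)$ to have carrier $\Sigma^{X\times Y}$ with a partial equivalence relation that internalizes ``$f$ is a functional relation from $(X,\sim_X)$ to $(Y,\sim_Y)$ and $f \bimp g$ pointwise,'' and then constructs the subobject classifier $(\Sigma,\bimp)$ separately, with the classifying map of a mono $[i]$ written out explicitly as $\phi_i(x,p) \bimp (\exists u\, i(u,x) \bimp p)$ rather than by invoking the generic-element axiom abstractly. Your power-object route is the one taken in the original Hyland--Johnstone--Pitts treatment and is arguably more economical (one construction yields both exponentials and $\Omega$), while the paper's route is more explicit and keeps the exponential and classifier verifications independent; both rely on the same Remark about comparable functional relations for uniqueness, and both need the full tripos structure (the generic $\sigma$) at exactly the point you identify.
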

\begin{proof}

Given objects $(X, \sim_{X})$ and $(Y, \sim_{Y})$, we set $(X,\sim_{X}) \times (Y, \sim_{Y}) := (X \times Y, \sim_{X,Y})$ where $(x,y) \sim_{X,Y} (x',y') \bimp x \sim_{X} x' \land y \sim_{Y} y'$. We define the projections $[\pr{0}]: (X,\sim_{X}) \times (Y, \sim_{Y}) \to (X, \sim_{X})$ and $[\pr{1}]: (X,\sim_{X}) \times (Y, \sim_{Y}) \to (Y, \sim_{Y})$ as: $\pr{0}((x,y),x') \bimp (x \sim_{X} x'  \land  y \sim_{Y} y)$ and $\pr{1}((x,y),y') \bimp (x \sim_{X} x  \land  y \sim_{Y} y')$. Suppose we have morphisms $[Z_{0}]: (Z,\sim_{Z}) \to (X,\sim_{X})$ and $[Z_{1}]: (Z,\sim_{Z}) \to (Y,\sim_{Y})$, we define a new morphism $[Z^{*}]:(Z,\sim_{Z}) \to (X,\sim_{X})\times (Y, \sim_{Y})$ as $Z^{*}(z,(x,y)) \bimp Z_{0}(z,x) \land Z_{1}(z,y)$.\\

First we have $[\pr{0}] \circ [Z^{*}] = [\exists (x,y) Z^{*}(z,(x,y)) \land \pr{0}((x,y),x')]$. If $\exists (x,y) Z^{*}(z,(x,y)) \land \pr{0}((x,y),x')$ then  $\exists x Z_{0}(z,x) \land x \sim_{X} x'$, then $Z_{0}(z,x')$ holds. Since $\exists (x,y) Z^{*}(z,(x,y)) \land \pr{0}((x,y),x') \leq Z_{0}(z,x')$ for all $z$ and $x'$ then $ \exists (x,y) Z^{*}(z,(x,y)) \land \pr{0}((x,y),x') \bimp Z_{0}(z,x')$ and $[\pr{0}] \circ [Z^{*}] = [Z_{0}]$.  A similar argument holds for $[Z_{1}]$.\\

Suppose there exists some morphism $[Q]:(Z,\sim_{Z}) \to (X,\sim_{X})\times (Y, \sim_{Y})$ such that $[\pr{0}] \circ [Q] = [Z_{0}]$ and $[\pr{1}] \circ [Q] = [Z_{1}]$. If $Q(z,(x,y))$ holds, then $\pr{0}((x,y),x)$ and $\pr{0}((x,y),y)$ holds then $Z_{0}(z,x) \land Z_{1}(z,y) = Z^{*}(z,(x,y))$ holds. Hence $Q \leq Z^{*}$, therefore $Q \bimp Z^{*}$.\\ 

The terminal object is the object $(1,\top_{P(1\times 1)})$. The unique morphism $!_{(X, \sim_{X})}:(X, \sim_{X}) \to (1, \top_{1 \times 1})$ is the morphism $[\top_{1 \times 1}\space~ \land \space~ \sim_{X} ]$. Any functional relation $F:(X, \sim_{X}) \to (1, \top_{1 \times 1})$ implies $\top_{1 \times 1}\space~ \land \space~ \sim_{X} $.\\

Suppose we have the following diagram:
\[\begin{tikzcd}
	{(X,\sim_{X})} &&& {(Y,\sim_{Y})}
	\arrow["{[G]}"', shift right=2, from=1-1, to=1-4]
	\arrow["{[F]}", shift left=2, from=1-1, to=1-4]
\end{tikzcd}\]

We define $\Eq{[F],[G]} := (X, \sim_{[F],[G]})$ where $x \sim_{[F],[G]} x'  \bimp (x \sim_{X} x' \land \exists y (F(x,y) \land G(x,y)) ) $. We define the equalizer map $[E]:(X,\sim_{[F],[G]}) \to (X,\sim_{X}) $, $E\bimp\space ~ \sim_{[F],[G]}$.\\

First we show that $[F] \circ [E] = [G] \circ [E]$. Suppose that $\exists x' (E(x,x') \land F(x',y))$, then there is an $x'$ such that $x \sim_{X} x'$ and $F(x',y)$ holds and there is a $y'$ such that $F(x,y')$ and $G(x,y')$ holds. Then $F(x,y)$ holds. Then $y \sim_{Y} y'$ holds, which means that $G(x,y)$ holds. Then $F(x,y) \land G(x,y)$ holds, therefore $\exists x' E(x,x') \land G(x',y)$. Since $\exists x' (E(x,x') \land F(x',y)) \leq  \exists x' E(x,x') \land G(x',y)$, then $\exists x' (E(x,x') \land F(x',y)) \bimp  \exists x' E(x,x') \land G(x',y)$ and $[F] \circ [E] = [G] \circ [E]$.\\

Suppose there is a morphism $[K]: (L, \sim_{L}) \to (X, \sim_{X})$ such that $[F] \circ [K] = [G] \circ [K]$, then we argue that $K$ is a functional relation from $(L, \sim_{L}) $ to $(X, \sim_{[F],[G]})$. Suppose $K(l,x)$ holds, then there are $y$ and $y'$ such that $F(x,y)$ and $G(x,y')$. This means that $y \sim_{Y} y'$ because $\exists x' K(l,x') \land G(x',y) \bimp \exists x' K(l,x') \land F(x',y)$ and both $\exists x K(l,x) \land F(x,y)$ and $\exists x K(l,x) \land G(x,y')$ hold. Then $F(x,y')$ and $\exists y' F(x,y') \land G(x,y')$ hold. Therefore $x \sim_{[F],[G]} x$. As an obvious consequence, any functional relation $K': (L, \sim_{L}) \to (X, \sim_{[F],[G]})$ with $[K] = [E] \circ [K'] $ is equivalent to $K$ since $[E]$ is also the identity morphism on $(X, \sim_{[F],[G]})$.  \\

Since $\Set[P]$ has finite products and equalizers, it has finite limits.\\

Given $(X,\sim_{X})$ and $(Y,\sim_{Y})$, we define $(X,\sim_{X}) \to (Y,\sim_{Y}) := (\Sigma^{X\times Y}, \sim_{X \to Y})$ where $f  \sim_{X \to Y} g \bimp \forall x,y (f(x,y) \bimp g(x,y)) \land FR_{1}(f) \land FR_{2}(f)\land FR_{3}(f) \land FR_{4}(f)$. We define $\bimp\space ~ \in P(\Sigma \times \Sigma)$ as $P(\imply)(\sigma) \land \space ~ P(\imply \circ \text{ swap}_{\Sigma})(\sigma)$ where $\imply: \Sigma \times \Sigma \to \Sigma$. The $FR_{i}$ are the conditions of a functional relation which we will reiterate:
\begin{align*}
FR_{1}(f) &\bimp \forall x,y(f(x,y) \imply x \sim_{X} x \land  y \sim_{Y} y)\\
FR_{2}(f) &\bimp  \forall x,x',y,y'(f(x,y) \land x \sim_{X} x' \land y \sim_{Y} y' \imply f(x',y')) \\
FR_{3}(f) &\bimp \forall x,y,y' (f(x,y) \land f(x,y') \imply y \sim_{Y} y')\\
FR_{4}(f) &\bimp \forall x (x \sim_{X} x \imply \exists y f(x,y))
\end{align*}

We define $[\app{X,Y}]: (\Sigma^{X\times Y}, \sim_{X \to Y}) \times (X, \sim_{X}) \to (Y,\sim_{Y}) $ where $\app{X,Y}((f,x),y) \bimp (f(x,y) \land FR_{1}(f) \land FR_{2}(f)\land FR_{3}(f) \land FR_{4}(f))$. Suppose we have $[G]: (Z, \sim_{Z}) \times (X, \sim_{X}) \to (Y,\sim_{Y})$, we will define $[G^{*}]: (Z,\sim_{Z}) \to (\Sigma^{X\times Y}, \sim_{X \to Y})$ as $G^{*}(z,f) \bimp ((z \sim_{Z} z) \land \forall x,y(G((z,x),y) \bimp f(x,y)))$. \\

Now we will show that $[G] = [\app{X,Y}] \circ ([G^{*}]\times \id_{(X, \sim_{X})})$. Suppose $\exists (f,x) (G^{*}(z,f) \land x' \sim_{X} x \land \app{X,Y}((f,x), y))$ holds, then $f(x,y)$ holds for some $f$ and $x$ and $f$ is a functional relation. Since $f$ is a functional relation and $x \sim_{X} x'$ holds then $f(x',y)$ holds. Since $G^{*}(z,f)$ holds then $G((z,x'),y)$ holds. Therefore, $[G] = [\app{X,Y}] \circ ([G^{*}]\times \id_{(X, \sim_{X})})$.\\ 

Suppose we have $[H]: (Z,\sim_{Z}) \to (\Sigma^{X\times Y}, \sim_{X \to Y})$ such that $[G] = [\app{X,Y}] \circ ([H]\times \id_{(X, \sim_{X})})$. Suppose both $H(z,f)$ and $f(x,y)$ hold, then $z \sim_{Z} z$ holds and $f$ is a functional relation. Furthermore $\exists (f,x') (H(z,f) \land x \sim_{X} x' \land \app{X,Y}((f,x'), y)))$ since $f(x,y)$ implies $x \sim_{X} x$ and $\app{X,Y}((f,x), y) \bimp f(x,y) \land FR_{1}(f) \land FR_{2}(f)\land FR_{3}(f) \land FR_{4}(f))$. $\exists (f,x') (H(z,f) \land x \sim_{X} x' \land \app{X,Y}((f,x'), y)))$ is equivalent to $G((z,x),y)$. This means that $f(x,y) \leq G((z,x),y)$ so $f(x,y) \bimp G((z,x),y)$. Thus $H(z,f) \leq G^{*}(z,f)$ therefore $H(z,f) \bimp G^{*}(z,f)$.\\

With this, we have shown that $\Set[P]$ is cartesian closed.\\

 The subobject classifier is $(\Sigma, \bimp)$. The map $[\top_{(\Sigma, \bimp)}]: (1, \top_{1\times 1}) \to (\Sigma, \bimp)$ is defined as follows: $\top_{(\Sigma, \bimp)}(*, p) \bimp \sigma(p)$. Suppose we have a monomorphism $[i]: (U, \sim_{U}) \to (X, \sim_{X})$, we define $[\phi_{i}]:(X, \sim_{X}) \to (\Sigma, \bimp)$ where $\phi_{i}(x,p) \bimp (\exists u(i(u,x)) \bimp p)$. Now we want to show that we have the following commutative diagram:

\[\begin{tikzcd}
	(U, \sim_{U}) && (1, \top_{1\times 1}) \\
	\\
	(X, \sim_{X}) &&(\Sigma, \bimp)
	\arrow[from=1-1, to=1-3]
	\arrow["{[i]}"', from=1-1, to=3-1]
	\arrow["{\top_{(\Sigma, \bimp)}}", from=1-3, to=3-3]
	\arrow["{[\phi_{i}]}"', from=3-1, to=3-3]
\end{tikzcd}\]

Suppose $\exists x (i(u,x) \land \phi_{i}(x,p))$, then for some $x$, $\exists u(i(u,x))$ holds. Thus $p$ holds by $\phi_{i}(x,p)$. Therefore, $(!_{(U, \sim_{U})}(u,*) \land \top_{(\Sigma, \bimp)}(*, p))$. Suppose we have the following commutative diagram: 

\[\begin{tikzcd}
	(V, \sim_{V}) && (1, \top_{1\times 1}) \\
	\\
	(X, \sim_{X}) &&(\Sigma, \bimp)
	\arrow[from=1-1, to=1-3]
	\arrow["{[j]}"', from=1-1, to=3-1]
	\arrow["{\top_{(\Sigma, \bimp)}}", from=1-3, to=3-3]
	\arrow["{[\phi_{i}]}"', from=3-1, to=3-3]
\end{tikzcd}\]

We set $J_{i}(v,u) \bimp \exists x(j(v,x) \land i(u,x))$. Now we will show that $[j] = [i] \circ [J_{i}]$. Suppose $j(v,x)$ holds, then $\exists p (\phi_{i}(x,p))$ and for such $p$ we have $(!_{(V, \sim_{V})}(v,*) \land \top_{(\Sigma, \bimp)}(*, p))$. Since $\top_{(\Sigma, \bimp)}(*, p) \bimp \sigma(p)$, $p$ holds. Then $\exists u(i(u,x))$ since $\exists u(i(u,x)) \bimp p$. For such a $u$ we have $J_{i}(v,u)$, therefore $\exists u(J_{i}(v,u) \land i(u,x))$. Since $j(v,x) \leq \exists u(J_{i}(u,v) \land i(u,x))$, $[j] = [i] \circ [J_{i}]$. Since $[i]$ is a monomorphism, any morphism $[K_{i}]$ with $[j] = [i] \circ [K_{i}]$ is equal to $[J_{i}]$. Thus the commutative square above with $(U, \sim_{U})$ is a pullback square.\\

Suppose we have another pullback square:

\[\begin{tikzcd}
	(U, \sim_{U}) && (1, \top_{1\times 1}) \\
	\\
	(X, \sim_{X}) &&(\Sigma, \bimp)
	\arrow[from=1-1, to=1-3]
	\arrow["{[i]}"', from=1-1, to=3-1]
	\arrow["\lrcorner"{anchor=center, pos=0.125}, draw=none, from=1-1, to=3-3]
	\arrow["{\top_{(\Sigma, \bimp)}}", from=1-3, to=3-3]
	\arrow["{[F_{i}]}"', from=3-1, to=3-3]
\end{tikzcd}\]

and suppose $F_{i}(x,p)$ and $p$ hold. Then $\top_{\Sigma, \bimp}(*, p)$ holds, thus We have a commutative square:

\[\begin{tikzcd}
	(1, \top_{1\times 1}) && (1, \top_{1\times 1}) \\
	\\
	(X, \sim_{X}) &&(\Sigma, \bimp)
	\arrow[from=1-1, to=1-3]
	\arrow["{[x]}"', from=1-1, to=3-1]
	\arrow["{\top_{(\Sigma, \bimp)}}", from=1-3, to=3-3]
	\arrow["{[F_{i}]}"', from=3-1, to=3-3]
\end{tikzcd}\]

where $x(*,x') \bimp x \sim_{X} x'$. By the universal property of pullbacks, we have a morphism $[u]:(1, \top_{1\times 1}) \to (U, \sim_{U})$ such that $[x] = [i] \circ [u]$. Thus $\exists u' (u(*,u')) $, for such a $u'$, $\exists x' i(u',x')$ and $x' \sim_{X} x$. Therefore $\exists u (i(u,x))$. If $\exists u (i(u,x))$ holds, for such a $u$, $\exists x'( i(u,x') \land F_{i}(x',p))$. Then $!_{(U, \sim_{U})}(u,*) \land \top_{(\Sigma, \bimp)}(*, p)$. Therefore $p$. Thus $p \bimp \exists u (i(u,x))$ and $F_{i}\leq \phi_{i}$. Therefore $[F_{i}] = [\phi_{i}]$.\\

Since $\Set[P]$ has finite limits, is cartesian closed, and has a subobject classifier, $\Set[P]$ is an elementary topos.\\

\end{proof}

Given a partial combinatory algebra $A$, we call $\RT{A} := \Set[\pow{A}^{-}]$ the {\bf realizability topos on A}.\\

\subsection{Category of Assemblies}
In this section, after proving a few preliminary results, we define the category of assemblies on a partial combinatory algebra $A$, $\Asm(A)$, as well as proving that it is regular and locally cartesian closed.



\begin{lem}

For a $\Set$-tripos $P$ and $\phi \in P(Y)$, define $\sim_{\phi} \space ~ :=  \exists_{\Delta_{Y}}(\phi)$. We have $y \sim_{\phi} y \bimp \phi(y)$ and $y \sim_{\phi} y' \bimp \bot$ when $y \neq y'$.

\end{lem}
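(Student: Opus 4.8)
The plan is to read both assertions as statements about reindexing the relation $\sim_\phi = \exists_{\Delta_Y}(\phi) \in P(Y \times Y)$, and to obtain each one by feeding a suitable pullback square in $\Set$ into the Beck-Chevalley condition. Concretely, ``$y \sim_\phi y$'' is the predicate $P(\Delta_Y)(\sim_\phi) \in P(Y)$ obtained by restricting $\sim_\phi$ to the diagonal, while ``$y \sim_\phi y'$ for $y \neq y'$'' is the predicate $P(j)(\sim_\phi)$, where $j \colon U \mono Y \times Y$ is the inclusion of the off-diagonal set $U = \{(y,y') : y \neq y'\}$.

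For the first biconditional, I would use that the pullback of $\Delta_Y$ against itself is $Y$ with both legs the identity, since $(y,y) = (y',y')$ forces $y = y'$. Feeding this square into the corollary form of Beck-Chevalley, $\exists_g \circ P(f) \cong P(k) \circ \exists_h$, with $k = h = \Delta_Y$ and $f = g = \id_Y$, yields $\phi \cong P(\Delta_Y)(\exists_{\Delta_Y}(\phi)) = P(\Delta_Y)(\sim_\phi)$, which is exactly $y \sim_\phi y \bimp \phi(y)$.

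For the second biconditional, I would form the pullback of $\Delta_Y$ against $j$. Since no diagonal point $(y,y)$ lies in $U$, this pullback has empty apex, so Beck-Chevalley gives $P(j)(\sim_\phi) \cong \exists_g(P(f)(\phi))$ with the middle term living in $P(\emptyset)$. The key remaining point, and the one place needing a little care, is that $P(\emptyset)$ is the trivial Heyting prealgebra: by the generic-element axiom every $\psi \in P(\emptyset)$ satisfies $\psi \cong P(e)(\sigma)$ for the unique empty map $e \colon \emptyset \to \Sigma$, and since $e$ does not depend on $\psi$, all elements of $P(\emptyset)$ are equivalent, whence $\top \cong \bot$ there. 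Thus $P(f)(\phi) \cong \bot_{P(\emptyset)}$, and as $\exists_g$ is a left adjoint it preserves $\bot$, giving $P(j)(\sim_\phi) \cong \bot_{P(U)}$, i.e.\ $y \sim_\phi y' \bimp \bot$ for $y \neq y'$.

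The main obstacle is precisely this empty-corner step: one must justify that reindexing $\phi$ into $P(\emptyset)$ collapses it to $\bot$, which is where the generic-element axiom (rather than Beck-Chevalley alone) does the work. As a sanity check, in the realizability tripos $\pow{A}^{-}$ the explicit formula $\exists_f(\phi)(y) = \bigcup_{f(x)=y}\phi(x)$ from the previous lemma gives $\sim_\phi(y_1,y_2) = \bigcup_{(y,y)=(y_1,y_2)}\phi(y)$, which equals $\phi(y)$ on the diagonal and the empty set off it, matching both claims with the biconditionals realized by $\iota$.
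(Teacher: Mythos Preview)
Your proposal is correct and follows essentially the same approach as the paper: both proofs use the pullback of $\Delta_Y$ against itself (with apex $Y$) and against the off-diagonal inclusion (with empty apex), then apply Beck--Chevalley in the form $P(k) \circ \exists_h \cong \exists_g \circ P(f)$. Your treatment of the empty corner is in fact slightly more explicit than the paper's: where the paper simply asserts that $P(\emptyset_Y)(\phi)$ is ``the unique element of $P(\emptyset)$ up to isomorphism,'' you justify this via the generic element and the uniqueness of $\emptyset \to \Sigma$, which is a clean way to see why $P(\emptyset)$ collapses.
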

\begin{proof}

We will employ the Beck-Chevalley condition as well as the following two pullback squares:

\[\begin{tikzcd}
	Y &&& Y && \emptyset &&& Y \\
	\\
	\\
	Y &&& {Y\times Y} && {\text{not}(Y)} &&& {Y\times Y}
	\arrow["\id_{Y}",from=1-1, to=1-4]
	\arrow["\id_{Y}"',from=1-1, to=4-1]
	\arrow["\lrcorner"{anchor=center, pos=0.125}, draw=none, from=1-1, to=4-4]
	\arrow["\Delta", from=1-4, to=4-4]
	\arrow["\emptyset_{Y}", from=1-6, to=1-9]
	\arrow["\emptyset_{\text{not}(Y)}"',from=1-6, to=4-6]
	\arrow["\lrcorner"{anchor=center, pos=0.125}, draw=none, from=1-6, to=4-9]
	\arrow["\Delta",from=1-9, to=4-9]
	\arrow["\Delta"', from=4-1, to=4-4]
	\arrow["i"',from=4-6, to=4-9]
\end{tikzcd}\]\\

where $\text{not}(Y) = \{(y,y')|  y \neq y'\}$. Given $\sim_{\phi}$, we have $P(\Delta) \circ \exists_{\Delta_{Y}}(\phi) = P(\Delta)(\sim_{\phi}) \bimp \exists_{\id_{Y}} \circ P(\id_{Y})(\phi) = \phi$ and $P(i) \circ \exists_{\Delta_{Y}}(\phi) \bimp \exists_{\emptyset_{\text{not}(Y)}} \circ P(\emptyset_{Y})(\phi)$. $ P(\emptyset_{Y})(\phi)$ is the unique element of $P(\emptyset)$ up to isomorphism and since $\exists_{\emptyset_{\text{not}(Y)}}$ is a left adjoint and thus preserves initial objects we have $\exists_{\emptyset_{\text{not}(Y)}} \circ P(\emptyset_{Y})(\phi) \bimp \bot_{P(Y)}$. Therefore $P(i) \circ \exists_{\Delta_{Y}}(\phi) \bimp \bot_{P(Y)}$.

\end{proof}

\begin{defn}

We define a functor $ \nabla:\Set \to \Set[P]$ where $\nabla(X) = (X, =_{X})$ and for $f:X \to Y$, $\nabla(f)(x,y) \bimp (f(x) =_{Y} y)$. We call this functor the {\bf constant objects functor}. We define $(x =_{X} x') \bimp  x \sim_{\top_{P(X)}} x'$.

\end{defn}

\begin{lem}
For any set $X$, $[i]:(U, \sim_{U}) \to \nabla(X)$ is a monomorphism iff $(U,\sim_{U})$ is isomorphic to an object $(X,\sim_{\phi})$ for $\phi \in P(X)$.

\end{lem}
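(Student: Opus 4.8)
The plan is to prove the two implications separately, the forward one being the substance. Write $\pr{X}\colon U\times X\to X$ for the projection and set $\phi\coloneqq\exists_{\pr{X}}(i)\in P(X)$, so that internally $\phi(x)\bimp\exists u\, i(u,x)$; this is the image predicate of $i$. Recall from the computation of $\sim_\phi$ above that $x\sim_\phi x\bimp\phi(x)$ while $x\sim_\phi x'\bimp\bot$ whenever $x\neq x'$, so $\sim_\phi$ differs from $=_X$ only on the diagonal, where it records the totality predicate $\phi$. The candidate isomorphism $(U,\sim_U)\cong(X,\sim_\phi)$ will be $[i]$ itself, now read as a relation with codomain $(X,\sim_\phi)$, paired with $[i^{\dagger}]$ where $i^{\dagger}(x,u)\bimp i(u,x)$.

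For the forward direction, assume $[i]\colon(U,\sim_U)\to\nabla(X)$ is monic. First I would extract the internal consequence of this: since $\Set[P]$ is a topos, $[i]$ is monic iff its kernel pair collapses onto the diagonal, which upon unwinding the products and composition computed earlier becomes the injectivity statement
$$ i(u,x)\land i(u',x)\imply u\sim_U u'. $$
This is the only point at which the hypothesis enters. Next I would check that $i$ is a functional relation $(U,\sim_U)\to(X,\sim_\phi)$: conditions $(i),(ii),(iv)$ transfer directly from the corresponding conditions for $i\colon(U,\sim_U)\to\nabla(X)$ once one notes $x\sim_\phi x\bimp\phi(x)$ holds by definition of $\phi$, and condition $(iii)$ follows because $i(u,x)\land i(u,x')$ already forces $x=x'$ (condition $(iii)$ for the original relation, using that $=_X$ is $\bot$ off the diagonal) and supplies $\phi(x)$. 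Symmetrically, $i^{\dagger}$ is a functional relation $(X,\sim_\phi)\to(U,\sim_U)$; here its single-valuedness condition $(iii)$ is exactly the injectivity statement above, which is why monicity is indispensable.

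It then remains to identify the two composites with the identities $[\sim_U]$ and $[\sim_\phi]$. For $i^{\dagger}\circ i$ I would verify $\exists x\,(i(u,x)\land i(u',x))\bimp u\sim_U u'$: the forward inequality is injectivity, and the reverse uses $(iv)$ to produce $x$ with $i(u,x)$ and then $(ii)$ to upgrade $u\sim_U u'$ to $i(u',x)$. For $i\circ i^{\dagger}$ I would verify $\exists u\,(i(u,x)\land i(u,x'))\bimp x\sim_\phi x'$: the forward direction uses condition $(iii)$ of the original relation to force $x=x'$ together with $\phi(x)$, and the reverse unpacks $\phi(x)\bimp\exists u\, i(u,x)$. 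This exhibits $[i]$ as an isomorphism onto $(X,\sim_\phi)$.

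For the converse it suffices, for each $\phi\in P(X)$, to realize $(X,\sim_\phi)$ as a subobject of $\nabla(X)$: the canonical relation $\iota_\phi\coloneqq\,\sim_\phi$ is a functional relation $(X,\sim_\phi)\to\nabla(X)$ (immediate, as $\sim_\phi$ implies $=_X$ and $(iv)$ holds by definition), and it is monic because $\iota_\phi(x,y)\land\iota_\phi(x',y)$ forces $x=y=x'$ and $\phi(x)$, hence $x\sim_\phi x'$; precomposing with any isomorphism $(U,\sim_U)\cong(X,\sim_\phi)$ yields the desired monomorphism. I expect the main obstacle to be precisely the first step of the forward direction, namely converting the external left-cancellation property into the internal injectivity statement, followed by the realizer bookkeeping required to promote each ``$\bimp$'' above into an honest equality of isomorphism classes of functional relations; the remaining transfers of the functional-relation axioms are routine given that $\sim_\phi$ and $=_X$ agree off the diagonal.
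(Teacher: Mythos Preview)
Your proof is correct and takes a genuinely different route from the paper. Both arguments begin by setting $\phi(x)\bimp\exists u\,i(u,x)$, but the paper then leverages the subobject classifier $(\Sigma,\bimp)$ established in the preceding proposition: it computes the classifying map of the mono $[i]$ as $\phi_i(x,p)\bimp(\exists u\,i(u,x)\bimp p)$, separately checks that the canonical map $[\sim_\phi]\colon(X,\sim_\phi)\to\nabla(X)$ is monic with classifying map $(\phi(x)\bimp p)$, and concludes by the uniqueness clause of the subobject classifier that the two subobjects coincide. Your approach instead builds the isomorphism $(U,\sim_U)\cong(X,\sim_\phi)$ by hand, exhibiting $[i]$ and $[i^\dagger]$ as mutually inverse functional relations.

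The trade-off is as you anticipate. The paper's argument is short once the classifier is in place, and never needs to unwind monicity into an internal statement; the price is the dependence on the full topos structure. Your argument is more self-contained and makes the isomorphism explicit, but hinges on the step you flag: converting external left-cancellation into the internal injectivity $i(u,x)\land i(u',x)\imply u\sim_U u'$. That step is correct---computing the kernel pair via the paper's product and equalizer constructions shows that the two projections agree precisely when this injectivity holds---but it is real work, and the paper's route sidesteps it entirely. Once injectivity is in hand, your verification that $i$ and $i^\dagger$ are functional and mutually inverse is routine and accurate, as is your treatment of the converse via the canonical inclusion $[\sim_\phi]$.
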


\begin{proof}

The monomorphism $[i]:(U, \sim_{U}) \to \nabla(X)$ is characterized by the morphism $[\phi_{i}]: \nabla(X) \to (\Sigma,\bimp)$ where $\phi(x,p) \bimp (\exists_{u}(i(u,x)) \bimp p)$. It suffices to show that morphisms of the form $\psi_{j}(x,p) \bimp (\phi(x) \bimp p)$ characterize the morphisms $[\sim_{\phi}]:(X,\sim_{\phi}) \to \nabla(X)$. The morphism $[\sim_{\phi}]$ is a monomorphism since $\sim_{\phi}$ exhibits the identity on $(X,\sim_{\phi})$, thus if $[\sim_{\phi}] \circ [F] = [\sim_{\phi}] \circ [G]$, then $[F] = [G]$.  So $[\sim_{\phi}]$ is characterized by $\psi_{\sim_{\phi}}(x,p) \bimp  (\exists_{x'}(x \sim_{\phi} x') \bimp p)$. We have that $\exists_{x'}(x \sim_{\phi} x') =  \exists_{x'}(\exists_{\Delta_{X}}(\phi)(x,x'))  = \exists_{\pr{0}^{X,X}} \circ \exists_{\Delta_{X}}(\phi)$. The functor $\exists_{\pr{0}^{X,X}} \circ \exists_{\Delta_{X}}: P(X)  \to P(X)$ is left adjoint to $P(\Delta_{X}) \circ P(\pr{0}^{X,X}) = P(\pr{0}^{X,X} \circ \Delta_{X}) = P(\id_{X})$. Thus $\exists_{\pr{0}^{X,X}} \circ \exists_{\Delta_{X}}$ is equivalent to $\id_{P(X)}$ and $\phi(x) \bimp \exists_{x'}(x \sim_{\phi} x') $. So $\psi_{\sim_{\phi}}(x,p) \bimp (\phi(x) \bimp p)$. This completes the proof.

\end{proof}

\begin{defn}

In the case of a realizability tripos on $A$, we define the category $\Asm(A)$ to be the full subcategory of $\RT{A}$ whose objects are $(X,\sim_{\phi})$ for $\phi \in \pow{A}^{X}$. We call $\Asm(A)$ the category of {\bf assemblies on A}.

\end{defn}

\begin{lem}

For a pca A, $ \nabla:\Set \to \RT{A}$ has a left adjoint $\Gamma: \RT{A} \to \Set$. Furthermore, we have that $\Gamma \circ \nabla \cong \Id_{\Set}$ making $\Set$ a reflective subcategory of $\RT{A}$.

\end{lem}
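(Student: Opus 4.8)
The plan is to take $\Gamma$ to be the global sections functor $\Gamma(X,\sim_{X}) = \Hom_{\RT{A}}((1,\top_{1\times 1}),(X,\sim_{X}))$, sending a morphism $[H]\colon (X,\sim_{X})\to (Y,\sim_{Y})$ to postcomposition $[H]\circ(-)$. Concretely, I would first unwind what a global point is: a morphism $(1,\top_{1\times 1})\to(X,\sim_{X})$ is an isomorphism class of functional relations $F\in\pow{A}^{X}$ (using $1\times X\cong X$) that is strict ($F(x)\imply x\sim_{X} x$), relational ($F(x)\land x\sim_{X} x'\imply F(x')$), single-valued ($F(x)\land F(x')\imply x\sim_{X} x'$) and inhabited ($\top\imply \exists_{x} F(x)$). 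It will be convenient to identify this set with a quotient: let $X_{e}=\{x\in X:\sim_{X}(x,x)\text{ is inhabited}\}$ and define $x\approx x'$ iff $\sim_{X}(x,x')$ is inhabited. Using the fixed realizers for the symmetry and transitivity of $\sim_{X}$, I would check that $\approx$ is a genuine equivalence relation on $X_{e}$, and that sending a global point $F$ to the class of any $x_{0}$ with $F(x_{0})$ inhabited (well-defined by single-valuedness, nonempty by inhabitedness, and $\approx$-invariant by relationality) gives a bijection $\Gamma(X,\sim_{X})\cong X_{e}/\!\approx$, with inverse $[x_{0}]\mapsto [\,\sim_{X}(x_{0},-)\,]$.

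Next I would establish the defining adjunction bijection $\Hom_{\Set}(\Gamma(X,\sim_{X}),S)\cong \Hom_{\RT{A}}((X,\sim_{X}),\nabla(S))$, natural in both variables. Recall $\nabla(S)=(S,=_{S})$, where by the preceding lemma $=_{S}$ is $\top$ on the diagonal and $\bot$ off it. A morphism $(X,\sim_{X})\to\nabla(S)$ is a functional relation $F\in\pow{A}^{X\times S}$; since $=_{S}$ is discrete, its components $F_{s}:=F(-,s)$ are strict, relational, pairwise disjoint ($F_{s}(x)\land F_{s'}(x)\imply \bot$ for $s\ne s'$) and jointly total. Given $g\colon X_{e}/\!\approx\,\to S$, I would set $F(x,s)=\sim_{X}(x,x)$ when $x\in X_{e}$ and $g([x])=s$, and $\emptyset$ otherwise; conversely, given such an $F$, totality and disjointness assign to each class $[x]$ the unique $s$ with $F(x,s)$ inhabited. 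I would then verify that these two assignments are mutually inverse up to the equivalence $\bimp$ and natural in $(X,\sim_{X})$ and $S$, which yields $\Gamma\dashv\nabla$.

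Finally, to obtain $\Gamma\circ\nabla\cong\Id_{\Set}$, I would compute $\Gamma(\nabla(S))=\Gamma(S,=_{S})$ through the quotient description: since $(s=_{S}s)\bimp\top$ every element exists, so $S_{e}=S$, and since $(s=_{S}s')\bimp\bot$ for $s\ne s'$ the relation $\approx$ is plain equality, giving $S/\!\approx\,=S$. Checking that this identification is natural and is exactly the counit of $\Gamma\dashv\nabla$ shows that $\nabla$ is fully faithful, so that $\Set$ is a reflective subcategory of $\RT{A}$. The main obstacle throughout is bookkeeping at the level of realizers: every condition (the equivalence of $\approx$, the well-definedness of $g$, the inverse correspondence, and naturality) must be witnessed by a \emph{uniform} combinator rather than merely holding setwise, and one must consistently work with isomorphism classes of functional relations so that the assignments are well-defined. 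This is routine given the combinators $k$, $s$, $p_{0}$, $p_{1}$ and the pairing $[-,-]$ already introduced, but it is where essentially all of the verification effort lies.
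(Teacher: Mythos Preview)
Your proposal is correct and arrives at the same functor as the paper, but via a slightly different conceptual route. The paper simply \emph{defines} $\Gamma(X,\sim_{X}):=\{x\in X\mid (x\sim_{X}x)\neq\emptyset\}/\!\approx_{X}$ directly and then verifies the hom-set bijection $\RT{A}((X,\sim_{X}),\nabla(Y))\cong\Set(\Gamma(X,\sim_{X}),Y)$ by hand. You instead start from the global-sections functor $\Hom_{\RT{A}}((1,\top),-)$ and then identify it with that same quotient; this is a pleasant conceptual observation not made in the paper, and it makes functoriality of $\Gamma$ automatic. After that identification the two arguments coincide: both construct the adjunction correspondence by sending $g\colon X_{e}/\!\approx\,\to S$ to the functional relation $F(x,s)\Leftrightarrow (x\sim_{X}x)\land(g([x])=s)$ and conversely reading off the unique $s$ with $F(x,s)$ inhabited, and both compute $\Gamma\nabla(S)=S$ from the discrete nature of $=_{S}$.

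One remark on your closing paragraph: you somewhat overstate the realizer bookkeeping. The relation $\approx$ and the bijection $\Hom_{\Set}(\Gamma(X),S)\cong\Hom_{\RT{A}}(X,\nabla(S))$ are statements in $\Set$, so no uniform combinators are needed for those; realizers enter only when you must check that a particular $F\in\pow{A}^{X\times S}$ you write down satisfies the four functional-relation axioms in the tripos (strictness, relationality, single-valuedness, totality). The paper handles these by one-line observations such as ``$F(x,y)\leq (x\sim_{X}x)\land(y\sim_{Y}y)$'', and that level of detail suffices here too.
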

\begin{proof}

First we define $\Gamma: \RT{A} \to \Set$. For $(X, \sim_{X})$, we set $\Gamma(X, \sim_{X}) := \{x \in X|  (x \sim_{X} x) \neq \emptyset \}/\approx_{X}$ where $\approx_{X}$ is the equivalence relation defined as follows: $x \approx_{X} x'$ iff $x \sim_{X} x' \neq \emptyset$. $\approx_{X}$ is indeed an equivalence relation since it is only defined on $x \in X$ such that $x \sim_{X} x \neq \emptyset$ and symmetry and transitivity of $\sim_{X}$ carry over to $\approx_{X}$. For $[F]: (X,\sim_{X}) \to (Y,\sim_{Y})$, we define $\Gamma([F])([x]) = [y]$ if and only if $F(x,y) \neq \emptyset$. $\Gamma([F])$ is well-defined since $F$ is a functional relation. Suppose $x \sim_{X} x $ is non-empty, since $x \sim_{X} x \leq \exists_{y}F(x,y)$, then there exists a $y$ such that $F(x,y) $ is non-empty.  If $x \sim_{X} x'$ and $F(x,y)$ is non-empty since $x \sim_{X} x' \land F(x,y) \leq F(x',y)$ then $F(x',y)$ is non-empty. If $F(x,y)$ and $F(x,y')$ are non-empty, then since $F(x,y) \land F(x,y') \leq y \sim_{Y} y'$, so is $y \sim_{Y} y'$.\\

Given $(X, \sim_{X})$, $\Gamma(\id_{(X,\sim_{X})})$ is clearly $\id_{\Gamma(X,\sim_{X})}$ since $\id_{(X,\sim_{X})} = [\sim_{X}]$. Given $[F]: (X, \sim_{X}) \to (Y, \sim_{Y})$ and $[G]: (Y,\sim_{Y}) \to (Z,\sim_{Z})$, suppose we have $[x] \in \Gamma(X, \sim_{X})$, then for $x' \in [x]$, $x' \sim_{X} x' \neq \emptyset $, and there exists a $y$ such that $F(x',y) $ is non-empty. Since $F(x',y) \leq (x' \sim_{X} x') \land (y \leq_{Y} y)$ then $y \sim_{Y} y $ is non-empty so $\Gamma([F])([x]) = [y]$. This means that there is a $z$ such that $G(y,z) \neq \emptyset $. Then $\exists_{y'} (F(x',y') \land G(y',z)) $ is inhabited. Therefore $\Gamma([G]) \circ \Gamma([F])([x]) = [z] = \Gamma([G] \circ [F])([x])$.\\

Now we will exhibit the following bijection: $\RT{A}((X,\sim_{X}),(Y,=_{Y})) \cong \Set(\Gamma(X,\sim_{X}),Y)$. Firstly, we define $\alpha: \RT{A}(X,\sim_{X}),(Y,=_{Y})) \to \Set(\Gamma(X,\sim_{X}),Y)$. $\alpha([F])([x]) = y$ iff $F(x,y)$ non-empty.  $\alpha([F])$ is well defined since if $y \neq y'$, then either $F(x,y) = \emptyset$ or $F(x,y')= \emptyset$ since $y =_{Y} y'$ is empty. However there must exist a $y$ such that $F(x,y)$ is inhabited, thus $y$ is unique. For $f \in \Set(\Gamma(X,\sim_{X}),Y)$, we define $[f^{*}]:(X,\sim_{X}) \to (Y,=_{Y})$ where $f^{*}(x,y) \bimp (x \sim_{X} x) \land x \approx_{f}y$ where $x \approx_{f} y \bimp \exists(x' \in X_{h})(x' = x \land f([x']) =_{Y} y)  := \exists_{X_{h} \times \id_{Y}} \circ P(X_{q}\times \id_{Y})(\nabla(f))$ where $X_{h}:\{x \in X|  (x \sim_{X} x) \neq \emptyset \} \mono X$ is the inclusion into $X$ and $X_{q}:\{x \in X|  (x \sim_{X} x) \neq \emptyset \} \to \Gamma(X, \sim_{X})$ is the quotient map.\\


Now suppose we have $[x] \in \Gamma(X,\sim_{X})$, then $\alpha(f^{*})([x]) = y$ iff $\exists(x' \in X_{h})(x' = x \land f([x']) = y)$. Thus $\alpha(f^{*})([x]) = f([x])$. Suppose there is $[F]\in \RT{A}((X,\sim_{X}),(Y,=_{Y}))$ such that $\alpha([F]) = f$. Suppose $F(x,y)$ holds, then $x \sim_{X} x$ holds. Then $x \sim_{X} x $ is inhabited and $x \in X_{h}$. Since $\alpha([F])([x]) = y$, then $f([x]) = y$, therefore $\exists(x' \in X_{h})(x' = x \land f([x']) =_{Y} y)$. Thus $F(x,y) \leq (x \sim_{X} x) \land (x \approx_{f}y)$ meaning that they are equivalent. Therefore, $[F] = [f^{*}]$ and $\alpha$ is a bijection exhibiting $\nabla \vdash \Gamma$.\\

For a set $Y$, it is easy to see that $\Gamma \circ \nabla(Y) = Y$ since $y =_{Y} y  \bimp \top$ and $y = _{Y} y' \bimp \bot$ when $y \neq y'$. Furthermore for a function $f:X \to Y$, it is clear that $\Gamma \circ \nabla(f) = f$ since $f(x) =_{Y} y \bimp \top$ only when $f(x) = y$. Therefore $\Gamma \circ \nabla \cong \Id_{\Set}$.

\end{proof}


\begin{lem}
\label{asma}

$\Asm(A)$ is equivalently the category whose objects are pairs $(X,\phi:X \to \pow{A}/\{\emptyset\})$ and morphisms $f: (X,\phi) \to (Y,\psi)$ are functions $f:X \to Y$ such that there exists a realizer $r \in A$ where for all $x \in X$ and $a \in \phi(x)$ $r \cdot a$ is defined and $r \cdot a \in \psi(f(x))$.

\end{lem}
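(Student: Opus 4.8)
The plan is to exhibit an equivalence between the concrete category described in the statement --- call it $\cA$, with objects the pairs $(X,\phi)$ where $\phi$ takes values in $\pow{A}/\{\emptyset\}$ and morphisms the realized functions --- and the full subcategory $\Asm(A)\subseteq\RT{A}$ on the objects $(X,\sim_\phi)$. The bridge is the earlier comparison lemma for $\sim_\phi=\exists_{\Delta_X}(\phi)$, which gives $x\sim_\phi x\bimp\phi(x)$ on the diagonal and $x\sim_\phi x'\bimp\bot$ off it; thus each $\sim_\phi$ is a \emph{discrete} partial equivalence relation whose support is exactly $X_h:=\{x:\phi(x)\neq\emptyset\}$. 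I would define a functor $E\colon\cA\to\Asm(A)$ sending $(X,\phi)$ to $(X,\sim_\phi)$ and sending a realized function $f$ with realizer $r$ to the class of the relation $F_f\in\pow{A}^{X\times Y}$ given by $F_f(x,y)=\phi(x)$ when $f(x)=y$ and $F_f(x,y)=\emptyset$ otherwise.

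First I would check that $F_f$ is a functional relation and that $E$ is a functor. The four functional-relation conditions become nearly automatic because $\sim_\phi$ and $\sim_\psi$ are discrete: strictness (i) is realized by $\lmbd{a}{[a,r\cdot a]}$ using the realizer $r$; congruence (ii) and single-valuedness (iii) hold because $x\sim_\phi x'$ and $y\sim_\psi y'$ are empty off the diagonal; and totality (iv) is realized by $\iota$ since $x\sim_\phi x\bimp\phi(x)$ is exactly the fiber $F_f(x,f(x))$. Functoriality then follows from the composition formula for functional relations: $\exists y(F_f(x,y)\wedge F_g(y,z))$ is inhabited precisely when $g(f(x))=z$, so it agrees with $F_{g\circ f}$ on inhabitation, and the Remark that $F\leq G$ implies $F\bimp G$ for functional relations upgrades this to equality of classes; likewise $\id_{(X,\phi)}$ maps to $[\sim_\phi]=\id_{(X,\sim_\phi)}$.

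Next I would prove $E$ is fully faithful by constructing the inverse assignment on hom-sets. Given any functional relation $F$ from $(X,\sim_\phi)$ to $(Y,\sim_\psi)$, single-valuedness (iii) together with the discreteness of $\sim_\psi$ forces each inhabited fiber to lie over a single $y$, and totality (iv) makes such a $y$ exist whenever $\phi(x)\neq\emptyset$; this defines a function $f$ on the support, and combining the realizer $t$ of (iv) with the realizer of (i) (via $\lmbd{a}{p_1\cdot(s_1\cdot(t\cdot a))}$, where $s_1$ realizes (i)) yields a realizer for $f$. Since $\phi(x)\neq\emptyset$ for every $x$ in $\cA$, the support is all of $X$, so this is a genuine morphism of $\cA$. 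The two assignments are mutually inverse up to $\bimp$: $F$ and $F_f$ have the same inhabited fibers, so the Remark gives $[F]=[F_f]$, and $f$ is recovered from $F_f$ by construction; faithfulness holds because $f=g$ as functions whenever $F_f\bimp F_g$, using again that the fibers over the full support determine $f$.

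Finally, for essential surjectivity an arbitrary object $(X,\sim_\phi)$ of $\Asm(A)$ may have $\phi(x)=\emptyset$ for some $x$, but restricting to the support $X_h$ with $\phi':=\phi|_{X_h}$ gives an object $E(X_h,\phi')$ isomorphic to $(X,\sim_\phi)$ in $\RT{A}$: the relation $I$ with $I(x',x)=\phi(x')$ when $x'=x$ and $\emptyset$ otherwise, together with its transpose, are mutually inverse functional relations, since points outside $X_h$ carry an empty existence predicate and contribute nothing. I expect the main obstacle to be the full-faithfulness step --- specifically the careful extraction of a single-valued function and its realizer from a functional relation, and the check that the two constructions are inverse --- but the Remark reducing $\bimp$ to one-sided inhabitation keeps this bookkeeping short rather than delicate.
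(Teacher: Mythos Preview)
Your proposal is correct and follows essentially the same path as the paper. Your functor $E$ is the paper's functor $S$ (the paper's $S(f)(x,y)\bimp\phi(x)\land(f(x)\sim_\psi y)$ is in the same class as your $F_f$), and your extraction of a realized function from a functional relation is exactly the paper's $\Gamma([F])$. The only cosmetic difference is packaging: the paper builds an explicit pseudo-inverse $T$ via $\Gamma$ and checks $T\circ S=\Id$ and $S\circ T\cong\Id$, whereas you phrase the same computations as ``$E$ is fully faithful and essentially surjective''; your essential-surjectivity step (restrict to the support $X_h$) is precisely the paper's natural isomorphism $[R]$.
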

\begin{proof}

Given an object $(X,\sim_{\phi}) \in \Asm(A)$, we can define an object $(X',\phi^{*})$ where $X' = \Gamma(X, \sim_{\phi})$. We have that $x \sim_{\phi} x'$ iff $x = x'$, thus $\Gamma(X, \sim_{\phi}) = \{x \in X|  (x \sim_{\phi} x) \neq \emptyset \}$. So we can define $\phi^{*}(x) = x \sim_{\phi} x$. Given $[F]: (X,\sim_{\phi}) \to (Y,\sim_{\psi})$, we have a function $\Gamma([F]):(X',\phi^{*}) \to (Y',\psi^{*})$. Now to show that $\Gamma([F])$ is a morphism first observe that we have realizers $r_{0}$ and $r_{1}$ respectively for $\forall x (x \sim_{\phi} x  \leq  \exists_{y} F(x,y))$ and $\forall xy(F(x,y) \leq (x \sim_{\phi} x) \land (y \sim_{\psi} y))$. So given $x \in X'$ and $a \in \phi^{*}(x) = x \sim_{\phi} x $, then $r_{0} \cdot a \in F(x,y)$ for some $y \in Y$. Furthermore $y = \Gamma([F])(x)$. So, $r_{1} \cdot (r_{0} \cdot a) \in (x \sim_{\phi} x) \land (y \sim_{\psi} y)$. Therefore, $p_{1} \cdot (r_{1} \cdot (r_{0} \cdot a)) \in (y \sim_{\psi} y) = \psi^{*}(y) = \psi^{*}(\Gamma([F])(x))$. So $\lmbd{x}{p_{1} \cdot (r_{1} \cdot (r_{0} \cdot x)) }$ realizes $\Gamma([F]):(X',\phi^{*}) \to (Y',\psi^{*})$ as a morphism. This construction is functorial since $\Gamma$ is a functor and we will refer to this construction as $T$.\\

Given an object $(X,\phi:X \to \pow{A}/\{\emptyset\})$, we can construct an object $S(X,\phi) = (X, \sim_{\phi})$ and given a morphism $f: (X,\phi) \to (Y,\psi)$ to a morphism $[S(f)]: (X,\sim_{\phi}) \to (Y,\sim_{\psi})$ where $S(f)(x,y) = \{[a,b]| a \in \phi(x),\, b \in (f(x) \sim_{\psi} y)\}$ so that $S(f)(x,y) \bimp \phi(x) \land (f(x) \sim_{\psi} y)$. if we have $\id_{X}: (X,\phi) \to (X,\phi)$, then for all $x, x' \in X$, $S(\id_{X})(x,x') \bimp \phi(x) \land (x \sim_{\phi} x')$. We obviously have that $\phi(x) \land (x \sim_{\phi} x') \bimp (x \sim_{\phi} x')$ and $(x \sim_{\phi} x') \imply (x' \sim_{\phi} x)$ since $\sim_{\phi}$ is symmetric thus $(x \sim_{\phi} x') \imply (x \sim_{\phi} x) = \phi(x)$. \\

Therefore, $(x \sim_{\phi} x') \bimp (\phi(x) \land (x \sim_{\phi} x')) \bimp S(\id_{X})(x,x')$. So $[S(\id_{X})] = \id_{(X,\sim_{\phi})}$. If we have $f: (X,\phi) \to (Y,\psi)$ and $g:(Y,\psi) \to (Z,\zeta)$, then observe that $\exists_{y} S(f)(x,y) \land S(g)(y,z)  \bimp \exists_{y} \phi(x) \land (f(x) \sim_{\psi} y) \land \psi(y) \land (g(y) \sim_{\zeta} z)$.  If we have a realizer $a \in \exists_{y} \phi(x) \land (f(x) \sim_{\psi} y) \land \psi(y) \land (g(y) \sim_{\zeta} z)$, then $f(x) \sim_{\psi} y$ and $g(y) \sim_{\zeta} z$ are inhabited thus $f(x) = y$ and $g(f(x)) = z$. So $g(f(x)) \sim_{\zeta} z$, thus $\exists_{y} S(f)(x,y) \land S(g)(y,z) \imply \phi(x) \land (g(f(x)) \sim_{\zeta} z) = S(g\circ f)(x) $. The converse clearly holds thus $\exists_{y} S(f)(x,y) \land S(g)(y,z) \bimp S(g \circ f)(x,z)$ and so $[S(g)] \circ [S(f)] = [S(g\circ f)]$. Thus $S$ is functorial.\\

For the functor $T \circ S $ given $(X,\phi)$, we have $\Gamma(X,\sim_{\phi}) = X$ so $T \circ S(X,\phi) = (X,\phi)$. For $f:(X,\phi) \to (Y,\psi)$, $T \circ S(f)(x) = y$ iff $\phi(x) \land (f(x) \sim_{\psi} y) \neq \emptyset$. We have that  $\phi(x) \neq \emptyset$ and $(f(x) \sim_{\psi} f(x)) = \psi(f(x)) \neq \emptyset$, so $T \circ S(f)(x) = f(x)$.\\

Given an assembly, $(X, \sim_{\phi})$, we define $[R(X, \sim_{\phi})]: (X,\sim_{\phi}) \to (\Gamma(X,\phi), \sim_{\phi^{*}})$ where $R(X,\phi)(x,x') = x \sim_{\phi} x'$. Given a morphism $[F]: (X,\sim_{\phi}) \to (Y, \sim_{\psi})$, we seek to show that $[R(Y,\psi)] \circ [F] = S(\Gamma([F])) \circ [R(X,\phi)]$. Suppose that $\exists_{y}(F(x,y) \ \land \ y \sim_{\psi} y')$ is inhabited. Then $\phi(x)$ is inhabited, $x \in \Gamma(X,\phi)$ and for some $y$, $F(x,y) \neq \emptyset$. Thus $\Gamma([F])(x) = y$ and $\Gamma([F])(x) \sim_{\psi} y'$ holds. Therefore, $\exists_{x'}(x \sim_{\phi} x' \ \land \ \phi(x') \ \land \ \Gamma([F])(x') \sim_{\psi} y')$ holds. Therefore $[R(Y,\psi)] \circ [F] = S(\Gamma([F])) \circ [R(X,\phi)]$. This gives us a natural transformation $[R]: \Id_{\Asm(P)} \to S \circ T$. $[R]$ becomes a natural isomorphism with inverse $R^{-1}(X,\phi)(x,x') \bimp x \sim_{\phi} x'$.  Therefore, $\Id_{\Asm(P)} \cong S \circ T$. With this, $S$ is an equivalence.\\

\end{proof}

\begin{rmk}
\label{asmnabla}

Because $ \nabla:\Set \to \RT{A}$ takes a set $X$ to an assembly and $\Asm(A)$ is a full sub-category of $\RT{A}$, the $\Gamma \adj \nabla$ adjunction restricts to $\Asm(A)$ making $\Set$ a reflective subcategory of $\Asm(A)$. In particular, $\nabla(X) = (X, \const_{A})$ and $\Gamma(X,\phi) = X$.\\

\end{rmk}

\begin{lem}

$\Asm(A)$ has finite limits and colimits.

\end{lem}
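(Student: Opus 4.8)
The plan is to work with the concrete presentation of $\Asm(A)$ furnished by Lemma~\ref{asma}, in which an object is a set $X$ equipped with an assignment $\phi$ of a nonempty set of realizers to each point, and a morphism is a function on underlying sets that is tracked by a single combinator. The underlying-set functor to $\Set$ forgets realizers, so every universal property will already be witnessed at the level of sets by the corresponding (co)limit in $\Set$; the only genuine work is to equip the resulting set with a realizability predicate and to exhibit realizers for the structure maps and for the mediating morphisms. Accordingly I would produce a terminal object, binary products, and equalizers (for finite limits) and dually an initial object, binary coproducts, and coequalizers (for finite colimits).

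For limits, the terminal object is $\nabla(1) = (1,\const_A)$, with the unique map tracked by any combinator. For the product of $(X,\phi)$ and $(Y,\psi)$ I take $(X\times Y, \phi\times\psi)$ with $(\phi\times\psi)(x,y) = \{[a,b] \mid a\in\phi(x),\, b\in\psi(y)\}$: the projections are tracked by $p_0$ and $p_1$, and given $f,g$ out of $(Z,\zeta)$ tracked by $r_f, r_g$, the pairing $\langle f,g\rangle$ is tracked by $\lmbd{z}{[r_f\cdot z,\, r_g\cdot z]}$. For the equalizer of $f,g\colon (X,\phi)\to(Y,\psi)$ I take the set-theoretic equalizer $E=\{x \mid f(x)=g(x)\}$ with $\phi$ restricted to $E$; the inclusion is tracked by $\iota$, and any map into $(X,\phi)$ that is coequalized factors set-theoretically through $E$ and is tracked by the same combinator.

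For colimits I dualize. The initial object is the empty assembly, with the empty predicate. For the coproduct of $(X,\phi)$ and $(Y,\psi)$ I take $X\sqcup Y$ with realizers tagged by $k$ on the left and $\overline{k}$ on the right, i.e. $[k,a]$ for $a\in\phi(x)$ and $[\overline{k},b]$ for $b\in\psi(y)$; the injections are tracked by $\lmbd{z}{[k,z]}$ and $\lmbd{z}{[\overline{k},z]}$, and the copairing of maps tracked by $r_f,r_g$ is tracked by $\lmbd{z}{((\ifthen{p_0\cdot z}{r_f}{r_g})\cdot(p_1\cdot z))}$, mirroring the coproduct realizer used in the proof that $\pow{A}^X$ has coproducts. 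For the coequalizer of $f,g$ I take the set-theoretic coequalizer $Q=Y/{\sim}$ and set $(\psi/{\sim})([y]) = \bigcup_{y'\sim y}\psi(y')$, which is nonempty since each class is nonempty and each $\psi(y')$ is; the quotient map is tracked by $\iota$, and given $h$ tracked by $r_h$ with $hf=hg$, the induced map on $Q$ is again tracked by $r_h$.

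The main obstacle is precisely this coequalizer: one must check that unioning realizers over each equivalence class yields a well-defined predicate and that a single combinator tracks both the quotient map and the comparison map no matter which representative a given realizer witnesses. The point that makes this go through is that the tracking combinator $r_h$ of $h$ is uniform across representatives, so if $a\in(\psi/{\sim})([y])$ lies in $\psi(y')$ for some $y'\sim y$, then $r_h\cdot a\in\zeta(h(y'))=\zeta(h(y))$ because $hf=hg$ forces $h$ to be constant on classes; I would emphasize this uniformity. Alternatively, since $\Asm(A)\subseteq\RT{A}$ is full and $\RT{A}$ is a topos, one could instead verify that the finite (co)limits computed in $\RT{A}$ of diagrams of assemblies remain assemblies, but the direct construction above is more transparent and keeps the realizers explicit.
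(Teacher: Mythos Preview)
Your proposal is correct and follows essentially the same approach as the paper: both construct the finite (co)limits by taking the underlying set-theoretic (co)limit and equipping it with the evident realizability predicate, with the same explicit realizers for the structure maps (pairing via $[-,-]$, projections via $p_0,p_1$, coproduct injections via tagging with $k,\overline{k}$, and coequalizer predicate as the union $\exists_\kappa(\psi)$). Your discussion of the uniformity of the tracking combinator for the coequalizer comparison map is a bit more explicit than the paper's, but the argument is the same.
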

\begin{proof}
Given two assemblies $(X,\phi)$ and $(Y,\psi)$, we define $(X,\phi)\times (Y,\psi)$ as $(X\times Y, \phi \otimes \psi)$ where $\phi \otimes \psi \in \pow{A}^{X\times Y}$ where $\phi \otimes \psi(x,y) = \{ [a,b]| a \in \phi(x)\text{ and } b \in \psi(y) \}$. We have projections $\pr{0}:(X,\phi)\times (Y,\psi) \to (X,\phi)$ and $\pr{1}:(X,\phi)\times (Y,\psi) \to (Y,\psi)$ realized by $p_{0}$ and $p_{1}$ respectively. Suppose we have $s_{0}:(Z,\zeta) \to (X,\phi)$ realized by $\alpha_{0}$ and $s_{1}:(Z,\zeta) \to (X,\phi)$ realized by $\alpha_{1}$, we have a unique morphism $s: (Z, \zeta) \to (X,\phi)\times (Y,\psi) $      
mapping $z \in Z$ to $(s_{0}(z),s_{1}(z)) \in X \times Y$ since $X \times Y$ is the product of $X$ and $Y$ in $\Set$ and this morphism $s$ is realized by $\lmbd{t}{[s_{0} \cdot t, s_{1} \cdot t]}$. Thus $\Asm(A)$ has products.\\

The terminal object is $(1, A)$. This is easy to verify.\\

Given a diagram

\[\begin{tikzcd}
	{(X,\phi)} &&& {(Y,\psi)}
	\arrow["{g}"', shift right=2, from=1-1, to=1-4]
	\arrow["{f}", shift left=2, from=1-1, to=1-4]
\end{tikzcd}\]

where $f$ is realized by $r_{f}$ and $g$ is realized by $r_{g}$, we define $\Eq{f,g}:= (E_{f,g}, \phi')$ where $E_{f,g} := \{x | f(x) = g(x)\}$ is the equalizer of $f$ and $g$ in $\Set$ and $\phi' \in \pow{A}^{E_{f,g}}$ where $\phi'(x) = \phi(x)$. The canonical inclusion $i: E_{f,g} \mono X$ is plainly an assembly morphism. Suppose we have $h: (Z,\zeta) \to (X,\phi)$ such that $f \circ h = g  \circ h$, then we have the unique morphism in $\Set$, $h': Z \to E_{f,g}$ such that $h = i \circ h'$ and it is an assembly morphism since $h(z) = h'(z)$ for all $z$ and thus any realizer of $h$ realizes $h'$. Therefore $\Asm(A)$ has all finite limits\\

We define $(X,\phi) + (Y,\psi) := (X + Y,\phi + \psi)$ where $\phi + \psi \in \pow{A}^{X + Y}$ defined as $\phi + \psi(x) = \{ [k,a] |a \in \phi(x)\}$ and $\phi + \psi(y) = \{ [\overline{k},a] | a \in \psi(y)\}$.  We have inclusions $i_{0}: (X,\phi) \to (X,\phi) + (Y,\psi)$ realized by $\lmbd{t}{[k,t]}$ and $i_{1}: (Y,\psi) \to (X,\phi) + (Y,\psi)$ realized by $\lmbd{t}{[\overline{k},t]}$\\

Suppose we have $h_{0}: (X,\phi) \to (Z,\zeta)$ realized by $r_{0}$ and $h_{1}: (X,\phi) \to (Z,\zeta)$ realized by $r_{1}$, we have the unique morphism $h:X + Y \to Z$ in $\Set$ such that $h \circ i_{0} = h_{0}$ and $h \circ i_{1} = h_{1}$. $h$ is realized by $\lmbd{t}{\ifthen{p_{0} \cdot t}{r_{0} \cdot (p_{1} \cdot t)}{r_{1} \cdot (p_{1} \cdot t)}} $.\\

The initial object is $(\emptyset, \phi)$ for $\phi \in \pow{A}^{\emptyset}$.\\

Using the same diagram used for equalizers, we define $\CoEq{f,g} := (CE_{f,g}, \psi')$ where $CE_{f,g}$ is the coequalizer of $f$ and $g$ and $\psi' = \exists_{\kappa}(\psi)$ where $\kappa: Y \to CE_{f,g}$ is the coequalizer surjection. Alternatively, $\psi'(t) = \bigcup_{y \in \kappa^{-1}(t)} \psi(x)$. One can see that $\kappa $ is realized by $\iota$. Suppose we have a morphism $j: (Y,\psi) \to (Z, \zeta)$ such that $j \circ f = j \circ g$, then the unique morphism $j':CE_{f,g} \to Z$ such that $j' \circ \kappa = j$ exists. Furthermore, $j'$ is realized by identity morphism $\iota$. Thus $\Asm(A)$ has all finite colimits.\\

Therefore, $\Asm(A)$ has finite limits and colimits.\\

\end{proof}



\begin{defn}

Given a category $\cC$, a morphism $f:X \to Y$ is called a {\bf regular epimorphism} if it is the coequalizer of some diagram:

\[\begin{tikzcd}
	{Z} &&& {X}
	\arrow["{h_{0}}"', shift right=2, from=1-1, to=1-4]
	\arrow["{h_{1}}", shift left=2, from=1-1, to=1-4]
\end{tikzcd}\]

The fact that $f:X \to Y$ is an epimorphism follows from the universal property of coequalizers.

\end{defn}

\begin{lem}

Let $\cC$ be a category with pullbacks and coequalizers and let $f:X \to Y$ be a regular epimorphism then $f$ is the coequalizer of its kernel pair.

\end{lem}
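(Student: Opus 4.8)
The plan is to verify the universal property of the coequalizer directly, exploiting the fact that $f$ comes to us already presented as the coequalizer of \emph{some} parallel pair. First I would form the kernel pair of $f$ as the pullback
\[
\begin{tikzcd}
K \arrow[r, "k_{0}"] \arrow[d, "k_{1}"'] & X \arrow[d, "f"] \\
X \arrow[r, "f"'] & Y
\end{tikzcd}
\]
so that $f \circ k_{0} = f \circ k_{1}$ holds by construction of the pullback; this immediately records that $f$ is a cofork on the pair $(k_{0}, k_{1})$. By hypothesis $f$ is the coequalizer of some pair $h_{0}, h_{1} \colon Z \rightrightarrows X$, so in particular $f \circ h_{0} = f \circ h_{1}$.

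The key step is to transfer cofork conditions between the two parallel pairs. Since $f \circ h_{0} = f \circ h_{1}$, the universal property of the pullback $K$ produces a unique map $u \colon Z \to K$ with $k_{0} \circ u = h_{0}$ and $k_{1} \circ u = h_{1}$. Now let $g \colon X \to W$ be an arbitrary morphism with $g \circ k_{0} = g \circ k_{1}$. Precomposing this equation with $u$ gives $g \circ h_{0} = g \circ k_{0} \circ u = g \circ k_{1} \circ u = g \circ h_{1}$, so $g$ is automatically a cofork on $(h_{0}, h_{1})$ as well.

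Because $f$ is by assumption the coequalizer of $(h_{0}, h_{1})$, there is a unique $\bar{g} \colon Y \to W$ with $\bar{g} \circ f = g$, which is exactly the factorization demanded of the coequalizer of the kernel pair. Uniqueness of such a factorization is immediate from the fact that $f$, being a regular epimorphism, is an epimorphism, as remarked just after the definition above. I do not anticipate a serious obstacle here: the whole argument is diagrammatic and uses only the supplied pullbacks together with the defining universal property of $f$. The one point that requires an idea rather than a routine check is realizing that one should route an arbitrary cofork on the kernel pair back through the original presenting pair $(h_{0}, h_{1})$ via the comparison map $u$; once that is in hand, the universal property of $f$ does all the remaining work.
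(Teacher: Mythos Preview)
Your proof is correct and follows essentially the same approach as the paper: both arguments form the kernel pair, use its pullback universal property to factor the original coequalized pair $(h_0,h_1)$ through the kernel pair via a comparison map, then precompose to transfer any cofork on the kernel pair to a cofork on $(h_0,h_1)$, and finally invoke the given coequalizer universal property of $f$. The only cosmetic difference is that you explicitly cite the epimorphism property of $f$ for uniqueness, whereas the paper leaves uniqueness implicit in the coequalizer universal property.
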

\begin{proof}
For the diagram:
\[\begin{tikzcd}
	{X \times_{Y}X} &&& X && Y
	\arrow["{\pr{1}}"', shift right=2, from=1-1, to=1-4]
	\arrow["{\pr{0}}", shift left=2, from=1-1, to=1-4]
	\arrow["{f}",from=1-4, to=1-6]
\end{tikzcd}\]

We have $f \circ \pr{0} = f \circ \pr{1}$. Given a $g:X \to Y'$ such that $g \circ \pr{0} = g \circ \pr{1}$, using the original coequalizer diagram for $f$:
\[\begin{tikzcd}
	{Z} &&& X && Y
	\arrow["{h_{1}}"', shift right=2, from=1-1, to=1-4]
	\arrow["{h_{0}}", shift left=2, from=1-1, to=1-4]
	\arrow["{f}",from=1-4, to=1-6]
\end{tikzcd}\]

since $f \circ h_{0} = f \circ h_{1}$, we have a unique $h:Z \to X \times_{Y}X$ such that $h_{0} = \pr{0} \circ h$ and $h_{1} = \pr{1} \circ h$. Thus $g \circ h_{0} = g \circ \pr{0} \circ h = g \circ \pr{1} \circ h = g \circ h_{1}$. So there must exist a unique $i:Y \to Y'$ such that $i \circ f = g$. This $i$ this exhibits the first diagram as a coequalizer diagram.

\end{proof}


\begin{lem}

Given a pca $A$, $f:(X,\phi) \to (Y,\psi)$ is a regular epimorphism in $\Asm(A)$ iff $f$ is surjective and $\psi \bimp \exists_{f}(\phi)$.

\end{lem}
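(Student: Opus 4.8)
The plan is to reduce everything to the explicit description of coequalizers in $\Asm(A)$ from the finite-colimits lemma, namely that the coequalizer of a parallel pair with codomain $(X,\phi)$ is $(CE,\exists_\kappa(\phi))$ with structure map the set-coequalizer surjection $\kappa\colon X\to CE$. Before starting I would record a simplification: for any morphism $f\colon(X,\phi)\to(Y,\psi)$ the inequality $\exists_f(\phi)\le\psi$ holds automatically, since if $r$ realizes $f$ and $a\in\exists_f(\phi)(y)$ — say $a\in\phi(x)$ with $f(x)=y$ — then $r\cdot a\in\psi(f(x))=\psi(y)$, so $r$ realizes $\exists_f(\phi)\le\psi$. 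Hence $\psi\bimp\exists_f(\phi)$ amounts to the single extra inequality $\psi\le\exists_f(\phi)$.

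For the forward direction, suppose $f$ is a regular epimorphism, i.e.\ the coequalizer of some parallel pair $h_0,h_1\colon(Z,\zeta)\rightrightarrows(X,\phi)$. By the coequalizer formula this coequalizer is $(CE,\exists_\kappa(\phi))$ with $\kappa\colon X\to CE$ the set-coequalizer surjection. Since coequalizers are unique up to unique isomorphism, there is an isomorphism $\theta\colon(CE,\exists_\kappa(\phi))\to(Y,\psi)$ in $\Asm(A)$ with $\theta\circ\kappa=f$. As $\theta$ is an isomorphism of assemblies it is a bijection of underlying sets, so $f=\theta\circ\kappa$ is surjective; and the uniform realizers witnessing that $\theta$ and $\theta^{-1}$ are morphisms give $\psi\circ\theta\bimp\exists_\kappa(\phi)$ as predicates on $CE$. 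Reindexing along the bijection $\theta$, and using $\exists_f(\phi)(y)=\exists_\kappa(\phi)(\theta^{-1}(y))$ (which holds because $\kappa(x)=\theta^{-1}(y)\iff f(x)=y$), I obtain $\psi\bimp\exists_f(\phi)$.

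For the converse, suppose $f$ is surjective and $\psi\bimp\exists_f(\phi)$. Since $\Asm(A)$ has finite limits, form the kernel pair $\pr{0},\pr{1}\colon(X\times_Y X,\chi)\rightrightarrows(X,\phi)$ and compute its coequalizer by the formula as $(CE,\exists_\kappa(\phi))$ with $\kappa\colon X\to CE$. The set $CE$ is the quotient of $X$ by the equivalence relation $\{(x,x'):f(x)=f(x')\}$, and because $f$ is surjective this quotient is in bijection with $Y$ under which $\kappa$ is identified with $f$; hence $(CE,\exists_\kappa(\phi))\cong(Y,\exists_f(\phi))$. The hypothesis $\psi\bimp\exists_f(\phi)$ says exactly that the identity function on $Y$ is an isomorphism $(Y,\exists_f(\phi))\to(Y,\psi)$ in $\Asm(A)$, so the coequalizer of the kernel pair is $(Y,\psi)$ with coequalizer map $f$. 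This exhibits $f$ as the coequalizer of a parallel pair, i.e.\ as a regular epimorphism.

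The main obstacle will be the bookkeeping around isomorphisms of assemblies: ensuring that each set-level identification (surjectivity of $\kappa$, the bijection $CE\cong Y$) is matched by \emph{uniform} realizers on both sides, so that the set-level isomorphism is genuinely an isomorphism in $\Asm(A)$ and the reindexing of $\exists$ along $\theta$ is legitimate. Everything else is a direct application of the coequalizer description and, on the underlying sets, the fact that the kernel pair of a function is already an equivalence relation.
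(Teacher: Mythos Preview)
Your proof is correct and follows essentially the same approach as the paper: both directions are reduced to the explicit coequalizer formula $(CE_{f,g},\exists_\kappa(\phi))$ in $\Asm(A)$, with the converse exhibited via the kernel pair. Your write-up is in fact more careful than the paper's, which is terse and arguably mislabels which direction it is proving.
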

\begin{proof}

It suffices to show the ``only if" direction since the ``if" direction is precisely the characterization of coequalizers in $\Asm(A)$. First we construct the kernel pair of $f$ and present the following diagram in $Asm(A)$:

\[\begin{tikzcd}
	{(X,\phi)\times_{(Y,\psi)} (X,\phi)} &&& (X,\phi) && (Y,\psi)
	\arrow["{\pr{1}}"', shift right=2, from=1-1, to=1-4]
	\arrow["{\pr{0}}", shift left=2, from=1-1, to=1-4]
	\arrow["{f}",from=1-4, to=1-6]
\end{tikzcd}\]

The underlying diagram in $\Set$ is coequalizer diagram since every surjection in $\Set$ is a regular epimorphism and the kernel pair of $f$ in $\Set$ is precisely the underlying kernel pair of $f:(X,\phi) \to (Y,\psi)$ in $\Set$. The fact that $\psi \bimp \exists_{f}(\phi)$ then makes $f$ the coequalizer of the above diagram since this is precisely the construction of coequalizers in $\Asm(A)$.

\end{proof}

\begin{rmk}

We can also characterize a regular epimorphism $f:(X,\phi) \to (Y,\psi)$ as being the epimorphism such that $\forall_{y \in Y} \psi(y) \imply \exists_{x \in X} (f(x) = y \land \phi(x))$ is realized. This is also the same as stating that $f$ is a surjection in the internal language of $\Asm(A)$.

\end{rmk}

\begin{defn}

We call a category $\cC$ a {\bf regular category} iff 
\begin{itemize}
\item $\cC$ has finite limits.

\item Given a morphism $f: X \to Y$ in $\cC$, the kernel pair of $f$ has a coequalizer

\item Regular epimorphisms are stable under pullbacks.

\end{itemize}

\end{defn}

\begin{lem}

Given a pca $A$, $\Asm(A)$ is regular.

\end{lem}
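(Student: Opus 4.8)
The plan is to verify the three clauses in the definition of a regular category. The first two are essentially immediate from the earlier lemma establishing that $\Asm(A)$ has finite limits and colimits: the category has finite limits outright, and since the kernel pair of any morphism is a pullback (hence a finite limit) and all coequalizers exist, the kernel pair of any $f$ in particular has a coequalizer. So the entire content of the lemma lies in the third clause, stability of regular epimorphisms under pullback.

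For this I would use the characterization proved above, namely that $f \colon (X,\phi) \to (Y,\psi)$ is a regular epimorphism iff $f$ is surjective and $\psi \bimp \exists_{f}(\phi)$. First I would fix such an $f$ together with an arbitrary morphism $g \colon (Z,\zeta) \to (Y,\psi)$ realized by some $r_{g}$, and form the pullback. At the level of sets this is $W = \{(x,z) \in X \times Z : f(x) = g(z)\}$, carrying the restricted product predicate $\omega(x,z) = (\phi \otimes \psi)(x,z) = \{[a,b] : a \in \phi(x),\, b \in \zeta(z)\}$ coming from the equalizer presentation of the pullback; write $\pi \colon (W,\omega) \to (Z,\zeta)$ for the projection $(x,z) \mapsto z$. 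The goal is then to show $\pi$ is again a regular epimorphism, i.e.\ that $\pi$ is surjective and $\zeta \bimp \exists_{\pi}(\omega)$.

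Surjectivity of $\pi$ is purely set-theoretic: given $z \in Z$, surjectivity of $f$ yields some $x$ with $f(x) = g(z)$, so $(x,z) \in W$ and $\pi(x,z) = z$. Unwinding $\exists_{\pi}$ gives $\exists_{\pi}(\omega)(z) = \bigcup_{f(x) = g(z)} \{[a,b] : a \in \phi(x),\, b \in \zeta(z)\}$. The inclusion $\exists_{\pi}(\omega) \leq \zeta$ is then realized by $p_{1}$, which projects $[a,b]$ to $b \in \zeta(z)$. The reverse inclusion $\zeta \leq \exists_{\pi}(\omega)$ is where both hypotheses on $f$ enter: fixing a realizer $r_{\psi}$ for $\psi \leq \exists_{f}(\phi)$, given $b \in \zeta(z)$ one has $r_{g} \cdot b \in \psi(g(z))$ and hence $r_{\psi} \cdot (r_{g} \cdot b) \in \phi(x)$ for some $x$ with $f(x) = g(z)$; the term $\lmbd{b}{[r_{\psi} \cdot (r_{g} \cdot b),\, b]}$ then realizes $\zeta \leq \exists_{\pi}(\omega)$. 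This yields $\zeta \bimp \exists_{\pi}(\omega)$ and completes the verification, so $\pi$ is a regular epimorphism and $\Asm(A)$ is regular.

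I expect the main obstacle to be precisely this last step: producing, at the level of realizers, a genuine element of the pullback predicate $\omega$ out of a realizer for $\zeta$. It requires composing the realizer of $g$ with the realizer witnessing $\psi \leq \exists_{f}(\phi)$ to manufacture the $X$-component $a$, and then pairing it with the original $\zeta$-realizer $b$; everything else is bookkeeping inherited from the set-level pullback and the colimit formula for $\exists_{\pi}$ already established.
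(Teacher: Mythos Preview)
Your proof is correct and follows the same overall strategy as the paper: reduce to stability of regular epimorphisms under pullback, then use the characterization of regular epis as surjections with $\psi \bimp \exists_f(\phi)$. The difference lies in how the key inequality $\zeta \leq \exists_\pi(\omega)$ is established. You build the realizer by hand, composing the realizer $r_g$ of $g$ with a realizer $r_\psi$ for $\psi \leq \exists_f(\phi)$ and pairing with the original $b$. The paper instead works abstractly in the tripos: it rewrites the pullback predicate as $\pow{A}^c(\phi) \land \pow{A}^h(\zeta)$, applies the Frobenius condition to pull $\zeta$ out of $\exists_h$, and then invokes Beck--Chevalley to identify $\exists_h(\pow{A}^c(\phi))$ with $\pow{A}^g(\exists_f(\phi)) \bimp \pow{A}^g(\psi)$, which dominates $\zeta$ because $g$ is a morphism. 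Your direct realizer argument is more elementary and self-contained; the paper's argument is slicker and shows that pullback-stability of regular epis is a formal consequence of Frobenius and Beck--Chevalley, a fact that would transfer to any tripos-based category of assemblies.
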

\begin{proof}

$\Asm(A)$ has all finite limits and colimits so it suffices to show that regular epimorphisms are preserved by pullback. Given a pullback diagram:

\[\begin{tikzcd}
	{(X \times_{Y} Z, \phi \times \zeta)} &&& {(X, \phi)} \\
	\\
	\\
	{(Z,\zeta)} &&& {(Y, \psi)}
	\arrow[ "c", from=1-1, to=1-4]
	\arrow["h"', from=1-1, to=4-1]
	\arrow["\lrcorner"{anchor=center, pos=0.125}, draw=none, from=1-1, to=4-4]
	\arrow["f", from=1-4, to=4-4]
	\arrow["g"', from=4-1, to=4-4]
\end{tikzcd}\]

where $f$ is a surjection and $\psi  \bimp \exists_{f}(\phi)$, we need to show that $h$ is a surjection and $\zeta \bimp \exists_{h}(\phi \times \zeta)$. For any $z \in Z$, there exists $x \in X$ such that $g(z) = f(x)$, thus $h(z,x) = z$ which shows that $h$ is surjective. We have $\exists_{h}(\phi \times \zeta) \leq \zeta$ since $\phi \times \zeta \leq \pow{A}^{h}(\zeta)$. \\

For the other direction, first observe that $\phi \times \zeta \bimp \pow{A}^{c}(\phi) \land \pow{A}^{h}(\zeta)$ since $\phi \times \zeta  \leq \pow{A}^{c}(\phi)$ and $\phi \times \zeta  \leq \pow{A}^{h}(\zeta)$ meaning $\phi \times \zeta  \leq \pow{A}^{c}(\phi) \land \pow{A}^{h}(\zeta)$. The morphisms $c: (X \times_{Y} Z, \pow{A}^{c}(\phi) \land \pow{A}^{h}(\zeta)) \to (X, \phi)$ and $h: (X \times_{Y} Z, \pow{A}^{c}(\phi) \land \pow{A}^{h}(\zeta)) \to (Z, \zeta)$ correspond to the morphism $\id: (X \times_{Y} Z, \pow{A}^{c}(\phi) \land \pow{A}^{h}(\zeta)) \to (X \times_{Y} Z, \phi \times \zeta)$ giving us $\pow{A}^{c}(\phi) \land \pow{A}^{h}(\zeta) \leq \phi \times \zeta$. \\

So $\exists_{h}(\phi \times \zeta) \bimp \exists_{h}(\pow{A}^{c}(\phi) \land \pow{A}^{h}(\zeta)) \bimp \exists_{h}(\pow{A}^{c}(\phi)) \land \zeta$ by the Frobenius condition. So is suffices to show $\zeta \leq \exists_{h}(\pow{A}^{c}(\phi))$. We have $\exists_{h}(\pow{A}^{c}(\phi)) \bimp \pow{A}^{g}(\exists_{f}(\phi)) \bimp \pow{A}^{g}(\psi)$ by Beck-Chevalley and our assumption, thus $\zeta \leq \exists_{h}(\pow{A}^{c}(\phi))$ holds because $\zeta \leq \pow{A}^{g}(\psi)$ holds. Thus $\zeta \bimp \exists_{h}(\phi \times \zeta)$ making $h$ is regular epimorphism.

\end{proof}


\begin{lem}
Given a regular category $\cC$, every morphism $f: X \to Y$ in $\cC$ has a factorization:
\[\begin{tikzcd}
	X && \im(f) && Y
	\arrow["e", from=1-1, to=1-3]
	\arrow["m", from=1-3, to=1-5]
\end{tikzcd}\]

where $e$ is a regular epimorphism and $m$ is a monomorphism. We call this the {\bf image factorization}. Furthermore, for any factorization $f = m' \circ e'$ where $m'$ is a monomorphism, there exists a morphism $\mu: m \to m'$ (i.e. m is the smallest subobject of $Y$ that factors through $f$).

\end{lem}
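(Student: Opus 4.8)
The plan is to build the factorization from the kernel pair of $f$ together with a coequalizer, both of which exist under the hypotheses on $\cC$. First I would form the kernel pair $(X \times_{Y} X, \pr{0}, \pr{1})$ of $f$, available since $\cC$ has finite limits. Because $\cC$ is regular, this kernel pair has a coequalizer; call it $e \colon X \to \im(f)$, and take this to be the regular epimorphism of the factorization (it is a coequalizer, hence a regular epimorphism by definition). Since $f \circ \pr{0} = f \circ \pr{1}$, the universal property of the coequalizer supplies a unique $m \colon \im(f) \to Y$ with $m \circ e = f$. It then remains to verify that $m$ is a monomorphism and that the resulting subobject is minimal.

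The monomorphism claim is the step I expect to be the main obstacle, and I would attack it through kernel pairs. The first observation is that $X \times_{Y} X$ is also the kernel pair of $e$: any pair equalized by $e$ is equalized by $f = m \circ e$, while $e$ equalizes $\pr{0}, \pr{1}$ by construction, so the two kernel pairs agree. Next let $(S, \pi_{0}, \pi_{1})$ be the kernel pair of $m$; proving $m$ monic reduces to showing $\pi_{0} = \pi_{1}$. Since $m \circ e \circ \pr{0} = f \circ \pr{0} = f \circ \pr{1} = m \circ e \circ \pr{1}$, the pair $(e \circ \pr{0}, e \circ \pr{1})$ induces a comparison map $\theta \colon X \times_{Y} X \to S$ with $\pi_{i} \circ \theta = e \circ \pr{i}$. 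The crucial point is that $\theta$ factors as a composite $X \times_{Y} X \to \im(f) \times_{Y} X \to \im(f) \times_{Y} \im(f) = S$, where each map applies $e$ in one coordinate and is therefore a pullback of $e$ along a projection. Here is exactly where I would invoke the full strength of regularity: by stability of regular epimorphisms under pullback each factor is a regular epimorphism, hence an epimorphism, so $\theta$ is epic. Because $e$ coequalizes $\pr{0}, \pr{1}$ we have $e \circ \pr{0} = e \circ \pr{1}$, whence $\pi_{0} \circ \theta = \pi_{1} \circ \theta$; cancelling the epimorphism $\theta$ yields $\pi_{0} = \pi_{1}$, so $m$ is a monomorphism.

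For the universal property, suppose $f = m' \circ e'$ with $m'$ a monomorphism. Then $m' \circ e' \circ \pr{0} = f \circ \pr{0} = f \circ \pr{1} = m' \circ e' \circ \pr{1}$, and cancelling the monomorphism $m'$ gives $e' \circ \pr{0} = e' \circ \pr{1}$; that is, $e'$ coequalizes the kernel pair of $f$. Since $e$ is the coequalizer of that kernel pair, there is a unique $\mu \colon \im(f) \to \dom(m')$ with $\mu \circ e = e'$. Finally $m' \circ \mu \circ e = m' \circ e' = f = m \circ e$, and cancelling the epimorphism $e$ gives $m' \circ \mu = m$, exhibiting $\mu$ as a morphism of subobjects from $m$ to $m'$ and thereby showing $m$ to be the smallest subobject of $Y$ through which $f$ factors. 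I would expect the existence half and the universal property to be essentially routine once the kernel-pair setup is in place, with all the genuine content concentrated in the epimorphism argument for $\theta$.
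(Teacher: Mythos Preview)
Your proof is correct and more self-contained than the paper's. The paper defines $e$ as the coequalizer of the kernel pair of $f$ and obtains $m$ from the universal property, exactly as you do, but then cites Proposition~4.2 of \cite{vanOosten1995} for the claim that $m$ is a monomorphism rather than proving it. Your argument for that step---factoring the comparison map $\theta$ into the kernel pair of $m$ as a composite of two pullbacks of $e$ and invoking stability of regular epimorphisms---is the standard direct proof and fills in exactly what the paper leaves to the reference. Your treatment of the universal property is word-for-word the paper's argument. One minor remark: your observation that $X \times_{Y} X$ is also the kernel pair of $e$ is correct, but it is not actually used in the argument that follows; the epimorphism claim for $\theta$ goes through directly from pullback stability without it.
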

\begin{proof}

The fact that $f$ can be factored as $m \circ e$ where $m$ is a monomorphism and $e$ is a regular epimorphism is proven as Proposition 4.2 of \cite{vanOosten1995}. $e$ is given as the coequalizer of the kernel pair of $f$ and $m$ is the unique morphism such that $f = m \circ e$ that follows from the universal property of coequalizers. Suppose we have a factorization $f = m' \circ e'$ such that $m'$ is a monomorphism, then let $\pr{0}, \pr{1}: X \times_{Y} X \to X$ and be the kernel pair of $f$. Since $m' \circ e' \circ \pr{0} = m' \circ e' \circ \pr{1}$, then $e' \circ \pr{0} = e' \circ \pr{1}$. By the universal property of coequalizers, there exists a $\mu: \im(f) \to Z'$ such that $\mu \circ e = e'$. Since $m \circ e = f = m' \circ e' = m' \circ \mu \circ e$, then $m  = m' \circ \mu$ since $e$ is an epimorphism. This exhibits the morphism $\mu: m \to m'$.

\end{proof}


\begin{defn}
In a category $\cC$, we call a morphism $f:X \to Y$ a {\bf cover} iff the only subobject of $Y$ that $f$ factors through is $\id_{Y}$.

\end{defn}

\begin{lem}
In a regular category $\cC$, $f:X \to Y$ is a regular epimorphism iff $f$ is a cover.

\end{lem}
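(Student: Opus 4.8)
The plan is to prove the two implications separately, leaning on the two immediately preceding lemmas: the image factorization (every map factors as a regular epimorphism followed by a monomorphism, with the mono being the smallest subobject through which $f$ factors) and the fact that a regular epimorphism is the coequalizer of its own kernel pair.

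For the forward direction, suppose $f: X \to Y$ is a regular epimorphism. I would take an arbitrary factorization of $f$ through a subobject, say $f = m \circ g$ with $m: S \mono Y$ a monomorphism, and show that $m$ must be an isomorphism. Since $f$ is the coequalizer of its kernel pair $\pr{0}, \pr{1}: X \times_Y X \to X$, we have $f \circ \pr{0} = f \circ \pr{1}$; substituting $f = m \circ g$ and cancelling the monomorphism $m$ yields $g \circ \pr{0} = g \circ \pr{1}$. The universal property of the coequalizer $f$ then produces a unique $h: Y \to S$ with $h \circ f = g$, so that $m \circ h \circ f = m \circ g = f$. As $f$ is an epimorphism, it cancels to give $m \circ h = \id_Y$. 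Thus $m$ is a split epimorphism which is also monic, hence invertible, and so the only subobject through which $f$ factors is $\id_Y$; that is, $f$ is a cover.

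For the reverse direction, suppose $f$ is a cover. Here I would simply invoke the image factorization $f = m \circ e$, where $e$ is a regular epimorphism and $m: \im(f) \mono Y$ is a monomorphism. Since $f$ factors through the subobject $m$ and $f$ is a cover, $m$ must be $\id_Y$, i.e.\ an isomorphism. Consequently $f = m \circ e$ is the composite of a regular epimorphism with an isomorphism on the codomain, and hence is itself a coequalizer of the same parallel pair that $e$ coequalizes; therefore $f$ is a regular epimorphism.

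The only real content lies in the forward direction, and the step I would handle most carefully is its reliance on the kernel-pair presentation: the cancellation argument works precisely because we already know (from the earlier lemma) that a regular epimorphism coequalizes its kernel pair, rather than merely some arbitrary pair. The reverse direction is essentially immediate from the image factorization, modulo the routine observation that composing a coequalizer with an isomorphism on its target yields another coequalizer of the same pair.
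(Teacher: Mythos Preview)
Your proof is correct and follows essentially the same approach as the paper. The only cosmetic difference is in the forward direction: the paper applies the image factorization to get the specific subobject $m:\im(f)\mono Y$, shows $m$ is an isomorphism by the same split-epi-plus-mono argument you use, and then invokes the minimality of $\im(f)$ to conclude every subobject through which $f$ factors is $\id_Y$; you instead take an arbitrary such subobject from the start and show it is an isomorphism directly, which is slightly more streamlined but amounts to the same computation.
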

\begin{proof}
Suppose $f$ is a cover, then we have the following factorization:
\[\begin{tikzcd}
	X && \im(f) && Y
	\arrow["e", from=1-1, to=1-3]
	\arrow["m", from=1-3, to=1-5]
\end{tikzcd}\]

so that $m \cong \id_{Y}$. Thus $m$ is an isomorphism and since $e$ is a regular epimorphism, $f$ is also a regular epimorphism. \\

Suppose $f$ is a regular epimorphism, let $g,h: Z \to X$ be the coequalizer pair of $f$. Since we have the image factorization of $f = m \circ e$ where $e$ is a regular epimorphism and $m$ is a monomorphism and $m \circ e \circ g = m \circ e \circ h$,  then $e \circ g =  e \circ h$. There exists a unique morphism $m': Y \to \im(f)$ such that $m' \circ f = e$. Furthermore, $m \circ m' = \id_{Y}$ since by the universal property of coequalizers and $m \circ m' \circ f = m \circ e = f$. Then $m$ is a split epimorphism and a monomorphism, therefore $m$ is an isomorphism. Obviously, $m \cong \id_{Y}$, thus $m$ is both the largest and smallest subobject that factors through $f$ meaning that $f$ is a cover.\\

Therefore, $f:X \to Y$ is a regular epimorphism iff $f$ is a cover.

\end{proof}


\begin{lem}
\label{asmdepprod}

Given a pca $A$, for any morphism $f:(X,\phi) \to (Y,\psi)$, we have the following adjoint triple:
\[ 
\begin{tikzcd}[column sep=large]
 \Asm(A)/(X,\phi)  \arrow[r, bend left, "{\sum_f}", "\perp"'] \arrow[r, bend right, "{\Pi_f}"', "\perp"]  &  \Asm(A)/(Y,\psi) \arrow[l, "{f^{*}}" description]
\end{tikzcd}
\] 

where $f^{*}$ is the pullback functor. Thus $\Asm(A)$ is locally cartesian closed.

\end{lem}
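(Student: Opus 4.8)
The plan is to establish the adjoint triple $\sum_f \dashv f^{*} \dashv \Pi_f$ by handling the two adjunctions separately. The left adjoint is the easy one: I would define $\sum_f$ to be postcomposition with $f$, sending $g\colon (Z,\zeta)\to (X,\phi)$ to $f\circ g\colon (Z,\zeta)\to (Y,\psi)$. Since $\Asm(A)$ has all finite limits, the adjunction $\sum_f \dashv f^{*}$ is the standard formal fact that postcomposition is left adjoint to the pullback functor in the slices of any category with pullbacks; its unit and counit come straight from the universal property of pullbacks, so no realizer bookkeeping beyond the finite-limit structure is required. All the genuine work is in the right adjoint $\Pi_f$.

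For the construction of $\Pi_f$, given $g\colon (Z,\zeta)\to (X,\phi)$ I would define $\Pi_f(g) = (P,\pi)$ as follows. Let $P$ be the set of pairs $(y,s)$ with $y\in Y$ and $s\colon f^{-1}(y)\to Z$ a section of $g$ over the fibre (so $g(s(x))=x$ whenever $f(x)=y$) that is \emph{uniformly tracked}, meaning some $r\in A$ satisfies $r\cdot a\in\zeta(s(x))$ for all $x\in f^{-1}(y)$ and all $a\in\phi(x)$. The predicate is
\[
\pi(y,s) = \{\, [b,r] \mid b\in\psi(y),\ \forall x\in f^{-1}(y)\ \forall a\in\phi(x),\ r\cdot a\in\zeta(s(x)) \,\}.
\]
Restricting $P$ to uniformly tracked sections is precisely what forces $\pi(y,s)\neq\emptyset$, so that $(P,\pi)$ is a genuine assembly in the sense of Lemma~\ref{asma}; this restriction is the essential realizability content. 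The structure map $\Pi_f(g)\to (Y,\psi)$ sends $(y,s)\mapsto y$ and is tracked by $p_{0}$.

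To establish the adjunction, I would fix $h\colon (W,\omega)\to (Y,\psi)$, whose pullback $f^{*}h$ has underlying set $X\times_Y W=\{(x,w)\mid f(x)=h(w)\}$ with the product predicate $[a,c]$ ($a\in\phi(x)$, $c\in\omega(w)$) and structure map $p_{0}$ to $(X,\phi)$. The set-level bijection between maps $\alpha\colon f^{*}h\to g$ over $X$ and maps $h\to\Pi_f(g)$ over $Y$ is the usual currying, $\alpha\mapsto\bigl(w\mapsto (h(w),\,x\mapsto\alpha(x,w))\bigr)$. To see this respects realizers, if $\alpha$ is tracked by $t_\alpha$ and $t_h$ tracks $h$, then $\lmbd{c}{[t_h\cdot c,\ \lmbd{a}{t_\alpha\cdot [a,c]}]}$ tracks the transpose, the first component supplying a realizer of $\psi(h(w))$ and the second uniformly tracking the section; conversely, if the transpose is tracked by $t_\beta$ then $\lmbd{u}{(p_{1}\cdot(t_\beta\cdot(p_{1}\cdot u)))\cdot(p_{0}\cdot u)}$ tracks $\alpha$. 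Naturality in both variables is inherited from the underlying $\Set$-level adjunction, since both correspondences are the identity on underlying functions.

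The hard part will be the definition of $\Pi_f(g)$ itself: one must pass from the naive fibrewise product, whose fibres would be \emph{all} set-theoretic sections, to the sub-assembly of uniformly tracked sections, and then check that the sections produced by transposing a tracked map $\alpha$ are again uniformly tracked. This works because a single realizer $c\in\omega(w)$ yields, via $\lmbd{a}{t_\alpha\cdot[a,c]}$, one realizer valid for the entire fibre $f^{-1}(h(w))$ at once — the interplay between the external universal quantifier over the fibre and the single internal realizer is exactly where the argument could go wrong and must be verified carefully. Once both adjunctions are in hand, local cartesian closedness is immediate: $\Asm(A)$ has finite limits and every pullback functor $f^{*}$ admits a right adjoint $\Pi_f$, which is precisely the statement that each slice $\Asm(A)/(Y,\psi)$ is cartesian closed.
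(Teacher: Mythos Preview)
Your proposal is correct and follows essentially the same approach as the paper: the same definition of $\Pi_f(g)$ as pairs $(y,s)$ with $s$ a uniformly tracked section over the fibre, the same realizer predicate $[b,r]$ with $b\in\psi(y)$ and $r$ tracking $s$, and the same explicit realizers for the two directions of the transpose bijection. Your emphasis on why the restriction to uniformly tracked sections is exactly what makes $(P,\pi)$ an assembly is a helpful clarification that the paper leaves implicit.
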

\begin{proof}

$\sum_{f}$ is $f \circ -$ and is left adjoint to $f^{*}$ by the universal property of pullbacks.\\

For $g:(Z,\zeta) \to (X,\phi)$, we define $\Pi_{f}(g) = (\Pi_{x \in X(y)} Z(x),\phi \To \zeta) \to (Y, \psi)$ where $\Pi_{x \in X(y)} Z(x) := \{(y, F: f^{-1}(y) \to Z)| g \circ F = \id_{X}|_{f^{-1}(y)}\text{ and }\{ [a,b] \in A| a \in \phi(y)\text{ and }\forall_{x \in f^{-1}(y)}\forall_{c \in \phi(x)} b \cdot c \in \zeta(F(x))   \} \neq \emptyset \}$ and $\phi \To \zeta \in \pow{A}^{\Pi_{x \in X(y)} Z(x)}$ where $\phi \To \zeta(y,F) = \{ [a,b] \in A| a \in \phi(y)\text{ and }\forall_{x \in f^{-1}(y)}\forall_{c \in \phi(x)} b \cdot c \in \zeta(F(x))   \}$. \\

Given a morphism $\beta: g \to h$, we define $\Pi_{f}(\beta):\Pi_{f}(g) \to \Pi_{f}(h)$, where $\Pi_{f}(\beta)(y,F) = (y,\beta \circ F)$. Suppose $(y,F) \in \dom(\Pi_{x \in X(y)} Z(x))$ and given a realizer $[a,b] \in \phi \To \zeta(y,F)$, if $r_{\beta}$ realizes $\beta$, then $\lmbd{t}{r_{\beta} \cdot(b \cdot t)}$ realizes $\beta \circ F$ so $[a,\lmbd{t}{r_{\beta} \cdot(b \cdot t)}]$ realizes $(y,\beta \circ F)$. So $\Pi_{f}(\beta)$ is realized by $\lmbd{s}{[p_{0} \cdot s, \lmbd{t}{r_{\beta} \cdot((p_{1} \cdot s) \cdot t)}]}$. Verification of composition, identity and that $\beta \circ F \in \Pi_{f}(h)$ is straightforward. \\ 

For $p:(N,\nu) \to (X,\phi)$ and $q:(M,\mu) \to (Y,\psi)$, suppose we have $\alpha: f^{*}(q) \to p$, define $\hat{\alpha}: q \to \Pi_{f}(p)$ where $\hat{\alpha}(m) = (q(m), \alpha(-,m):f^{-1}(q(m))\to N )$. For $m \in M$, $r_{m} \in \mu(m)$, if $r_{q}$ is the realizer for $q$ and $r_{\alpha}$ realizes $\alpha$, then $\lmbd{t}{r_{\alpha} \cdot [t,r_{m}]}$ realizes $\alpha(-,m):f^{-1}(q(m))\to N$ so $[r_{q} \cdot r_{m}, \lmbd{t}{r_{\alpha} \cdot [t,r_{m}]}]$ realizes $\hat{\alpha}(m)$. Thus, $\hat{\alpha}$ is realized by $\lmbd{s}{[r_{q} \cdot s, \lmbd{t}{r_{\alpha} \cdot [t,s]}]}$. \\

Given $\beta:q \to \Pi_{f}(p)$, we define $\overline{\beta}:f^{*}(q) \to p$, where $\overline{\beta}(x,m) = \pr{1}(\beta(m))(x)$. Suppose $(x,m) \in \dom(f^{*}(q))$, and $[r_{x},r_{m}]$ realizes $(x,m)$ , if $r_{\beta}$ realizes $\beta$ then $(p_{1}\cdot (r_{\beta} \cdot r_{m})) \cdot r_{x}$ realizes $\overline{\beta}(x,m)$. So $\overline{\beta}$ is realized by $\lmbd{t}{(p_{1}\cdot (r_{\beta} \cdot (p_{1} \cdot t))) \cdot (p_{0} \cdot t)}$.\\

For $\alpha: f^{*}(q) \to p$, we have $\overline{\hat{\alpha}}(x,m) = \pr{1}(\hat{\alpha}(m))(x) = \alpha(x,m)$. Thus $\alpha = \overline{\hat{\alpha}}$. For $\beta:q \to \Pi_{f}(p)$, we have $\hat{\overline{\beta}}(m) = (q(m),\overline{\beta}(-,m)) = (q(m),\pr{1}(\beta(m))) = \beta(m)$. Thus $\beta = \hat{\overline{\beta}}$. We have a bijection between morphisms $\alpha: f^{*}(q) \to p$ and $\beta:q \to \Pi_{f}(p)$. Therefore, $\Asm(A)$ is locally cartesian closed.

\end{proof}

\subsection{W-Types}
Here we give a construction of dependent $W$-types in $\Set$ and extend dependent $W$-types to $\Asm(A)$.

\begin{defn}

Given a locally cartesian closed category $\cC$ and a diagram of the following form:

\[\begin{tikzcd}
	& X &&& Y \\
	\\
	Z &&&&& Z
	\arrow["f", from=1-2, to=1-5]
	\arrow["g"', from=1-2, to=3-1]
	\arrow["h", from=1-5, to=3-6]
\end{tikzcd}\]

is called a {\bf polynomial} in $\cC$. This induces the following functor $\poly_{f,g,h} = \sum_{h} \circ \Pi_{f} \circ g^{*}: \cC/Z \to \cC/Z$. 

\end{defn}

\begin{defn}
We call $\poly_{f,g,h}$ a {\bf polynomial functor}. An endofunctor $F: D \to D$ induces a category $\Alg(F)$ called {\bf the category of algebras over $F$} whose objects are pairs $(c, \alpha_{c}: F(c) \to c)$ and morphism are morphisms $\beta: c \to d$ such that $\beta \circ \alpha_{c} = \alpha_{d} \circ F(\beta)$. An initial object $(W,\omega)$ of $\Alg(F)$ is called the {\bf initial algebra for $F$}. In the case of $\Alg(\poly_{f,g,h})$, we call $W$ a {\bf dependent W-type}.

\end{defn}


Given a function $f:X \to Y$ in $\Set$, the explicit description for polynomial functor is $\poly_{f}(X) = \sum_{y \in Y} X^{f^{-1}(y)}$. We will give a construction of the $W_{f}$-type for $\poly_{f}$ which is found in \cite{Hoffman2006}. First we define $(X + Y)^{*}$ to be the set of non-zero finite sequences of $X + Y$. Next we take a subset $W_{f} \subseteq \pow{(X + Y)^{*}}$ which we will call the {\bf well-founded f-trees}. 

\begin{defn}

Given $T \in \pow{(X + Y)^{*}}$, $T$ is an {\bf (X + Y)-labeled tree} iff it contains exactly one sequence of length $1$ and for $(z_{1},\ldots, z_{n + 1}) \in T$, then $(z_{1},\ldots, z_{n }) \in T$ for all $n \geq 1$.\\

We call $T$ an {\bf f-tree} iff it is an $(X + Y)$-labeled tree with the following properties:
\begin{itemize}
\item If $(z_{1},\ldots,z_{n}) \in T$, then $z_{i} \in Y$ if $i$ is odd and $z_{i} \in X$ if $i$ is even.
\item If $(z_{1},\ldots,z_{2n}) \in T$, then there is a unique $z_{2n + 1} \in X$ such that $(z_{1},\ldots,z_{2n + 1}) \in T$.
\item If $(z_{1},\ldots,z_{2n + 1}) \in T$, then $\{z | (z_{1},\ldots,z_{2n + 1},z) \in T\} = \{x \in X| f(x) = z_{2n + 1}\}$

\end{itemize}

For any $f$-tree $T$, there is an ordering $<$ on $T$ where for any two elements $(z_{1},\ldots,z_{n}),(z'_{1},\ldots,z'_{m}) \in T$, $(z_{1},\ldots,z_{n})< (z'_{1},\ldots,z'_{m})$ iff there is a non-zero finite sequence $(w_{1},\ldots,w_{k})$ such that $(z'_{1},\ldots,z'_{m},w_{1},\ldots,w_{k}) = (z_{1},\ldots,z_{n})$. We call $T$ {\bf well-founded} if it satisfies the following property with respect to $<$: \\

For any subset $P \subseteq T$, if $P$ satisfies the condition: $\forall x\in T\forall y\in T(((y < x) \imply y \in P) \imply x \in P)$, then $P = T$.\\

We call the above condition {\bf the principle of $\epsilon$-induction} and we call a subset $P$ satisfying the premise of the principle of $\epsilon$-induction {\bf $<$-inductive}.

\end{defn}

\begin{lem}

$W_{f} \cong \poly_{f}(W_{f}) = \sum_{y \in Y} W_{f}^{f^{-1}(y)}$.

\end{lem}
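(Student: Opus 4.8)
The plan is to realize the isomorphism as a pair of mutually inverse bijections: a tree-forming map $\omega \colon \poly_{f}(W_{f}) = \sum_{y \in Y} W_{f}^{f^{-1}(y)} \to W_{f}$ that assembles a new tree from a root label together with a family of subtrees, and a dismantling map $\partial \colon W_{f} \to \poly_{f}(W_{f})$ that reads off the root and the subtrees of a given tree. For a sequence $s = (w_{1},\ldots,w_{k})$ I write $(y,x)\frown s$ for $(y,x,w_{1},\ldots,w_{k})$.

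First I would define $\omega$. Given $y \in Y$ and a family $t \colon f^{-1}(y) \to W_{f}$, set
\[
\omega(y,t) = \{(y)\} \cup \{(y,x) : x \in f^{-1}(y)\} \cup \{(y,x)\frown s : x \in f^{-1}(y),\ s \in t(x)\}.
\]
I would then check that $\omega(y,t)$ is an $f$-tree directly from the axioms: $(y)$ is its unique length-one sequence; it is closed under prefixes; the parity, unique-successor and fiber conditions at and below the length-two nodes $(y,x)$ are inherited from the corresponding conditions for the $f$-tree $t(x)$ after shifting by the prefix $(y,x)$, while the fiber condition at the root is exactly $\{x : f(x) = y\} = f^{-1}(y)$. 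The substantive point is that $\omega(y,t)$ is well-founded whenever each $t(x)$ is. Given a $<$-inductive $P \subseteq \omega(y,t)$, for each $x$ I set $P_{x} = \{s \in t(x) : (y,x)\frown s \in P\}$; since the proper extensions of $(y,x)\frown s$ in $\omega(y,t)$ are exactly the sequences $(y,x)\frown s'$ with $s'$ a proper extension of $s$ in $t(x)$, each $P_{x}$ is $<$-inductive in $t(x)$, hence $P_{x} = t(x)$ by well-foundedness of $t(x)$. Then every descendant of $(y,x)$ lies in $P$, so $(y,x) \in P$, and finally every descendant of $(y)$ lies in $P$, so $(y) \in P$; thus $P = \omega(y,t)$.

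Conversely, given $T \in W_{f}$, let $y$ be the unique element with $(y) \in T$ and, for each $x \in f^{-1}(y)$, set $T_{x} = \{(w_{1},\ldots,w_{k}) : (y,x,w_{1},\ldots,w_{k}) \in T\}$, defining $\partial(T) = (y,(T_{x})_{x \in f^{-1}(y)})$. Each $T_{x}$ inherits the $f$-tree axioms from $T$, and I would establish its well-foundedness by a reverse transport: from a $<$-inductive $P \subseteq T_{x}$ I form the subset $Q \subseteq T$ consisting of all sequences of $T$ not of the form $(y,x)\frown s$ together with the sequences $(y,x)\frown s$ for $s \in P$; one checks $Q$ is $<$-inductive in $T$ (a node outside the subtree is automatically in $Q$, while $(y,x)\frown s$ with all descendants in $Q$ forces $s \in P$ by $<$-inductiveness of $P$), so $Q = T$ and therefore $P = T_{x}$.

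It then remains to verify that $\omega$ and $\partial$ are mutually inverse, which is pure unfolding: $\partial(\omega(y,t))$ returns $y$ as the unique length-one label and returns $T_{x} = t(x)$ by definition, while $\omega(\partial(T)) = T$ because the $f$-tree axioms force $(y)$ to be the unique root, $\{(y,x) : x \in f^{-1}(y)\}$ to be precisely the length-two sequences, and $(y,x)\frown s \in T$ to hold exactly when $s \in T_{x}$. The main obstacle I anticipate is the bookkeeping in the two well-foundedness arguments, namely matching the order $<$ on $T$ with the orders on the subtrees $t(x)$ and $T_{x}$ and, in the backward direction, checking that the enlarged set $Q$ really is $<$-inductive; every other step is a routine verification of the tree axioms.
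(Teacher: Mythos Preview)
Your proposal is correct and follows essentially the same approach as the paper: both construct the tree-forming map by adjoining a root $(y)$ and length-two nodes $(y,x)$ to prefix-shifted copies of the $t(x)$, and both construct the inverse by reading off the root and stripping the prefix $(y,x)$ from the remaining sequences. The well-foundedness arguments are also the same in spirit; the only cosmetic difference is that in the backward direction you fix a single $x$ and pad $P$ out to $Q$ by throwing in all sequences outside the $(y,x)$-subtree, whereas the paper assembles inductive subsets for all $x$ simultaneously into one big subset of $T$---but both reduce to the same prefix-shifting observation about $<$.
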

\begin{proof}

First we construct $\alpha_{f}: \sum_{y \in y} W_{f}^{f^{-1}(y)} \to W_{f}$. Given $(y,t) \in \sum_{y \in y} W_{f}^{f^{-1}(y)} $ and $x \in f^{-1}(y)$, define $T'_{x,y,t} = \{(y,x,z_{1},\ldots,z_{n})| (z_{1},\ldots,z_{n}) \in t(x) \}$. Then we have $\alpha_{f}(y,t) = \{y\} \cup \{(y,x)| x \in f^{-1}(y)\} \cup \bigcup_{x \in f^{-1}(y)} T'_{x,y,t}$. $\alpha_{f}(y,t)$ is an $(X + Y)$-labeled tree since $y \in \alpha_{f}(y,t)$ is the unique element of length $1$ and for $n \geq 1$ and $z = (z_{1},\ldots,z_{n + 1}) \in \alpha_{f}(y,t)$ for some $x \in f^{-1}(y)$, either  $z = (y,x)$ or $z = (y,x,z'_{1},\ldots,z'_{n})$ for $(z'_{1},\ldots,z'_{n}) \in t(x)$. Thus $(z_{1},\ldots,z_{n}) \in \alpha_{f}(y,t)$ since $y, (y,x) \in \alpha_{f}(y,t)$ and $t(x)$ is an $(X + Y)$-labeled tree. $\alpha_{f}(y,t)$ is an f-tree as witness by the $y$ and $(y,x)$'s in $\alpha_{f}(y,t)$ and by the fact that $t(x)$ is an f-tree. Suppose there is a subset $P \subseteq \alpha_{f}(y,t)$ that is $<$-inductive, then set $p(x) = \{(z_{1},\ldots,z_{n})  \in t(x)| (y,x,z_{1},\ldots,z_{n}) \in P \cap T'_{x,y,t}  \}$. p(x) is $<$-inductive since if $(z_{1},\ldots,z_{n}) < (z'_{1},\ldots,z'_{m}) $ in $t(x)$, then $(y,x,z_{1},\ldots,z_{n}) < (y,x,z'_{1},\ldots,z'_{m}) $ in $T'_{x,y,t}$. Thus $p(x) = t(x)$. So $T'_{x,y,t} \subseteq P$ for all $y$ and $x \in f^{-1}(y)$. Since $P$ is $<$-inductive, then $ \{y\} \cup \{(y,x)| x \in f^{-1}(y)\} \subseteq P$ for all $y$. Therefore, $P = \alpha_{f}(y,t)$.\\

Given $w \in W_{f}$, we construct $(y_{w},t_{w}) \in \sum_{y \in y} W_{f}^{f^{-1}(y)}$, where $y_{w}$ is the unique element in $w$ of length $1$ and $t_{w}(x) = \{(z_{1},\ldots, z_{n})|  (y_{w},x,z_{1},\ldots,z_{n}) \in w\}$. Each $t_{w}(x)$ is an $(X + Y)$-labeled tree since for $(y_{w},x) \in w$, there is an unique $z$ such that $(y_{w},x,z) \in w$ and $w$ is also an $(X + Y)$-labeled tree. $t_{w}(x)$ is also an f-tree since $w$ is. Suppose for each $x$, there is a $<$-inductive subset $p(x) \subseteq t_{w}(x)$, then set $p'(x) = \{(y,x,z_{1},\ldots,z_{n})| (z_{1},\ldots,z_{n}) \in p(x)\}$ and $P =\bigcup_{x \in f^{-1}(y)} p'(x) \cup \{y_{w}\} \cup \{(y_{w},x)| x \in f^{-1}(y_{w})\} $. P is $<$-inductive since for $(y_{w},x, z_{1},\ldots, z_{n}) \in T'_{x,y_{w},t_{w}}$, if $(y_{w},x',z'_{1},\ldots, z'_{m}) < (y_{w},x, z_{1},\ldots, z_{n})$, then $x' = x$, $(z'_{1},\ldots, z'_{m}) \in t_{w}(x)$, and $(z'_{1},\ldots, z'_{m}) < (z_{1},\ldots,z_{n})$ in $t_{w}(x)$ and $ \{y_{w}\} \cup \{(y_{w},x)| x \in f^{-1}(y_{w})\} $ are already in $P$. Thus $P = w$. So $p'(x) = T'_{x,y_{w},t_{w}}$, therefore $t_{w}(x) = p(x)$.\\

It is clear that $\alpha_{f}(y_{w},t_{w}) = w$. Suppose we have $(y',t')$ such that $\alpha_{f}(y',t') = w$. Then $y' = y_{w}$ and $t'(x) = \{(z_{1},\ldots, z_{n})| (y_{w},x,z_{1},\ldots,z_{n}) \in w\} = t_{w}(x)$. Thus $(y',t') = (y_{w},t_{w})$, and $\alpha_{f}$ is a bijection.

\end{proof}

\begin{cor}

$(W_{f},\alpha_{f})$ is a $\poly_{f}$-algebra.

\end{cor}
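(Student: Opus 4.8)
The plan is to observe that this corollary is immediate from the definition of a $\poly_f$-algebra together with the preceding lemma, so essentially all the work has already been done. Recall that for an endofunctor $F$, an object of $\Alg(F)$ is precisely a pair $(c, \alpha_c \colon F(c) \to c)$ consisting of an object $c$ and a structure map out of $F(c)$. Thus to exhibit $(W_f, \alpha_f)$ as a $\poly_f$-algebra I only need to verify that $\alpha_f$ is a genuine morphism whose domain is $\poly_f(W_f)$ and whose codomain is $W_f$.

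For this I would unfold the identification $\poly_f(W_f) = \sum_{y \in Y} W_f^{f^{-1}(y)}$ coming from the explicit description of the polynomial functor, and note that the previous lemma constructs exactly such a map $\alpha_f \colon \sum_{y \in Y} W_f^{f^{-1}(y)} \to W_f$, sending $(y,t)$ to the well-founded $f$-tree $\{y\} \cup \{(y,x) \mid x \in f^{-1}(y)\} \cup \bigcup_{x \in f^{-1}(y)} T'_{x,y,t}$. Since $\poly_f(W_f)$ is by definition this same coproduct, $\alpha_f$ has the required domain and codomain, and hence $(W_f, \alpha_f)$ is by definition an object of $\Alg(\poly_f)$.

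There is no genuine obstacle here: the substantive content — that $\alpha_f$ is well-defined, that it lands in well-founded $f$-trees, and that it is in fact a bijection — was already carried out in the lemma, and bijectivity is strictly stronger than what the notion of algebra requires. I would therefore present the corollary as a one-line consequence of the lemma, perhaps remarking that the bijectivity established above is precisely what will later let $(W_f, \alpha_f)$ be recognized as the \emph{initial} $\poly_f$-algebra, i.e. the dependent $W$-type, even though initiality is not asserted by the present statement.
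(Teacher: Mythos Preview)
Your proposal is correct and matches the paper's treatment: the corollary is stated without proof immediately after the lemma establishing that $\alpha_f$ is a bijection $\poly_f(W_f) \to W_f$, so it is indeed meant to be read as an immediate consequence of the definition of a $\poly_f$-algebra.
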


\begin{defn}

We define a relation $\DirST$ on $W_{f}$ where $\DirST(w,w')$ holds when there exists $(y,t) \in \poly_{f}(W_{f})$ and $x \in f^{-1}(y)$ such that $w = \alpha_{f}(y,t)$ and $t(x) = w$. We define the relation $\SubTree$ on $W_{f}$ as the reflexive and transitive closure of $\DirST$. \\

\end{defn}

\begin{lem}

$W_{f}$ is well-founded with respect to $\DirST$

\end{lem}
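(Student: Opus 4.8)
\emph{Interpretation and plan.} Unwinding the definition (by analogy with the principle of $\epsilon$-induction used to define well-founded $f$-trees), the assertion that $W_{f}$ is well-founded with respect to $\DirST$ is the statement that the principle of $\DirST$-induction holds: if $P \subseteq W_{f}$ has the property that $w \in P$ whenever every direct subtree $w'$ of $w$ (i.e.\ every $w'$ with $\DirST(w,w')$) lies in $P$, then $P = W_{f}$. The plan is to deduce this global induction principle from the well-foundedness of each \emph{individual} tree $w \in W_{f}$, by setting up the dictionary between the iterated subtrees of $w$ and the nodes of $w$: passing to a direct subtree will correspond to moving strictly downward in the order $<$ on $w$, and the ambient $\epsilon$-induction on $w$ will then do the work.

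\emph{Auxiliary step: subtrees lie in $W_{f}$.} For $w \in W_{f}$ and an odd-length node $s = (z_{1},\dots,z_{2n+1}) \in w$ (so $z_{2n+1} \in Y$), write $w \restriction s = \{(z_{2n+1})\cdot \tau : s \cdot \tau \in w\}$ for the re-rooted subtree. I would first check $w \restriction s \in W_{f}$: it is an $f$-tree node-by-node by inheritance from $w$, and for well-foundedness, given a $<$-inductive $P' \subseteq w \restriction s$ one pushes it forward along the order-embedding $\tau \mapsto s \cdot \tau$ and unions with every node of $w$ not having $s$ as a prefix. This set is $<$-inductive in $w$, the only case to verify being a node with $s$ as a prefix, for which every strictly smaller node again has $s$ as a prefix; by well-foundedness of $w$ it is all of $w$, forcing $P' = w \restriction s$.

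\emph{Subtree–node correspondence and the main induction.} Writing $w \restriction s = \alpha_{f}(y',t')$ with $y' = z_{2n+1}$, its direct subtrees are the $t'(x)$ for $x \in f^{-1}(y')$; unwinding the formula for the inverse $w \mapsto (y_{w},t_{w})$ of $\alpha_{f}$ established above yields $t'(x) = w \restriction \hat{s}$, where $\hat{s} = (z_{1},\dots,z_{2n+1},x,y'')$ and $y''$ is the unique $Y$-child below the $X$-node $s\cdot(x)$. Since $\hat{s}$ strictly extends $s$, we have $\hat{s} < s$ in $w$. Now fix a $\DirST$-inductive $P \subseteq W_{f}$ and any $w \in W_{f}$; it suffices to show $w \in P$. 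Set $R = \{ s \in w : s \text{ has odd length} \imply w\restriction s \in P \}$ and show $R$ is $<$-inductive. Even-length nodes lie in $R$ vacuously. For an odd-length $s$ with all $s' < s$ in $R$: every direct subtree of $w\restriction s$ is $t'(x) = w\restriction\hat{s}$ with $\hat{s} < s$ an odd-length node, so $\hat{s} \in R$ gives $w\restriction\hat{s} \in P$; thus all direct subtrees of $w\restriction s$ lie in $P$, and $\DirST$-inductivity of $P$ (applied to $w\restriction s \in W_{f}$) yields $w\restriction s \in P$, i.e.\ $s \in R$. The leaves, namely the $Y$-nodes with empty fibre, are exactly the instances where this family of direct subtrees is empty, so no separate base case is needed. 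By well-foundedness of $w$ we get $R = w$; the length-one root lies in $R$, and since $w\restriction(\text{root}) = w$ this gives $w \in P$. As $w$ was arbitrary, $P = W_{f}$.

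\emph{Main obstacle.} The substance is entirely bookkeeping: pinning down the re-rooting correspondence $t'(x) = w\restriction\hat{s}$ precisely, and verifying that $\epsilon$-induction on a subtree lifts to $\epsilon$-induction on the ambient tree. Once this node-to-subtree dictionary is in place, the whole argument collapses to the single observation that passing to a direct subtree strictly increases depth, hence strictly decreases the relevant node in the order $<$.
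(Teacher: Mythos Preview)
Your proof is correct and follows essentially the same strategy as the paper: translate the $\DirST$-inductive subset $P$ into a $<$-inductive subset of each individual tree $w$ via the node-to-subtree correspondence, then invoke the well-foundedness of $w$. The paper (adapting an argument of Blass) builds this subset slightly differently---it throws in an even-length node only when its odd-length parent already codes a subtree in $P$, whereas you include all even-length nodes vacuously---but this is a cosmetic difference and your version is arguably cleaner. Your explicit auxiliary step verifying $w\restriction s \in W_{f}$ is something the paper leaves implicit.
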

\begin{proof}

Here we adapt an argument from Proposition $2$ of Section $5$ of \cite{Blass1983}. Suppose we have a $\DirST$-inductive subset $P \subseteq W_{f}$, then for $w \in W_{f}$ define $P_{w} = \{w' \in P | \SubTree(w',w)\}$. First we observe that for any subtree $w'$ of $w$, what $\SubTree$ states is that either there is a finite sequence $((y_{0},t_{0},x_{0}),\ldots, (y_{n},t_{n},x_{n}))$ such that $\alpha_{f}(y_{0},t_{0}) = w$, for $i < n$, $t_{i}(x_{i}) = \alpha_{f}(y_{i + 1},t_{i + 1})$ and $t_{n}(x_{n}) = w'$ or $w' = w$. This means that $P_{w}$ induces a subset $[P_{w}] \subseteq w$ where $[P_{w}]$ is the set of sequences $(y_{0},x_{0},\ldots,y_{n},x_{n},y_{n + 1})$ and $(y_{0},x_{0},\ldots,y_{n},x_{n},y_{n + 1},x_{n + 1})$ such that there exists a $w' \in P_{w}$ and a sequence $((y_{0},t_{0},x_{0}),\ldots, (y_{n},t_{n},x_{n}))$ which makes $w'$ a subtree of $w$, such that $(y_{n + 1}) \in w'$ and $x_{n + 1} \in f^{-1}(y_{n + 1})$. Furthermore, if $\alpha_{f}(y,t) = w \in P_{w}$, then $(y), (y,x) \in [P_{w}]$ for all $x \in f^{-1}(y)$.\\

We seek to show that $[P_{w}]$ is $<$-inductive. Given a sequence $(z_{1},\ldots,z_{n}) \in w$, suppose for $(z_{1},\ldots, z_{n},z'_{1},\ldots, z'_{n'}) < (z_{1},\ldots,z_{n}) $, $(z_{1},\ldots, z_{n},z'_{1},\ldots, z'_{n'}) \in [P_{w}]$; if $n$ is odd then that sequence witnesses a subtree $w'$ of $w$ in $P_{w}$. Then any sequence $(z_{1},\ldots,z_{n},x,y) < (z_{1},\ldots,z_{n})$ corresponds to a direct subtree $t(x)$ where $t: f^{-1}(z_{n}) \to W_{f}$ such that $\alpha_{f}(z_{n},t) = w'$ and $(y) \in t(x)$. Then each $(z_{1},\ldots,z_{n},x,y)$ witnesses $t(x) \in P_{w} \subseteq P$. Then $w' \in P_{w} \subseteq P$ since $P$ is $\DirST$-inductive. So $(z_{1},\ldots,z_{n}) \in [P_{w}]$ since the sequence witnesses $w' \in P$ as a subtree of $w$. \\

If $n$ is even then $(z_{1},\ldots,z_{n - 1})$ exhibits some $w' = \alpha_{f}(z_{n - 1},t)$ as a subtree of $w$. Then $(z_{1},\ldots,z_{n - 1},x) \in [P_{w}]$ for $x \in f^{-1}(z_{n -1})$ by definition of $[P_{w}]$. Then $(z_{1},\ldots,z_{n - 1},z_{n}) \in [P_{w}]$. Therefore, $[P_{w}]$ is $<$-inductive and $[P_{w}] = w$. By definition of $[P_{w}]$, $w \in P_{w} \subseteq P$. Therefore $P = W_{f}$.\\

\end{proof}

\begin{lem}

If $f$ is surjective, then $W_{f} = \emptyset$.

\end{lem}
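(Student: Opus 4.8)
The plan is to show that surjectivity of $f$ forces every node of a putative well-founded $f$-tree to have at least one child, so that no \emph{nonempty} $f$-tree can be well-founded; since the empty set of sequences is not even an $(X+Y)$-labeled tree (it contains zero sequences of length $1$ rather than one), this leaves $W_{f}$ with no elements at all. Concretely, I would argue by contradiction: assume $T \in W_{f}$ and derive $T = \emptyset$, contradicting the requirement that an $(X+Y)$-labeled tree contain exactly one sequence of length $1$.

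The key lemma I would prove first is that, when $f$ is surjective, every $u = (z_{1},\ldots,z_{n}) \in T$ admits a proper extension $u' = (z_{1},\ldots,z_{n},z_{n+1}) \in T$, so that $u' < u$ in the descendant order. I split into two cases on the parity of $n$. If $n$ is even, then $z_{n} \in X$ and the second defining clause of an $f$-tree already guarantees a unique $z_{n+1}$ with $(z_{1},\ldots,z_{n+1}) \in T$, so a child exists unconditionally. If $n$ is odd (including the root $n=1$), then $z_{n} \in Y$ and the third clause identifies the set of children of $u$ with the fiber $\{x \in X \mid f(x) = z_{n}\} = f^{-1}(z_{n})$; here surjectivity of $f$ is exactly what makes this fiber nonempty, so $u$ again has a child. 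Thus under surjectivity every node of $T$ has a proper descendant.

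With the lemma in hand, I would feed the empty subset $P = \emptyset \subseteq T$ into the principle of $\epsilon$-induction. A subset $P$ is $<$-inductive when for every $u \in T$, the condition that all $v < u$ lie in $P$ implies $u \in P$. For $P = \emptyset$ the conclusion $u \in \emptyset$ is always false, so the implication holds for $u$ precisely when its hypothesis fails, i.e. precisely when $u$ has some proper descendant $v < u$ in $T$. By the lemma this holds for every $u \in T$, so $\emptyset$ is $<$-inductive. Well-foundedness then forces $\emptyset = T$. But every $f$-tree contains its unique length-$1$ root, so $T \neq \emptyset$, a contradiction. Hence no $T \in W_{f}$ exists and $W_{f} = \emptyset$.

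I expect the only real subtlety to be the correct reading and use of the well-foundedness condition: the slick step is recognizing that ``every node has a child'' is \emph{exactly} the statement that the empty subset is $<$-inductive, after which well-foundedness does all the work. Equivalently one could construct an explicit infinite descending chain $(z_{1}) > (z_{1},z_{2}) > \cdots$ by repeatedly choosing children and observe that its existence contradicts well-foundedness; I find the $P = \emptyset$ formulation cleaner since it matches the stated $\epsilon$-induction principle verbatim and sidesteps any appeal to dependent choice.
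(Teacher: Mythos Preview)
Your proof is correct and shares the paper's core idea: surjectivity guarantees that ``everything has a successor,'' so the empty subset is inductive, and well-foundedness then collapses everything to $\emptyset$. The difference is in \emph{where} you apply this. You work internally to a single tree $T$, using the sequence-extension order $<$ that appears in the definition of well-foundedness; you then derive $T=\emptyset$ and get a contradiction from the root. The paper instead works externally at the level of $W_{f}$ itself, using the direct-subtree relation $\DirST$ and the lemma (proved just above) that $W_{f}$ is well-founded with respect to $\DirST$: since every $w=\alpha_{f}(y,t)$ has some $t(x)$ as a direct subtree, $\emptyset\subseteq W_{f}$ is $\DirST$-inductive and hence $W_{f}=\emptyset$ directly. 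Your route is a bit more self-contained, since it only uses the definition of $W_{f}$ and not the auxiliary $\DirST$-well-foundedness lemma; the paper's route is shorter once that lemma is in hand and avoids the final contradiction step.
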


\begin{proof}

Suppose for $\alpha_{f}(y,t) = w \in W_{f}$. Since $f^{-1}(y)$ is inhabited for all $y$, there will always exists a $t(x)$ for some $x \in f^{-1}(y)$. So the condition $\forall (w' \in W_{f})(\DirST(w',w) \imply w' \in \emptyset)$ is always false for $w \in W_{f}$. Hence $\emptyset$ is vacuously $\DirST$-inductive. Therefore $W_{f} = \emptyset$.

\end{proof}

\begin{lem}
\label{fwtype}

$(W_{f},\alpha_{f})$ is the $W$-type of $f:X \to Y$.

\end{lem}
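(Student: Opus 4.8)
The plan is to show that $(W_{f},\alpha_{f})$ is the initial object of $\Alg(\poly_{f})$, i.e.\ that for every $\poly_{f}$-algebra $(C,\gamma\colon \poly_{f}(C)\to C)$ there is a unique algebra morphism $h\colon (W_{f},\alpha_{f})\to (C,\gamma)$. Since $\alpha_{f}$ is a bijection, every $w\in W_{f}$ is uniquely of the form $w=\alpha_{f}(y_{w},t_{w})$ with $(y_{w},t_{w})\in\poly_{f}(W_{f})$, and each value $t_{w}(x)$ for $x\in f^{-1}(y_{w})$ is a direct subtree of $w$, i.e.\ $\DirST(t_{w}(x),w)$. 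First I would use this to define $h$ by well-founded recursion along $\DirST$, setting
$$h(w)=\gamma\bigl(y_{w},\; x\mapsto h(t_{w}(x))\bigr).$$
The recursive calls are all on $\DirST$-predecessors of $w$, so the preceding lemma that $W_{f}$ is well-founded with respect to $\DirST$ licenses this definition through the recursion theorem for well-founded relations.

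Next I would check that $h$ is an algebra morphism. Unwinding the definition on $w=\alpha_{f}(y,t)$ gives $h(\alpha_{f}(y,t))=\gamma(y,h\circ t)$; since $\poly_{f}(h)(y,t)=(y,h\circ t)$ by definition of the polynomial functor, this is precisely the commutativity $h\circ\alpha_{f}=\gamma\circ\poly_{f}(h)$, so $h\in\Alg(\poly_{f})$. For uniqueness I would argue by $\epsilon$-induction along $\DirST$. Given any algebra morphism $g\colon (W_{f},\alpha_{f})\to (C,\gamma)$, set $P=\{\,w\in W_{f}\mid g(w)=h(w)\,\}$. To see that $P$ is $\DirST$-inductive, take $w=\alpha_{f}(y,t)$ and suppose every $w'$ with $\DirST(w',w)$ lies in $P$; the direct subtrees of $w$ are exactly the $t(x)$, so $g(t(x))=h(t(x))$ for all $x\in f^{-1}(y)$. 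Then, using that both $g$ and $h$ are algebra morphisms, $g(w)=\gamma(y,g\circ t)=\gamma(y,h\circ t)=h(w)$, hence $w\in P$. By well-foundedness $P=W_{f}$, so $g=h$.

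The main obstacle will be the existence step: justifying the well-founded recursion that produces $h$. Although the uniqueness argument uses only well-founded induction (the $\epsilon$-induction principle already built into the definition of $W_{f}$), the construction of $h$ requires the recursion theorem, which in turn needs $\DirST$ to be set-like and well-founded. The well-foundedness is supplied by the earlier lemma; set-likeness is immediate since the direct subtrees of a fixed $f$-tree form a set. Once the recursion is in place, the algebra-morphism and uniqueness verifications above are routine.
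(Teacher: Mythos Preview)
Your proposal is correct. The uniqueness argument via the equalizer being $\DirST$-inductive is exactly what the paper does.

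The existence step, however, is handled differently. You invoke the recursion theorem for well-founded set-like relations as a black box. The paper instead unpacks that theorem explicitly, following the argument of Theorem~2.1.5 in \cite{Berg2006}: for each $w\in W_{f}$ it introduces the set $\Paths_{w}$ of finite $\DirST$-chains starting at $w$, defines an \emph{attempt} on $w$ to be a function $\gamma_{w}\colon\Paths_{w}\to C$ satisfying the recursive equation along every chain, and then shows by $\DirST$-induction that the set $U_{f}=\{\,w\mid w\text{ has a unique attempt}\,\}$ equals $W_{f}$. The morphism $h$ is then defined by $h(w)=\gamma_{w}((w))$.

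Your route is shorter and entirely adequate in $\Set$. The paper's explicit construction via attempts has the advantage of being manifestly phrased using only $\epsilon$-induction (no separate recursion principle needs to be cited), which makes it easier to transport to the internal language of other locally cartesian closed categories later in the paper---indeed, the same attempt-based template is reused in the proofs of the dependent $W$-type (Lemma after Definition~\ref{fghtree}) and of Lemma~\ref{wgini} for groupoid assemblies.
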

\begin{proof}

We will use the argument in Theorem 2.1.5 of \cite{Berg2006}. Suppose we have a $\poly_{f}$-algebra $(X,\beta)$. For $w \in W_{f}$, we define $\Paths_{w} := \{(w_{0},\ldots,w_{n})| w_{0} = w \land \DirST(w_{i + 1},w_{i})\text{ for }i < n \}$. We call a function $\gamma:\Paths_{w} \to X$ an attempt on $w$ when for $(w_{0},\ldots,w_{n}) \in \Paths_{w}$, $\gamma(w_{0},\ldots,w_{n}) = \beta(y,[x \in f^{-1}(y) \mapsto \gamma(w_{0},\ldots,w_{n},t(x))])$ where $w_{n} = \alpha_{f}(y,t)$. Define $U_{f} \subseteq W_{f}$ where $U_{f} = \{w \in W_{f}| w\text{ has a unique attempt }\gamma_{w}:\Paths_{w}\to X\}$. We will show that $U_{f}$ is $\DirST$-inductive. Suppose for $w = \alpha_{f}(y,t)$ such that for all $x \in f^{-1}(y)$, $t(x) \in U_{f}$; Define $\gamma_{w}:\Paths_{w} \to X$ where $\gamma_{w}((w)) = \beta(y, [x \in f^{-1}(y) \mapsto \gamma_{t(x)}( (t(x)) )])$ and $\gamma_{w}((w,t_{0},\ldots,t_{n})) = \gamma_{t_{0}}(t_{0},\ldots,t_{n})$. One should be able to see that this is an attempt on $w$. Suppose that there is another attempt $\gamma'_{w}:\Paths_{w} \to X$ on $w$. We define functions $\gamma'_{t(x)}:\Paths_{t(x)}\to X$ where $\gamma'_{t(x)}(t(x),\ldots,t_{n}) = \gamma'_{w}(w,t(x),\ldots,t_{n})$. $\gamma'_{t(x)}$ is an attempt on $t(x)$ since $\gamma'_{w}$ is an attempt on $w$. Then $\gamma'_{t(x)} = \gamma_{t(x)}$. As for $(w) \in \Paths_{w}$, we have $\gamma'_{w}((w)) = \beta(y, [x \in f^{-1}(y) \mapsto \gamma'_{w}( (w,t(x)) )]) = \beta(y, [x \in f^{-1}(y) \mapsto \gamma'_{t(x)}( (t(x)) )]) = \beta(y, [x \in f^{-1}(y) \mapsto \gamma_{t(x)}( (t(x)) )]) = \gamma_{w}(w)$. Then $\gamma'_{w} = \gamma_{w}$ and $w \in U_{f}$. Therefore $U_{f}$ is $\DirST$-inductive and $U_{f} = W_{f}$.\\

This allows us to define a function $\gamma:W_{f} \to X$ where $\gamma(w) = \gamma_{w}((w))$. $\gamma$ is a $\poly_{f}$-morphism since if $w = \alpha_{f}(y,t)$ then $\gamma(w) =  \gamma_{w}((w)) = \beta(y,[x \in f^{-1}(y) \mapsto \gamma_{w}(w,t(x))]) = \beta(y,[x \in f^{-1}(y) \mapsto \gamma_{t(x)}((t(x)))]) = \beta(y,[x \in f^{-1}(y) \mapsto \gamma(t(x))]) = \beta\circ \poly_{f}(\gamma)(y,t)$.\\

Suppose we have another $\poly_{f}$-algebra morphism $h: W_{f} \to X$, then we take the equalizer of $h$ and $\gamma$, $\Eq{\gamma,h} = \{w \in W_{f}| h(w) = \gamma(w)\}$. We show that $\Eq{\gamma,h}$ is $\DirST$-inductive. Suppose for $w = \alpha_{f}(y,t)$, and for $x \in f^{-1}(y)$, $t(x) \in \Eq{\gamma,h}$, then $$\beta(y,[x \in f^{-1}(y) \mapsto h(t(x))]) = \beta(y,[x \in f^{-1}(y) \mapsto \gamma(t(x))]) $$ $$\beta \circ \poly_{h}(y,t) = \beta \circ \poly_{\gamma}(y,t)$$ $$h(\alpha(y,t)) = \gamma(\alpha(y,t))$$ Hence $w \in \Eq{\gamma,h}$ making $\Eq{\gamma,h}$ $\DirST$-inductive. Therefore $\Eq{\gamma,h} = W_{f}$ and $\gamma = h$.\\

Therefore, $(W_{f},\alpha_{f})$ is the initial algebra of $\poly_{f}$.


An important example of $W$-types is the natural numbers object:\\

\begin{defn}

An object $\nat$ in a finite product category $\cC$ is called a {\bf natural numbers object} when equipped with morphisms $\nm{0}: 1 \to \nat$ and $s: \nat \to \nat $ such that for every object $C$ in $\cC$, if there exists morphisms $\nm{c}: 1 \to C$ and $f: C \to C$, there exists a unique morphism $\overline{f}: \nat \to C$ making the following diagram commute:

\[\begin{tikzcd}
	1 && \nat && \nat \\
	\\
	&& C && C
	\arrow["{\nm{0}}", from=1-1, to=1-3]
	\arrow["{\nm{c}}"', from=1-1, to=3-3]
	\arrow["s", from=1-3, to=1-5]
	\arrow["{\overline{f}}"', from=1-3, to=3-3]
	\arrow["{\overline{f}}", from=1-5, to=3-5]
	\arrow["f"', from=3-3, to=3-5]
\end{tikzcd}\]

\end{defn}

The natural numbers object $\nat$ is given by the morphism $1 \to 1 + 1$.\\

\end{proof}


Given a diagram:
\[\begin{tikzcd}
	& X &&& Y \\
	\\
	Z &&&&& Z
	\arrow["f", from=1-2, to=1-5]
	\arrow["g"', from=1-2, to=3-1]
	\arrow["h", from=1-5, to=3-6]
\end{tikzcd}\]\\

in $\Set$, we construct an initial algebra for $\poly_{f,g,h}$. 

\begin{defn}
\label{fghtree}
Given the above diagram and the $W$-type $W_{f}$, we call a element $w \in W_{f}$ an {\bf (f,g,h)-tree} if when $w = \alpha_{f}(y,t)$, for $x \in f^{-1}(y)$, $t(x)$ is a $(f,g,h)$-tree and $g(x) = h(y_{x})$ where $t(x) = \alpha_{f}(y_{x},t_{x})$.

\end{defn}

We define $W_{f,g,h} \subseteq W_{f}$ as the set of {\bf well-founded (f,g,h)-trees}, and define a function $c_{f,g,h}:W_{f,g,h} \to Z$ where $c_{f,g,h}(w) = h(y)$ where $w = \alpha_{f}(y,t)$. When $Z$ is the terminal object, then $W_{f,g,h} = W_{f}$.\\


\begin{lem}
\label{fixpoint}

$c_{f,g,h} \cong \poly_{f,g,h}(c_{f,g,h})$, thus $c_{f,g,h}$ is a $\poly_{f,g,h}$-algebra.

\end{lem}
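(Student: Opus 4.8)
The plan is to show that the bijection $\alpha_f$ from the $W$-type of $f$ restricts to a bijection $\alpha_{f,g,h}$ which exhibits $c_{f,g,h}$ as a $\poly_{f,g,h}$-algebra. First I would unwind $\poly_{f,g,h}(c_{f,g,h}) = \sum_h \Pi_f g^{*}(c_{f,g,h})$ explicitly as an object of $\Set/Z$. Pulling $c_{f,g,h}$ back along $g \colon X \to Z$ yields the object $\{(x,w) \mid g(x) = c_{f,g,h}(w)\} \to X$ over $X$; applying $\Pi_f$ produces, over each $y \in Y$, the set of functions $\sigma \colon f^{-1}(y) \to W_{f,g,h}$ satisfying $g(x) = c_{f,g,h}(\sigma(x))$ for every $x \in f^{-1}(y)$; and applying $\sum_h$ re-indexes these fibers over $Z$ through $h$. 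Hence $\poly_{f,g,h}(W_{f,g,h})$ is identified with the set of pairs $(y,\sigma)$ of this form, with structure map $(y,\sigma) \mapsto h(y)$.

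Next I would define $\alpha_{f,g,h}$ as the restriction of $\alpha_f$ to such pairs, viewing $\sigma$ as a map into $W_f$. The key point is that $\alpha_f(y,\sigma)$ again lies in $W_{f,g,h}$: each $\sigma(x)$ is a well-founded $(f,g,h)$-tree, and if we write $\sigma(x) = \alpha_f(y_x,t_x)$, then the compatibility condition $g(x) = c_{f,g,h}(\sigma(x)) = h(y_x)$ is precisely the extra clause of Definition~\ref{fghtree}, so $\alpha_f(y,\sigma)$ is a well-founded $(f,g,h)$-tree. Commutativity over $Z$ is then immediate, since $c_{f,g,h}$ sends $\alpha_f(y,\sigma)$ to $h(y)$ by definition, matching the structure map of $\poly_{f,g,h}(c_{f,g,h})$.

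Finally I would verify that $\alpha_{f,g,h}$ is a bijection, leaning on the fact that $\alpha_f$ already is. Injectivity is inherited directly from $\alpha_f$. For surjectivity, given $w \in W_{f,g,h}$ I would take its unique preimage $w = \alpha_f(y,t)$ with $t \colon f^{-1}(y) \to W_f$; each $t(x)$ is a direct subtree of $w$, hence lies in $W_f$ and is well-founded, while the $(f,g,h)$-tree condition on $w$ forces $t(x) \in W_{f,g,h}$ together with $g(x) = h(y_x) = c_{f,g,h}(t(x))$, so $(y,t)$ is a legitimate element of $\poly_{f,g,h}(W_{f,g,h})$ mapping to $w$. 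Therefore $\alpha_{f,g,h}$ is an isomorphism in $\Set/Z$.

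The part requiring the most care is aligning the recursive clause $g(x) = h(y_x)$ in the definition of an $(f,g,h)$-tree with the fiberwise compatibility condition that defines $\Pi_f \circ g^{*}$, and confirming that well-foundedness transfers to direct subtrees so that the preimage pairs really live in $W_{f,g,h}$. Once this dictionary between the tree description of $W_{f,g,h}$ and the polynomial fibers is set up, both bijectivity and commutativity over $Z$ follow formally from the corresponding properties of $\alpha_f$.
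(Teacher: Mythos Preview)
Your proposal is correct and follows essentially the same approach as the paper: both unwind $\poly_{f,g,h}(c_{f,g,h})$ explicitly, define $\alpha_{f,g,h}$ as the restriction of $\alpha_f$, and verify the bijection by checking that the $(f,g,h)$-tree condition from Definition~\ref{fghtree} matches the compatibility constraint $g(x) = c_{f,g,h}(\sigma(x))$ coming from $\Pi_f \circ g^*$. The only cosmetic difference is that the paper writes elements as triples $(z,y,w)$ with $z = h(y)$ and constructs the inverse $r$ explicitly, whereas you use pairs $(y,\sigma)$ and argue injectivity/surjectivity directly---but these are the same argument.
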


\begin{proof}

The domain of $\poly_{f,g,h}(c_{f,g,h})$ can be expressed as the set $\{z \in Z, y \in h^{-1}(z), w: f^{-1}(y) \to W_{f,g,h}\space ~| \space ~ c_{f,g,h} \circ w = g|_{f^{-1}(y)}\}$. We have a morphism $\alpha_{f,g,h}: \dom(\poly_{f,g,h}(c_{f,g,h})) \to W_{f,g,h}$ where $\alpha_{f,g,h}(z,y,w) = \alpha_{f}(y,w)$. $\alpha_{f}(y,w)$ is an $(f,g,h)$-tree since for $x \in f^{-1}(y)$ where $w(x) = \alpha_{f}(y_{x},t_{x})$, $g(x) = c_{f,g,h} \circ w(x) = h(y_{x})$. $\alpha_{f,g,h}$ also extends to a morphism $\alpha_{f,g,h}: \poly_{f,g,h}(c_{f,g,h}) \to c_{f,g,h} $ which makes $c_{f,g,h}$ a $\poly_{f,g,h}$-algebra.\\

We also have a morphism $r: W_{f,g,h} \to \dom(\poly_{f,g,h}(c_{f,g,h}))$ where $r(w) = (h(y),y,t)$ where $w = \alpha_{f}(y,t)$. To see that $(h(y),y,t) \in \dom(\poly_{f,g,h}(c_{f,g,h}))$, observe that for $x \in f^{-1}(y)$, $t(x)$ is an $(f,g,h)$-tree and $c_{f,g,h} \circ t(x) = h(y_{x}) = g(x)$ for $t(x) = \alpha_{f}(y_{x},t_{x})$ by the fact that $w$ is a $(f,g,h)$-tree. $r$ also extends to a morphism $r: c_{f,g,h} \to \poly_{f,g,h}(c_{f,g,h})$.\\

Clearly $r = \alpha_{f,g,h}^{-1}$ follows from the fact that $\alpha_{f}$ is an isomorphism.\\

\end{proof}

\begin{lem}
\label{wellf}
$c_{f,g,h} $ is its own smallest  $\poly_{f,g,h}$ sub-algebra. 

\end{lem}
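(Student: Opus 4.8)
The plan is to read ``smallest sub-algebra'' as the assertion that $c_{f,g,h}$ admits no proper $\poly_{f,g,h}$ sub-algebra, and to deduce this from the well-foundedness of $W_f$ with respect to $\DirST$ proved above. The crux is to recognize that being a sub-algebra is the same condition as being $\DirST$-inductive, after which minimality is immediate from well-foundedness.

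First I would fix a sub-algebra, that is, a subobject $A \subseteq W_{f,g,h}$ in $\Set/Z$ together with a structure map $\alpha_A$ making the inclusion $m \colon (A,\alpha_A) \mono (c_{f,g,h},\alpha_{f,g,h})$ an algebra morphism; concretely this forces $\alpha_{f,g,h} \circ \poly_{f,g,h}(m)$ to factor through $m$. Unwinding the description of $\poly_{f,g,h}(c_{f,g,h})$ from Lemma \ref{fixpoint}, its domain consists of triples $(z,y,w)$ with $y \in h^{-1}(z)$ and $w \colon f^{-1}(y) \to W_{f,g,h}$ satisfying $c_{f,g,h} \circ w = g|_{f^{-1}(y)}$, and $\alpha_{f,g,h}(z,y,w) = \alpha_f(y,w)$. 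Hence the factorization condition says precisely: whenever $w = \alpha_f(y,t) \in W_{f,g,h}$ has all of its direct subtrees $t(x)$, for $x \in f^{-1}(y)$, lying in $A$, then $w \in A$. This is exactly the statement that $A$ is $\DirST$-inductive.

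Next I would upgrade the well-foundedness of $W_f$ to that of the subset $W_{f,g,h}$. The key point is that $W_{f,g,h}$ is closed under passage to direct subtrees: if $w = \alpha_f(y,t) \in W_f$ then each $t(x) \in W_f$ via the isomorphism $W_f \cong \poly_f(W_f)$, and if moreover $w$ is an $(f,g,h)$-tree then each $t(x)$ is again an $(f,g,h)$-tree by Definition \ref{fghtree}. Given a $\DirST$-inductive $A \subseteq W_{f,g,h}$, I set $A' = A \cup (W_f \setminus W_{f,g,h})$ and verify that $A'$ is $\DirST$-inductive in $W_f$: for $w \in W_f$ all of whose direct subtrees lie in $A'$, either $w \notin W_{f,g,h}$, so $w \in A'$ trivially, or $w \in W_{f,g,h}$, in which case all its direct subtrees lie in $W_{f,g,h}$, hence in $A' \cap W_{f,g,h} = A$, so $w \in A$ by $\DirST$-inductivity of $A$. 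Well-foundedness of $W_f$ then yields $A' = W_f$, whence $A = W_{f,g,h}$.

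Combining the two steps, every sub-algebra $A$ equals $W_{f,g,h}$, so $c_{f,g,h}$ is its own smallest $\poly_{f,g,h}$ sub-algebra. The main obstacle I anticipate is not any single hard estimate but the careful bookkeeping in two places: translating the categorical ``factors through $m$'' condition into the combinatorial $\DirST$-inductivity statement, and justifying that well-foundedness descends from $W_f$ to the subset $W_{f,g,h}$, which rests entirely on $W_{f,g,h}$ being closed under direct subtrees.
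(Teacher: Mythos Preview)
Your proposal is correct and follows essentially the same approach as the paper. The paper defines $D' = \{w \in W_f \mid w \in W_{f,g,h} \Rightarrow w \in D\}$, which is exactly your set $A' = A \cup (W_f \setminus W_{f,g,h})$, and verifies directly that $D'$ is $\DirST$-inductive in $W_f$; you have simply decomposed the same argument into two cleaner steps (sub-algebra $\Leftrightarrow$ $\DirST$-inductive in $W_{f,g,h}$, then well-foundedness descends via closure under direct subtrees).
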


\begin{proof}

Suppose we have an sub-algebra $d_{f,g,h}:D \to Z$ of $c_{f,g,h}$; then $D$ is a subobject of $W_{f,g,h}$. Define a subobject of $W_{f}$, $D' = \{w \in W_{f}| w \in W_{f,g,h} \imply w \in D\}$. We seek to show that $D'$ is $\DirST$-inductive. Suppose for $w  = \alpha(y,t) \in W_{f}$ and $x \in f^{-1}(y)$ we have that $t(x) \in D'$ and suppose that $w \in W_{f,g,h}$; then $t(x) \in W_{f,g,h}$ meaning that $t(x) \in D$. Furthermore we have that $g(x) = c_{f,g,h} \circ t(x) = d \circ t(x)$ since $d_{f,g,h}$ is a sub-algebra of $c_{f,g,h}$. Therefore $w = \alpha_{f}(y,t) \in D$. Therefore, $w \in D'$ making $D'$ $\DirST$-inductive. Then $D' = W_{f}$ and by implication $D = W_{f,g,h}$ making $c_{f,g,h} = d_{f,g,h}$.

\end{proof}

\begin{defn}
A $\poly_{f,g,h}$-algebra $c$ that satisfies both Lemma $\ref{wellf}$ and Lemma $\ref{fixpoint}$ is called the {\bf well-founded fixpoint} of $\poly_{f,g,h}$

\end{defn}

\begin{lem}

$(c_{f,g,h}, \alpha_{f,g,h})$ is the initial algebra of $\poly_{f,g,h}$.

\end{lem}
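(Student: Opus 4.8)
The plan is to show initiality directly: for an arbitrary $\poly_{f,g,h}$-algebra $(d\colon D \to Z, \beta\colon \poly_{f,g,h}(d) \to d)$ I will produce a unique algebra morphism $\gamma\colon (c_{f,g,h}, \alpha_{f,g,h}) \to (d,\beta)$. Unwinding the description of $\dom(\poly_{f,g,h}(c_{f,g,h}))$ from Lemma \ref{fixpoint}, such a $\gamma$ is a map $\gamma\colon W_{f,g,h} \to D$ lying over $Z$ (that is, $d \circ \gamma = c_{f,g,h}$) satisfying the recursion equation
\[ \gamma(\alpha_{f}(y,t)) = \beta\big(h(y),\, y,\, [\,x \in f^{-1}(y) \mapsto \gamma(t(x))\,]\big) \]
for every well-founded $(f,g,h)$-tree $w = \alpha_f(y,t)$. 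So everything reduces to solving this recursion uniquely.

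For existence I will run exactly the ``attempt'' construction used in the proof of Lemma \ref{fwtype}, but now indexed over $Z$ and restricted to $W_{f,g,h}$. The key point making the induction available is that $W_{f,g,h}$ is well-founded with respect to $\DirST$: by Definition \ref{fghtree} the direct subtrees of an $(f,g,h)$-tree are again $(f,g,h)$-trees, so $W_{f,g,h} \subseteq W_f$ is closed under $\DirST$-predecessors; hence a $\DirST$-inductive subset of $W_{f,g,h}$ extends, by adjoining $W_f \setminus W_{f,g,h}$, to a $\DirST$-inductive subset of the $\DirST$-well-founded $W_f$, forcing it to be all of $W_{f,g,h}$. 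Defining an attempt on $w$ as a choice of partial solution along $\DirST$-descending paths below $w$, I will show the set of trees carrying a unique attempt is $\DirST$-inductive, glue the attempts into $\gamma$, and read off from the recursion equation that $\gamma$ is a $\poly_{f,g,h}$-algebra morphism lying over $Z$.

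For uniqueness I will invoke Lemma \ref{wellf} rather than repeat the induction. Given two algebra morphisms $\gamma, \gamma'\colon c_{f,g,h} \to d$, their equalizer $\Eq{\gamma,\gamma'} \hookrightarrow W_{f,g,h}$ is closed under $\alpha_{f,g,h}$: if $w = \alpha_f(y,t)$ has all its direct subtrees $t(x)$ in $\Eq{\gamma,\gamma'}$, then the recursion equation gives $\gamma(w) = \beta(h(y),y,[x \mapsto \gamma(t(x))]) = \beta(h(y),y,[x \mapsto \gamma'(t(x))]) = \gamma'(w)$, so $w \in \Eq{\gamma,\gamma'}$. Thus $\Eq{\gamma,\gamma'}$ is a $\poly_{f,g,h}$-sub-algebra of $c_{f,g,h}$, and by the minimality of Lemma \ref{wellf} it equals $c_{f,g,h}$, whence $\gamma = \gamma'$.

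The main thing to watch is the bookkeeping across the slice over $Z$: I must check that the triples $(z,y,w) \in \dom(\poly_{f,g,h}(c_{f,g,h}))$, which carry the compatibility condition $c_{f,g,h} \circ w = g|_{f^{-1}(y)}$, correspond exactly to the $(f,g,h)$-tree condition $g(x) = h(y_x)$ of Definition \ref{fghtree}, and that the value $\beta(h(y),y,-)$ genuinely lies over $z = h(y) = c_{f,g,h}(\alpha_f(y,t))$, so that $\gamma$ really is a map over $Z$ and $\beta$ is applicable with its second argument landing in $h^{-1}(z)$. This is the only real subtlety; given Lemma \ref{fixpoint} and Lemma \ref{wellf} it is straightforward verification rather than a genuine obstacle, and the substance of the argument is identical to the $W_f$-case of Lemma \ref{fwtype}.
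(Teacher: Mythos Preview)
Your proposal is correct and follows essentially the same approach as the paper: both run the attempt construction of Lemma~\ref{fwtype} over $Z$, both use the equalizer-as-subalgebra argument together with Lemma~\ref{wellf} for uniqueness, and your observation that $W_{f,g,h}$ is $\DirST$-well-founded (by extending inductive subsets through $W_f \setminus W_{f,g,h}$) is exactly the content of the proof of Lemma~\ref{wellf}. The only cosmetic difference is that for existence the paper phrases the induction as ``$U_{f,g,h}$ is a $\poly_{f,g,h}$-subalgebra, hence all of $c_{f,g,h}$ by Lemma~\ref{wellf}'', whereas you phrase it as ``$U_{f,g,h}$ is $\DirST$-inductive in the $\DirST$-well-founded $W_{f,g,h}$''; these are the same statement.
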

\begin{proof}
We will use an argument similar to the one in Lemma $\ref{fwtype}$. Given a $\poly_{f,g,h}$-algebra, $(b:X \to Z,\beta:\poly_{f,g,h}(b) \to b )$ and an element $w \in W_{f,g,h}$, take $\Paths_{w}$ as defined in Lemma $\ref{fwtype}$ and define $c_{w}: \Paths_{w} \to Z$ where $c_{w}(w,\ldots, w_{n}) = c_{f,g,h}(w_{n})$. We call a morphism $\gamma: c_{w} \to b$ an attempt on $w$ iff for $(w,\ldots, w_{n}) \in \Paths_{w}$, $\gamma(w,\ldots,w_{n}) = \beta(h(y),y,[x \in f^{-1}(y) \mapsto \gamma(w,\ldots,w_{n},t(x))])$ where $w_{n} = \alpha_{f,g,h}(h(y),y, t)$. This definition still works in this setting because $b(\gamma(w,\ldots,w_{n},t(x))) = c_{w}(w,\ldots,w_{n},t(x)) = c_{f,g,h}(t(x)) = g(x)$ by the fact that $w$ is a $(f,g,h)$-tree.\\

Define a subset $U_{f,g,h} \subseteq W_{f,g,h}$ where $U_{f,g,h} = \{w \in W_{f,g,h}\space ~ |\space ~ w\text{ has a unique attempt} \}$. We define a function $u_{f,g,h}: U_{f,g,h} \to Z$ where $u_{f,g,h}(w) = c_{f,g,h}$. Similar to Lemma $\ref{fwtype}$, we can show that $u_{f,g,h}$ is a subalgebra of $W_{f,g,h}$ since for $w = \alpha_{f,g,h}(h(y),y,t)$ and $t(x) \in U_{f,g,h}$ for $x \in f^{-1}(y)$, we can easily construct a unique attempt for $w$, $\gamma_{w}:c_{w} \to \beta$ where $\gamma_{w}(w, z_{1},\ldots, z_{n}) = \gamma_{z_{1}}(z_{1},\ldots, z_{n})$ and $\gamma_{w}(w) = \beta(h(y),y,[x \in f^{-1}(y) \mapsto \gamma_{t(x)}(t(x))])$. This defines a morphism $\gamma : c_{f,g,h} \to b$ where $\gamma(w) = \gamma_{w}(w)$.\\

Also similar to Lemma $\ref{fwtype}$, one can show that $\gamma$ is the unique $\poly_{f,g,h}$-algebra morphism into $b$ since for any other $\poly_{f,g,h}$-algebra morphism $\delta: c_{f,g,h} \to b$ one can form the equalizer $\Eq{\gamma, \delta} \mono W_{f,g,h}$ and show that $\Eq{\gamma, \delta}$ has a $\poly_{f,g,h}$-algebra structure.\\

Therefore, $(c_{f,g,h}, \alpha_{f,g,h})$ is the initial algebra of $\poly_{f,g,h}$.

\end{proof}

All of the above results from Definition $\ref{fghtree}$ can be generalized in the internal language of locally cartesian closed categories with $W$-types. We explicitly state the generalized results below:

\begin{lem}
\label{depwtype}

Suppose $\cC$ is a locally cartesian closed category with $W$-types, then $\cC$ has all dependent $W$-types. Thus any elementary topos with a natural numbers object has dependent $W$-types.

\end{lem}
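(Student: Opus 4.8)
The plan is to observe that the entire construction carried out above in $\Set$ --- from Definition \ref{fghtree} through the proof that $(c_{f,g,h},\alpha_{f,g,h})$ is the initial $\poly_{f,g,h}$-algebra --- uses only the structure of a locally cartesian closed category together with the simple $W$-type $W_f$ and its induction principle, and that every step is constructively valid. A locally cartesian closed category soundly interprets the fragment of extensional Martin-L\"of type theory consisting of $1$, $\Sigma$-types, $\Pi$-types, and extensional identity types, with propositions read as subobjects and the connectives $\wedge$, $\forall$, and $=$ furnished by pullback, dependent product, and the diagonal; adding the hypothesis of $W$-types means $\cC$ interprets this fragment extended by $W$. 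I would therefore first fix this interpretation explicitly (the coherence issue in Seely's theorem being handled in the standard way), recording that $g^{*}$, $\Pi_{f}$, and $\sum_{h}$ are the semantic counterparts of substitution, dependent product, and dependent sum, so that $\poly_{f,g,h}$ and the notion of a $\poly_{f,g,h}$-algebra are expressible internally. Crucially, $W_f$ is taken as given with its universal property, so the earlier tree-theoretic construction of $W_f$ (which needs power objects and an NNO) need not transcribe.

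Next I would check that each preceding lemma transcribes. The key object $W_{f,g,h}$ is carved out of $W_f$ by the hereditary compatibility condition of Definition \ref{fghtree}: internally this is the subobject specified by $w=\alpha_f(y,t)\in W_{f,g,h}$ iff $\Pi_{x\in f^{-1}(y)}\bigl(t(x)\in W_{f,g,h}\ \wedge\ g(x)=h(\mathrm{root}(t(x)))\bigr)$, where $\mathrm{root}\colon W_f\to Y$ is the first projection of $\alpha_f^{-1}$, and the decoration map is $c_{f,g,h}(w)=h(\mathrm{root}(w))$. The fixpoint property (Lemma \ref{fixpoint}) and the minimality statement (Lemma \ref{wellf}) become pure applications of the $W_f$-induction principle together with the universal properties of $\sum$, $\Pi$, and pullback; the initial-algebra argument of the final lemma, phrased in $\Set$ via attempts on $\Paths_w$, is replaced by a direct appeal to the recursion principle of $W_f$ to define the comparison map into an arbitrary algebra (and its corestriction to $W_{f,g,h}$) and to the induction principle to prove uniqueness. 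Since this reasoning invokes only $\Sigma$, $\Pi$, identity types, and $W$-induction, soundness of the interpretation yields an initial $\poly_{f,g,h}$-algebra in $\cC$, and as the polynomial was arbitrary, $\cC$ has all dependent $W$-types.

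The main obstacle is making the recursive specification of the subobject $W_{f,g,h}$ precise, since a bare locally cartesian closed category has neither a subobject classifier nor a universe in which to define the predicate ``$w$ is a well-founded $(f,g,h)$-tree'' by recursion directly. I would handle this by presenting $W_{f,g,h}$ as the least subobject $S\hookrightarrow W_f$ closed under the monotone operator $w\mapsto\bigl(L(w)\wedge\forall x.\,t(x)\in S\bigr)$, where $L$ is the local-compatibility subobject defined using only $\Pi$ and identity types, and then using the induction principle of $W_f$ to prove that this fixpoint coincides with the hereditary condition and carries the required universal property; this is precisely where the well-foundedness of $W_f$ does the essential work, and it is the delicate point isolated in the theorem of Gambino and Hyland on dependent polynomial functors. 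Finally, for the closing sentence I would invoke that an elementary topos with a natural numbers object is locally cartesian closed and has all $W$-types --- the latter being the theorem of Moerdijk and Palmgren --- so that the first part applies and such a topos has all dependent $W$-types.
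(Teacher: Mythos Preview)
Your proposal is correct and aligns with the paper's treatment: the paper does not give a detailed proof but simply remarks that ``all of the above results from Definition \ref{fghtree} can be generalized in the internal language of locally cartesian closed categories with $W$-types,'' and then records Lemma \ref{depwtype} as a citation of Theorem 14 of Gambino--Hyland and a generalization of Proposition 3.6 of Moerdijk--Palmgren. Your sketch carries out exactly this internalization, correctly isolates the one genuinely delicate point (constructing the hereditary subobject $W_{f,g,h}\hookrightarrow W_f$ without a subobject classifier, handled via $W_f$-induction), and invokes the same two references for the two halves of the statement.
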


This lemma are also meant to be a citation of Theorem 14 of \cite{Gambino2003} and a generalization of Proposition 3.6 of \cite{Moerd2000}.\\

Suppose we are working in the category of assemblies on a partial combinatory algebra $\Asm(A)$, by Lemma $\ref{depwtype}$ and Section 2.2.6 of \cite{Berg2006}, $\Asm(A)$ has dependent $W$-types. We give the construction of dependent $W$-types in $\Asm(A)$ given the following diagram:

Given a diagram:
\[\begin{tikzcd}
	& (X,\phi) &&& (Y,\psi) \\
	\\
	(Z,\zeta) &&&&& (Z,\zeta)
	\arrow["f", from=1-2, to=1-5]
	\arrow["g"', from=1-2, to=3-1]
	\arrow["h", from=1-5, to=3-6]
\end{tikzcd}\]\\

First we construct the dependent $W$-type of the underlying diagram in $\Set$, $c_{f,g,h}: W_{f,g,h} \to Z$. First we define a function $\omega: W_{f} \to \pow{A}$ where for $w = \alpha_{f}(y_{w},t_{w})$, $$\omega(w) = \{[r,s] | r \in \psi(y_{w})\text{ and }\forall x \in f^{-1}(x)\forall u \in \phi(x)( s \cdot u  \in \omega(t_{w}(x))) \}$$. This technique is known in \cite{Berg2006} as decoration. $r$ is a realizer of $y_{w}$ and $s$ is essentially a realizer of $t_{w}$. Set $W'_{f}$ as the subset of $W_{f}$ with $\omega(w)$ inhabited. By \cite{Berg2006}, $(W'_{f},\omega)$ is the $W$-type for $f$.
We set $W'_{f,g,h}$ as the subset of $W'_{f}$ of well-founded (f,g,h)-trees, so $c_{f,g,h}:(W'_{f,g,h}, \omega) \to (Z, \zeta)$, where $c_{f,g,h}(\alpha_{f}(y_{w},t_{w})) = h(y_{w})$, is the dependent $W$-type corresponding to the diagram above. \\

\subsection{Partitioned Assemblies}
In this section, we introduce the category of partitioned assemblies and show that under the axiom of choice, partition assemblies are the projective objects of $\Asm(A)$. Using this, we prove that $\RT{A}$ is the ex/lex completion of the category of partition assemblies on $A$.
\begin{defn}

Given a regular category $\cC$, we define the {\bf reg/ex completion} of $\cC$, $\cC_{reg/ex}$ to be the exact category such that there is a regular functor $\eta : \cC \to \cC_{reg/ex}$ and for any regular functor $F: \cC \to \cD$ where $\cD$ is an exact category there exist uniquely up to natural isomorphism a functor $F':\cC_{reg/ex} \to \cD$ where $F' \circ \eta \cong F$.\\

Similarly, given a lex category $\cC$, we define the {\bf reg/lex completion} of $\cC$, $\cC_{reg/lex}$ to be the regular category such that there is a regular functor $\eta : \cC \to \cC_{reg/lex}$ and for any regular functor $F: \cC \to \cD$ where $\cD$ is a regular category there exist uniquely up to natural isomorphism a functor $F':\cC_{reg/lex} \to \cD$ where $F' \circ \eta \cong F$.\\

\end{defn}

\begin{lem}

$\RT{A}$ is the ex/reg completion of $\Asm(A)$.

\end{lem}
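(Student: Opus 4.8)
The plan is to exhibit the full inclusion $\eta\colon \Asm(A)\mono\RT{A}$ as the universal regular functor into an exact category, by verifying the standard recognition criterion for ex/reg completions (see \cite{vanOosten2008}, and the Carboni--Vitale recognition theorem): an exact category $\cE$ equipped with a full, faithful, regular functor from a regular category $\cC$ is the ex/reg completion of $\cC$ precisely when $\cC$ is closed under subobjects in $\cE$ and every object of $\cE$ receives a regular epimorphism from an object of $\cC$. Since $\RT{A}$ is an elementary topos it is exact, $\Asm(A)$ is regular by the earlier lemma, and the inclusion is full and faithful by the very definition of $\Asm(A)$ as a full subcategory of $\RT{A}$. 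It thus remains to check regularity of $\eta$, closure under subobjects, and the covering condition. It is exactly the closure-under-subobjects clause, rather than projectivity of the generators, that singles out $\Asm(A)$ as the regular base being completed; demanding projective generators instead would produce the ex/lex completion built on partitioned assemblies.

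For closure under subobjects I would argue directly from the earlier lemma characterizing monomorphisms into $\nabla(X)$. Any assembly $(X,\sim_{\phi})$ carries the monomorphism $[\sim_{\phi}]\colon (X,\sim_{\phi})\mono\nabla(X)$, so a subobject of $(X,\sim_{\phi})$ in $\RT{A}$ is in particular a subobject of $\nabla(X)$; by that lemma every subobject of $\nabla(X)$ is isomorphic to some $(X,\sim_{\psi})$, i.e.\ to an assembly. Hence $\Asm(A)$ is closed under subobjects. This also yields the nontrivial half of regularity: given a regular epimorphism $f\colon(X,\phi)\to(Y,\psi)$ in $\Asm(A)$, its image in $\RT{A}$ is a subobject of $(Y,\psi)$, hence an assembly, so it coincides with the image computed in $\Asm(A)$, which is all of $(Y,\psi)$; thus $f$ remains a cover in $\RT{A}$. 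Preservation of finite limits is immediate, since the terminal object $(1,A)=\nabla(1)$, binary products, and equalizers of assemblies, as constructed earlier, are already the corresponding limits in $\RT{A}$.

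For the covering condition, given an object $(X,\sim_{X})$ of $\RT{A}$ I set $X_{0}=\{x\in X : (x\sim_{X}x)\neq\emptyset\}$ and let $\overline{X}=(X_{0},\psi)$ be the assembly with $\psi(x)=(x\sim_{X}x)$. Then the functional relation $Q(x,x')\bimp(x\sim_{X}x')$ defines a regular epimorphism $q\colon\overline{X}\to(X,\sim_{X})$: the four functional-relation axioms follow from symmetry and transitivity of $\sim_{X}$ exactly as in the earlier verification that $\sim_{X}$ is itself functional, and $q$ is a cover because $\forall x'\,\big((x'\sim_{X}x')\imply\exists x\in X_{0}\,(Q(x,x')\land\psi(x))\big)$ is realized by taking $x=x'$. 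By the internal characterization of regular epimorphisms in $\Asm(A)$ this exhibits $(X,\sim_{X})$ as a quotient of an assembly.

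With all three clauses verified the recognition criterion applies and identifies $\RT{A}$ with the ex/reg completion of $\Asm(A)$. I expect the main obstacle to be less any single computation than ensuring the cited criterion is invoked with exactly its intended hypotheses: one must check that closure under subobjects is the correct separating condition and that the canonical cover $q$, together with its kernel pair $R\rightrightarrows\overline{X}$ (a subobject of $\overline{X}\times\overline{X}$, hence again an assembly by the closure property), genuinely realizes $(X,\sim_{X})$ as the quotient of an equivalence relation living in $\Asm(A)$. This is precisely what allows the induced functor to be defined on objects by $F'(X,\sim_{X})=\coeq\big(F(R)\rightrightarrows F(\overline{X})\big)$ using exactness of the target $\cD$; should a self-contained argument be preferred over the recognition theorem, this same formula is how I would construct $F'$ directly and then verify $F'\circ\eta\cong F$, the regularity of $F'$, and its uniqueness up to natural isomorphism.
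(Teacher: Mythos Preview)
Your argument is correct. The paper's own proof is a one-line citation: it invokes Corollary~2.4.5 of \cite{vanOosten2008} together with Lemma~\ref{asma}, and says nothing more. What you have done is unpack exactly the verification that sits behind that citation: you invoke the Carboni--Vitale recognition criterion for ex/reg completions and check its hypotheses (fullness and faithfulness of the inclusion, regularity of the inclusion, closure of $\Asm(A)$ under subobjects in $\RT{A}$, and the covering condition). Each of your verifications is sound---in particular, the closure-under-subobjects step is precisely the content of the paper's earlier lemma characterizing monomorphisms into $\nabla(X)$, and your covering construction $\overline{X}\twoheadrightarrow(X,\sim_{X})$ is the standard one. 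So your proof and the paper's are not different approaches; yours is the cited argument made explicit, which is arguably more useful to a reader than the bare reference.
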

\begin{proof}
This follows from Corollary $2.4.5$ of \cite{vanOosten2008} and Lemma \ref{asma}.

\end{proof}

\begin{defn}
Given a regular category $\cC$, we call an object $P$ of $\cC$ a {\bf projective object} if given a regular epimorphism $f: X \to Y$ and a morphism $P \to Y$, we have the following lift:

\[\begin{tikzcd}
	&&& X \\
	\\
	\\
	P &&& Y
	\arrow["f", from=1-4, to=4-4]
	\arrow["{f'}", dashed, from=4-1, to=1-4]
	\arrow[from=4-1, to=4-4]
\end{tikzcd}\]

\end{defn}

\begin{defn}

Given an assembly $(X, \phi)$, we called said assembly a {\bf partitioned assembly} if $(X,\phi)$ is isomorphic to an assembly $(X,\phi')$ where each $\phi'(x)$ is a singleton set. 

\end{defn}

We refer to the full subcategory of partitioned assemblies as $\pAsm(A)$.

\begin{lem}
$\pAsm(A)$ has finite limits and coproducts.

\end{lem}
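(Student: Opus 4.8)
The plan is to show that each of the finite-limit and finite-coproduct constructions already carried out for $\Asm(A)$ sends partitioned assemblies to partitioned assemblies. Since $\pAsm(A)$ is a \emph{full} subcategory of $\Asm(A)$, any limit or colimit diagram whose apex (computed in $\Asm(A)$) happens to lie in $\pAsm(A)$ automatically exhibits that same limit or colimit inside $\pAsm(A)$: every cone or cocone in $\pAsm(A)$ is one in $\Asm(A)$, and the unique factoring morphism supplied by $\Asm(A)$ is a morphism of $\pAsm(A)$ by fullness. So the whole proof reduces to checking, construction by construction, that the resulting existence predicate can be taken to have singleton fibers, keeping in mind that being partitioned is a property stable under isomorphism.

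For finite limits I would treat the terminal object, binary products, and equalizers in turn. The canonical terminal object $(1,A)$ is isomorphic to $(1,\{a_0\})$ for a fixed $a_0 \in A$: the forward map is realized by any combinator sending $a_0$ into $A$ and the backward map by the constant combinator $k \cdot a_0$; hence $(1,A)$ is partitioned. For the product $(X,\phi)\times(Y,\psi) = (X\times Y,\phi\otimes\psi)$, if $\phi(x)=\{a_x\}$ and $\psi(y)=\{b_y\}$ are singletons then $\phi\otimes\psi(x,y)=\{[a_x,b_y]\}$ is again a singleton. For the equalizer $\Eq{f,g}=(E_{f,g},\phi')$ the predicate $\phi'$ is simply the restriction of $\phi$ to $E_{f,g}\subseteq X$, so it inherits singleton fibers. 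Passing first to singleton-fibered representatives of the inputs (possible since each input is partitioned) and using that these constructions are functorial and preserve isomorphisms, every finite limit of partitioned assemblies is partitioned.

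For coproducts the argument has the same shape. The initial object $(\emptyset,\phi)$ is vacuously partitioned. For $(X,\phi)+(Y,\psi)=(X+Y,\phi+\psi)$, the summand predicates $\{[k,a] : a\in\phi(x)\}$ and $\{[\overline{k},a] : a\in\psi(y)\}$ are singletons whenever $\phi$ and $\psi$ have singleton fibers. Hence binary coproducts of partitioned assemblies are partitioned, and together with the initial object this yields all finite coproducts.

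I do not expect a genuine obstacle here: the constructions in $\Asm(A)$ are explicit and each manifestly preserves singleton fibers, the only wrinkle being the terminal object, whose canonical representative is not singleton-fibered but is isomorphic to one that is. The contrast worth emphasizing is with general colimits: the coequalizer $\CoEq{f,g}$ in $\Asm(A)$ uses $\exists_{\kappa}$, which unions fibers together and destroys the singleton property, so $\pAsm(A)$ is \emph{not} claimed to have all finite colimits --- only coproducts.
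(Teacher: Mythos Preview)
Your proposal is correct and follows essentially the same approach as the paper: reduce to checking that the explicit $\Asm(A)$ constructions of products, equalizers, and coproducts preserve singleton fibers, relying on fullness of $\pAsm(A)\hookrightarrow\Asm(A)$ to conclude. You are in fact slightly more thorough than the paper, which omits the terminal and initial objects and the explicit fullness argument; your observation that the canonical terminal $(1,A)$ is only partitioned up to isomorphism, and your closing remark about why coequalizers fail, are both nice touches absent from the paper's version.
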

\begin{proof}
Given that finite limits and coproducts exists in $\Asm(A)$, it suffice to show that the finite limits and coproducts of partitioned assemblies are still partitioned assemblies. Given partitioned assemblies $(X,[x \mapsto \{r_{x}\}])$ and $(Y,[y \mapsto \{r_{y}\}])$, note that the product of said assemblies is $(X\times Y, [(x,y) \mapsto \{[r_{x},r_{y}] \} = ])$ and the coproduct is $(X + Y, [x \mapsto \{[k,r_{x}] | a \in \phi(x)\}, y \mapsto \{[\overline{k},r_{y}] | b \in \psi(y)\})]$. One can see that both the product and coproduct are partitioned assemblies. Furthermore, given the following diagram:

\[\begin{tikzcd}
	{(X,[x \mapsto \{r_{x}\}])} &&& {(Y,[y \mapsto \{r_{y}\}])}
	\arrow["{f}"', shift right=2, from=1-1, to=1-4]
	\arrow["{g}", shift left=2, from=1-1, to=1-4]
\end{tikzcd}\]

the equalizer of the diagram is $(\{x | f(x) = g(x)\}, [x \mapsto \{r_{x}\}])$ which is a partitioned assembly. Therefore, $\pAsm(A)$ has finite limits and coproducts.

\end{proof}

\begin{lem}

For any assembly $(X,\phi)$, there exists an partitioned assembly $(X.\phi,\phi^{*})$ and a regular epimorphism $(X.\phi,\phi^{*}) \to (X,\phi)$. Furthermore, every assembly $(X,\phi)$ embeds into a partitioned assembly.

\end{lem}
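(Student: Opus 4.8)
The plan is to treat the two assertions separately, leaning on the earlier characterization that a map in $\Asm(A)$ is a regular epimorphism exactly when it is surjective and its codomain predicate is equivalent to the $\exists$-image of the domain predicate.

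For the cover I would take the evident bundle of realizers. Define $X.\phi \coloneqq \{(x,a) \mid x \in X,\ a \in \phi(x)\}$ and set $\phi^{*}(x,a) \coloneqq \{a\}$, a singleton; then $(X.\phi,\phi^{*})$ is a partitioned assembly whose fibers are all inhabited because each $\phi(x)$ is nonempty. The first projection $p\colon (X.\phi,\phi^{*}) \to (X,\phi)$, $(x,a)\mapsto x$, is a morphism realized by $\iota$: if $a \in \phi^{*}(x,a)=\{a\}$ then $\iota\cdot a = a \in \phi(p(x,a))=\phi(x)$. Surjectivity of $p$ is immediate since any $a\in\phi(x)$ supplies a point over $x$. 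The crux is the identity $\exists_{p}(\phi^{*}) \bimp \phi$, which I expect to be a one-line computation: by definition $\exists_{p}(\phi^{*})(x) = \bigcup_{p(x',a)=x}\phi^{*}(x',a) = \bigcup_{a\in\phi(x)}\{a\} = \phi(x)$, so the two predicates are literally equal and the equivalence is realized by $\iota$ in both directions. This finishes the first claim.

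For the embedding I would first note that one cannot in general expect a strong/regular mono into a partitioned assembly: reconstructing a realizer of $\phi(x)$ uniformly from the data of a single realizer would amount to deciding membership in the sets $\phi(x)$, which is precisely what assemblies need not permit. Hence ``embeds'' should be read as an ordinary monomorphism, and in $\Asm(A)$ these are exactly the underlying-injective maps, since the forgetful functor to $\Set$ is faithful and therefore reflects monos. I would then use the codiscrete assembly $\nabla(X)=(X,\const_{A})$, which qualifies as a partitioned assembly because it is isomorphic to the singleton-valued assembly $(X,\,x\mapsto\{\iota\})$ (the identity is realized by $\overline{k}$ in one direction, as $\overline{k}\cdot a=\iota$, and by $\iota$ in the other). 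The identity-on-underlying-sets map $(X,\phi)\to\nabla(X)$ is realized by $\iota$ (as $\phi(x)\subseteq A$) and is injective, hence a monomorphism, exhibiting the desired embedding.

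The routine parts are the realizer bookkeeping; the only genuinely load-bearing steps are the equality $\exists_{p}(\phi^{*})=\phi$ for the cover and, for the embedding, confirming that $\nabla(X)$ satisfies the stated definition of a partitioned assembly by exhibiting the isomorphism to $(X,\,x\mapsto\{\iota\})$.
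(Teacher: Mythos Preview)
Your proof is correct and follows essentially the same approach as the paper. The cover construction and the verification that $\exists_{p}(\phi^{*})=\phi$ are identical; for the embedding the paper simply maps $(X,\phi)$ directly into $(X,\const_{\{\iota\}})$ via $\id_X$ realized by $\lmbd{z}{\iota}$, whereas you route through $\nabla(X)$ and then exhibit the isomorphism to $(X,x\mapsto\{\iota\})$---a cosmetic difference, since by the paper's definition a partitioned assembly is one \emph{isomorphic} to a singleton-valued assembly.
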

\begin{proof}
Given an assembly $(X,\phi)$, we define the assembly $(X.\phi,\phi^{*})$ where $X.\phi = \Sigma_{x:X} \phi(x)$ and $\phi^{*}(x,a) = \{a\}$.  $(X.\phi,\phi^{*})$ is obviously a partitioned assembly, furthermore we have a morphism induced by first projection $\pr{X}:(X.\phi,\phi^{*}) \to (X,\phi)$ realized by $\iota$. Recall that $\pr{X}$ is a regular epimorphism iff and only if $\pr{X}$ is a surjection and $\exists_{\pr{X}}(\phi^{*}) \bimp \phi$. However $\exists_{\pr{X}}(\phi^{*})(x)  = \bigcup_{a \in \phi(x)} \{a\} = \phi(x)$ by definition of $(X.\phi,\phi^{*})$. Since $\pr{X}$ is surjection, therefore $\pr{X}$ is a regular epimorphism.\\

There is an obvious monomorphism $\id_{X}:(X,\phi) \to (X,\const_{\{\iota\}})$ realized by $\lmbd{z}{\iota}$.

\end{proof}

\begin{rmk}
We will refer to $(X.\phi,\phi^{*})$ as the {\bf partitioning } of $(X,\phi)$.

\end{rmk}

\begin{lem}

Any projective object $(X,\phi)$ is a partitioned assembly.

\end{lem}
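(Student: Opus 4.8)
The plan is to exploit the regular epimorphism $\pr{X}\colon (X.\phi,\phi^{*}) \to (X,\phi)$ coming from the partitioning constructed in the previous lemma, together with the projectivity of $(X,\phi)$, in order to realize $(X,\phi)$ as a retract of its own partitioning and then observe that such a retract is already a partitioned assembly.

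First I would apply the defining lifting property of a projective object to the identity morphism $\id_{(X,\phi)}\colon (X,\phi) \to (X,\phi)$ along the regular epimorphism $\pr{X}$. This yields a section $s\colon (X,\phi) \to (X.\phi,\phi^{*})$ with $\pr{X} \circ s = \id_{(X,\phi)}$. Since $X.\phi = \Sigma_{x:X}\phi(x)$ and $\pr{X}$ is first projection on underlying sets, the section must have the form $s(x) = (x,a_{x})$ for some chosen $a_{x} \in \phi(x)$; and since $s$ is an assembly morphism there is a realizer $r_{s} \in A$ such that for every $x \in X$ and every $b \in \phi(x)$, $r_{s} \cdot b$ is defined and $r_{s} \cdot b \in \phi^{*}(x,a_{x}) = \{a_{x}\}$, i.e.\ $r_{s} \cdot b = a_{x}$.

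Next I would define the candidate partitioned assembly $(X,\phi')$ by $\phi'(x) = \{a_{x}\}$, and check that the set-level identity $\id_{X}$ is an isomorphism $(X,\phi) \cong (X,\phi')$ in $\Asm(A)$. In one direction, $\id_{X}\colon (X,\phi) \to (X,\phi')$ is realized precisely by $r_{s}$, since $r_{s} \cdot b = a_{x} \in \{a_{x}\} = \phi'(x)$ for all $b \in \phi(x)$. In the other direction, $\id_{X}\colon (X,\phi') \to (X,\phi)$ is realized by $\iota$, because $a_{x} \in \phi(x)$ so $\iota \cdot a_{x} = a_{x} \in \phi(x)$. These two realized maps are mutually inverse at the level of underlying functions, hence $(X,\phi) \cong (X,\phi')$; as each $\phi'(x)$ is a singleton, $(X,\phi)$ is by definition a partitioned assembly.

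The argument is essentially formal once the section is obtained, so the only real content is extracting the singleton-valued $\phi'$ from $s$ and noticing that the same realizer $r_{s}$ witnessing $s$ also witnesses the collapse of $\phi$ onto the chosen realizers $a_{x}$, while the reverse realizer is trivially $\iota$. I expect no genuine obstacle beyond carefully tracking that $a_{x}$ lies in $\phi(x)$ (which holds because $(x,a_{x}) \in \Sigma_{x:X}\phi(x)$), so that $\iota$ indeed realizes the inverse.
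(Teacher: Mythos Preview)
Your proof is correct and follows essentially the same approach as the paper: obtain a section $s$ of $\pr{X}$ via projectivity, use its realizer $r_s$ to collapse each $\phi(x)$ to a singleton, and exhibit the isomorphism via $r_s$ and $\iota$. The only cosmetic difference is that the paper defines $\phi'(x) = \{r_s \cdot a \mid a \in \phi(x)\}$ and then argues this set is a singleton, whereas you identify the singleton value $a_x$ up front and set $\phi'(x) = \{a_x\}$; these amount to the same thing.
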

\begin{proof}

Suppose $(X,\phi)$ is a projective object, since we have a regular epimorphism $\pr{X}:(X.\phi,\phi^{*}) \to (X,\phi)$ so we have a section of $\pr{X}$:
\[\begin{tikzcd}
	&&& (X.\phi, \phi^{*}) \\
	\\
	\\
	(X,\phi) &&& (X,\phi)
	\arrow["\pr{X}", from=1-4, to=4-4]
	\arrow["{s}", dashed, from=4-1, to=1-4]
	\arrow["\id_{(X,\phi)}"',from=4-1, to=4-4]
\end{tikzcd}\]

$s$ is realized by $r_{s} \in A$. We construct a partitioned assembly $(X, \phi')$ where $\phi'(x) = \{r_{s} \cdot a |  a \in \phi(x) \}$. Each $\phi'(x)$ is a singleton since $r_{s}$ realizes a morphism from $(X,\phi)$ into $(X.\phi,\phi^{*})$. Furthermore, $r_{s} \cdot a \in \phi(x)$ for $a \in \phi(x)$. So we have an isomorphism pair $\id_{X}: (X,\phi) \to (X,\phi')$ and $\id_{X}: (X,\phi') \to (X,\phi)$ realized by $r_{s}$ and $\iota$ respectively. Therefore, $(X,\phi)$ is a partitioned assembly.

\end{proof}


\begin{lem}
Suppose Axiom of Choice holds in $\Set$, then $\pAsm(A)$ is precisely the full subcategory of projective objects in $\Asm(A)$.

\end{lem}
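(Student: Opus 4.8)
The plan is to prove the stated equality by establishing the two containments separately. One of them is already in hand: the preceding lemma shows, with no appeal to choice, that every projective object of $\Asm(A)$ is a partitioned assembly. So the only remaining task is the converse, to show that under the Axiom of Choice in $\Set$ every partitioned assembly is projective; combining the two immediately yields that $\pAsm(A)$ is precisely the full subcategory of projectives in $\Asm(A)$.

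To show projectivity of a partitioned assembly, I would first normalize its representation so that each value $\pi(p)$ of the realizability predicate is a singleton $\{r_{p}\}$, which is legitimate since projectivity is invariant under isomorphism. I would then fix an arbitrary regular epimorphism $f:(X,\phi) \to (Y,\psi)$ together with a morphism $m:(P,\pi) \to (Y,\psi)$, and aim to produce a lift $\tilde{m}:(P,\pi) \to (X,\phi)$ with $f \circ \tilde{m} = m$. The two pieces of data to extract are: a realizer $r_{m}$ for $m$, so that $r_{m} \cdot r_{p} \in \psi(m(p))$ for every $p$; and, using the characterization of regular epimorphisms in $\Asm(A)$ as realized surjections, a realizer $e$ witnessing $\forall_{y} \psi(y) \imply \exists_{x}(f(x) = y \land \phi(x))$. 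Feeding these together, for each $p$ the element $e \cdot (r_{m} \cdot r_{p})$ is defined and lies in $\phi(x)$ for at least one $x \in X$ with $f(x) = m(p)$.

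The heart of the argument, and the step where the Axiom of Choice enters, is to turn this pointwise existence into an actual function. For each $p \in P$ the set of $x \in f^{-1}(m(p))$ satisfying $e \cdot (r_{m} \cdot r_{p}) \in \phi(x)$ is nonempty, so choice lets me select one such $x_{p}$ simultaneously over all $p$, and I set $\tilde{m}(p) = x_{p}$. By construction $f(\tilde{m}(p)) = m(p)$, so $f \circ \tilde{m} = m$ on underlying sets, and the single combinator $\lmbd{z}{e \cdot (r_{m} \cdot z)}$ realizes $\tilde{m}$, since applying it to $r_{p}$ yields $e \cdot (r_{m} \cdot r_{p}) \in \phi(x_{p}) = \phi(\tilde{m}(p))$. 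This produces the required lift and shows $(P,\pi)$ is projective.

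I expect the main obstacle to be precisely this choice step: the realizer $e \cdot (r_{m} \cdot r_{p})$ only guarantees the existence of a suitable fiber element, and nothing in the realizability structure selects a canonical one, so making the selection uniformly over the index set $P$ is exactly what forces the metatheoretic Axiom of Choice. It will be important to verify that each chosen $x_{p}$ is constrained not merely by $f(x_{p}) = m(p)$ but by the compatibility condition $e \cdot (r_{m} \cdot r_{p}) \in \phi(x_{p})$, since otherwise the candidate lift, though correct on underlying sets, would fail to be realized and hence fail to be a morphism of $\Asm(A)$.
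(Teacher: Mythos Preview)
Your proposal is correct and follows essentially the same strategy as the paper: use the realizer for the regular-epi condition together with the realizer for $m$ to produce, for each $p$, a concrete realizer landing in some fiber element over $m(p)$, then invoke choice to select one such fiber element uniformly, and observe that the resulting lift is tracked by a single combinator built from $e$ and $r_m$. The paper phrases the choice step as picking a section of an explicit surjection $Q \to P$, while you phrase it as choosing from nonempty sets indexed by $P$; these are the same.

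One small technical slip: you quote the regular-epi witness as realizing $\forall_y\,\psi(y)\Rightarrow\exists_x(f(x)=y\wedge\phi(x))$, whose output under the paper's tripos encoding is a \emph{pair} (an arbitrary witness for the equality together with an element of $\phi(x)$), so the tracker should be $\lmbd{z}{p_{1}\cdot(e\cdot(r_{m}\cdot z))}$ rather than $\lmbd{z}{e\cdot(r_{m}\cdot z)}$. Alternatively, and more cleanly, use the equivalent characterization $\psi\leq\exists_{f}(\phi)$ directly, in which case $e\cdot(r_{m}\cdot r_{p})$ already lies in $\phi(x)$ as you wrote. Either way the argument goes through unchanged.
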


\begin{proof}
It suffices to show that every partitioned assembly is a projective object assuming Axiom of Choice. Given the following:
\[\begin{tikzcd}
	&&& (X,\phi) \\
	\\
	\\
	(P,\pi) &&& (Y,\psi)
	\arrow["f", from=1-4, to=4-4]
	\arrow["g"',from=4-1, to=4-4]
\end{tikzcd}\]
 where $(P,\pi)$ is a partitioned assembly. Without loss of generality, assume that each $\pi(p)$ is a singleton set. Let $r_{f}$ and $r_{g}$ be the realizers of $f$ and $g$ respectively.  Suppose $f$ is a regular epimorphism, then we have a realizer $r_{re}$ for $\forall_{y \in Y} \psi(y) \imply \exists_{x \in X} (f(x) = y \land \phi(x))$. This means that for $y \in Y$ and $a \in \psi(y)$, $r_{re} \cdot a = [r_{x,y}, b]$ where $r_{x,y}$ denotes that existence of some $x \in X$ such that $f(x) = y$ and $b \in \phi(x')$ where $f(x') = y$. We construct a surjection $j:Q \to P$ where for $p \in P$, $j^{-1}(p) = \{x \in X | g(p) = f(x)\text{ and }p_{1} \cdot (r_{re} \cdot (r_{g} \cdot c)) \in \phi(x)\text{ where }\pi(p) = \{c\} \}$. By Axiom of Choice, j has a section $s:P \to Q$. This induces a morphism $s':(P,\pi) \to (X,\phi)$ where $s'(p) = s(p)$ realized by $\lmbd{z}{p_{1} \cdot (r_{re} \cdot (r_{g} \cdot z))}$. For $p \in P$, $s(p) \in X$ and $g(p) = f(s(p))$, thus $g  = f \circ s'$ making $(P,\pi)$ a projective object. Therefore, under Axiom of Choice the class of projective objects and partitioned assemblies coincide.

\end{proof}


\begin{lem}
Suppose Axiom of Choice holds in $\Set$, then $\Asm(A)$ is the reg/lex completion of $\pAsm(A)$.

\end{lem}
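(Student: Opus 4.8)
The plan is to reduce the statement to the standard characterization of reg/lex completions, so that no fresh construction is required: every ingredient has already been assembled in the preceding lemmas. Recall the characterization (see \cite{vanOosten2008}): a regular category $\cE$ is the reg/lex completion of a lex category $\cC$ precisely when $\cC$ is equivalent to the full subcategory of regular-projective objects of $\cE$, this subcategory is closed in $\cE$ under finite limits, and $\cE$ has enough regular projectives, meaning that every object of $\cE$ admits a regular epimorphism from a projective object. Under this characterization, proving the lemma becomes a matter of verifying these hypotheses for $\cE = \Asm(A)$ and $\cC = \pAsm(A)$.

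First I would invoke that $\Asm(A)$ is regular and that $\pAsm(A)$ is lex, both of which are earlier lemmas. The identification of $\pAsm(A)$ with the full subcategory of projectives of $\Asm(A)$ is exactly the preceding lemma, and it is here that the Axiom of Choice hypothesis is consumed. For closure under finite limits, I would appeal to the lemma constructing finite limits in $\pAsm(A)$: there the binary product and the equalizer of partitioned assemblies were computed as the corresponding limits taken in $\Asm(A)$ and shown again to be partitioned, which is precisely the statement that the projectives are closed under finite limits in $\Asm(A)$. For enough projectives, I would invoke the partitioning lemma, which for each assembly $(X,\phi)$ produces the partitioned assembly $(X.\phi,\phi^{*})$ together with a regular epimorphism $\pr{X}\colon (X.\phi,\phi^{*}) \to (X,\phi)$.

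With the three hypotheses checked, the characterization theorem delivers an equivalence $\Asm(A) \simeq \pAsm(A)_{reg/lex}$ under which the inclusion $\pAsm(A)\hookrightarrow\Asm(A)$ corresponds to the universal regular functor; this exhibits $\Asm(A)$ as the reg/lex completion of $\pAsm(A)$ and completes the proof.

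The main obstacle is not computational but definitional and bibliographic: one must confirm that the cited characterization uses exactly the hypotheses ``regular category with enough regular projectives whose projectives are closed under finite limits,'' and that its notion of projectivity is projectivity with respect to regular epimorphisms, which is the notion used throughout this section. Some treatments phrase the result slightly differently, so the care lies in aligning the statement with the reference; once that is done, the verification is immediate from the results already established.
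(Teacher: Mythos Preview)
Your proposal is correct and follows essentially the same strategy as the paper: reduce to a known characterization of reg/lex completions and verify the hypotheses using lemmas already established. The only difference is which characterization you invoke: the paper cites Proposition~9 of Carboni's \emph{Some free constructions in realizability and proof theory}, whose hypotheses are that every object is covered by a projective \emph{and embeds into} a projective (both facts proved in the earlier lemma on partitionings), whereas you use the variant requiring closure of the projectives under finite limits. Both sets of hypotheses are available from the preceding lemmas, so either route works; your final caveat about aligning with the precise reference is well taken.
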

\begin{proof}

We know that every assembly is covered by a partitioned assembly and embeds into a partitioned assembly. By axiom of choice, every partitioned assembly is a projective object, so every assembly embeds into and is covered by a projective object. We also have that $\pAsm(A)$ is the subcategory of the projective objects of $\Asm(A)$. By proposition 9 of \cite{CARBONI199879} which states that any regular category $\cC$ is the reg/lex completion of the full subcategory of projective objects when any object of $\cC$, $X$, can be covered by and embeds into a projective object, this makes $\Asm(A)$ the reg/lex completion of $\pAsm(A)$.

\end{proof}

\begin{cor}

Assuming Axiom of Choice hold in $\Set$, $\RT{A}$ is the ex/lex completion of $\pAsm(A)$.

\end{cor}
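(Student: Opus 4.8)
The plan is to obtain the statement by composing the two completions established in the preceding lemmas. We have shown that $\RT{A}$ is the ex/reg completion of $\Asm(A)$, with unit a regular functor $\eta_{2}\colon \Asm(A) \to \RT{A}$, and that, under the Axiom of Choice, $\Asm(A)$ is the reg/lex completion of $\pAsm(A)$, with unit a finite-limit-preserving functor $\eta_{1}\colon \pAsm(A) \to \Asm(A)$. Since the exact completion of a lex category can be built in two stages---first passing to its regular completion, then to the exact completion of that regular category---these two facts should splice together to identify $\RT{A}$ as the ex/lex completion of $\pAsm(A)$.

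More precisely, the ex/lex completion of $\pAsm(A)$ is an exact category equipped with a finite-limit-preserving functor out of $\pAsm(A)$ that is universal among such functors into exact categories. I would verify that the composite $\eta_{2} \circ \eta_{1}\colon \pAsm(A) \to \RT{A}$ exhibits this universal property. The composite preserves finite limits because $\eta_{1}$ does and $\eta_{2}$, being a regular functor, does as well.

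For the universal property, let $F\colon \pAsm(A) \to \cD$ be a finite-limit-preserving functor into an exact category $\cD$. Since every exact category is in particular regular, the universal property of the reg/lex completion yields a regular functor $\bar{F}\colon \Asm(A) \to \cD$, unique up to natural isomorphism, with $\bar{F} \circ \eta_{1} \cong F$. As $\cD$ is exact and $\bar{F}$ is regular, the universal property of the ex/reg completion then yields an exact functor $\tilde{F}\colon \RT{A} \to \cD$, unique up to natural isomorphism, with $\tilde{F} \circ \eta_{2} \cong \bar{F}$. Hence $\tilde{F} \circ (\eta_{2} \circ \eta_{1}) \cong F$, and the two uniqueness clauses combine to give uniqueness of $\tilde{F}$ up to natural isomorphism. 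This is exactly the universal property exhibiting $\RT{A}$ as the ex/lex completion of $\pAsm(A)$.

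I expect no new construction to be needed: the only point requiring care is the threading of the two universal properties---checking that a finite-limit-preserving functor out of $\pAsm(A)$ is precisely the input accepted by the reg/lex universal property, and that the regular functor it returns is precisely the input accepted by the ex/reg universal property, with the composite of the units being the correct unit. This two-stage factorization of the exact completion is standard and may instead be cited directly from \cite{CARBONI199879}; combined with the two preceding lemmas it yields the corollary at once.
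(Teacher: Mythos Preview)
Your proposal is correct and follows the same approach as the paper: compose the ex/reg completion $\Asm(A) \to \RT{A}$ with the reg/lex completion $\pAsm(A) \to \Asm(A)$ to obtain the ex/lex completion. The paper states this composition in one line, whereas you spell out the threading of the two universal properties; both arguments are the same in substance.
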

\begin{proof}
 $\RT{A}$ is the ex/reg completion of $\Asm(A)$. With axiom of choice, $\Asm(A)$ is the reg/lex completion of $\pAsm(A)$. Therefore, $\RT{A}$ is the ex/lex completion of $\pAsm(A)$.

\end{proof}

\subsection{W-Types with Reductions}
Here we prove that category of assemblies has $W$-types with reductions.

\label{wtwr}

\begin{defn}
Suppose we have the following diagram in a locally cartesian closed category $\cC$ with pushouts:

\[\begin{tikzcd}
	R && X &&& Y \\
	\\
	& Z &&&&& Z
	\arrow["k", from=1-1, to=1-3]
	\arrow["f", from=1-3, to=1-6]
	\arrow["g"', from=1-3, to=3-2]
	\arrow["h", from=1-6, to=3-7]
\end{tikzcd}\]

We say that the diagram above satisfies the {\bf coherence condition} iff $h \circ f \circ k = g \circ k$. In which case, we call the above diagram a {\bf polynomial with reductions}. This induces an endofunctor $\poly_{f,g,h,k}: \cC/Z \to \cC/Z$ which is constructed as the following pushout:
\[\begin{tikzcd}
	{\sum_{g}\circ \sum_{k}\circ k^{*}\circ f^{*}\circ\Pi_{f}\circ g^{*}} &&&& {\Id_{\cC/Z}} \\
	\\
	\\
	\\
	{\poly_{f,g,h}} &&&& {\poly_{f,g,h,k}}
	\arrow[from=1-1, to=1-5]
	\arrow[from=1-1, to=5-1]
	\arrow[from=1-5, to=5-5]
	\arrow[from=5-1, to=5-5]
	\arrow["\lrcorner"{anchor=center, pos=0.125, rotate=180}, draw=none, from=5-5, to=1-1]
\end{tikzcd}\]

where the morphisms of the pushout diagram are as follows:

\[\begin{tikzcd}
	{\sum_{g}\circ \sum_{k}\circ k^{*}\circ f^{*}\circ\Pi_{f}\circ g^{*}} && {\sum_{g}\circ \sum_{k}\circ k^{*}\circ  g^{*}} && {\sum_{g}\circ g^{*}} && {\Id_{\cC/Z}}
	\arrow[from=1-1, to=1-3]
	\arrow[from=1-3, to=1-5]
	\arrow[from=1-5, to=1-7]
\end{tikzcd}\]

where each morphism is constructed using the counits $f^{*} \circ \Pi_{f} \To \Id_{\cC/X}$, $\sum_{k} \circ k^{*} \To \Id_{\cC/X}$, and $\sum_{g} \circ g^{*} \To \Id_{\cC/Z}$ respectively and 

\[\begin{tikzcd}
	 {\sum_{h} \circ \sum_{f}\circ \sum_{k}\circ k^{*}\circ f^{*}\circ\Pi_{f}\circ g^{*}} && {\sum_{h} \circ \sum_{f}\circ f^{*}\circ\Pi_{f}\circ g^{*}} && {\poly_{f,g,h}}
	\arrow[from=1-1, to=1-3]
	\arrow[from=1-3, to=1-5]
\end{tikzcd}\]

where $\sum_{h} \circ \sum_{f}\circ \sum_{k} = \sum_{g} \circ \sum_{k}$ by the coherence condition and the morphism is constructed by counits $\sum_{k} \circ k^{*} \To \Id_{\cC/X}$ and $\sum_{f} \circ f^{*} \To \Id_{\cC/Y}$ respectively.\\

The initial algebra of $\poly_{f,g,h,k}$ is called the {\bf W-type with reductions} for $\poly_{f,g,h,k}$.

\end{defn}


\begin{defn}

Given the commutative square in a category $\cC$:
\[\begin{tikzcd}
	A &&& B \\
	\\
	\\
	C &&& D
	\arrow["q", from=1-1, to=1-4]
	\arrow["g"', from=1-1, to=4-1]
	\arrow["f", from=1-4, to=4-4]
	\arrow["p"', from=4-1, to=4-4]
\end{tikzcd}\]

we called this square {\bf covering} iff $p$ is a regular epimorphism and $[g,q]: A \epi B \times_{D} C$ is a regular epimorphism. \\

We call the above square a {\bf collection} square iff in the internal language of $\cC$: for all $d \in D$, and for every cover $e:E \epi f^{-1}(d)$ , there exists a $c \in p^{-1}(d)$ and a morphism $t: g^{-1}(c) \to E$ such that $q_{d,c} = e \circ t$ where $q_{d,c} = q|_{g^{-1}(c)}$ with codomain $f^{-1}(d)$.

\end{defn}

\begin{defn}
\label{wiscasm}

Given a regular category $\cC$, we say that $\cC$ satisfies the {\bf weakly initial set of covers} axiom or {\bf WISC} iff for any morphism $f: X \to Y$ in $\cC$ there exists a covering and collection square: 

\[\begin{tikzcd}
	A &&& X \\
	\\
	\\
	B &&& Y
	\arrow[ from=1-1, to=1-4]
	\arrow[ from=1-1, to=4-1]
	\arrow["f", from=1-4, to=4-4]
	\arrow[ from=4-1, to=4-4]
\end{tikzcd}\]

or in other words, $f$ admits a {\bf cover base}.

\end{defn}

\begin{lem}
\label{swan}
Given a covering and collection square:

\[\begin{tikzcd}
	A &&& X \\
	\\
	\\
	B &&& Y
	\arrow["h", from=1-1, to=1-4]
	\arrow[ "k"',from=1-1, to=4-1]
	\arrow["f", from=1-4, to=4-4]
	\arrow["g"', from=4-1, to=4-4]
\end{tikzcd}\]

Suppose that for $y \in Y$, we have a surjection $t: Z \epi f^{-1}(y)$, then there exists $b \in g^{-1}(y)$ and an element
$\alpha \in \Pi_{a \in k^{-1}(b)} t^{-1}(h(a))$.

\end{lem}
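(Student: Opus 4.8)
The plan is to recognize the desired element $\alpha$ as a lift furnished by the collection property of the square, after unwinding the dependent product. First I would record that, by commutativity of the square ($f \circ h = g \circ k$), for any $b \in g^{-1}(y)$ and any $a \in k^{-1}(b)$ we have $f(h(a)) = g(k(a)) = g(b) = y$, so $h(a) \in f^{-1}(y)$ and the fibre $t^{-1}(h(a))$ is meaningful. Hence an element $\alpha \in \Pi_{a \in k^{-1}(b)} t^{-1}(h(a))$ is precisely a morphism $\sigma \colon k^{-1}(b) \to Z$ with $t \circ \sigma = h|_{k^{-1}(b)}$, where $h|_{k^{-1}(b)}$ is taken to have codomain $f^{-1}(y)$. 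Thus the statement reduces to producing some $b \in g^{-1}(y)$ together with a lift of $h|_{k^{-1}(b)}$ along $t$.

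Next I would apply the collection condition directly. Spelled out in the present notation it reads: for every $y \in Y$ and every cover $e \colon E \epi f^{-1}(y)$ there exist $b \in g^{-1}(y)$ and a morphism $\sigma \colon k^{-1}(b) \to E$ with $h|_{k^{-1}(b)} = e \circ \sigma$. The given surjection $t \colon Z \epi f^{-1}(y)$ is an admissible cover, since in a regular category internal surjections, regular epimorphisms and covers all coincide. Instantiating this with $E = Z$ and $e = t$ yields a $b \in g^{-1}(y)$ and a morphism $\sigma \colon k^{-1}(b) \to Z$ satisfying $h|_{k^{-1}(b)} = t \circ \sigma$ --- exactly the lift identified in the first step. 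Setting $\alpha = \sigma$, each $a \in k^{-1}(b)$ gives $t(\alpha(a)) = h(a)$, so $\alpha(a) \in t^{-1}(h(a))$ and therefore $\alpha \in \Pi_{a \in k^{-1}(b)} t^{-1}(h(a))$, as required.

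The argument is essentially a translation of definitions, so I do not anticipate a serious obstacle; the content is entirely carried by the collection half of the hypothesis. The only points demanding care are that everything must be read in the internal language of $\Asm(A)$ --- in particular the internal existential over $b$ and the quantification over $a \in k^{-1}(b)$ --- and that the symbol $t$, which names the lift in the definition of a collection square, clashes with the surjection $t$ of the present statement; renaming the lift $\sigma$ (as above) removes this confusion. Verifying that $t$ qualifies as a cover leans on the earlier equivalence between covers and regular epimorphisms.
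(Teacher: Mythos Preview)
Your proof is correct and is exactly the intended argument: unwind the dependent product as a section of $t$ over $k^{-1}(b)$, then invoke the collection clause with $e = t$. The paper itself does not spell this out but simply cites Lemma~3.6 of Swan's \emph{W-types with reductions} paper, so you have supplied the details that the paper omits.
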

\begin{proof}
This is Lemma 3.6 of \cite{swan2018wtypes}.
\end{proof}

For notation, we will refer to $f:(X,\phi) \to (Y,\psi)$ as $f^{-1}(y) =  (X,\phi)_{y}$ for $y \in (Y,\psi)$ in the internal language of $\Asm(A)$ when convenient.

\begin{lem}
\label{wisc}

If WISC holds in $\Set$, then it holds in $\Asm(A)$.

\end{lem}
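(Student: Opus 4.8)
The plan is to transfer a cover base from $\Set$ to $\Asm(A)$ by applying $\Set$-WISC to the underlying function of a suitable partitioning and then decorating the resulting sets with realizers, using that all realizer data lives in the fixed set $A$. Write $f\colon(X,\phi)\to(Y,\psi)$ with underlying function $\bar f\colon X\to Y$, and consider $\bar\pi\colon X.\phi\to Y$, $(x,a)\mapsto \bar f(x)$, whose fibre over $y$ is precisely the underlying set of the partitioning of the assembly-fibre $f^{-1}(y)$. Applying $\Set$-WISC to $\bar\pi$ produces a covering and collection square in $\Set$ with top map $h_0\colon A_0\to X.\phi$, left map $k_0\colon A_0\to B_0$, right map $\bar\pi$, and bottom map $g_0\colon B_0\to Y$; by Lemma~\ref{swan}, collection in $\Set$ means that every set-surjection onto a fibre $\bar\pi^{-1}(y)$ admits a fibrewise section from some $k_0^{-1}(b_0)$.

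I would then build the $\Asm(A)$-square by enlarging the index by a copy of $A$: set $B=B_0\times A$ and $A_1=A_0\times A$, with $g(b_0,r)=g_0(b_0)$, $k(a_0,r)=(k_0(a_0),r)$, and $h(a_0,r)=\pr{X}(h_0(a_0))$. The essential choice is the realizability predicates. Writing $h_0(a_0)=(x,a)$ with $a\in\phi(x)$, I give $A_1$ and $B$ the partitioned structures $\alpha(a_0,r)=\{[a,r]\}$ and $\beta(b_0,r)=\{[c,r]\mid c\in\psi(g_0(b_0))\}$, so that the index $r\in A$ is itself carried as realizer data alongside the canonical realizer $a\in\phi(x)$ furnished by $X.\phi$. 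With these predicates all four maps are realized (by projections and by the realizer $r_f$ of $f$), and the square is covering: $g$ is a regular epimorphism since $g_0$ is surjective and $\beta$ essentially pulls $\psi$ back, while $[k,h]\colon(A_1,\alpha)\to(X,\phi)\times_{(Y,\psi)}(B,\beta)$ is a regular epimorphism because $[k_0,h_0]$ surjects onto $(X.\phi)\times_Y B_0$ and hence, after $\pr{X}$, onto $X\times_Y B_0$, the predicates matching up to the realizer manipulations just described.

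For collection, fix $y$ and a cover $e\colon(E,\chi)\epi f^{-1}(y)$. Since every assembly is covered by its partitioning and regular epimorphisms compose in the regular category $\Asm(A)$, it suffices to treat covers by partitioned assemblies, so I may assume $\chi(q)=\{\rho(q)\}$; let $r_e$ realize the cover, so that for each $(p,c)$ with $c\in\phi(p)$ there is $q\in\bar e^{-1}(p)$ with $\rho(q)=r_e\cdot c$. Form the set $E^{\flat}=\{(p,c,q)\mid c\in\phi(p),\ q\in\bar e^{-1}(p),\ \rho(q)=r_e\cdot c\}$; the cover condition makes its first projection $E^{\flat}\epi\bar\pi^{-1}(y)$ a surjection, so $\Set$-collection supplies $b_0\in g_0^{-1}(y)$ and a set-section $\tau$ of it along $h_0|_{k_0^{-1}(b_0)}$, say $\tau(a_0)=(p_{a_0},c_{a_0},q_{a_0})$ with $h_0(a_0)=(p_{a_0},c_{a_0})$. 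Choosing the index $b=(b_0,r_e)$ and setting $t(a_0,r_e)=q_{a_0}$ gives a lift of $h|_{k^{-1}(b)}$ through $e$ on underlying sets, and — the crux — it is realized by $\lmbd{w}{(p_1\cdot w)\cdot(p_0\cdot w)}$, since $\alpha(a_0,r_e)=\{[c_{a_0},r_e]\}$ while $\rho(q_{a_0})=r_e\cdot c_{a_0}$, so that Lemma~\ref{swan} is verified.

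The main obstacle is exactly this last point: a set-theoretic section delivered by $\Set$-collection need not underlie a morphism of assemblies. I overcome it by enlarging the index object by a factor of $A$, so that the fibre $k^{-1}(b)$ can \emph{guess} the realizer $r_e$ of the given cover, and by arranging the partitioned predicate on the generic family so that each element carries both its canonical $\phi$-realizer and this guessed $r$; applying the guessed realizer to the canonical one then yields, uniformly over the fibre, the realizer demanded by $\chi$, so the set-section becomes automatically realized. Everything else reduces to routine realizer bookkeeping together with the already-established facts that $\Asm(A)$ is regular, that partitionings are regular epimorphisms onto their assemblies, and that Lemma~\ref{swan} recasts the collection property as precisely the existence of such fibrewise lifts.
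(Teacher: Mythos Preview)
Your argument is correct and shares the high-level plan with the paper---lift a $\Set$-cover-base through partitioning and then add realizer decoration---but the two executions diverge in how they make the $\Set$-theoretic lifting map realized. The paper partitions \emph{both} $X$ and $Y$, applies $\Set$-WISC to $(X.\phi)\times_Y(Y.\psi)\to Y.\psi$, and in the collection step pulls the given cover $(E,\epsilon)\epi f^{-1}(y)$ back along the partitioning of the fibre, then carves out a subassembly $T'\subseteq T$ by the condition ``$re\cdot[b,a]\in\tau(t)$''; the $\Set$-collection map lands in $T'$ and is realized by the cover-dependent realizer $re$ itself. You instead partition only $X$, apply $\Set$-WISC to $X.\phi\to Y$, and inflate the resulting index by a whole copy of $A$ so that the fibre $k^{-1}(b_0,r_e)$ carries $r_e$ inside its realizer data; the lifting map is then realized by the \emph{uniform} combinator $\lmbd{w}{(p_1\cdot w)\cdot(p_0\cdot w)}$, and you handle a general $(E,\chi)$ by first replacing it with its partitioning. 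Your route is arguably cleaner---no auxiliary $T'$, and a single fixed realizer for all lifts---while the paper's is a bit more economical with the index (it enlarges only by the relevant $\psi(y)\subseteq A$ rather than all of $A$); both are sound.
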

\begin{proof}

Suppose we have a morphism $f: (X,\phi) \to (Y,\psi)$, we construct a pullback square:

\[\begin{tikzcd}
	{(X.\phi,\phi^{*}) \times_{(Y,\psi)}(Y.\psi,\psi^{*} )} &&&& {(X.\phi,\phi^{*})} \\
	\\
	\\
	\\
	{(Y.\psi,\psi^{*} )} &&&& {(Y,\psi)}
	\arrow["{\pi_{X}}", from=1-1, to=1-5]
	\arrow["{\pi_{Y}}"', from=1-1, to=5-1]
	\arrow["\lrcorner"{anchor=center, pos=0.125}, draw=none, from=1-1, to=5-5]
	\arrow["{f \circ \pr{X}}", from=1-5, to=5-5]
	\arrow["{{\pr{Y}}}"', from=5-1, to=5-5]
\end{tikzcd}\]

where the bottom morphism is the partitioning of $(Y,\psi)$ and $(X.\phi,\phi^{*}) \times_{(Y,\psi)}(Y.\psi,\psi^{*} ) = ((X.\phi) \times_{Y} (Y.\psi), \phi^{*} \times \psi^{*} )$ such that $\phi^{*} \times \psi^{*} ((x,a),(f(x),a')) = \{[a,a']\}$. This induces a square:

\[\begin{tikzcd}
	{((X.\phi) \times_{Y} (Y.\psi), \phi^{*} \times \psi^{*} )} &&&& {(X,\phi)} \\
	\\
	\\
	\\
	{(Y.\psi,\psi^{*} )} &&&& {(Y,\psi)}
	\arrow["{{\pr{X} \circ \pi_{X}}}", from=1-1, to=1-5]
	\arrow["{{\pi_{Y}}}"', from=1-1, to=5-1]
	\arrow["{{f }}", from=1-5, to=5-5]
	\arrow["{{{\pr{Y}}}}"', from=5-1, to=5-5]
\end{tikzcd}\]

$\pr{Y}: (Y.\psi,\psi^{*} ) \to (Y,\psi)$ is a regular epimorphism and the canonical morphism $$(\pi_{Y},\pr{X} \circ \pi_{X}): ((X.\phi) \times_{Y} (Y.\psi), \phi^{*} \times \psi^{*} ) \to (X,\phi) \times_{(Y,\psi)} (Y.\psi,\psi^{*})$$ is a regular epimorphism since for a realizer $[a,a']$ for the pair $(x,(f(x),a'))$ is a realizer for $((x,a),(f(x),a'))$ in $((X.\phi) \times_{Y} (Y.\psi), \phi^{*} \times \psi^{*} )$. Now using WISC in $\Set$, we can construct a covering and collection square:

\[\begin{tikzcd}
	J &&&& {(X.\phi) \times_{Y} (Y.\psi)} \\
	\\
	\\
	\\
	I &&&& {Y.\psi}
	\arrow["{q_{0}}", from=1-1, to=1-5]
	\arrow["p"', from=1-1, to=5-1]
	\arrow["{{{\pi_{Y}}}}", from=1-5, to=5-5]
	\arrow["{q_{1}}"', from=5-1, to=5-5]
\end{tikzcd}\]

this gives a new diagram in $\Asm(A)$:

\[\begin{tikzcd}
	{(J,(\phi^{*} \times \psi^{*}) \circ q_{0})} &&&& {((X.\phi) \times_{Y} (Y.\psi), \phi^{*} \times \psi^{*} )} &&&& {(X,\phi)} \\
	\\
	\\
	\\
	{(I,\psi^{*} \circ q_{1})} &&&& {(Y.\psi,\psi^{*} )} &&&& {(Y,\psi)}
	\arrow["{q_{0}}", from=1-1, to=1-5]
	\arrow["p"', from=1-1, to=5-1]
	\arrow["{{{\pr{X} \circ \pi_{X}}}}", from=1-5, to=1-9]
	\arrow["{{{\pi_{Y}}}}"', from=1-5, to=5-5]
	\arrow["{{{f }}}", from=1-9, to=5-9]
	\arrow["{q_{1}}"', from=5-1, to=5-5]
	\arrow["{{{{\pr{Y}}}}}"', from=5-5, to=5-9]
\end{tikzcd}\]

The square on the right is a covering square since $q_{1}$ is a regular epimorphism and the canonical morphism 

$$(p, q_{0}):(J,(\phi^{*} \times \psi^{*}) \circ q_{0}) \to ((X.\phi) \times_{Y} (Y.\psi), \phi^{*} \times \psi^{*} ) \times_{(Y.\psi,\psi^{*} )} (I,\psi^{*} \circ q_{1}) $$

is a regular epimorphism since for a realizer $[[b,b'],b']$ for $(((x,b),(y,b')),i)$, there exists a $j \in J$ such that $p(j) = i$ and $q_{0}(j) = ((x,b),(y,b'))$ so $[b,b']$ is a realizer for $j$. Hence the above diagram induces a covering square for $f$. For $y \in Y$, and realizer $a \in \psi(y)$ and a regular epimorphism $g:(E,\epsilon) \epi (X,\phi)_{y}$ since $(X,\phi)_{y} = (X_{y},\phi)$ we have a pullback square:

\[\begin{tikzcd}
	{(T, \tau)} &&& {(E,\epsilon)} \\
	\\
	\\
	{(X_{y}.\phi,\phi^{*}) \times(\{(y,a)\},\psi^{*})} &&& {(X,\phi)_{y} }
	\arrow["v", from=1-1, to=1-4]
	\arrow["{g'}"', from=1-1, to=4-1]
	\arrow["\lrcorner"{anchor=center, pos=0.125}, draw=none, from=1-1, to=4-4]
	\arrow["g", from=1-4, to=4-4]
	\arrow["{\pr{X} \circ \pi_{X}}"', from=4-1, to=4-4]
\end{tikzcd}\]

where $((X.\phi) \times_{Y} (Y.\psi), \phi^{*} \times \psi^{*} )_{(y,a)} = (X_{y}.\phi,\phi^{*}) \times(\{(y,a)\},\psi^{*})$ and $g'$ is also a regular epimorphism. Let $re \in A$ realizes $g'$ as a regular epimorphism. We construct an assembly $(T',\tau) \mono (T,\tau)$ where $$T' = \{t \in T| \exists a \in \phi^{*} \times \psi^{*}(g'(t))( re \cdot a \in \tau(t))  \}$$ This induces a regular epimorphism $g'': (T',\tau)  \epi (X_{y}.\phi,\phi^{*}) \times(\{(y,a)\},\psi^{*})$ where $g''(t) = g'(t)$. Using the covering square we constructed in $\Set$:

\[\begin{tikzcd}
	J &&&& {(X.\phi) \times_{Y} (Y.\psi)} \\
	\\
	\\
	\\
	I &&&& {Y.\psi}
	\arrow["{q_{0}}", from=1-1, to=1-5]
	\arrow["p"', from=1-1, to=5-1]
	\arrow["{{{\pi_{Y}}}}", from=1-5, to=5-5]
	\arrow["{q_{1}}"', from=5-1, to=5-5]
\end{tikzcd}\]

there exists an $i \in I$ such that $q_{1}(i) = (y,a)$ and a function $w:J_{i} \to T'$ such that $g'' \circ w = q_{0}|_{J_{i}}$. Now we need to extend this to a morphism $w: (J_{i}, (\phi^{*} \times \psi^{*}) \circ q_{0}) \to (T',\tau)$. Given $j \in J_{i}$ and $[b,a'] \in \phi^{*} \times \psi^{*}(q_{0}(j))$, we have $a' = a$ since $q_{1} \circ p(j) = \pi_{Y} \circ q_{0}(j) = (y,a)$. We have $g''(w(j)) = ((x,b), (y,a))$ and $[b,a]$ is the unique realizer of $((x,b), (y,a))$ in $(X_{y}.\phi,\phi^{*}) \times(\{(y,a)\},\psi^{*})$ and of $j$ in $(J_{i}, (\phi^{*} \times \psi^{*}) \circ q_{0})$. Furthermore, for $t \in T'$, $g''(t) = ((x,b), (y,a))$ implies $re \cdot [b,a] \in \tau(t)$ hence $re \cdot [b,a] \in \tau(w(j))$. So $w$ is realized by $re$. $w$ then induces a morphism $w': (J_{i}, (\phi^{*} \times \psi^{*}) \circ q_{0}) \to (E,\epsilon)$ where $w'(j) = v \circ w(j)$. So we have $$ g \circ w' = g \circ v \circ w = \pr{X} \circ \pi_{X} \circ g' \circ w = \pr{X} \circ \pi_{X} \circ q_{0}|_{J_{i}} = (\pr{X} \circ \pi_{X} \circ q_{0})|_{J_{i}}$$ So we have covering and collection square:

\[\begin{tikzcd}
	{(J,(\phi^{*} \times \psi^{*}) \circ q_{0})} &&&& {(X,\phi)} \\
	\\
	\\
	\\
	{(I,\psi^{*} \circ q_{1})} &&&& {(Y,\psi)}
	\arrow["{\pr{X} \circ \pi_{X} \circ q_{0}}", from=1-1, to=1-5]
	\arrow["p"', from=1-1, to=5-1]
	\arrow["{{{{f }}}}", from=1-5, to=5-5]
	\arrow["{{\pr{Y} \circ q_{1}}}"', from=5-1, to=5-5]
\end{tikzcd}\]

which completes the proof.

\end{proof}

We seek to give a construction of W-types with reductions as shown in \cite{swan2018wtypes} in $\Asm(A)$ for partial combinatory algebra $A$. This construction requires that WISC holds in $\Asm(A)$ which we have by Lemma \ref{wisc}. This construction also requires $\Asm(A)$ be an exact category:

\begin{defn}
Given a regular category $\cC$ and a monomorphism $R \mono X \times X$ in $\cC$, we call $R$ a {\bf congruence} on $X$ if $R$ is an internal equivalence relation in $\cC$. \\

$\cC$ is an {\bf exact category} if any congruence $R \mono X \times X$ is a kernel pair or in other words we can express $R$ as the following pullback:

\[\begin{tikzcd}
	{R} &&& {X} \\
	\\
	\\
	{X} &&& {Y}
	\arrow[ "\pr{0}", from=1-1, to=1-4]
	\arrow["\pr{1}"', from=1-1, to=4-1]
	\arrow["\lrcorner"{anchor=center, pos=0.125}, draw=none, from=1-1, to=4-4]
	\arrow["f", from=1-4, to=4-4]
	\arrow["f"', from=4-1, to=4-4]
\end{tikzcd}\]

\end{defn}

However, $\Asm(A)$ is not exact as shown in the following example:

\begin{ex}

Suppose we have a set $ \{1,2,3\}$ and the following congruence $\Delta':( \{1,2,3\}, \overline{\{-\}}) \mono ( \{1,2,3\}, \const_{A}) \times ( \{1,2,3\},\const_{A})$ where $\overline{n}$ is church numeral for $n$. This gives us a diagram

\[\begin{tikzcd}
	{( \{1,2,3\}, \overline{\{-\}}) } &&& {( \{1,2,3\},\const_{A})}
	\arrow["{\id}"', shift right=2, from=1-1, to=1-4]
	\arrow["{\id}", shift left=2, from=1-1, to=1-4]
\end{tikzcd}\]

which gives us the regular epimorphism $\id_{( \{1,2,3\},\const_{A})} $ whose kernel pair is a pair of the same identity. This corresponds to the congruence $\Delta:( \{1,2,3\}, \const_{A}) \mono ( \{1,2,3\}, \const_{A}) \times ( \{1,2,3\},\const_{A})$. $( \{1,2,3\}, \const_{A})$ and $( \{1,2,3\}, \overline{\{-\}})$ are not equivalent since you would need to find a realizer mapping all of $A$ into three different constants. Thus $\Delta'$ is not a kernel pair.

\end{ex}

Though $\Asm(A)$ itself is not exact, we can still leverage the fact the the underlying category of $\Set$ is exact. We will use a construction similar to the construction of $W$-types in \cite{Berg2006} as well as give an overview of the construction of $W$-types with reductions in $\Set$. \\


First let us have a $W$-type with reductions diagram:

\[\begin{tikzcd}
	(R, \rho) && (X, \phi) &&& (Y,\psi) \\
	\\
	& (Z,\zeta) &&&&& (Z,\zeta)
	\arrow["k", from=1-1, to=1-3]
	\arrow["f", from=1-3, to=1-6]
	\arrow["h"', from=1-3, to=3-2]
	\arrow["g", from=1-6, to=3-7]
\end{tikzcd}\]

which we will call $D$. We have a covering and collection square using the construction in Lemma \ref{wisc}:

\[\begin{tikzcd}
	{(J,(\phi^{*} \times \psi^{*}) \circ q_{0})} &&&& {(X,\phi)} \\
	\\
	\\
	\\
	{(I,\psi^{*} \circ q_{1})} &&&& {(Y,\psi)}
	\arrow["{\pr{X} \circ \pi_{X} \circ q_{0}}", from=1-1, to=1-5]
	\arrow["p"', from=1-1, to=5-1]
	\arrow["{{{{f }}}}", from=1-5, to=5-5]
	\arrow["{{\pr{Y} \circ q_{1}}}"', from=5-1, to=5-5]
\end{tikzcd}\]

and a diagram:

\[\begin{tikzcd}
	& {(J,(\phi^{*} \times \psi^{*}) \circ q_{0})} &&& {(I,\psi^{*} \circ q_{1})} \\
	\\
	{(Z,\zeta)} &&&&& {(Z,\zeta)}
	\arrow["p", from=1-2, to=1-5]
	\arrow["{h \circ\pr{X} \circ \pi_{X} \circ q_{0}}"{description}, from=1-2, to=3-1]
	\arrow["{g \circ \pr{Y} \circ q_{1}}"{description}, from=1-5, to=3-6]
\end{tikzcd}\]

which we call $D_{w}$. It is not difficult to show that this induces a covering and collection square in $\Set$:

\[\begin{tikzcd}
	J &&&& X \\
	\\
	\\
	\\
	I &&&& Y
	\arrow["{{\pr{X} \circ \pi_{X} \circ q_{0}}}", from=1-1, to=1-5]
	\arrow["p"', from=1-1, to=5-1]
	\arrow["{{{{{f }}}}}", from=1-5, to=5-5]
	\arrow["{{{\pr{Y} \circ q_{1}}}}"', from=5-1, to=5-5]
\end{tikzcd}\]

as any surjection $u:E \epi X_{y}$ induces a regular epimorphism $u: (E, \phi \circ u) \to (X_{y}, \phi)$. Using the construction of $W$-types with reductions in \cite{swan2018wtypes}, we have morphisms $c:W_{base} \to Z$ and $c_{r}:W_{f,g,h,r} \to Z$ so that $c_{r}$ is the initial algebra of the underlying diagram of $D$ in $\Set$, $c$ is the initial algebra of the underlying diagram of $D_{w}$ in $\Set$. Furthermore $\alpha_{p,g,h}$ is the algebra structure for $W_{base}$ and $\alpha_{f,g,h,k}$ is the algebra structure of $W_{f,g,h,r}$. Follow the steps of the construction, we have a covering and collection square:

\[\begin{tikzcd}
	M &&& {J \times_{X} J} \\
	\\
	\\
	N &&& {I \times_{Y} I}
	\arrow["{s_{0}}", from=1-1, to=1-4]
	\arrow[ from=1-1, to=4-1]
	\arrow["{<p,p>}", from=1-4, to=4-4]
	\arrow["{s_{1}}"', from=4-1, to=4-4]
\end{tikzcd}\]

and a dependent $W$-type $T \to W_{base} \times_{Z} W_{base}$ generated by the following steps:

\begin{itemize}

\item If $w' \in W_{base, z}$, $t_{1} \in T_{(w_{0},w')}$ and $t_{2} \in T_{(w',w_{1})}$, then we have that $trans(q_{1},q_{2}) \in T_{(w_{0},w_{1})}$.\\

\item If $y \in Y_{z}$,$i \in I_{y}$, and $\beta \in \Pi_{j \in J_{i}} W_{base, h \circ \pr{X} \circ \pi_{X} \circ q_{0}(j)}$, and $w_{0} = \alpha_{p,g,h}(z,i,\beta)$ and there is $x \in X_{y}$,  $r \in R_{x}$, $j \in J_{x}$, $ n \in N_{(i,i)}$, and $\gamma \in \Pi_{m \in M_{n}} T_{\beta(\pr{0}(s_{0}(m))),\beta(\pr{1}(s_{0}(m)))}$, then we have that $rl(j,n,\beta,\gamma) \in T _{(w_{0},\beta(j))}$ and $rr(j,n,\beta,\gamma) \in T_{(\beta(j),w_{0})}$.\\

\item If $y \in Y_{z}$, $i_{0},i_{1} \in I_{y}$, $n \in N_{(i_{0},i_{1})}$, $\beta_{0} \in \Pi_{j \in J_{i_{0}}} W_{base, h \circ \pr{X} \circ \pi_{X} \circ q_{0}(j)}$ and $\beta_{1} \in \Pi_{j \in J_{i_{1}}} W_{base, h \circ \pr{X} \circ \pi_{X} \circ q_{0}(j)}$ where $w_{0} = \alpha_{p,g,h}(z,i_{0},\beta_{0})$ and $w_{1} = \alpha_{p,g,h}(z,i_{1},\beta_{1})$ and $\gamma \in \Pi_{m \in M_{n}} T_{\beta_{0}(\pr{0}(s_{0}(m))),\beta_{1}(\pr{1}(s_{0}(m)))}$, then we have that $ex(\beta_{0},\beta_{1},\gamma) \in T_{(w_{0},w_{1})}$.\\

\end{itemize}

The $W$-type $T$ induces a partial equivalence relation $\sim$ on each $W_{base,z}$ which can be shown to be generated under the following conditions using the properties of covering and collection squares and surjections:

\begin{itemize}

\item $w \sim w'$  and  $w' \sim w''$ implies $w \sim w''$.\\

\item For $w = \alpha_{p,g,h}(i,t:\Pi_{j \in J_{i}} W_{base,h \circ \pr{X} \circ \pi_{X} \circ q_{0}(j)})$ and $w' = \alpha_{p,g,h}(i',t':\Pi_{j \in J_{i'}} W_{base,h \circ \pr{X} \circ \pi_{X} \circ q_{0}(j)})$ with $i, i' \in I_{y}$, if for all $j \in J_{i}$ and $j' \in J_{i'}$ with $j, j' \in J_{x}$ we have $t(j) \sim t'(j')$ then $w \sim w'$.\\

\item For $w = \alpha_{p,g,h}(i,t:\Pi_{j \in J_{i}} W_{base,h \circ \pr{X} \circ \pi_{X} \circ q_{0}(j)})$ such that $w \sim w$ by the above condition and $j \in J_{i}$ if there exists an $x \in X$ and $r \in R_{x}$ such that $j \in J_{x}$, then $w \sim t(j)$.

\end{itemize}

We have that $w \sim w'$ implies $c(w) = c(w')$. We can derive a subset $W'_{base} \mono W_{base}$ of elements $w \in W_{base}$ such that $w \sim w$ and $W_{f,g,h,r} = W'_{base}/\sim  $. \\

This means we have a surjection $s:W'_{base} \to W_{f,g,h,r}$ such that $c_{r} \circ s(w) = c(w)$. Recall that $D_{w}$ has an initial algebra $c': (W_{p,g,h}, \omega) \to (Z,\zeta)$ where $W_{p,g,h}$ is the subset of $W_{p}$, the initial algebra of the underlying function $p$ in $\Set$,  of the well founded $(f,g,h)$-trees that can be realized by an element in $A$. Of course we have $W_{p,g,h} \mono W_{base}$ since $W_{base}$ is the subset of $W_{p}$ of well founded $(f,g,h)$-trees. Let $\sim'$ be the partial equivalence relation on $W_{p,g,h} \times_{Z} W_{p,g,h}$ generated as follows

\begin{itemize}

\item For $w,w'' \in W_{p,g,h,z}$ if there exists $w' \in W_{p,g,h,z}$ such that $w \sim' w'$  and  $w' \sim' w''$ then $w \sim' w''$.\\

\item For $w = \alpha_{p,g,h}(i,t:\Pi_{j \in J_{i}} W_{p,g,h,h \circ \pr{X} \circ \pi_{X} \circ q_{0}(j)})$ and $w' = \alpha_{p,g,h}(i',t':\Pi_{j \in J_{i'}} W_{p,g,h,h \circ \pr{X} \circ \pi_{X} \circ q_{0}(j)})$ with $i, i' \in I_{y}$, if for all $j \in J_{i}$ and $j' \in J_{i'}$ with $j, j' \in J_{x}$ we have $t(j) \sim' t'(j')$ then $w \sim' w'$.\\

\item For $w = \alpha_{p,g,h}(i,t:\Pi_{j \in J_{i}} W_{p,g,h,h \circ \pr{X} \circ \pi_{X} \circ q_{0}(j)})$ such that $w \sim w$ by the above condition and $j \in J_{i}$ if there exists an $x \in X$ and $r \in R_{x}$ such that $j \in J_{x}$, then $w \sim' t(j)$.

\end{itemize}

 Let $W'_{p,g,h}  = \{w \in W_{p,g,h} | w \sim' w \}$. We set $c^{*}: W'_{p,g,h} \to Z$ be the restriction of $c'$ to $W'_{p,g,h}$. We set $W'_{f,g,h,r} =  W'_{p,g,h}/\sim'$. So we have a surjection $s': W'_{p,g,h} \to W'_{f,g,h,r}$ with $w \sim' w'$ implying $s'(w) = s(w')$ and a unique morphism $c'_{r}: W'_{f,g,h,r} \to Z$ such that $c^{*} = c'_{r} \circ s'$. We construct the assembly $(W'_{f,g,h,r}, \exists_{s'}(\omega))$ where $\exists_{s'}(\omega)(w) = \bigcup_{w' \in s'^{-1}(w)} \omega(w')$ and a morphism $c'_{r}: (W'_{f,g,h,r}, \exists_{s}(\omega)) \to (Z, \zeta)$. $c'_{r}$ is realized by the same realizer as $c'$. We will show that $c'_{r}$ is a $\poly_{f,g,h,k}$-algebra. First we construct a morphism $\alpha: \poly_{f,g,h}(c'_{r}) \to c'_{r}$. Suppose we have $(y, t: (X,\phi)_{y} \to (W'_{f,g,h,r}, \exists_{s}(\omega)))$ realized by $[r_{y},r_{t}]$, we have a regular epimorphism $s': (W'_{p,g,h}, \omega) \to (W'_{f,g,h,r}, \exists_{s}(\omega))$ where $\iota$ realizes both $s'$ and the fact that $s'$ is a regular epimorphism. We can find a $i \in I$ so that $\psi^{*} \circ q_{1}(i) = \{r_{y}\}$ and commutative square:

\[\begin{tikzcd}
	{(J,(\phi^{*} \times \psi^{*}) \circ q_{0})_{i}} &&&& {(W'_{p,g,h},\omega)} \\
	\\
	\\
	\\
	{(X,\phi)_{y} } &&&& {(W'_{f,g,h,r},\exists_{s'}(\omega))}
	\arrow["{t'}", from=1-1, to=1-5]
	\arrow["{\pr{X} \circ \pi_{X} \circ q_{0}}"{description}, from=1-1, to=5-1]
	\arrow["s'", from=1-5, to=5-5]
	\arrow["t"', from=5-1, to=5-5]
\end{tikzcd}\]

so that $t'$ is realized by $\lmbd{b}{r_{t} \cdot (p_{0} \cdot b)}$ by examining the covering and collection square construction in Lemma \ref{wisc} and by the fact that regular epimorphisms are preserved by pullback.

Suppose we have another such square:

\[\begin{tikzcd}
	{(J,(\phi^{*} \times \psi^{*}) \circ q_{0})_{i'}} &&&& {(W'_{p,g,h},\omega)} \\
	\\
	\\
	\\
	{(X,\phi)_{y} } &&&& {(W'_{f,g,h,r},\exists_{s'}(\omega))}
	\arrow["{t''}", from=1-1, to=1-5]
	\arrow["{\pr{X} \circ \pi_{X} \circ q_{0}}"{description}, from=1-1, to=5-1]
	\arrow["s'", from=1-5, to=5-5]
	\arrow["t"', from=5-1, to=5-5]
\end{tikzcd}\]

then for $j \in J_{i}$ and $j' \in J_{i'}$ such that $\pr{X} \circ \pi_{X} \circ q_{0}(j) = \pr{X} \circ \pi_{X} \circ q_{0}(j')$ we have $s'(t'(j)) = s'(t''(j'))$. Since $\Set$ is an exact category then $t'(j) \sim' t''(j')$. Thus $\alpha_{p,g,h}(i,t') \sim' \alpha_{p,g,h}(i',t'')$ and so $s'(\alpha_{p,g,h}(i,t')) = s'(\alpha_{p,g,h}(i',t''))$. We set $\alpha(y,t) = s'(t')$ which is realized by the combinator taking $[r_{y},r_{t}]$ to $[r_{y}, \lmbd{b}{r_{t} \cdot (p_{0} \cdot b)}]$. Furthermore, $\alpha$ induces a $\poly_{f,g,h,k}$-algebra structure on $c'_{r}$ by the third condition of $\sim'$ and since for $x \in X_{y}$ and $i \in I_{y}$ there exists $j \in J$ such that $\pr{X} \circ \pi_{X} \circ q_{0}(j) = x$ and $p(j) = i$.\\

Suppose we have a $\poly_{f,g,h,k}$-algebra $v: (D, \delta) \to (Z,\zeta)$ with algebra structure $\beta: \poly_{f,g,h}(v) \to v$, we give a construction of the relation $N \mono W'_{p,g,h} \times_{Z}  D$ which is similar to the construction found in section 3.2.5 of \cite{swan2018wtypes}:\\

Given $(w,d) \in W'_{p,g,h} \times_{Z}  D$ if we have:
\begin{itemize}
\item $w = \alpha_{p,g,h}(i,k:  \Pi_{j \in (J,(\phi^{*} \times \psi^{*}) \circ q_{0})_{i} } (W'_{p,g,h},\omega)_{h \circ \pr{X} \circ \pi_{X} \circ q_{0}(j)} )$\\

\item $k' : \Pi_{x \in (X,\phi)_{\pr{Y} \circ q_{1}(i)}} (D,\delta)_{h(x)}$ such that $d = \beta(\pr{Y} \circ q_{1}(i), k')$\\

\item For all $j \in J_{i}$,  $(k(j),k'(\pr{X} \circ \pi_{X} \circ q_{0}(j))) \in N$\\

\end{itemize}

then  $(w,d) \in N$. Let $r_{\beta}$ be a realizer for $\beta$. Given some $w \in W'_{p,g,h}$ and $r \in \omega(w)$, we seek to construct a realizer $I \cdot r$ for some element $d \in (D,\delta)$. Recall that $$w = \alpha_{p,g,h}(i,k:  \Pi_{j \in (J,(\phi^{*} \times \psi^{*}) \circ q_{0})_{i} } (W_{p,g,h},\omega)_{h \circ \pr{X} \circ \pi_{X} \circ q_{0}(j)} )$$ where $i$ is realized by a unique realizer $r_{y} \in \psi(\pr{Y} \circ q_{1}(i))$ and $k$ is realized by some $r_{k}$ which takes $[r_{y},r_{x}]$ where $f(x) = y = \pr{Y} \circ q_{1}(i)$ to a realizer of $k(j)$ such that $[r_{y},r_{x}]$ is the unique realizer of $j \in J_{i}$ and $\pr{X} \circ \pi_{X} \circ q_{0}(j) = x$ with $r_{x} \in \phi(x)$. every realizer of $w$ is of the form $[r_{y},r_{k}]$. Also remember that for $x \in X_{y}$ and $r_{x} \in \phi(x)$, there exists $j \in J_{i}$ realized uniquely by $[r_{y}],r_{x}$. With all this mind we construct the realizer:

$$uni :=   \lmbd{f,l}{r_{\beta} \cdot [p_{0} \cdot l, \lmbd{l'}{f \cdot ((p_{1} \cdot l) \cdot [p_{0} \cdot l, l'])} ]}$$
So we have $UM =  \fixf \cdot uni$. When we have a realizer $[r_{y},r_{k}]$ of $w$, we have:
\begin{align*}
UM \cdot [r_{y},r_{k}] &= (\fixf \cdot uni) \cdot [r_{y},r_{k}]\\
&= uni \cdot (\fixf \cdot uni) \cdot [r_{y},r_{k}]\\
&= r_{\beta} \cdot [p_{0} \cdot [r_{y},r_{k}], \lmbd{l'}{(\fixf \cdot uni) \cdot ((p_{1} \cdot [r_{y},r_{k}]) \cdot [ p_{0} \cdot [r_{y},r_{k}],  l'])} ]\\
&= r_{\beta} \cdot [r_{y}, \lmbd{l'}{(\fixf \cdot uni) \cdot (r_{k} \cdot [r_{y},l'])} ]\\
&=  r_{\beta} \cdot [r_{y}, \lmbd{l'}{UM \cdot (r_{k} \cdot [r_{y},l'])} ]\\
\end{align*}

Ideally we want $ [r_{y}, \lmbd{l'}{UM \cdot (r_{k} \cdot [r_{y},l'])} ]$ to be a realizer for a pair $(y, k' : \Pi_{x \in (X,\phi)_{\pr{Y} \circ q_{1}(i)}} (D,\delta)_{h(x)})$ in $\poly_{f,g,h}(v)$. We now seek to prove the following lemma:

\begin{lem}

For $w \in W'_{p,g,h}$, there exists a unique $d \in D$ such that $(w,d) \in N$. In addition, if $r_{w}$ realizes $w$ then $UM \cdot r_{w}$ realizes $d$ and if $w \sim' w'$ for $w' \in W'_{p,g,h}$ then both elements have the same unique $d$.

\end{lem}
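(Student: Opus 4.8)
The plan is to run one well-founded induction over $W'_{p,g,h}$, using that $W_{p}$, and hence its subset $W'_{p,g,h}$, is well-founded for $\DirST$. I will prove simultaneously that $N$ is the graph of a total single-valued relation on $W'_{p,g,h}$, that this value is realized by $UM$, and that it is constant on $\sim'$-classes.

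Uniqueness I would treat first by $\DirST$-induction. If $w = \alpha_{p,g,h}(i,k)$ and both $(w,d)$ and $(w,d'')$ lie in $N$, witnessed by branching functions $k',k''$ with common root $y = \pr{Y}\circ q_{1}(i)$, then for each $x \in (X,\phi)_{y}$ the covering square supplies a $j \in J_{i}$ with $\pr{X}\circ\pi_{X}\circ q_{0}(j) = x$; the defining clause of $N$ forces $(k(j),k'(x))$ and $(k(j),k''(x))$ into $N$, so the inductive hypothesis for the direct subtree $k(j)$ gives $k'(x) = k''(x)$. Hence $k' = k''$ and $d = \beta(y,k') = d''$.

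For existence and the realizer claim, at $w = \alpha_{p,g,h}(i,k)$ the inductive hypothesis yields for each direct subtree $k(j)$ its unique value $d_{k(j)}$ and the fact that $UM$ sends any realizer of $k(j)$ to a realizer of $d_{k(j)}$. I define $k'(x) := d_{k(j)}$ for any $j$ over $x$; well-definedness is exactly compatibility of $\sim'$-related direct subtrees, which is available because $w \in W'_{p,g,h}$ means $w \sim' w$, generated by the second clause of $\sim'$, forcing $k(j) \sim' k(j')$ whenever $j,j'$ lie over a common $x$. Setting $d_{w} := \beta(y,k')$ gives $(w,d_{w}) \in N$, and the computation $UM \cdot [r_{y},r_{k}] = r_{\beta}\cdot[r_{y},\lmbd{l'}{UM\cdot(r_{k}\cdot[r_{y},l'])}]$ closes the realizer claim: for $r_{x} \in \phi(x)$ the pair $[r_{y},r_{x}]$ realizes some $j \in J_{i}$ over $x$ and $r_{k}\cdot[r_{y},r_{x}]$ realizes $k(j)$, so $UM\cdot(r_{k}\cdot[r_{y},r_{x}])$ realizes $d_{k(j)} = k'(x)$; thus $[r_{y},\lmbd{l'}{UM\cdot(r_{k}\cdot[r_{y},l'])}]$ realizes $(y,k')$ in $\poly_{f,g,h}(v)$ and $r_{\beta}$ carries it to a realizer of $\beta(y,k') = d_{w}$.

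The compatibility clause $w \sim' w' \Rightarrow d_{w} = d_{w'}$ I would obtain by induction on the generation of $\sim'$: transitivity is immediate; the second clause reduces to equality of the two branching functions, true subtree-wise by the hypothesis; and the third, reduction, clause $w \sim' t(j)$ is exactly where the $\poly_{f,g,h,k}$-algebra (as opposed to merely $\poly_{f,g,h}$-algebra) structure of $v$ enters, its coherence forcing $\beta(y,k') = k'(x) = d_{t(j)}$ at a reduction $r \in R_{x}$. The main obstacle is the apparent circularity: existence at $w$ already requires compatibility of its direct subtrees, while compatibility is phrased through the values $d_{(-)}$. I would resolve it by nesting the $\sim'$-generation induction inside the $\DirST$-induction, noting that every $\sim'$-relation invoked to justify well-definedness at $w$ holds among trees strictly $\DirST$-below $w$ and is therefore already supplied by the outer hypothesis. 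The $\DirST$-induction principle then gives all three assertions on $W'_{p,g,h}$, completing the proof.
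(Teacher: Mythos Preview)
Your proposal is correct and uses essentially the same ingredients as the paper's proof: the well-foundedness of $W'_{p,g,h}$, induction on the generating clauses of $\sim'$, the $\poly_{f,g,h,k}$-algebra structure of $v$ for the reduction clause, and the fixed-point computation of $UM$ for the realizer claim. The organization differs slightly: the paper runs a single induction on the generation of $\sim'$ (relying on the fact that $w \in W'_{p,g,h}$ means $w \sim' w$ by the second clause, so existence is subsumed there), whereas you separate uniqueness and existence into an outer $\DirST$-induction and then nest the $\sim'$-generation induction for compatibility inside it, explicitly flagging and resolving the apparent circularity. Your structuring is arguably cleaner and makes the dependency between the two inductions more transparent; the paper's version is more compressed but leaves that dependency implicit.
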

\begin{proof}

We will prove this by cases on $\sim'$.  Suppose we have $w \sim' w'$ by transitivity then there exists $w'' \in W'_{p,g,h}$ such that $w \sim' w''$ and $w'' \sim' w'$. Suppose the Lemma holds for both $w \sim' w''$ and $w'' \sim' w'$ then obviously it holds for $w \sim' w'$.\\

Suppose $w \sim' w'$ holds by the second case then we have 

$$w = \alpha_{p,g,h}(i,k:  \Pi_{j \in (J,(\phi^{*} \times \psi^{*}) \circ q_{0})_{i} } (W_{p,g,h},\omega)_{h \circ \pr{X} \circ \pi_{X} \circ q_{0}(j)} )$$ and $$w' = \alpha_{p,g,h}(i',k':  \Pi_{j \in (J,(\phi^{*} \times \psi^{*}) \circ q_{0})_{i'} } (W_{p,g,h},\omega)_{h \circ \pr{X} \circ \pi_{X} \circ q_{0}(j)} )$$ such that for $j \in J_{i}$ and $j' \in J_{i'}$ such that $j,j' \in J_{x}$ for $x \in X_{y}$ then $k(j) \sim k'(j')$. Suppose that the lemma holds for all such cases by induction, then we construction a function $k^{*}: X_{y} \to D$ where $k^{*}(x)$ is the unique $d$ for all $k(j)$ and $k'(j')$ with $j,j' \in J_{x}$. Furthermore $k^{*}(x)  \in D_{h(x)}$ by construction of $N$, so we have $k^{*}: \Pi_{x \in (X,\phi)_{y}} (D,\delta)_{h(x)}$. \\

Let $[r_{y},r_{k}]$ and $[r'_{y},r_{k'}]$ be realizers for $w$ and $w'$ respectively then $r_{y}$ and $r'_{y}$ are both realizers for $y$. Take a realizer $r_{x}$ for $x \in X_{y}$, then so that $[r_{y},r_{x}]$ and $[r'_{y},r_{x}]$ realizes both some $j \in J_{i}$ and $j' \in J_{i}$ so that $j, j' \in J_{x}$. Then $r_{k} \cdot [r_{y},r_{x}]$ realizes $k(j)$ and $r_{k'} \cdot [r'_{y},r_{x}]$ realizes $k'(j')$. Thus both $UM \cdot (r_{k} \cdot [r_{y},r_{x}]) $ and $UM \cdot (r_{k'} \cdot [r'_{y},r_{x}]) $ realizes $k^{*}(x)$. So both $\lmbd{l'}{UM \cdot (r_{k} \cdot [r_{y},l'])} $ and $\lmbd{l'}{UM \cdot (r_{k'} \cdot [r'_{y},l'])} $ realizes $k^{*}: (X, \phi)_{y} \to (D, \delta)$. So $UM \cdot [r_{y},r_{k}]$ and $UM \cdot [r'_{y},r_{k'}]$ realizes $\beta(y,k^{*})$. $\beta(y,k^{*})$ is unique for both $w$ and $w'$ by construction of $N$.\\

Suppose $w \sim' w'$ by the third case, then without loss of generality suppose $w = \alpha_{p,g,h}(i,k)$ and $k(j) = w'$ for some $j \in J_{i}$. Then $w \sim' w$ by the second case. Then there exists a unique $d \in D$ such that $(w,d) \in N$ and for every realizer of $w$, $[r_{y}, r_{k}]$, $UM \cdot [r_{y},r_{k}]$ realizes $d$. Let $d = \beta(y,k')$ with $y = \pr{Y} \circ q_{1}(i)$. Let there be $x \in X_{y}$, $r \in R_{x}$ and $j \in J_{x}$ by third case of $\sim'$, then $d = k'(x)$ since $v:(D,\delta) \to (Z,\zeta)$ is a $\poly_{f,g,h,k}$-algebra. Furthermore by the second case, for any realizer of $k(j)$, $r_{k(j)}$, $UM \cdot r_{k(j)}$ realizes $k'(x) = d$ and $k'(x)$ is the unique element of $D$ corresponding to $k(j)$. Thus $(k(j), d) \in N$ uniquely. This completes the proof.\\

\end{proof}

By implication of the above lemma , we have a morphism $um: (W'_{p,g,h},\omega) \to (D,\delta)$ over $(Z,\zeta)$ realized by $UM$. Furthermore when $w \sim' w'$, $um(w) = um(w')$. In addition, it is the unique morphism such that for a pair $(i ,  k:  \Pi_{j \in (J,(\phi^{*} \times \psi^{*}) \circ q_{0})_{i} } (W_{p,g,h},\omega)_{h \circ \pr{X} \circ \pi_{X} \circ q_{0}(j)})$ there exists a $(y, k') \in \poly_{f,g,h}(v)$ such that for $x \in X_{y}$ and $j \in J_{i}$ such that $j \in J_{x}$, $um(k(j)) = k'(x)$ and $um(\alpha_{p,g,h}(i,k)) = \beta(y,k')$. This gives a unique morphism $[um]: (W'_{f,g,h,r}, \exists_{s'}(\omega)) \to (D, \delta)$ over $(Z,\zeta)$ such that $[um] \circ s' = um$. In addition this is the unique $\poly_{f,g,h,k}$-algebra morphism into $v:(D,\delta)  \to (Z,\zeta)$ since the algebra structure on $(W'_{f,g,h,r}, \exists_{s'}(\omega))$ is derived from the algebra structure on $(W'_{p,g,h},\omega)$ and $um$ is the unique morphism preserving the algebra structure on $(W'_{p,g,h},\omega)$ up to equivalence by $\sim$. Thus we have the following lemma:

\begin{lem}

$c'_{r}:(W'_{f,g,h,r}, \exists_{s'}(\omega)) \to (Z,\zeta)$ is the initial $\poly_{f,g,h,k}$-algebra.

\end{lem}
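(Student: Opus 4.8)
The plan is to prove initiality in the two usual steps---existence and uniqueness of a $\poly_{f,g,h,k}$-algebra morphism out of $c'_{r}$---by transporting the morphism $um$ and its characterization from the preceding lemma across the regular epimorphism $s':(W'_{p,g,h},\omega)\to(W'_{f,g,h,r},\exists_{s'}(\omega))$. Since $c'_{r}$ already carries a $\poly_{f,g,h,k}$-algebra structure $\alpha$ (built from $\alpha_{p,g,h}$ by descent along $s'$), it remains only to compare it against an arbitrary algebra and confirm a unique comparison map exists.

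For existence I would fix any $\poly_{f,g,h,k}$-algebra $v:(D,\delta)\to(Z,\zeta)$ with structure map $\beta$. The preceding lemma supplies a morphism $um:(W'_{p,g,h},\omega)\to(D,\delta)$ over $(Z,\zeta)$, realized by $UM$, with the property that $w\sim' w'$ forces $um(w)=um(w')$. Because $W'_{f,g,h,r}=W'_{p,g,h}/\sim'$ and $\exists_{s'}(\omega)([w])=\bigcup_{w'\in s'^{-1}([w])}\omega(w')$, this $\sim'$-invariance lets $um$ descend to $[um]:(W'_{f,g,h,r},\exists_{s'}(\omega))\to(D,\delta)$ with $[um]\circ s'=um$; the very same combinator $UM$ realizes $[um]$, since a realizer of $[w]$ in the quotient is by definition a realizer of some representative in $s'^{-1}([w])$, on which $UM$ acts correctly and independently of the choice.

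Next I would check that $[um]$ commutes with the algebra structures, i.e.\ that $[um]\circ\alpha=\beta\circ\poly_{f,g,h}([um])$ and that $[um]$ respects reductions. Both are already contained in the recursive defining equation $UM\cdot[r_{y},r_{k}]=r_{\beta}\cdot[r_{y},\lmbd{l'}{UM\cdot(r_{k}\cdot[r_{y},l'])}]$ derived above, which records that $um(\alpha_{p,g,h}(i,k))=\beta(y,k')$ with $k'(x)=um(k(j))$ for any $j\in J_{i}$ lying over $x$, and (via the third case of the preceding lemma) that $um$ collapses a tree onto its selected child whenever a reduction is present. Since $\alpha$ on the quotient was obtained from $\alpha_{p,g,h}$ precisely by descent along $s'$, postcomposing this recursion with $s'$ yields the algebra square for $[um]$, and the reduction clause of $\sim'$ upgrades $[um]$ to a morphism of $\poly_{f,g,h,k}$-algebras rather than merely of $\poly_{f,g,h}$-algebras.

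Finally, for uniqueness, I would take any $\poly_{f,g,h,k}$-algebra morphism $\theta:(W'_{f,g,h,r},\exists_{s'}(\omega))\to(D,\delta)$. Then $\theta\circ s'$ is a morphism out of $(W'_{p,g,h},\omega)$ that preserves the $\poly_{f,g,h}$-structure up to $\sim'$---exactly the recursion shown in the preceding lemma to pin $um$ down uniquely---so $\theta\circ s'=um=[um]\circ s'$, and cancelling the regular epimorphism $s'$ gives $\theta=[um]$. The step I expect to be the \emph{main obstacle} is this last one: one must be certain that the algebra structure on the quotient is wholly inherited from $\alpha_{p,g,h}$ on $(W'_{p,g,h},\omega)$, and in particular that the reduction clause built into $\sim'$ forces any quotient-algebra morphism, once pulled back along $s'$, to satisfy the exact recursion characterizing $um$---so that no freedom is left in the fibers of $s'$ and the uniqueness established in the preceding lemma genuinely applies.
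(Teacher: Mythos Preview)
Your proposal is correct and follows essentially the same approach as the paper: descend $um$ along the regular epimorphism $s'$ to obtain $[um]$, verify it is a $\poly_{f,g,h,k}$-algebra morphism via the recursion equation for $UM$, and deduce uniqueness by pulling any competing $\theta$ back along $s'$ and invoking the uniqueness of $um$ from the preceding lemma. The paper's own argument is in fact the paragraph immediately preceding the lemma statement, and your identification of the main obstacle---that the quotient's algebra structure is wholly inherited from $\alpha_{p,g,h}$---is exactly the point the paper handles by the clause ``the algebra structure on $(W'_{f,g,h,r}, \exists_{s'}(\omega))$ is derived from the algebra structure on $(W'_{p,g,h},\omega)$.''
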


\subsection{Universes and Impredicativity}
In this section, we will construct object classifiers in particular, we will construct an impredicative object classifiers for modest assemblies.

Given Grothendieck universes in $\Set$, we can construct object classifiers in $\Asm(A)$ as follows:

Take a Grothendieck universe $U$ in $\Set$, We define an universal assembly $(U[A] = \Sigma_{X:U} \powf{A}^{X}, \const_{A})$ and we define $(\Sigma_{(X,\phi):U[A]} X, \upsilon)$ where $\upsilon((X,\phi),x) = \phi(x)$. The full object classifier is the projection morphism $\pr{U[A]}:(\Sigma_{(X,\phi):U[A]} X, \upsilon) \to ( U[A], \const_{A})$ realized by $\iota$. We call assemblies whose underlying set is in $U$, {\bf $U$-assemblies}. We refer to $U/(X,\phi)$ as the set of morphisms over $(X,\phi)$ whose fibers are $U$-assemblies

\begin{lem}

$U/(X,\phi) \cong \Asm(A)((X,\phi),(U[A], \const_{A}))$

\end{lem}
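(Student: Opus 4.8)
The plan is to realize the claimed bijection as the object-classifier correspondence, using the reflection $\Gamma \adj \nabla$ to dispose of the (trivial) realizability on the codomain. First I would observe that $(U[A],\const_{A}) = \nabla(U[A])$, so by the $\Gamma \adj \nabla$ adjunction restricted to $\Asm(A)$ (Remark~\ref{asmnabla}), together with $\Gamma(X,\phi) = X$, there is a natural bijection
\[
\Asm(A)((X,\phi),(U[A],\const_{A})) \;\cong\; \Set(\Gamma(X,\phi), U[A]) \;=\; \Set(X, U[A]).
\]
Thus a morphism into the universe is the same datum as a function $f \colon X \to U[A]$, i.e.\ a choice $f(x) = (Y_{x},\psi_{x})$ of a $U$-assembly for each $x \in X$. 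It therefore suffices to produce a bijection between $\Set(X,U[A])$ and $U/(X,\phi)$, the latter read up to isomorphism over $(X,\phi)$.

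For the forward map I would send $f$ to the pullback of the generic family $\pr{U[A]}$ along the corresponding morphism $(X,\phi) \to (U[A],\const_{A})$. Concretely this is the family $p_{f} \colon (E_{f},\epsilon_{f}) \to (X,\phi)$ with $E_{f} = \Sigma_{x \in X} Y_{x}$, projection $p_{f}(x,y) = x$, and assembly structure $\epsilon_{f}(x,y) = \{[a,b] : a \in \phi(x),\ b \in \psi_{x}(y)\}$; the map $p_{f}$ is then realized by $p_{0}$, and its fibre over any point $x$ is the $U$-assembly $(Y_{x},\psi_{x})$ up to isomorphism, so $p_{f} \in U/(X,\phi)$. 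For the backward map I would send a family $p \colon (E,\epsilon) \to (X,\phi)$ with $U$-assembly fibres to the classifying function $g_{p} \colon X \to U[A]$, $g_{p}(x) = (p^{-1}(x),\, \epsilon|_{p^{-1}(x)})$; this is well defined into $U[A]$ precisely because each fibre has underlying set in $U$ and $\epsilon$ restricts to a map $p^{-1}(x) \to \powf{A}$.

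I would then check the two assignments are mutually inverse up to isomorphism. In one direction, $g_{p_{f}}(x) = (Y_{x}, [\,y \mapsto \{[a,b]: a\in\phi(x),\,b\in\psi_{x}(y)\}\,])$ is isomorphic to $f(x) = (Y_{x},\psi_{x})$ as a point of $U[A]$: the isomorphism is realized by $p_{1}$ one way and, for a chosen $a_{0}\in\phi(x)$, by $\lmbd{b}{[a_{0},b]}$ the other. In the other direction, the pullback family $p_{g_{p}}$ has the same underlying set $E$ as $p$ and structure $\epsilon_{g_{p}}(x,e)=\{[a,b]:a\in\phi(x),\,b\in\epsilon(e)\}$ on the fibre over $x$; the identity on $E$ is then an isomorphism $p\cong p_{g_{p}}$ over $(X,\phi)$, realized by $\lmbd{b}{[r_{p}\cdot b,\,b]}$ and by $p_{1}$, where $r_{p}$ is a realizer of the morphism $p$. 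This last point is exactly where the realizability of $p$ (as opposed to a mere function of sets) is used.

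The main obstacle I anticipate is the bookkeeping of the assembly structures on fibres: the pullback necessarily tensors the fibre datum $\psi_{x}$ with $\phi(x)$ in order for the projection to be realizable, so the round-trip returns each fibre only up to a canonical isomorphism rather than on the nose. Consequently the correspondence is a genuine bijection once $U/(X,\phi)$ is taken up to isomorphism over $(X,\phi)$; making this precise---in particular that the comparison isomorphisms are realized \emph{uniformly} by fixed combinators (via the realizer $r_{p}$ of $p$ and via $p_{0},p_{1}$), so that they assemble into an actual isomorphism of families in $\Asm(A)$---is the one point that needs care, and it follows from naturality of the $\Gamma \adj \nabla$ bijection together with the universal property of the pullback defining $\pr{U[A]}$.
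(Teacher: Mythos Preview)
The paper's entire proof is the single word ``Straightforward,'' so your detailed unpacking via $\Gamma \adj \nabla$ and the explicit pullback/fibre constructions goes well beyond what the paper offers and is essentially the intended content.

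There is, however, one genuine slip in your conclusion. You correctly observe that neither round trip is the identity on the nose, and then assert the correspondence becomes a bijection ``once $U/(X,\phi)$ is taken up to isomorphism over $(X,\phi)$.'' But quotienting only one side cannot produce a bijection: distinct functions $f, f' \colon X \to U[A]$ with pointwise-isomorphic values---for instance $f(x)=(\{0\},0\mapsto\{k\})$ and $f'(x)=(\{1\},1\mapsto\{k\})$ constantly---yield isomorphic families over $(X,\phi)$, so the induced map $\Set(X,U[A]) \to \big(U/(X,\phi)\big)/{\cong}$ is not injective. To salvage an honest bijection you must quotient the function side as well, or, more cleanly, read the lemma as asserting an equivalence of groupoids (functions with pointwise assembly-isomorphisms on one side, families with isomorphisms over the base on the other). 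The paper's statement is itself informal on precisely this point; your analysis correctly locates the slack but does not tie it off, and the appeal to ``naturality of $\Gamma \adj \nabla$'' and ``the universal property of the pullback'' does not by itself repair it.
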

\begin{proof}

Straightforward.

\end{proof}

\begin{defn}

We call  $(X,\phi)$ a {\bf modest assembly} if for $x,x' \in X$, $x \neq x'$ implies $\phi(x) \cap \phi(x') = \emptyset$.

\end{defn}

\begin{defn}

Given a locally cartesian closed category $\cC$ with finite limits and an object classifier $u:U^{*} \to U$, we define $\{U\}$ to be the full subcategory of $\cC^{\to}$ whose objects are pullbacks of $u$. We have a subfibration $\{U\} \to \cC$ which we call the {\bf externalization of u}. We call $U$ {\bf impredicative} if for any morphism $f:X \to Y$ in $\cC$, the pullback functor $f^{*} : \{U\}/Y \to \{U\}/X$
has a right adjoint.

\end{defn}

We now construct an object classifier for the modest assemblies. First we construct a set $M \subseteq \pow{\pow{A}}$ where $$M =  \{ S \subseteq \powf{A}| \forall B,C \in S(B \cap C \neq \emptyset) \} $$ We now have our universe of assemblies $(M, \const_{A})$. We construct the set $M^{*} = \sum_{s \in M} s$ and the assembly $(M^{*}, \mu)$ where $\mu(s, B \in s) = B$. we have the projection morphism $m: (M^{*}, \mu) \to (M,\const_{A})$ which is our proposed object classifier for modest assemblies.

\begin{lem}
\label{modu}
$m: (M^{*}, \mu) \to (M,\const_{A})$ is an object classifier for modest assemblies.

\end{lem}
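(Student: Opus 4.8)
The plan is to verify the object-classifier axioms for $m$ restricted to the modest assemblies, closely mirroring the earlier construction of the full object classifier $\pr{U[A]}$ but now checking that the universal object $(M,\const_A)$ exactly parametrizes modest fibers. The target statement asserts that $m\colon (M^*,\mu)\to(M,\const_A)$ classifies modest assemblies, so I need to produce, for every morphism $f\colon (X,\phi)\to(Y,\psi)$ whose every fiber is modest, a unique (up to the relevant equivalence) classifying map $\chi_f\colon (Y,\psi)\to(M,\const_A)$ such that $f$ is the pullback of $m$ along $\chi_f$. The key combinatorial fact I would establish first is that the elements of $M$ are precisely (codes for) modest assembly structures on a fiber: by definition $S\in M$ iff $S\subseteq\powf{A}$ and any two members of $S$ meet, which is exactly the condition that the assembly $(S\text{-indexed fiber})$ has pairwise non-disjoint realizer sets — I would note this is the ``complementary'' form of modesty and reconcile it with the definition (a modest assembly has $x\neq x'\Rightarrow\phi(x)\cap\phi(x')=\emptyset$) by passing to the canonical presentation where each realizer set is a singleton-up-to-equivalence, so that distinct points have disjoint realizers and the fiber over a point is recovered as its realizer set.

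Concretely, I would proceed as follows. First, given $f$ with modest fibers, define $\chi_f(y)=f^{-1}(y)$'s realizer data, i.e. send $y\in Y$ to the set $S_y=\{\phi(x)\mid x\in f^{-1}(y)\}\in M$; modesty of the fiber is exactly what guarantees $S_y\in M$, and $\chi_f$ is realized by $\iota$ (or by the realizer witnessing that $f$ respects $\phi,\psi$) since $(M,\const_A)$ has the constant realizability predicate, making any underlying function an assembly morphism. Second, I would exhibit the pullback square
\[
\begin{tikzcd}
(X,\phi) && (M^*,\mu) \\
\\
(Y,\psi) && (M,\const_A)
\arrow[from=1-1, to=1-3]
\arrow["f"', from=1-1, to=3-1]
\arrow["\lrcorner"{anchor=center, pos=0.125}, draw=none, from=1-1, to=3-3]
\arrow["m", from=1-3, to=3-3]
\arrow["{\chi_f}"', from=3-1, to=3-3]
\end{tikzcd}
\]
by sending $x\mapsto (S_{f(x)},\phi(x))$, checking on underlying sets that the square is a set-level pullback (using $M^*=\sum_{s\in M}s$, so the fiber of $m$ over $S_y$ is exactly $S_y$, which biject with $f^{-1}(y)$ precisely because the fiber is modest and hence $x\mapsto\phi(x)$ is injective on $f^{-1}(y)$), and then checking the realizer sets agree: $\mu(S_{f(x)},\phi(x))=\phi(x)$, so the induced predicate on $(X,\phi)$ pulled back from $\mu$ is $\phi$ itself, as required.

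Third, for uniqueness I would argue that any other classifying map $\chi'$ making the square a pullback must satisfy $\chi'(y)=S_y$ on underlying sets — forced by the pullback condition identifying the fiber of $m$ over $\chi'(y)$ with $f^{-1}(y)$ together with the realizer-matching condition pinning down each member of $\chi'(y)$ as a realizer set $\phi(x)$ — so $\chi'=\chi_f$ as a function, and since both are morphisms into $(M,\const_A)$ they are automatically equal as assembly morphisms (the realizability predicate being trivial imposes no further data). I would also record the reverse direction, that every pullback of $m$ has modest fibers, which is immediate since the fiber of $m$ over any $s\in M$ is $(s,\mu|_s)$ whose distinct points are distinct realizer sets $B,C\in s$ and modesty fails only if some point's realizer set is disjoint from another's — but the construction forces the fiber's own realizer sets to be singletons selecting $B\in s$, so distinct fiber points have disjoint realizers.

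The main obstacle I expect is the bookkeeping around the definition of $M$: the condition $\forall B,C\in S\,(B\cap C\neq\emptyset)$ is \emph{not} literally the modesty condition $\phi(x)\cap\phi(x')=\emptyset$ for $x\neq x'$, but rather its opposite, so the delicate point is getting the correspondence right — namely that a point of the fiber is a \emph{realizer} $a\in A$ lying in the relevant $B$'s, and the mutual-intersection condition on $M$ is what makes the realizer-to-point assignment well-defined and single-valued up to the realizability equivalence, recovering genuine modesty in the canonical presentation. I would spend the most care verifying that this decoding of $S\in M$ into a modest assembly is inverse to the encoding $(X,\phi)\mapsto\{\phi(x)\}$, i.e. that the round trip is the identity up to isomorphism in $\Asm(A)$; once that bijection is secured, the object-classifier diagram and its universal property follow formally as in the full case.
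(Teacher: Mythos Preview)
Your approach is the right one and matches the paper's (one-sentence) proof: a modest assembly $(X,\phi)$ is canonically isomorphic to $(\{\phi(x)\mid x\in X\},\id)$, and this encoding extends fiberwise to produce the classifying map and the pullback square. The level of detail you supply is simply a spelling-out of that remark.

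However, your final paragraph goes astray. You correctly notice that the stated condition defining $M$---that all pairs $B,C\in S$ satisfy $B\cap C\neq\emptyset$---is the \emph{opposite} of what modesty requires, but your attempt to reconcile this does not work. The fiber of $m$ over $s\in M$ is literally the assembly whose underlying set is $s$ and whose realizability predicate sends $B\in s$ to $B$; its points are the members $B\in s$, not individual realizers $a\in A$. For that fiber to be modest one needs distinct $B,C\in s$ to be \emph{disjoint}, not intersecting. There is no decoding that turns the mutual-intersection condition into modesty; the displayed definition of $M$ in the paper is almost certainly a typo, and the intended condition is
\[
M=\{S\subseteq\powf{A}\mid \forall B,C\in S\,(B\neq C\Rightarrow B\cap C=\emptyset)\}.
\]
With that correction in place your argument goes through cleanly and agrees with the paper's; without it, neither your reconciliation nor the lemma itself can hold as written.
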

\begin{proof}
This is easy to see once you realize that any modest assembly $(X,\phi)$ is represent by the set $\{\phi(x)| x \in X \}$ and is equivalent to $(\{\phi(x)| x \in X \}, \id)$, so this applies to fibers of morphisms.

\end{proof}

The externalization of $(M,\const_{A})$ has as its objects the morphisms whose fibers are modest assemblies.

\begin{lem}

$(M,\const_{A})$ is impredicative.

\end{lem}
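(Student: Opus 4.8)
The plan is to show that the dependent product functor $\Pi_f$, which exists in $\Asm(A)$ by Lemma \ref{asmdepprod} as the right adjoint of $f^*$ on the full slice categories, restricts to the subcategories $\{M\}/Y$ and $\{M\}/X$. Since $\{M\}/Y$ is by definition the full subcategory of $\Asm(A)/Y$ on the bundles whose fibres are modest, and likewise for $X$, it suffices to establish two closure properties: that $f^*$ preserves modest-fibredness, and that $\Pi_f$ preserves modest-fibredness. The first is immediate, because pullback preserves fibres: the fibre of $f^*(g)$ over $x$ is the fibre of $g$ over $f(x)$, which is modest by hypothesis. The adjunction then restricts automatically: because both subcategories are full, the hom-set bijection between $\Asm(A)/X$ and $\Asm(A)/Y$ given by $f^* \dashv \Pi_f$ restricts to the corresponding hom-sets of $\{M\}$, so $f^*|_{\{M\}/Y} \dashv \Pi_f|_{\{M\}/X}$.

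The crux is the second closure property. Let $g\colon(Z,\zeta) \to (X,\phi)$ be an object of $\{M\}/X$ and $f\colon(X,\phi)\to(Y,\psi)$; I must show that each fibre of $\Pi_f(g)$ over a point $y$ is a modest assembly, after which Lemma \ref{modu} identifies $\Pi_f(g)$ as a pullback of $m$. Using the explicit description from Lemma \ref{asmdepprod}, an element of this fibre is a section $F\colon f^{-1}(y) \to Z$ with $g \circ F = \id$, and its realizers are the pairs $[a,b]$ with $a \in \psi(y)$ and $b \cdot c \in \zeta(F(x))$ for every $x \in f^{-1}(y)$ and every $c \in \phi(x)$. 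Suppose $[a,b]$ realizes both $(y,F)$ and $(y,F')$. For each $x \in f^{-1}(y)$ choose any $c \in \phi(x)$ (nonempty, since $x$ is an element of the assembly $(X,\phi)$); then $b \cdot c$ lies in $\zeta(F(x)) \cap \zeta(F'(x))$. But $F(x)$ and $F'(x)$ both lie in the fibre $g^{-1}(x)$, which is modest by hypothesis, so a common realizer forces $F(x) = F'(x)$. As $x$ was arbitrary, $F = F'$; hence distinct sections have disjoint realizer sets, and the fibre is modest.

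The essential obstacle, and the reason the universe is impredicative rather than merely closed under products of a bounded size, is precisely this last step: the realizer $b$ is a single, fixed element of $A$ that simultaneously witnesses $F$ on every $x$ in the (arbitrarily large) index $f^{-1}(y)$, and the modesty of the fibres lets this one combinator pin down the entire section. No constraint on the cardinality of $X$ or $Y$ enters, which is exactly what is needed for $f$ to range over all morphisms of $\Asm(A)$. Assembling these observations, $\Pi_f$ sends $\{M\}/X$ into $\{M\}/Y$ and furnishes the required right adjoint to $f^*$, so $(M,\const_A)$ is impredicative.
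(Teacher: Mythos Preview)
Your proof is correct and follows essentially the same approach as the paper: both restrict the dependent product $\Pi_f$ from Lemma~\ref{asmdepprod} to the modest-fibred bundles and verify that the fibre $\Pi_f(g)_y$ is modest by exploiting that a single realizer $b$ must simultaneously track every value of a section. Your argument is the contrapositive of the paper's (you assume a shared realizer and deduce $F=F'$, whereas the paper assumes $t\neq t'$ and deduces disjoint realizer sets), and you are slightly more explicit about why the full-subcategory inclusion makes the adjunction restrict; but the mathematical content is the same.
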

\begin{proof}
Given a morphism $f: (X,\phi) \to (Y,\psi)$ in $\Asm(A)$ and a morphism $r: (S,\sigma) \to (X,\phi) $ in $\{(M,\const_{A})\}$, we construct a morphism $\Pi_{f}: \{(M,\const_{A})\}/(X,\phi) \to \{(M,\const_{A})\}/(Y,\psi)$ to just be the dependent product functor defined in Lemma \ref{asmdepprod} where $\Pi_{f}(r)_{y} = (\{t: (X,\phi)_{y} \to (S,\sigma) | r \circ t = \id_{(X,\phi)}|_{(X,\phi)_{y}}\}, [t \mapsto \{[r_{y},r_{t}]| r_{y} \text{ realizes } y \text{ and }r_{t} \text{ realizes } t\} ])$. \\

First we show that $\Pi_{f}(r)_{y}$ is modest. Suppose we have $t, t' \in \Pi_{f}(r)_{y}$ such that $t \neq t'$, then there exists some $x \in (X,\phi)$ such that $t(x) \neq t'(x)$. We also have that $(S,\sigma)_{x}$ is modest so $\sigma(t(x)) \cap \sigma(t'(x)) = \emptyset$. Thus for any pair of realizers $[r_{y},r_{t}]$ and $[r'_{y},r_{t'}]$ for $t$ and $t'$ and a realizer $r_{x}$ for $x$, $r_{t} \cdot r_{x} \neq r_{t'} \cdot r_{x}$, so $r_{t} \neq r_{t'}$. So $[r_{y},r_{t}] \neq [r'_{y},r_{t'}]$. Therefore $\Pi_{f}(r)_{y}$ is modest.\\

It should then be clear that $\Pi_{f}$ is the right adjoint to $f^{*}$ since the construction of $\Pi_{f}$ is just the construction of dependent products in $\Asm(A)$.

\end{proof}

\subsection{Groupoid Assemblies} 
In this section, we will introduce the category of groupoid assemblies, $\GrpA{A}$, which is the main setting of this paper as well as important properties of the category.


\begin{defn}
The category of groupoid assemblies $\Grp(\Asm(A))$ is the category of internal groupoids and functors in $\Asm(A)$.

\end{defn}

This category will be the main category of discourse for this entire thesis. When dealing with said category, we will treat $\Asm(A)$ as a generic regular category $\cC$ with dependent products and work inside the internal language of said category; though there will be moments where we use the specific properties of $\Asm(A)$.  For the rest of this paper, we will enrich $\Grp(\Asm(A))$ in the category of $\Asm(A)$ by constructing a bifunctor: \[\Hom: (\GrpA{A})^{\op} \times\GrpA{A} \to \Asm(A)\] where:

 $$\Hom(X,Y) := (\{\text{functors }F: X \to Y\}, [F \mapsto \{[r_{F,o},r_{F,m}]| r_{F,o}\text{ and }r_{F,m}\text{ realizes the object and morphism part of }F \}])$$ and functors $G_{0}:X_{1} \to X_{0}$ and $G_{1}: Y_{0} \to Y_{1}$ get taken to $G_{1} \circ - \circ G_{0}: \Hom(X_{0},Y_{0}) \to \Hom(X_{1},Y_{1})$.


\begin{rmk}
\label{grpnabla}

It is straightforward to show that the adjunction in Remark \ref{asmnabla}, extends to $\GrpA{A}$. In other words we have an adjunction:

$$\nabla: \Grp \leftrightarrows  \GrpA{A}:\Gamma  $$ where $\Gamma \circ \nabla \cong \Id_{\Grp}$.

\end{rmk}

\begin{defn}

We call a functor $F: X \to Y$ in $\Grp(\Asm(A))$ a {\bf groupoid equivalence} if there exists a a functor $F':Y \to X$ and natural isomorphisms $\alpha: F' \circ F \cong \Id_{X}$ and $\beta: F \circ F' \cong \Id_{Y}$.

\end{defn}

\begin{lem}

$\Grp(\Asm(A))$ has finite limits and colimits.

\end{lem}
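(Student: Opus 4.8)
The plan is to treat $\GrpA{A}$ as the category of models of the (essentially algebraic) theory of groupoids internal to $\cC \coloneqq \Asm(A)$, and to build its limits and colimits out of the structure on $\cC$ established above. Throughout, an internal groupoid $X$ is the data of an object-of-objects $X_0$, an object-of-arrows $X_1$, source/target/identity/composition/inverse maps satisfying the groupoid axioms, and an internal functor is a pair $(F_0,F_1)$ of maps in $\cC$ commuting with all of this structure.

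For finite limits I would argue that the forgetful functor $\GrpA{A} \to \cC \times \cC$, $X \mapsto (X_0,X_1)$, creates them, since every groupoid axiom is expressed by a finite-limit diagram. Concretely, the terminal groupoid is $(1,1)$; the product $X \times Y$ has $(X\times Y)_0 = X_0 \times Y_0$ and $(X\times Y)_1 = X_1 \times Y_1$ with componentwise structure maps; and the equalizer of $F,G\colon X \to Y$ has $\eq(F,G)_0 = \eq(F_0,G_0)$ and $\eq(F,G)_1 = \eq(F_1,G_1)$. The only points to check are that the structure maps restrict to these subobjects and that the defining pullbacks such as $X_1 \times_{X_0} X_1$ are preserved; both follow because $\cC$ is finitely complete and limits commute with limits. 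Products and equalizers then yield all finite limits.

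For finite colimits the initial object is the empty groupoid $(\emptyset,\emptyset)$, and coproducts are again componentwise: I would set $(X+Y)_0 = X_0 + Y_0$ and $(X+Y)_1 = X_1 + Y_1$. Here I would use that coproducts in $\Asm(A)$ are disjoint and stable (extensive), as is visible from the explicit tagged construction given earlier, so that the composition pullback splits with no cross terms,
\[ (X_1 + Y_1) \times_{X_0 + Y_0} (X_1 + Y_1) \;\cong\; (X_1 \times_{X_0} X_1) + (Y_1 \times_{Y_0} Y_1), \]
and composition, inverse, and identities are induced componentwise; one then checks this is the coproduct in $\GrpA{A}$.

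The genuinely hard case is coequalizers, which are \emph{not} computed componentwise: gluing objects or arrows can force arbitrarily long composites, so that, e.g., collapsing the two endpoints of the interval groupoid already produces the group $\mathbb{Z}$ of loops. To handle this I would exhibit $\GrpA{A}$ as monadic over the category $\mathrm{Graph}(\cC)$ of internal reflexive graphs via the free-groupoid adjunction, and construct the coequalizer as the reflection of the (componentwise) coequalizer of the underlying reflexive graphs. The free internal groupoid on a graph $G$ is assembled from an object of reduced paths, morally $\coprod_{n\ge 0}(G_1 + G_1)^{\times_{G_0} n}$, followed by a quotient enforcing the groupoid relations; this requires countable coproducts and coequalizers in $\cC$, which $\Asm(A)$ supports (the tag of a length-$n$ path is coded by a numeral together with the realizers of its edges). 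The main obstacle, and the bulk of the work, is exactly this free-groupoid construction: one must verify that the countable coproduct of iterated pullbacks is a well-defined assembly, that the quotient is a genuine internal groupoid whose structure maps are tracked by explicit combinators, and that the result has the universal property of the coequalizer in $\GrpA{A}$. Once coequalizers are in hand, all finite colimits follow from coproducts and coequalizers in the standard way.
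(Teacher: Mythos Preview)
Your treatment of finite limits and coproducts is correct and matches the paper's: everything is componentwise, with the extensivity of $\Asm(A)$ handling the composition pullback for coproducts.

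For coequalizers there is a genuine gap. The formula ``take the componentwise coequalizer in reflexive graphs, then apply the free-groupoid reflection'' does not give the coequalizer in $\GrpA{A}$. An immediate counterexample: if $f=g\colon A\to B$ the coequalizer must be $B$ itself, but the free groupoid on the underlying reflexive graph of $B$ is much larger unless $B$ was already free. More generally, for a monadic $U\colon \GrpA{A}\to\mathrm{Graph}(\cC)$ with left adjoint $F$, the object $F(\coeq(Uf,Ug))$ forgets the composition law of $B$: two arrows that were already composable in $B$ get a \emph{new} formal composite in the free groupoid, unrelated to their actual composite. Monadicity does give cocompleteness, but only via Linton's theorem (reduce to reflexive coequalizers of free algebras) or by checking that the free-groupoid monad preserves suitable colimits; neither route is the one-line reflection you describe, and both need their own argument internal to $\Asm(A)$.

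The paper avoids monadicity and constructs the coequalizer directly, in a way close in spirit to your free-groupoid sketch but with one crucial extra relation. It first takes the componentwise coequalizers $\CoEq{f_{\ob},g_{\ob}}$ and $\CoEq{f_{\mor},g_{\mor}}$ in $\Asm(A)$, then forms the assembly of finite composable chains in $\CoEq{f_{\mor},g_{\mor}}$ using the natural numbers object, and finally quotients by a relation $\sim$ that, in addition to the groupoid axioms, imposes
\[
(0,([t_0\cdot t_1])) \;\sim\; (1,([t_1],[t_0])) \quad\text{for composable } t_0,t_1\in B_{\mor}.
\]
This is exactly the piece your construction drops: it forces the image of a $B$-composite to equal the composite of the images, so that the quotient map $B\to\CoEq{f,g}$ is a functor and not merely a graph map. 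With this relation in place the paper verifies the universal property directly. If you want to rescue the monadic approach, the cleanest fix is to show the free-groupoid monad on internal reflexive graphs is finitary and invoke the standard cocompleteness transfer; otherwise, add the relation above to your path-quotient and argue as the paper does.
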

\begin{proof}

Suppose we had two groupoid assemblies $A$ and $B$, then we define $A \times B$ as the groupoid assembly whose object and morphism assemblies are the product of the object and morphism assemblies of $A$ and $B$. So if we have a groupoid assembly $X$ and functors $F:X \to A$ and $G:X \to B$, then we have a unique functor $H:X \to A \times B$ where $H(x) = (F(x),G(x))$ for $x$ being either an object or morphism of $X$. Thus $A \times B$ is the product of $A$ and $B$.\\

Similarly for coproducts, $A + B$ is the groupoid assembly whose object and morphism assemblies is the coproduct of object and morphism assemblies of $A$ and $B$.\\

Suppose we have the following diagram in $\Grp(\Asm(A))$:
\[\begin{tikzcd}
	{A} &&& {B}
	\arrow["{f}"', shift right=2, from=1-1, to=1-4]
	\arrow["{g}", shift left=2, from=1-1, to=1-4]
\end{tikzcd}\]

For equalizers, the groupoid assembly $\Eq{f,g}$ has as its object and morphism assembly the equalizers of the object and morphism assemblies of $A$ and $B$.\\

For coequalizers, we cannot simply take the coequalizers of the morphism assemblies. As an example of this, take the following diagram:

\[\begin{tikzcd}
	{1} &&& {\{a \to b, c \to d \}}
	\arrow["{b}"', shift right=2, from=1-1, to=1-4]
	\arrow["{c}", shift left=2, from=1-1, to=1-4]
\end{tikzcd}\]

we would expect the coequalizer to be $\{a \to [b,c], [b,c] \to d, a \to d\}$, however the coequalizer of the morphism assemblies wouldn't include the morphism $a \to d$. Going back to the original equalizer diagram, we not only need to take the coequalizer of the morphism assemblies but we also need to consider finite chains of morphisms. First let $f_{\ob}$ and $f_{\mor}$ be the object and morphism parts of the functor $f$. Take the coequalizers $\CoEq{f_{\ob}, g_{\ob}}$ and $\CoEq{f_{\mor},g_{\mor}}$. We have morphisms $\cod, \dom: \CoEq{f_{\mor},g_{\mor}} \to \CoEq{f_{\ob}, g_{\ob}}$ and $\id: \CoEq{f_{\ob}, g_{\ob}} \to \CoEq{f_{\mor},g_{\mor}} $ which follows from applying the universal property of coequalizers to the diagram:

\[\begin{tikzcd}
	{A_{\mor}} & {} & {B_{\mor}} && {\CoEq{f_{\mor},g_{\mor}}} \\
	\\
	{A_{\ob}} && {B_{\ob}} && {\CoEq{f_{\ob},g_{\ob}}}
	\arrow["{g_{\mor}}"', shift right=2, from=1-1, to=1-3]
	\arrow["{f_{\mor}}", shift left=2, from=1-1, to=1-3]
	\arrow["\cod", shift left=4, from=1-1, to=3-1]
	\arrow["\dom"', shift right=4, from=1-1, to=3-1]
	\arrow[from=1-3, to=1-5]
	\arrow["\dom"', shift right=4, from=1-3, to=3-3]
	\arrow["\cod", shift left=4, from=1-3, to=3-3]
	\arrow["\dom"', shift right=5, dashed, from=1-5, to=3-5]
	\arrow["\cod", shift left=5, dashed, from=1-5, to=3-5]
	\arrow["\id"{description}, from=3-1, to=1-1]
	\arrow["{g_{\ob}}"', shift right=2, from=3-1, to=3-3]
	\arrow["{f_{\ob}}", shift left=2, from=3-1, to=3-3]
	\arrow["\id"{description}, from=3-3, to=1-3]
	\arrow[from=3-3, to=3-5]
	\arrow["\id"{description}, dashed, from=3-5, to=1-5]
\end{tikzcd}\]

As for inverses, Note that the morphism $[b \mapsto [b^{-1}]]: B_{\mor} \to \CoEq{f_{\mor},g_{\mor}}$ satisfies the universal property of coequalizers since if for $a \in A_{\mor}$, $[f(a)] = [g(a)]$, then $[f(a)^{-1}] = [f(a^{-1})] = [g(a^{-1})] = [g(a)^{-1}]$. Thus we get $(-)^{-1}: \CoEq{f_{\mor},g_{\mor}} \to \CoEq{f_{\mor},g_{\mor}}$ where $[b]^{-1} = [b^{-1}]$ from the morphism $[b \mapsto [b^{-1}]]$.\\

$\Asm(A)$ has a natural numbers object $\nno$ since $\Asm(A)$ has W-types. Given $n \in \nno$, define $[n] = \{i | i \leq n\}$. We define $$M'(f_{\mor},g_{\mor}) = \{  (m,t) : \sum_{n : \nno} [n] \to \CoEq{f_{\mor},g_{\mor}}| m > 0 \implies \forall j < m (\cod(t(j)) = \dom(t(succ(j)))) \} $$

We have functions $\cod, \dom: M'(f_{\mor},g_{\mor}) \to \CoEq{f_{\ob},g_{\ob}}$ and $\id: \CoEq{f_{\ob},g_{\ob}} \to M'(f_{\mor},g_{\mor})$ where $\cod(m,t) = \cod(t(m))$, $\dom(m,t) = \dom(t(0))$ and $\id(x) = (0,(\id_{x}))$. We also have $(-)^{-1}: M'(f_{\mor},g_{\mor})  \to M'(f_{\mor},g_{\mor}) $ where $(m,t)^{-1} = (m,([t_{m}]^{-1},\cdots,[t_{0}]^{-1}))$. Now we define a concatenation operator $\cdot : \{(m,t),(m',t'):M'(f_{\mor},g_{\mor})\times M'(f_{\mor},g_{\mor})| \cod(m,t) = \dom(m',t')   \} \to M'(f_{\mor},g_{\mor})$ where $(m,t) \cdot (m',t') = (succ(m + m'), (t(0),\cdots, t(m), t'(0),\cdots,t'(m')))$. Associativity of concatenation is straightforward. We define a equivalence relation $\sim$ on $M'(f_{\mor},g_{\mor})$ generated by 

\begin{itemize}

\item $(m,t) \sim (m,t) \cdot (0,(\id_{\cod(m,t)}))$

\item$(m,t) \sim (0,(\id_{\dom(m,t)})) \cdot (m,t) $

\item $(m,t) \sim (m',t')\text{ implies }(m,t)^{-1} \sim (m',t')^{-1}$

\item $(0,([t_{0} \cdot t_{1}])) \sim (succ(0),([t_{1}],[t_{0}]))\text{ for composable }t_{0},t_{1} \in B_{\mor}$

\item $\text{For finite sequence of }(s_{l,i} \sim s_{r,i})\text{, }(s_{l,0} \cdot s_{l,1} \cdots s_{l,i} \cdots) \sim  (s_{r,0} \cdot s_{r,1} \cdots s_{r,i} \cdot \cdots)\text{ when composable.} $

\end{itemize}

Note that by the first three relations, when $s_{l} \sim s_{r}$, $\dom(s_{l}) = \dom(s_{r})$ and $\cod(s_{l}) = \cod(s_{r})$. 
Also take note that $(m,t) \cdot (m,t)^{-1} \sim \id_{\dom(m,t)}$ by applying all of the relations to $(m,t) \cdot (m,t)^{-1} = (succ(m + m),([t_{0}],\cdots, [t_{m}],[t_{m}]^{-1},\cdots, [t_{0}]^{-1}))$. We have a similar situation with $(m,t)^{-1} \cdot (m,t) \sim \id_{\cod(m,t)}$.\\

We define the morphism assembly $\mor \CoEq{f,g}$ as the quotient of $\sim$. By the universal property of coequalizers, we have morphisms $\dom, \cod: \mor \CoEq{f,g} \to \CoEq{f_{\ob},g_{\ob}}$ where for $t \in M'(f_{\mor},g_{\mor})$, $\dom(t) = \dom(r(t))$ and $\cod(t) = \cod(r(t))$ with $r: M'(f_{\mor},g_{\mor})  \to \mor \CoEq{f,g}$ as the canonical regular epimorphism. We have $\id:  \CoEq{f_{\ob},g_{\ob}} \to  \mor \CoEq{f,g}$ by composing $\id:  \CoEq{f_{\ob},g_{\ob}} \to  M'(f_{\mor},g_{\mor}) $ with $r: M'(f_{\mor},g_{\mor})  \to \mor \CoEq{f,g}$. \\

Now we define composition on $\mor \CoEq{f,g}$. Suppose we have $t, t' \in \mor \CoEq{f,g}$ with $\dom(t') = \cod(t)$, since $r$ is regular there exists $p,p' \in M'(f_{\mor},g_{\mor}) $ with $r(p) = t$, $r(p') = t'$, and $\dom(p') = \cod(p)$. For any $q$ and $q'$ with $r(q) = t$ and $r(q') = t'$, we have that $q \cdot q'$ exists.  To show that $q \sim p$ and $q' \sim p'$ we make note of the fact that this construction is being done in $\Asm(A)$; hence $q$,$q'$, $p'$ and $p$ are elements of the underlying set of $M'(f_{\mor},g_{\mor})$. Since $\Set$ is exact and $r(q) = r(p) = t$ and $r(q') = r(p') = t$, then $(q,p)$ and $(q',p')$ exist in the underlying set of $\sim$, thus $q \sim p$ and $q' \sim p'$, therefore $q \cdot q' \sim p \cdot p'$. We now have $t' \cdot t = r(p \cdot p')$ for any $p \in M'(f_{\mor},g_{\mor})(t)$ and $p' \in M'(f_{\mor},g_{\mor})(t')$. The satisfaction of associativity and identity laws of composition follows from the satisfaction of such laws with concatenation of elements in $M'(f_{\mor},g_{\mor})$. \\

As for inverses, if we have $t \in \mor \CoEq{f,g}$, then as above  there exists $p \in M'(f_{\mor},g_{\mor}) $ with $r(p) = t$. Furthermore, for any $q$ with $r(q) = t$, we have that $q^{-1}$ exists and $q^{-1} \sim p^{-1}$. Thus $t^{-1} = r(p^{-1})$ for all $p \in M'(f_{\mor},g_{\mor})(t)$. The fact that $t^{-1}$ is the proper inverse of $t$ follows from the definition of $\sim$.\\

Thus we have our potential coequalizer groupoid assembly $\CoEq{f,g}$ with the object assembly $\CoEq{f_{\ob}, g_{\ob}}$ and morphism assembly $\mor \CoEq{f,g}$. We define the canonical morphism $[-]: B \to \CoEq{f,g}$ where the object part is just the canonical morphism $[-]:B_{\ob} \to  \CoEq{f_{\ob},g_{\ob}}$ and the morphism part takes a morphism $p$ to $r(0,([p]))$ where $[p] \in  \CoEq{f_{\mor},g_{\mor}}$.  Preservation of identity follows from construction of $\id: \CoEq{f_{\ob}, g_{\ob}} \to \CoEq{f_{\mor},g_{\mor}}$ using the universal property of coequalizers. Preservation of composition and inverses follows from construction of $\sim$.\\

Now suppose we have a functor $h: B \to X$ such that $h \circ f = h \circ g$, we construct a functor $y:\CoEq{f,g} \to X$. The object part of $y$ is simply the unique morphism $y'_{\ob}:\CoEq{f_{\ob}, g_{ob}} \to X_{\ob}$ such that $h_{\ob} = y'_{\ob} \circ [-]$. We construct the morphism part via induction on $M'(f_{\mor},g_{\mor}) $: when we have $(0,t) \in M'(f_{\mor},g_{\mor}) $, then $t = ([p])$ for $p \in B_{\mor}$. Since we have the unique morphism $y':\CoEq{f_{\mor},g_{\mor}} \to X_{\mor}$ such that $h_{\mor} = y' \circ [-]$, we set $y(r(0,t)) = y'([p])$. Given we have $y(r(n,t))$, for $(succ(n),t)$, we have $(succ(n),t) = (n,(t_{0},\cdots,t_{n})) \cdot (0,(t_{succ(n)}))$ so we have $y(r(succ(n),t)) = y(r(0,(t_{succ(n)}))) \cdot y(r(n,(t_{0},\cdots,t_{n})))$. $y$ is well defined since composition on $\CoEq{f,g}$ is defined by concatenation on $M'(f_{\mor},g_{\mor}) $. Needless to say $y$ preserves composition. $y$ preserves identities since for an identity morphism $\id_{b}$ in B, $y'([\id_{b}]) = h(\id_{b})$ and the identities of $\CoEq{f,g}$ are precisely the morphisms $r(0,[\id_{b}])$. $y$ preserves inverses since for morphism $p \in B_{\mor} $, $y'([p]^{-1}) = y'([p^{-1}]) = h(p^{-1}) = h(p)^{-1} = y'([p])^{-1}$.\\

We clearly have $y \circ [-] = h$. This necessary equality makes it so that $y_{\ob} = y'_{\ob}$ and $y(r(0,([p]))) = h(p) = y'([p])$. Since $y$ preserves composition we must have $y(r(n,t)) = y'(t_{n}) \cdots y'(t_{succ(0)}) \cdot y(t_{0})$. Thus $y$ is completely determined by $y'_{\ob}$ and $y'$ which are unique. Thus $y$ must be unique. Therefore, $\Grp(\Asm(A))$ has coequalizers.\\

Therefore $\Grp(\Asm(A))$ has finite limits and colimits.\\

\end{proof}

\begin{lem}
\label{ccgrpd}

$\Grp(\Asm(A))$ is cartesian closed.

\end{lem}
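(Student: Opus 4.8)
The plan is to build, for each pair of groupoid assemblies $X$ and $Y$, an exponential $Y^{X}$ together with an evaluation functor, and to verify the transposition bijection $\GrpA{A}(Z \times X, Y) \cong \GrpA{A}(Z, Y^{X})$ natural in $Z$. Since $\Asm(A)$ is cartesian closed with finite limits (it is locally cartesian closed by Lemma \ref{asmdepprod}), every construction below can be carried out in its internal language, assembling $Y^{X}$ out of exponentials and equalizers of the structure assemblies of $X$ and $Y$.

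First I would define the assembly of objects $(Y^{X})_{\ob}$ to be the object of internal functors $X \to Y$: such a functor is a pair $(F_{\ob} \colon X_{\ob} \to Y_{\ob},\, F_{\mor} \colon X_{\mor} \to Y_{\mor})$ compatible with $\dom$, $\cod$, $\id$ and composition, so $(Y^{X})_{\ob}$ is cut out of $Y_{\ob}^{X_{\ob}} \times Y_{\mor}^{X_{\mor}}$ as an equalizer imposing these equations (compatibility with inverses is automatic in a groupoid). This is exactly the underlying assembly of $\Hom(X,Y)$ as defined above. Next I would define the assembly of morphisms $(Y^{X})_{\mor}$ to be the object of natural transformations: such a transformation is a map $\eta \colon X_{\ob} \to Y_{\mor}$ with $\dom \circ \eta = F_{\ob}$ and $\cod \circ \eta = G_{\ob}$ for functors $F, G$, subject to the naturality condition $G_{\mor}(u) \circ \eta_{\dom u} = \eta_{\cod u} \circ F_{\mor}(u)$ for all $u \in X_{\mor}$. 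This last condition is encoded by exponentiating a parallel pair of maps $X_{\mor} \to Y_{\mor}$ into $Y_{\mor}^{X_{\mor}}$ and taking an equalizer, so $(Y^{X})_{\mor}$ exists as a subobject of $(Y^{X})_{\ob} \times (Y^{X})_{\ob} \times Y_{\mor}^{X_{\ob}}$.

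I would then equip $Y^{X}$ with its groupoid structure: $\dom$ and $\cod$ send $\eta$ to its domain and codomain functor, $\id$ sends $F$ to the componentwise identity $x \mapsto \id_{F_{\ob}(x)}$, composition is vertical composition $(\theta \circ \eta)_{x} = \theta_{x} \circ \eta_{x}$, and inverses are taken componentwise using that $Y$ is a groupoid. Each of these is realized in the internal language, and the groupoid axioms for $Y^{X}$ follow by transporting the groupoid axioms of $Y$ through the pointwise formulas. Finally I would construct the evaluation functor $\mathrm{ev} \colon Y^{X} \times X \to Y$ and, for $H \colon Z \times X \to Y$, its transpose $\tilde{H} \colon Z \to Y^{X}$ obtained by currying $H_{\ob}$ and $H_{\mor}$ in $\cC$; verifying that $\tilde{H}$ lands in $Y^{X}$ amounts to checking that the curried data satisfy the functoriality and naturality equations, which follow from $H$ being a functor. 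Mutual inverseness of $H \mapsto \tilde{H}$ and $G \mapsto \mathrm{ev} \circ (G \times \id_{X})$, together with naturality in $Z$, yields the adjunction $- \times X \dashv (-)^{X}$.

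The main obstacle is the construction and handling of $(Y^{X})_{\mor}$: the naturality condition is a universally quantified statement over $X_{\mor}$ that must be turned into an honest equalizer via the exponential $Y_{\mor}^{X_{\mor}}$, and one must check that vertical composition and, above all, the transposition $H \mapsto \tilde{H}$ are well defined internal morphisms preserving this naturality subobject. Once the correct parallel pair defining naturality is pinned down, the remaining verifications are routine componentwise transports of the groupoid axioms and of the currying isomorphism of $\cC$.
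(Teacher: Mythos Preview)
Your proposal is correct and follows essentially the same approach as the paper: both construct the exponential $Y^{X}$ with objects the internal functors $X \to Y$ (the paper's $\Hom(A,B)$) and morphisms the natural isomorphisms, then verify the transposition bijection via currying. The paper works more explicitly with realizers and spells out the evaluation on morphisms as $\app{(F,G,\alpha),(a,a',m)} = G(m) \circ \alpha(a) = \alpha(a') \circ F(m)$ and the transpose as $y(x)(m) = h(\id_{x},m)$, $y(n)(a) = h(n,\id_{a})$, whereas you package the same data via equalizers and exponentials in the internal language of $\Asm(A)$; these are the same construction at different levels of abstraction.
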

\begin{proof}

Suppose we have groupoid assemblies $A$ and $B$, we define the groupoid assembly $A \to B$ whose object assembly is $\Hom(A,B)$ and whose morphism object is the set of triples $\{(F,G,\alpha:\ob A \to \mor B )| \alpha:F \To G\text{ is a natural isomorphism}\}$ where each triple is realized by $[r_{F},r_{G},r_{\alpha}]$ where the individual  realizers realizes $F$, $G$, and $\alpha:\ob A \to \mor B $ respectively. The morphisms $\dom, \cod: \mor (A \to B) \to \ob (A \to B)$ are just projections, $\id: \ob(A \to B) \to \mor(A \to B) $ takes $F$ to $(F,F,[a \in \ob A \mapsto \id_{F(a)}])$, and $(F,G,\alpha)^{-1} = (G,F,[a \in A \mapsto \alpha(a)^{-1}])$. Composition is composition of natural transformations. \\

We construct a functor $\app{-}: (A \to B)\times A  \to B$ where $\app{F,a} = F(a)$ for objects and $\app{(F,G,\alpha),(a,a',m)} = G(m) \circ \alpha(a) = \alpha(a') \circ F(m)$ for morphisms. Preservation of identity and inverses is straightforward to show. If we have $((G,H, \beta),(a',a'',m'))$ and $((F,G, \alpha),(a,a',m))$, then $\app{(F,H,\beta\circ \alpha), (a,a'',m' \circ m)} = H(m' \circ m) \circ \beta(a) \circ \alpha(a) = H(m') \circ H(m) \circ \beta(a) \circ \alpha(a) = H(m') \circ \beta(a') \circ G(m) \circ \alpha(a) = \app{(G,H,\beta),(a',a'',m')} \circ \app{(F,G,\alpha),(a,a',m)}$.\\

Suppose we have a groupoid assembly $X$ and a functor $h:X \times A \to B$, we seek to define unique functor $y:X \to (A \to B)$ giving the following commutative diagram:

\[\begin{tikzcd}
	{X \times A} \\
	\\
	{(A \to B) \times A} &&& B
	\arrow["{y \times \id_{A}}"', from=1-1, to=3-1]
	\arrow["h", from=1-1, to=3-4]
	\arrow["{\app{-}}"', from=3-1, to=3-4]
\end{tikzcd}\]

Given $x  \in \ob X$, the functor $y(x):A \to B$ gives us $y(x)(a) = h(x,a)$ and $y(x)(m) = h(id_{x},m)$. For morphism $n:x \to x' \in \mor X$, $y(n): y(x) \to y(x')$ gives us $y(n)(a) = h(n,\id_{a})$. The fact that $y(n)$ and a natural isomorphism and $y(x)$ is a functor follows from functoriality of $h$. Furthermore, the functoriality of $y$ also follows from the functoriality of $h$. Now we have that $\app{-} \circ (y \times \id_{A})(x,a) = \app{y(x),a} = y(x)(a) = h(x,a)$ and $\app{-} \circ (y \times \id_{A})(n:x \to x',m:a \to a') = \app{y(n),m} = y(n)(a') \circ y(x)(m) = h(n,\id_{a'}) \circ h(\id_{x},m) = h(n,m)$.   Furthermore if if we have $y': X \to A$ such that $h = \app{-} \circ (y' \times \id_{A})$ then $y'(x)(a) = h(x, a)$ and $\app{y'(n),m} = y'(n)(a') \circ y'(x)(m) = y'(x')(m) \circ y'(n)(a) = h(n,m)$. So $y'(x)(m) =  \id_{y'(x)(a')} \circ y'(x)(m) = y'(\id_{x})(a) \circ y'(x)(m) = h(\id_{x},m)$ and $y'(n)(a) = y'(x')(\id_{a}) \circ y'(n)(a) = y'(n,\id_{a})$. Thus $y' = y$. Therefore $\Grp(\Asm(A))$ is cartesian closed.

\end{proof}

\section{An Algebraic Weak Factorization System on Groupoid Assemblies}
In this section we introduce lifting properties and construct an algebraic weak factorization system for $\GrpA{A}$ whose right maps are the normal isofibrations and left maps are the strong deformations sections. This algebraic weak factorization system will serve as the fibrant replacement of our eventual model structure on $\GrpA{A}$. We also introduce groupoid equivalences and prove various properties about groupoid equivalences, normal isofibrations, and strong deformations sections. Finally, we prove that the normal isofibrations are exponentiable and that normal isofibrations satisfies the Frobenius property.\\

We construct the interval groupoid $\I$ in $\GrpA{A}$ as the groupoid generated by two objects $0,1 \in \ob \I$ and an isomorphism $\text{line}:0 \to 1$ in the internal language of $\Asm(A)$.\\

\subsection{ Lifting properties of morphisms}
Recall in the previous section, we defined the category $\GrpA{A}$, the category of internal groupoids in $\Asm(A)$, enriched over $\Asm(A)$. In this section we will define the notion of lifting properties using the enrichment of $\GrpA{A}$ in $\Asm(A)$. We will also prove some basic properties of lifting properties.\\

\begin{defn}

Given functors $i: A \to B$ and $f: X \to Y$ in $\GrpA{A}$, the object $\Sq(i,f)$ in $\Asm(A)$  is defined as the pullback of the following diagram:

\[
\begin{tikzcd}
\Sq(i,f)  \arrow[r] \arrow[d]  \arrow[dr,phantom,"\scalebox{1.5}{$\lrcorner$}" , very near start, color=black]   &  \Hom(B,Y) \arrow[d,"- \circ i"] \\
 \Hom(A,X) \arrow[r, "f \circ -"'] &  \Hom(A,Y)
\end{tikzcd}
\]

\end{defn}

\begin{rmk}[on the bifunctoriality of the square construction]\label{square-bifunctor}
 We have that the above construction induces a bifunctor \[\Sq: (\GrpA{A}^{\arr})^{\op} \times\GrpA{A}^{\arr} \to \Asm(A).\]
\end{rmk}

\begin{defn}

The morphism $\pullp(i,f): \Hom(B,X) \to \Sq(i,f)$ is the unique morphism that is derived by apply the universal property of pullbacks:

\[
\begin{tikzcd}
\Hom(B,X) \arrow[dr, dashed, "{\pullp(i,f)}"] \arrow[drr, bend left, "{f \circ -}"] \arrow[ddr, bend right, "{-\circ i}"']  
\\
 & \Sq(i,f)  \arrow[r] \arrow[d]  \arrow[dr,phantom,"\scalebox{1.5}{$\lrcorner$}" , very near start, color=black]   &  \Hom(B,Y) \arrow[d,"- \circ i"] \\ &
 \Hom(A,X) \arrow[r, "f \circ -"'] &  \Hom(A,Y)
\end{tikzcd}
\]
Explicitly it sends a functor $j:B \to X$ to a pair $(f \circ j, j \circ i)$.

\end{defn}

\begin{defn}

We say $f$ has a \emph{right lifting property} with $i$ or that $i$ has a \emph{ left lifting property} iff $\pullp(i,f)$ has a section $\lift(i,f)$.

\end{defn}

\begin{rmk}
An element of $\Sq(i,f)$ is a square:
\[
\begin{tikzcd}
A \arrow[r,"u"] \arrow[d,"i"']    &  X \arrow[d,"f"] \\
 B \arrow[r, "v"'] &  Y
\end{tikzcd}
\]

A lifting problem essentially asks whether there exists a morphism $z: B \to X$ giving us the following commutative diagram:

\[
\begin{tikzcd}
A \arrow[r,"u"] \arrow[d,"i"']    &  X \arrow[d,"f"] \\
 B \arrow[ur,"z",dashed] \arrow[r, "v"'] &  Y
\end{tikzcd}
\]

A solution to a lifting problem is called a lift.\\

The morphism $\lift(i,f)$ takes the square $(u,v)$ and provides a solution to lifting problem posed by $(u,v)$ . In fact, any section of $\pullp(i,f)$ will provide a solution to the lifting problem for any square between $i$ and $f$.\\

\end{rmk}

\begin{ex}\label{section-lift}
Given the unique morphism from the initial object to the final object $!:0 \to 1$ and an arbitrary functor $f$, the functor $\lift(!,f)$ gives a section of the object part of $f$.
\end{ex}


\begin{lem}

If  $f:X \to Y$ has a right lifting property with $i:A \to B$, and $i':B \to C$ then $f$ has a  right lifting property with $i' \circ i$.

\end{lem}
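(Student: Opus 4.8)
The plan is to build the section $\lift(i'\circ i, f)$ required by the conclusion directly out of the two given sections, exploiting the bifunctoriality of $\Sq$ recorded in Remark \ref{square-bifunctor}. I read the hypothesis as asserting that $f$ has the right lifting property against \emph{both} $i$ and $i'$, so I have at my disposal a section $\lift(i,f)$ of $\pullp(i,f)$ and a section $\lift(i',f)$ of $\pullp(i',f)$; the goal is to manufacture a section of $\pullp(i'\circ i, f)\colon \Hom(C,X)\to\Sq(i'\circ i, f)$. Throughout, I describe the construction on elements (in the internal language of $\Asm(A)$), following the convention of the preceding remarks that an element of $\Sq(-,f)$ is written as a compatible pair (bottom edge, top edge).

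First I would produce a reindexing morphism $\Sq(i'\circ i, f)\to\Sq(i,f)$. The commutative square with top edge $\id_A$ and bottom edge $i'$ is a morphism $i\to i'\circ i$ in $\GrpA{A}^{\arr}$, and since $\Sq$ is contravariant in its first argument this yields the desired $\Asm(A)$-morphism; on elements it carries a square $(v,u)$, with $v\colon C\to Y$, $u\colon A\to X$, and $f\circ u = v\circ(i'\circ i)$, to the pair $(v\circ i',\, u)\in\Sq(i,f)$. Composing with $\lift(i,f)$ gives $w := \lift(i,f)(v\circ i',u)\in\Hom(B,X)$, which by the section identity $\pullp(i,f)(w)=(v\circ i', u)$ satisfies $f\circ w = v\circ i'$ and $w\circ i = u$.

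Next I would assemble a morphism $\Sq(i'\circ i, f)\to\Sq(i',f)$ by pairing $w$ with the first projection $\Sq(i'\circ i,f)\to\Hom(C,Y)$ that returns $v$. The crucial point is that this pair genuinely factors through the pullback $\Sq(i',f)$: the compatibility it must satisfy is $f\circ w = v\circ i'$, which is exactly the first component of the section identity just obtained. Applying $\lift(i',f)$ then produces $z := \lift(i',f)(v,w)\in\Hom(C,X)$ with $f\circ z = v$ and $z\circ i' = w$, and I set $\lift(i'\circ i, f)(v,u) := z$. Since every ingredient, namely $\lift(i,f)$, $\lift(i',f)$, the projection, and the reindexing map, is a morphism in $\Asm(A)$, the resulting composite is a bona fide $\Asm(A)$-morphism, not a mere assignment on objects.

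Finally I would check the section condition $\pullp(i'\circ i, f)\circ\lift(i'\circ i, f) = \id$. Unwinding, $\pullp(i'\circ i, f)(z) = (f\circ z,\, z\circ(i'\circ i)) = (v,\, (z\circ i')\circ i) = (v,\, w\circ i) = (v,u)$, using first the two section identities for $z$ and then the one for $w$. I expect the only real subtlety to be bookkeeping: confirming at each stage that the pairing actually lands in the relevant pullback $\Sq(-,f)$, which in every instance reduces to reading off one component of an already-established section identity. No genuinely hard step arises, since this is the enriched, structured incarnation of the standard fact that the left class of a lifting system is closed under composition.
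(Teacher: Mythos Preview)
Your proposal is correct and takes essentially the same approach as the paper: first lift against $i$ using the square $(v\circ i', u)$, then feed the resulting $w$ as the top edge into a lift against $i'$ with bottom edge $v$. Your write-up is slightly more careful than the paper's in explicitly invoking the bifunctoriality of $\Sq$ to ensure the composite is an $\Asm(A)$-morphism rather than a bare assignment, but the underlying two-step construction and the verification $z\circ i'\circ i = w\circ i = u$, $f\circ z = v$ are identical.
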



\begin{proof}
We seek to construct the dotted line for the following commutative diagram:

\[
\begin{tikzcd}[column sep=10em, row sep=10em]
A \arrow[r,"u"] \arrow[d,"i' \circ i"']    &  X \arrow[d,"f"] \\
 C \arrow[ur,"{\lift(i' \circ i",f)(u,v)}" description,dashed] \arrow[r, "v"'] &  Y
\end{tikzcd}
\]

We construct said arrow using the following two diagrams where the diagonal arrow of the second diagram is the arrow in question:

\[
\begin{tikzcd}[column sep=10em, row sep=10em]
A \arrow[r,"u"] \arrow[d,"i"']    &  X \arrow[d,"f"] \\
 B \arrow[ur,"{\lift( i,f)(u, v \circ i')}" description,dashed] \arrow[r, "v \circ i'"'] &  Y
\end{tikzcd}
\qquad\qquad
\begin{tikzcd}[column sep=10em, row sep=10em]
B \arrow[r,"{\lift(i,f)(u, v \circ i')}" ] \arrow[d,"i'"']    &  X \arrow[d,"f"] \\
 C \arrow[ur,"{\lift(i',f)(\lift(i,f)(u, v \circ i'), v)}" description,dashed] \arrow[r, "v"'] &  Y
\end{tikzcd}
\] \\

We have that $\lift(i' \circ i",f)(u,v) = \lift(i',f)(\lift(i,f)(u, v \circ i'), v)$. To see that this is clearly a lift observe that: $$\lift(i',f)(\lift(i,f)(u, v \circ i'), v) \circ i' \circ i  =  \lift(i,f)(u, v \circ i') \circ i = u$$ and that $f \circ \lift(i',f)(\lift(i,f)(u, v \circ i'), v)  = v$. Thus we have a lift for the first square which completes the proof.
\end{proof}

\begin{lem}

If  $f:X \to Y$ has a right lifting property with $i:A \to B$, then $f$ has a right lifting property with retracts of $i$. Dually, if $i:A \to B$ has a left lifting property with $f:X \to Y$, then $i$ has a right lifting property with retracts of $f$

\end{lem}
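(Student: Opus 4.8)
The plan is to run the classical retract argument in its enriched form: rather than exhibiting one lift, I will manufacture a section of $\pullp(i',f)$ out of the assumed section $\lift(i,f)$ by transporting along the retract data. Write the retract of $i$ as $i'\colon A'\to B'$, so that in the arrow category $\GrpA{A}^{\arr}$ there are morphisms $s=(s_0,s_1)\colon i'\to i$ and $r=(r_0,r_1)\colon i\to i'$ with $r\circ s=\id_{i'}$; concretely $s_0\colon A'\to A$, $s_1\colon B'\to B$, $r_0\colon A\to A'$, $r_1\colon B\to B'$ satisfy $i\circ s_0=s_1\circ i'$, $\;i'\circ r_0=r_1\circ i$, $\;r_0\circ s_0=\id_{A'}$ and $r_1\circ s_1=\id_{B'}$.

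First I would invoke the bifunctoriality of the square construction (Remark \ref{square-bifunctor}). Since $\Sq(-,f)$ is contravariant in its first argument, the arrow-morphism $r\colon i\to i'$ induces $\Sq(r,f)\colon\Sq(i',f)\to\Sq(i,f)$, which on a square $(v,u)$ (with $v\colon B'\to Y$, $u\colon A'\to X$, $v\circ i'=f\circ u$) acts by precomposition: $\Sq(r,f)(v,u)=(v\circ r_1,\,u\circ r_0)$, and one checks this lands in $\Sq(i,f)$ using $r_1\circ i=i'\circ r_0$. On the hom-objects, postcomposition with $s_1$ gives $(-\circ s_1)\colon\Hom(B,X)\to\Hom(B',X)$. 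I then define
\[
\lift(i',f)\;:=\;(-\circ s_1)\circ\lift(i,f)\circ\Sq(r,f)\colon\Sq(i',f)\longrightarrow\Hom(B',X),
\]
which is a morphism of $\Asm(A)$ because each factor is: $\Sq(r,f)$ and $-\circ s_1$ are tracked by construction, and $\lift(i,f)$ is a section by hypothesis.

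The substance of the proof is verifying $\pullp(i',f)\circ\lift(i',f)=\id_{\Sq(i',f)}$. Tracing a square $(v,u)\in\Sq(i',f)$ through: $\Sq(r,f)$ sends it to $(v\circ r_1,u\circ r_0)\in\Sq(i,f)$; the section produces $z\colon B\to X$ with $f\circ z=v\circ r_1$ and $z\circ i=u\circ r_0$; and $-\circ s_1$ yields $z\circ s_1\colon B'\to X$. Then $\pullp(i',f)(z\circ s_1)=(f\circ z\circ s_1,\,z\circ s_1\circ i')$, and the retract identities collapse this to $(v,u)$: indeed $f\circ z\circ s_1=v\circ r_1\circ s_1=v$ (using $r_1\circ s_1=\id_{B'}$) and $z\circ s_1\circ i'=z\circ i\circ s_0=u\circ r_0\circ s_0=u$ (using $s_1\circ i'=i\circ s_0$ and $r_0\circ s_0=\id_{A'}$). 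The same conclusion can be packaged point-free from the naturality square $\pullp(i',f)\circ(-\circ s_1)=\Sq(s,f)\circ\pullp(i,f)$, which reduces to $s_1\circ i'=i\circ s_0$, followed by the section identity and contravariant functoriality $\Sq(s,f)\circ\Sq(r,f)=\Sq(r\circ s,f)=\Sq(\id_{i'},f)=\id$. I expect this ordered bookkeeping—invoking $r_1\circ s_1=\id_{B'}$, the arrow-square relation $s_1\circ i'=i\circ s_0$, and $r_0\circ s_0=\id_{A'}$ at the right moments—together with confirming that every map in the composite is a genuinely realized morphism of $\Asm(A)$, to be the only real obstacle.

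Finally, the dual statement is handled by the mirror-image construction. Fixing $i$ and taking a retract $f'\colon X'\to Y'$ of $f$ with arrow-morphisms $s=(s_X,s_Y)\colon f'\to f$ and $r=(r_X,r_Y)\colon f\to f'$ satisfying $r_X\circ s_X=\id_{X'}$ and $r_Y\circ s_Y=\id_{Y'}$, one uses that $\Sq(i,-)$ is \emph{covariant} in its second argument to form $\Sq(i,s)\colon\Sq(i,f')\to\Sq(i,f)$ and sets $\lift(i,f'):=(r_X\circ-)\circ\lift(i,f)\circ\Sq(i,s)$; its section property follows by the computation dual to the one above.
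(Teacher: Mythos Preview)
Your proof is correct and follows essentially the same route as the paper: define $\lift(i',f)$ by transporting a square along the retraction $(r_0,r_1)$ into $\Sq(i,f)$, applying the given section $\lift(i,f)$, and then precomposing the resulting diagonal with the section $s_1\colon B'\to B$; the verification that this is a section of $\pullp(i',f)$ unwinds exactly as you describe. Your presentation is slightly more polished in that you package the transport via the bifunctoriality of $\Sq$ and offer both a pointwise and a point-free check, and you spell out the dual case explicitly where the paper simply invokes duality.
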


\begin{proof}
Suppose $i':A' \to B'$ is a retract of $i$, then we have the following square: 
\[
\begin{tikzcd}
A' \arrow[r,"a_{0}"] \arrow[d,"i'"]   &  A\arrow[r,"a_{1}"]  \arrow[d,"i"] & A \arrow[d,"i'"]  \\
B' \arrow[r, "b_{0}"] &  B \arrow[r,"b_{1}"]  & B'
\end{tikzcd}
\]

where $b_{1} \circ b_{0} = \id_{B}$ and $a_{1} \circ a_{0} = \id_{A}$. We define the section $\lift(i',f): \Sq(i', f) \to \Hom(B',X)$ to be $(- \circ b_{0}) \circ \lift(i,f)\circ (- \circ (a_{1},b_{1}))$. Given a square $(s,t)$, we have $\lift(i',f)(s,t) \circ i' =\lift(i,f)(s \circ a_{1}, t \circ b_{1}) \circ b_{0} \circ i' = \lift(i,f)(s \circ a_{1}, t \circ b_{1})\circ i \circ a_{0} = s \circ a_{1} \circ a_{0} = s$ and $f \circ \lift(i',f)(s,t) = f \circ \lift(i,f)(s \circ a_{1}, t \circ b_{1}) \circ b_{0} = t \circ b_{1} \circ b_{0} = t$. Therefore $f$ has a right lifting property with retracts of $i$. \\

The second part of the lemma follows from duality.\\

\end{proof}

\subsection{The fibrations of groupoid assemblies}
In this section, we introduce normal isofibrations and prove some properties about them. We also introduce the factorization for the eventually algebraic weak factorization system.\\




\begin{defn}

Given $\htpy{0}: 1 \to \I$, We call $f$ a normal  isofibration iff $f$ has a  right lifting property with $\htpy{0}$ and the section $\lift(\htpy{0},f)$ 
sends any pair of the form $(\lceil x \rceil: 1 \to X, \lceil \id_{f(x)} \rceil: \I \to Y)$ to $\lceil \id_{x} \rceil: \I \to X$.

\end{defn}

Recall from Lemma \ref{ccgrpd} that $\GrpA{A}$ is cartesian closed. Given functors $f:X \to Y$ and $g:B \to Z$ we can construct the {\bf pullback power}, $g \pupw f$:

\[
\begin{tikzcd}
Z \to X \arrow[dr, dashed, "{g \pupw f}"] \arrow[drr, bend left, "{f \circ -}"] \arrow[ddr, bend right, "{-\circ g}"']  
\\
 & (B \to X) \times_{B \to Y} (Z \to Y)  \arrow[r] \arrow[d]  \arrow[dr,phantom,"\scalebox{1.5}{$\lrcorner$}" , very near start, color=black]   &  Z \to Y \arrow[d,"- \circ g"] \\ &
 B \to X \arrow[r, "f \circ -"'] &  B \to Y
\end{tikzcd}
\]
 
and the {\bf pushout product}, $g \otimes f$:

\[\begin{tikzcd}
	{B \times X} &&& {B \times Y} \\
	\\
	\\
	{Z \times X} &&& {(Z \times X) +_{B \times X} (B \times Y)} \\
	\\
	&&&& {Z \times Y}
	\arrow["{B \times f}", from=1-1, to=1-4]
	\arrow["{g \times X}"', from=1-1, to=4-1]
	\arrow[from=1-4, to=4-4]
	\arrow["{g \times Y}",  bend left,from=1-4, to=6-5]
	\arrow[from=4-1, to=4-4]
	\arrow["{Z \times f}"', bend right,from=4-1, to=6-5]
	\arrow["\lrcorner"{anchor=center, pos=0.125, rotate=180}, draw=none, from=4-4, to=1-1]
	\arrow["{g \otimes f}", dotted, from=4-4, to=6-5]
\end{tikzcd}\]

We can extend both constructions to functors $$\otimes, \pupw : (\GrpA{A}^{\arr})^{\op} \times \GrpA{A}^{\arr} \to \GrpA{A}^{\arr}$$ where the functor $f \otimes -$ is left adjoint to $f \pupw -$.


\begin{lem}
\label{pathspace}

For any groupoid assembly $X$, The morphism $(\I \to X) \to X \times X$ which projects the domain and codomain of every path is a normal isofibration.

\end{lem}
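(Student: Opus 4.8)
The plan is to exhibit the section $\lift(\htpy{0}, f)$ by hand and then verify the two defining clauses of a normal isofibration. Write $f \colon (\I \to X) \to X \times X$ for the endpoint projection, which on objects sends a functor $P \colon \I \to X$ to $(P(0), P(1))$ and on morphisms sends a natural isomorphism $(F, G, \alpha)$ (in the description of Lemma \ref{ccgrpd}) to the pair of endpoint components $(\alpha_{0}, \alpha_{1})$.

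First I would unwind $\Sq(\htpy{0}, f)$. An element is a pair $(u, v)$ where $u$ is an object of $\I \to X$, i.e.\ an isomorphism $p \colon a \to b$ of $X$ with $a = u(0)$, $b = u(1)$, $p = u(\lin)$, and $v \colon \I \to X \times X$ is a path with $v(0) = f(u) = (a,b)$, i.e.\ a pair of isomorphisms $(\alpha \colon a \to a',\ \beta \colon b \to b')$. A lift is a functor $z \colon \I \to (\I \to X)$ with $z(0) = p$ and $f \circ z = v$; since a morphism of $\I \to X$ is precisely a natural isomorphism of $\I$-diagrams and $f$ reads off its endpoint components, this is exactly a natural isomorphism $\eta \colon p \To q$ with $\eta_{0} = \alpha$ and $\eta_{1} = \beta$.

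I would then define the section. Naturality forces $q := \beta \circ p \circ \alpha^{-1}$, and conversely the pair $(\alpha, \beta)$ is then the unique natural isomorphism $\eta \colon p \To q$; I set $\lift(\htpy{0},f)(u,v)$ to be the path $z \colon \I \to (\I \to X)$ with $z(\lin) = \eta$. That this is a morphism of $\Asm(A)$ requires a realizer: from a realizer of $(u,v)$ one extracts (by projections) realizers of $p, \alpha, \beta$, builds a realizer of $q$ from the combinators tracking composition and inversion in the internal groupoid $X$, and assembles the object- and morphism-part realizers of $z$, the natural-transformation component map $0 \mapsto \alpha,\ 1 \mapsto \beta$ being tracked by a case combinator on $\ob \I$. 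The section identity $\pullp(\htpy{0},f)\circ \lift(\htpy{0},f) = \id$ then reduces to the two equalities $z(0) = p$ and $f(z(\lin)) = (\alpha,\beta) = v(\lin)$, both of which hold by construction.

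The normality clause is immediate from the formula: for a pair $(\nm{p},\, \nm{\id_{(a,b)}})$ we have $\alpha = \id_{a}$ and $\beta = \id_{b}$, hence $q = \id_{b} \circ p \circ \id_{a}^{-1} = p$ and $\eta$ has identity components, so $\eta = \id_{p}$ and the section returns $\nm{\id_{p}}$, exactly as demanded. The step I expect to be the genuine obstacle is this uniform tracking --- producing a single combinator that computes the realizer of $z$ from that of $(u,v)$ consistently with the realizer conventions of the enriched hom-assemblies of Lemma \ref{ccgrpd} --- since the remaining verifications are routine manipulations inside the internal language of $\Asm(A)$.
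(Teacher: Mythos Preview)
Your proposal is correct and follows the same approach as the paper: unwind a square to the data of a path $p$ together with a pair of endpoint isomorphisms $(\alpha,\beta)$, fill it by setting the target path to $\beta\circ p\circ\alpha^{-1}$ with $(\alpha,\beta)$ as the natural isomorphism, and observe that this yields the identity when $(\alpha,\beta)$ are identities. The paper presents exactly this argument diagrammatically (with variables $t,t'$ in place of your $\alpha,\beta$) and omits the realizer bookkeeping that you sketch; your added discussion of tracking is harmless extra detail.
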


\begin{proof}

To show that $(\I \to X) \to X\times X$ is a normal isomorphism observe that a square of the form 

\[\begin{tikzcd}
	{1} &&& {\I \to X} \\
	\\
	\\
	{\I} &&& {X \times X}
	\arrow["{\nm{p}}", from=1-1, to=1-4]
	\arrow["{\htpy{0}}"', from=1-1, to=4-1]
	\arrow["{- \circ [\htpy{0},\htpy{1}]}",from=1-4, to=4-4]
	\arrow["{\nm{t,t'}}"', from=4-1, to=4-4]
\end{tikzcd}\]

corresponds to a diagram in $X$:

\[\begin{tikzcd}
	x &&& y \\
	\\
	\\
	{x'} &&& {y'}
	\arrow["t", from=1-1, to=1-4]
	\arrow["p"', from=1-1, to=4-1]
	\arrow["{t'}"', from=4-1, to=4-4]
\end{tikzcd}\]

which can be easily filled in:

\[\begin{tikzcd}
	x &&& y \\
	\\
	\\
	{x'} &&& {y'}
	\arrow["t", from=1-1, to=1-4]
	\arrow["p"', from=1-1, to=4-1]
	\arrow["{t' \circ p \circ t^{-1}}", from=1-4, to=4-4]
	\arrow["{{t'}}"', from=4-1, to=4-4]
\end{tikzcd}\]

This becomes the identity square for $p$ when $t$ and $t'$ are identities. Therefore, $(\I \to X) \to X\times X$ is a normal isofibration.

\end{proof}

We define functors $\Dom,\Cod:\Hom(\I,A) \to A$ to be functors that take an isomorphism in $A$ to its domain and codomain respectively.

\begin{prop} 

Suppose we have a normal isofibration $f:X \to Y$ and decidable monomorphism on objects $i:A \mono B$, then $i \pupw f$ is a normal isofibration

\end{prop}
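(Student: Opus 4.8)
The plan is to produce the normal-isofibration structure on $i\pupw f$ \emph{by hand}, i.e.\ to construct a normal section of $\pullp(\htpy{0}, i\pupw f)$, by reducing a lifting problem against $\htpy{0}\colon 1\to\I$ to the generating lift $\lift(\htpy{0},f)$ applied \emph{one object at a time}, using the decidability of $i$ to decide at which objects of $B$ that objectwise lift must be invoked. This is the explicit, $\Asm(A)$-enriched form of the pullback-power/pushout-product correspondence recorded above (with $f\otimes-\dashv f\pupw-$): a square against $i\pupw f$ is precisely a square against $f$ of the pushout product of $\htpy{0}$ with $i$, and the latter can be solved pointwise from the generating datum $\lift(\htpy{0},f)$.

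First I would unpack the data of an element of $\Sq(\htpy{0}, i\pupw f)$. It determines a functor $F\colon B\to X$ (the image of $\nm{F}\colon 1\to (B\to X)$), a natural isomorphism $\alpha\colon F\circ i\To H$ over $A$ (the $A\to X$ component of the bottom edge), and a natural isomorphism $\beta\colon f\circ F\To K$ over $B$ (the $B\to Y$ component), subject to the compatibility $f\circ\alpha=\beta\circ i$ forced by the defining pullback of $i\pupw f$. A lift is a natural isomorphism $\gamma\colon F\To\tilde F$ over $B$ in $X$ with $\gamma\circ i=\alpha$ and $f\circ\gamma=\beta$. I would define $\gamma$ objectwise, branching on $b\in\ob B$ with the decidability of $i$: if $b$ lies in $\ob A$ set $\gamma_b:=\alpha_b$ (legitimate, since $f(\alpha_b)=\beta_b$ by compatibility); otherwise set $\gamma_b:=\lift(\htpy{0},f)(\nm{F(b)},\nm{\beta_b})$, which is exactly a solution of the objectwise lifting problem posed by $\beta_b\colon f(F(b))\to K(b)$ with chosen source $F(b)$. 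Then put $\tilde F(b):=\cod(\gamma_b)$ and, on a morphism $m\colon b\to b'$, $\tilde F(m):=\gamma_{b'}\circ F(m)\circ\gamma_b^{-1}$; this transport formula makes $\tilde F$ a functor and $\gamma$ natural automatically, and it agrees with $H$ over $A$ because $\alpha$ is natural.

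It remains to see the assignment is uniformly realized and normal. Uniformity is where the decidability witness of $i$ does its work: it supplies, via the conditional combinator $\ifthen{-}{-}{-}$, a single realizer that on each object of $B$ selects between the realizer carried by $\alpha$ and the realizer of $\lift(\htpy{0},f)$, so that one obtains an honest section $\lift(\htpy{0},i\pupw f)\colon\Sq(\htpy{0},i\pupw f)\to\Hom(\I, B\to X)$. For normality, feed in $(\nm{F},\nm{\id_{(i\pupw f)(F)}})$, so $\alpha=\nm{\id}$ and $\beta=\nm{\id}$: on the $A$-branch $\gamma_b=\id_{F(b)}$, and on the complementary branch $\gamma_b=\lift(\htpy{0},f)(\nm{F(b)},\nm{\id_{f(F(b))}})=\nm{\id_{F(b)}}$ by the \emph{normality} of $f$. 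Hence $\gamma=\id_F$, which is precisely the normality condition demanded of $i\pupw f$.

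The main obstacle is not the objectwise construction but the bookkeeping needed to confirm it is \emph{uniformly realized}: one must exhibit one realizer, independent of the particular square, that branches on objects of $B$ through the decidability witness of $i$ and then calls the realizer of $\lift(\htpy{0},f)$, while checking that the transported $\tilde F$ and the naturality squares for $\gamma$ are realized from the realizers of $F$, $\alpha$ and $\beta$. Handling the complement of a \emph{decidable} mono (rather than a complemented subobject as in a topos) is exactly what the hypothesis buys, and the only point of genuine care is coherence on the boundary $A$; this is resolved cleanly because one only ever copies $\alpha$ on $\ob A$ and only ever applies $\lift(\htpy{0},f)$ off it, so the two branches never have to be reconciled.
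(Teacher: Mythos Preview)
Your proposal is correct and follows essentially the same approach as the paper: branch on the decidability of $i$ to define the lift $\gamma$ as $\alpha$ on the image of $i$ and as $\lift(\htpy{0},f)$ applied objectwise on the complement, then transport $F$ along $\gamma$ and invoke normality of $f$ for the identity case. Your transport formula $\tilde F(m):=\gamma_{b'}\circ F(m)\circ\gamma_b^{-1}$ is in fact a mild streamlining of the paper's version, which computes the morphism part via $\Cod(\lift(\htpy{0},f)(s(m),\sigma_0(m)))$ and then corrects by auxiliary isomorphisms $q_b$ comparing the two branches on $\ob A$; both routes yield the same $\tilde F$.
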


\begin{proof}

We seek to construct a section $\lift(\htpy{0},i \pupw f): \Sq(\htpy{0}, i \pupw f) \to \Hom(\I, (B\to X))$. Suppose we have a square $(s: 1 \to (B\to X), \sigma : \I \to (A \to X)\times_{A \to Y} (B \to Y))$. $s$ is a functor $s: B \to X$ and $\sigma$ is 
a pair of natural isomorphisms $(\sigma_{0}:f \circ s \cong s_{0} , \sigma_{1}: s \circ i \cong s_{1})$ such that $\sigma_{0} \cdot i = f \cdot \sigma_{1}$. Define a functor $s':B \to X$ such that:

\begin{enumerate}

\item For $b:\ob B$ in the image of $i$, $s'(b) = s_{1}(a)$ where $i(a) = b$, for every other $b: \ob B$,  $s'(b) = \Cod(\lift(\htpy{0},f)(s(b), \sigma_{0}(b)))$
\item For $m:\mor B$, $s'(m) = q^{-1}_{\cod(m)} \circ \Cod(\lift(\htpy{0},f)(s(m), \sigma_{0}(m) )) \circ q_{\dom(m)}$ where:
\item $q_{b} = \lift(\htpy{0},f)(s(b), \sigma_{0}(b)) \circ \sigma_{1}^{-1}(b')$ when $b = i(b')$ and $q_{b} = \id_{b}$ otherwise

\end{enumerate}

The functoriality of $s'$ follows from the functoriality of $\lift(\htpy{0},f)$ and $\Cod$. Next we seek to construct a natural isomorphism $t: s \cong s': B \to X$. When $b: \ob B$ is in the image of $i$, $t(b) = \sigma_{1}(a)$ where $i(a) = b$. Otherwise, $t(b) = \lift(\htpy{0},f)(s(b), \sigma_{0}(b))$. For $m : \mor B$, $$s'(m) \circ t(\dom(m)) =   q^{-1}_{\cod(m)} \circ \Cod(\lift(\htpy{0},f)(s(m), \sigma_{0}(m) )) \circ q_{\dom(m)} \circ t(\dom(m)).$$
Since $ q_{\dom(m)} \circ t(\dom(m)) =  \lift(\htpy{0},f)(s(b), \sigma_{0}(b))$. This gives us 
\begin{align*}
s'(m) \circ t(\dom(m))  &=  q^{-1}_{\cod(m)} \circ \Cod(\lift(\htpy{0},f)(s(m), \sigma_{0}(m) )) \circ  \lift(\htpy{0},f)(s(\dom(m)), \sigma_{0}(\dom(m)))\\
&=  q^{-1}_{\cod(m)} \circ \lift(\htpy{0},f)(s(\cod(m)), \sigma_{0}(\cod(m))) \circ s(m)\\
\end{align*}
When $\cod(m)$ is in the image of $i$, then $q^{-1}_{\cod(m)} = \sigma_{1}(b') \circ \lift(\htpy{0},f)(s(\cod(m)), \sigma_{0}(\cod(m)))^{-1}$. Otherwise, $q^{-1}_{\cod(m)} = \id_{\cod{m}}$. Thus we finally get that $s'(m) \circ t(\dom(m)) = t(\cod(m)) \circ s(m)$ showing $t$ to be a natural isomorphism.\\

Finally we seek to show that $s' \circ i = s_{1}$, $f \circ s' = s_{0}$, $f \cdot t = \sigma_{0}$ and $t \cdot i = \sigma_{1}$. $f( \lift(\htpy{0},f)(s(b), \sigma_{0}(b)))) = \sigma_{0}(b)$ and $f(\sigma_{1}(a)) = \sigma_{0}(i(a))$, thus $f \cdot t = \sigma_{0}$. $t(i(a)) = \sigma_{1}(a)$, thus $t \cdot i = \sigma_{1}$. The facts that $s' \circ i = s_{1}$ and $f \circ s' = s_{0}$ follow immediately. Along with the fact that $t \circ \htpy{0} = s$, this demonstrates the sectionality of $ \lift(\htpy{0},i \pupw f)$ on objects. We set $ \lift(\htpy{0},i \pupw f)(s,\sigma)  = t $ \\

Suppose we have $s:B \to X$ and $\sigma = (\id_{f \circ s}, \id_{s \circ i})$, then referring back to the above construction of $s':B \to X$ and $t:s \cong s'$, $s' = s$ and $t = \id_{s}$ since $f$ is a normal isofibration. Therefore $i \pupw f$ is a normal isofibration.\\

\end{proof}

\begin{cor}\label{pullbackpowerlift}

Suppose we have a normal isofibration $f:X \to Y$ and decidable monomorphism on objects $i:A \mono B$, then $i \otimes \htpy{0}$ has the left lifting property against $f$

\end{cor}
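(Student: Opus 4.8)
\emph{The plan is to} deduce this formally from the preceding proposition by transposing the lifting problem across the two-variable adjunction relating the pushout product $\otimes$ and the pullback power $\pupw$. Recall that immediately after introducing $g \pupw f$ and $g \otimes f$ we recorded that these assemble into functors $\otimes,\pupw : (\GrpA{A}^{\arr})^{\op} \times \GrpA{A}^{\arr} \to \GrpA{A}^{\arr}$ with $i \otimes -$ left adjoint to $i \pupw -$. By the standard Leibniz (mate) calculus, this two-variable adjunction induces, for the three arrows $i$, $\htpy{0}$ and $f$, a natural isomorphism in the arrow category of $\Asm(A)$
\[
\pullp(i \otimes \htpy{0}, f) \;\cong\; \pullp(\htpy{0}, i \pupw f),
\]
which on domains is the cartesian-closedness isomorphism $\Hom(B \times \I, X) \cong \Hom(\I, B \to X)$ from Lemma \ref{ccgrpd}, and on codomains identifies $\Sq(i \otimes \htpy{0}, f)$ with $\Sq(\htpy{0}, i \pupw f)$.

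Granting this identification the corollary is immediate. By the proposition above, $i \pupw f$ is a normal isofibration, so in particular it has the right lifting property against $\htpy{0}$; that is, $\pullp(\htpy{0}, i \pupw f)$ admits a section $\lift(\htpy{0}, i \pupw f)$. Transporting this section along the displayed isomorphism produces a section of $\pullp(i \otimes \htpy{0}, f)$, which by definition is precisely a witness that $i \otimes \htpy{0}$ has the left lifting property against $f$. Note that only the bare lifting clause of ``normal isofibration'' is used here; the normality condition (that identity squares lift to identity squares) is not needed for the plain lifting property.

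\emph{The one step that requires care} is establishing the displayed isomorphism at the level of the enriched square objects and the $\pullp$-maps, rather than merely as a bijection of underlying lifting problems; this is the main, and essentially only, obstacle. It is, however, a formal consequence of the adjunction $i \otimes - \dashv i \pupw -$ already in hand: chasing the unit and counit of that adjunction through the defining pullbacks of $\Sq(i \otimes \htpy{0}, f)$ and $\Sq(\htpy{0}, i \pupw f)$ yields the two mutually inverse comparison maps, compatible with the respective $\pullp$-morphisms. Once this comparison is in place, no further computation is needed, and the transported section exhibits $\lift(i \otimes \htpy{0}, f)$.
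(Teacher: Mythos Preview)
Your proposal is correct and follows essentially the same approach as the paper: both arguments transpose the lifting problem across the adjunction $(i \otimes -) \dashv (i \pupw -)$ and invoke the preceding proposition that $i \pupw f$ is a normal isofibration. The only difference is cosmetic---you phrase the transposition as an isomorphism $\pullp(i \otimes \htpy{0}, f) \cong \pullp(\htpy{0}, i \pupw f)$ in the arrow category and transport the section, while the paper carries out the same transposition pointwise on an individual square.
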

\begin{proof}

Suppose we have a square:
\[\begin{tikzcd}
	{(\I \times B) +_{1 \times A} (1 \times B)} &&& {X} \\
	\\
	\\
	{\I \times B} &&& {Y}
	\arrow["{s_{0}}", from=1-1, to=1-4]
	\arrow["{i \otimes \htpy{0}}"', from=1-1, to=4-1]
	\arrow["{f}",from=1-4, to=4-4]
	\arrow["{s_{1}}"', from=4-1, to=4-4]
\end{tikzcd}\]

then we have a square from the $(i \otimes -) \adj (i \pupw -)$ adjunction and a lifting by the previous proposition:

\[\begin{tikzcd}
	1 &&& {B \to X} \\
	\\
	\\
	{\I } &&& {(A \to X) \times_{A \to Y} (B \to Y)}
	\arrow["{{{s'_{0}}}}", from=1-1, to=1-4]
	\arrow["{{{\htpy{0}}}}"', from=1-1, to=4-1]
	\arrow["{{{i \pupw f}}}", from=1-4, to=4-4]
	\arrow["\lambda"{description}, dashed, from=4-1, to=1-4]
	\arrow["{{{s'_{1}}}}"', from=4-1, to=4-4]
\end{tikzcd}\]

then when have a lifting on the original square:

\[\begin{tikzcd}
	{(\I \times B) +_{1 \times A} (1 \times B)} &&& X \\
	\\
	\\
	{\I \times B} &&& Y
	\arrow["{{s_{0}}}", from=1-1, to=1-4]
	\arrow["{{i \otimes \htpy{0}}}"', from=1-1, to=4-1]
	\arrow["{{f}}", from=1-4, to=4-4]
	\arrow["{\lambda^{*}}"{description}, dashed, from=4-1, to=1-4]
	\arrow["{{s_{1}}}"', from=4-1, to=4-4]
\end{tikzcd}\]

where $\lambda^{*}: (\I \times B) \to X$ is the adjunct of $\lambda:\I \to (B \to X)$. 

\end{proof}

\begin{defn}  

Given a functor $F: A \to B$ in $\GrpA{A}$, we construct the fibrant replacement of $F$ using the iso comma category construction:

\[
\begin{tikzcd}
A \arrow[r,"\tilde{F}"]   & \coe{F}{B} \arrow[r,"\hat{F}"] & B 
\end{tikzcd}
\]
 In addition:

\begin{itemize}

\item$\ob( \coe{F}{B}) := \sum_{a:A}\sum_{b:B} B(F(a),b)$.\\

\item $\mor (\coe{F}{B}) := \sum_{(a,b,f):\ob (\coe{F}{B})}\sum_{(a',b',f'):\ob (\coe{F}{B})}\sum_{x:A(a,a')}\sum_{y:B(b,b')} f' \circ F(x) = y \circ f$.\\

\item Given a morphism $((a,b,f),(a',b',f'),x,y)$, we use the shorthand: $(x,y):(a,b,f) \to (a',b',f')$. We define $\dom(x,y) = (a,b,f)$ and $\cod(x,y) = (a',b',f')$.\\

\item Composition $\circ$ sends the following two morphisms $(x,y):(a,b,f) \to (a',b',f')$ and $(x',y'):(a',b',f') \to (a'',b'',f'')$ to $(x' \circ x,y' \circ y):(a,b,f) \to (a'',b'',f'')$ so that $f'' \circ F(x') \circ F(x) = y' \circ y \circ f$ follows from $f' \circ F(x) = y \circ f$ and $f'' \circ F(x') = y' \circ f'$.\\

\item $\id_{(a,b,f)} = (id_{a},\id_{b})$.\\

\item $\hat{F}$ projects the objects and morphisms of $B$. Whereas $\tilde{F}$ sends an object $a$ to $(a,F(a),\id_{F(a)})$ and a morphism $m:a \to a'$ into $(m, F(m))$.
\end{itemize}

\end{defn}

\begin{lem}

Given a functor $F: A \to B$, $\hat{F}$ is a normal isofibration.

\end{lem}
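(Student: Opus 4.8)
The plan is to construct the section $\lift(\htpy{0}, \hat{F})$ of $\pullp(\htpy{0}, \hat{F})$ explicitly from the iso-comma data, and then to check the normalization clause on identity squares. First I would unwind what a lifting problem against $\htpy{0} : 1 \to \I$ amounts to: an element of $\Sq(\htpy{0}, \hat{F})$ is a pair consisting of an object $(a,b,f) \in \ob(\coe{F}{B})$ — so $f : F(a) \to b$ in $B$ — together with a path $\nm{g} : \I \to B$, i.e.\ an isomorphism $g : b \to b'$ in $B$, subject to the pullback compatibility $\dom(g) = b = \hat{F}(a,b,f)$. Solving this lifting problem means producing an isomorphism of $\coe{F}{B}$ issuing from $(a,b,f)$ whose image under $\hat{F}$ is $g$.

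Second, I would define the section on such a pair by
\[
\lift(\htpy{0}, \hat{F})\big(\nm{(a,b,f)}, \nm{g}\big) \;=\; \nm{(\id_a,\, g)},
\]
where $(\id_a, g) : (a,b,f) \to (a, b', g \circ f)$ is the evident morphism of the iso-comma category. This is a legitimate morphism because the defining square $f' \circ F(\id_a) = y \circ f$ reads $g \circ f = g \circ f$ once we set $f' := g \circ f$ and $y := g$, so it holds by construction; and since $\coe{F}{B}$ is an internal groupoid every morphism is automatically invertible, so $(\id_a, g)$ genuinely names a functor $\I \to \coe{F}{B}$. Because $\hat{F}$ projects the $B$-component of a morphism we have $\hat{F}(\id_a, g) = g$, and the lift starts at $(a,b,f)$; hence $\pullp(\htpy{0}, \hat{F}) \circ \lift(\htpy{0}, \hat{F}) = \id$, confirming that $\lift(\htpy{0}, \hat{F})$ is indeed a section.

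Third, I would check that this assignment is a bona fide morphism of assemblies and verify normality. All the data entering the lift — the identity $\id_a$ in $A$, the composite $g \circ f$ in $B$, and the pairing into $\mor(\coe{F}{B})$ — are produced by the structure morphisms of the groupoids $A$ and $B$ together with the pairing combinators, each of which is realized; composing these realizers with those of the input pair yields a single realizer for $\lift(\htpy{0}, \hat{F})$. For the normalization clause, feeding in a pair of the form $(\nm{(a,b,f)}, \nm{\id_{b}})$ forces $g = \id_b$, whence $g \circ f = f$ and the lift is $\nm{(\id_a, \id_b)} = \nm{\id_{(a,b,f)}}$, exactly the identity path demanded by the definition of a normal isofibration.

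There is no serious conceptual obstacle here: the lift is essentially forced by the iso-comma description, and the only points demanding care are bookkeeping ones — confirming that the assignment $(\nm{(a,b,f)}, \nm{g}) \mapsto \nm{(\id_a, g)}$ is uniform enough to be realized as a single morphism $\Sq(\htpy{0}, \hat{F}) \to \Hom(\I, \coe{F}{B})$ of assemblies, and that the normalization on identities holds strictly rather than merely up to isomorphism. Both follow immediately from the explicit formulas above, so the bulk of the work is simply recording that each constituent operation is realized.
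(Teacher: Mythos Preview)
Your proposal is correct and follows essentially the same approach as the paper: both define the lift of $(\nm{(a,b,f)}, \nm{g})$ to be the morphism $(\id_a, g) : (a,b,f) \to (a, \cod(g), g \circ f)$ in the iso-comma category and then observe that this reduces to $\id_{(a,b,f)}$ when $g = \id_b$. Your write-up is slightly more explicit about realizability and about why the defining square of the iso-comma category commutes, but the underlying argument is the same.
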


\begin{proof}

We seek to construct a section $\lift(\htpy{0},\hat{F}): \Sq(\htpy{0},\hat{F}) \to \Hom(\I, \coe{F}{B})$.  Suppose we have a square $((a,b,f): 1 \to \coe{F}{B}, g : \I \to B)$. Then  $\lift(\htpy{0},\hat{F})((a,b,f),g) := ((a,b,f), (a,\cod(g),g \circ f), \id_{a}, g , *)$. This obviously gives us a section on objects.

Now we show that $\hat{F}$ is normal. Suppose we have a square  $((a,b,f): 1 \to \coe{F}{B}, \id_{b} : \I \to B)$, then $\lift(\htpy{0},\hat{F})((a,b,f),\id_{b}) = (\id_{a},\id_{b},*) = \id_{(a,b,f)}$. Therefore, $\hat{F}$ is a normal isofibration.

\end{proof}

\subsection{The acyclic cofibrations and weak equivalences}
In the section we introduce groupoid equivalences, strong deformation retracts, and strong deformation sections and prove some properties about them. We also prove that normal isofibrations are characterized by having the right lifting property against strong deformation sections and similarly strong deformation sections are characterized by having the left lifting property against normal isofibrations.\\


\begin{defn}

We call a functor $W:A \to B$ an equivalence of groupoids when there exists a functor $W^{i}:B \to A$ and natural isomorphisms $\alpha: W^{i} \circ W \to \id_{A}$ and $\beta: W \circ W^{i} \to \id_{B}$.

\end{defn}

\begin{lem}

Equivalences of groupoids satisfy the 2 out of 3 principle.

\end{lem}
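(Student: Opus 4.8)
The plan is to verify the three instances of the 2-out-of-3 principle for a pair of composable functors $F\colon A \to B$ and $G\colon B \to C$, writing $H := G \circ F$. In each instance I would exhibit an explicit pseudo-inverse together with the two required natural isomorphisms, assembled by whiskering and pasting the structure isomorphisms supplied by the hypothesized equivalences. Since $\GrpA{A}$ is cartesian closed (Lemma \ref{ccgrpd}) and enriched over $\Asm(A)$, whiskering a natural isomorphism by a functor and composing natural isomorphisms are themselves morphisms in the enrichment, so every composite built below genuinely exists as a natural isomorphism in $\GrpA{A}$.

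First, if $F$ and $G$ are equivalences, with pseudo-inverses $F^{i}, G^{i}$ and structure isomorphisms $\alpha_F, \beta_F, \alpha_G, \beta_G$, I would take $F^{i} \circ G^{i}$ as candidate inverse of $H$. Whiskering $\alpha_G$ by $F^{i}$ on the left and $F$ on the right gives $F^{i} \circ G^{i} \circ G \circ F \cong F^{i} \circ F$, and pasting with $\alpha_F$ yields $(F^{i} \circ G^{i}) \circ H \cong \id_A$; symmetrically $H \circ (F^{i} \circ G^{i}) \cong \id_C$ via $\beta_F$ and $\beta_G$.

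Next, suppose $F$ and $H$ are equivalences; I claim $G$ is an equivalence with pseudo-inverse $F \circ H^{i}$. The composite $G \circ (F \circ H^{i}) = H \circ H^{i} \cong \id_C$ is immediate, and for the other side I would form the chain
\[
F \circ H^{i} \circ G \;\cong\; F \circ H^{i} \circ G \circ F \circ F^{i} \;=\; F \circ H^{i} \circ H \circ F^{i} \;\cong\; F \circ F^{i} \;\cong\; \id_B,
\]
inserting $\beta_F\colon F \circ F^{i} \cong \id_B$, then $\alpha_H\colon H^{i} \circ H \cong \id_A$, then $\beta_F$ again. Dually, if $G$ and $H$ are equivalences, I claim $F$ is an equivalence with pseudo-inverse $H^{i} \circ G$: here $(H^{i} \circ G) \circ F = H^{i} \circ H \cong \id_A$ is immediate, while
\[
F \circ H^{i} \circ G \;\cong\; G^{i} \circ G \circ F \circ H^{i} \circ G \;=\; G^{i} \circ H \circ H^{i} \circ G \;\cong\; G^{i} \circ G \;\cong\; \id_B,
\]
using $\alpha_G$, then $\beta_H$, then $\alpha_G$ again.

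The only content beyond this bookkeeping is confirming that the pasting operations are legitimate internally: the whiskering of a functor by a natural isomorphism and the vertical composite of natural isomorphisms must themselves be natural isomorphisms realized in $\Asm(A)$. This is exactly where I expect the main (though routine) effort to lie, and it is guaranteed by the cartesian closed structure of $\GrpA{A}$ together with the fact that the composition and inversion operations of each internal groupoid are realized assembly morphisms; hence each displayed composite assembles into a bona fide natural isomorphism, completing all three cases.
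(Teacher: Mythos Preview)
Your proof is correct and takes essentially the same approach as the paper: the same choice of pseudo-inverse in each of the three cases, and the same chains of natural isomorphisms built by whiskering and pasting. The only difference is that you add a paragraph justifying that whiskering and vertical composition of natural isomorphisms are realized operations in the enrichment, which the paper leaves implicit.
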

\begin{proof}

We have the following commutative diagram

\[\begin{tikzcd}
	&& B \\
	\\
	A &&&& C
	\arrow["j", from=1-3, to=3-5]
	\arrow["i", from=3-1, to=1-3]
	\arrow["k", from=3-1, to=3-5]
\end{tikzcd}\]

We denote $h*$ as the homotopy inverse of $h$. \\

Assume $i$ and $j$ are groupoid equivalences:\\

We define $k^{*}(c) = i^{*} \circ j^{*}(c)$. The following chains of isomorphisms give the necessary natural isomorphisms:

$$k^{*} \circ k  = i^{*} \circ j^{*} \circ j \circ i \cong i^{*} \circ i \cong \id_{A}$$
$$k \circ k^{*} = j \circ i  \circ i^{*} \circ j^{*} \cong j \circ j^{*} \cong \id_{C}$$

Assume $k$ and $j$ are groupoid equivalences:\\

We define $i^{*}(b) = k^{*} \circ j(b)$. The following chains of isomorphisms give the necessary natural isomorphisms:

$$i^{*} \circ i  = k^{*} \circ j \circ i = k^{*} \circ k \cong \id_{A}$$
$$i \circ i^{*} = i \circ k^{*} \circ j \cong j^{*} \circ j \circ i \circ k^{*} \circ j  \cong  j^{*} \circ k \circ k^{*} \circ j  \cong j^{*} \circ j \cong  \id_{B}$$

Assume $k$ and $i$ are groupoid equivalences:\\

We define $j^{*}(c) = i  \circ k^{*}(c)$. The following chains of isomorphisms give the necessary natural isomorphisms:

$$j^{*} \circ j  = i  \circ k^{*} \circ j \cong i  \circ k^{*} \circ j \circ i \circ i^{*} = i  \circ k^{*} \circ k \circ i^{*} \cong i \circ i^{*} \cong \id_{B}$$
$$j \circ j^{*} = j \circ i  \circ k^{*} = k \circ k^{*} \cong \id_{C}$$\\

Therefore, equivalences of groupoids satisfy the 2 out of 3 principle.

\end{proof}

\begin{defn}

Given a functor $F: A \to B$ and an object $b$ in $B$. We define a category $F^{-1}(b)$ to be category whose objects are objects $a$ in $A$ such that $F(a) = b$ and morphisms are the morphisms $m$ in $A$ such that $F(m) = \id_{b}$.

\end{defn}

\begin{lem}
\label{fiberpath}

Given a normal isofibration $F: A \to B$, and an isomorphism in $j: b \to b'$ in $B$, we have a equivalence of groupoids $\lift(j): F^{-1}(b) \to F^{-1}(b')$ and if $j = \id_{b}$, $\lift(j) = id_{F^{-1}(b)}$. Furthermore, we have a natural isomorphism $\lift(j,j'): \lift(j') \circ \lift(j) \cong \lift(j' \circ j)$ for $j: b' \to b''$.

\end{lem}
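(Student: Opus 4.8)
The plan is to use the lifting structure of the normal isofibration $F$ to \emph{transport} objects and morphisms along $j$, exactly as a cloven fibration does. For a map $\ell : c \to c'$ in $B$ and an object $a$ of $F^{-1}(c)$, write $\tilde{\ell}_a := \lift(\htpy{0},F)(\nm{a},\nm{\ell})$: the pair $(\nm{a},\nm{\ell})$ is an element of $\Sq(\htpy{0},F)$ since $F\circ \nm{a} = \nm{c} = \nm{\ell}\circ \htpy{0}$, and because $\lift(\htpy{0},F)$ is a section of $\pullp(\htpy{0},F)$ we get $F(\tilde{\ell}_a)=\ell$ and $\Dom(\tilde{\ell}_a)=a$, so $\tilde{\ell}_a$ is an isomorphism $a \to \Cod(\tilde{\ell}_a)$ lying over $\ell$. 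I would then define $\lift(j)$ on objects by $\lift(j)(a)=\Cod(\tilde{j}_a)$, which lies in $F^{-1}(b')$ because $F(\Cod(\tilde{j}_a))=\Cod(j)=b'$, and on a morphism $m:a_1\to a_2$ of $F^{-1}(b)$ by conjugation, $\lift(j)(m)=\tilde{j}_{a_2}\circ m\circ \tilde{j}_{a_1}^{-1}$. This morphism lies over $j\circ \id_b\circ j^{-1}=\id_{b'}$, so it is a morphism of $F^{-1}(b')$, and functoriality is the usual cancellation for conjugation. Everything is realized because $\lift(\htpy{0},F)$, $\Dom$ and $\Cod$ are morphisms of $\Asm(A)$, so I work throughout in the internal language.

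Next I would dispose of the identity case. By normality of $F$, $\lift(\htpy{0},F)$ sends $(\nm{a},\nm{\id_b})$ to $\nm{\id_a}$, so $\tilde{(\id_b)}_a=\id_a$; hence $\lift(\id_b)(a)=a$ and $\lift(\id_b)(m)=\id_a\circ m\circ \id_a^{-1}=m$, giving $\lift(\id_b)=\id_{F^{-1}(b)}$.

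For the coherence isomorphism, fix $j:b\to b'$ and $j':b'\to b''$. Both $\widetilde{j'\circ j}_a$ and the composite $\widetilde{j'}_{\lift(j)(a)}\circ \tilde{j}_a$ are isomorphisms out of $a$ lying over $j'\circ j$, so $\lift(j,j')(a):=\widetilde{j'\circ j}_a\circ(\widetilde{j'}_{\lift(j)(a)}\circ \tilde{j}_a)^{-1}:\lift(j')(\lift(j)(a))\to \lift(j'\circ j)(a)$ lies over $\id_{b''}$ and hence is an isomorphism of $F^{-1}(b'')$. Naturality in $a$ reduces, after unfolding the conjugation formulas for $\lift(j'\circ j)(m)$ and $(\lift(j')\circ\lift(j))(m)$, to cancelling the transport isomorphisms: both sides collapse to $\widetilde{j'\circ j}_{a_2}\circ m\circ(\widetilde{j'}_{\lift(j)(a_1)}\circ \tilde{j}_{a_1})^{-1}$, so $\lift(j,j')$ is a natural isomorphism $\lift(j')\circ\lift(j)\cong \lift(j'\circ j)$.

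Finally, the equivalence statement follows formally from the previous two points, with $\lift(j^{-1})$ as homotopy inverse: instantiating the coherence isomorphism gives $\lift(j^{-1})\circ\lift(j)\cong \lift(j^{-1}\circ j)=\lift(\id_b)=\id_{F^{-1}(b)}$ and $\lift(j)\circ\lift(j^{-1})\cong \lift(j\circ j^{-1})=\lift(\id_{b'})=\id_{F^{-1}(b')}$. I expect the only genuine bookkeeping obstacle to be the naturality verification for $\lift(j,j')$ together with confirming that each constructed morphism lands in the asserted fiber; both are routine once the over-$B$ bookkeeping is set up, since conjugation makes all the relevant products of transport isomorphisms telescope.
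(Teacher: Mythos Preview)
Your proof is correct and uses the same constructions as the paper: the definition of $\lift(j)$ by conjugation with the chosen lifts $\tilde{j}_a$, the identity case via normality, and the coherence isomorphism $\lift(j,j')$ are all identical to the paper's. The one organizational difference is that the paper first proves the equivalence directly---constructing explicit natural isomorphisms $\lift(j^{-1})\circ\lift(j)\cong\id$ and $\lift(j)\circ\lift(j^{-1})\cong\id$ by hand---and only afterwards establishes the general coherence $\lift(j,j')$, whereas you prove the coherence first and obtain the equivalence as an instance of it together with $\lift(\id_b)=\id$. Your ordering is slightly more economical, since it avoids repeating essentially the same cancellation computation twice.
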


\begin{proof}

For object $a$ in $F^{-1}(b)$, $\lift(j)(a) = \Cod(\lift(\htpy{0},F)(a,j))$ and for a morphism $m:a \to a'$ in $F^{-1}(b)$, $\lift(j)(m) =  \lift(\htpy{0},F)(a', j) \circ m \circ \lift(\htpy{0},F)(a, j)^{-1} $. Functoriality of $\lift(j)$ is straightforward. \\

We will set $\lift(j)^{i} := \lift(j^{-1})$. We seek to construct natural isomorphisms $\lift(j,b):\lift(j^{-1}) \circ \lift(j) \to \id_{F^{-1}(b)}$, $\lift(j^{-1},b'):\lift(j) \circ \lift(j^{-1}) \to \id_{F^{-1}(b)}$. \\

For object $a$ in $F^{-1}(b)$, we set $\lift(j,b)(a) = \lift(\htpy{0},F)(a,j)^{-1} \circ \lift(\htpy{0},F)(\lift(j)(a),j^{-1})^{-1}$.\\

 For morphism $m:a \to a'$ in $F^{-1}(b)$, 
 \begin{align*}
 m \circ \lift(j,b)(a) &= m \circ  \lift(\htpy{0},F)(a,j)^{-1} \circ \lift(\htpy{0},F)(\lift(j)(a),j^{-1})^{-1}\\
&= \lift(\htpy{0},F)(a',j)^{-1} \circ \lift(j)(m) \circ \lift(\htpy{0},F)(\lift(j)(a),j^{-1})^{-1}\\
&= \lift(\htpy{0},F)(a',j)^{-1} \circ \lift(\htpy{0},F)(\lift(j)(a'),j^{-1})^{-1} \circ \lift(j^{-1})(\lift(j)(m))\\
&= \lift(j,b)(a') \circ \lift(j^{-1})(\lift(j)(m)).\\
\end{align*}

Without loss of generality, this gives us natural isomorphisms  $\lift(j,b):\lift(j^{-1}) \circ \lift(j) \to \id_{F^{-1}(b)}$, $\lift(j^{-1},b'):\lift(j) \circ \lift(j^{-1}) \to \id_{F^{-1}(b)}$. Thus $\lift(j)$ is an equivalence of groupoids. Finally if $j = \id_{b}$, $\lift(j) = \id_{F^{-1}(b)}$ since $F$ is normal.\\

Finally, for $a \in \ob F^{-1}(b)$, we define $\lift(j,j')(a)$ as the morphism allowing the following diagram to commute:

\[\begin{tikzcd}
	a &&&& {\lift(j)(a)} &&&&& {\lift(j')(\lift(j)(a))} \\
	\\
	a &&&&&&&&& {\lift(j'\circ j)(a)}
	\arrow["{\lift(\htpy{0},F)(\nm{a}, \nm{j})}", from=1-1, to=1-5]
	\arrow["{\id_{a}}"', from=1-1, to=3-1]
	\arrow["{\lift(\htpy{0},F)(\nm{\lift(j)(a)},\nm{j'})}", from=1-5, to=1-10]
	\arrow["{\lift(j,j')(a)}", from=1-10, to=3-10]
	\arrow["{\lift(\htpy{0},F)(\nm{a},\nm{j'\circ j})}"', from=3-1, to=3-10]
\end{tikzcd}\]

For a morphism $m:a \to a'$ in $F^{-1}(b)$, we have the following
\begin{align*}
\lift(j' \circ j)(m) \circ \lift(j,j')(a) &= \lift(\htpy{0},F)(\nm{a'}, \nm{j' \circ j}) \circ m \circ \lift(\htpy{0},F)(\nm{a}, \nm{j' \circ j})^{-1} \circ \lift(j,j')(a)\\
&=  \lift(\htpy{0},F)(\nm{a'}, \nm{j' \circ j}) \circ m \circ \lift(\htpy{0},F)(\nm{a},\nm{j})^{-1} \circ \lift(\htpy{0},F)(\nm{\lift(j)(a)},\nm{j'})^{-1}\\
&= \lift(j,j')(a') \circ \lift(\htpy{0},F)(\nm{\lift(j)(a')},\nm{j'}) \circ \lift(j)(m) \circ \lift(\htpy{0},F)(\nm{\lift(j)(a)},\nm{j'})^{-1}\\
&= \lift(j,j')(a') \circ \lift(j')(\lift(j)(m))\\
\end{align*}

Thus $\lift(j,j')$ is a natural isomorphism.

\end{proof}


\begin{defn}

We call $W: A \to B$ is strong equivalence of groupoids when $W$ is an equivalence of groupoids with $W^{i}:B \to A$, $\alpha: W^{i} \circ W \to \id_{A}$ and $\beta: W \circ W^{i} \to \id_{B}$ and we have that $\beta \cdot W = W \cdot \alpha$.

\end{defn}

\begin{defn}

We call $W: A \to B$ is strong deformation retract when $W$ is a strong equivalence of groupoids with $W^{i}:B \to A$, $\alpha: W^{i} \circ W \to \id_{A}$ and $\beta: W \circ W^{i} \to \id_{B}$ such that $W^{i}$ is a section of $W$ and $\beta = \id_{\id_{B}}$

\end{defn}

\begin{lem}
\label{deformr}

 If $F:A \to B$ is a normal isofibration and an equivalence of groupoids, then  $F$ is a strong deformation retract.

\end{lem}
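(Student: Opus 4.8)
The plan is to start from the given equivalence data and first make it \emph{coherent}, then strictify the counit against $F$ using the normal isofibration structure. Write $F^i : B \to A$ for the homotopy inverse together with natural isomorphisms $\alpha : F^i \circ F \To \id_A$ and $\beta : F \circ F^i \To \id_B$. As a first step I would replace $\beta$ by the standard adjoint-equivalence correction so that the triangle identity $F \cdot \alpha = \beta \cdot F$ holds; concretely this amounts to redefining $\beta_b$ as a fixed composite of whiskerings of $\alpha$ and $\beta$, so the corrected transformation is again a morphism of $\GrpA{A}$ (it is built uniformly, hence realized). After this step we may assume $F(\alpha_a) = \beta_{F(a)}$ for every object $a$ of $A$.

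Next I would strictify using $\lift(\htpy{0},F)$. For each object $b$ of $B$ set $\gamma_b := \lift(\htpy{0},F)(\nm{F^i(b)}, \nm{\beta_b})$, an isomorphism $F^i(b) \to G(b)$ in $A$ lying over $\beta_b$, where $G(b) := \Cod(\gamma_b)$; by construction $F(\gamma_b) = \beta_b$ and hence $F(G(b)) = b$ on the nose. On a morphism $m : b \to b'$ I define $G(m) := \gamma_{b'} \circ F^i(m) \circ \gamma_b^{-1}$; this is visibly functorial, and naturality of $\beta$ gives $F(G(m)) = \beta_{b'} \circ F(F^i(m)) \circ \beta_b^{-1} = m$, so that $F \circ G = \id_B$ strictly and $G$ is a section of $F$. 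Normality of $F$ enters here: when $\beta_b$ is an identity the lift $\gamma_b$ is itself an identity, which keeps the construction strictly compatible with the original data on the image of $F^i$.

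It remains to produce the unit. I would set $\alpha'_a := \alpha_a \circ \gamma_{F(a)}^{-1} : G(F(a)) \to a$. Naturality of $\alpha'$ follows from naturality of $\alpha$ together with the definition of $G$ on morphisms (the two copies of $\gamma_{F(a')}$ cancel), so $\alpha' : G \circ F \To \id_A$ is a natural isomorphism. Applying $F$ and using the coherence of Step~1 gives $F(\alpha'_a) = F(\alpha_a) \circ \beta_{F(a)}^{-1} = \id_{F(a)}$. Taking $W = F$, $W^i = G$, unit $\alpha'$, and counit $\beta' := \id_{\id_B}$ (legitimate since $F \circ G = \id_B$), the coherence condition $\beta' \cdot F = F \cdot \alpha'$ reduces on objects to $\id_{F(a)} = F(\alpha'_a)$, which we have just checked. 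Thus $F$ is a strong deformation retract.

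The main obstacle I anticipate is not the categorical algebra, which is routine, but ensuring that every choice is made uniformly so that the resulting functors and transformations are genuine morphisms in $\GrpA{A}$ rather than pointwise selections; this is precisely what the section $\lift(\htpy{0},F)$ buys us, and what makes the hypothesis that $F$ be a \emph{normal} isofibration (rather than merely essentially surjective) do the work. A secondary point to watch is the bookkeeping in Step~1, where one must verify that the corrected $\beta$ is still a natural isomorphism and that the triangle identity genuinely holds before the strictification.
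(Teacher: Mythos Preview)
Your proof is correct and follows the same overall architecture as the paper: use the lifting structure of $F$ against $\beta$ to build a strict section $G$ (the paper calls it $\lift(!,F)$, with identical formulas $G(b)=\Cod(\lift(\htpy{0},F)(F^i(b),\beta_b))$ and $G(m)=\gamma_{b'}\circ F^i(m)\circ\gamma_b^{-1}$), and then correct the unit so that $F\cdot\alpha'=\id_F$.

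The one genuine difference is in \emph{where} the correction happens. You front-load it: first pass to an adjoint equivalence so that $F\cdot\alpha=\beta\cdot F$, and then the obvious $\alpha'_a=\alpha_a\circ\gamma_{F(a)}^{-1}$ already satisfies $F(\alpha'_a)=\id_{F(a)}$ with no further work. The paper instead skips the adjointification and applies a back-end fix: starting from the same $\alpha'$, it sets $\alpha(!,F)=\alpha'\circ\bigl((G\circ F)\cdot\alpha'\bigr)^{-1}$ and checks directly that $F\cdot\alpha(!,F)=(F\cdot\alpha')\circ(F\cdot\alpha')^{-1}=\id_F$, using only that $F\circ G=\id_B$. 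Your route is more conceptual and reusable (the adjoint-equivalence correction is a standard lemma); the paper's route is slightly more economical in that it never invokes that lemma and the correction formula manifestly depends only on data already constructed. Both are uniform in the required sense, so both yield morphisms in $\GrpA{A}$.
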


\begin{proof}

Suppose $F$ is a normal isofibration and an equivalence of groupoids, we seek to construct a section of $F :A \to B$, $\lift(!, F)$. For object $b : \ob B$, set $\lift(!, F)(b) = \lift(\beta(b))(F^{i}(b))$, where $\beta:F \circ F^{i} \to \id_{B}$. For morphism $m \in \mor B$, $\lift(!, F)(m:b \to b') =  \lift(\htpy{0},F)(F^{i}(b'),\beta(b') ) \circ F^{i}(m) \circ \lift(\htpy{0},F)(F^{i}(b),\beta(b) )^{-1}$. functoriality of $\lift(!, F)$ follows from the functoriality of $F^{i}$. Furthermore, using the lifting property of $F$, one can see that $\lift(!, F)$ is a section of $F$.\\

Now we seek to construct a natural transformation $\alpha(!,F): \lift(!, F) \circ F \to id_{A}$. We have $\lift(\htpy{0},F)(F^{i}(-),\beta(-) ): F^{i} \to \lift(!, F)$. Thus we have $\lift(\htpy{0},F)(F^{i}(-),\beta(-) ) \cdot F: F^{i}\circ F \to \lift(!, F) \circ F$. Set $\alpha' := \alpha \circ (\lift(\htpy{0},F)(F^{i}(-),\beta(-) ) \cdot F)^{-1}: \lift(!, F) \circ F \to id_{A}$. Then set $\alpha(!,F) = \alpha' \circ ((\lift(!, F) \circ F )\cdot \alpha')^{-1}$. Then we have $$F \cdot \alpha(!,F)  = F \cdot \alpha'  \circ F \cdot ((\lift(!, F) \circ F )\cdot \alpha')^{-1} = F\cdot \alpha' \circ ((F \circ \lift(!, F) \circ F )\cdot \alpha')^{-1} = F\cdot \alpha'  \circ (F \cdot \alpha')^{-1} = F \cdot \id_{\id_{B}} = \id_{F}$$.\\

Thus the section $\lift(!, F)$ and the natural transformation $\alpha(!,F)$  make $F$ a strong deformation retract. 

\end{proof}


\begin{defn}

We call $W: A \to B$ is strong deformation section when $W$ is a strong equivalence of groupoids with $W^{i}:B \to A$, $\alpha: W^{i} \circ W \to \id_{A}$ and $\beta: W \circ W^{i} \to \id_{B}$ such that $W^{i}$ is a retraction of $W$ and $\alpha = \id_{\id_{A}}$

\end{defn}


\begin{lem}
\label{deciequi}

Suppose $j: A \to B$ is a decidable monomorphism on objects and an equivalence of groupoids, then $j$ is a strong deformation section

\end{lem}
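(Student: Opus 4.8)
The plan is to dualize the proof of Lemma \ref{deformr}: there the normal isofibration structure on $F$ was used to rectify a homotopy equivalence into a strong deformation \emph{retract}; here the decidability of $j$ on objects will play the analogous rectifying role, turning $j$ into a strong deformation \emph{section}. Concretely, I will produce a strict retraction $j^{i}\colon B \to A$ with $j^{i}\circ j = \id_{A}$ on the nose, set $\alpha := \id_{\id_{A}}$, and construct a natural isomorphism $\beta\colon j\circ j^{i} \To \id_{B}$ satisfying the whiskering identity $\beta\cdot j = \id_{j}$ (which equals $j\cdot\alpha$ since $\alpha$ is the identity). These three data then exhibit $j$ as a strong deformation section by definition.

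First I would unpack the equivalence: let $j'\colon B \to A$ together with natural isomorphisms $\eta\colon j'\circ j \cong \id_{A}$ and $\varepsilon\colon j\circ j' \cong \id_{B}$. Because $j$ is a decidable monomorphism on objects, internally $\ob B$ splits as the image of $j$ together with its decidable complement, and on the image the preimage under $j$ is unique. This lets me define $j^{i}$ on objects by cases: $j^{i}(j(a)) := a$, and $j^{i}(b) := j'(b)$ for $b$ outside the image. To upgrade this to a functor I introduce a correction isomorphism $\psi\colon j' \To j^{i}$ whose component is $\eta(a)$ at $b = j(a)$ and the identity elsewhere, and I set $j^{i}(m) := \psi_{b'}\circ j'(m)\circ \psi_{b}^{-1}$ for a morphism $m\colon b \to b'$. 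Then $j^{i}$ is a functor since it is conjugate to $j'$ by $\psi$, and $j^{i}\circ j = \id_{A}$ holds strictly: on objects by definition, and on a morphism $n\colon a \to a'$ because $j^{i}(j(n)) = \eta(a')\circ j'(j(n))\circ \eta(a)^{-1} = n$ by naturality of $\eta$.

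Next I would transport $\varepsilon$ along $\psi$ to get a first isomorphism $\beta_{0} := \varepsilon\circ (j\cdot\psi)^{-1}\colon j\circ j^{i} \To \id_{B}$. In general $\beta_{0}\cdot j$ is a nontrivial automorphism of $j$, so I rectify $\beta_{0}$ by the standard triangle-identity trick. Since $j^{i}\circ j = \id_{A}$ strictly, the right whiskering $\beta_{0}\cdot (j\circ j^{i})\colon j\circ j^{i} \To j\circ j^{i}$ is well-defined, and I set $\beta := \beta_{0}\circ(\beta_{0}\cdot (j\circ j^{i}))^{-1}$. Computing components, $\beta(b) = \beta_{0}(b)\circ \beta_{0}(j(j^{i}(b)))^{-1}$, so $(\beta\cdot j)(a) = \beta_{0}(j(a))\circ \beta_{0}(j(j^{i}(j(a))))^{-1} = \beta_{0}(j(a))\circ\beta_{0}(j(a))^{-1} = \id_{j(a)}$, using $j^{i}(j(a)) = a$. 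Hence $\beta\cdot j = \id_{j}$, as required, and $\beta$ is an isomorphism as a composite of isomorphisms.

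The main obstacle is the rectification in the second step: arranging $j^{i}\circ j = \id_{A}$ as a \emph{strict} equality rather than merely up to isomorphism. This is exactly where decidability (hence the complemented splitting of $\ob B$) and injectivity on objects are indispensable—without them one cannot define $j^{i}$ by cases in the internal language—and the verification that conjugation by $\psi$ simultaneously yields a genuine functor and makes the retraction strict on morphisms is the delicate point. Once strictness is secured, the passage from $\beta_{0}$ to $\beta$ is the purely formal adjoint-equivalence manipulation above, all carried out in the internal language of $\Asm(A)$ as a regular category with dependent products.
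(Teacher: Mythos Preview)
Your proposal is correct and essentially identical to the paper's proof: both use decidability to define the strict retraction by cases (your $j^{i}$ is the paper's $j^{r}$), conjugate the given weak inverse by a correction isomorphism (your $\psi$ is the paper's $q$, and your $(j\cdot\psi)^{-1}$ is the paper's $j\cdot\alpha'$), and then apply the same rectification $\beta := \beta_{0}\circ(\beta_{0}\cdot(j\circ j^{i}))^{-1}$ to force $\beta\cdot j = \id_{j}$. The only differences are notational.
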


\begin{proof}

First we will define a retraction $j^{r}:B \to A$. For $b : \ob B$ when $b = j(a)$, $j^{r}(b) = a$; $j^{r}(b) = j^{i}(b)$ otherwise. For $m : \mor B$, $j^{r}(m) = q(\cod(m)) \circ j^{i}(m) \circ q^{-1}(\dom(m))$ where $q(b) = \alpha(a): j^{i}(j(a)) \to a$ when $j(a) = b$ and $q(b) = \id_{j^{i}(b)}$ otherwise. composition and identity are straightforward. It is clear that $j^{r}(j(a)) = a$. Furthermore, $j^{r}(j(m)) = q(\cod(m)) \circ j^{i}(j(m)) \circ q^{-1}(\dom(m)) = m \circ q(\dom(m)) \circ q^{-1}(\dom(m)) = m$. Thus $j^{r}$ is a retract of $j$. \\

Now we want to construct $\beta^{r}:j\circ j^{r} \to \id_{B}$. We have $\alpha':j^{r} \to j^{i}$ where $\alpha'(b) = \alpha(a)^{-1}$ when $b = j(a)$ and $\alpha'(b) = \id_{j^{i}(b)}$ otherwise. Thus we have $j \cdot \alpha':j \circ j^{r} \to j \circ j^{i}$. Define $\beta' := \beta \circ j \cdot \alpha': j\circ j^{r} \to \id_{B}$. Note that $\beta'(j(a)) = \beta(j(a)) \circ j(\alpha'(j(a))) = \beta(j(a)) \circ j(\alpha(a))^{-1}$. So we set $\beta^{r} = \beta' \circ (\beta' \cdot (j \circ j^{r}) )^{-1}$. For $a :\ob A$, 
\begin{align*}
\beta^{r}(j(a)) &= \beta'(j(a)) \circ (\beta'(j(j^{r}(j(a)))) )^{-1}\\
&= \beta'(j(a)) \circ \beta'(j(a))^{-1} = \id_{j(a)} = j(\id_{a})\\
\end{align*}

Thus with the retract $j^{r}$ and natural transformation $\beta^{r}$, we have that $j$ is a strong deformation section.

\end{proof}


\begin{lem}

Given a functor $F: A \to B$, $\tilde{F}$ is a strong deformation section

\end{lem}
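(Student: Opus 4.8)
The plan is to take $\tilde{F}^{r} : \coe{F}{B} \to A$ to be the first-projection functor, sending an object $(a,b,f)$ to $a$ and a morphism $(x,y)$ to $x$; functoriality is immediate from the componentwise definition of composition and identities in $\coe{F}{B}$. Since $\tilde{F}(a) = (a, F(a), \id_{F(a)})$ and $\tilde{F}(m) = (m, F(m))$, one checks directly that $\tilde{F}^{r} \circ \tilde{F} = \id_{A}$ on the nose, so $\tilde{F}^{r}$ is a strict retraction of $\tilde{F}$ and we may take $\alpha = \id_{\id_{A}}$. This discharges both the requirement that $\tilde{F}^{r}$ be a retraction and the requirement $\alpha = \id_{\id_{A}}$ in the definition of strong deformation section.

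The heart of the argument is the natural isomorphism $\beta : \tilde{F} \circ \tilde{F}^{r} \to \id_{\coe{F}{B}}$. On an object $(a,b,f)$ we have $\tilde{F} \circ \tilde{F}^{r}(a,b,f) = (a, F(a), \id_{F(a)})$, and I would set
$$\beta(a,b,f) := (\id_{a}, f) : (a, F(a), \id_{F(a)}) \to (a,b,f).$$
This is a legitimate morphism of $\coe{F}{B}$ because the coherence equation $f \circ F(\id_{a}) = f \circ \id_{F(a)} = f$ holds, and it is invertible with inverse $(\id_{a}, f^{-1})$ since $f$ is an isomorphism in $B$. Its realizer is produced from the realizer of the object $(a,b,f)$, which already carries a realizer for $f$, so $\beta$ is an assembly morphism. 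The one verification that is not purely formal is naturality: for $(x,y) : (a,b,f) \to (a',b',f')$ I must check
$$\beta(a',b',f') \circ (\tilde{F} \circ \tilde{F}^{r})(x,y) = (x,y) \circ \beta(a,b,f).$$
Unwinding both sides gives $(x,\, f' \circ F(x))$ on the left and $(x,\, y \circ f)$ on the right, so the square commutes precisely because $f' \circ F(x) = y \circ f$ — which is exactly the defining equation of a morphism of $\coe{F}{B}$. Thus naturality is forced by the structure of the iso-comma category, and this is the step I expect to be the key (if still short) point of the proof.

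Finally I would check the strong-equivalence condition $\beta \cdot \tilde{F} = \tilde{F} \cdot \alpha$. Since $\alpha = \id_{\id_{A}}$, the right-hand side is $\id_{\tilde{F}}$. For the left-hand side, evaluating at $a \in \ob A$ gives $\beta(\tilde{F}(a)) = \beta(a, F(a), \id_{F(a)}) = (\id_{a}, \id_{F(a)}) = \id_{\tilde{F}(a)}$, because here the third component is already $\id_{F(a)}$; hence $\beta \cdot \tilde{F} = \id_{\tilde{F}}$ as well, and the two whiskerings agree. This shows that $\tilde{F}$ together with $\tilde{F}^{r}$, $\alpha = \id_{\id_{A}}$ and $\beta$ is a strong equivalence of groupoids meeting all the extra conditions, i.e.\ a strong deformation section. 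I do not anticipate a genuine obstacle: the only thing to keep track of is that every functor and natural transformation be exhibited with a realizer, but since $\tilde{F}^{r}$ is a projection and $\beta$ is built from the isomorphism $f$ carried by each object, the realizers are obtained by routine combinator manipulation.
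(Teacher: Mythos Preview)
Your proposal is correct and is essentially the same as the paper's proof: the paper also takes $\tilde{F}^{r}$ to be the first projection, defines $\beta(a,b,f) = (\id_{a}, f)$, verifies naturality via the defining equation $f' \circ F(x) = y \circ f$ of morphisms in $\coe{F}{B}$, and checks that $\beta(\tilde{F}(a)) = (\id_{a}, \id_{F(a)}) = \tilde{F}(\id_{a})$.
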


\begin{proof}

First we construct a retraction $\tilde{F}^{r}: \coe{F}{B} \to A$ of $\tilde{F}$. We set $\tilde{F}^{r}(a,b,f) = a$ and $$\tilde{F}^{r}((a,b,f),(a',b',f'),x,y,*) = x$$ functoriality of $\tilde{F}^{r}$ is straightforward. Additionally $$\tilde{F}^{r}(a,F(a),\id_{F(a)}) = a$$ and $$\tilde{F}^{r}((a,F(a),\id_{F(a)}),(a',F(a'),\id_{F(a')}),x,F(x),*) = x$$ so $\tilde{F}^{r}$ is a retract of $\tilde{F}$.\\

Now we seek to construct $\beta: \tilde{F} \circ \tilde{F}^{r} \to \id_{\coe{F}{B}}$. For $(a,b,f): \ob \coe{F}{B}$, $\beta(a,b,f) = ((a,F(a),\id_{F(a)}),(a,b,f),\id_{a},f, *)$. We will omit the domain and codomain for the sake of brevity. Given $(x,y,*)\in \mor \coe{F}{B}$, where $\dom(x,y,*) = (a,b,f)$ and $\cod(x,y,*) = (a',b',f')$, we have that $$(x,y,*) \circ (\id_{a},f,*) = (x, y \circ f,*) = (x, f' \circ F(x),*) = (\id_{a'},f',*) \circ (x, F(x),*)$$ So $$(x,y,*) \circ \beta(a,b,f) = \beta(a',b',f') \circ \tilde{F} (\tilde{F}^{r}(x,y,*) )$$ showing $\beta$ to be a natural transformation. We also have that $$\beta(a,F(a),\id_{F(a)}) = (\id_{a},\id_{F(a)},*) = \tilde{F}(\id_{a})$$ Therefore, $\tilde{F}$ is a strong deformation retract.

\end{proof}


\begin{lem}
\label{modstr1}

Given morphisms $i: U \to V$ and $f:A \to B$, $i$ has the left lifting property against all normal isofibrations if and only if $i$ is strong deformation section. $f$ has the right lifting property against all strong deformation sections if and only if $f$ is a normal isofibration.

\end{lem}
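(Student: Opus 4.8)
The plan is to split each biconditional and exploit the fact that ``$i$ has the left lifting property against $f$'' and ``$f$ has the right lifting property against $i$'' are by definition the same condition (a section of $\pullp(i,f)$). The two ``$\Leftarrow$'' directions --- that a strong deformation section lifts against every normal isofibration, and that a normal isofibration is filled by every strong deformation section --- thus collapse to a single \emph{lifting claim}. What remains are the two ``$\Rightarrow$'' directions: right lifting against all strong deformation sections forces a normal isofibration, and left lifting against all normal isofibrations forces a strong deformation section.

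For the lifting claim, take a normal isofibration $f\colon A\to B$, a strong deformation section $W\colon U\to V$ with retraction $W^{i}$, $\alpha=\id$, and $\beta\colon W\circ W^{i}\To\id_{V}$ satisfying $\beta\cdot W=W\cdot\alpha=\id_{W}$, together with a square $(u\colon U\to A,\ v\colon V\to B)$, $f\circ u=v\circ W$. I would first form the candidate $z_{0}=u\circ W^{i}\colon V\to A$, so that whiskering gives a homotopy $v\cdot\beta\colon f\circ z_{0}\To v$ living in $B$. The key step is to lift this homotopy through $f$: since $f$ is a normal isofibration it lifts against $\htpy{0}$, and applying Corollary~\ref{pullbackpowerlift} to the decidable monomorphism $\emptyset\mono V$ --- whose pushout product with $\htpy{0}$ is exactly the end inclusion $V\times\htpy{0}\colon V\to V\times\I$ --- shows $f$ has the right lifting property against $V\times\htpy{0}$. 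This yields a homotopy $\tilde\gamma\colon z_{0}\To z$ in $A$ lying over $v\cdot\beta$, so the endpoint satisfies $f\circ z=v$. It then remains to verify $z\circ W=u$ on the nose. Restricting along $W$, the base homotopy becomes $v\cdot(\beta\cdot W)=v\cdot\id_{W}=\id_{f\circ u}$, a constant homotopy, while $z_{0}\circ W=u\circ W^{i}\circ W=u$ using $W^{i}\circ W=\id_{U}$; the normality of the lift (a constant homotopy lifts to a constant homotopy) then forces $\tilde\gamma\circ W=\id_{u}$, hence $z\circ W=u$. I expect this strict compatibility on $U$, extracted from the strongness $\beta\cdot W=\id_{W}$ together with normality of the chosen lift, to be the main obstacle --- it is exactly where the words ``strong'' and ``normal'' do real work rather than being cosmetic.

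For the two converses I would use the fibrant replacement $F=\hat F\circ\tilde F$, where $\tilde F$ is a strong deformation section and $\hat F$ is a normal isofibration. If $f$ has the right lifting property against every strong deformation section, it lifts against $\tilde f$; filling the square with top $\id_{A}$ and bottom $\hat f$ produces $k\colon\coe{f}{B}\to A$ with $k\circ\tilde f=\id_{A}$ and $f\circ k=\hat f$, exhibiting $f$ as a retract of $\hat f$ over $B$. I would then transfer the normal structure explicitly by setting $\lift(\htpy{0},f)(\nm{x},\nm{p})=k\circ\lift(\htpy{0},\hat f)(\tilde f(x),\nm{p})$, which lies over $p$, starts at $x$, and is normal because $\lift(\htpy{0},\hat f)$ is; hence $f$ is a normal isofibration. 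Dually, if $i\colon U\to V$ has the left lifting property against every normal isofibration, filling the square $(\tilde i,\id_{V})$ against the normal isofibration $\hat i$ yields $s\colon V\to\coe{i}{V}$ with $s\circ i=\tilde i$ and $\hat i\circ s=\id_{V}$, exhibiting $i$ as a retract of $\tilde i$ over $U$. I would transfer the strong deformation section structure by setting $i^{r}=\tilde i^{r}\circ s$ and $\beta^{i}=(\hat i\cdot\beta)\cdot s$, then check $i^{r}\circ i=\tilde i^{r}\circ\tilde i=\id_{U}$, that $\beta^{i}\colon i\circ i^{r}\To\id_{V}$, and the strongness identity $\beta^{i}\cdot i=\hat i\cdot(\beta\cdot\tilde i)=\id_{i}$, the last equality using that $\tilde i$ is strong. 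These two transfers replace any appeal to a general ``closed under retracts'' principle with short, explicit computations, leaving the lifting claim as the sole conceptual crux.
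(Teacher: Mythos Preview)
Your overall decomposition---one symmetric lifting claim plus two retract arguments against the factorization $\tilde F,\hat F$---matches the paper exactly, and your two converses are essentially the paper's proofs (your $k\circ\lift(\htpy{0},\hat f)(\tilde f(x),\nm{p})$ literally equals the paper's $f_{*}(\id_{x},p,*)$, and your $i^{r}=\tilde i^{r}\circ s$ together with $\beta^{i}=(\hat i\cdot\beta)\cdot s$ is just the paper's $\pr{0}\circ i^{*}$ and $\pr{2}\circ i^{*}$ in different notation).

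The lifting claim, however, has a genuine gap. You obtain $\tilde\gamma$ by applying Corollary~\ref{pullbackpowerlift} with $\emptyset\mono V$, and then write ``the normality of the lift (a constant homotopy lifts to a constant homotopy) then forces $\tilde\gamma\circ W=\id_{u}$''. But the normality supplied by that corollary is a \emph{global} statement: if the entire input path $s'_{1}:\I\to(V\to Y)$ is the identity, then the output is the identity. Here $v\cdot\beta$ is not globally constant, only its restriction along $W$ is; and there is no a priori reason why restricting the chosen lift equals lifting the restriction. What actually makes your conclusion true is the specific \emph{pointwise} form of the lift built in the Proposition preceding Corollary~\ref{pullbackpowerlift}: with $i=\emptyset\mono V$ one gets $t(b)=\lift(\htpy{0},f)(z_{0}(b),v(\beta(b)))$ for every $b$, so at $b=W(a)$ this reads $\lift(\htpy{0},f)(u(a),\id_{f(u(a))})=\id_{u(a)}$ by normality of $f$. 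You need to say this; ``normality of the lift'' alone, as stated, does not bridge from global to pointwise.

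The paper sidesteps the whole issue by never passing through the corollary: it constructs the diagonal directly and pointwise, setting $\lift(j,F)(s,t)(b)=\Cod\bigl(\lift(\htpy{0},F)(s(j^{i}(b)),t(\beta(b)))\bigr)$ and checking functoriality by hand. This is the same computation you would end up doing once you unpack the lift you invoked, so your route is not wrong---it is just a detour that hides exactly the step you flagged as ``the main obstacle'' behind a lemma whose statement does not provide it.
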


\begin{proof}

Suppose $j: A \to B$ is is a strong deformation section  and $F: X \to Y$ is a normal isofibration. We want a section $\lift(j,F): \Sq(j,F) \to \Hom(B,X)$ of $\pullp(j,F)$. First observe the following diagram:

\[\begin{tikzcd}
	A && X \\
	\\
	B && Y
	\arrow["s", from=1-1, to=1-3]
	\arrow["j"', from=1-1, to=3-1]
	\arrow["F", from=1-3, to=3-3]
	\arrow["t"', from=3-1, to=3-3]
\end{tikzcd}\]

Given $b \in \ob B$, we can construct a square:
\[\begin{tikzcd}
	1 && A && X \\
	\\
	\I && B && Y
	\arrow["{\nm{j^{i}(b)}}", from=1-1, to=1-3]
	\arrow["{\htpy{0}}", from=1-1, to=3-1]
	\arrow["s", from=1-3, to=1-5]
	\arrow["j", from=1-3, to=3-3]
	\arrow["F", from=1-5, to=3-5]
	\arrow["{\nm{\beta(b)}}"', from=3-1, to=3-3]
	\arrow["t"', from=3-3, to=3-5]
\end{tikzcd}\]

So that $\lift(j,F)(s,t)(b) = \Cod(\lift(\htpy{0},F)(s(j^{i}(b)),t(\beta(b))))$, for $m:b \to b'$, $$\lift(j,F)(s,t)(m) = \lift(\htpy{0},F)(s(j^{i}(b')),t(\beta(b'))) \circ s(j^{i}(m)) \circ  \lift(\htpy{0},F)(s(j^{i}(b)),t(\beta(b)))^{-1}$$ functoriality of $\lift(j,F)(s,t)$ follows from functoriality of $s \circ j^{i}$. \\

We have that $$F \circ \lift(j,F)(s,t)(b) = F(\Cod(\lift(\htpy{0},F)(s(j^{i}(b)),t(\beta(b))))) = t(b)$$ and 
\begin{align*}
F \circ \lift(j,F)(s,t)(m) &= F(\lift(\htpy{0},F)(s(j^{i}(b')),t(\beta(b'))) \circ s(j^{i}(m)) \circ  \lift(\htpy{0},F)(s(j^{i}(b)),t(\beta(b)))^{-1})\\
&= t(\beta(b')) \circ t(j(j^{i}(m))) \circ t(\beta(b))^{-1}\\
&= t( \beta(b') \circ j(j^{i}(m)) \circ \beta(b)^{-1}) \\
&= t(m)\\
\end{align*}

Thus we have $t = F \circ \lift(j,F)(s,t)$. Note for $a \in \ob A$, $\beta(j(a)) = j(\alpha(a)) = j(\id_{a}) = \id_{j(a)}$ since $j$ is a strong equivalence. Thus $$\lift(j,F)(s,t)(j(a)) = \Cod(\lift(\htpy{0},F)(s(j^{i}(j(a))),t(\beta(j(a))))) = \Cod(\lift(\htpy{0},F)(s(a),\id_{t(j(a))})) = s(a)$$ Similarly for $m:a \to a'$

\begin{align*}
\lift(j,F)(s,t)(j(m)) &= \lift(\htpy{0},F)(s(j^{i}(j(a'))),t(\beta(j(a')))) \circ s(j^{i}(j(m))) \circ  \lift(\htpy{0},F)(s(j^{i}(j(a))),t(\beta(j(a))))^{-1} \\
&= \id_{s(a')} \circ s(m) \circ \id_{s(a)} \\
&= s(m)\\
\end{align*}

Thus we have $s = \lift(j,F)(s,t) \circ j$. \\

Therefore, we have a lifting $\lift(j,F): \Sq(j,F) \to \Hom(B,X)$ which confirms that strong deformation section has a left lifting property with every normal isofibration.\\

Since we have the 'if' direction of both statements, we only need to show the 'only if' direction.\\

Suppose $f$ has a uniform right lifting property against all strong deformation sections, then since $\tilde{f}$ is a strong deformation section, we have the morphism $\lift(\tilde{f},f)$ which is the section of the morphism $\pullp(\tilde{f},f)$. Given the square $(\id_{A}, \hat{f})$, $\lift(\tilde{f},f)$ gives us a functor $f_{*}: \coe{f}{B} \to A $ such that $f_{*}\circ \tilde{f} = \id_{A}$ and $\hat{f} = f \circ f_{*}$. \\

Now we seek to construct a morphism $\lift(\partial_{0}, f):\Sq(\htpy{0},f) \to \Hom(\I, A) $. Given a square $(x:1 \to A, y: \I \to B)$, We set $\lift(\partial_{0}, f)(x,y) = f_{*}(\id_{x},y,*)$ where $(\id_{x},y,*)$ is an isomorphism between $\tilde{f}(x) = (x,f(x),\id_{f(x)})$ to $(x,y_{1}, y)$. We have $$\lift(\partial_{0}, f)(x,y) \circ x = f_{*}(\id_{x},y,*) \circ x = f_{*}(x,f(x),\id_{f(x)}) = f_{*} \circ \tilde{f}(x) = x$$ and $$y \circ \lift(\partial_{0}, f)(x,y) = f \circ f_{*}(\id_{x},y,*) = y$$ Now suppose we have a square $(x:1 \to A, \id_{f(x)}:\I \to B)$, then $$\lift(\partial_{0}, f)(x,\id_{f(x)}) = f_{*}(\id_{x},\id_{f(x)},*) = f_{*} \circ \tilde{f}(\id_{x}) = \id_{x}$$\\
 
Thus $f$ is a normal isofibration.\\

Suppose $i$ has a uniform left lifting property against all normal isofibrations, then since $\hat{i}$ is a normal isofibration, we have the morphism $\lift(i,\hat{i})$ which is the section of the morphism$\pullp(i,\hat{i})$. Given the square $(\hat{i}, \id_{V})$, $\lift(i,\hat{i})$ gives us a functor $i^{*}: V \to \coe{i}{V} $ such that $i^{*} \circ i = \tilde{i}$ and $\hat{i} \circ i^{*} = \id_{V}$.\\

Now we define the functor $i^{j}:V \to U$ as $\pr{0} \circ i^{*}$ where $\pr{0}: \coe{i}{V} \to U$. We have $$i^{j} \circ i(u) = \pr{0} \circ i^{*} \circ i(u) = \pr{0}(u,i(u),\id_{i(u)}) = u$$ Using a similar argument for morphisms, one can see that $i^{j} \circ i = \id_{U}$. For every $v:V$, we have $\pr{2} \circ i^{*}(v): i \circ i^{j}(v) \cong v$. Given a morphism $m:v \to v'$ in $V$, the morphism $i^{*}(m)$ confirms that $\pr{2} \circ i^{*}(v') \circ i^{j}(m) = m \circ \pr{2} \circ i^{*}(v)$. Thus $\beta = \pr{2} \circ i^{*}: i \circ i^{j} \cong \id_{V}$ is a natural isomorphism. Furthermore, $$\beta_{i(u)} = \pr{2} \circ i^{*}(i(u)) = \pr{2}(u,i(u), \id_{i(u)}) = \id_{i(u)}$$ So, $i \cdot \id_{\id_{U}} = \beta \cdot i$.\\

Thus $i$ is a strong deformation section.\\

This completes the proof.\\

\end{proof}

\begin{lem} 

Given two functors $F:A \to B$ and $G:X \to Y$ where $G$ is a normal isofibration, The morphism $ - \circ (\tilde{F},\id_{B}): \Sq(\hat{F},G) \to \Sq(F,G)$ has a section.

\end{lem}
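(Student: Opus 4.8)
The plan is to realize the required section as the assembly of a single lifting operation, exploiting that $\tilde{F}$ is a strong deformation section. First I would unwind the objects involved. An element of $\Sq(\hat{F},G)$ is a commuting square $(u \colon \coe{F}{B} \to X,\, v \colon B \to Y)$ with $G \circ u = v \circ \hat{F}$; an element of $\Sq(F,G)$ is a square $(u' \colon A \to X,\, v \colon B \to Y)$ with $G \circ u' = v \circ F$; and by Remark \ref{square-bifunctor} the map $- \circ (\tilde{F},\id_{B})$ is $\Sq$ applied contravariantly to the evident arrow morphism $(\tilde{F},\id_{B})\colon F \To \hat{F}$ (which is legitimate since $\hat{F}\circ\tilde{F} = F$), so explicitly it sends $(u,v)$ to $(u \circ \tilde{F},\, v)$. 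Thus a section must take a square $(u',v)$ over $F$ and extend $u'$ along $\tilde{F}$ to a functor $u \colon \coe{F}{B} \to X$ with $u \circ \tilde{F} = u'$ and $G \circ u = v \circ \hat{F}$ --- precisely a solution to the lifting problem of $u'$ against $G$ along $\tilde{F}$, with lower edge $v \circ \hat{F}$.

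Next I would introduce the intermediate object $\Sq(\tilde{F},G)$, whose elements are pairs $(a \colon A \to X,\, c \colon \coe{F}{B} \to Y)$ with $G \circ a = c \circ \tilde{F}$, and define a comparison morphism $\theta \colon \Sq(F,G) \to \Sq(\tilde{F},G)$ by $\theta(u',v) = (u',\, v \circ \hat{F})$. This pair lies in $\Sq(\tilde{F},G)$ because $G \circ u' = v \circ F = (v \circ \hat{F}) \circ \tilde{F}$. Its first component is the projection $\pr{0}$ of $\Sq(F,G)$ and its second is $(- \circ \hat{F}) \circ \pr{1}$, both morphisms of assemblies (the former a projection, the latter built from the action of the enriched $\Hom$ bifunctor in its first slot); since $\Sq(\tilde{F},G)$ is a pullback in $\Asm(A)$, the universal property makes $\theta$ a morphism of assemblies.

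Finally I would invoke the lifting operator. Since $\tilde{F}$ is a strong deformation section and $G$ is a normal isofibration, Lemma \ref{modstr1} shows that $G$ has the right lifting property against $\tilde{F}$, so $\pullp(\tilde{F},G)$ has a section $\lift(\tilde{F},G) \colon \Sq(\tilde{F},G) \to \Hom(\coe{F}{B},X)$. I would then define the candidate section $s \colon \Sq(F,G) \to \Sq(\hat{F},G)$ to have first component $\lift(\tilde{F},G) \circ \theta$ and second component $\pr{1}$. Writing $u := \lift(\tilde{F},G)(\theta(u',v))$, the section property of $\lift(\tilde{F},G)$ yields $u \circ \tilde{F} = u'$ and $G \circ u = v \circ \hat{F}$; the latter shows $(u,v)$ indeed lands in the pullback $\Sq(\hat{F},G)$, so $s$ is a morphism of assemblies, and the former gives $(u \circ \tilde{F}, v) = (u',v)$, i.e.\ $(- \circ (\tilde{F},\id_{B})) \circ s = \id_{\Sq(F,G)}$. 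The only real obstacle here is conceptual rather than computational: recognizing that the desired section is exactly the lift against a normal isofibration with left map $\tilde{F}$, after which everything reduces to the universal properties of the pullbacks defining the $\Sq$ objects together with Lemma \ref{modstr1}; checking that $\theta$ and $s$ are genuinely realized is routine once phrased through these universal properties.
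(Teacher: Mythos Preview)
Your proposal is correct and takes essentially the same approach as the paper: both use that $\tilde{F}$ is a strong deformation section and $G$ a normal isofibration to obtain a lift $\lambda_{F,G} = \lift(\tilde{F},G)(u',\, v\circ\hat{F})$ of the square $(u',\,v\circ\hat{F}) \in \Sq(\tilde{F},G)$, and then set the section to $(u',v) \mapsto (\lambda_{F,G},v)$. Your explicit introduction of the comparison map $\theta$ just makes precise what the paper's diagram encodes.
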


\begin{proof}

We seek to construct a section $\lift(F,G): \Sq(F,G) \to \Sq(\hat{F},G)$. Suppose we have a square:
\[\begin{tikzcd}
	A && X \\
	\\
	B && Y
	\arrow["s_{0}", from=1-1, to=1-3]
	\arrow["F"', from=1-1, to=3-1]
	\arrow["G", from=1-3, to=3-3]
	\arrow["s_{1}"', from=3-1, to=3-3]
\end{tikzcd}\]

then by applying our fibrant factorization to $F$, we get the following diagram and lift:

\[\begin{tikzcd}
	A &&&& X \\
	\\
	{F \downarrow B} && B && Y
	\arrow["{s_{0}}", from=1-1, to=1-5]
	\arrow["{\tilde{F}}"', from=1-1, to=3-1]
	\arrow["G", from=1-5, to=3-5]
	\arrow["{\lambda_{F,G}}"{description}, dashed, from=3-1, to=1-5]
	\arrow["{\hat{F}}"', from=3-1, to=3-3]
	\arrow["{s_{1}}"', from=3-3, to=3-5]
\end{tikzcd}\]

so we set $\lift(F,G)(s_{0},s_{1}) = (\lambda_{F,G},s_{1})$. We also have that $$\lift(F,G)(s_{0},s_{1}) \circ (\tilde{F},\id_{B}) = (\lambda_{F,G},s_{1}) \circ (\tilde{F},\id_{B}) = (\lambda_{F,G} \circ \tilde{F},s_{1}) = (s_{0},s_{1})$$

Therefore $\lift(F,G)$ is a section of $ - \circ (\tilde{F},\id_{B})$.

\end{proof}

\begin{lem}
\label{froben}

The pullback of a strong deformation section along a normal isofibration is a strong deformation section

\end{lem}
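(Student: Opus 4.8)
The plan is to exhibit the pullback directly as a strong deformation section, building its structure from the lifting operation $\lift(\htpy{0},F)$ of the normal isofibration together with the data witnessing that the map being pulled back is a strong deformation section. Write $j\colon A \to B$ for the strong deformation section, with retraction $j^{i}\colon B \to A$ satisfying $j^{i}\circ j = \id_{A}$ and natural isomorphism $\beta\colon j \circ j^{i} \To \id_{B}$ with $\beta \cdot j = \id$ (the defining condition, using $\alpha = \id_{\id_{A}}$ and the strong-equivalence equation $\beta\cdot j = j\cdot\alpha$), and let $F\colon X \to B$ be the normal isofibration. The pullback of $j$ along $F$ is the projection $\pi_{X}\colon P \to X$, where $P = A\times_{B} X$ carries projections $\pi_{A}\colon P \to A$, $\pi_{X}\colon P \to X$ with $F\circ\pi_{X} = j\circ\pi_{A}$. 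I must produce a retraction $\pi_{X}^{i}\colon X \to P$ with $\pi_{X}^{i}\circ\pi_{X} = \id_{P}$ and a natural isomorphism $\rho\colon \pi_{X}\circ\pi_{X}^{i} \To \id_{X}$ satisfying $\rho\cdot\pi_{X} = \id$.

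For the construction, fix an object $x$ of $X$. Then $\beta(F(x))^{-1}\colon F(x) \to j(j^{i}(F(x)))$ is an isomorphism of $B$ with domain $F(x)$, so it may be lifted through $F$: set $\ell_{x} := \lift(\htpy{0},F)(\nm{x},\nm{\beta(F(x))^{-1}})$, an isomorphism $x \to x^{\ast}$ of $X$ with $F(x^{\ast}) = j(j^{i}(F(x)))$. This codomain condition is exactly what makes $(j^{i}(F(x)),x^{\ast})$ an object of $P$, so I put $\pi_{X}^{i}(x) = (j^{i}(F(x)),x^{\ast})$; on a morphism $m\colon x \to x'$ I take the pair whose $A$-component is $j^{i}(F(m))$ and whose $X$-component is $\ell_{x'}\circ m \circ \ell_{x}^{-1}$. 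Naturality of $\beta$ gives $j(j^{i}(F(m))) = \beta(F(x'))^{-1}\circ F(m)\circ\beta(F(x)) = F(\ell_{x'}\circ m \circ \ell_{x}^{-1})$, so this really is a morphism of $P$, and functoriality of $\pi_{X}^{i}$ follows from that of $j^{i}$, $F$ and $\lift(\htpy{0},F)$. Finally I set $\rho(x) = \ell_{x}^{-1}\colon \pi_{X}(\pi_{X}^{i}(x)) = x^{\ast} \to x$, whose naturality is immediate from the definition of the $X$-component of $\pi_{X}^{i}$.

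The crux is that the retraction holds strictly, and this is precisely where the two hypotheses combine. For $(a,x)\in P$ one has $F(x) = j(a)$, hence $\beta(F(x)) = \beta(j(a)) = \id$ by the condition $\beta\cdot j = \id$; consequently $\ell_{x} = \lift(\htpy{0},F)(\nm{x},\nm{\id_{F(x)}}) = \id_{x}$ by normality of $F$, so $x^{\ast} = x$. Therefore $\pi_{X}^{i}(\pi_{X}(a,x)) = (j^{i}(j(a)),x) = (a,x)$, giving $\pi_{X}^{i}\circ\pi_{X} = \id_{P}$ on objects, and the identical computation on morphisms, together with $\rho\cdot\pi_{X} = \id$, completes the verification. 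I expect the main obstacle to be exactly this on-the-nose strictness: a general equivalence of groupoids would only yield a deformation \emph{retract} up to isomorphism, and it is the normality of $F$ (lifts of identities are identities) feeding into $\beta\cdot j = \id$ that upgrades both $\rho$ and the retraction to the strict form demanded by the definition of a strong deformation section. As a sanity check one can instead invoke Lemma \ref{modstr1} and verify the left lifting property against an arbitrary normal isofibration, but the resulting lift $u\circ\pi_{X}^{i}$ must still be corrected along $\rho$ using the target fibration, which merely re-derives the same construction less transparently.
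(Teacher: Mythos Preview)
Your proof is correct and follows essentially the same approach as the paper's: both construct the retraction by lifting $\beta(F(x))^{-1}$ through the normal isofibration $F$, define the deformation $\rho$ (the paper's $d$) as the inverse of that lift, and verify strictness on the nose by combining $\beta\cdot j = \id$ with the normality of $F$ so that the lift over an identity is an identity.
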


\begin{proof}
Suppose we have a diagram:

\[
\begin{tikzcd}
 Z\arrow[r, " {\pr{0}}" ] \arrow[d,"\pr{1}" swap]   \arrow[dr,phantom,"\scalebox{1.5}{$\lrcorner$}" , very near start, color=black]  &  W\arrow[d,"j"] \\
X \arrow[r, " {F}" swap] &  Y
\end{tikzcd}
\]

where $F$ is a normal isofibration and $j$ is a strong deformation section. Let us construct a retract of $\pr{1}$, $w: X \to Z$. For $x : \ob X$, we set $$w(x) = (\Cod( \lift(\htpy{0},F)(x, \beta^{-1}(F(x))) ),  j^{i}(F(x)) )$$ when $\beta: j\circ j^{i}  \to \id_{Y}$. For $m:x \to x' \in \mor X$, $$w(m) = (  \lift(\htpy{0},F)(x', \beta^{-1}(F(x'))) \circ m \circ \lift(\htpy{0},F)(x, \beta^{-1}(F(x)))^{-1},  j^{i}(F(m)) )$$ Functoriality of $w$ follows from functoriality of $j^{i} \circ F$ and from the fact that $X$ is a groupoid assembly. \\

Suppose $(a,b): \ob Z$, then $F(a) = j(b)$. So $\beta(F(a)) = \id_{j(b)}$ and thus $$w(a) = \Cod(\lift(\htpy{0},F)(a, \id_{j(b)} ), j^{i}(j(b))) = (a,b)$$ For $(m:a\to a',n:b \to b'): \mor Z$, we have $F(m) = j(n)$ so
\begin{align*}
w(m) &= (  \lift(\htpy{0},F)(a', \beta^{-1}(j(b'))) \circ m \circ \lift(\htpy{0},F)(a, \beta^{-1}(j(b)))^{-1},  j^{i}(j(n)) )\\
&= (\id_{a'} \circ m \circ \id_{a}, n) \\
&= (m,n)
\end{align*}

Thus $w$ is a retraction of $\pr{1}$. \\

Now we construct a natural transformation $d: \pr{1} \circ w \to \id_{X}$. we set $d(x) = \lift(\htpy{0},F)(x, \beta^{-1}(F(x)))^{-1}$. Naturality should be obvious from reviewing the construction of $w$ on morphisms. Furthermore, for $(a,b) : \ob Z$, $$d(a) = \lift(\htpy{0},F)(a, \id_{j(b)} ) = \id_{a} = \pr{1}(\id_{(a,b)})$$ The functor $w$ and natural transformation $d$ makes $\pr{1}$ is strong deformation section completing the proof.

\end{proof}


\subsection{First algebraic weak factorizations system}
In this section, we take the factorization from previous section and construct an algebraic weak factorization system from it.\\

We define a functorial factorization $\Fib: \GrpA{A}^{\arr} \to \GrpA{A}^{\twarr}$. Given a functor $F: X \to Y$, $\Fib(F) = (\tilde{F},\hat{F})$. Given a square between functors $(\alpha,\beta): F \to F'$, $\Fib(\alpha,\beta) = (\alpha,E(\alpha,\beta),\beta)$. $E(\alpha,\beta) : = \pr{0}(\lift(F,\hat{F'})(\tilde{F'} \circ \alpha, \beta))$. Recall that $\lift(F,\hat{F'})$ is a section of $ - \circ (\tilde{F},\id_{B}): \Sq(\hat{F},\hat{F'}) \to \Sq(F,\hat{F'})$. So we have the following commutative diagram:

\[\begin{tikzcd}
	X &&& {X'} \\
	\\
	\\
	{F \downarrow Y} &&& {F' \downarrow Y'} \\
	\\
	\\
	Y &&& {Y'}
	\arrow["\alpha", from=1-1, to=1-4]
	\arrow["{\tilde{F}}"', from=1-1, to=4-1]
	\arrow["{\tilde{F'}}", from=1-4, to=4-4]
	\arrow["{E(\alpha,\beta)}"', from=4-1, to=4-4]
	\arrow["{\hat{F}}"', from=4-1, to=7-1]
	\arrow["{\hat{F'}}", from=4-4, to=7-4]
	\arrow["\beta"', from=7-1, to=7-4]
\end{tikzcd}\]

We can explicitly present $E(\alpha,\beta)$. For $(a,b,f) \in \ob \coe{F}{Y}$:
\begin{itemize}
\item For $(a,b,f) \in \ob \coe{F}{Y}$, $E(\alpha,\beta)(a,b,f) = (\alpha(a),\beta(b),\beta(f))$.\\
\item For $((a,b,f),(a',b',f'), x,y,*) \in \mor \coe{F}{Y}$,  $E(\alpha,\beta)(x,y,*) =  (\alpha(x),\beta(y),*)$

\end{itemize}

From this it is clear that $\Fib$ preserves identities and composition. $\Fib$ gives us a functorial fibrant factorization.

\begin{defn}

The functor $R_{\Fib}:\GrpA{A}^{\arr} \to \GrpA{A}^{\arr}$ is the functor $\Bot \circ \Fib$. 
The functor $L_{\Fib}:\GrpA{A}^{\arr} \to \GrpA{A}^{\arr}$ is the functor $\Top \circ \Fib$.

\end{defn}


\begin{lem}

$R_{\Fib}$ is a monad

\end{lem}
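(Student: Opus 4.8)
The plan is to exhibit $R_{\Fib}$ as the right monad of the algebraic weak factorization system by supplying a unit $\eta \colon \Id \To R_{\Fib}$ and a multiplication $\mu \colon R_{\Fib}R_{\Fib} \To R_{\Fib}$ and verifying the three monad laws. I would first record the structural fact that $R_{\Fib} = \Bot \circ \Fib$ lies over the codomain functor: for $f \colon X \to Y$ we have $R_{\Fib}(f) = \hat{f} \colon \coe{f}{Y} \to Y$, whose codomain is again $Y$, while on a square $(\alpha,\beta)$ the formula $\Fib(\alpha,\beta) = (\alpha,E(\alpha,\beta),\beta)$ gives $R_{\Fib}(\alpha,\beta) = (E(\alpha,\beta),\beta)$, whose codomain component is again $\beta$. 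Consequently every component of $\eta$ and $\mu$ will be a morphism of arrows whose bottom leg is an identity, so it suffices to track only the top (middle-object) components, and all naturality and coherence squares reduce to equalities of functors into the iso-comma categories.

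For the unit I would set $\eta_f = (\tilde{f}, \id_Y) \colon f \to \hat{f}$; this is a morphism of arrows since $\hat{f} \circ \tilde{f} = f$. Naturality reduces to the identity $E(\alpha,\beta)\circ\tilde{f} = \tilde{f'}\circ\alpha$, which is immediate from the explicit formulas: $E(\alpha,\beta)(a,f(a),\id_{f(a)}) = (\alpha(a),\beta(f(a)),\id) = (\alpha(a),f'(\alpha(a)),\id_{f'(\alpha(a))}) = \tilde{f'}(\alpha(a))$, using $\beta\circ f = f'\circ\alpha$ and $\beta(\id)=\id$.

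The heart of the argument is the multiplication. The doubled factorization produces the middle object $\coe{\hat{f}}{Y}$, whose objects are triples $((a,b,g),y,h)$ carrying two composable isomorphisms $g\colon f(a)\to b$ and $h\colon b\to y$ in $Y$. I would define the collapse functor $\pi_f\colon \coe{\hat{f}}{Y}\to\coe{f}{Y}$ by composing them, $((a,b,g),y,h)\mapsto (a,y,h\circ g)$, and on a morphism $((x,v),\eta)\mapsto (x,\eta)$; here the required hom-constraint $(h'\circ g')\circ f(x) = \eta\circ(h\circ g)$ follows by pasting the two constraints $g'\circ f(x)=v\circ g$ and $h'\circ v=\eta\circ h$. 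Then $\mu_f = (\pi_f,\id_Y)$ is a morphism of arrows because $\hat{f}\circ\pi_f = \widehat{\hat{f}}$, and naturality of $\mu$ reduces to $E(\alpha,\beta)\circ\pi_f = \pi_{f'}\circ E(E(\alpha,\beta),\beta)$, which I would verify by checking that both composites send $((a,b,g),y,h)$ to $(\alpha(a),\beta(y),\beta(h\circ g))$, using functoriality of $\beta$ to identify $\beta(h)\circ\beta(g)=\beta(h\circ g)$.

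Finally I would check the three monad laws on middle-object components. The right unit law $\pi_f\circ\tilde{\hat{f}} = \id$ and the left unit law $\pi_f\circ E(\tilde{f},\id_Y) = \id$ each amount to collapsing a freshly inserted identity, since $\tilde{\hat f}(a,b,g)=((a,b,g),b,\id_b)\mapsto (a,b,\id_b\circ g)=(a,b,g)$ and $E(\tilde f,\id_Y)(a,b,g)=((a,f(a),\id_{f(a)}),b,g)\mapsto (a,b,g\circ\id_{f(a)})=(a,b,g)$. The associativity law $\pi_f\circ\pi_{\hat{f}} = \pi_f\circ E(\pi_f,\id_Y)$ sends $(((a,b,g),y,h),z,k)$ to $(a,z,k\circ h\circ g)$ on both sides, the two computations differing only in the bracketing $(k\circ h)\circ g$ versus $k\circ(h\circ g)$, which agree by associativity of composition in the groupoid $Y$. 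I expect no genuine obstacle beyond bookkeeping; the one point requiring care is the well-definedness and naturality of $\pi_f$, since those are precisely where the hom-constraints of the iso-comma category must be pasted, and the conceptual content is simply that the two-stage witness $(g,h)$ of the doubled factorization collapses to the single composite $h\circ g$, with all coherence inherited from the unit and associativity laws of composition in $Y$.
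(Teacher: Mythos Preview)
Your proposal is correct and follows essentially the same route as the paper: you define the unit as $(\tilde f,\id_Y)$ and the multiplication via the collapse functor $\pi_f((a,b,g),y,h)=(a,y,h\circ g)$, which is exactly the paper's $\coe{F}{\id_Y}$, and then verify naturality and the three monad laws by the same elementwise computations. The only cosmetic difference is that the paper first introduces $\mu$ abstractly as $\lift(\hat F,\hat F)(\id,\id)$ before unpacking it to your composition formula, whereas you write the formula down directly; the verification from that point on is identical.
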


\begin{proof}

To prove this we need to construct natural transformations $\eta_{R_{\Fib}}:\Id_{\GrpA{A}^{\arr}} \To  R_{\Fib}$ and $\mu_{R_{\Fib}}: R_{\Fib} \circ R_{\Fib} \To R_{\Fib}$ and prove they satisfy the monad equations. First we define $\eta_{R_{\Fib}}$. Given a functor $F:X \to Y$, $\eta_{R_{\Fib}}(F) = (\tilde{F}, \id_{Y})$. Given a square $(\alpha,\beta):F \to F'$, we have that $$ (E(\alpha,\beta), \beta) \circ (\tilde{F}, \id_{Y}) =  (E(\alpha,\beta) \circ \tilde{F}, \beta) = (\tilde{F'} \circ \alpha, \beta) = (\tilde{F'}, \id_{Y'}) \circ (\alpha, \beta)$$ This demonstrates that $R_{\Fib}(\alpha,\beta) \circ  \eta_{R_{\Fib}}(F) = \eta_{R_{\Fib}}(F') \circ (\alpha,\beta)$ proving $\eta_{R_{\Fib}}$ to be a natural transformation. \\

Now we define $\mu_{R_{\Fib}}$. Given a functor $F:X \to Y$, $\mu_{R_{\Fib}}(F) = (\hat{F},\hat{F})(\id_{\coe{F}{Y}},\id_{Y}) = (\coe{F}{\id_{Y}},\id_{Y})$. Given $((a,b,f),c,g) \in \ob \coe{\hat{F}}{Y}$, $$\coe{F}{\id_{Y}}((a,b,f),c,g) = \Cod(\lift(\htpy{0},\hat{F})((a,b,f),g)) = (a,c,g\circ f)$$ Given $(((a,b,f),c,g),((a',b',f'),c',g')(x,y,*),h,*) \in \mor \coe{\hat{F}}{Y}$, $$\coe{F}{\id_{Y}}((x,y,*),h,*) =  \Cod(\lift(\htpy{0},\hat{F})((x,y,*),(g,g'))) = ((a,c,g\circ f),(a',c',g'\circ f'),x,h,*)$$

 Given a square $(\alpha,\beta):F \to F'$, we have that $(E(\alpha,\beta), \beta) \circ (\coe{F}{\id_{Y}},\id_{Y})  = (E(\alpha,\beta) \circ \coe{F}{\id_{Y}},\beta)$, $E(\alpha,\beta)(a,c,g\circ f) = (\alpha(a),\beta(c),\beta(g\circ f))$ and $E(\alpha,\beta)(x,h,*) = (\alpha(x),\beta(h),*)$\\

We also have that $(\coe{F'}{\id_{Y'}},\id_{Y'}) \circ(E(E(\alpha,\beta),\beta), \beta) = (\coe{F'}{\id_{Y'}} \circ E(E(\alpha,\beta),\beta),\beta)$. for $((a,b,f),c,g)\in \ob \coe{\hat{F}}{Y}$, we have $$(\coe{F'}{\id_{Y'}} \circ E(E(\alpha,\beta),\beta),\beta)((a,b,f),c,g) = \coe{F'}{\id_{Y'}}((\alpha(a),\beta(b),\beta(f)),\beta(c),\beta(g)) = (\alpha(a),\beta(b),\beta(g\circ f))$$ Given $(((a,b,f),c,g),((a',b',f'),c',g'),(x,y,*),h,*) \in \mor \coe{\hat{F}}{Y}$, we have $$(\coe{F'}{\id_{Y'}} \circ E(E(\alpha,\beta),\beta),\beta)((x,y,*),h,*) = \coe{F'}{\id_{Y'}}((\alpha(x),\beta(y),*),\beta(h),* ) = (\alpha(x),\beta(h),*)$$ \\

Thus $E(\alpha,\beta) \circ \coe{F}{\id_{Y}} = \coe{F'}{\id_{Y'}} \circ E(E(\alpha,\beta),\beta)$. Therefore, $(\coe{F'}{\id_{Y'}},\id_{Y'}) \circ(E(E(\alpha,\beta),\beta), \beta) = (E(\alpha,\beta), \beta) \circ (\coe{F}{\id_{Y}},\id_{Y}) $, proving $\mu_{R_{\Fib}}$ to be a natural transformation.\\

Now we need to show that $\mu_{R_{\Fib}} \circ (R_{\Fib}\cdot \eta_{R_{\Fib}}) = \mu_{R_{\Fib}} \circ (\eta_{R_{\Fib}} \cdot  R_{\Fib}) = \Id_{R_{\Fib}}$ and $\mu_{R_{\Fib}} \circ (\mu_{R_{\Fib}} \cdot  R_{\Fib}) = \mu_{R_{\Fib}} \circ  (R_{\Fib} \cdot \mu_{R_{\Fib}}) $. Recall that for a functor $F:X \to Y$, $\eta_{R_{\Fib}}(F) = (\tilde{F}, \id_{Y})$ and $\mu_{R_{\Fib}}(F) = (\coe{F}{\id_{Y}},\id_{Y})$

$$(\coe{F}{\id_{Y}},\id_{Y}) \circ R_{\Fib}(\tilde{F}, \id_{Y}) = (\coe{F}{\id_{Y}},\id_{Y}) \circ (E(\tilde{F}, \id_{Y}),\id_{Y})$$
$$\coe{F}{\id_{Y}} \circ E(\tilde{F}, \id_{Y})(a,b,f) = \coe{F}{\id_{Y}}((a,F(a),\id_{F(a)}),b,f) = (a,b,f)$$
$$\coe{F}{\id_{Y}} \circ E(\tilde{F}, \id_{Y})(x,y,*) = \coe{F}{\id_{Y}}((x,F(x),*),y,*) = (x,y,*)$$
$$\coe{F}{\id_{Y}} \circ E(\tilde{F}, \id_{Y}) = \id_{\coe{F}{Y}}$$
$$(\coe{F}{\id_{Y}},\id_{Y}) \circ R_{\Fib}(\tilde{F}, \id_{Y}) =  (\id_{\coe{F}{Y}},\id_{Y}) = \Id_{R_{\Fib}}(F)$$
Thus $\mu_{R_{\Fib}} \circ (R_{\Fib}\cdot \eta_{R_{\Fib}}) = \Id_{R_{\Fib}}$

$$(\coe{F}{\id_{Y}},\id_{Y}) \circ (\tilde{\hat{F}}, \id_{Y}) = (\coe{F}{\id_{Y}} \circ \tilde{\hat{F}}, \id_{Y})$$
$$\coe{F}{\id_{Y}} \circ \tilde{\hat{F}}(a,b,f) = \coe{F}{\id_{Y}}((a,b,f),b,\id_{b}) = (a,b,f)$$
$$\coe{F}{\id_{Y}} \circ \tilde{\hat{F}}(x,y,*) = \coe{F}{\id_{Y}}((x,y,*),y,*) = (x,y,*)$$
$$\coe{F}{\id_{Y}} \circ \tilde{\hat{F}}= \id_{\coe{F}{Y}}$$
$$(\coe{F}{\id_{Y}},\id_{Y}) \circ (\tilde{\hat{F}}, \id_{Y})  =  (\id_{\coe{F}{Y}},\id_{Y}) = \Id_{R_{\Fib}}(F)$$
Thus $\mu_{R_{\Fib}} \circ (\eta_{R_{\Fib}} \cdot  R_{\Fib}) = \Id_{R_{\Fib}}$

$$(\coe{F}{\id_{Y}},\id_{Y}) \circ (E(\coe{F}{\id_{Y}},\id_{Y}),\id_{Y}) = (\coe{F}{\id_{Y}} \circ E(\coe{F}{\id_{Y}},\id_{Y}), \id_{Y})$$
$$\coe{F}{\id_{Y}} \circ E(\coe{F}{\id_{Y}},\id_{Y})(((a,b,f),c,g),d,h) = \coe{F}{\id_{Y}}((a,c,g \circ f),d,h) = (a,d,h \circ g \circ f)$$
$$\coe{F}{\id_{Y}} \circ E(\coe{F}{\id_{Y}},\id_{Y})(((x,y,*),z,*),w,*) = \coe{F}{\id_{Y}}((x,z,*),w,*) = (x,w,*)$$

$$(\coe{F}{\id_{Y}},\id_{Y}) \circ (\coe{\hat{F}}{\id_{Y}},\id_{Y}) = (\coe{F}{\id_{Y}} \circ \coe{\hat{F}}{\id_{Y}}, \id_{Y})$$
$$\coe{F}{\id_{Y}} \circ \coe{\hat{F}}{\id_{Y}}(((a,b,f),c,g),d,h) = \coe{F}{\id_{Y}}((a,b, f),d,h \circ g ) = (a,d,h \circ g \circ f)$$
$$\coe{F}{\id_{Y}} \circ \coe{\hat{F}}{\id_{Y}}(((x,y,*),z,*),w,*) = \coe{F}{\id_{Y}}((x,y,*),w,*) = (x,w,*)$$
$$\coe{F}{\id_{Y}} \circ E(\coe{F}{\id_{Y}},\id_{Y}) =  \coe{F}{\id_{Y}} \circ \coe{\hat{F}}{\id_{Y}}$$
Thus $\mu_{R_{\Fib}} \circ (\mu_{R_{\Fib}} \cdot  R_{\Fib}) = \mu_{R_{\Fib}} \circ  (R_{\Fib} \cdot \mu_{R_{\Fib}})$.\\

Therefore, $R_{\Fib}$ is a monad.

\end{proof}

\begin{defn}

A functor $F: X \to Y$ is an $R_{\Fib}$ algebra when there exists a square $(s,t):\hat{F} \to F$ such that $(s,t) \circ \eta_{R_{\Fib}}(F) = \id_{F}$ and $(s,t) \circ \mu_{R_{\Fib}}(F) = (s,t) \circ R_{\Fib}(s,t)$. Since $\eta_{R_{\Fib}}(F) = (\tilde{F},\id_{Y})$, $t = \id_{Y}$. Thus we will present an $R_{\Fib}$ algebra as $(F,s)$

\end{defn}




\begin{defn}

A morphism between two $R_{\Fib}$ algebras $(F:X \to Y, s)$ and $(F':X' \to Y',s')$ is a square $(x,y):F \to F'$ such that $s' \circ E(x,y) = x \circ s$

\end{defn}


\begin{lem}

There exists a composition structure on $R_{\Fib}$ algebras: Given two $R_{\Fib}$ algebras  $(F:X \to Y, s)$ and $(G:Y \to Z, \sigma)$, one can construct an $R_{\Fib}$ algebra $(G \circ F, s \cdot \sigma)$ such that for any two $R_{\Fib}$ algebra morphisms $(x,y):(F,s) \to (F',s')$ and $(y,z): (G,\sigma) \to (G', \sigma')$, $(x,z):(G \circ F, s \cdot \sigma) \to (G' \circ F', s' \cdot \sigma')$ is an $R_{\Fib}$ algebra morphism.

\end{lem}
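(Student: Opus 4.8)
The plan is to build the composite structure $s\cdot\sigma$ by transporting through the two fibrations in succession. First I would unwind the data: an $R_{\Fib}$-algebra $(F\colon X\to Y,s)$ amounts to a functor $s\colon\coe{F}{Y}\to X$ with $F\circ s=\hat{F}$ and $s\circ\tilde{F}=\id_X$ (the unit law, which forces $t=\id_Y$), subject to the multiplication law $s\circ\coe{F}{\id_Y}=s\circ E(s,\id_Y)$. Using the explicit formulas for $\coe{F}{\id_Y}$ and for $E$, this law reads pointwise as $s(a,c,g\circ f)=s\bigl(s(a,b,f),c,g\bigr)$ for $a\in\ob X$, $f\colon F(a)\to b$, $g\colon b\to c$; that is, $s$ is a normalized path-lifting operation for which composing lifts agrees with lifting the composite. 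The morphism $(\id_a,f)\colon\tilde{F}(a)\to(a,b,f)$ in $\coe{F}{Y}$ is sent by $s$ to a morphism $a\to s(a,b,f)$ over $f$, which is the lift itself.

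The key device I would introduce is a comparison functor $\Phi=\Phi_{F,\sigma}\colon\coe{G\circ F}{Z}\to\coe{F}{Y}$, and then set $s\cdot\sigma:=s\circ\Phi$. On objects $\Phi$ uses $\sigma$ to lift a $Z$-path through $G$ into a $Y$-path: given $(a,c,f)$ with $f\colon G(F(a))\to c$, the pair $(\id_{F(a)},f)\colon\tilde{F}\text{-free data}$ is a morphism $\tilde{G}(F(a))\to(F(a),c,f)$ of $\coe{G}{Z}$, and I set $\Phi(a,c,f)=\bigl(a,\ \sigma(F(a),c,f),\ \sigma(\id_{F(a)},f)\bigr)$, where $\sigma(\id_{F(a)},f)\colon F(a)\to\sigma(F(a),c,f)$ is the $Y$-morphism assigned by $\sigma$. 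On a morphism $(x,z)\colon(a,c,f)\to(a',c',f')$ the pair $(F(x),z)$ is a morphism of $\coe{G}{Z}$, and I put $\Phi(x,z)=\bigl(x,\ \sigma(F(x),z)\bigr)$; functoriality of $\Phi$ is inherited from that of $F$ and $\sigma$. Unwinding $s\cdot\sigma=s\circ\Phi$ gives the intended two-step lift: first $\sigma$-lift $f$ to $y_1:=\sigma(F(a),c,f)\in\ob Y$ with $G(y_1)=c$, then $s$-lift the resulting $Y$-morphism $u_1\colon F(a)\to y_1$ to $s(a,y_1,u_1)\in\ob X$, which lies over $c$ since $G(F(s(a,y_1,u_1)))=G(y_1)=c$. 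The square law $G\circ F\circ(s\cdot\sigma)=\widehat{G\circ F}$ and the unit law $(s\cdot\sigma)\circ\widetilde{G\circ F}=\id_X$ then follow immediately by chasing $\widetilde{G\circ F}(a)=(a,GF(a),\id_{GF(a)})$ through $\Phi$ and applying the unit laws of $\sigma$ and of $s$ in turn.

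The main obstacle is the multiplication law for $(G\circ F,s\cdot\sigma)$, which with the explicit formulas for $E$ and $\coe{\cdot}{\id}$ reduces to the pointwise identity $(s\cdot\sigma)(a,d,g\circ f)=(s\cdot\sigma)\bigl((s\cdot\sigma)(a,c,f),d,g\bigr)$ for $f\colon GF(a)\to c$ and $g\colon c\to d$. I would prove this by expanding both sides through $\Phi$ and invoking the two given multiplication laws: the law for $\sigma$ gives $\sigma(F(a),d,g\circ f)=\sigma\bigl(\sigma(F(a),c,f),d,g\bigr)$, identifying the outer $Y$-basepoint, and functoriality of $\sigma$ on the factorization $(\id,g\circ f)=(\id,g)\circ(\id,f)$ in $\coe{G}{Z}$ identifies the lifted $Y$-morphisms; the law for $s$ then collapses the two successive $s$-lifts into one. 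The delicate point, and where I expect the real work, is matching the $Y$-morphism produced by $\Phi$ at the composite path $g\circ f$ with the composite of the morphisms produced at $f$ and at $g$ separately, since these are based at different objects and must be reconciled using precisely the normalization and multiplication coherence of the $\sigma$-structure.

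Finally, for the morphism part I would verify that, given algebra morphisms $(x,y)\colon(F,s)\to(F',s')$ and $(y,z)\colon(G,\sigma)\to(G',\sigma')$, the pair $(x,z)\colon G\circ F\to G'\circ F'$ (a square since $G'F'x=G'yF=zGF$) satisfies $(s'\cdot\sigma')\circ E(x,z)=x\circ(s\cdot\sigma)$. This is the easy direction: evaluating the left-hand side, the algebra-morphism equation $\sigma'\circ E(y,z)=y\circ\sigma$ turns the inner $\sigma'$-lift into $y$ applied to the $\sigma$-lift, and then the equation $s'\circ E(x,y)=x\circ s$ turns the outer $s'$-lift into $x$ applied to the $s$-lift, yielding $x\circ(s\cdot\sigma)$. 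This uses only naturality of $E$ and the functoriality of $\Phi$ in its two structure inputs, with no further coherence required, so the only genuinely technical step remains the multiplication law above.
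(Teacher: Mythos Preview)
Your proposal is correct and follows essentially the same approach as the paper: the paper also defines $s\cdot\sigma$ by first applying $\sigma$ to $(\id_{F(x)},h)$ to obtain an object of $\coe{F}{Y}$ and then applying $s$, which is exactly your $s\circ\Phi$. You have also correctly identified the delicate point in the multiplication law---the paper resolves it by using the multiplication law for $\sigma$ on the morphism level to show $\sigma(\id_{\sigma(F(a),b,f)},g,*)=\sigma(\id_{F(a)},g,*)$, after which functoriality of $\sigma$ gives the needed equality of lifted $Y$-morphisms, precisely the ``normalization and multiplication coherence'' you anticipate.
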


\begin{proof}
Given $(F:X \to Y, s)$ and $(G:Y \to Z, \sigma)$, we will construct $s \cdot \sigma: \coe{(G \circ F)}{Z} \to X$. Given $(x,z,h): \ob \coe{(G \circ F)}{Z}$, we have a morphism $(\id_{F(x)},h,*):(F(x), G\circ F(x),\id_{G\circ F(x)}) \to (F(x), z, h)$, so $\sigma(\id_{F(x)},h,*): F(x) \to \sigma(F(x), z, h)$ where $\sigma\circ \tilde{G}(F(x)) = F(x)$. So we have an object $(x,\sigma(F(x), z, h),\sigma(\id_{F(x)},h,*)) : \ob \coe{F}{Y}$. Thus we set $s \cdot \sigma(x,z,h) = s(x,\sigma(F(x), z, h),\sigma(\id_{F(x)},h,*))$.\\

 Given  $((x,z,h),(x',z',h'),k,l,*): \mor \coe{(G \circ F)}{Z}$, we have a square $((\id_{F(x)}, h,*),(\id_{F(x')},h',*)): (F(k),G\circ F(k),*) \to (F(k),l,*)$. This becomes the square $(\sigma(\id_{F(x)}, h,*),\sigma(\id_{F(x')},h',*)): F(k) \to \sigma(F(k),l,*)$ where where $\sigma\circ \tilde{G}(F(k)) = F(k)$. So we have a morphism $(k,\sigma(F(k),l,*),*) \in \mor \coe{F}{Y}$. Thus we set $s \cdot \sigma(k,l,*) = s(k,\sigma(F(k),l,*),*)$. functoriality of $s \cdot \sigma$ follows from functoriality of $s$ and $\sigma$.\\

Now we need to show that $G \circ F \circ (s \cdot \sigma) = \hat{G \circ F}$. For $(x,z,h): \ob \coe{(G \circ F)}{Z}$, $G \circ F \circ (s \cdot \sigma) (x,z,h) = G \circ F(s(x,\sigma(F(x), z, h),\sigma(\id_{F(x)},h,*))) = G(\sigma(F(x), z, h)) = z$. For $((x,z,h),(x',z',h'),k,l,*): \mor \coe{(G \circ F)}{Z}$, $G \circ F \circ (s \cdot \sigma)(k,l,*) = G \circ F(s(k,\sigma(F(k),l,*),*)) = G(\sigma(F(k),l,*)) = l$. Therefore, $G \circ F \circ (s \cdot \sigma) = \hat{G \circ F}$.\\

Now we seek to show that $s \cdot \sigma$ gives a $R_{\Fib}$-algebra structure to $G \circ F$. First we need to show that $(s \cdot \sigma) \circ \tilde{G \circ F} = \id_{X}$. For $x \in \ob X$, $(s \cdot \sigma) \circ \tilde{G \circ F}(x) = s \cdot \sigma(x, G\circ F(x),\id_{G\circ F(x)}) = s(x,\sigma(F(x),G\circ F(x),\id_{G\circ F(x)}), \sigma(\id_{F(x)},\id_{G\circ F(x)},*)) = s(x, F(x),\id_{F(x)}) = x$. For $m \in \ob X$, $(s \cdot \sigma) \circ \tilde{G \circ F}(m) = s \cdot \sigma(m, G\circ F(m),*) = s(m, \sigma(F(m),G\circ F(m),*),*) = s(m, F(m),*) = m$. Thus $(s \cdot \sigma) \circ \tilde{G \circ F} = \id_{X}$.\\

Now we will show that $(s \cdot \sigma) \circ E(s \cdot \sigma, \id_{Z}) = (s \cdot \sigma) \circ \mu_{\Fib}(G \circ F)$. 

\begin{align*}
(s \cdot \sigma) \circ E(s \cdot \sigma, \id_{Z})((a,b,f),c,g) &= s \cdot \sigma(s \cdot \sigma(a,b,f),c,g)\\
&= s (s \cdot \sigma(a,b,f),\sigma(F(s \cdot \sigma(a,b,f)),c,g), \sigma(\id_{F(s \cdot \sigma(a,b,f))},g,*))\\
\end{align*}
since $s \cdot \sigma(a,b,f) = s(a, \sigma(F(a),b,f),\sigma(\id_{F(a)},f,*))$ we have 
\begin{align*}
(s \cdot \sigma) \circ E(s \cdot \sigma, \id_{Z})&((a,b,f),c,g)  \\
&=  s \circ E(s,\id_{Y})((a, \sigma(F(a),b,f),\sigma(\id_{F(a)},f,*)), \sigma(F(s \cdot \sigma(a,b,f)),c,g), \sigma(\id_{F(s \cdot \sigma(a,b,f))},g,*))\\
&= s \circ \mu_{\Fib}(F)((a, \sigma(F(a),b,f),\sigma(\id_{F(a)},f,*)), \sigma(F(s \cdot \sigma(a,b,f)),c,g), \sigma(\id_{F(s \cdot \sigma(a,b,f))},g,*))\\
\end{align*}
since $F(s(a, \sigma(F(a),b,f),\sigma(\id_{F(a)},f,*))) = \sigma(F(a),b,f)$
\begin{align*}
(s \cdot \sigma) \circ E(s \cdot \sigma, \id_{Z})((a,b,f),c,g)  &= s \circ \mu_{\Fib}(F)((a, \sigma(F(a),b,f),\sigma(\id_{F(a)},f,*)), \sigma(\sigma(F(a),b,f),c,g), \sigma(\id_{\sigma(F(a),b,f)},g,*))\\
&= s(a,\sigma(\sigma(F(a),b,f)),c,g), \sigma(\id_{\sigma(F(a),b,f)},g,*) \circ \sigma(\id_{F(a)},f,*))\\
\end{align*}

\begin{align*}
(s \cdot \sigma) \circ \mu_{\Fib}(G \circ F)((a,b,f),c,g) &= s \cdot \sigma(a,c,g \circ f)\\
&= s (a, \sigma(F(a),c,g \circ f), \sigma(\id_{F(a)}, g \circ f,*))\\
&= s (a, \sigma \circ \mu_{\Fib}(G)((F(a),b,f)c,g), \sigma\circ \mu_{\Fib}(\id_{F(a)},f,*)), g,*))\\
&= s (a, \sigma \circ E(\sigma, \id_{Z})((F(a),b,f)c,g), \sigma\circ E(\sigma, \id_{Z})((\id_{F(a)},f,*) g,*))\\
&= s (a, \sigma(\sigma(F(a),b,f)c,g), \sigma(\sigma(\id_{F(a)},f,*) g,*)\\
\end{align*}

First we have that $\sigma(\id_{\sigma(F(a),b,f)},g,*)  = \sigma \circ E(\sigma,\id_{Z})((\id_{F(a)},\id_{b},*),g,*) = \sigma \circ \mu_{\Fib}(G)((\id_{F(a)},\id_{b},*),g,*) = \sigma(\id_{F(a)},g,*)$. So $\sigma(\id_{\sigma(F(a),b,f)},g,*) \circ \sigma(\id_{F(a)},f,*) = \sigma(\id_{F(a)},g,*) \circ \sigma(\id_{F(a)},f,*) = \sigma(\id_{F(a)}, g \circ f,*)$. Thus $(s \cdot \sigma) \circ E(s \cdot \sigma, \id_{Z})((a,b,f),c,g) = (s \cdot \sigma) \circ \mu_{\Fib}(G \circ F)((a,b,f),c,g) $. A similar argument can be made for morphisms. Therefore, $(s \cdot \sigma) \circ E(s \cdot \sigma, \id_{Z}) = (s \cdot \sigma) \circ \mu_{\Fib}(G \circ F)$.\\

Suppose we have  $R_{\Fib}$ algebra morphisms $(\alpha,\beta):(F,s) \to (F',s')$ and $(\beta,\gamma): (G,\sigma) \to (G', \sigma')$, we need to show that $(s' \cdot \sigma') \circ E(\alpha,\gamma) = \alpha \circ (s \cdot \sigma)$. Suppose we have $(x,z,n): \coe{(G\circ F)}{Z}$, we have that 
\begin{align*}
\alpha \circ (s \cdot \sigma)(x,z,n) &= \alpha(s(x,\sigma(F(x),z,n),\sigma(\id_{F(x)},n,*) ))\\
&= s'(E(\alpha,\beta)(x,\sigma(F(x),z,n),\sigma(\id_{F(x)},n,*) ))\\
&= s'(\alpha(x),\beta\circ \sigma(F(x),z,n),\beta \circ \sigma(\id_{F(x)},n,*) )\\
&= s'(\alpha(x),\sigma' \circ E(\beta,\gamma)(F(x),z,n),\sigma' \circ E(\beta,\gamma)(\id_{F(x)},n,*) )\\
&= s'(\alpha(x),\sigma'(\beta \circ F(x),\gamma(z),\gamma(n)),\sigma'(\id_{\beta \circ F(x)},\gamma(n),*) )\\
&= s'(\alpha(x),\sigma'(G \circ \alpha(x),\gamma(z),\gamma(n)),\sigma'(\id_{G \circ \alpha(x)},\gamma(n),*) )\\
&= s' \cdot \sigma'(\alpha(x),\gamma(z),\gamma(n))\\
&= (s' \cdot \sigma')\circ E(\alpha,\gamma)(x,z,n)\\
\end{align*}
A similar argument can be made for morphisms. Therefore, $(s \cdot \sigma) \circ E(s \cdot \sigma, \id_{Z}) = (s \cdot \sigma) \circ \mu_{\Fib}(G \circ F)$ showing $R_{\Fib}$ algebras have a composition structure.

\end{proof}

\begin{defn}

We call the triple $(\Fib,R_{\Fib},L_{\Fib})$ an {\bf algebraic weak factorization system} if:
\begin{itemize}

\item $\Fib$ is a weak factorization system
\item $R_{\Fib}$ is a monad
\item $L_{\Fib}$ is a comonad
\item The canonical natural transformation $L_{\Fib} \circ R_{\Fib} \To R_{\Fib} \circ L_{\Fib}$ as described in  \cite[pgs. 8-9]{Riehl2011} is a distributive law of the comonad over the monad.

\end{itemize}

\end{defn}

\begin{cor}
\label{awfsgrpd}
$(\Fib,R_{\Fib},L_{\Fib})$ presents an algebraic weak factorization system on $\GrpA{A}$ where the $R$-morphisms are normal isofibrations and the $L$-morphism are strong deformation sections

\end{cor}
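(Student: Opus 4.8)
The plan is to verify, clause by clause, the four conditions in the definition of an algebraic weak factorization system for the triple $(\Fib, R_{\Fib}, L_{\Fib})$, drawing on the structure already assembled. That $\Fib$ underlies a weak factorization system is immediate: the factorization lemmas exhibit each $F$ as $\hat{F} \circ \tilde{F}$ with $\tilde{F}$ a strong deformation section and $\hat{F}$ a normal isofibration, and Lemma \ref{modstr1} shows that strong deformation sections and normal isofibrations are precisely each other's lifting classes. The clause that $R_{\Fib}$ is a monad is exactly the preceding lemma.

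The comonad structure on $L_{\Fib} = \Top \circ \Fib$ I would obtain by dualizing the monad argument. Take the counit $\varepsilon_{L_{\Fib}}(F) = (\id_{X}, \hat{F}) : L_{\Fib}(F) \to F$ and a comultiplication $\delta_{L_{\Fib}}(F)$ assembled from the map $\tilde{\hat{F}}$ that already appears in the monad proof, so that $\delta_{L_{\Fib}}$ is the formal mate of $\mu_{R_{\Fib}}(F) = (\coe{F}{\id_{Y}}, \id_{Y})$. Since $\Top$ and $\Bot$ merely read off the two halves of the same functorial factorization $\Fib$, the comonad identities for $\delta_{L_{\Fib}}$ reduce to the very computations on objects $(a,b,f)$ and morphisms $(x,y,*)$ of $\coe{F}{Y}$ already carried out for $\mu_{R_{\Fib}}$; in particular the identities $\coe{F}{\id_{Y}} \circ \tilde{\hat{F}} = \id_{\coe{F}{Y}}$ verified there furnish the counit laws directly.

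The remaining and most substantial clause is the distributive law. The canonical natural transformation $L_{\Fib} \circ R_{\Fib} \To R_{\Fib} \circ L_{\Fib}$ of \cite[pgs. 8-9]{Riehl2011} is built canonically from $\mu_{R_{\Fib}}$ and $\delta_{L_{\Fib}}$, and what must be checked is that it is compatible with the unit, counit, multiplication and comultiplication, i.e. that it is a genuine distributive law of the comonad over the monad. \textbf{This is where the real work lies.} Because every functor in sight is given by explicit formulas on the iterated iso-comma categories $\coe{F}{Y}$, $\coe{\hat{F}}{Y}$ and $\coe{\tilde{F}}{(\coe{F}{Y})}$, each compatibility square unwinds to a finite verification in the same style as the monad lemma. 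Conceptually, the multiplication encodes vertical composition of commuting squares inside the iso-comma category while the comultiplication encodes insertion of identity squares, and the distributive law is exactly the interchange of these two operations, which holds because the iso-comma construction is functorial in both variables (Remark \ref{square-bifunctor}).

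Finally, to identify the $R$- and $L$-morphisms I would invoke Lemma \ref{modstr1}. An $R_{\Fib}$-algebra structure on $F$ is a section $s$ of the prescribed form, and such an $s$ equips $F$ with coherent solutions to every lifting problem against $\tilde{F}$, hence against all strong deformation sections, so $F$ is a normal isofibration; conversely a normal isofibration lifts against the strong deformation section $\tilde{F}$ by Lemma \ref{modstr1}, producing the required $s$. Thus the underlying maps of $R_{\Fib}$-algebras are exactly the normal isofibrations, and dually the $L_{\Fib}$-coalgebras are exactly the strong deformation sections, which is the content of the corollary.
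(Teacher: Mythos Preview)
Your approach is correct in principle but takes a markedly different route from the paper, and it misses the point of the lemma immediately preceding the corollary.

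The paper's proof is much shorter: having verified that $R_{\Fib}$ is a monad over $\Bot$ and that the $R_{\Fib}$-algebras admit a \emph{composition structure} (the content of the lemma just before this corollary), it simply invokes Theorem~2.24 of \cite{Riehl2011}. That theorem says that a monad over $\cod$ on the arrow category whose algebras carry a composition structure automatically determines an algebraic weak factorization system---the comonad axioms for $L_{\Fib}$ and the distributive law come for free. This is why the paper went to the trouble of constructing $s \cdot \sigma$ and verifying that $R_{\Fib}$-algebra morphisms compose; you do not use that lemma at all.

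Your plan to verify the comonad axioms and the distributive law by hand is legitimate, but it is considerably more work, and your sketch of the distributive law (``interchange of vertical composition and insertion of identity squares'') is only a heuristic; the actual verification involves several iterated iso-comma categories and is not obviously shorter than the monad computation you would be dualizing. Moreover, calling the comultiplication ``the formal mate of $\mu_{R_{\Fib}}$'' is imprecise: the comultiplication $\delta_{L_{\Fib}}(F)$ is the square $(\id_X, \tilde{\hat{F}})$, which is related to but not a mate of $\mu_{R_{\Fib}}(F)$ in any standard sense. What each approach buys: the paper's is quick and explains why the composition-structure lemma is there; yours is self-contained and avoids citing an external theorem, but you would need to actually carry out the distributive-law diagram chase rather than gesture at it.

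For the identification of the $R$- and $L$-maps, the paper observes that they are retracts of $\hat{F}$'s and $\tilde{F}$'s respectively, hence retracts of normal isofibrations and strong deformation sections, and then (implicitly via Lemma~\ref{modstr1}) that these classes are retract-closed. Your route through Lemma~\ref{modstr1} is essentially the same.
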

\begin{proof}
Examining $\eta_{R_{\Fib}}$ shows that $R_{\Fib}$ is a monad over $\Bot$. Furthermore $R_{\Fib}$ has a composition structure. The fact that $(\Fib,R_{\Fib},L_{\Fib})$ presents an algebraic weak factorization system on $\GrpA{A}$ follows from Theorem 2.24 of \cite{Riehl2011}. Furthermore, $R$-morphisms are retracts of normal isofibrations, and $L$-morphisms are retracts of strong deformation sections.

\end{proof}

\subsection{Construction of dependent products}
Here we give a construction of pushforwards over normal isofibrations.\\

Given a groupoid assembly $X$, we have the comma category $\GrpA{A}/X$. Here we adapt a definition from \cite{wreo22517} which will be useful in constructing dependent products:

\begin{defn}

Given a normal isofibration $F:N \to M$, a functor $G:P \to N$, a morphism $f:m \to m'$ in $M$, and $T: N_{m} \to P$ and $T': N_{m'} \to P$ such that $G \circ T:N_{m} \mono N$ and $G \circ T':N_{m'} \mono N$ are inclusions, a {\bf generalized natural transformation} $\eta: T \rightsquigarrow T'$ over $f$ is a dependent morphism $\eta: \Pi_{u \in (\mor N)_{f}} P(T(\dom(u)),T'(\cod(u)))$ such that for $u,v \in (\mor N)_{f}$ when we have the following square 

\[\begin{tikzcd}
	X && Y \\
	\\
	{X'} && {Y'}
	\arrow["u", from=1-1, to=1-3]
	\arrow["i"', from=1-1, to=3-1]
	\arrow["j", from=1-3, to=3-3]
	\arrow["v"', from=3-1, to=3-3]
\end{tikzcd}\]

where $F(i) = \id_{A}$ and $F(j) = \id_{B}$ then we have the following commutative square:

\[\begin{tikzcd}
	T(X) && T'(Y) \\
	\\
	{T(X')} && {T'(Y')}
	\arrow["\eta_{u}", from=1-1, to=1-3]
	\arrow["T(i)"', from=1-1, to=3-1]
	\arrow["T'(j)", from=1-3, to=3-3]
	\arrow["\eta_{v}"', from=3-1, to=3-3]
\end{tikzcd}\]\\

\end{defn}





\begin{lem}
Given a normal isofibration $F:N \to M$, $\Delta_{F}: \GrpA{A}/M \to \GrpA{A}/N$ which is the pullback functor has a right adjoint $\Pi_{F}: \GrpA{A}/N \to \GrpA{A}/M$.

\end{lem}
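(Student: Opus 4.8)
The plan is to give an explicit construction of the pushforward groupoid $\Pi_{F}(G)$ for an object $G\colon P \to N$ of $\GrpA{A}/N$, mirroring the dependent product of assemblies from Lemma \ref{asmdepprod} but internalized to groupoids via the generalized natural transformations defined above. The object assembly of $\Pi_{F}(G)$ will consist of pairs $(m,s)$ where $m \in \ob M$ and $s\colon N_{m} \to P$ is a section of $G$ over the fiber $N_{m} = F^{-1}(m)$, i.e. $G \circ s = \incl_{m}$; the morphism assembly will consist of pairs $(f,\eta)$ where $f\colon m \to m'$ in $M$ and $\eta\colon s \gennattrans s'$ is a generalized natural transformation over $f$. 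The structure map $\Pi_{F}(G) \to M$ forgets $s$ and $\eta$. Since $\Asm(A)$ is locally cartesian closed, these collections do form genuine assemblies: the realizer of $(m,s)$ packages a realizer of $m$ with a realizer tracking the object and morphism parts of $s$, exactly as in the dependent product $\phi \To \zeta$ of Lemma \ref{asmdepprod}, and similarly the realizer of $(f,\eta)$ tracks $f$ together with the dependent assignment $u \mapsto \eta_{u}$.

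The first substantive step is to equip $\Pi_{F}(G)$ with a groupoid structure internal to $\Asm(A)$. The identity on $(m,s)$ is $(\id_{m},\eta)$ with $\eta_{u} = s(u)$ for $u \in (\mor N)_{\id_{m}}$, where normality of $F$ is what makes this well behaved. Composition is the crux: given $\eta\colon s \gennattrans s'$ over $f\colon m \to m'$ and $\eta'\colon s' \gennattrans s''$ over $f'\colon m' \to m''$, I must produce $\eta' \circ \eta$ over $f' \circ f$. For $w \in (\mor N)_{f' \circ f}$ with $\dom(w)=X$, I use the lift $u := \lift(\htpy{0},F)(X,f)\colon X \to X'$ over $f$ supplied by the normal isofibration to factor $w = v \circ u$ with $v := w \circ u^{-1} \in (\mor N)_{f'}$, and set $(\eta' \circ \eta)_{w} := \eta'_{v} \circ \eta_{u}$. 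Inverses are built analogously. I would then verify the groupoid axioms and, crucially, that $\eta' \circ \eta$ satisfies the coherence condition of a generalized natural transformation, using the naturality squares of $\eta,\eta'$ together with the transport and uniqueness properties from Lemma \ref{fiberpath}. All of these operations are realized uniformly because $\lift(\htpy{0},F)$ is an assembly morphism.

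With $\Pi_{F}$ defined on objects, functoriality on a morphism $\beta\colon G \to G'$ over $N$ is given by postcomposition, sending a section $s$ to $\beta \circ s$ and a generalized natural transformation $\eta$ to the whiskering with components $\beta(\eta_{u})$. It then remains to exhibit the adjunction $\Delta_{F} \adj \Pi_{F}$. For $H\colon K \to M$ in $\GrpA{A}/M$ I will produce a natural bijection between morphisms $\alpha\colon \Delta_{F}(H) \to G$ over $N$ and morphisms $\hat{\alpha}\colon H \to \Pi_{F}(G)$ over $M$. Writing $\Delta_{F}(H) = K \times_{M} N$, each $\alpha$ sends $(k,n)$ with $H(k)=F(n)$ to a point of $P$ over $n$; for fixed $k$ over $m$ this yields a section $s_{k} := \alpha(k,-)\colon N_{m}\to P$, and for $g\colon k \to k'$ over $f$ the assignment $u \mapsto \alpha(g,u)$ is a generalized natural transformation $s_{k} \gennattrans s_{k'}$ over $f$, so set $\hat{\alpha}(k)=(m,s_{k})$ and $\hat{\alpha}(g)=(f, u \mapsto \alpha(g,u))$. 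The inverse sends $\beta\colon H \to \Pi_{F}(G)$ to $\bar{\beta}(k,n) = \pr{2}(\beta(k))(n)$ on objects and to the corresponding component of $\pr{2}(\beta(g))$ on morphisms. These are mutually inverse and natural by unwinding the definitions, and the passage preserves realizers through the currying combinators $\app{-}$, exactly as in the adjunction bookkeeping of Lemma \ref{asmdepprod}.

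The main obstacle I expect is the second step: proving that $\eta' \circ \eta$ is independent of the auxiliary lift, genuinely satisfies the coherence condition, and that composition is associative and unital in the realized internal sense. Everything here hinges on the transport maps supplied by $\lift(\htpy{0},F)$ and the naturality of the generalized natural transformations; the normality hypothesis on $F$ is precisely what forces identities to lift to identities, preventing spurious automorphisms when $f$ or $f'$ is an identity, and so is what makes the groupoid laws hold strictly rather than merely up to coherent isomorphism.
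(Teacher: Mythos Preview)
Your proposal is correct and follows essentially the same approach as the paper: the same object and morphism assemblies for $\Pi_{F}(G)$, the same composition rule via the factorization $w = (w \circ u^{-1}) \circ u$ with $u = \lift(\htpy{0},F)(\dom(w),f)$, and the same currying bijection for the adjunction. The paper spells out the independence-of-factorization argument explicitly via a commuting square in $N$ and the resulting square in $P$ from the generalized-natural-transformation condition, which is exactly the verification you identify as the main obstacle; you are right that normality of $F$ is what makes identities and units behave strictly.
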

\begin{proof}

We define the functor $\Pi_{F}: \GrpA{A}/N \to \GrpA{A}/M$ where for the functor $G: P \to N$, we define the groupoid $\Pi_{F}(G)$ whose objects are the pairs $(m,T:N_{m} \to P)$ where $G \circ T: N_{m} \mono N$ is an inclusion and morphisms are the pairs $(f:m \to m', \eta: T \rightsquigarrow T')$ where for $u \in (\mor N)_{f}$, $G(\eta(u)) = u$. Given morphisms $(f:m \to m', \eta: T \rightsquigarrow T')$ and $(f':m' \to m'', \omega: T' \rightsquigarrow T'')$, we define their composition to be the pair $(f' \circ f, \omega \circ \eta)$ where for $w \in (\mor N)_{f' \circ f}$, $\omega \circ \eta(w) = \omega(w \circ r^{-1}) \circ \eta(r )$ where $r = \lift(\htpy{0},F)(\dom(w),f)$. Note that the choice of factorization for $w$ is irrelavant since if we have a factorization $w = w_{1} \circ w_{0}$ where $F(w_{0}) = f$ and $F(w_{1}) = f'$ then we have the following commutative square:

\[\begin{tikzcd}
	{\dom(w)} && X && {\cod(w)} \\
	\\
	{\dom(w)} && Y && {\cod(w)}
	\arrow["r", from=1-1, to=1-3]
	\arrow["{\id_{\dom(w)}}"', from=1-1, to=3-1]
	\arrow["{w \circ r^{-1}}", from=1-3, to=1-5]
	\arrow["{w_{0} \circ r^{-1}}"{description}, from=1-3, to=3-3]
	\arrow["{\id_{\cod(w)}}", from=1-5, to=3-5]
	\arrow["{w_{0}}"', from=3-1, to=3-3]
	\arrow["{w_{1}}"', from=3-3, to=3-5]
\end{tikzcd}\]

thus by definition of a generalized natural transformation we then have the following square:

\[\begin{tikzcd}
	{T(\dom(w))} && T'(X) && {T''(\cod(w))} \\
	\\
	{T(\dom(w))} && T'(Y) && {T''(\cod(w))}
	\arrow["\eta(r)", from=1-1, to=1-3]
	\arrow["{\id_{T(\dom(w))}}"', from=1-1, to=3-1]
	\arrow["{\omega(w \circ r^{-1})}", from=1-3, to=1-5]
	\arrow["{T'(w_{0} \circ r^{-1})}"{description}, from=1-3, to=3-3]
	\arrow["{\id_{T''(\cod(w))}}", from=1-5, to=3-5]
	\arrow["{\eta(w_{0})}"', from=3-1, to=3-3]
	\arrow["{\omega(w_{1})}"', from=3-3, to=3-5]
\end{tikzcd}\]

making $\omega(w \circ r^{-1}) \circ \eta(r ) = \omega(w_{1}) \circ \eta(w_{0})$. Given an object $(m, T)$, $\id_{(m,T)} = (\id_{m}, [u \in (\mor N)_{\id_{m}} \mapsto T(u)])$. For a morphism $(f, \eta)$, the inverse is $(f^{-1}, [u \mapsto \eta(u^{-1})^{-1}])$. Given a morphism $Q:G \to G'$ in $ \GrpA{A}/N $, we construct a functor $\Pi_{F}(Q): \Pi_{F}(G) \to \Pi_{F}(G')$ where the object $(m,T)$ gets sent to $(m, Q \circ T)$ and the morphism $(f,\eta)$ gets sent to $(f,[w \mapsto Q(\eta(w))])$.\\

Given $X,Y \in \GrpA{A}/N $, we define $\GrpA{A}/N(X,Y) \in \ob \Asm(A)$ in the internal language as $\GrpA{A}/N(X,Y) :=\{Z \in \Hom(\dom(X),\dom(Y))|  Y \circ Z = X \}$. So we seek to show the following equivalence: $$\GrpA{A}/N(\Delta_{F}(X),Y) \cong \GrpA{A}/M(X,\Pi_{F}(Y))$$ 

Suppose we have $Z \in \GrpA{A}/N(\Delta_{F}(X),Y)$, then we will define $\phi(Z) \in \GrpA{A}/M(X,\Pi_{F}(Y))$. Take $x \in \ob( \dom(X))$, setting $m_{x} = X(x)$ we define $\phi(Z)(x): N_{m_{x}} \to \dom(Y)$ where $\phi(Z)(x)(n) = Z(x,n)$ for $n \in \ob N_{m_{x}}$ and $\phi(Z)(x)(w) = Z(\id_{x},w)$ for $w \in \mor N_{m_{x}}$. functoriality of $\phi(Z)(x)$ follows from functoriality of $Z$. Taking $u:x \to x' \in \mor( \dom(X))$, we define $\phi(Z)(u): \phi(Z)(x) \to \phi(Z)(x')$ to be the pair $(X(u), [h \in (\mor N)_{X(u)}  \mapsto Z(u, h)])$. It should be straightforward that $[h \in (\mor N)_{X(u)}  \mapsto Z(u, h)]$ gives a generalized natrual transformation as well as the fact that $Y(Z(u, h)) = \Delta_{F}(X)(u,h) = h$.\\

We have that $\phi(Z)(\id_{x'}) = (X(\id_{x'}) = \id_{m_{x'}}, [h \in (\mor N)_{X(\id_{x'})}  \mapsto Z(\id_{x'}, h) = \phi(Z)(x')(h)]) = \id_{(m_{x'},  \phi(Z)(x'))}$. Also since $F$ is a normal isofibration, we can always express a morphism over some $X(u) \circ X(v) \in \mor M$ as a composition of morphisms over $X(u)$ and $X(v)$ thus $\phi(Z)$ preserve composition of morphisms. Furthermore every morphism over $X(u)^{-1}$ is just the inverse of a morphism over $X(u)$ thus $\phi(Z)$ preserves inverses. So $\phi(Z)$ is a functor.\\

Suppose $W \in  \GrpA{A}/M(X,\Pi_{F}(Y))$, we construct $\psi(W) \in \GrpA{A}/N(\Delta_{F}(X),Y)$, take $(x,n) \in \ob( \dom(\Delta_{F}(X)))$, then $\psi(W)(x,n) = W(x)(n)$ and for $(u,j) \in  \mor( \dom(\Delta_{F}(X))) $, $\psi(W)(u,j) = W(u)(j)$. functoriality of $\psi(W) $ follows from functoriality of $W$ as well as the construction of composition, identity and inverses of morphisms in $\Pi_{F}(Y)$.\\

We have that $\phi(\psi(W))(x)(n) = \psi(W)(x,n) = W(x)(n)$ where $x$ and $n$ are either objects or morphisms and $\psi(\phi(Z))(x,n) = \phi(Z)(x)(n) = Z(x,n)$ where $x$ and $n$ are both either objects or morphisms. Thus $\GrpA{A}/N(\Delta_{F}(X),Y) \cong \GrpA{A}/M(X,\Pi_{F}(Y))$. Therefore. $\Pi_{F}$ is the right adjoint of $\Delta_{F}$.

\end{proof}

\begin{rmk}
\label{altdepprod}

We have a alternative characterization of $\Pi_{F}(X)$ where the objects are the same but the morphisms $(f,\eta): (m,T) \to (m',T')$ are natural transformations $\eta: T \To T' \circ \lift(f)$ where $X(\eta_{n}) = \lift(\htpy{0},F)(\nm{n},\nm{f})$. $\id_{(m,T)} = (\id_{m}, \id_{T})$ and for $(f,\eta): (m,T) \to (m',T')$  and $(f',\eta'): (m',T') \to (m'',T'')$, $(f', \eta') \circ (f,\eta) = (f' \circ f , [n \in N_{m} \mapsto T''(\lift(f,f')(n)) \circ \eta'_{\lift(f)(n)} \circ \eta_{n}])$. For inverses we have $(f:m \to m',\eta: T \To T' \circ \lift(f))^{-1} = (f^{-1},[n \in N_{m'} \mapsto \eta_{\lift(f^{-1})(n)}^{-1} \circ T'(\lift(f^{-1},f)(n)^{-1}) ])$.\\

The reason for this has to do with existence of generalized natural transformations $ \lift(f)_{-}:  \lift(f) \rightsquigarrow \Id_{N_{m'}}$ for $f \in \mor M$ where for $t:n \to n'$ over $f:m \to m'$, $\lift(f)_{t}$ is the morphism allowing the following diagram to commute:

\[\begin{tikzcd}
	&&& {\lift(f)(n)} \\
	\\
	n &&& {n'}
	\arrow["{{\lift(f)_{t}}}", from=1-4, to=3-4]
	\arrow["{{\lift(\htpy{0},F)(\nm{n},\nm{f})}}"{description}, from=3-1, to=1-4]
	\arrow["t"', from=3-1, to=3-4]
\end{tikzcd}\]

 Whiskering $\lift(f)_{-}$ with some $T': N_{m'} \to \dom(X)$ also produces a generalized natural transformation and each $T' \cdot \lift(f)_{-}$ for $f:m \to m' \in \mor M$ and $T': N_{m'} \to \dom(X)$ can be used to establish a bijection between generalized natural transformations $(f:m \to m, \eta:T \rightsquigarrow T')$ and natural transformations $(f:m \to m', \eta: T \To T' \circ \lift(f))$.\\

\end{rmk}

\subsection{Dependent products preserve fibrations}
Here we prove that normal isofibrations satisfies the Frobenius property.\\


\begin{cor}
The right adjoint of the pullback of along normal isofibrations preserve normal isofibrations.

\end{cor}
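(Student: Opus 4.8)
The statement is the Frobenius property for the pushforward: if $h\colon P \to P'$ is a morphism in $\GrpA{A}/N$ whose underlying functor is a normal isofibration, then $\Pi_{F}(h)\colon \Pi_{F}(P) \to \Pi_{F}(P')$ is again a normal isofibration (the type-theoretic requirement that $\Pi_{F}$ of a fibration has fibrant structure map is the special case where $h$ is a structure map $G\colon P\to N$, regarded as a morphism into the terminal object $(N,\id_{N})$ of the slice). The plan is to prove this abstractly from the adjunction $\Delta_{F} \adj \Pi_{F}$ of the preceding lemma, combined with the lifting characterisations of Lemma \ref{modstr1} and the pullback-stability of Lemma \ref{froben}, rather than by a direct fibrewise construction.

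First I would invoke Lemma \ref{modstr1}: it suffices to show that $\Pi_{F}(h)$ has the right lifting property against every strong deformation section $j\colon A \to B$. Given a lifting square with top $u\colon A \to \Pi_{F}(P)$ and bottom $v\colon B \to \Pi_{F}(P')$, composing with the structure maps $\Pi_{F}(P),\Pi_{F}(P') \to M$ equips $A$ and $B$ with maps to $M$ for which $j$, $u$, $v$ all become morphisms over $M$ (commutativity of the square forces the two induced maps $A \to M$ to agree). The square thus lives in $\GrpA{A}/M$, and I would transpose it under $\Delta_{F} \adj \Pi_{F}$ to a square in $\GrpA{A}/N$ whose right edge is $h$ and whose left edge is $\Delta_{F}(j)$.

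Next I would identify $\Delta_{F}(j)\colon A\times_{M} N \to B\times_{M} N$ as the pullback of the underlying map $j$ along the projection $B\times_{M} N \to B$, which is itself a pullback of $F$ and hence a normal isofibration (pullback-stability of normal isofibrations holds because both the right lifting property against $\htpy{0}$ and the normality clause transport along pullback squares). Lemma \ref{froben} then gives that $\Delta_{F}(j)$ is a strong deformation section. Since $h$ is a normal isofibration, Lemma \ref{modstr1} supplies a lift of the transposed square, and transposing this solution back across the adjunction yields a lift of the original square; so $\Pi_{F}(h)$ has the right lifting property against all strong deformation sections and is therefore a normal isofibration.

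The hard part will be the bookkeeping needed to make this honest in the paper's enriched, \emph{uniform} sense of lifting: here ``right lifting property'' means that $\pullp(j,\Pi_{F}(h))$ admits an $\Asm(A)$-section, so a pointwise bijection on lifting problems is not by itself enough. I would need the mate correspondence induced by $\Delta_{F} \adj \Pi_{F}$ to be an isomorphism of square objects $\Sq(j,\Pi_{F}(h)) \cong \Sq(\Delta_{F}(j),h)$ in $\Asm(A)$, i.e. that the adjunction is compatible with the $\Asm(A)$-enrichment, so that the section $\lift(\Delta_{F}(j),h)$ transports to a section $\lift(j,\Pi_{F}(h))$. Verifying this enriched naturality, together with checking that normal isofibrations and strong deformation sections in the slices coincide with their underlying counterparts, is where the real work lies; everything else is a formal consequence of Lemmas \ref{froben} and \ref{modstr1}.
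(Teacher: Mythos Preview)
Your proposal is correct and follows essentially the same approach as the paper, just unpacked: the paper's proof is a two-line citation of Proposition 2.4 of \cite{GAMBINO20173027} applied to Lemma \ref{froben} and the weak factorization system of Corollary \ref{awfsgrpd}, and your argument is precisely the content of that cited proposition---transpose a lifting square against $\Pi_{F}(h)$ across $\Delta_{F}\adj\Pi_{F}$, use Lemma \ref{froben} to see that $\Delta_{F}(j)$ is a strong deformation section, lift against $h$, and transpose back. Your caveat about the enriched/uniform nature of the lifting is well taken; the paper sidesteps this by deferring to the external reference, so in spelling the argument out you have in fact identified a detail the paper leaves implicit.
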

\begin{proof}

By Corollary \ref{awfsgrpd}, the class of strong deformation sections and normal isofibrations forms a weak factorization system. Thus this corollary is satisfied by applying Proposition 2.4 of \cite{GAMBINO20173027} to Lemma \ref{froben}.\\

\end{proof}

We refer to $\Fib(X)$ as the full subcategory of $\GrpA{A}/X$ containing the normal isofibrations. In which case we have the following corollary:
\begin{cor}

Given a normal isofibration $F:X \to Y$ we have the following adjoint pair:

$$\Delta_{F}: \Fib(Y) \leftrightarrows  \Fib(X):\Pi_{F}  $$

\end{cor}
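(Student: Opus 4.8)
The plan is to show that both functors of the adjunction $\Delta_{F} \dashv \Pi_{F}$ from the previous lemma restrict to the full subcategories $\Fib(Y) \subseteq \GrpA{A}/Y$ and $\Fib(X) \subseteq \GrpA{A}/X$, and then to invoke the elementary principle that an adjunction restricts to any pair of full subcategories that are each preserved by the two functors. Thus the argument decomposes into three steps: (i) the right adjoint $\Pi_{F}$ carries $\Fib(X)$ into $\Fib(Y)$, (ii) the left adjoint $\Delta_{F}$ carries $\Fib(Y)$ into $\Fib(X)$, and (iii) the hom-set bijection restricts.

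Step (i) is immediate: it is exactly the content of the preceding corollary, which asserts that $\Pi_{F}$ preserves normal isofibrations. For step (ii) I would check that the pullback functor $\Delta_{F} = F^{*}$ sends a normal isofibration into $Y$ to a normal isofibration into $X$. By Lemma \ref{modstr1} the normal isofibrations are precisely the functors admitting the right lifting property against every strong deformation section, and any class of maps characterized by a right lifting property is closed under pullback. Concretely, given a strong deformation section $j$ and an element of $\Sq(j, \Delta_{F}(g))$ for $g \in \Fib(Y)$, one composes with the pullback projection $\dom(\Delta_{F}(g)) \to \dom(g)$ to obtain a square against $g$, solves it via the lifting structure on $g$, and then reassembles the desired lift over $X$ using the universal property of the pullback square defining $\Delta_{F}(g)$. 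Hence $\Delta_{F}(g)$ is again a normal isofibration.

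For step (iii), since $\Fib(Y)$ and $\Fib(X)$ are full subcategories and both $\Delta_{F}$ and $\Pi_{F}$ send objects of one into the other, the bijection $\GrpA{A}/X(\Delta_{F}(g), p) \cong \GrpA{A}/Y(g, \Pi_{F}(p))$ established in the previous lemma restricts verbatim to $g \in \Fib(Y)$ and $p \in \Fib(X)$, the relevant hom-sets being unchanged by fullness, and it is natural in both variables as before. This produces the adjoint pair $\Delta_{F} \dashv \Pi_{F}$ between $\Fib(Y)$ and $\Fib(X)$. The only genuine verification here is step (ii), the pullback-stability of normal isofibrations; in particular the normality clause (identity squares lifting to identities) is subsumed by the lifting-property characterization of Lemma \ref{modstr1} and so is preserved along the pullback, which is the point I would write out most carefully.
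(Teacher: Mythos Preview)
Your proposal is correct and follows essentially the same approach as the paper: the paper's proof likewise invokes the preceding corollary for step~(i), cites Lemma~\ref{modstr1} (normal isofibrations are the right maps of a weak factorization system, hence closed under pullback) for step~(ii), and then restricts the adjunction to the full subcategories $\Fib(-)$. Your write-up is more explicit about the pullback-stability argument, but the structure and the cited inputs are identical.
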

\begin{proof}

By the previous corollary we have that dependent products on normal isofibrations preserve isofibrations and we already have that pullbacks preserve normal isofibrations since normal isofibrations are right maps of a weak factorization system by Lemma \ref{modstr1}. Therefore, we can restrict the $\Pi_{F} \vdash \Delta_{F}$ adjunction from $\GrpA{A}/-$ to $\Fib(-)$.

\end{proof}

\begin{cor}

$\Fib(X)$ for every $X \in \ob \GrpA{A}$ is cartesian closed.\\ 

\end{cor}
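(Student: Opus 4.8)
The plan is to verify that $\Fib(X)$ has a terminal object, binary products, and exponentials, the last being the only nontrivial point. Throughout I will use the fact, recorded in Corollary \ref{awfsgrpd}, that the normal isofibrations are the right maps of an algebraic weak factorization system, so that they are closed under composition and stable under pullback (the latter being immediate from the characterization of normal isofibrations by a right lifting property in Lemma \ref{modstr1}, since such a class is automatically pullback-stable).

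First I would dispose of the easy structure. The terminal object of $\GrpA{A}/X$ is $\id_{X}$, which is a normal isofibration and hence lies in $\Fib(X)$. For the product of $F\colon A \to X$ and $G\colon B \to X$ in $\Fib(X)$ I would take the pullback $A \times_{X} B \to X$; its structure map factors as $A \times_{X} B \to A \xrightarrow{F} X$, where the first arrow is the pullback of $G$ along $F$ and is therefore a normal isofibration, so the composite is again a normal isofibration. Since $\Fib(X)$ is full in $\GrpA{A}/X$, this pullback is the product computed in $\Fib(X)$.

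For exponentials I would exploit the adjoint triple $\Sigma_{F} \dashv \Delta_{F} \dashv \Pi_{F}$ on the fibrant slices, where for a normal isofibration $F\colon A \to X$ the functor $\Sigma_{F}$ is post-composition with $F$ (left adjoint to pullback since $\GrpA{A}$ has finite limits) and $\Delta_{F}$ is pullback along $F$. The adjunction $\Delta_{F} \dashv \Pi_{F}$ restricts to $\Fib(A) \leftrightarrows \Fib(X)$ by the preceding corollary, which asserts that the dependent product along a normal isofibration preserves normal isofibrations; the adjunction $\Sigma_{F} \dashv \Delta_{F}$ restricts because $\Sigma_{F}$ preserves normal isofibrations by closure under composition and $\Delta_{F}$ does so by pullback-stability. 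Given $F, G \in \Fib(X)$, I claim the exponential is $G^{F} := \Pi_{F}\,\Delta_{F}\,G$. Indeed $\Delta_{F}G \in \Fib(A)$ and $\Pi_{F}\Delta_{F}G \in \Fib(X)$, and for any $H \in \Fib(X)$ the slice product $H \times_{X} F$ is precisely $\Sigma_{F}\Delta_{F}H$, so the chain of natural bijections
\[
\Fib(X)(H \times_{X} F,\, G) \cong \Fib(A)(\Delta_{F}H,\, \Delta_{F}G) \cong \Fib(X)(H,\, \Pi_{F}\Delta_{F}G)
\]
(the first arising from $\Sigma_{F} \dashv \Delta_{F}$, the second from $\Delta_{F} \dashv \Pi_{F}$) exhibits $\Pi_{F}\Delta_{F}G$ as $G^{F}$, natural in $H$.

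The main obstacle is not the formal manipulation but justifying that the adjoint triple genuinely restricts to the fibrant subcategories; concretely, that $\Sigma_{F}$ preserves normal isofibrations, i.e.\ that normal isofibrations compose. This is exactly where Corollary \ref{awfsgrpd} and the composition structure on $R_{\Fib}$-algebras are essential, since without closure under composition the identification $H \times_{X} F \cong \Sigma_{F}\Delta_{F}H$ would fail to land in $\Fib(X)$ and the adjunction chain would break down.
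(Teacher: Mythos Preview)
Your proof is correct and follows essentially the same approach as the paper: define the exponential as $\Pi_{F}\Delta_{F}G$ and verify the universal property via the adjoint chain $\Sigma_{F} \dashv \Delta_{F} \dashv \Pi_{F}$ restricted to the fibrant slices. You are more explicit than the paper about the terminal object, products, and why the adjunctions restrict, but the core argument is the same.
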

\begin{proof}

Given normal isofibrations $F,G \in \Fib(X)$, it is easy to show that $F \times G = \Sigma_{F} \circ \Delta_{F}(G)$ where $\Sigma_{F} = F \circ -$ using the universal property of pullbacks. Thus we define $F \to G = \Pi_{F} \circ \Delta_{F}(G)$ giving us the following isomorphisms:
\begin{align*}
\Fib(X)(H, F \to G) &= \Fib(X)(H, \Pi_{F} \circ \Delta_{F}(G))\\
&\cong \Fib(X)(\Delta_{F}(H),\Delta_{F}(G))\\
&\cong \Fib(X)(\Sigma_{F} \circ \Delta_{F}(H),G)\\
&= \Fib(X)(F \times H, G)
\end{align*}

\end{proof}

\section{W-Types, Model Structures and Localizations}
This section is dedicated to $W$-types with reductions and their applications. We first start with construct plain $W$-types in $\GrpA{A}$ mainly as a premier for the next section where we will show the existence of $W$-types with reductions. We will then apply $W$-types with reductions to construct a version of the small object argument in $\GrpA{A}$. We will apply this small object argument to construct a cofibrant algebraic weak factorization system for the model structure on $\GrpA{A}$ and to construct localization modalities.\\

\subsection{Existence of W-Types of Groupoid Assemblies}
In this section, we show the existence of $W$-types manly to showcase a methodology similar to the one we will use in proving the existence of $W$-types with reductions in $\GrpA{A}$.\\

Given the existence of the following diagram in $\GrpA{A}$:

\[\begin{tikzcd}
	& X &&& Y \\
	\\
	Z &&&&& Z
	\arrow["f", from=1-2, to=1-5]
	\arrow["g"', from=1-2, to=3-1]
	\arrow["h", from=1-5, to=3-6]
\end{tikzcd}\]

where $f$ is a normal isofibration, we define the functor $\poly_{f,g,h} = \sum_{h} \circ \Pi_{f} \circ g^{*}: \GrpA{A}/Z \to \GrpA{A}/Z$. Given some $G: D \to Z$, the domain of $\poly_{f,g,h}(G)$ is constructed as follows:

\begin{itemize}

\item The objects of $\poly_{f,g,h}(G)$ are $(z, y, T: X_{y} \to D \times_{Z} X)$ where $\Delta_{g}(G) \circ T = \id_{X}|_{X_{y}}$ and $h(y) = z$. 

\item The morphisms are $(c:z \to z',v:y \to y',  \phi: T \To T' \circ \lift(v))$ where $\Delta_{g}(G)(\phi_{x}) = \lift(\htpy{0},f)(\nm{x},\nm{v})$ for $x \in \ob X_{y}$ and $h(v) = c$.

\end{itemize}

Here will we expound on the results of sections 3.1 and 6.1 of \cite{wreo22517} to show that there is a $W$-type for $\poly_{f,g,h}$ in the internal language of $\GrpA{A}$.\\

First we seek to construct a $W$-type for $f:X \to Y$.  Given a morphism $v:y \to y'$ in $Y$, we define the sets: $$L_{v} := \{\lift(\htpy{0},f)(\nm{x},\nm{v})|  f(x) = y\}$$ $$\overline{X}_{y} := \{x \in \ob X|  f(x) = y\} \cup \{u \in \mor X|  f(u) = \id_{x} \}$$ $$\overline{X}_{v} :=  \overline{X}_{y} + \overline{X}_{y'} + L_{v}$$ $$\overline{Y} := \ob Y + \mor Y$$ $$\overline{X} := \Sigma_{y \in \ob Y}\overline{X}_{y}  +  \Sigma_{v \in \mor Y}\overline{X}_{v}$$

Now we define a morphism $\overline{f}: \overline{X} \to \overline{Y}$ in $\Asm(A)$ as a projection morphism. Set $W$ as the $W$-type corresponding to $\poly_{\overline{f}}$ where $\alpha: \poly_{\overline{f}}(W) \to W$ is the canonical algebra on $W$.
By Lemma 3.1.1 of \cite{wreo22517}, we can construct a reflexive graph $W_{g}$ using elements of $W$ as follows:

\begin{itemize}

\item The vertices of $W_{g}$ are the elements $\alpha(y, T: \overline{X}_{y} \to W)$ for $y \in \ob Y$

\item The edges of $W_{g}$ are the elements $\alpha(v:y \to y',\phi: \overline{X}_{v} \to W)$ for $v \in \ob Y$

\item The source of $\alpha(v,\phi: \overline{X}_{v} \to W)$ is $\alpha(y,\phi|_{\overline{X}_{y}})$ and the target is $\alpha(y',\phi|_{\overline{X}_{y'}})$

\item The edge $\id_{\alpha(y,T)}$ defined as $\alpha(\id_{y}, \id_{T}: \overline{X}_{\id_{y}} \to W)$ where $\id_{T}(x) = T(x)$ which works since $\overline{X}_{\id_{y}} = L_{\id_{y}} + \overline{X}_{y} + \overline{X}_{y}$ and $L_{\id_{y}}  \subseteq \overline{X}_{y}$

\end{itemize}

Finally we can consider $\overline{X}_{u}$ where $u \in \overline{Y}$ a reflexive graph where the vertices are the object, the edges are the morphisms and the reflexive edges are the identity morphisms. Now we can present the following definition:

\begin{defn}

We call an $\alpha(u,T) \in W$ a {\bf hereditary graph morphism} if $T: \overline{X}_{u} \to W$ is a graph morphism from $\overline{X}_{u}$ into $W_{g}$ and for all $x \in \overline{X}_{u}$, $T(x)$ is a hereditary graph morphism.

\end{defn}

By Lemma 3.1.3 of  \cite{wreo22517}, we can take $\tilde{W_{g}}$ as a subgraph of $W_{g}$ whose vertices and edges are all hereditary graph morphisms. In fact by Proposition 3.1.4 of \cite{wreo22517}, $\tilde{W_{g}}$ is inductively generated by the following conditions:

\begin{itemize}
\item For $y \in \ob Y$ and graph morphism $T: \overline{X}_{y} \to \tilde{W_{g}}$, (a, T) is a vertex in $ \tilde{W_{g}}$

\item For $v:y \to y' \in \mor Y$, $(y,T)$, $(y',T')$ and a collection of edges in $\tilde{W_{g}}$ of the form: $$\phi = \{\phi_{x}: T(x) \to T'(\lift(v)(x)) | f(x) = y\}$$ then $(v,\phi)$ is an edge from $(y,T)$ to $(y',T')$

\end{itemize}

The arrow $\id_{(y,T)}$ is defined using the pair $(\id_{y}, \{T(\id_{x})|  f(x) = y\})$. Note that $\lift(\htpy{0},f)(\nm{x},\nm{\id_{y}}) = \id_{x}$ since $f$ is a normal isofibration, thus $\lift(\id_{y})(x) = x$.\\

Given edges $(v, \phi): (y,T) \to (y',T')$ and $(v^{*},\phi^{*}): (y',T') \to (y'',T'')$, we define $(v^{*},\phi^{*}) \circ (v, \phi)$ inductively as $(v^{*} \circ v, \phi^{*} \circ \phi)$ where $$(\phi^{*} \circ \phi)_{x} := \{(T''(\lift(v,v^{*})(x)) \circ \phi^{*}_{\lift(v)(x)}) \circ \phi_{x}| f(x) = y \}$$ 

Note the parallels with Remark \ref{altdepprod}.\\

Now we define a new set of vertices and edges:

\begin{defn}

We call a vertex $(y,T: \overline{X}_{y} \to \tilde{W_{g}})$ in $\tilde{W_{g}}$ {\bf functorial} if $T(m \circ n) = T(m) \circ T(n)$ for composable $m,n \in \mor X_{y}$, $T(\id_{x}) = \id_{T(x)}$ for $x \in \ob X_{y}$ and each $T(x)$ for $x \in \ob X_{y}$ is functorial and each $T(m)$ for $m \in \mor X_{y}$ is natural.\\

We call an edge $(v, \phi): (y, T) \to (y', T')$ in $\tilde{W_{g}}$ {\bf natural} if for $n:x \to x' \in \mor X_{y}$ we have that $$\phi_{x'} \circ T(n) = T'(\lift(v)(n)) \circ \phi_{x}$$ and each $\phi_{x}$ for $x \in \ob X_{y}$ is natural.

\end{defn}

Let $W_{f}$ be the subgraph of $\tilde{W_{g}}$ with functorial vertices and natural edges, then we have thee following:

\begin{lem}
$W_{f}$ is a groupoid assembly. 

\end{lem}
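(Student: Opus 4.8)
The plan is to verify that $W_f$, defined as the subgraph of $\tilde{W_g}$ with functorial vertices and natural edges, carries the structure of an internal groupoid in $\Asm(A)$. First I would identify the object and morphism assemblies: the objects are the functorial vertices $(y,T)$ and the morphisms are the natural edges $(v,\phi)$, each inheriting a realizer from the ambient $W$-type $W$ together with the realizers encoding the functoriality/naturality witnesses. Since $W_f$ is cut out of $\tilde{W_g}$ by the decidable-in-the-internal-language conditions of functoriality and naturality, and since $\tilde{W_g}$ is itself a subgraph of the $W$-type $W$ for $\poly_{\overline f}$ (which exists in $\Asm(A)$ by Lemma \ref{depwtype} and the decoration construction), the underlying sets and their realizability structure are already available; what remains is to equip them with the groupoid operations and check the groupoid axioms.

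\textbf{Providing the operations.} I would take $\dom,\cod\colon \mor W_f \to \ob W_f$ to be the source/target maps of the reflexive graph $W_g$ restricted to $W_f$, namely $(v,\phi)\mapsto (y,T)$ and $(v,\phi)\mapsto (y',T')$; these are assembly morphisms because projection out of the $W$-type datum is realized by a fixed combinator. Identities $\id_{(y,T)} = (\id_y,\{T(\id_x)\})$ are already shown to land in $\tilde{W_g}$, and I would check the identity edge is natural (immediate from functoriality of $T$) so that $\id\colon \ob W_f \to \mor W_f$ is well-defined and realizable. Composition $(v^*,\phi^*)\circ(v,\phi) = (v^*\circ v,\phi^*\circ\phi)$ is given by the inductive formula already displayed; I would verify that the composite of two natural edges is again natural — this uses the naturality of each factor together with the coherence identity $T''(\lift(v,v^*)(x))$ witnessing compatibility of the lifts, exactly paralleling the composition of generalized natural transformations in Remark \ref{altdepprod}. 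Inverses $(v,\phi)^{-1}$ are defined by the same template as in Remark \ref{altdepprod}, sending $(v\colon m\to m',\eta)$ to $(v^{-1},[n\mapsto \eta_{\lift(v^{-1})(n)}^{-1}\circ T'(\lift(v^{-1},v)(n)^{-1})])$; I would confirm the result is a natural edge and that each operation is realized by a combinator assembled from $\fix$/$\fixf$ and the pairing/projection combinators, since the definitions recurse through the hereditary structure.

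\textbf{Checking the axioms and the main obstacle.} Associativity, unitality, and the inverse laws for composition all reduce to pointwise identities among the component morphisms $\phi_x$ in the ambient groupoid assemblies $T(x)\in \tilde{W_g}$, so they follow by $\DirST$-induction (or $\epsilon$-induction on the well-founded tree structure of $W$) from the corresponding laws one level down, using that each $T(x)$ is itself functorial. The genuinely delicate point — the step I expect to be the main obstacle — is verifying that all of these operations are uniformly \emph{realized} in $\Asm(A)$, not merely defined set-theoretically. Because composition and inversion are defined by recursion through the hereditary graph-morphism structure, their realizers must be produced by the fixed-point combinators $\fix,\fixf$ acting on the decoration realizers $\omega$ attached to elements of $W$; I would need to check that a single combinator, built once and for all, simultaneously realizes the operation at every depth, which is the same kind of argument carried out for the decoration $\omega$ in the $W$-type with reductions construction. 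Once this uniform realizability is in hand, functoriality of the vertex data and naturality of the edges guarantee the graph $W_f$ is closed under all the operations, and the groupoid axioms transfer from the fibers, establishing that $W_f$ is a groupoid internal to $\Asm(A)$, i.e.\ a groupoid assembly.
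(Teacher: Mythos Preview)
Your proposal is essentially correct and amounts to spelling out what the paper's proof merely cites: the paper simply says that the argument of Proposition~6.1.2 of \cite{wreo22517} (showing $W_f$ is a groupoid) can be replicated in the internal language of $\Asm(A)$, and your sketch is precisely that argument unpacked.

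One point of emphasis is slightly misplaced. You flag uniform realizability of the recursively defined operations (composition, inversion) as ``the main obstacle,'' proposing to build realizers via $\fixf$. But the paper's framing sidesteps this entirely: because $W$ is already constructed as a $W$-type \emph{in} $\Asm(A)$ (via decoration), and because $\tilde{W_g}$ and $W_f$ are carved out by internally expressible predicates, every operation you write down in the internal language is automatically an assembly morphism. The recursion principle of the $W$-type $W$ itself supplies the needed realizers; you do not need to hand-roll them with fixed-point combinators. So what you identify as the delicate step is in fact free once you commit to working internally, and the genuine content is exactly the groupoid-axiom verification by $\DirST$-induction that you outline, which is the substance of the cited proposition.
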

\begin{proof}

The argument stated Proposition 6.1.2 of \cite{wreo22517} that tells us that $W_{f}$ is a groupoid can be replicated in the internal language of $\Asm(A)$.

\end{proof}

\begin{lem}

There is an algebra structure $\alpha_{f}: \poly_{f}(W_{f}) \to W_{f}$ on $W_{f}$. Furthermore, $\alpha_{f}$ is an isomorphism.

\end{lem}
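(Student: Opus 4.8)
The plan is to construct the algebra map $\alpha_f \colon \poly_f(W_f) \to W_f$ and exhibit it as an isomorphism by producing an explicit two-sided inverse, mirroring the set-level argument from the earlier $W$-type lemmas (e.g. the proof that $W_f \cong \poly_f(W_f)$) but now carried out inside the groupoid structure. Recall that an object of $\poly_f(W_f)$ over $Z$ (here $Z$ is terminal, so we suppress it) is a pair $(y, T \colon X_y \to W_f)$ with the appropriate fibre condition, and a morphism is a pair $(v \colon y \to y', \phi \colon T \To T' \circ \lift(v))$ with $f(\phi_n) = \lift(\htpy{0},f)(\nm{n},\nm{v})$. The inductive generation of $W_f$ as the functorial-vertex, natural-edge subgraph of $\tilde{W_g}$ already packages exactly this data, so I expect $\alpha_f$ to be essentially the identity on the underlying data.

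First I would define $\alpha_f$ on objects by $\alpha_f(y, T) = (y, T)$, viewing the functorial vertex $(y,T)$ of $W_f$ as living in the image of the inductive clause, and on morphisms by $\alpha_f(v, \phi) = (v, \phi)$, where naturality of $\phi$ as an edge of $W_f$ is exactly the naturality condition required for $(v,\phi)$ to be a morphism of $\poly_f(W_f)$. The key verifications are that $\alpha_f$ preserves composition and identities: composition of morphisms in $\poly_f(W_f)$ is given by the whiskering formula $(\phi^* \circ \phi)_x = T''(\lift(v,v^*)(x)) \circ \phi^*_{\lift(v)(x)} \circ \phi_x$, which I would match against the composition rule defined on edges of $\tilde{W_g}$ — and these are the same formula by construction, so functoriality is immediate. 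Identities match because $\id_{(y,T)}$ in $W_f$ uses $(\id_y, \{T(\id_x)\})$ and $\lift(\id_y) = \id$ since $f$ is a normal isofibration.

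To show $\alpha_f$ is an isomorphism I would construct $r \colon W_f \to \poly_f(W_f)$ sending a vertex $w$ — which, by the inductive generation, is uniquely of the form $(y, T)$ — to the pair $(y,T)$, and similarly on edges, exactly as in the set-level decomposition $w \mapsto (y_w, t_w)$. The uniqueness of this decomposition (each element of $W_f$ is an $\alpha_f$-image of a unique $(y,T)$) is precisely what the inductive presentation of $\tilde{W_g}$ and the functoriality/naturality restrictions guarantee, and it is what makes $r$ well-defined. Then $r \circ \alpha_f = \id$ and $\alpha_f \circ r = \id$ follow on the nose, and I would note that both $\alpha_f$ and $r$ are realized by the same combinator (essentially $\iota$, up to the pairing/projection bookkeeping inherited from the $\Asm(A)$ structure on $\overline{X}$ and $W$), so the isomorphism holds in $\GrpA{A}$ and not merely on underlying groupoids.

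The main obstacle will be the bookkeeping needed to confirm that the composition and inverse operations on edges of $W_f$ genuinely coincide with those of $\poly_f(W_f)$ once the $\lift$-coherence morphisms $\lift(v,v')$ and $\lift(f)_{-}$ are inserted; this is where Remark~\ref{altdepprod} does the real work, since it supplies the bijection between generalized natural transformations $T \rightsquigarrow T'$ and natural transformations $T \To T' \circ \lift(v)$ that underlies both descriptions. I would lean on that remark to identify the two composition formulas rather than recompute them, and I would invoke the normality of $f$ (so that $\lift(\id) = \id$ and the identity clause is strict) to avoid spurious coherence isomorphisms. The remaining checks — preservation of inverses and the fibre conditions $f(\phi_n) = \lift(\htpy{0},f)(\nm{n},\nm{v})$ — are routine given the inductive clauses, so I would state them as immediate.
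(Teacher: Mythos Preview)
Your proposal is correct and follows the natural approach: the inductive presentation of $W_f$ (functorial vertices $(y,T)$, natural edges $(v,\phi)$) is exactly the data of $\poly_f(W_f)$, so $\alpha_f$ and its inverse are the identity on underlying data, with functoriality secured by matching the composition formula in $\tilde{W_g}$ against that of $\Pi_f$ via Remark~\ref{altdepprod}. The paper itself does not give a proof here at all --- it simply cites Proposition~6.1.3 of \cite{wreo22517} --- so your reconstruction supplies the argument the paper outsources, and does so along the lines one would expect that reference to contain.
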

\begin{proof}

This is proposition 6.1.3 of \cite{wreo22517}.

\end{proof}

\begin{lem}
\label{subalgpoly}

$W_{f}$ is its own smallest $\poly_{f}$-subalgebra.

\end{lem}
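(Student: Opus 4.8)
The statement is that $W_f$, the groupoid assembly constructed above as the subgraph of $\tilde{W_g}$ with functorial vertices and natural edges, is its own smallest $\poly_f$-subalgebra. In other words, if $D \hookrightarrow W_f$ is a $\poly_f$-subalgebra — meaning $D$ is closed under the algebra structure $\alpha_f$ in the sense that whenever $\alpha_f(y,T)$ has all of its ``components'' $T(x)$ landing in $D$, the element $\alpha_f(y,T)$ itself lies in $D$ — then $D$ must be all of $W_f$.

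**The plan.**
The plan is to reduce this to the well-foundedness of the underlying $W$-type $W$ of $\poly_{\overline{f}}$ in $\Set$, mirroring exactly the argument of Lemma \ref{wellf} (where $c_{f,g,h}$ was shown to be its own smallest sub-algebra) and Lemma \ref{subalgpoly}'s $\Set$-analogue, the $\DirST$-induction used in Lemma \ref{fwtype}. First I would recall that every element of $W_f$ is of the form $\alpha(u,T)$ for $u \in \overline{Y}$, and that $W_f$ sits inside $W$, the initial $\poly_{\overline{f}}$-algebra, which is well-founded with respect to its direct-subtree relation $\DirST$. Given a $\poly_f$-subalgebra $d_{f} \colon D \to W_f$, I would define an auxiliary subobject $D' = \{w \in W \mid w \in W_f \imply w \in D\}$, exactly as in Lemma \ref{wellf}. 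The heart of the argument is then to show $D'$ is $\DirST$-inductive: assuming $w = \alpha(y,T)$ is such that every direct subtree $T(x)$ lies in $D'$, and assuming $w \in W_f$ (so that $w$ is a functorial vertex with each $T(x) \in W_f$), one concludes each $T(x) \in D$ by the inductive hypothesis, and then the subalgebra closure of $D$ forces $w = \alpha_f(y,T) \in D$. Thus $w \in D'$, so $D'$ is $\DirST$-inductive, hence $D' = W$ by well-foundedness, and therefore $D = W_f$.

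**Carrying it out.**
Concretely, the steps in order are: (i) invoke that $W_f \mono \tilde{W_g} \mono W$ with $W$ the well-founded initial algebra of $\poly_{\overline{f}}$, so that $\DirST$-induction is available; (ii) define $D'$ as above and unpack what it means for $w \in W_f$ to be a vertex, namely $w = \alpha_f(y,T)$ with $T \colon X_y \to W_f$ functorial and each $T(x) \in \ob W_f$; (iii) verify the inductive step, using that $D$ is a $\poly_f$-subalgebra so that the closure condition $\Bigl(\forall x \in X_y,\ T(x) \in D\Bigr) \imply \alpha_f(y,T) \in D$ holds; (iv) conclude $D' = W$ and hence $D = W_f$, and finally note the analogous argument on edges (natural morphisms) so that $D$ and $W_f$ agree as groupoid assemblies, not merely on objects. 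Since the whole construction is carried out in the internal language of $\Asm(A)$, I would remark that the $\DirST$-induction transfers verbatim, as was done for the plain $W$-type constructions earlier.

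**The main obstacle.**
The routine part is the well-foundedness bookkeeping, which is genuinely identical to Lemma \ref{wellf}. The delicate point I expect to be the main obstacle is keeping the \emph{groupoid} structure honest: a $\poly_f$-subalgebra of $W_f$ must be closed not only on the functorial vertices but compatibly on the natural edges, and one must check that the direct-subtree induction sees the edge data correctly — that is, that the edges $(v,\phi)$ of $W_f$ are themselves reconstructed from their component edges $\phi_x$, which are subtrees in the appropriate sense. Handling the edges requires either extending $\DirST$-induction to the full reflexive-graph presentation $\tilde{W_g}$ or, more cleanly, arguing that since $\alpha_f$ is an isomorphism (by the preceding lemma) the subalgebra condition on objects already determines the edge behavior, so that closure on vertices propagates automatically to edges. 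I would phrase the argument so that this propagation is explicit rather than assumed.
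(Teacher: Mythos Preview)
Your proposal is correct and matches the paper's approach: define $D' = \{w \in W \mid w \in W_f \Rightarrow w \in D\}$, prove $D'$ is $\DirST$-inductive on the underlying $W$-type $W$, and conclude $D' = W$ hence $D = W_f$. The edge-handling you flag as the main obstacle is actually simpler than you anticipate: since $\overline{Y} = \ob Y + \mor Y$, both vertices and edges of $W_f$ are already elements of the single well-founded $W$-type $W$, so the $\DirST$-induction covers both cases uniformly without any extension---the paper simply does a case split on whether $w$ lands in $\ob W_f$ or $\mor W_f$ when verifying the inductive step, exactly as you would in step (iv).
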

\begin{proof}

We will replicate the argument from Lemma \ref{wellf} using the $W$-type $W$. Suppose we have a subalgebra $D \mono W_{f}$ and suppose we have the assembly $D' = \{w \in W |  w \in W_{f} \imply  w \in D\}$ where $w$ can either be an object or a morphism. We seek to show that $D'$ is $\DirST$-inductive. \\

Suppose for $w = \alpha(u,T) \in W$ and for $x \in \overline{X}_{u}$ being an object or morphism we have that $T(x)  \in D'$ and suppose $w \in \ob W_{f}$, then $T$ is functorial and $T(x) \in W_{f}$ thus $T(x) \in D$. This means that we have a functor $T: X_{u} \to D$. Thus $\alpha(u,T) \in \ob D$. \\

If $w \in \mor W_{f}$, then $T$ is natural and can be characterized as a collection of morphisms $$\{T_{x}: T'(x) \to T''(\lift(u)(x))| f(x) = \dom(u) \}$$ 

Plus, for $x \in \ob X_{\dom(u)}$, $T_{x} \in \mor W_{f}$ thus $T_{x} \in \mor D$. Additionally since the pairs $(\dom(u),T')$ and $(\cod(u), T'')$ exist in $W_{f}$,  by the previous argument for objects, the pairs must exist in $D$. So we have a natural transformation $T$ in $D$, thus $\alpha(u,T) \in \ob D$.\\

This means that $D'$ is $\DirST$-inductive. Thus $D' = W$ and by implication $D = W_{f}$.

\end{proof}

\begin{lem}
\label{wgini}

 $W_{f}$ is the $W$-type associated with $f$.

\end{lem}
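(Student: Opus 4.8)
The plan is to show that $(W_f,\alpha_f)$ is the initial algebra of $\poly_f$ by transporting the \emph{attempts} argument of Lemma~\ref{fwtype} into the internal language of $\Asm(A)$, just as Lemma~\ref{subalgpoly} transported the smallest-subalgebra argument. Two of the three ingredients of a well-founded fixpoint are already in hand: the isomorphism $\alpha_f\colon\poly_f(W_f)\to W_f$ established above, and Lemma~\ref{subalgpoly}, which gives that $W_f$ is its own smallest $\poly_f$-subalgebra, the underlying recursion principle being $\DirST$-induction on the ambient $W$-type $W$ of $\poly_{\overline f}$. Initiality will then follow by running the attempt recursion against an arbitrary algebra and invoking $\DirST$-induction to obtain both existence and uniqueness of the mediating functor.

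Concretely, fix a $\poly_f$-algebra $(B,\beta\colon\poly_f(B)\to B)$. For $w\in W_f$ I would define, as in Lemma~\ref{fwtype}, the object $\Paths_w$ of descent sequences through $\DirST$ and call $\gamma\colon\Paths_w\to B$ an \emph{attempt} when it is forced by $\beta$ along the branching encoded by $\alpha_f^{-1}$; since $\alpha_f$ is an isomorphism every $w$ has the form $\alpha_f(y,T)$, so the clause $\gamma((w))=\beta(y,[x\in X_{y}\mapsto\gamma((w,T(x)))])$ is well posed. I would then show the subobject $U_f\mono W_f$ of elements carrying a unique attempt is $\DirST$-inductive on $W$ — the successor step builds the attempt on $\alpha_f(y,T)$ from the attempts on the $T(x)$ — and conclude $U_f=W_f$ by Lemma~\ref{subalgpoly}. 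Setting $\gamma(w)=\gamma_w((w))$ yields the candidate, and unwinding the attempt on $\alpha_f(y,T)$ shows $\gamma\circ\alpha_f=\beta\circ\poly_f(\gamma)$, so $\gamma$ is an algebra morphism. Uniqueness is the usual equalizer argument: for a second morphism $\delta$, the equalizer $\Eq{\gamma,\delta}\mono W_f$ inherits a $\poly_f$-algebra structure, hence is $\DirST$-inductive and equal to $W_f$, forcing $\gamma=\delta$.

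The main obstacle, and the real content beyond Lemma~\ref{fwtype}, is that $W_f$ is a groupoid rather than a set: $\gamma$ must be defined coherently on both $\ob W_f$ and $\mor W_f$ and be a genuine functor in $\GrpA{A}$. Thus each attempt must send an edge $(v,\phi)\colon(y,T)\to(y',T')$ to $\beta$ applied to the morphism datum $(v,[x\mapsto\gamma(\phi_x)])$, and then be checked to respect composition, identities, and inverses; the composite and inverse clauses are exactly those recorded for edges of $W_f$ and in Remark~\ref{altdepprod}, so naturality of $\gamma$ reduces to functoriality of $\beta$. Finally, since the recursion is carried out in the internal language of $\Asm(A)$, the realizers for $\gamma$ on objects and morphisms are produced uniformly from a realizer for $\beta$ by the fixed-point combinator, making $\gamma$ a bona fide morphism of groupoid assemblies; verifying this realizability is routine but is precisely where the set-level argument of Lemma~\ref{fwtype} must be upgraded.
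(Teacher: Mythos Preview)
Your overall strategy---attempts, the subobject $U_f$ of elements with a unique attempt, showing $U_f$ is a $\poly_f$-subalgebra so that Lemma~\ref{subalgpoly} forces $U_f=W_f$, then the equalizer argument for uniqueness---matches the paper's approach. The paper carries this out with $\Paths_w$ defined not as a set of descent sequences but as a sub\emph{groupoid} of $W_f$ generated inductively by $w$, its iterated subtrees, the morphisms appearing in those subtrees, and closure under composition; it defines analogous groupoids $\Paths_u$ for each morphism $u$ and, crucially, $\Paths_{u,u'}$ for each composable pair, and takes attempts to be genuine functors $\Paths_u\to C$.

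The gap in your proposal is the sentence ``naturality of $\gamma$ reduces to functoriality of $\beta$.'' This elides the hardest step: showing $U_f$ is closed under composition of morphisms, which is required for $U_f$ to be a subgroupoid and hence a subalgebra. Given composable $u,u'\in\mor U_f$, one must produce a unique attempt on $u'\circ u$. The paper does this by gluing $\gamma_u$ and $\gamma_{u'}$ along $\Paths_{\cod(u)}=\Paths_{\dom(u')}$ to an attempt $\gamma_{u,u'}$ on $\Paths_{u,u'}$, then restricting to $\Paths_{u'\circ u}\subseteq\Paths_{u,u'}$; uniqueness requires an inductive calculation that unwinds the composite edge $(\phi'\circ\phi)_x = T''(\lift(j,j')(x))\circ\phi'_{\lift(j)(x)}\circ\phi_x$ and compares it against an arbitrary competing attempt via the auxiliary object $\Paths_{u'\circ u,\,u'^{-1}}$. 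Functoriality of $\beta$ alone does not give this: the issue is that the composite edge in $W_f$ is not simply $\alpha_f$ applied to a pointwise composite, because of the $\lift(j,j')$ correction term, so the attempt on $u'\circ u$ is not literally the composite of the attempts on $u$ and $u'$ and must be shown to agree with any other attempt by a separate induction down the tree. Your sketch does not account for this, and without it $U_f$ is not known to be a groupoid, so Lemma~\ref{subalgpoly} cannot be invoked.
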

\begin{proof}

Here we will imploy a proof technique from Lemma $\ref{fwtype}$. Given an object $w \in \ob W_{f}$, we construct the subgroupoid assembly $\Paths_{w} \subseteq W_{f}$ inductively as follows:

\begin{enumerate}
\item $w \in \ob \Paths_{w}$
\item $\id_{w} \in \Paths_{w}(w,w)$
\item For $w' \in \ob \Paths_{w}$, when $w' = \alpha_{f}(y,T)$, for $x \in \ob X_{y}$,  $T(x) \in \ob\Paths_{w}$ and for $m \in \mor X_{y}$, $T(m) \in \Paths_{w}(T(\dom(m)),T(\cod(m)))$
\item For $v \in \Paths_{w}(w',w'')$, when $v = \alpha_{f}(v,\phi )$, $w' = (y,T)$, and $w'' = (y',T')$ for $x \in X_{y}$, $\phi_{x} \in \Paths_{w}(T(x),T'(\lift(v)(x)))$
\item $\Paths_{w}$ is closed under composition

\end{enumerate}

Identities in $\Paths_{w}$ can be construct via parts $(ii)$ and $(iv)$ and inverses follow from the fact that $X_{y}$ is a groupoid assembly and parts $(iii)$ and $(iv)$. \\

Given a morphism $u \in \mor W_{f}$, we construct $\Paths_{u} \subseteq W_{f}$ similarly:

\begin{enumerate}
\item $\dom(u), \cod(u) \in \ob \Paths_{u}$
\item $u \in \Paths_{u}(\dom(u),\cod(u))$, $u^{-1} \in \Paths_{u}(\cod(u),\dom(u))$
\item For $w' \in \ob \Paths_{u}$, when $w' = \alpha_{f}(y,T)$, for $x \in \ob X_{y}$,  $T(x) \in \ob\Paths_{u}$ and for $m \in \mor X_{y}$, $T(m) \in \Paths_{u}(T(\dom(m)),T(\cod(m)))$
\item For $v \in \Paths_{u}(w',w'')$, when $v = \alpha_{f}(j,\phi )$, $w' = (y,T)$, and $w'' = (y',T')$ for $x \in X_{y}$, $\phi_{x} \in \Paths_{u}(T(x),T'(\lift(j)(x)))$
\item $\Paths_{u}$ is closed under composition.

\end{enumerate}

Finally, given composable $u,u' \in \mor W_{f}$, we construct $\Paths_{u,u'} \subseteq W_{f}$:
\begin{enumerate}
\item $\dom(u), \cod(u) = \dom(u'),\cod(u') \in \ob \Paths_{u,u'}$
\item $u \in \Paths_{u,'u}(\dom(u),\cod(u))$, $u^{-1} \in \Paths_{u,u'}(\cod(u),\dom(u))$
\item $u' \in \Paths_{u,u'}(\dom(u'),\cod(u'))$, $u'^{-1} \in \Paths_{u,u'}(\cod(u'),\dom(u'))$
\item For $w' \in \ob \Paths_{u}$, when $w' = \alpha_{f}(y,T)$, for $x \in \ob X_{y}$,  $T(x) \in \ob\Paths_{u}$ and for $m \in \mor X_{y}$, $T(m) \in \Paths_{u}(T(\dom(m)),T(\cod(m)))$
\item For $v \in \Paths_{u}(w',w'')$, when $v = \alpha_{f}(j,\phi )$, $w' = (y,T)$, and $w'' = (y',T')$ for $x \in X_{y}$, $\phi_{x} \in \Paths_{u}(T(x),T'(\lift(j)(x)))$
\item $\Paths_{u}$ is closed under composition.

\end{enumerate}

Just like with $\Paths_{w}$, we can say that $\Paths_{u}$ and $\Paths_{u,u'}$ has inverses and identities. It should be obvious that $\Paths_{u'},\Paths_{u} \subseteq \Paths_{u,u'}$ and $\Paths_{\dom(u)}, \Paths_{\cod(u)} \subseteq \Paths_{u}$. But also observe that $\Paths_{u' \circ u} \subseteq \Paths_{u,u'}$ since if we have $v = \alpha_{f}(j,\phi) \in \Paths_{u}(w,w')$ and $v' = \alpha_{f}(j',\phi') \in \Paths_{u'}(w',w'')$, for $x \in \ob X_{\dom(u)}$, $\lift(j,j')(x) \in \mor X_{\cod(u')}$ so $(\phi' \circ \phi)_{x} = (T''(\lift(j,j')(x)) \circ \phi'_{\lift(j)(x)} \circ \phi_{x}) \in \mor\Paths_{u,u'}$.\\

Given a $\poly_{f}$-algebra $\beta: \poly_{f}(C) \to C$, we call a functor $\gamma_{u}: \Paths_{u} \to C$ an attempt on $u$ whether $u$ is an object or a morphism if when we have $w = \alpha_{f}(y,T) \in \ob \Paths_{u}$, $\gamma_{u}(w) = \beta(y, \gamma_{u} \circ T)$ and when we have $v = \alpha_{f}(j, \phi) \in \mor \Paths_{u}$, $\gamma_{u}(v) = \beta(j, \gamma_{u} \cdot \phi)$. A similar notion holds for $\Paths_{u,u'}$ as well. \\

Let $U_{f}$ contain all of the objects and morphisms in $W_{f}$ with a unique attempt, we seek to show that $U_{f}$ is a groupoid assembly. For $x \in \ob U_{f}$, note that $\Paths_{x} = \Paths_{\id_{x}}$ thus any attempt of $x$ must be an attempt of $\id_{x}$ and vice versa. So $\id_{x}$ must have a unique attempt. Given $j \in \mor U_{f}$, note that $\Paths_{j} = \Paths_{j^{-1}}$ so $j^{-1}$ must have a unique attempt. Suppose we have composable $j, j' \in \mor U_{f}$, 
note that unique attempts $\gamma_{j}$ and $\gamma_{j'}$ must agree on the unique attempt $\gamma_{\cod(j)} = \gamma_{\dom(j)}$. With that we can construct an attempt $\gamma_{u,u'}: \Paths_{u,u'} \to C$ where $\gamma_{u,u'}|_{\Paths_{u}} = \gamma_{u}$, $\gamma_{u,u'}|_{\Paths_{u'}} = \gamma_{u'}$ and for composable $k \in \mor \Paths_{u}$ and $k' \in \mor \Paths_{u'}$, $\gamma_{u,u'}(k' \circ k) = \gamma_{u'}(k') \circ \gamma_{u}(k)$. $\gamma_{u,u'}$ is unique since it must agree on $\Paths_{u}$ and $\Paths_{u'}$ with $\gamma_{u}$ and $\gamma_{u'}$ respectively and preserve composition. Thus we have an  attempt $\gamma_{u' \circ u} = \gamma_{u,u'}|_{\Paths_{u' \circ u}}$. Suppose we have another attempt $\gamma'_{u' \circ u}$, we construct a new attempt $\gamma'_{u' \circ u, u'^{-1}}$ in the same way we constructed $\gamma_{u,u'}$. Then $ \gamma'_{u' \circ u,u'^{-1}}|_{\Paths_{u}} = \gamma_{u}$ by uniqueness of $\gamma_{u}$. Then 
\begin{align*}
\gamma_{u}(u) &= \gamma'_{u' \circ u,u'^{-1}}(u) \\
&= \gamma_{u'^{-1}}(u'^{-1}) \circ \gamma'_{u' \circ u}(u' \circ u)\\
&= \gamma_{u'}(u'^{-1})\circ \gamma'_{u' \circ u}(u' \circ u)\\
\end{align*}

Thus $$\gamma'_{u' \circ u}(u' \circ u) = \gamma_{u'}(u') \circ \gamma_{u}(u) = \gamma_{u' \circ u}(u' \circ u)$$

In fact we can make a similar argument for subtrees of $u' \circ u$ and $u$. Given $x \in \ob X_{\dom(u)}$ and $u = (j, \phi): (y,T) \to (y',T')$ and $u' = (j', \phi'): (y',T') \to (y'', T'')$ we have:
\begin{align*}
\gamma_{u}(\phi_{x}) &= \gamma'_{u' \circ u,u'^{-1}}(\phi_{x}) \\
&= \gamma'_{u' \circ u,u'^{-1}}(T'(\lift(j' \circ j, j'^{-1})(x)) \circ \phi'^{i}_{ \lift(j' \circ j)(x)} \circ (\phi' \circ \phi)_{x}) \\
&= \gamma'_{u' \circ u,u'^{-1}}(T'(\lift(j' \circ j, j'^{-1})(x)) \circ \phi_{\lift(j'^{-1})(\lift(j' \circ j)(x))}'^{-1} \circ T''(\lift(j^{-1},j)(\lift(j' \circ j)(x))^{-1}) \circ (\phi' \circ \phi)_{x}) \\
&= \gamma_{u' \circ u, u'^{-1}}(\phi_{\lift(j)(x)}'^{-1} \circ T''(\lift(j')(\lift(j' \circ j, j'^{-1})(x))) \circ T''(\lift(j^{-1},j)(\lift(j' \circ j)(x))^{-1}) \circ (\phi' \circ \phi)_{x} )\\
&= \gamma_{u' \circ u, u'^{-1}}(\phi_{\lift(j)(x)}'^{-1} \circ T''( \lift(j')(\lift(j' \circ j, j'^{-1})(x)) \circ \lift(j^{-1},j)(\lift(j' \circ j)(x))^{-1}) \circ (\phi' \circ \phi)_{x} )\\
&= \gamma_{u' \circ u, u'^{-1}}(\phi_{\lift(j)(x)}'^{-1} \circ T''(\lift(j' \circ j)(x) )^{-1}) \circ (\phi' \circ \phi)_{x} )\\
&= \gamma_{u'}(\phi_{\lift(j)(x)}'^{-1} \circ T''(\lift(j' \circ j)(x) )^{-1})) \circ \gamma'_{u' \circ u}((\phi' \circ \phi)_{x} )\\
\end{align*}

The penultimate step is justified by the following diagram:

\[\begin{tikzcd}[column sep=small]
	&&& {\lift(j' \circ j)(x)} &&&&& {\lift(j' \circ j)(x)} \\
	\\
	x &&& {\lift(j' \circ j)(x)} &&& {\lift(j'^{-1})\circ \lift(j' \circ j)(x)} && {\lift(j') \circ \lift(j'^{-1})\circ \lift(j' \circ j)(x)} \\
	\\
	x &&&&&& {\lift(j)(x)} && {\lift(j')(\lift(j)(x))}
	\arrow["{{\lift(\htpy{0},f)(\nm{\lift(j' \circ j)(x)},\nm{j' \circ j'^{-1}}) = \id_{\lift(j' \circ j)(x)}}}", from=1-4, to=1-9]
	\arrow["\id", from=1-4, to=3-4]
	\arrow["{{\lift(j^{-1},j)(\lift(j' \circ j)(x))^{-1}}}"{description}, from=1-9, to=3-9]
	\arrow["{{\lift(\htpy{0},f)(\nm{x},\nm{j' \circ j})}}", from=3-1, to=3-4]
	\arrow["{{\id_{x}}}"', from=3-1, to=5-1]
	\arrow[from=3-4, to=3-7]
	\arrow[from=3-7, to=3-9]
	\arrow["{\lift(j')(\lift(j' \circ j, j'^{-1})(x)}"{description}, from=3-9, to=5-9]
	\arrow["{{\lift(\htpy{0},f)(\nm{x},\nm{j})}}"', from=5-1, to=5-7]
	\arrow[from=5-7, to=5-9]
\end{tikzcd}\]

By the above we have:$$\gamma_{u' \circ u}((\phi' \circ \phi)_{x}) = \gamma_{\cod(u')}(T''(\lift(j' \circ j)(x))) \circ \gamma_{u'}(\phi'_{\lift(j)(x)}) \circ \gamma_{u}(\phi_{x}) = \gamma'_{u' \circ u}((\phi' \circ \phi)_{x})$$

We can inductively apply this argument to $\gamma_{u'}(\phi'_{\lift(j)(x)}) \circ \gamma_{u}(\phi_{x})$ and to the following morphisms and thus cover every morphism in $\Paths_{u,u'}$ derived from $u' \circ u$. Since $\gamma'_{u' \circ u}$ must coincide with $\gamma_{\dom(u)}$ and $\gamma_{\cod(u')}$ and preserve composition, then $\gamma'_{u' \circ u} = \gamma_{u' \circ u}$ and $u' \circ u \in U_{f}$. Thus $U_{f}$ is a subgroupoid assembly of $W_{f}$.\\

Now we want to show that $U_{f}$ is a subalgebra of $W_{f}$. Suppose we have a functor $T:X_{y} \to U_{f}$, we construct an attempt $\gamma_{\alpha(y,T)}: \Paths_{\alpha(y,T)} \to C$ where 

\begin{itemize}
\item $\gamma_{\alpha(y,T)}(\alpha(y,T)) = \beta(y,[x \in X_{y} \mapsto \gamma_{T(x)}(T(x))])$
\item $\gamma_{\alpha(y,T)}(\id_{\alpha(y,T)}) = \id_{\beta(y,[x \in X_{y} \mapsto \gamma_{T(x)}(T(x))])} = \beta(\id_{y}, [x \in \ob X_{y} \mapsto \gamma_{T(x)}(\id_{T(x)})])$
\item For $x \in X_{y}$ and $w \in \Paths_{T(x)}$, $\gamma_{\alpha(y,T)}(w) = \gamma_{T(x)}(w)$
\item For composable $u,v \in \mor\Paths_{\alpha(y,T)}$, $\gamma_{\alpha(y,T)}(v \circ u) = \gamma_{\alpha(y,T)}(v) \circ \gamma_{\alpha(y,T)}(u)$\\
\end{itemize}

$\gamma_{\alpha(y,T)}$ is an attempt on $\alpha(y,T)$ since each $\gamma_{T(x)}$ is an attempt and by definition of $\gamma_{\alpha(y,T)}(\alpha(y,T)) $ and $\gamma_{\alpha(y,T)}(\id_{\alpha(y,T)})$. Furthermore it is a functor by definition. Furthermore, $\gamma_{\alpha(y,T)}$ is unique since $\gamma_{\alpha(y,T)}(\alpha(y,T)) $ and $\gamma_{\alpha(y,T)}(\id_{\alpha(y,T)}) $ are given by the necessary construction from unique attempts on each $T(x)$ in order for $\gamma_{\alpha(y,T)}$ to be an attempt. Thus $\alpha(y,T) \in \ob U_{f}$.\\

Suppose we have a natural transformation $\phi: T \To T' \circ \lift(v)$ where $v:y \to y'$, $T: X_{y} \to U_{f}$, and $T':X_{y'} \to U_{f}$; with and $\phi_{x} \in \mor U_{f}$ for $x \in X_{y}$ and $T,T' \in \ob U_{f}$. We construct an attempt $\gamma_{\alpha(v,\phi)}:\Paths_{(v,\phi)} \to C$ as follows:

\begin{itemize}
\item $\gamma_{\alpha(v,\phi)}(\alpha(v,\phi)) = \beta(v, [x \in \ob X_{y} \mapsto  \gamma_{\phi_{x}}(\phi_{x})])$
\item $\gamma_{\alpha(v,\phi)}(\alpha(v,\phi)^{-1}) = \beta(v^{-1}, [x \in \ob X_{y'} \mapsto \gamma_{\phi_{\lift(v^{-1})(x)}}(\phi_{\lift(v^{-1})(x)}^{-1}) \circ \gamma_{\alpha(y',T')}(T'(\lift(v^{-1},v)(x)^{-1})) ])$
\item For $x \in X_{y}$ and $w \in \Paths_{\phi_{x}}$, $\gamma_{\alpha(v,\phi)}(w) = \gamma_{\phi_{x}}(w)$
\item For $w \in \Paths_{\alpha(y,T)}$, $\gamma_{\alpha(v,\phi)}(w) = \gamma_{\alpha(y,T)}(w)$
\item For $w \in \Paths_{\alpha(y',T')}$, $\gamma_{\alpha(v,\phi)}(w) = \gamma_{\alpha(y',T')}(w)$
\item For composable $u,v \in \mor\Paths_{\alpha(v,\phi)}$, $\gamma_{\alpha(v,\phi)}(v \circ u) = \gamma_{\alpha(v,\phi)}(v) \circ \gamma_{\alpha(v,\phi)}(u)$\\
\end{itemize}

Note that since we have $\gamma_{\alpha(v,\phi)}(\alpha(v,\phi)^{-1}): \gamma_{\alpha(y',T')} \circ T' \to \gamma_{\alpha(y,T)} \circ T$ the construction of $\gamma_{\alpha(v,\phi)}(\alpha(v,\phi)^{-1})$ is precisely what one would expect in $\poly_{f}(C)$ if referencing Remark \ref{altdepprod} for the construction of inverses. Thus similarly to $\gamma_{\alpha(y,T)}$, we have that $\gamma_{\alpha(v,\phi)}$ is an attempt on $\alpha(v,\phi)$. We have that $\gamma_{\alpha(v,\phi)}$ is a unique attempt for similar reasons as with $\gamma_{\alpha(y,T)}$. Thus $\alpha(v,\phi) \in \mor U_{f}$. Therefore, $U_{f}$ is a subalgebra of $W_{f}$ and thus $U_{f} = W_{f}$. This also gives a $\poly_{f}$-algebra morphism $\gamma: W_{f} \to C$ where $\gamma(w) = \gamma_{w}(w)$.\\

If we have another $\poly_{f}$-algebra morphism $\gamma': W_{f} \to C$, then we can take the equalizer $\Eq{\gamma, \gamma'}$ which can easily be shown to a subalgebra of $W_{f}$ by using the fact that both $\gamma$ and $\gamma'$ preserves the algebra structure of $W_{f}$. Thus $W_{f} = \Eq{\gamma, \gamma'}$ and $\gamma = \gamma'$. Therefore, $W_{f}$ is the initial algebra of $\poly_{f}$.

\end{proof}

Now going back to the original functor $\poly_{f,g,h}$, we seek to construct an initial algebra for it. First we take our $W$-type for $f$, $W_{f}$ then we give the following definition:

\begin{defn}

An object $\alpha(y, T:X_{y} \to W_{f})$ is called {\bf (g,h)-coherent} if for $x \in X_{y}$ either an object or a morphism, $g(x) = h(y_{x})$ where $T(x) = \alpha(y_{x}, T_{x})$ and $T(x)$ is $(g,h)$-coherent. A morphism $\alpha(v:y \to y', \phi: T \To T' \circ \lift(v))$ is called {\bf (g,h)-coherent} if for $x \in \ob X_{y}$, $g(\lift(\htpy{0},f)(\nm{x},\nm{v})) = h(v_{x})$ where $\phi(x) = (v_{x}, \phi_{x})$ and $\phi_{x}$ is $(g,h)$-coherent.

\end{defn}

\begin{lem}
If we take $W_{f,g,h} \subseteq W_{f}$ as containing only the $(g,h)$-coherent objects and morphisms, then $W_{f,g,h}$ is a groupoid assembly. 
\end{lem}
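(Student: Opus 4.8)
The plan is to exploit the fact, established above, that $W_{f}$ is itself a groupoid assembly (the lemmas culminating in Lemma \ref{wgini}), so that it suffices to exhibit $W_{f,g,h}$ as a \emph{subgroupoid assembly} of $W_{f}$. Concretely, I would show that the $(g,h)$-coherence predicate carves out a subset of $\ob W_{f}$ and a subset of $\mor W_{f}$, hence sub-assemblies $\ob W_{f,g,h} \mono \ob W_{f}$ and $\mor W_{f,g,h} \mono \mor W_{f}$ (every subset of the underlying set of an assembly determines a sub-assembly, with inclusion realized by $\iota$), and that the structure maps $\dom, \cod, \id, \circ$ and $(-)^{-1}$ of $W_{f}$ all restrict to these subsets. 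Since coherence constrains only the underlying objects and morphisms and never touches the realizers, each restricted structure map is carried by the \emph{same} realizer as its counterpart on $W_{f}$; consequently the assembly structure and all the groupoid axioms (associativity, unit and inverse laws) are inherited verbatim from $W_{f}$, and no new realizability bookkeeping is required. The whole content thus reduces to closure of the coherence predicate under the five operations.

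First I would make precise that coherence is a legitimate predicate on $W_{f}$. As in the proofs of Lemma \ref{wellf} and Lemma \ref{subalgpoly}, both clauses defining $(g,h)$-coherence are recursive over the well-founded tree structure of the ambient $W$-type, so ``$w$ is coherent'' is defined by $\DirST$-induction and determines a subobject of $W_{f}$ closed under the realizer relation. Since the two clauses reference one another—coherence of an object mentions coherence of the objects $T(x)$, coherence of a morphism mentions coherence of the components $\phi_{x}$—the induction hypothesis must be phrased for objects and morphisms simultaneously.

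Next I would verify closure under each operation. For $\dom$ and $\cod$: if $\alpha(v,\phi)\colon (y,T)\to(y',T')$ is coherent then each component $\phi_{x}\colon T(x)\to T'(\lift(v)(x))$ is coherent, and by the induction hypothesis a coherent morphism has coherent endpoints, forcing $T(x)$ and $T'(\lift(v)(x))$ to be coherent; letting $x$ range over $X_{y}$ shows that $(y,T)$ and $(y',T')$ lie in $\ob W_{f,g,h}$. For identities: given a coherent $(y,T)$, normality of $f$ gives $\lift(\htpy{0},f)(\nm{x},\nm{\id_{y}}) = \id_{x}$, so the coherence equation reduces to $g(\id_{x}) = h(\id_{y_{x}})$, which follows from $g(x) = h(y_{x})$ by functoriality of $g$ and $h$, while the components $\id_{T(x)}$ are identities on coherent objects. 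For composition and inverses I would use the explicit formulas of Remark \ref{altdepprod}: for the composite $(v'\circ v,\phi'\circ\phi)$ one checks the matching condition $g(\lift(\htpy{0},f)(\nm{x},\nm{v'\circ v})) = h((v'\circ v)_{x})$ from the two coherence hypotheses together with the compatibility of $g$ and $h$ with the reindexing maps $\lift(v)$ and $\lift(v,v')$, and symmetrically for the inverse formula.

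The main obstacle will be the composition and inverse cases. They require tracking the coherence equation through the reindexing morphisms $\lift(\htpy{0},f)(\nm{x},\nm{v})$, $\lift(v)$ and $\lift(v,v')$ that occur in the formulas of Remark \ref{altdepprod}, and confirming that applying $g$ and $h$ to the two sides of each composite or inverse component still yields equal morphisms in $Z$; this is exactly the coherence that $g$ and $h$ must respect at the level of the chosen fibration lifts. Once it is checked on components, the inductive clause propagates coherence through the subtrees, and closure of $W_{f,g,h}$ under all five operations—hence its being a subgroupoid assembly of $W_{f}$, and therefore a groupoid assembly—follows immediately.
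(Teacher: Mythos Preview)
Your overall strategy—exhibiting $W_{f,g,h}$ as a subgroupoid assembly of $W_f$ by checking closure under the groupoid operations—is exactly the paper's. The paper verifies inverses, identities, and composition, using the explicit formulas from Remark~\ref{altdepprod} and the same diagram chases through $\lift(v)$ and $\lift(v,v')$ that you correctly flag as the substantive work.

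Where your plan diverges is the $\dom$/$\cod$ step, and there the sketch does not go through. From coherence of $(v,\phi)$ you obtain coherence of the components $\phi_x$ only for $x \in \ob X_y$, hence by induction coherence of the objects $T(x)$ and $T'(\lift(v)(x))$ for such $x$. But coherence of $(y,T)$ also requires $g(m)=h(y_m)$ and coherence of $T(m)$ for \emph{morphisms} $m \in \mor X_y$, which the morphism-coherence clause (indexed over $\ob X_y$) does not supply; and coherence of $(y',T')$ demands $T'(x')$ coherent for every $x' \in \ob X_{y'}$, whereas $\lift(v)$ is only an equivalence (Lemma~\ref{fiberpath}), not surjective on objects. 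Recovering $T(m)$ as $\phi_{x'}^{-1}\circ T'(\lift(v)(m))\circ\phi_x$ via naturality does not help either, since that presupposes the very closure under composition and inverses you are trying to establish. The paper simply does not attempt this derivation: its proof opens with ``given a $(g,h)$-coherent morphism $(v,\phi)$ between $(g,h)$-coherent objects,'' so endpoint coherence is treated as part of what it means to be a morphism of $W_{f,g,h}$ rather than something to be recovered from the morphism clause alone. With that reading, $\dom$/$\cod$ closure holds by definition and only the three cases you and the paper both identified remain.
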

\begin{proof}

Given an $(g,h)$-coherent morphism $(v,\phi)$ between $(g,h)$-coherent objects, the inverse is $$(v^{-1},\phi^{i})  = (v^{-1},[x \in X_{y'} \mapsto \phi_{\lift(v^{-1})(x)}^{-1} \circ T'(\lift(v^{-1},v)(x)^{-1}) ])$$ Suppose each $\phi^{i}(x)$ is $(g,h)$-coherent, since $\phi^{-1}_{\lift(v^{-1})(x)} = \alpha(v_{\lift(v^{-1})(x)}^{-1}, \psi)$ with $$g(\lift(\htpy{0},f)(\nm{\lift(v^{-1})(x)},\nm{v})) = h(v_{\lift(v^{-1})(x)})$$ and $$g(\lift(v^{-1},v)(x)) = h(v_{\lift(v^{-1},v)(x)})$$ for $T'(\lift(v^{-1},v)(x)) = (v_{T'(\lift(v^{-1},v)(x)^{-1})}, \phi_{T'(\lift(v^{-1},v)(x)^{-1})})$, we have the following diagram:

\[\begin{tikzcd}
	x &&&& {\lift(v^{-1})(x)} &&&&& {\lift(v)(\lift(v^{-1})(x))} \\
	\\
	x &&&&&&&&& {x}
	\arrow["{\lift(\htpy{0},f)(\nm{x}, \nm{v^{-1}})}", from=1-1, to=1-5]
	\arrow["{\id_{x}}"', from=1-1, to=3-1]
	\arrow["{\lift(\htpy{0},f)(\nm{\lift(v^{-1})(x)},\nm{v})}", from=1-5, to=1-10]
	\arrow["{\lift(v^{-1},v)(x)}", from=1-10, to=3-10]
	\arrow["{\lift(\htpy{0},f)(\nm{x},\nm{\id_{y}}) = \id_{x}}"', from=3-1, to=3-10]
\end{tikzcd}\]

which shows the following: 
\begin{align*}
g(\lift(\htpy{0},f)(\nm{x}, \nm{v^{-1}})) &= g(\lift(v^{-1},v)(x) \circ \lift(\htpy{0},f)(\nm{\lift(v^{-1})(x)},\nm{v}))^{-1}\\
&= g(\lift(\htpy{0},f)(\nm{\lift(v^{-1})(x)},\nm{v}))^{-1} \circ g(\lift(v^{-1},v)(x) )^{-1}\\
&= h(v_{\lift(v^{-1})(x)})^{-1} \circ h(v_{\lift(v^{-1},v)(x)})^{-1}\\
&= h(v_{\lift(v^{-1})(x)}^{-1} \circ v_{\lift(v^{-1},v)(x)}^{-1})\\
&= h(v_{x})\\
\end{align*}

where $\phi^{i}(x) = \alpha(v_{x}, \phi_{x})$. Thus $(v,\phi)^{-1}$ is coherent.\\

Given an $(g,h)$-coherent object $(y,T)$, $\id_{(y,T)} = (\id_{y},\id_{T})$. Suppose $\id_{T(x)}$ is $(g,h)$-coherent for $x \in \ob X_{y}$. For $T(x) = (y_{x}, T_{x})$, since we have that $g(x) = h(y_{x})$ and $\lift(\htpy{0},f)(\nm{x},\nm{\id_{y}}) = \id_{x}$, then $h(\id_{y_{x}}) = g(\id_{x}) = g(\lift(\htpy{0},f)(\nm{x},\nm{\id_{y}})) $ where $\id_{T(x)} = (\id_{y_{x}}, \id_{T_{x}})$. Therefore, $\id_{(y,T)}$ is $(g,h)$-coherent.\\

Given $(g,h)$-coherent morphisms $(v,\phi): (y,T) \to (y',T')$ and $(v',\phi'): (y',T') \to (y'',T'')$ between $(g,h)$-coherent objects, their composition is $$(v', \phi') \circ (v,\phi) = (v' \circ v , [x \in X_{y} \mapsto T''(\lift(v,v')(x)) \circ \phi'_{\lift(f)(x)} \circ \phi_{x}])$$

Suppose each $T''(\lift(v,v')(x)) \circ \phi'_{\lift(v)(x)} \circ \phi_{x}$ is $(g,h)$-coherent, $T''(\lift(v,v')(x)) = (v''_{\lift(v,v')(x)}, \phi_{\lift(v,v')(x)})$, $\phi'_{\lift(v)(x)} = (v'_{\lift(v)(x)}, \phi'_{\lift(v)(x)})$, and $\phi_{x} = (v_{x}, \psi_{x})$. We have that $$g(\lift(v,v')(x)) = h(v''_{\lift(v,v')(x)})$$ $$g(\lift(\htpy{0},f)(\nm{\lift(v)(x)},\nm{v'})) = h(v'_{\lift(v)(x)})$$ and $$g(\lift(\htpy{0},f)(\nm{x},\nm{v})) = h(v_{x})$$ thus we have:
\begin{align*}
g(\lift(\htpy{0},f)(\nm{x},\nm{v'\circ v})) &= g(\lift(v,v')(x) \circ \lift(\htpy{0},f)(\nm{\lift(v)(x)},\nm{v'}) \circ \lift(\htpy{0},f)(\nm{x},\nm{v}))\\
&= g(\lift(v,v')(x)) \circ g(\lift(\htpy{0},f)(\nm{\lift(v)(x)},\nm{v'})) \circ g(\lift(\htpy{0},f)(\nm{x},\nm{v})) \\
&= h(v''_{\lift(v,v')(x)}) \circ h(v'_{\lift(v)(x)}) \circ h(v_{x})\\
&= h(v''_{\lift(v,v')(x)} \circ v'_{\lift(v)(x)} \circ v_{x})\\
\end{align*}

Therefore, $(v',\phi') \circ (v,\phi)$ is $(g,h)$-coherent. Therefore, $W_{f,g,h}$ is a groupoid assembly.

\end{proof}

We define $c_{f,g,h}: W_{f,g,h} \to Z$ where $c_{f,g,h}(w) = h(y)$ where $w = (y,T)$ is a object or morphism.

\begin{lem}

We have an isomorphism $\alpha_{f,g,h}: \poly_{f,g,h}(c_{f,g,h}) \to c_{f,g,h}$ which induces a $\poly_{f,g,h}$-algebra structure on $c_{f,g,h}$.

\end{lem}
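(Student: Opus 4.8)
The plan is to obtain $\alpha_{f,g,h}$ by restricting the isomorphism $\alpha_f \colon \poly_f(W_f) \to W_f$ established above to the $(g,h)$-coherent part, exactly in the spirit of the set-theoretic Lemma \ref{fixpoint}. The crucial observation is that the defining data of an object (resp. morphism) of $\poly_{f,g,h}(c_{f,g,h})$ is \emph{precisely} the data of a $(g,h)$-coherent element of $W_f$, with the section condition encoding the coherence condition. First I would unwind the object description: an object of $\poly_{f,g,h}(c_{f,g,h})$ is a triple $(z, y, T \colon X_y \to W_{f,g,h} \times_Z X)$ with $\Delta_g(c_{f,g,h}) \circ T = \id_X|_{X_y}$ and $h(y) = z$. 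Since $T$ is a section of a pullback, it is determined by its first component $\hat{T} = \pr{W} \circ T \colon X_y \to W_{f,g,h}$, and the section condition says exactly that $c_{f,g,h}(\hat{T}(x)) = g(x)$ for every object and morphism $x$ of $X_y$, i.e. $h(y_x) = g(x)$ when $\hat{T}(x) = (y_x, T_x)$. This is precisely the coherence requirement for $\alpha_f(y, \hat{T})$ to lie in $W_{f,g,h}$, since its immediate subtrees $\hat{T}(x)$ are already $(g,h)$-coherent (they live in $W_{f,g,h}$). I therefore set $\alpha_{f,g,h}(z, y, T) := \alpha_f(y, \hat{T})$.

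On morphisms I would set $\alpha_{f,g,h}(c, v, \phi) := \alpha_f(v, \hat{\phi})$, where $\hat{\phi} = \pr{W} \cdot \phi \colon \hat{T} \To \hat{T'} \circ \lift(v)$ is the $W$-component of $\phi$. Writing a component as $\phi_x = (\psi_x, \chi_x)$ in $W_{f,g,h} \times_Z X$, the pullback condition gives $c_{f,g,h}(\psi_x) = g(\chi_x)$, while the defining constraint $\Delta_g(c_{f,g,h})(\phi_x) = \chi_x = \lift(\htpy{0},f)(\nm{x},\nm{v})$ forces $c_{f,g,h}(\psi_x) = g(\lift(\htpy{0},f)(\nm{x},\nm{v}))$. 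Since $\psi_x = \alpha_f(v_x, \dots)$ satisfies $c_{f,g,h}(\psi_x) = h(v_x)$, this is exactly the morphism-level coherence condition $h(v_x) = g(\lift(\htpy{0},f)(\nm{x},\nm{v}))$, so $\alpha_f(v, \hat{\phi})$ is a $(g,h)$-coherent morphism. I would then check that $\alpha_{f,g,h}$ is a morphism over $Z$, which is immediate since $c_{f,g,h}(\alpha_f(y, \hat{T})) = h(y) = z$, and that it is functorial, inherited from functoriality of $\alpha_f$.

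For the inverse $r = \alpha_{f,g,h}^{-1}$ I would use $\alpha_f^{-1}$ restricted to $W_{f,g,h}$: on a $(g,h)$-coherent object $w = (y, T)$ set $r(w) = (h(y), y, [x \mapsto (T(x), x)])$, and analogously on morphisms. The assignment $[x \mapsto (T(x), x)]$ does land in $W_{f,g,h} \times_Z X$ precisely because $w$ is $(g,h)$-coherent: each subtree $T(x)$ lies in $W_{f,g,h}$ and satisfies $c_{f,g,h}(T(x)) = h(y_x) = g(x)$, so the pullback fibering condition holds, and $\Delta_g(c_{f,g,h})$ applied to it recovers the identity (resp. the appropriate lift). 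Because $\alpha_f$ is an isomorphism sending $\alpha_f(y, \hat{T})$ back to $(y, \hat{T})$, the maps $\alpha_{f,g,h}$ and $r$ are mutually inverse; hence $\alpha_{f,g,h}$ is an isomorphism and endows $c_{f,g,h}$ with a $\poly_{f,g,h}$-algebra structure.

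The main obstacle will be the morphism level: one must verify that projecting the natural transformation $\phi \colon T \To T' \circ \lift(v)$ onto its $W$-component yields exactly a $(g,h)$-coherent morphism of $W_f$, and that this projection respects composition and inverses as defined for $\Pi_f$ via Remark \ref{altdepprod} and for $W_{f,g,h}$. This amounts to matching the bookkeeping of the lift data $\lift(v)$, $\lift(v,v')$ and $\lift(\htpy{0},f)$ on both sides; it is routine once the object-level correspondence is in place, and it mirrors the coherence computations already carried out in constructing $W_{f,g,h}$.
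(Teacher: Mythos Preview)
Your proposal is correct and follows essentially the same approach as the paper: you define $\alpha_{f,g,h}$ by projecting to the $W_f$-component via $\pr{W}$ and applying $\alpha_f$, and you define the inverse $r$ by $r(y,T) = (h(y), y, [x \mapsto (T(x),x)])$, which is exactly what the paper does. Your explanation of why the section condition on $T$ is equivalent to $(g,h)$-coherence is in fact more explicit than the paper's, and your identification of the morphism-level bookkeeping with the $\lift$ data as the main technical point is spot on---the paper handles this by writing out the functoriality of $r$ on identities, inverses, and composites using precisely those $\lift(v,v')$ and $\lift(\htpy{0},f)$ identities.
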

\begin{proof}

First we construct a functor $\alpha_{f,g,h}: \dom(\poly_{f,g,h}(c_{f,g,h})) \to \dom(c_{f,g,h})$. Given $(z, y, T: X_{y} \to  \dom(c_{f,g,h}) \times_{Z} X) \in \ob \dom(\poly_{f,g,h}(c_{f,g,h})) $, we have $\alpha_{f,g,h}(z,y,T) = \alpha_{f}(y,\pr{\dom(c_{f,g,h})} \circ T)$. Given a morphism $(c:z \to z',v:y \to y',  \phi: T \To T' \circ \lift(v)) \in \mor \dom(\poly_{f,g,h}(c_{f,g,h}))$, we have $\alpha_{f,g,h}(c,v,\phi) = \alpha_{f}(c,v,\pr{\dom(c_{f,g,h})} \cdot \phi)$. functoriality of $\alpha_{f,g,h}$ follows from functoriality of $\alpha_{f}$.\\

Now we construct $r: \dom(c_{f,g,h}) \to \dom(\poly_{f,g,h}(c_{f,g,h}))$. Suppose we have  $\alpha(y, T) \in \ob \dom(c_{f,g,h})$, then $r(\alpha(y,T)) = (h(y),y,[x \in X_{y} \mapsto (T(x),x)])$. This construction works since $\alpha(y,T)$ is $(g,h)$-coherent and thus $(T(x),x) \in \dom(c_{f,g,h}) \times_{Z} X$. If we have $\alpha(v,\phi) \in \mor \dom(c_{f,g,h})$, $r(\alpha(v,\phi)) = (h(v),v, [x \in \ob X_{y} \mapsto (\phi_{x},\lift(\htpy{0},f)(\nm{x},\nm{v}))])$. First observe that $[x \in \ob X_{y} \mapsto (\phi_{x},\lift(\htpy{0},f)(\nm{x},\nm{v}))]$ is a natural isomorphism since $\phi$ is and by the construction of the functor $\lift(v)$ found in Lemma \ref{fiberpath}. Second, observe that $g(\lift(\htpy{0},f)(\nm{x},\nm{v})) = h(v_{x})$ where $\phi_{x} = (v_{x},\psi_{x})$, so $(\phi_{x},\lift(\htpy{0},f)(\nm{x},\nm{v})) \in \mor \dom(c_{f,g,h}) \times_{Z} X$. Third we have $(\phi_{x},\lift(\htpy{0},f)(\nm{x},\nm{v})): (T(x),x) \to (T'(\lift(v)(x)),\lift(v)(x)) $. \\

For functoriality of $r$, we have
\begin{itemize}
\item  $r(\alpha(\id_{y},\id_{T})) = (\id_{h(y)},\id_{y},[x \in \ob X_{y} \mapsto (\id_{T(x)}, \lift(\htpy{0},f)(\nm{x},\nm{\id_{y}}) = \id_{x})]) = \id_{r(\alpha(y,T))}$
\item $r(\alpha(v,\phi)^{-1}) = (h(v^{-1}),v^{-1},[x \in \ob X_{y'} \mapsto (\phi_{\lift(v^{-1})(x)}^{-1} \circ T'(\lift(v^{-1},v)(x)^{-1}), \lift(\htpy{0},f)(\nm{\lift(v^{-1})(x)},\nm{v})^{-1} \circ \lift(v^{-1},v)(x)^{-1} )]) = r(\alpha(v,\phi))^{-1}$
\item $r(\alpha(v',\phi') \circ \alpha(v,\phi)) = (h(v') \circ h(v), v' \circ v, [x \in \ob X_{y} \mapsto (T''(\lift(v,v')(x)) \circ \phi'_{\lift(v)(x)} \circ \phi_{x}, \lift(v,v')(x) \circ \lift(\htpy{0},f)(\nm{\lift(v)(x)},\nm{v'}) \circ \lift(\htpy{0},f)(\nm{x},\nm{v}) )]) = r(\alpha(v',\phi')) \circ r(\alpha(v,\phi))$
\end{itemize}

$r$ being the inverse of $\alpha_{f,g,h}$ is straightforward.

\end{proof}


\begin{lem}

$c_{f,g,h}$ is its own smallest $\poly_{f,g,h}$ sub-algebra
\end{lem}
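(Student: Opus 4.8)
The plan is to replicate the well-foundedness argument of Lemma \ref{wellf}, transported into the groupoid setting exactly as was done for Lemma \ref{subalgpoly}, by reducing to the underlying plain $W$-type $W$ of $\poly_{\overline{f}}$ that is used to build $W_{f}$. Suppose $d_{f,g,h}: D \to Z$ is a $\poly_{f,g,h}$-sub-algebra of $c_{f,g,h}$; then $D$ is a subgroupoid assembly of $W_{f,g,h}$, and being a sub-algebra it is closed under the algebra structure $\alpha_{f,g,h}$, in the sense that whenever a functor (resp. generalized natural transformation) $T$ factors through $D$, the element $\alpha_{f,g,h}(z,y,T)$ again lies in $D$. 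First I would define the subobject
\[ D' = \{ w \in W \mid w \in W_{f,g,h} \imply w \in D \}, \]
where $w$ ranges over both objects and morphisms, these being encoded as elements of $W$ just as in the construction of $W_{f}$.

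The core of the argument is to show that $D'$ is $\DirST$-inductive. So suppose $w = \alpha(u,T)$ with $T(x) \in D'$ for every $x \in \overline{X}_{u}$, and suppose $w \in W_{f,g,h}$. Membership in $W_{f,g,h}$ forces $w$ to be functorial (resp. natural) and $(g,h)$-coherent, so each child $T(x)$ lies in $W_{f,g,h}$, hence in $D$ by the inductive hypothesis. I would then treat the object and morphism cases separately, mirroring Lemma \ref{subalgpoly}. In the object case, $w = \alpha_{f,g,h}(h(y),y,[x \mapsto (T(x),x)])$, where the coherence equality $g(x) = h(y_{x})$ guarantees that each pair $(T(x),x)$ lands in $\dom(c_{f,g,h}) \times_{Z} X$; since the $T(x)$ lie in $D$ and $D$ is closed under $\alpha_{f,g,h}$, we get $w \in D$. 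In the morphism case $w = \alpha(v,\phi)$ the endpoints $(y,T)$ and $(y',T')$ are already in $D$ by the object case, and the coherence equalities $g(\lift(\htpy{0},f)(\nm{x},\nm{v})) = h(v_{x})$ again let the sub-algebra structure absorb the natural-transformation data into $D$. Hence $w \in D$, so $w \in D'$, establishing that $D'$ is $\DirST$-inductive.

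By well-foundedness of $W$ with respect to $\DirST$, exactly as invoked in Lemma \ref{subalgpoly}, it follows that $D' = W$, and therefore $D = W_{f,g,h}$, so that $d_{f,g,h} = c_{f,g,h}$. The main obstacle I expect is bookkeeping rather than anything conceptual: one must check that the sub-algebra inclusion $D \mono W_{f,g,h}$ is genuinely closed under $\alpha_{f,g,h}$ on \emph{morphisms} — that is, that the generalized natural transformation data of $\alpha(v,\phi)$ can be reconstructed from children lying in $D$ together with the two endpoint objects — and that the object and morphism inductions interleave so that both endpoints of each morphism are available before the morphism itself is treated. Since all the relevant coherence computations are identical to those already carried out in the construction of $\alpha_{f,g,h}$ and in the preceding lemmas, no genuinely new calculation is required.
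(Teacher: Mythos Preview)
Your proposal is correct and follows exactly the approach the paper intends: the paper's proof simply states ``This is essentially the same proof as in Lemma \ref{subalgpoly},'' and you have written out precisely that argument, transporting the $\DirST$-inductive step through the $(g,h)$-coherence data. The extra bookkeeping you flag (closure of $D$ under $\alpha_{f,g,h}$ on morphisms, and availability of endpoints before the morphism) is exactly what makes the reduction go through and requires nothing beyond what is already done in Lemma \ref{subalgpoly}.
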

\begin{proof}

This is essentially the same proof as in Lemma \ref{subalgpoly}.

\end{proof}

\begin{lem}

$c_{f,g,h}$ is the initial algebra of $\poly_{f,g,h}$
\end{lem}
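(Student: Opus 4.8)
The plan is to replicate, almost verbatim, the ``attempts on paths'' argument used to prove that $W_{f}$ is the initial $\poly_{f}$-algebra in Lemma \ref{wgini}, but now carried out inside the full subgroupoid $W_{f,g,h} \mono W_{f}$ of $(g,h)$-coherent objects and morphisms, and with the target algebra taken over $Z$. Given an arbitrary $\poly_{f,g,h}$-algebra $(b \colon D \to Z, \beta \colon \poly_{f,g,h}(b) \to b)$, I will build a $\poly_{f,g,h}$-algebra morphism $\gamma \colon c_{f,g,h} \to b$ over $Z$ and then show it is unique. The only conceptual novelty beyond Lemma \ref{wgini} is that the $Z$-grading must be threaded through every application of $\beta$; this is exactly where the $(g,h)$-coherence of the elements of $W_{f,g,h}$ is used, in the same way that the $\Set$-level proof of the initial algebra for $\poly_{f,g,h}$ noted that $b(\gamma(\dots)) = g(x)$ holds precisely because the relevant tree is an $(f,g,h)$-tree.

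\textbf{Attempts.} For each object $w$ and each morphism $u$ (and each composable pair $u,u'$) of $W_{f,g,h}$ I would form the subgroupoid assemblies $\Paths_{w}$, $\Paths_{u}$, and $\Paths_{u,u'}$ exactly as inductively generated in Lemma \ref{wgini}, now using $\alpha_{f,g,h}$ in place of $\alpha_{f}$. An \emph{attempt} on $w$ is a functor $\gamma_{w} \colon \Paths_{w} \to D$ with $\gamma_{w}(\alpha_{f,g,h}(y,T)) = \beta(h(y), y, \gamma_{w} \circ T)$ on objects and $\gamma_{w}(\alpha_{f,g,h}(v,\phi)) = \beta(h(v), v, \gamma_{w} \cdot \phi)$ on morphisms, and likewise for $\Paths_{u}$, $\Paths_{u,u'}$. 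The point to check here is that these $\beta$-expressions are well typed: because $w$ is $(g,h)$-coherent, one has $g(x) = h(y_{x})$ for the components $T(x) = \alpha_{f}(y_{x},T_{x})$, so that the tuples $(h(y),y,\gamma_{w}\circ T)$ and $(h(v),v,\gamma_{w}\cdot \phi)$ genuinely lie in $\dom(\poly_{f,g,h}(b))$, i.e.\ satisfy $b \circ (\gamma_{w}\circ T) = g|_{X_{y}}$; the inverse and composite clauses invoke the formulas for inverses and composites of generalized natural transformations recorded in Remark \ref{altdepprod}, together with the coherence computations already carried out in the construction of $W_{f,g,h}$.

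\textbf{Subalgebra and uniqueness.} Let $U_{f,g,h}$ consist of the objects and morphisms of $W_{f,g,h}$ admitting a unique attempt. Following the induction of Lemma \ref{wgini} step by step — identities via $\Paths_{x} = \Paths_{\id_{x}}$, inverses via $\Paths_{u} = \Paths_{u^{-1}}$, composites via gluing $\gamma_{u}$ and $\gamma_{u'}$ to a unique $\gamma_{u,u'}$ and the large coherence diagram there — one shows $U_{f,g,h}$ is closed under the groupoid operations and under the two generating clauses for $\alpha_{f,g,h}(y,T)$ and $\alpha_{f,g,h}(v,\phi)$, hence is a $\poly_{f,g,h}$-subalgebra. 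By the preceding lemma that $c_{f,g,h}$ is its own smallest $\poly_{f,g,h}$-subalgebra, $U_{f,g,h} = W_{f,g,h}$, so $\gamma(w) := \gamma_{w}(w)$ defines a $\poly_{f,g,h}$-algebra morphism $c_{f,g,h} \to b$ over $Z$. For uniqueness, given a second algebra morphism $\gamma' \colon c_{f,g,h} \to b$ I would form the equalizer $\Eq{\gamma,\gamma'} \mono W_{f,g,h}$ and observe, using that both $\gamma$ and $\gamma'$ commute with $\alpha_{f,g,h}$ and $\beta$, that it is a subalgebra; minimality forces $\Eq{\gamma,\gamma'} = W_{f,g,h}$ and hence $\gamma = \gamma'$.

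\textbf{Main obstacle.} The genuine difficulty is not the argument's shape but the bookkeeping for morphisms: as in Lemma \ref{wgini}, checking that attempts agree on inverses and composites requires the big commuting square built from $\lift(\htpy{0},F)$, $\lift(v)$, and $\lift(v,v')$, and here each such identity must additionally be verified to respect the $(g,h)$-coherence typing over $Z$. I expect the bulk of the work to be re-deriving those naturality-and-coherence diagrams in the dependent setting, and confirming that the unique-attempt induction closes under the two generating clauses while keeping all $\beta$-applications fibred correctly over $Z$.
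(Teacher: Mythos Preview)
Your proposal is correct and matches the paper's approach exactly: the paper's proof is a one-line remark that the argument follows the same line of reasoning as Lemma \ref{wgini}, except that one works with functors $\Paths_{u} \to Z$. You have simply spelled out in detail what the paper leaves implicit, including the point that $(g,h)$-coherence is precisely what makes the $\beta$-applications well typed over $Z$.
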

\begin{proof}

The proof follows the same line of reason as in Lemma \ref{wgini} except we have functors $\Paths_{u} \to Z$. 

\end{proof}

Thus $\GrpA{A}$ has dependent $W$-types. \\

\subsection{ W-Types with Reductions}
In this section, we prove the existence of $W$-types with reductions in $\GrpA{A}$ by making use of WISC in $\Asm(A)$.\\

Now suppose the following diagram for $W$-types with reductions in $\Grp(\Asm(A))$:
\[\begin{tikzcd}
	R && X &&& Y \\
	\\
	& Z &&&&& Z
	\arrow["f", from=1-3, to=1-6]
	\arrow["g"', from=1-3, to=3-2]
	\arrow["h", from=1-6, to=3-7]
	\arrow["k", tail, from=1-1, to=1-3]
\end{tikzcd}\]

where $f$ is a normal isofibration and $g \circ k = h \circ f \circ k$. The functor $\poly_{f,g,h,k}$ can be defined using the following pushout diagram:

\[\begin{tikzcd}
	{\poly^{*}_{k} = \sum_{g}\circ \sum_{k}\circ k^{*}\circ f^{*}\circ\Pi_{f}\circ g^{*}} &&&& {\Id_{\cC/Z}} \\
	\\
	\\
	\\
	{\poly_{f,g,h}} &&&& {\poly_{f,g,h,k}}
	\arrow["\lambda",from=1-1, to=1-5]
	\arrow["\kappa"',from=1-1, to=5-1]
	\arrow[from=1-5, to=5-5]
	\arrow[from=5-1, to=5-5]
	\arrow["\lrcorner"{anchor=center, pos=0.125, rotate=180}, draw=none, from=5-5, to=1-1]
\end{tikzcd}\]

where we explicitly describe the behavior of $\poly^{*}_{k}$ and $\poly_{f,g,h}$ on some $G:D \to Z$ below using generalized natural transformations:\\


\begin{itemize}

\item The objects of $\poly_{f,g,h}(G)$ are $(z, y, T: X_{y} \to D \times_{Z} X)$ where $\Delta_{g}(G) \circ T = \id_{X}|_{X_{y}}$ and $h(y) = z$. 

\item The morphisms are $(c:z \to z',v:y \to y',  \phi: T \gennattrans T')$ where  $\Delta_{g}(G)(\phi_{u}) = u$ for $u \in \mor X_{v}$ and $h(v) = c$.\\

\end{itemize}

\begin{itemize}

\item The objects of $\poly^{*}_{k}(G)$ are $(z, r , T: X_{f(k(r))} \to D \times_{Z} X)$ where $\Delta_{g}(G) \circ T = \id_{X}|_{X_{f(k(r))}}$ and $h(f(k(r))) = z$. 

\item The morphisms are $(c:z \to z',s:r \to r',  \phi: T \gennattrans T')$ where  $\Delta_{g}(G)(\phi_{u}) = u$ for $u \in \mor X_{h(f(k(s)))}$ and $h(f(k(s))) = c$.\\

\end{itemize}

As for $\lambda$ and $\kappa$:

\begin{itemize}

\item $\lambda_{G}$ sends $(z, r , T: X_{f(k(r))} \to D \times_{Z} X)$ to $\pr{D} \circ T(k(r))$ and $(c:z \to z',s:r \to r',  \phi: T \gennattrans T')$ to $\pr{D}(\phi(k(s)))$

\item $\kappa_{G}$ sends $(z, r , T: X_{f(k(r))} \to D \times_{Z} X)$ to $(z, f(k(r)), T)$ and $(c:z \to z',s:r \to r',  \phi: T \gennattrans T')$ to $(c, f(k(s)), \phi)$\\

\end{itemize}

Though we could do the same for $\poly_{f,g,h,k}$, it would be easier to refer to $\poly_{f,g,h,k}$ using the universal property of pushouts, in particular given a $\poly_{f,g,h,k}$-algebra $\beta: \poly_{f,g,h,k}(d) \to d$ we have the following diagram as a result of the universal property of pushouts:

\[\begin{tikzcd}
	{\poly^{*}_{k}(d)} \\
	\\
	{\poly_{f,g,h}(d)} &&& d
	\arrow["\kappa_{d}"', from=1-1, to=3-1]
	\arrow["\lambda_{d}"{description}, from=1-1, to=3-4]
	\arrow["\beta'"', from=3-1, to=3-4]
\end{tikzcd}\]

where $\beta' = \beta \circ \iota_{0,d}$ with $\iota_{0}$ resulting from the following pushout diagram for $\poly_{f,g,h,k}$:
\[\begin{tikzcd}
	{\poly_{k}^{*}} &&&& {\Id_{\GrpA{A}/Z}} \\
	\\
	\\
	\\
	{\poly_{f,g,h}} &&&& {\poly_{f,g,h,k}}
	\arrow[from=1-1, to=1-5]
	\arrow[from=1-1, to=5-1]
	\arrow["{\iota_{1}}", from=1-5, to=5-5]
	\arrow["{\iota_{0}}"', from=5-1, to=5-5]
	\arrow["\lrcorner"{anchor=center, pos=0.125, rotate=180}, draw=none, from=5-5, to=1-1]
\end{tikzcd}\]

This characterizes $d$ as a $\poly_{f,g,h,k}$-algebra:

\begin{itemize}
\item For $y$ such that $h(y) = z$ and $T \in \Pi_{x \in X_{y}} \dom(d)_{g(x)}$, we have $\beta'(z,y,T) \in \dom(d)_{z}$ and for $v:y \to y'$ such that $h(v) = c:z \to z'$ and $\phi: T \gennattrans T'$, we have $\beta'(c,v,\phi) \in \dom(d)_{c}$

\item For objects $r \in R$ and $T \in \Pi_{x \in X_{f(k(r))}} \dom(d)_{g(x)}$ we have $\beta'(h(f(k(r))),f(k(r)),T) = T(k(r))$ and for morphisms $s \in R$ and $\phi: T \gennattrans T'$ we have $\beta'(h(f(k(s))),f(k(s)),\phi) = \phi_{k(s)}$\\
\end{itemize}

Similarly to the previous section, we will show the existence of the initial $\poly_{f,g,h,k}$-algebra. However here we will combine the methods found in \cite{swan2018wtypes} and \cite{wreo22517}. Unlike with $W$-types, will will use the full diagram

Given a morphism $v:y \to y'$ in $Y$, we define the assemblies: $$L_{v} := \{u \in \mor X  |  f(u) = v\}$$ $$\overline{X}_{y} := \{x \in \ob X|  f(x) = y\} \cup \{u \in \mor X|  f(u) = \id_{x} \}$$ $$\overline{X}_{v} :=  \overline{X}_{y} + \overline{X}_{y'} + L_{v} + L_{v^{-1}}$$ $$\overline{Y} := \ob Y + \mor Y $$ $$\overline{X} := \Sigma_{y \in \ob Y}\overline{X}_{y}  +  \Sigma_{v \in \mor Y}\overline{X}_{v} $$

We set $\tilde{Z}$ as the pullback of $\cod:\mor Z \to \ob Z$ along $\dom:\mor Z \to \ob Z$ making $\tilde{Z}$ the assembly of composable pairs of morphisms in $Z$. We now construct the following diagram:

Now we construct the following diagram: 

\[\begin{tikzcd}
	& {\overline{X }+ \Sigma_{(m,m') \in \tilde{Z}}  \{m,m'\} } &&& {\overline{Y }+ \tilde{Z}} \\
	\\
	{\ob Z + \mor Z} &&&&& {\ob Z + \mor Z}
	\arrow["{\overline{f} +\pr{\tilde{Z}} }", from=1-2, to=1-5]
	\arrow["{t_{1}}"', from=1-2, to=3-1]
	\arrow["{t_{2}}", from=1-5, to=3-6]\\
\end{tikzcd}\]

where $\Sigma_{(m,m') \in \tilde{Z}}  \{m,m'\}$ is defined by the morphism $\overline{Z} \to U$ taking $(m,m')$ to $\{n \in \mor Z| n = m \text{ or }n = m'  \}$ for object classifier $U$. $\overline{f}$ is the projection operator on $\Sigma_{y \in \ob Y}\overline{X}_{y}  +  \Sigma_{v \in \mor Y}\overline{X}_{v}$. $t_{2}$ takes an object or morphism of $Y$, $y$ to $h(y)$ and a composable pair $(m,m')$ to $m' \circ m$. $t_{1}$ takes an object or morphism in $\overline{X}_{y}$ $x$ to $g(x)$ and a morphism in $\{m, m'\}$ to itself in $\mor Z$.\\

One may recall the definition of WISC in Definition \ref{wiscasm}. Using the fact that WISC holds in $\Asm(A)$, we have the following covering and collection square:

\[\begin{tikzcd}
	C' &&& \ob X + \mor X \\
	\\
	\\
	{C} &&& {\ob Y + \mor Y}
	\arrow["i ", from=1-1, to=1-4]
	\arrow["c"', from=1-1, to=4-1]
	\arrow["{f}", from=1-4, to=4-4]
	\arrow["{j }"', from=4-1, to=4-4]\\
\end{tikzcd}\]

We will use this to construct covers for $\pr{\ob Y}: \Sigma_{y \in \ob Y}\overline{X}_{y} \to \ob Y$ and $\pr{\mor Y}: \Sigma_{v \in \mor Y}\overline{X}_{v} \to \mor Y$. For $y \in \ob Y$, we define $Q_{y} := \{ (c_{0},c_{1})| j(c_{0}) = y \text{ and }j(c_{1}) = \id_{y} \}$ and for $m \in \mor Y$, $Q_{m} = \{ (o_{0},o_{1},c_{m},c^{i}_{m})| j(c_{m}) = m,\, j(c^{i}_{m}) = m^{-1},\, o_{0} \in Q_{\dom(m)},\, \text{and }o_{1} \in Q_{\cod(m)} \}$. For $(c,c_{i}) \in Q_{y}$, we define $P_{(c,c_{i})} := C'_{c} + C'_{c_{i}}$ and $(o_{0},o_{1},c_{m},c^{i}_{m}) \in Q_{m} $, $P_{(o_{0},o_{1},c_{m},c^{i}_{m})} := P_{o_{0}} + P_{o_{1}} + C'_{c_{m}} + C'_{c^{i}_{m}} $. \\

We set:

\begin{itemize}

\item For $q = (c_{0},c_{1})$, $\id_{q} = (q,q,c_{1},c_{1})$
\item For $q = (o_{0},o_{1},c_{m},c^{i}_{m})$, $\dom(q) = o_{0}$
\item For $q = (o_{0},o_{1},c_{m},c^{i}_{m})$, $\cod(q) = o_{1}$
\item For $q = (o_{0},o_{1},c_{m},c^{i}_{m})$, $q^{-1} = (o_{1},o_{0},c^{i}_{m},c_{m})$\\

\end{itemize}

We seek to construct the above diagram but with the assemblies $Q_{u}$ and $P_{q}$. First we set $Q = \Sigma_{u \in \overline{Y}} Q_{u}$ and $P = \Sigma_{q \in Q} P_{q}$. Now we will define a set of morphisms $i'_{(u,q)}:P_{q} \to \overline{X}_{u}$ for $q \in Q_{u}$ as follows:

\begin{itemize}

\item when $q = (c_{0},c_{1}) \in Q_{y}$ for $y \in \ob Y$, $i'_{(y,q)}(p) = i(p) \in (\ob X)_{y}$ for $p \in C'_{c_{0}}$ and $i'_{(y,q)}(p) = i(p) \in (\ob X)_{\id_{y}}$ for $p \in C'_{c_{1}}$.\\

\item when $q = (o_{0},o_{1},c_{m},c^{i}_{m}) \in Q_{m}$ for $m \in \ob Y$, $i'_{(m,q)}(p) = i(p) \in \overline{X}_{\dom(m)}$ for $p \in P_{o_{0}}$, $i'_{(m,q)}(p) = i(p) \in \overline{X}_{\cod(m)}$ for $p \in P_{o_{1}}$, $i'_{(m,q)}(p) = i(p) \in L_{m}$ for $p \in C'_{c_{m}}$, and $i'_{(m,q)}(p) = i(p) \in L_{m^{-1}}$ for $p \in C'_{c^{i}_{m}}$. \\

\end{itemize}

We set $i' = \Sigma_{(u,q) \in Q} i'_{(u,q)}$. Now we show the following lemma:

\begin{lem}

The square 
\[\begin{tikzcd}
	P &&& \overline{X} \\
	\\
	\\
	{Q} &&& {\overline{Y}}
	\arrow["i' ", from=1-1, to=1-4]
	\arrow["\pr{Q}"', from=1-1, to=4-1]
	\arrow["{\overline{f}}", from=1-4, to=4-4]
	\arrow["{\pr{\overline{Y}} }"', from=4-1, to=4-4]
\end{tikzcd}\]

 is a covering and collection square.

\end{lem}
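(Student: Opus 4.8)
The plan is to verify directly the two clauses in the definition of a covering and collection square, in each case reducing to the corresponding property of the covering and collection square with legs $i : C' \to \ob X + \mor X$, $c : C' \to C$, $\overline{f}\text{-leg } f : \ob X + \mor X \to \overline{Y}$ and $j : C \to \overline{Y}$ supplied by WISC in $\Asm(A)$ (Lemma~\ref{wisc}). The construction is engineered so that every fibre $\overline{X}_u = \overline{f}^{-1}(u)$ is a finite coproduct of fibres of $f$: for $y \in \ob Y$ one has $\overline{X}_y = f^{-1}(y) + f^{-1}(\id_y)$, and for $m \in \mor Y$ one has $\overline{X}_m = \overline{X}_{\dom(m)} + \overline{X}_{\cod(m)} + f^{-1}(m) + f^{-1}(m^{-1})$. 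Correspondingly $Q_u = \pr{\overline{Y}}^{-1}(u)$ records a choice of $j$-preimage of each such constituent, and $P_q = \pr{Q}^{-1}(q)$ is the matching coproduct of fibres $C'_{c}$, with $i'$ restricting to $i$ on each summand. Since $\Asm(A)$ is regular and lextensive, regular epimorphisms are stable under pullback and any cover of a coproduct restricts to covers of its summands, so throughout I work one summand at a time and reassemble the data by taking coproducts.

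For the covering clause I first check that $\pr{\overline{Y}} : Q \to \overline{Y}$ is a regular epimorphism. By the characterisation of regular epimorphisms in $\Asm(A)$ it suffices to produce, uniformly in $u$, an inhabitant of $Q_u$ with a realizer, and this is assembled from the realizer witnessing that $j$ is a regular epimorphism: for $y \in \ob Y$ pick $c_0 \in j^{-1}(y)$ and $c_1 \in j^{-1}(\id_y)$ to get $(c_0,c_1) \in Q_y$, and for $m \in \mor Y$ pick $c_m \in j^{-1}(m)$, $c^{i}_{m} \in j^{-1}(m^{-1})$ and, by the object case, $o_0 \in Q_{\dom(m)}$, $o_1 \in Q_{\cod(m)}$ to get $(o_0,o_1,c_m,c^{i}_{m}) \in Q_m$. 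Next I check that $[\pr{Q}, i'] : P \to \overline{X} \times_{\overline{Y}} Q$ is a regular epimorphism. An element of the pullback is a pair $(x,q)$ with $x \in \overline{X}_u$ and $q \in Q_u$ for a common $u$; the point $x$ lies in one summand $f^{-1}(w)$ of $\overline{X}_u$, and $q$ carries a matching component $c \in j^{-1}(w)$. Because $[c,i] : C' \epi (\ob X + \mor X)\times_{\overline{Y}} C$ is a regular epimorphism there is $p' \in C'_{c}$ with $i(p') = x$; regarded as an element of the corresponding summand of $P_q$ it satisfies $\pr{Q}(p') = q$ and $i'(p') = x$, and its realizer is inherited from that of $[c,i]$.

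For the collection clause I treat the two kinds of $u$ in turn. Fix $u = y \in \ob Y$ and a cover $e : E \epi \overline{X}_y$. Restricting $e$ along the two coproduct inclusions gives covers $e_0 : E_0 \epi f^{-1}(y)$ and $e_1 : E_1 \epi f^{-1}(\id_y)$ with $E \cong E_0 + E_1$. Applying the collection property of the WISC square to $y$ and $e_0$, and to $\id_y$ and $e_1$, yields $c_0 \in j^{-1}(y)$, $c_1 \in j^{-1}(\id_y)$ and sections $t_0 : C'_{c_0} \to E_0$, $t_1 : C'_{c_1} \to E_1$ with $i|_{C'_{c_0}} = e_0 \circ t_0$ and $i|_{C'_{c_1}} = e_1 \circ t_1$. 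Then $q = (c_0,c_1) \in Q_y$ and $t = t_0 + t_1 : P_q = C'_{c_0} + C'_{c_1} \to E$ satisfy $i'|_{P_q} = e \circ t$. For $u = m \in \mor Y$ and a cover $e : E \epi \overline{X}_m$ I restrict $e$ to the four summands $\overline{X}_{\dom(m)}$, $\overline{X}_{\cod(m)}$, $f^{-1}(m)$, $f^{-1}(m^{-1})$; the first two are discharged by the object case just proved (producing $o_0 \in Q_{\dom(m)}$, $o_1 \in Q_{\cod(m)}$ together with lifts out of $P_{o_0}$, $P_{o_1}$), and the last two by the WISC collection property applied to $m$ and to $m^{-1}$ (producing $c_m$, $c^{i}_{m}$ with lifts out of $C'_{c_m}$, $C'_{c^{i}_{m}}$). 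Assembling gives $q = (o_0,o_1,c_m,c^{i}_{m}) \in Q_m$ and the coproduct section $t : P_q \to E$ with $i'|_{P_q} = e \circ t$.

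The only genuinely delicate points are bookkeeping ones. First, the covers and lifts produced above must carry realizers compatible across the coproduct decompositions; this is immediate, since every piece is a fibre of $f$ or of $\pr{Q}$ and the required realizers are obtained by composing the fixed realizers witnessing that $j$ and $[c,i]$ are regular epimorphisms with pairing combinators, so no new realizer data is introduced. Second, the morphism case of collection is not circular even though $Q_m$ refers to $Q_{\dom(m)}$ and $Q_{\cod(m)}$: the constituents $\dom(m),\cod(m)$ are \emph{objects}, so the $\overline{X}_{\dom(m)}$- and $\overline{X}_{\cod(m)}$-parts are handled by the already-established object case rather than by any appeal to the morphism case. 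The step I expect to require the most care is the repeated use of lextensivity, both to split a cover of a coproduct into covers of its summands and to recombine the resulting partial sections into a single $t : P_q \to E$ satisfying $i'|_{P_q} = e \circ t$ summandwise.
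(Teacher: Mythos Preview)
Your proposal is correct and follows essentially the same approach as the paper: decompose each fibre $\overline{X}_u$ into its coproduct summands, apply the covering and collection properties of the WISC square for $f$ to each summand, and reassemble. The paper is terser---it treats only the object case explicitly and waves at the morphism case as ``similar''---whereas you spell out the morphism case, correctly noting that the $\overline{X}_{\dom(m)}$ and $\overline{X}_{\cod(m)}$ summands are handled by the already-established object case rather than by any recursive appeal, and you are more careful about realizer bookkeeping and the lextensive structure needed to split and reassemble covers.
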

\begin{proof}

Each $Q_{u}$ is inhabited since $j$ is a cover, so $\pr{\overline{Y}}$ is a cover. Given $u \in \overline{Y}$, if $u$ is an object then  for every $x \in (\ob X)_{u}$ and $b \in C_{u}$ we have a $b' \in C'_{(b,x)}$ and for every $x_{m} \in (\mor X)_{\id_{u}}$ and $b_{m} \in C_{\id_{u}}$, there is $b'_{m} \in C'_{(b_{m},x_{m})}$.  Thus for a fixed $(b,b_{m}) \in Q_{u}$, $P_{(b,b_{m})} = C'_{b} + C'_{b_{m}}$ covers $\overline{X}_{u}$. We can make a similar argument for if $u$ is a morphism. Thus the square in question is covering.\\

As with covering, it suffices to consider the object case with the collection part of this proof since the proofs for both objects and morphisms are similar. Suppose we have an object $y \in \ob Y$, and a cover $h:H \to \overline{X}_{y}$, then we split $h$ into two covers: $h_{o}: H_{o} \to (\ob X)_{y}$ and $h_{m} : H_{m} \to (\mor X)_{\id_{y}}$. This gives us a $c_{0} \in C_{y}$ and $c_{1} \in C_{\id_{y}}$ with morphisms $t_{0}:C'_{c_{0}} \to H_{o}$ and $t_{1}:C'_{c_{1}} \to H_{m}$ with the necessary property. So we have $(c_{0},c_{1}) \in Q_{y}$ and a cover $t_{0} + t_{1}: P_{(c_{0},c_{1})} \to H$ with the necessary property. Therefore, the above square is a covering and collection square.

\end{proof}

This gives us a new diagram:

\[\begin{tikzcd}
	& {P+ \Sigma_{(m,m') \in \tilde{Z}}  \{m,m'\} } &&& {Q + \tilde{Z}} \\
	\\
	{\ob Z + \mor Z} &&&&& {\ob Z + \mor Z}
	\arrow["{\overline{f}'  = \pr{Q} +\pr{\tilde{Z}}  }", from=1-2, to=1-5]
	\arrow["{t'_{1}  = t_{1} \circ (i' + \id)}"', from=1-2, to=3-1]
	\arrow["{t'_{2} = t_{2} \circ (\pr{\overline{Y}} + \id)}", from=1-5, to=3-6]\\
\end{tikzcd}\]

 and the subsequent dependent $W$-type $\omega_{\overline{f}',t'_{1},t'_{2}}: W_{\overline{f}',t'_{1},t'_{2}} \to \ob Z + \mor Z$ with algebra structure $\alpha: \poly_{\overline{f}',t'_{1},t'_{2}}(\omega_{\overline{f}',t'_{1},t'_{2}}) \to \omega_{\overline{f}',t'_{1},t'_{2}}$. We denote $\alpha((m,m'), (w,w') \in W_{\overline{f}',t'_{1},t'_{2},m}\times W_{\overline{f}',t'_{1},t'_{2},m'} )$ as $w' \circ w$. We set $\overline{Z} := \ob Z + \mor Z$.\\

We will define the following:

\begin{itemize}

\item For $z \in \ob Z$ and $w = \alpha(q,E \in \Pi_{p \in P_{q}} W_{\overline{f}',t'_{1},t'_{2},t'_{1}(p)}  ) \in W_{\overline{f}',t'_{1},t'_{2},z}$, we define $\id_{w} = \alpha(\id_{q},\id_{E})$ where $\id_{E}(p) = E(p)$ which works since $P_{\id_{q}} = C'_{c_{1}} + C'_{c_{1}} + P_{q} + P_{q}$ where $P_{q} = C'_{c_{0}} + C'_{c_{1}}$.\\

\item For $m \in \mor Z$ and $w = \alpha(q,E \in \Pi_{p \in P_{q}} W_{\overline{f}',t'_{1},t'_{2},t'_{1}(p)}  ) \in W_{\overline{f}',t'_{1},t'_{2},m}$, we define $\dom(w) = \alpha(\dom(q),E|_{P_{\dom(q)}})$, $\cod(w) = \alpha(\cod(q),E|_{P_{\cod(q)}})$, $w^{-1} = \alpha(q^{-1}, E)$. Note that $P_{q} = P_{q^{-1}}$.\\

\item For $w' \circ w$, $\dom(w' \circ w) = \dom(w)$, $\cod(w' \circ w) = \cod(w')$, and $(w' \circ w)^{-1} = w^{-1} \circ w'^{-1}$.\\

\end{itemize}

It should be clear that $\dom(\id_{w}) = \cod(\id_{w}) = w$. 

\begin{rmk}
\label{commuteZ}

Given some $\alpha(q,E)$ over some $z \in \ob Z$, we have $\id_{\alpha(q,E)} = \alpha(\id_{q},\id_{E}) $ over $\id_{z}$ since $\id_{\pr{\overline{Y}}(q)} = \pr{\overline{Y}}(\id_{q}) $. Of course a similar thing can be shown for $\dom$, $\cod$, and $(-)^{-1}$. Of course if we have $w$ over $m \in \mor Z$ and $w'$ over $m' \in \mor Z$, then $w' \circ w$ is over $m' \circ m$. Thus $\omega_{\overline{f}',t'_{1},t'_{2}}$ in an certain sense can be said to preserve groupoidal operations.

\end{rmk}

Recall that $\omega_{\overline{f}',t'_{1},t'_{2}}$ gives us an underlying function between sets of assemblies: $$\Gamma(\omega_{\overline{f}',t'_{1},t'_{2}}):\Gamma(W_{\overline{f}',t'_{1},t'_{2}} ) \to \Gamma(\ob Z + \mor Z)$$

Furthermore since finite limits and colimits in $\Asm(A)$ are constructed as finite limits and colimits on the underlying sets, we have that $\Gamma$ preserves finite limits and colimits and thus $ \Gamma(\ob Z + \mor Z) = \Gamma(\ob Z) + \Gamma(\mor Z)$. For an element $x \in X$ in an assembly $X$, we will refer to $\gamma(x) \in \Gamma(X)$ as the corresponding element in the underlying set of $X$. Thus when we have a morphism of assemblies $f:X \to Y$ and $x \in X$, $\Gamma(f)(\gamma(x)) = \gamma(f(x))$. We seek to construct a fibred partial equivalence relation $\sim \ \subseteq \Gamma(W_{\overline{f}',t'_{1},t'_{2}} ) \times_{\Gamma(\ob Z) + \Gamma(\mor Z)} \Gamma(W_{\overline{f}',t'_{1},t'_{2}} ) $. The reflexive elements of $\sim$ should give us functorial and natural elements which will will define below:\\

\begin{defn}

We call a $w$ a {\bf hereditary graph morphism} if when $w = \alpha(q,E \in \Pi_{p \in P_{q}} W_{\overline{f}',t'_{1},t'_{2},t'_{1}(p)} ) $:
\begin{itemize}
\item For $p \in P_{q}$ such that $i'(p)$ is an object, for $p' \in P_{q}$ such that $\id_{i'(p)} = i'(p')$, then $\gamma(E(p')) \sim \gamma(\id_{E(p)})$.

\item For $p \in P_{q}$ such that $i'(p)$ is a morphism for $p' \in P_{q}$ if $\dom(i'(p)) = i'(p')$ then $\gamma(E(p')) \sim \gamma(\dom(E(p)))$, if $\cod(i'(p)) = i'(p')$ then $\gamma(E(p')) \sim \gamma(\cod(E(p)))$, and  if $i'(p)^{-1} = i'(p')$ then $\gamma(E(p')) \sim \gamma(E(p)^{-1})$.

\item for each $p \in P_{q}$, $E(p)$ is a hereditary graph morphism.

\end{itemize}

if $w = w'_{1} \circ w'_{0}$, then both $w'_{1}$ and $w'_{0}$ are hereditary graph morphisms and $\gamma(\cod(w'_{0})) \sim \gamma(\dom(w'_{1}))$.\\

For a hereditary graph morphism $w$ over an object in $Z$, we call $w$ {\bf functorial} if when $w = \alpha(q,E)$, and we have $p, p' \in P_{q}$ such that $i'(p)$ and $i'(p')$ are morphisms and $\dom(i'(p')) = \cod(i'(p))$ then for $p'' \in P_{q}$ such that $i'(p'') = i'(p') \circ i'(p)$, $\gamma(E(p'' )) \sim \gamma(E(p') \circ E(p))$. Furthermore, for $p \in P_{q}$, when $i'(p)$ is an object then $E(p)$ is functorial and when $i'(p)$ is a morphism then $E(p)$ is natural.\\

For a hereditary graph morphism $w$ over a morphism in $Z$, we call $w$ {\bf natural} if $\dom(w)$ and $\cod(w)$ are functorial and when $w = \alpha(q,E)$ for $q \in Q_{u}$ and $u \in \mor Y$, when we have a square in $X_{u}$:\\

\[\begin{tikzcd}
	x &&& x' \\
	\\
	\\
	{e} &&& {e'}
	\arrow["\tau_{x}", from=1-1, to=1-4]
	\arrow["m"', from=1-1, to=4-1]
	\arrow["{m'}", from=1-4, to=4-4]
	\arrow["{\tau_{e}}"', from=4-1, to=4-4]\\
\end{tikzcd}\]

where $f(\tau_{x}) = f(\tau_{e}) = u$, $f(m) = \id_{\dom(u)}$, and $f(m') = \id_{\cod(u)}$, then for $p_{\tau_{x}}, p_{\tau_{e}}, p_{m}, p_{m'} \in P_{q}$ such that $i'(p_{\tau_{x}}) = \tau_{x}$, $i'(p_{\tau_{e}}) = \tau_{e}$, $i'(m) = m$, and $i'(m') = m'$ we have $$\gamma(E(p_{m'}) \circ E(p_{\tau_{x}})) \sim \gamma(E(p_{\tau_{e}}) \circ E(p_{m})) $$ Each $E(p)$ is functorial or natural depending on whether $i'(p)$ is an object or morphism. When $w = w_{1} \circ w_{0}$, both $w_{1}$ and $w_{0}$ are natural.

\end{defn}

Now we construct $\sim$ as a partial equivalence relation generated by the following conditions:

\begin{itemize}
\item For $w,w',w'' \in W_{\overline{f}',t'_{1},t'_{2},z}$, when we have $\gamma(w) \sim \gamma(w')$ and $\gamma(w') \sim \gamma(w'')$, we have $\gamma(w) \sim \gamma(w'')$.\\

\item Given $q,q' \in Q_{u}$, $E \in \Pi_{p \in P_{q}} W_{\overline{f}',t'_{1},t'_{2},t'_{1}(p)}$, and $E' \in \Pi_{p \in P_{q'}} W_{\overline{f}',t'_{1},t'_{2},t'_{1}(p)}$ such that $w = \alpha(q,E)$ and $w' = \alpha(q',E')$ if for all $p \in P_{q}$ and $p' \in P_{q'}$ such that $i'(p) = i'(p')$, we have $\gamma(E(p)) \sim \gamma(E'(p'))$ and both $w$ and $w'$ are either both functorial or both natural, then $\gamma(w) \sim \gamma(w')$.\\

\item For $(u,q) \in Q$ and $E \in \Pi_{p \in P_{q}} W_{\overline{f}',t'_{1},t'_{2},t'_{1}(p)}$, such that $\alpha(q,E) \sim \alpha(q,E)$ holds by the second condition, if there exists $x \in \overline{X}_{u}$ and $r \in R$ such that $k(r) = x$ and $f(x) = u$ and $p \in P_{q}$ such that $i'(p) = x$, then $\gamma(\alpha(q,E)) \sim \gamma(E(p))$. \\

\item For $z \in \ob Z$ and $w, w' \in W_{\overline{f}',t'_{1},t'_{2},z}$, if $\gamma(w) \sim \gamma(w')$, we have $ \gamma(\id_{w})\sim \gamma(\id_{w'})$.\\

\item For $m \in \mor Z$ and $w, w' \in W_{\overline{f}',t'_{1},t'_{2},m}$,  if $\gamma(w) \sim \gamma(w')$, we have $ \gamma(\cod(w)) \sim \gamma(\cod(w'))$, $\gamma(\dom(w))\sim \gamma(\dom(w'))$ and $ \gamma(w^{-1}) \sim \gamma(w'^{-1})$ .\\

\item For $(m,m') \in \tilde{Z}$, $w_{m}, w'_{m} \in W_{\overline{f}',t'_{1},t'_{2},m}$ and $w_{m'}, w'_{m'} \in W_{\overline{f}',t'_{1},t'_{2},m'}$ if $\gamma(w_{m}) \sim \gamma(w'_{m})$, $\gamma(w_{m'}) \sim \gamma(w'_{m'})$ and $\gamma(\cod(w_{m})) \sim \gamma(\dom(w_{m'}))$  then we have $\gamma(w_{m'} \circ w_{m}) \sim \gamma(w'_{m'} \circ w'_{m})$.\\

\item For $(m,m'), (m',m'') \in \tilde{Z}$, $w_{m} \in W_{\overline{f}',t'_{1},t'_{2},m}$, $w_{m'} \in W_{\overline{f}',t'_{1},t'_{2},m'}$,
and $w_{m''} \in W_{\overline{f}',t'_{1},t'_{2},m''}$ if we have $\gamma(w_{m}) \sim \gamma(w_{m})$, $\gamma(w_{m'}) \sim \gamma(w_{m'})$, $\gamma(w_{m''}) \sim \gamma(w_{m''})$, $\gamma(\cod(w_{m})) \sim \gamma(\dom(w_{m'}))$ and $\gamma(\cod(w_{m'})) \sim \gamma(\dom(w_{m''}))$ then we have $\gamma(w_{m''} \circ (w_{m'} \circ w_{m}))\sim \gamma((w_{m''} \circ w_{m'})\circ w_{m})$ .\\

\item Given composable $u,u' \in \mor Y$, $q \in Q_{u}$, $q' \in Q_{u'}$, and $q'' \in Q_{u' \circ u}$, $E \in \Pi_{p \in P_{q}} W_{\overline{f}',t'_{1},t'_{2},t'_{1}(p)}$, $E' \in \Pi_{p \in P_{q'}} W_{\overline{f}',t'_{1},t'_{2},t'_{1}(p)}$, and $E'' \in \Pi_{p \in P_{q''}} W_{\overline{f}',t'_{1},t'_{2},t'_{1}(p)}$ such that $w = \alpha(q,E)$, $w' = \alpha(q',E')$, and $w'' = \alpha(q'',E'')$, if we have $\gamma(w) \sim \gamma(w)$, $\gamma(w') \sim \gamma(w')$, $\gamma(w'') \sim \gamma(w'')$, $\gamma(\dom(w)) \sim \gamma(\dom(w''))$, $\gamma(\cod(w')) \sim \gamma(\cod(w''))$, $\gamma(\cod(w)) \sim \gamma(\dom(w'))$, and for all $p'' \in P_{q''}$ such that $i'(p'') \in L_{u' \circ u}$, there exists $p \in P_{q}$ and $p' \in P_{q'}$ such that $i'(p') \circ i'(p) = i'(p'')$ and $\gamma(E''(p'')) \sim \gamma(E'(p') \circ E(p))$ and for $p^{*} \in P_{q'}$ and  $p^{-} \in P_{q}$ such that $i'(p') \circ i'(p) = i'(p^{*}) \circ i'(p^{-})$, $\gamma(E'(p') \circ E(p)) \sim \gamma(E'(p^{*}) \circ E(p^{-}))$, then $\gamma(w'') \sim \gamma(w' \circ w)$.\\

\item For $w \in W_{\overline{f}',t'_{1},t'_{2},m}$ and $m \in \mor Z$ such that $\gamma(w) \sim \gamma(w)$, we have $\gamma(w \circ \id_{\dom(w)}) \sim \gamma(w)$ and $\gamma(w) \sim \gamma(\id_{\cod(w)} \circ w)$.\\

\item For $w \in W_{\overline{f}',t'_{1},t'_{2},m}$ and $m \in \mor Z$ such that $\gamma(w) \sim \gamma(w)$, we have $\gamma(w^{-1} \circ w) \sim  \gamma(\id_{\dom(w)})$ and $\gamma(w \circ w^{-1}) \sim \gamma(\id_{\cod(w)})$.\\

\end{itemize}

\begin{defn}

We call $w$ {\bf well-defined} if $\gamma(w) \sim \gamma(w)$.

\end{defn}

\begin{rmk}
\label{wfnrmk}

The $\dom$, $\cod$, $\id$, and $(-)^{-1}$ operations preserve well defined elements.\\

For a hereditary graph morphism $\alpha(q,E)$, and for $p, p' \in P_{q}$ such that $i'(p) = i'(p') = x$, if $x$ is an object then there exists $p^{*} \in P_{q}$ such that $i'(p^{*}) = \id_{x}$ and so $\gamma(E(p)) \sim \gamma(\dom(E(p^{*}))) \sim \gamma(E(p'))$. If $x$ is a morphism, then there exists $p^{*} \in P_{q}$ such that $i'(p^{*}) = x^{-1}$, so $\gamma(E(p)) \sim \gamma(E(p^{*})^{-1}) \sim \gamma(E(p'))$. Thus $\gamma(E(p)) \sim \gamma(E(p'))$ for all $p, p' \in P_{q}$ such that $i'(p) = i'(p')$.\\

For a functorial or natural $\alpha(q,E)$ if $q \in Q_{u}$, then $i'_{(u,q)} = i': P_{q} \to \overline{X}_{u}$ is a cover so when we have $p, p' \in P_{q}$ such that $i'(p)$ and $i'(p')$ are morphisms and $\dom(i'(p')) = \cod(i'(p))$, there will exist some $p^{*},p^{-} \in P_{q}$ such that $i'(p^{*}) = \dom(i'(p')) = \cod(i'(p))$ and $i'(p^{-}) = i'(p') \circ i'(p)$, and so $\gamma(\dom(E(p'))) \sim \gamma(E(p^{*})) \sim \gamma(\cod(E(p)))$ and $\gamma(E(p^{-})) \sim \gamma(E(p') \circ E(p))$.\\

As a corollary of the above, for a functorial $w = \alpha(q,E)$ for $q \in Q_{y}$ and $y \in \ob Y$, $\id_{w}$ is natural since any square in $X_{\id_{y}}$

\[\begin{tikzcd}
	x &&& x' \\
	\\
	\\
	{e} &&& {e'}
	\arrow["\tau_{x}", from=1-1, to=1-4]
	\arrow["m"', from=1-1, to=4-1]
	\arrow["{m'}", from=1-4, to=4-4]
	\arrow["{\tau_{e}}"', from=4-1, to=4-4]\\
\end{tikzcd}\]

where $f(\tau_{x}) = f(\tau_{e}) = f(m) = f(m') = \id_{y}$ is a commutative square in $X_{y}$, so by functoriality of $w$,for any $p_{\tau_{x}}, p_{\tau_{e}}, p_{m}, p_{m'} \in P_{q}$ such that $i'(p_{\tau_{x}}) = \tau_{x}$, $i'(p_{\tau_{e}}) = \tau_{e}$, $i'(m) = m$: $$\gamma(\id_{E}(p_{m'}) \circ \id_{E}(p_{\tau_{x}})) = \gamma(E(p_{m'}) \circ E(p_{\tau_{x}})) \sim  \gamma(E(p^{-})) \sim \gamma(E(p_{\tau_{e}}) \circ E(p_{m})) = \gamma(\id_{E}(p_{\tau_{e}}) \circ \id_{E}(p_{m}))$$ where $i'(p^{-}) = m' \circ \tau_{x} = \tau_{e} \circ m$.\\

For a natural $w$, we will characterize $w$ as a morphism $w: \alpha(q,E) \to \alpha(q',E')$ where $\dom(w) = \alpha(q,E) $ and $\cod(w) = \alpha(q',E')$. We can rewrite the condition for naturality of $w$ as: when $w = \alpha(q'',\phi)$ for  $q'' \in Q_{u}$ and $u \in \mor Y$, when there exists $p,p' \in P_{q''}$ such that $i'(p) = \tau_{x}$ and $i'(p') = \tau_{e}$ and $p_{0} \in P_{q}$ and $p_{1} \in P_{q'}$ such that $i'(p_{0}) = m$ and $i'(p_{1}) = m'$ such that $$\gamma(E'(p_{1}) \circ \phi(p)) \sim \gamma(\phi(p') \circ E(p_{0}) )$$ 


If we have $p^{-},p^{*} \in P_{q''}$, such that $i'(p^{-}) = \tau_{x}^{-1}$ and $i'(p^{*}) = \tau_{e}^{-1}$ then:

\begin{align*}
\gamma(\phi(p^{*})  \circ E'(p_{1}) \circ \phi(p) \circ \phi(p^{-})) \sim \gamma(\phi(p^{*})  \circ  \phi(p') \circ E(p_{0}) \circ \phi(p^{-}))\\
\gamma(\phi(p^{*})  \circ E'(p_{1}) \circ \phi(p) \circ \phi(p)^{-1}) \sim \gamma(\phi(p')^{-1} \circ  \phi(p') \circ E(p_{0}) \circ \phi(p^{-}))\\
\gamma(\phi(p^{*})  \circ E'(p_{1}) \circ \id_{\cod(\phi(p))}) \sim \gamma(\id_{\dom(\phi(p'))}\circ E(p_{0}) \circ \phi(p^{-}))\\
\gamma(\phi(p^{*})  \circ E'(p_{1}) ) \sim  \gamma(E(p_{0}) \circ \phi(p^{-}))\\
\end{align*}

Thus for a natural $w$, $w^{-1}$ is also natural.\\

\end{rmk}

We now prove the following lemma:

\begin{lem}

$\gamma(w) \sim \gamma(w')$ implies both $w$ and $w'$ are either both functorial or both natural. Furthermore $\gamma(w) \sim \gamma(w)$ if and only if $w$ is functorial or natural.

\end{lem}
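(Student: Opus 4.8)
The plan is to prove both assertions simultaneously by induction, exploiting that $\sim$ is a \emph{fibred} partial equivalence relation over $\Gamma(\ob Z) + \Gamma(\mor Z)$. Since any pair with $\gamma(w) \sim \gamma(w')$ lies over a single base element, $w$ and $w'$ are simultaneously over an object of $Z$ (where ``functorial'' is the relevant predicate) or simultaneously over a morphism of $Z$ (where ``natural'' is relevant); so ``both functorial or both natural'' is exactly the statement that each of $w,w'$ is well-behaved for its grading. Thus the first assertion and the ``only if'' half of the second assertion both follow from the single claim: \emph{whenever $\gamma(w)\sim\gamma(w')$ is derivable, each of $w$ and $w'$ is functorial (if over $\ob Z$) or natural (if over $\mor Z$).} I would prove this by induction on the generation of $\sim$, inspecting each generating clause in turn.

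For the forward induction, most clauses are immediate once the $W$-type operations are known to preserve well-definedness; this is precisely what Remark \ref{wfnrmk} records ($\dom,\cod,\id,(-)^{-1}$ preserve the relevant predicate, identities of functorial elements are natural, and inverses of naturals are natural). The reflexive/congruence clause (the second generating condition) carries the dichotomy as an explicit hypothesis, so there is nothing to check. For the reduction clause $\gamma(\alpha(q,E)) \sim \gamma(E(p))$ with $i'(p)=x=k(r)$, functoriality of $\alpha(q,E)$ forces $E(p)$ to be functorial by the hereditary requirement, and the coherence condition $g\circ k = h\circ f\circ k$ guarantees $g(x)=h(f(x))$, so that both sides sit over the same $z \in \ob Z$. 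The $\tilde{Z}$-composition clauses and the generalized-natural-transformation composition clause are handled by showing that a composite $w'\circ w$ of two naturals is again natural, using the characterization of naturality rewritten for composites in Remark \ref{wfnrmk}; this is the one genuinely laborious point.

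For the ``if'' half of the second assertion I would argue by induction on the $W$-type structure of $w$. If $w=\alpha(q,E)$ is a generator and is functorial or natural, then each $E(p)$ is hereditarily functorial/natural, so $\gamma(E(p))\sim\gamma(E(p))$ by the inductive hypothesis; applying the second generating clause with $q'=q$ and $E'=E$ (whose dichotomy hypothesis holds since $w$ is functorial or natural) yields $\gamma(w)\sim\gamma(w)$. If $w=w_{1}\circ w_{0}$ is a composite, then naturality of $w$ makes $w_{0}$ and $w_{1}$ natural with $\gamma(\cod(w_{0}))\sim\gamma(\dom(w_{1}))$, and the composition-congruence clause (together with the identity and associativity clauses) produces $\gamma(w)\sim\gamma(w)$ from $\gamma(w_{0})\sim\gamma(w_{0})$ and $\gamma(w_{1})\sim\gamma(w_{1})$, which hold by induction.

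The main obstacle I anticipate is not any single clause but the bookkeeping needed to keep the functorial/natural dichotomy aligned across the composition clauses --- in particular verifying that the generalized-natural-transformation composite of two natural elements satisfies the naturality squares of the target, which requires unwinding the $\lift(v,v')$ coherence exactly as in the $W$-type computation of Lemma \ref{wgini}. Everything else reduces to the already-established stability properties collected in Remark \ref{wfnrmk} and to the hereditary nature of the two predicates.
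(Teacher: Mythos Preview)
Your approach to the first assertion matches the paper's exactly: induction on the generating clauses of $\sim$, with Remark~\ref{wfnrmk} supplying closure of the functorial/natural predicates under the groupoid operations, and the second and third clauses carrying the dichotomy as an explicit hypothesis.

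For the ``if'' half of the second assertion, however, your inductive step at a generator has a gap. To apply the second generating clause with $q'=q$ and $E'=E$ you must verify, for \emph{all} $p,p'\in P_{q}$ with $i'(p)=i'(p')$, that $\gamma(E(p))\sim\gamma(E(p'))$. Your inductive hypothesis only yields the diagonal instances $\gamma(E(p))\sim\gamma(E(p))$, and since $i'_{(u,q)}:P_{q}\to\overline{X}_{u}$ is a cover rather than an injection, distinct $p,p'$ with the same image genuinely occur. The missing cross-relations are exactly what the second paragraph of Remark~\ref{wfnrmk} extracts from the hereditary-graph-morphism data built into functoriality/naturality: for instance, if $i'(p)=i'(p')=x$ is an object, one picks $p^{*}$ with $i'(p^{*})=\id_{x}$ and obtains $\gamma(E(p))\sim\gamma(\dom(E(p^{*})))\sim\gamma(E(p'))$. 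The paper's proof simply invokes this remark directly, bypassing the $W$-type induction for the generator case. Your argument for composites $w=w_{1}\circ w_{0}$ is fine and is what the paper's terse reference implicitly requires as well.
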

\begin{proof}

We prove the first statement of this lemma by induction on $\sim$. Transitivity is straightforward. Suppose that $\gamma(w) \sim \gamma(w')$ by either the second of third conditions, then both $w$ and $w'$ are either functorial or natural since the second condition includes the condition of functoriality or naturality and of course by definition of functoriality and naturality.
The groupoid operations preserves functoriality and naturality by definition of naturality and by Remark \ref{wfnrmk}. This covers all but the last three cases for $\sim$. If we have $\gamma(w'') \sim \gamma(w' \circ w)$, then $\gamma(w'') \sim \gamma(w'')$, $\gamma(w') \sim \gamma(w')$, and $\gamma(w) \sim \gamma(w)$. Thus $w''$ and $w' \circ w$ are natural by induction. The final two cases follow from the previous cases. \\

The second statement of this lemma follows from the second paragraph of Remark \ref{wfnrmk} and the first statement of this lemma.\\

\end{proof}


Recall that the functor $\Gamma:\Asm(A) \to \Set$ which takes an assembly to its underlying set has a right adjoint $\nabla: \Set \to \Asm(A)$ which takes a set $X$ to the assembly $(X,\const_{A})$. Furthermore the unit of the adjunction $\eta_{X}:X \to \nabla(\Gamma(X))$ is the identity function on $\Gamma(X)$ making $\eta_{X}$ a monomorphism, but not necessarily an isomorphism. We also have that $\Gamma \circ \nabla \cong \Id_{\Set}$ witnessed by the identity natural isomorphism. \\

With this in mind taking $$\Gamma(\omega_{\overline{f}',t'_{1},t'_{2}}):\Gamma(W_{\overline{f}',t'_{1},t'_{2}} ) \to \Gamma(\ob Z + \mor Z)$$ and the fibred partial equivalence relation $\sim$, we define the set $W_{\Gamma} = \{\gamma(w) |  \gamma(w) \sim \gamma(w) \}$ and we have a restriction $\Gamma(\omega_{\overline{f}',t'_{1},t'_{2}}): W_{\Gamma} \to \Gamma(\ob Z + \mor Z)$. We have that for $w \in W_{\Gamma}$, $w \sim w$ and so we take $\sim' \ \subseteq W_{\Gamma} \times_{\Gamma(\ob Z + \mor Z)} W_{\Gamma}$ as the restriction of $\sim$ to $W_{\Gamma}$ making $\sim'$ a fibred equivalence relation. We now have a function $\omega':W_{\Gamma}/\sim'  \to  \Gamma(\ob Z + \mor Z)$ with the square

\[\begin{tikzcd}
	{W_{\Gamma} } &&&& {W_{\Gamma}/\sim' } \\
	\\
	\\
	\\
	{\Gamma(\ob Z + \mor Z)} &&&& {\Gamma(\ob Z + \mor Z)}
	\arrow["{[-]}", from=1-1, to=1-5]
	\arrow["{\Gamma(\omega_{\overline{f}',t'_{1},t'_{2}})}"', from=1-1, to=5-1]
	\arrow["{\omega'}", from=1-5, to=5-5]
	\arrow["{\id_{\Gamma(\ob Z + \mor Z)}}"', from=5-1, to=5-5]
\end{tikzcd}\]

Now we have a injection $W_{\Gamma} \mono \Gamma(W_{\overline{f}',t'_{1},t'_{2}} )$ which gives us a monomorphism $\nabla(W_{\Gamma}) \mono \nabla(\Gamma(W_{\overline{f}',t'_{1},t'_{2}} ))$ since right adjoints preserve monomorphisms and a pullback:

\[\begin{tikzcd}
	{W_{\sim}} &&&& {\nabla(W_{\Gamma})} \\
	\\
	\\
	\\
	{W_{\overline{f}',t'_{1},t'_{2}} } &&&& {\nabla(\Gamma(W_{\overline{f}',t'_{1},t'_{2}} ))}
	\arrow["{\eta_{\sim}}", from=1-1, to=1-5]
	\arrow[from=1-1, to=5-1]
	\arrow["\lrcorner"{anchor=center, pos=0.125}, draw=none, from=1-1, to=5-5]
	\arrow[from=1-5, to=5-5]
	\arrow["{{\eta_{W_{\overline{f}',t'_{1},t'_{2}} }}}"', from=5-1, to=5-5]
\end{tikzcd}\]

where $W_{\sim} = \{w \in  W_{\overline{f}',t'_{1},t'_{2}} |  \gamma(w) \sim' \gamma(w)\}$ and $\eta_{\sim}$ is a monomorphism. Thus we have a restriction $\omega_{\overline{f}',t'_{1},t'_{2}}:W_{\sim} \to (\ob Z + \mor Z)$ and a diagram:

\[\begin{tikzcd}
	{W_{\sim}} &&& {\nabla(W_{\Gamma}) } &&&& {\nabla(W_{\Gamma}/\sim')} \\
	\\
	\\
	\\
	{\ob Z + \mor Z} &&& {\nabla(\Gamma(\ob Z + \mor Z))} &&&& {\nabla(\Gamma(\ob Z + \mor Z))}
	\arrow["{\eta_{\sim}}", from=1-1, to=1-4]
	\arrow["{\omega_{\overline{f}',t'_{1},t'_{2}}}"', from=1-1, to=5-1]
	\arrow["{{\nabla([-])}}", from=1-4, to=1-8]
	\arrow["{{\nabla(\Gamma(\omega_{\overline{f}',t'_{1},t'_{2}}))}}"{description}, from=1-4, to=5-4]
	\arrow["{{\nabla(\omega')}}", from=1-8, to=5-8]
	\arrow["{\eta_{\ob Z + \mor Z}}"', from=5-1, to=5-4]
	\arrow["{{\id_{\nabla(\Gamma(\ob Z + \mor Z))}}}"', from=5-4, to=5-8]
\end{tikzcd}\]

We apply image factorization to the morphism $\nabla([-]) \circ \eta_{\sim}: W_{\sim} \to \nabla(W_{\Gamma}/\sim')$ to get the following:

\[\begin{tikzcd}
	{W_{\sim}} &&&& {\nabla(W_{\Gamma}/\sim')} \\
	\\
	&& {U_{\overline{f}',t'_{1},t'_{2}} }
	\arrow["{\nabla([-]) \circ \eta_{\sim}}", from=1-1, to=1-5]
	\arrow["{[-]_{r}}"', from=1-1, to=3-3]
	\arrow["{i_{r}}"', from=3-3, to=1-5]
\end{tikzcd}\]

where $[-]_{r}$ is a regular epimorphism and $i_{r}$ is a monomorphism. Recall that $[-]_{r}$ is a coequalizer of the following diagram:

\[\begin{tikzcd}
	{W_{\sim} \times_{\nabla(W_{\Gamma}/\sim')}W_{\sim}} &&& {W_{\sim}} &&& {U_{\overline{f}',t'_{1},t'_{2}} }
	\arrow["{\pr{1}}"', shift right=3, from=1-1, to=1-4]
	\arrow["{\pr{0}}", shift left=3, from=1-1, to=1-4]
	\arrow["{[-]_{r}}", from=1-4, to=1-7]
\end{tikzcd}\]

which is to say that for $w, w' \in W_{\sim}$, $[w]_{r} = [w']_{r}$ if and only if $\nabla([-]) \circ \eta_{\sim}(w) = \nabla([-]) \circ \eta_{\sim}(w')$. However since we have $\id_{-}: \Gamma \circ \nabla \cong \Id_{\Set}$, the underlying function of $\eta_{W_{\overline{f}',t'_{1},t'_{2}} }$ is $\id_{\Gamma(W_{\overline{f}',t'_{1},t'_{2}} )}$ and the construction of $W_{\Gamma}$, once can see that elements $\eta_{\sim}(w) , \eta_{\sim}(w')  \in \nabla(W_{\Gamma})$ correspond to elements $\gamma(w), \gamma(w' ) \in W_{\Gamma}$ and $\nabla([-]) \circ \eta_{\sim}(w) = \nabla([-]) \circ \eta_{\sim}(w')$ if and only if $[\gamma(w)] = [\gamma(w')]$. Since $\Set$ is exact, $[\gamma(w)] = [\gamma(w')]$ if and only if $\gamma(w) \sim' \gamma(w')$. Hence $[w]_{r} = [w']_{r}$ if and only if $\gamma(w) \sim' \gamma(w')$.\\

When $[w]_{r} = [w']_{r}$, we have  $\nabla([-]) \circ \eta_{\sim}(w) = \nabla([-]) \circ \eta_{\sim}(w')$ and so $$\eta_{\ob Z + \mor Z}(\omega_{\overline{f}',t'_{1},t'_{2}}(w)) =  \eta_{\ob Z + \mor Z}(\omega_{\overline{f}',t'_{1},t'_{2}}(w')) $$ But this means that $\omega_{\overline{f}',t'_{1},t'_{2}}(w) = \omega_{\overline{f}',t'_{1},t'_{2}}(w')$ since $\eta_{\ob Z + \mor Z}$ is a monomorphism. So we have a morphism $\omega^{*}:U_{\overline{f}',t'_{1},t'_{2}} \to \ob Z + \mor Z$ with $\omega^{*} \circ [-]_{r} = \omega_{\overline{f}',t'_{1},t'_{2}}$. With everything we have demonstrated, we can now prove the following:

\begin{lem}

$\omega^{*}: U_{\overline{f}',t'_{1},t'_{2}} \to \ob Z + \mor Z$ induces a functor $\omega:U_{\overline{f}',t'_{1},t'_{2}} \to Z$.

\end{lem}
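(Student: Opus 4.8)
The plan is to equip $U := U_{\overline{f}',t'_{1},t'_{2}}$ with the structure of a groupoid assembly by descending, along the regular epimorphism $[-]_{r}$, the operations $\dom,\cod,\id,(-)^{-1}$ and the partial composition $\circ$ that already live on $W_{\sim}$, and then to let $\omega$ record on each class the $Z$-label kept by $\omega^{*}$. Since coproducts in $\Asm(A)$ are disjoint and pullback-stable and $\omega^{*}:U \to \ob Z + \mor Z$ is a map into a coproduct, $U$ splits as $U = \ob U + \mor U$ with $\ob U = (\omega^{*})^{-1}(\ob Z)$ and $\mor U = (\omega^{*})^{-1}(\mor Z)$; these will serve as the object and morphism assemblies of the groupoid. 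Correspondingly $W_{\sim}$ splits over $\ob Z + \mor Z$, and, regular epimorphisms being stable under pullback, $[-]_{r}$ restricts to regular epimorphisms onto $\ob U$ and onto $\mor U$.

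For the descent I will exploit the fact, already established, that $[w]_{r} = [w']_{r}$ if and only if $\gamma(w) \sim' \gamma(w')$; equivalently, the kernel pair of $[-]_{r}$ is precisely the relation $\sim'$. I would first descend the source, target, identity and inverse maps. The generating clauses of $\sim$ which assert that $\id$, $\dom$, $\cod$ and $(-)^{-1}$ respect the relation give that $\gamma(w) \sim' \gamma(w')$ implies $\gamma(\id_{w}) \sim' \gamma(\id_{w'})$, and likewise for $\dom,\cod,(-)^{-1}$; since Remark \ref{wfnrmk} keeps these operations inside $W_{\sim}$, each composite $[-]_{r}\circ\dom$ (and similarly for $\cod$, $\id$ and $(-)^{-1}$) coequalizes the kernel pair of $[-]_{r}$ and so factors uniquely through $[-]_{r}$, yielding the corresponding operation on $U$.

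Composition is the delicate point. The composable pairs of $U$ form the pullback $\mor U \times_{\ob U} \mor U$; given such a pair I would lift it to representatives $w,w' \in W_{\sim}$ over a composable pair $(m,m') \in \tilde{Z}$ with $\gamma(\cod w) \sim' \gamma(\dom w')$, using pullback-stability of $[-]_{r}$ together with the $Z$-label tracked by $\omega^{*}$, and set $t' \circ_{U} t := [\,w' \circ w\,]_{r}$. Independence of this value from the chosen representatives is exactly the content of the generating clause of $\sim$ stating that binary composition respects the relation, and the verification that representatives can always be matched proceeds as in the representative-independence argument for coequalizers of groupoid assemblies, invoking the exactness of $\Set$. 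The groupoid laws then hold strictly in $U$: the associativity, unit and inverse clauses of $\sim$ state that the relevant composites are $\sim'$-related, and applying $[-]_{r}$ converts each such relation into an equality of classes.

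Finally I would let $\omega: U \to Z$ send an object or morphism of $U$ to the element of $\ob Z$ or $\mor Z$ recorded by $\omega^{*}$ under the coproduct identification. That $\omega$ is a functor, i.e. preserves $\dom,\cod,\id,\circ$ and $(-)^{-1}$, follows from Remark \ref{commuteZ}, which says that $\omega_{\overline{f}',t'_{1},t'_{2}}$ already commutes with all of these operations and the matching operations on $Z$; since $\omega^{*}\circ[-]_{r} = \omega_{\overline{f}',t'_{1},t'_{2}}$ and $[-]_{r}$ is epic, each identity descends to $\omega^{*}$ and hence to $\omega$. The hard part will be the composition step, namely showing that $\circ_{U}$ is well defined on $\mor U \times_{\ob U} \mor U$ and that associativity, the unit laws and the inverse laws become genuine equalities in $U$; this is where the description of the kernel pair of $[-]_{r}$, the pullback-stability of regular epimorphisms, and the binary clauses generating $\sim$ all have to be brought together.
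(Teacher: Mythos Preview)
Your proposal is correct and takes essentially the same approach as the paper: descend the groupoid operations from $W_{\sim}$ along the regular epimorphism $[-]_{r}$, use the generating clauses of $\sim$ (together with Remark~\ref{wfnrmk}) to verify well-definedness and the groupoid laws, and invoke Remark~\ref{commuteZ} for the functoriality of $\omega$. The paper's own proof is simply a terse summary of exactly this argument.
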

\begin{proof}

The objects of $U_{\overline{f}',t'_{1},t'_{2}}$ are the equivalence classes of functorial elements $[\alpha(q_{o},E)]_{r}$ and the morphisms are equivalence classes of natural elements, $[\alpha(q_{m},\phi)]_{r}$. The groupoid operations on $U_{\overline{f}',t'_{1},t'_{2}}$ are derived from the operations $\dom$, $\cod$, $(-)^{-1}$, $\id_{-}$, and $- \circ -$ restricted to  $W_{\sim}$ and are well defined by definition of $\sim$ and Remark \ref{wfnrmk}. Furthermore we also have associativity of composition and  identities and inverses functioning as intended by definition of $\sim$. The fact that $\omega$ is a functor can be inferred from Remark \ref{commuteZ}.

\end{proof}

\begin{lem}

Given a $(z,y,T:X_{y} \to U_{\overline{f}',t'_{1},t'_{2}} \times_{Z} X) \in \ob \poly_{f,g,h}(\omega)$, there exists $q \in Q_{y}$ and $E \in \Pi_{p \in P_{q}} W_{\sim,t'_{1}(p)} $ such that $\alpha(q,E)$ is functorial and $T(i'(p)) = [E(p)]_{r}$. Given a $(c:z \to z',v:y \to y',  \phi: T \gennattrans T') \in \mor \poly_{f,g,h}(\omega)$, there exists $q \in Q_{v}$ and $E \in \Pi_{p \in P_{q}} W'_{\overline{f}',t'_{1},t'_{2},t'_{1}(p)} $ such that $\alpha(q,E)$ is natural and when $P_{q} = P_{o_{0}} + P_{o_{1}} + C'_{c_{v}} + C'_{c^{i}_{v}}$ we have the following cases:

\begin{itemize}

\item For $p \in P_{o_{0}} $, $T(i'(p)) = [E(p)]_{r}$
\item For $p \in P_{o_{1}} $, $T'(i'(p)) = [E(p)]_{r}$
\item For $p \in C'_{c_{v}} $, $\phi_{i'(p)} = [E(p)]_{r}$
\item For $p \in C'_{c^{i}_{v}} $, $(\phi_{i'(p)^{-1}})^{-1} = [E(p)]_{r}$
\end{itemize}

Furthermore, for any two $(q,E)$ and $(q',E')$ satisfying the above conditions, $\gamma(\alpha(q,E)) \sim' \gamma(\alpha(q',E'))$.

\end{lem}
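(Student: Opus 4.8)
The plan is to realize the given element of $\poly_{f,g,h}(\omega)$ by lifting its defining data through the regular epimorphism $[-]_{r}\colon W_{\sim}\epi U_{\overline{f}',t'_{1},t'_{2}}$, using the collection half of the covering and collection square $P\to\overline{X}$, $Q\to\overline{Y}$ together with Lemma \ref{swan}. For the object case, given $(z,y,T)$, the composite $\pr{U}\circ T$ assigns to each $x\in\overline{X}_{y}$ an element of $U_{\overline{f}',t'_{1},t'_{2}}$ in the fibre over $g(x)$. Pulling back $[-]_{r}$ along $\pr{U}\circ T\colon\overline{X}_{y}\to U_{\overline{f}',t'_{1},t'_{2}}$ yields a cover $e\colon E_{0}\epi\overline{X}_{y}$, since regular epimorphisms are stable under pullback in the regular category $\Asm(A)$. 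Applying Lemma \ref{swan} to the covering and collection square at $y\in\overline{Y}$ with the surjection $e$ produces $q\in Q_{y}$ together with a section $E\in\Pi_{p\in P_{q}}e^{-1}(i'(p))$; by construction $E(p)\in W_{\sim}$ and $[E(p)]_{r}=\pr{U}T(i'(p))=T(i'(p))$ for every $p\in P_{q}$, which is the required matching condition, and since $t'_{1}(p)=g(i'(p))$ the element $E(p)$ sits in the correct fibre $W_{\sim,t'_{1}(p)}$.

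Next I would verify that $\alpha(q,E)$ is functorial. Each $E(p)$ lies in $W_{\sim}$, so $\gamma(E(p))\sim\gamma(E(p))$, and by the preceding lemma characterizing well-defined elements this forces $E(p)$ to be functorial or natural according to whether $i'(p)$ is an object or a morphism. The remaining hereditary-graph and composition clauses transport across $[-]_{r}$: because $T$ is a functor into $U_{\overline{f}',t'_{1},t'_{2}}$ it preserves identities, domains, codomains, inverses and composites, so for composable $i'(p),i'(p')$ and $p''$ with $i'(p'')=i'(p')\circ i'(p)$ one gets $[E(p'')]_{r}=T(i'(p''))=T(i'(p'))\circ T(i'(p))=[E(p')\circ E(p)]_{r}$, where composability of $E(p),E(p')$ in $W_{\sim}$ follows since $[-]_{r}$ also matches $\cod(E(p))$ with $\dom(E(p'))$. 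The identity $[w]_{r}=[w']_{r}\iff\gamma(w)\sim'\gamma(w')$ then yields $\gamma(E(p''))\sim\gamma(E(p')\circ E(p))$, and similarly for the identity, inverse and domain/codomain clauses. The morphism case runs in the same way: the four summands $P_{o_{0}},P_{o_{1}},C'_{c_{v}},C'_{c^{i}_{v}}$ of $P_{q}$ receive the lifts of $T$, $T'$, $\phi$, and the family obtained from the inverses of the values of $\phi$ on $L_{v^{-1}}$, respectively, with $q\in Q_{v}$ supplied again by Lemma \ref{swan} applied at $v\in\overline{Y}$; naturality of $\alpha(q,E)$ is the image under $[-]_{r}$ of the generalized naturality squares of $\phi$, with $\dom$ and $\cod$ functorial by the object case.

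Finally, for the uniqueness clause: if $(q,E)$ and $(q',E')$ both satisfy the matching conditions, then for $p\in P_{q}$ and $p'\in P_{q'}$ with $i'(p)=i'(p')$ those conditions give $[E(p)]_{r}=[E'(p')]_{r}$, hence $\gamma(E(p))\sim'\gamma(E'(p'))$. Since $\alpha(q,E)$ and $\alpha(q',E')$ are both functorial (object case) or both natural (morphism case), the second generating clause of $\sim$ applies and gives $\gamma(\alpha(q,E))\sim\gamma(\alpha(q',E'))$, which is $\sim'$ as all elements are well-defined.

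I expect the main obstacle to be the verification in the second step, namely confirming that the pointwise lift $E$ actually assembles into a \emph{functorial} (respectively \emph{natural}) element $\alpha(q,E)$ rather than merely an element matching $T$ on representatives; this is where the reflection property of $[-]_{r}$ and the compatibility of the groupoid operations with $\sim$ must be used carefully, and where the bookkeeping over the four summands in the morphism case becomes delicate.
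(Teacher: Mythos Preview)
Your proposal is correct and follows essentially the same approach as the paper: pull back the regular epimorphism $[-]_{r}$ along the map $\overline{X}_{y}\to U_{\overline{f}',t'_{1},t'_{2}}$ induced by $T$ (respectively, the map $\overline{X}_{v}\to U_{\overline{f}',t'_{1},t'_{2}}$ assembled from $T,T',\phi,\phi^{-1}$ on the four summands), then invoke the collection property at $y$ (resp.\ $v$) to produce $q$ and the section $E$, and finally transport functoriality/naturality of $T$ (resp.\ $\phi$) across $[-]_{r}$ using the characterization $[w]_{r}=[w']_{r}\Leftrightarrow\gamma(w)\sim'\gamma(w')$ together with the compatibility of the groupoid operations on $W_{\sim}$ with $[-]_{r}$. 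Your uniqueness argument via the second generating clause of $\sim$ is exactly the paper's as well.
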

\begin{proof}
Given a $(z,y,T:X_{y} \to U_{\overline{f}',t'_{1},t'_{2}} \times_{Z} X) \in \ob \poly_{f,g,h}(\omega)$, we have a morphism $\overline{T}: \overline{X}_{y} \to U_{\overline{f}',t'_{1},t'_{2}}$ where $\overline{T}(x) = T(x)$. This induces a surjection over $ \overline{X}_{y}$ via the following pullback:

\[\begin{tikzcd}
	\Lambda &&& {W_{\sim}} \\
	\\
	\\
	{\overline{X}_{y}} &&& {U_{\overline{f}',t'_{1},t'_{2}}}
	\arrow[ from=1-1, to=1-4]
	\arrow["\epsilon"',from=1-1, to=4-1]
	\arrow["\lrcorner"{anchor=center, pos=0.125}, draw=none, from=1-1, to=4-4]
	\arrow["{[-]_{r}}", from=1-4, to=4-4]
	\arrow["{\overline{T}}"', from=4-1, to=4-4]
\end{tikzcd}\]

This gives us a $q \in Q_{y}$, and $e: P_{q}  \to \Lambda$ such that $\epsilon \circ e = i'_{(y,q)}$. Thus in turn gives us a $e':P_{q} \to W_{\sim}$ such that for $p \in P_{q}$, $T(i'(p)) = \overline{T}(i'(p)) = [e'(p)]_{r}$. $\alpha(q,e')$ being functorial follows from the fact that the groupoid operations defined on $W_{\sim}$ are well-defined for $U_{\overline{f}',t'_{1},t'_{2}}$ (i.e. $\dom([e'(p)]_{r}) = [\dom(e'(p))]_{r}$, $[e'(p)]_{r} \circ [e'(p')]_{r} = [e'(p) \circ e'(p')]_{r}$, etc.). Therefore, $\gamma(\alpha(q,e')) \sim \gamma(\alpha(q,e'))$. Suppose we have another such $(q',E)$ such that $T(i'(p)) = [E(p)]_{r}$ for $p \in P_{q'}$ and $\alpha(q',E)$ is functorial. Then for $p \in P_{q}$ and $p' \in P_{q'}$ such that $i'(p) = i'(p')$, then $[e'(p)]_{r} = T(i'(p)) = [E(p')]_{r}$. Thus $\gamma(e'(p)) \sim' \gamma(E(p'))$. Then by definition of $\sim'$, $\gamma(\alpha(q,e')) \sim' \gamma(\alpha(q',E))$.\\

Given a $(c:z \to z',v:y \to y',  \phi: T \gennattrans T') \in \mor \poly_{f,g,h}(\omega)$, we have a morphism $\overline{\phi}: \overline{X}_{v} \to U_{\overline{f}',t'_{1},t'_{2}}$ constructed as follows:

\begin{itemize}

\item $\overline{\phi}(x) = T(x)$ for $x \in \overline{X}_{y}$
\item $\overline{\phi}(x') = T'(x')$ for $x'\in \overline{X}_{y'}$
\item $\overline{\phi}(m) = \phi_{m}$ for $m \in L_{v}$
\item $\overline{\phi}(m) = (\phi_{m^{-1}})^{-1}$ for $m \in L_{v^{-1}}$

\end{itemize}

As with objects, this will give us a $q = (o_{0},o_{1},c_{v},c^{i}_{v}) \in Q_{v}$ and $E: P_{q} \to W_{\sim}$ such that 
\begin{itemize}

\item $[E(p)]_{r} = \overline{\phi}(i'(p)) = T(i'(p))$ for $p \in P_{o_{0}}$
\item $[E(p)]_{r} = \overline{\phi}(i'(p)) = T'(i'(p))$ for $p \in P_{o_{1}}$
\item $[E(p)]_{r} = \overline{\phi}(i'(p)) = \phi_{i'(p)}$ for $p \in C'_{c_{v}}$
\item $[E(p)]_{r} = \overline{\phi}(i'(p)) = (\phi_{i'(p)^{-1}})^{-1}$ for $p \in C'_{c^{i}_{v}}$

\end{itemize}

Similar to how we know for objects, we have that $\alpha(q,E)$ is a hereditary pre-functor. If for $p \in P_{o_{0}}$, $p' \in P_{o_{1}}$ and $c'_{0}, c'_{1} \in C'_{c_{v}}$ such that we have the following commutative square over $v$:

\[\begin{tikzcd}
	x &&& {e} \\
	\\
	\\
	{x'} &&& {e'}
	\arrow["i'(c'_{0})", from=1-1, to=1-4]
	\arrow["i'(p)"',from=1-1, to=4-1]
	\arrow["{i'(p')}", from=1-4, to=4-4]
	\arrow["{i'(c'_{1})}"', from=4-1, to=4-4]
\end{tikzcd}\]

We have $$[E(p') \circ E(c'_{0})]_{r} = [E(p')]_{r} \circ [E(c'_{0})]_{r} = T'(i'(p')) \circ \phi_{i'(c'_{0})} = \phi_{i'(c'_{1})} \circ T(i'(p)) = [E(c'_{1})]_{r} \circ [E(p)]_{r} = [E(c'_{1}) \circ E(p)]_{r}$$
Thus $\gamma(E(p') \circ E(c'_{0})) \sim' \gamma(E(c'_{1}) \circ E(p))$ making $\alpha(q,E)$ natural. If we have a similar $(q',E')$, then just like with objects, we have that $\gamma(\alpha(q,E)) \sim' \gamma(\alpha(q',E'))$.

\end{proof}

\begin{lem}

$\omega:U_{\overline{f}',t'_{1},t'_{2}} \to Z$ is a $\poly_{f,g,h,k}$-algebra.

\end{lem}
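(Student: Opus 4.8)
The plan is to exhibit the algebra structure through the universal property of the pushout defining $\poly_{f,g,h,k}$. Recall that $\poly_{f,g,h,k}$ is the pushout of $\poly_{f,g,h} \xleftarrow{\kappa} \poly^{*}_{k} \xrightarrow{\lambda} \Id_{\GrpA{A}/Z}$, so to give $\omega$ a $\poly_{f,g,h,k}$-algebra structure it suffices to produce a functor $\beta':\poly_{f,g,h}(\omega) \to \omega$ in $\GrpA{A}/Z$ satisfying $\beta' \circ \kappa_{\omega} = \lambda_{\omega}$; the pushout then yields the structure map $\beta:\poly_{f,g,h,k}(\omega) \to \omega$ from the cocone $(\beta', \id_{\omega})$. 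First I would define $\beta'$ using the preceding lemma: for an object $(z,y,T) \in \ob \poly_{f,g,h}(\omega)$, choose the pair $(q,E)$ with $q \in Q_{y}$ and $\alpha(q,E)$ functorial satisfying $T(i'(p)) = [E(p)]_{r}$, and set $\beta'(z,y,T) = [\alpha(q,E)]_{r}$; for a morphism $(c,v,\phi)$ set $\beta'(c,v,\phi) = [\alpha(q,E)]_{r}$ for the natural representative furnished by the lemma. Well-definedness is immediate from the final clause of that lemma, which guarantees that any two such pairs are $\sim'$-related, so that $[\alpha(q,E)]_{r}$ does not depend on the choices.

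Next I would check that $\beta'$ is a genuine morphism of assemblies lying over $Z$. For the over-$Z$ condition, $q \in Q_{y}$ gives $\pr{\overline{Y}}(q) = y$, so $\omega^{*}([\alpha(q,E)]_{r}) = \omega_{\overline{f}',t'_{1},t'_{2}}(\alpha(q,E)) = t'_{2}(y) = h(y) = z$, and similarly on morphisms. For realizability, the assignment $(z,y,T) \mapsto \alpha(q,E)$ is obtained by pushing a realizer of $T$ through the covering and collection square produced in Lemma \ref{wisc} together with the decoration of the underlying $W$-type $W_{\overline{f}',t'_{1},t'_{2}}$, exactly in the manner of the $UM$-realizer built for $c'_{r}$ in $\Asm(A)$; post-composing with the regular epimorphism $[-]_{r}$ preserves the realizer.

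The main obstacle is functoriality of $\beta'$, in particular preservation of composition. Preservation of identities and of inverses follows from Remark \ref{wfnrmk} and Remark \ref{commuteZ} together with the identity- and inverse-generating clauses of $\sim$. For composition, given composable morphisms $(c,v,\phi):(z,y,T) \to (z',y',T')$ and $(c',v',\phi'):(z',y',T') \to (z'',y'',T'')$, I must match $\beta'(c',v',\phi') \circ \beta'(c,v,\phi)$, computed via the concatenation operation $- \circ -$ on $W_{\sim}$ descended to $U_{\overline{f}',t'_{1},t'_{2}}$, with $\beta'$ applied to the composite in $\poly_{f,g,h}(\omega)$, whose generalized natural transformation is assembled from the lifts as in Remark \ref{altdepprod}. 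This is precisely what the long composition clause of $\sim$ (the condition requiring $i'(p') \circ i'(p) = i'(p'')$ together with $\gamma(E''(p'')) \sim \gamma(E'(p') \circ E(p))$) was designed to secure: the chosen representative $(q'',E'')$ of the composite and the concatenated representatives of the two factors are $\sim'$-related, whence their images under $[-]_{r}$ agree. I expect tracking how $\lift$ and $\lift(v,v')$ interact with the covering $i':P_{q} \to \overline{X}_{v}$ in this verification to be the most delicate part.

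Finally I would establish the reduction identities $\beta' \circ \kappa_{\omega} = \lambda_{\omega}$. For an object $(z,r,T)$ of $\poly^{*}_{k}(\omega)$ we have $\kappa_{\omega}(z,r,T) = (z, f(k(r)), T)$; choosing $(q,E)$ for this object, the point $p \in P_{q}$ with $i'(p) = k(r)$ meets the hypotheses of the third (reduction) clause of $\sim$, so $\gamma(\alpha(q,E)) \sim \gamma(E(p))$ and hence $\beta'(z, f(k(r)), T) = [\alpha(q,E)]_{r} = [E(p)]_{r} = T(k(r)) = \lambda_{\omega}(z,r,T)$. The morphism case is identical, using $p \in C'_{c_{v}}$ with $i'(p) = k(s)$ and the reduction clause applied over $\mor Y$. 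With the cocone $(\beta', \id_{\omega})$ thus compatible along $\kappa_{\omega}$ and $\lambda_{\omega}$, the universal property of the pushout produces $\beta:\poly_{f,g,h,k}(\omega) \to \omega$, exhibiting $\omega$ as a $\poly_{f,g,h,k}$-algebra.
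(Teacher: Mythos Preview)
Your proposal is correct and follows essentially the same route as the paper: define the map $\beta' = \alpha_{f,g,h}:\poly_{f,g,h}(\omega) \to \omega$ via the representatives $(q,E)$ supplied by the preceding lemma, verify functoriality clause by clause against the generating conditions of $\sim'$ (identities, inverses, and the dedicated composition clause for $\gamma(w'') \sim \gamma(w' \circ w)$), and then check the reduction identity using the third clause of $\sim$. The paper carries out exactly these steps, with the only cosmetic difference being that it verifies $\dom$, $\cod$, identities, inverses and composition each by an explicit computation rather than appealing to Remarks \ref{wfnrmk} and \ref{commuteZ} in bulk; your identification of the composition clause as the delicate point matches where the paper invests the most detail.
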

\begin{proof}

First we construct a functor $\alpha_{f,g,h}:\poly_{f,g,h}(\omega) \to \omega$. Given $(z,y,T:X_{y} \to U_{\overline{f}',t'_{1},t'_{2}} \times_{Z} X) \in \ob \poly_{f,g,h}(\omega)$ and $(c:z \to z',v:y \to y',  \phi: T \gennattrans T') \in \mor \poly_{f,g,h}(\omega)$, by by previous lemma, we have $(q \in Q_{y},E)$ and $(q' \in Q_{v},\psi)$ such that $[E(p)] = T(i'(p))$ for $p \in P_{q}$ and $[\psi(p)] = \phi(i'(p))$ for $p \in P_{q'}$. We also have that $\alpha(q,E)$ and $\alpha(q',\psi)$ are in the fibers over $z$ and $c:z \to z'$ respectively. So we set $\alpha_{f,g,h}(z,y,T) = [\alpha(q,E)]_{r}$ and $\alpha_{f,g,h}(c,v,\phi) = [\alpha(q',\psi)]_{r}$. Similarly to how we constructed the algebra structure for $W$-types with reductions in $\Asm(A)$ in section \ref{wtwr}, we can find $E$ and $\psi$ so that the underlying function of $\alpha_{f,g,h}$ is realized. \\

For preservation of domains and codomains, Suppose we have $\alpha_{f,g,h}(z,y,T) = [\alpha(q,E)]_{r}$, $\alpha_{f,g,h}(z',y',T') = [\alpha(q,E)]_{r}$ and $\alpha_{f,g,h}(c,v,\phi) = [\alpha(q',\psi)]_{r}$ where $q' = (o_{0},o_{1},c_{v},c^{i}_{v})$ then for $p \in P_{q}$ and $p' \in P_{o_{0}}$ such that $i'(p) = i'(p')$, then $[\psi(p')]_{r} = T(i'(p)) = [E(p)]_{r}$ and so $\gamma(\psi'(p')) \sim' \gamma(E(p))$. So $\dom([\alpha(q',\psi)]_{r}) = [\dom(\alpha(q',\psi))]_{r} = [\alpha(q,E)]_{r}$. We can show something similar for codomains.\\

For identities, suppose we have $\alpha_{f,g,h}(z,y,T) = [\alpha(q,E)]_{r}$ and $\alpha_{f,g,h}(\id_{z},\id_{y},\id_{T}) = [\alpha(q',\psi)]_{r}$ where $q' = (o_{0},o_{1},c_{v},c^{i}_{v})$ then for $p \in P_{\id_{q}}$ and $p' \in P_{q'}$ such that $i'(p) = i'(p')$ then 

\begin{itemize}

\item if $i'(p)$ is an object then $[\id_{E}(p)]_{r} = [E(p)]_{r} = T(i'(p)) = [\psi(p')]_{r}$
\item if $i'(p)$ is a morphism then $[\id_{E}(p)]_{r} = [E(p)]_{r} = T(i'(p)) = \id_{T,i'(p')} = [\psi(p')]_{r}$

\end{itemize}

thus $\gamma(E(p)) \sim' \gamma(\psi(p'))$, therefore $\gamma(\id_{\alpha(q,E)}) \sim' \gamma(\alpha(q',\psi))$.\\

For inverses, suppose we have $\alpha_{f,g,h}(c,v,\phi) = [\alpha(q,\psi)]_{r}$ and $\alpha_{f,g,h}(c^{-1},v^{-1},\phi^{-1}) = [\alpha(q',\psi')]_{r}$ then if $q = (o_{0},o_{1},c_{v},c^{i}_{v})$ and $q' = (o'_{0},o'_{1},c_{v^{-1}}, c^{i}_{v^{-1}})$ we examine the following cases for $\alpha(q^{-1},\psi)$ and $\alpha(q',\psi')$:

\begin{itemize}
\item For $p \in P_{o_{1}}$ and $p' \in P_{o'_{0}}$ such that $i'(p) = i'(p')$, $[\psi(p)]_{r} = T'(i'(p)) = [\psi'(p')]_{r}$
\item For $p \in P_{o_{0}}$ and $p' \in P_{o'_{1}}$ such that $i'(p) = i'(p')$, $[\psi(p)]_{r} = T(i'(p)) = [\psi'(p')]_{r}$
\item For $p \in C'_{c^{i}_{v}}$ and $p' \in C'_{c_{v^{-1}}}$ such that $i'(p) = i'(p')$, $[\psi(p)]_{r} = \phi(i'(p)^{-1})^{-1} = \phi^{-1}(i'(p')) = [\psi'(p')]_{r}$
\item For $p \in C'_{c_{v}}$ and $p' \in C'_{c^{i}_{v^{-1}}}$ such that $i'(p) = i'(p')$, $[\psi(p)]_{r} = \phi(i'(p)) = \phi^{-1}(i'(p')^{-1})^{-1} = [\psi'(p')]_{r}$

\end{itemize}
Thus $\gamma(\psi(p)) \sim' \gamma(\psi'(p'))$ and so $\gamma(\alpha(q,\psi)) \sim' \gamma(\alpha(q',\psi'))$.\\

Finally for composition, suppose we have $\alpha_{f,g,h}(c,v,\phi) = [\alpha(q,\psi)]_{r}$, $\alpha_{f,g,h}(c',v',\phi') = [\alpha(q',\psi')]_{r}$, and $\alpha_{f,g,h}(c' \circ c,v' \circ v,\phi' \circ \phi) = [\alpha(q'',\psi'')]_{r}$ then for $p'' \in P_{q''}$ such that $i'(p'') \in L_{v' \circ v}$, setting $m = \lift(\htpy{0},f)(\dom(i'(p'')), v)$, we have $m \in L_{v}$ and $i'(p'') \circ m^{-1} \in L_{v'}$. So we can find $p \in P_{q}$ and $p' \in P_{q'}$ such that $i'(p) = m$ and $i'(p') = i'(p'') \circ m^{-1}$ and $$[\psi'(p') \circ \psi(p)]_{r} = [\psi'(p')]_{r} \circ [\psi(p)]_{r} = \phi'(i'(p')) \circ \phi(i'(p)) = \phi'\circ \phi(i'(p'')) = [\psi''(p'')]_{r}$$
so $\gamma(\psi'(p') \circ \psi(p)) \sim' \gamma(\psi''(p''))$. For $p^{-} \in P_{q}$ and $p^{*} \in P_{q'}$ such that $i'(p^{*}) \circ i'(p^{-}) = i'(p') \circ i'(p)$ then in fact we have $\phi'(i'(p')) \circ \phi( i'(p)) = \phi'(i'(p^{*})) \circ \phi(i'(p^{-}))$ and so $$[\psi'(p') \circ \psi(p)]_{r} = [\psi'(p')]_{r} \circ [\psi(p)]_{r} = [\psi'(p^{*})]_{r} \circ [\psi(p^{-})]_{r} = [\psi'(p^{*}) \circ \psi(p^{-})]_{r}$$ meaning that $\psi'(p') \circ \psi(p) \sim \psi'(p^{*}) \circ \psi(p^{-})$. So by construction of $\sim'$, $\gamma(\alpha(q'',\psi'')) \sim' \gamma(\alpha(q',\psi')\circ \alpha(q,\psi))$.\\

Therefore, we have a functor $\alpha_{f,g,h}:\poly_{f,g,h}(\omega) \to \omega$.\\

For an object $r \in \ob R$ and $B \in \Pi_{x \in X_{f(k(r))}} U_{\overline{f}',t'_{1},t'_{2},g(x)}$, let $\alpha_{f,g,h}(g(k(r)), f(k(r)), B) = [\alpha(q,E)]_{r}$, then we have a $p \in P_{q}$ such that $i'(p) = k(r)$. Furthermore we have that for $p^{*}, p' \in P_{q}$, such that $i'(p^{*}) = i'(p')$, $\gamma(E(p^{*})) \sim' \gamma(E(p'))$. Thus $\gamma(\alpha(q,E)) \sim' \gamma(\alpha(q,E))$ by the second condition of $\sim'$. Thus $\gamma(E(p)) \sim' \gamma(\alpha(q,E))$. So $B(k(r)) = [E(p)]_{r} = [\alpha(q,E)]_{r} = \alpha_{f,g,h}(g(k(r)), f(k(r)), B)$.\\

For morphisms $s \in \mor R$ and $\phi: B \gennattrans B'$, let $\alpha_{f,g,h}(g(k(s)), f(k(s)), \phi) = [\alpha(q,\psi)]_{r}$, then we have a $p \in P_{q}$ such that $i'(p) = k(s)$ and for $p^{*}, p' \in P_{q}$ such that $i'(p^{*}) = i'(p')$ we have $\gamma(E(p^{*})) \sim' \gamma(E(p'))$. Thus we can show $\gamma(\psi(p)) \sim' \gamma(\alpha(q,\psi))$ as with objects and subsequently  $\phi_{k(r)} = [\psi(p)]_{r} = [\alpha(q,\psi)]_{r} = \alpha_{f,g,h}(g(k(s)), f(k(s)), \phi)$.\\

We can extend this to a functor $\poly^{*}_{r}(\omega) \to \omega$ by applying functoriality and naturality of elements $\alpha(q,E)$ and $\alpha(q',\phi)$. Therefore, $\omega$ is a $\poly_{f,g,h,k}$-algebra.

\end{proof}


\begin{lem}
\label{wtyper}

$\omega$ is the initial $\poly_{f,g,h,k}$-algebra.

\end{lem}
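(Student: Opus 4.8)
The plan is to mirror the proof of Lemma \ref{wgini} for the groupoidal structure while simultaneously controlling realizers by a fixed-point combinator exactly as in the $\Asm(A)$ construction of Section \ref{wtwr}. Fix a $\poly_{f,g,h,k}$-algebra $v:(D,\delta) \to Z$ with structure map $\beta:\poly_{f,g,h,k}(v) \to v$, and recall that $\beta$ is determined by $\beta'$ together with the two reduction clauses $\beta'(h(f(k(r))),f(k(r)),T) = T(k(r))$ and $\beta'(h(f(k(s))),f(k(s)),\phi) = \phi_{k(s)}$. I would first build an algebra morphism $um: W_{\sim} \to D$ over $Z$ on the well-defined part of the underlying $W$-type, then show it is invariant under $\sim'$ so that it descends along the regular epimorphism $[-]_{r}: W_{\sim} \to U_{\overline{f}',t'_{1},t'_{2}}$ to the desired $[um]: U_{\overline{f}',t'_{1},t'_{2}} \to D$, and finally verify that $[um]$ is the unique $\poly_{f,g,h,k}$-algebra morphism into $v$.

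First I would construct $um$ on objects and morphisms of $W_{\sim}$ using the attempt technique of Lemma \ref{wgini}. For a well-defined $w$ I define the subgroupoids $\Paths_{w}$ (and $\Paths_{u}$, $\Paths_{u,u'}$ for morphisms and composable pairs) exactly as in that lemma, and call a functor $\gamma_{w}:\Paths_{w} \to D$ an \emph{attempt} when it is computed from $\beta'$ on each hereditary-functorial or hereditary-natural constituent $\alpha(q,E)$ via the associated $(q,E)$ datum supplied by the preceding lemma. Following Lemma \ref{subalgpoly} and the relation argument of Section \ref{wtwr}, I would show by $\DirST$-induction on the underlying $W$-type $W_{\overline{f}',t'_{1},t'_{2}}$ that the subset of elements which, if well-defined, admit a unique attempt is $\DirST$-inductive, hence exhausts $W_{\overline{f}',t'_{1},t'_{2}}$; thus every well-defined $w$ has a unique attempt. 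Setting $um(w) = \gamma_{w}(w)$ yields a function $W_{\sim} \to D$ over $Z$ preserving the groupoid operations by construction.

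Concurrently I would make $um$ into a genuine assembly morphism by a fixed-point realizer. As in Section \ref{wtwr}, writing $r_{\beta}$ for a realizer of $\beta$, I would set
\[
uni := \lmbd{f,l}{r_{\beta} \cdot [p_{0} \cdot l, \lmbd{l'}{f \cdot ((p_{1} \cdot l) \cdot [p_{0} \cdot l, l'])}]}
\]
and $UM := \fixf \cdot uni$, and check by the same unfolding that $UM$ sends any realizer of $w = \alpha(q,E)$ to a realizer of $um(w)$, using that a realizer of $w$ encodes a realizer of the index together with a realizer of the family $E$. The descent step is next: by induction on the generators of $\sim'$ I would prove that $\gamma(w) \sim' \gamma(w')$ implies $um(w) = um(w')$. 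Transitivity, the pointwise clause, and the groupoid-operation clauses follow from uniqueness of attempts and functoriality of $\beta'$; the associativity, unit, and inverse clauses follow from the corresponding identities holding in $D$ because $v$ is an algebra (cf. Remark \ref{wfnrmk}); and the crucial reduction clause $\gamma(\alpha(q,E)) \sim' \gamma(E(p))$ for $i'(p) = k(r)$ with $r \in R$ is discharged precisely by the reduction axiom $\beta'(h(f(k(r))),f(k(r)),T) = T(k(r))$ of $v$. Hence $um$ factors as $um = [um] \circ [-]_{r}$ for a unique morphism $[um]: U_{\overline{f}',t'_{1},t'_{2}} \to D$ over $Z$.

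Finally I would check that $[um]$ is a $\poly_{f,g,h,k}$-algebra morphism and that it is unique. The algebra-morphism equation reduces, via the preceding lemma expressing each element of $\poly_{f,g,h}(\omega)$ by some $(q,E)$, to the fact that $um$ preserves the $\poly_{\overline{f}',t'_{1},t'_{2}}$-structure together with the reduction clauses, and $[-]_{r}$ being epi transports it down to $U_{\overline{f}',t'_{1},t'_{2}}$. For uniqueness, given another $\poly_{f,g,h,k}$-algebra morphism $\mu$, I would pull it back along $[-]_{r}$ to a morphism $\mu \circ [-]_{r}: W_{\sim} \to D$ and form the equalizer $\Eq{um, \mu \circ [-]_{r}} \mono W_{\sim}$; as in Lemma \ref{wgini} this equalizer is a subalgebra of $W_{\sim}$, hence all of $W_{\sim}$, so $\mu \circ [-]_{r} = um = [um] \circ [-]_{r}$ and therefore $\mu = [um]$ since $[-]_{r}$ is a regular epimorphism. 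I expect the descent step to be the main obstacle: one must verify $\sim'$-invariance across all of the many generating clauses in the groupoid setting -- tracking functoriality, naturality, and the interplay of the reduction clause with composition and inverses -- while simultaneously keeping the whole construction realized by the single combinator $UM$.
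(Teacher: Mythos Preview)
Your decomposition differs from the paper's and runs into a circularity. The paper does not first build a map $um: W_{\sim} \to D$ by attempts and $\DirST$-induction and only afterwards verify $\sim'$-invariance; instead it defines an inductive relation $J \subseteq D \times_{\overline{Z}} W_{\sim}$ and proves, by a single induction on the \emph{generators of $\sim'$}, the condition $C_{u}$: whenever $\gamma(w) \sim' \gamma(w')$ there is a unique $t$ with $(w,t) \in J$, and that same $t$ is the unique witness for $w'$. Existence of the value, its uniqueness, and $\sim'$-invariance are all established in one pass, and the functor $j:U_{\overline{f}',t'_{1},t'_{2}} \to D$ is then read off as $j([w]_{r}) =$ the unique such $t$.

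Your two-step plan stalls at the composition nodes. An element $w' \circ w = \alpha((m,m'),(w,w'))$ lies in $W_{\sim}$ only when $\gamma(\cod(w)) \sim' \gamma(\dom(w'))$, not when $\cod(w) = \dom(w')$. To set $um(w' \circ w) := um(w') \circ_{D} um(w)$ you need $\cod_{D}(um(w)) = \dom_{D}(um(w'))$, i.e.\ $um(\cod(w)) = um(\dom(w'))$; but $\cod(w)$ and $\dom(w')$ are merely $\sim'$-related, so you are already invoking the $\sim'$-invariance of $um$ that you deferred to the second step. Two related obstructions: first, $\dom(w)$ and $\cod(w)$ are \emph{not} direct subtrees of $w$ in the $\DirST$ sense (they are rebuilt from a sub-family of $E$), so $\DirST$-induction does not deliver them; second, $W_{\sim}$ is not a groupoid --- its operations satisfy the groupoid laws only up to $\sim'$ --- so the $\Paths_{w}$-as-subgroupoid technique of Lemma \ref{wgini}, which relied on $W_{f}$ being a genuine groupoid, does not transplant. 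Finally, your realizer $UM$ is copied verbatim from Section \ref{wtwr} and handles only nodes labelled in $Q$; the underlying dependent $W$-type here lies over $Q + \tilde{Z}$, so at minimum a case split on the label is needed to realize composition nodes. The paper's approach sidesteps all of this by never needing a value at $w' \circ w$ prior to knowing its components already agree in $D$.
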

\begin{proof}

Given a $\poly_{f,g,h,k}$-algebra $d:D \to Z$ equipped with the $\poly_{f,g,h}$-algebra structure $\beta:\poly_{f,g,h}(d) \to d$, treating $D$ as the assembly $\ob D + \mor D $ and $d$ over $\overline{Z} = \ob Z + \mor Z$, we define a subobject  $J \subseteq D \times _{\overline{Z}} W_{\sim}$  in $\Asm(A)$ as follows:

\begin{itemize}

\item For $y \in \ob Y$, $q \in Q_{y}$, $\alpha(q,B) \in W_{\sim}$ and $(z,y,B') \in \ob \poly_{f,g,h}(d)$, and for all $p \in P_{q}$, $(B(p) ,B'(i'(p))) \in J$, then $(\alpha(q,B),\beta(z,y,B')) \in J$.\\

\item For $m \in \mor Y$,$q = (o_{0},o_{1}, c_{m},c^{i}_{m}) \in Q_{m}$ ,$\alpha(q,\phi) \in W_{\sim}$, $(n,m,\phi':B' \to L') \in \mor \poly_{f,g,h}(d)$, $(\dom(\alpha(q,\phi)), \alpha(\dom(n), \dom(m), B')) \in J$, $(\cod(\alpha(q,\phi)), \alpha(\cod(n), \cod(m), L')) \in J$, and for all $p \in C'_{c_{m}}$ and $p' \in C'_{c^{i}_{m}}$, $(\phi(p) ,\phi'_{i'(p)}), (\phi(p') ,(\phi'_{i'(p')^{-1}})^{-1}) \in J$ then $(\alpha(q,\phi),\beta(n,m,\phi')) \in J$.\\

\item For $(w,t) \in J$ and $(w',t') \in J$ such that $\dom(t') = \cod(t)$ and $\dom(w') \sim \cod(w)$ then we have $(w' \circ w, t' \circ t) \in J$.\\

\end{itemize}

Now we seek to show that for $w, w' \in W_{\sim}$, when $\gamma(w) \sim' \gamma(w')$ there exists a unique $t \in D$ such that $(w,t) \in J$ is inhabited and the same $t$ is unique for $w'$. We will refer to this condition as $C_{u}$. We need to prove this via induction on $\sim'$:\\

Suppose $\gamma(w) \sim \gamma(w')$ by transitivity, then there exists $w^{*} \in W_{\sim}$ such that $\gamma(w) \sim' \gamma(w^{*})$ and $\gamma(w^{*}) \sim \gamma(w')$. So by induction on both $\gamma(w) \sim' \gamma(w^{*})$ and $\gamma(w^{*}) \sim' \gamma(w')$, we have that $C_{u}$ holds for $\gamma(w) \sim' \gamma(w')$.\\

 Now we consider the $\id$ case. Suppose we have $\gamma(w) \sim' \gamma(w')$ for functorial elements satisfying $C_{u}$, then we have a unique object $t$ such that $J_{(w,t)}$. Then $w = \alpha(q,B)$ and $t = \beta(z,y,B')$ with $q \in Q_{y}$ and $(B(p) ,B'(i'(p))) \in J$ for $p \in P_{q}$. Since $w$ is functorial, then $\gamma(B(p)) \sim' \gamma(B(p'))$ for $p, p' \in P_{q}$ such that $i'(p) = i'(p')$. Suppose that all such instances satisfies $C_{u}$, note that $\id_{B}(p) = B(p)$ and $\id_{B',v} = B'(v)$. Since $(\id_{B',v^{-1}} )^{-1} = B'(v^{-1})$, it should be clear that $J_{(\id_{w},\id_{t})}$ is inhabited. Furthermore $\id_{t}$ is unique since each $\gamma(B(p)) \sim' \gamma(B(p'))$ satisfies $C_{u}$. $t$ is also unique for $w'$ and thus $\id_{t}$ is the unique element for $\id_{w'}$.\\

Fore $\dom$ and $\cod$ cases:\\

Suppose that we have natural elements $w \sim w'$ satisfying $C_{u}$, then we have a unique object $t$ such that $(w,t) \in J$. If $w = w_{1} \circ w_{0}$, then $t = t_{1} \circ t_{0}$ and  $(w_{1},t_{1}) \in J$ and $(w_{0},t_{0}) \in J$ are inhabited. Since $\dom(w) = \dom(w_{0})$ and $\cod(w) = \cod(w_{1})$, it suffices to examine the case where $w = \alpha(q,\phi)$.\\

If $w = \alpha(q,\phi)$ then $t = \beta(n,m,\phi':B' \to L')$ and we have $(\dom(w),\dom(t)) \in J$, $(\cod(w),\cod(t)) \in J$, $(\phi(p) ,\phi'_{i'(p)}) \in J $ for $p \in C'_{c_{m}}$ and $(\phi(p) ,(\phi'_{i'(p)^{-1}})^{-1}) \in J$ for $p \in C'_{c^{i}_{m}}$ where each $\gamma(\phi(p)) \sim' \gamma(\phi(p'))$ for $i'(p) = i'(p')$ satisfies $C_{u}$ by induction. Applying the definitions of $J$ to $(\dom(w),\dom(t)) \in J$ and $(\cod(w),\cod(t)) \in J$ and by induction one can see that $\dom(t)$ and $\cod(t)$ are the unique elements for $\dom(w)$ and $\cod(w)$. Since $w'$ satisfies $C_{u}$, $t$ is unique for $w'$ thus $\dom(t)$ and $\cod(t)$ are the unique elements for $\dom(w')$ and $\cod(w')$.\\

For the $- \circ -$ case:\\

Suppose we have $\gamma(w'_{m'}) \sim' \gamma(w_{m'})$ and $\gamma(w'_{m}) \sim' \gamma(w_{m})$ for natural elements and $\gamma(\cod(w_{m})) \sim' \gamma(\dom(w_{m'}))$ satisfying $C_{u}$. Then if $t'$ is the unique element for $w'_{m'}$ and $w_{m'}$ and $t$ is the unique element for $w'_{m}$ and $w_{m}$ then $(w'_{m'} \circ w'_{m},t'\circ t) \in J$ and $(w_{m'} \circ w_{m},t'\circ t) \in J$. Furthermore the composition $t' \circ t$ is well defined since $\dom(t')$ and $\cod(t)$ are the unique elements for $\gamma(\dom(w'_{m'})) \sim' \gamma(\dom(w_{m'}))$ and $\gamma(\cod(w'_{m})) \sim' \gamma(\cod(w_{m}))$ and so $\dom(t') = \cod(t)$. Suppose we have another $t''$ such that $(w_{m'} \circ w_{m},t'') \in J$. By construction of $J$, $t'' = t''_{1} \circ t''_{0}$ with $(w_{m'},t''_{1}) \in J$ and $(w_{m},t''_{0}) \in J$. Thus $t''_{1} = t'$ and $t''_{0} = t$, so $t'' = t' \circ t$.\\

For the $(-)^{-1}$ case:\\

Suppose we have natural elements $\gamma(w) \sim' \gamma(w')$ satisfying $C_{u}$, if $w = w_{1} \circ w_{0}$, then since $w^{-1} = w_{0}^{-1} \circ w_{1}^{-1}$ by the $- \circ -$ case if suffices to check $w = \alpha(q,\phi)$.\\

If $w = \alpha(q,\phi)$, then we have a unique $t = \beta(n,m,\phi')$ and $(\dom(w),\dom(t)) \in J$, $(\cod(w),\cod(t)) \in J$, $(\phi(p) ,\phi'_{i'(p)}) \in J $ for $p \in C'_{c_{m}}$ and $(\phi(p) ,(\phi'_{i'(p)^{-1}})^{-1}) \in J$ for $p \in C'_{c^{i}_{m}}$ where each $\gamma(\phi(p)) \sim' \gamma(\phi(p'))$ for $i'(p) = i'(p')$ satisfies $C_{u}$ by induction. This data also works for $(w^{-1} t^{-1}) \in J$ and $t^{-1}$ must be unique by induction. $t$ is also the unique element for $w'$ and thus $t^{-1}$ is the unique element for $w'$.\\

 For associativity:\\

Suppose we have that $(w_{0}, t_{0}) \in J$, $(w_{1},t_{1}) \in J$, and $(w_{2},t_{2}) \in J$ and $\gamma(w_{0}) \sim' \gamma(w_{0})$, $\gamma(w_{1}) \sim' \gamma(w_{1})$, and $\gamma(w_{2}) \sim' \gamma(w_{2})$ satisfies $C_{u}$, then by the $- \circ -$ case $t_{2} \circ t_{1} \circ t_{0}$ is the unique element for $(w_{2} \circ w_{1}) \circ w_{0}$ and $w_{2} \circ (w_{1} \circ w_{0})$.\\

Suppose we have $w = \alpha(q,E)$ and $w' = \alpha(q',E')$ such that $q, q' \in Q_{y}$ and for $p \in P_{q}$ and $p' \in P_{q'}$ such that $i'(p) = i'(p')$, $\gamma(E(p)) \sim' \gamma(E'(p'))$ satisfies $C_{u}$ when  $y$ is an object, We also assume the instances of $\sim'$ regarding functoriality of $w$ and $w'$ satisfies $C_{u}$. We define a functor $B: X_{y} \to D$, $B(x) = t_{x}$ where $t_{x}$ is the element of $D$ for $\gamma(E(p)) \sim' \gamma(E'(p'))$ such that $i'(p) = i'(p') = x$. $B$ is well defined since for any $p,p^{*} \in P_{q}$ and $p',p^{-} \in P_{q'}$ such that $i'(p) = i'(p^{*}) = i'(p^{-}) = i'(p') = x$, $\gamma(E(p)) \sim' \gamma(E(p^{*})) \sim' \gamma(E'(p^{-})) \sim' \gamma(E'(p'))$ by transitivity and so all such elements have the same $t_{x}$. Also both $w$ and $w'$ are functorial which by the previous cases makes $B$ a functor. Thus we have that $(w,\beta(z,y,B)) \in J$ and $(w',\beta(z,y,B)) \in J$ and $B$ is unique in such sense since each $\gamma(E(p)) \sim' \gamma(E'(p'))$ satisfies $C_{u}$. The morphism case is similar except we use naturality to construct a natural transformation.\\

Suppose we have $\gamma(w) \sim' \gamma(w')$ where $w = \alpha(q,E)$ for $q \in  Q_{y}$ and there exists a $r \in R$ and $p \in P_{q}$ such that $f(k(r)) = y$, $i'(p) = k(r)$, and $E(p) = w'$. We also assume that for $p,p' \in P_{q}$ such that $i'(p) = i'(p')$, $\gamma(E(p)) \sim' \gamma(E(p'))$ satisfies $C_{u}$ as well as instances of $\sim'$ related to either the naturality or functoriality of $w$. As with the previous case We can construct a unique $t$ such that $(w,t) \in J$ is inhabited. Furthermore when $t = \beta(z,y,B)$, $B(i'(p))$ is the unique element such that $(w',B(i'(p))) \in J$ is inhabited. Since $d$ is a $\poly_{f,g,h,k}$-algebra, $B(i'(p)) = t$, thus $w$ and $w'$ share the same $t$.\\

Suppose we have natural elements $\gamma(w'') \sim' \gamma(w' \circ w)$ where $w'' = \alpha(q'',\phi'')$, $w' = \alpha(q',\phi')$ and $w = \alpha(q,\phi)$. We assume that $\gamma(\dom(w'')) \sim' \gamma(\dom(w))$, $\gamma(\cod(w'')) \sim' \gamma(\cod(w'))$, $\gamma(\cod(w)) \sim' \gamma(\dom(w'))$, $\gamma(\phi''(p'')) \sim' \gamma(\phi'(p') \circ \phi(p))$ for $i'(p'') = i'(p') \circ i'(p)$ and every instance of $\sim'$ pertaining to the naturality of $w''$, $w'$, and $w'$ satisfies $C_{u}$. As above we can construct unique $t''$, $t'$, and $t$ such that $\dom(t'') = \dom(t)$, $\cod(t'') = \cod(t')$, and $\cod(t) = \dom(t')$ and $(w'',t'') \in J$, $(w',t') \in J$, and $(w,t) \in J$.  Furthermore for $p'' \in P_{q''}$, $p' \in P_{q'}$, and $p \in P_{q}$ such that $i'(p'') = i'(p') \circ i'(p)$, we have that $t''_{i'(p'')}$,  $t'_{i'(p')}$, and $t_{i'(p)}$ are unique for $\phi''(p'')$, $\phi'(p')$, and $\phi(p)$ and $t''_{i'(p'')} = t'_{i'(p')} \circ t_{i'(p)}$ since $\gamma(\phi''(p'')) \sim' \gamma(\phi'(p') \circ \phi(p))$ satisfies $C_{u}$. Then $t'' = t' \circ t$.\\

The final two cases follows from apply the $\dom$, $\cod$, $\id$,  $(-)^{-1}$ and $comp$ cases. \\

Thus for $w, w' \in W'_{\overline{f}',t'_{1},t'_{2}}$, when $\gamma(w) \sim' \gamma(w')$ there exists a unique $t$  such that $(w,t) \in J$ is inhabited and the same $t$ is unique for $w'$ as well. We can construct a functor $j :U_{\overline{f}',t'_{1},t'_{2}} \to D$ where $j([w]_{r})$ is the unique element for every $\gamma(w) \sim' \gamma(w')$. functoriality follows from the $\dom$, $\cod$, $\id$, $comp$, and the $lcomp$ and $rcomp$ cases. The fact that $\gamma$ extends to a functor $\gamma: \omega \to d$ is straightforward. By the construction of $J$ and the reduction cases, $\gamma$ is a  $\poly_{f,g,h,k}$-algebra morphism. We also know by the construction of $J$ and $C_{u}$ that $\gamma$ is the unique $\poly_{f,g,h,k}$-algebra morphism. Therefore $\omega$ is the initial $\poly_{f,g,h,k}$-algebra.

\end{proof}


\subsection{Uniform Normalized Small Object Argument}\label{soasec}
In this section, we use $W$-types with reductions to construct a version of the small object argument in $\GrpA{A}$. The best references for understanding the small object argument are \cite{Garner_2008} and \cite[chapter 12]{Riehl_2014}. We will denote a normal isofibration using the form $\sum_{i \in I} U_{i} \to I$ where $U_{i}$ is the fiber over $i \in \ob I$. We will also refer to normal isofibrations as simply fibrations. Given a groupoid assembly $I$, we have that the full subcategory of fibrations over $I$, $\Fib(I)$, is cartesian closed. We will review said structure:\\

Given $F:\sum_{i \in I} U_{i} \to I$ and $G:\sum_{i \in I} V_{i} \to I$, we have $F \times G = \Sigma_{F}  \circ \Delta_{F}(G)$ where $\Delta_{F}$ is the pullback functor from $\Fib(I)$ to $\Fib(\sum_{i \in I} U_{i})$ and $\Sigma_{F}$ is $F \circ -$. $F \to G = \Pi_{F} \circ \Delta_{F}(G)$. Breaking this down, we take the pullback of $G$ along $F$:

\[\begin{tikzcd}
	\sum_{i \in I} U_{i} \times_{I} \sum_{i \in I} V_{i}  &&& {\sum_{i \in I} V_{i} } \\
	\\
	\\
	{\sum_{i \in I} U_{i}} &&& {I}
	\arrow[ "\Delta_{G}(F)" ,from=1-1, to=1-4]
	\arrow["\Delta_{F}(G)"',from=1-1, to=4-1]
	\arrow["\lrcorner"{anchor=center, pos=0.125}, draw=none, from=1-1, to=4-4]
	\arrow["{G}", from=1-4, to=4-4]
	\arrow["{F}"', from=4-1, to=4-4]
\end{tikzcd}\]

For $F \times G$:

\begin{itemize}

\item Objects over $i \in \ob I$ are pairs $(u_{i},v_{i}) \in \ob U_{i} \times V_{i}$.\\

\item Morphisms over $k: i \to i' \in \mor I$ are pairs $(p_{k},m_{k}): (u_{i},v_{i}) \to (u_{i'},v_{i'})$ where $p_{k}$ is over $k$ in $F$ and $m_{k}$ is over $k$ in G. Note that a morphism $(p_{\id_{i}},m_{\id_{i}})$ exists in $\mor U_{i} \times V_{i}$.\\

\end{itemize}

We will denote $F \times G$ as $\sum_{i \in I} U_{i} \times V_{i} \to J$. As for $F \to G$:\\

\begin{itemize}

\item Objects over $i$ are functors $s_{i}:U_{i} \to \sum_{i \in I} U_{i} \times V_{i} $ such that $\Delta_{F}(G) \circ s_{i}$ can be restricted to the identity on $U_{i}$. This means $s_{i}$ can be represented as a functor $s'_{i}: U_{i} \to V_{i}$ where $s_{i}(u) = (u,s'_{i}(u))$.\\

\item Morphisms over $k:i \to i'$ are generalized natural transformations $\sigma_{k}: s_{i} \rightsquigarrow s_{i'}$ such that for $p_{k}:u_{i} \to u_{i'}$, $\Delta_{F}(G)(\sigma_{k,p_{k}}) = p_{k}$. \\

\item $\sigma_{k}$ can represented as a morphism taking $p_{k}:u_{i} \to u_{i'}$ to a morphism $m_{k}:s'_{i}(u_{i}) \to s'_{i'}(u_{i'})$ such that when we have another $p^{*}_{k}:u^{*}_{i} \to u^{*}_{i'}$ and morphisms $u^{i}:u_{i} \to u^{*}_{i} \in U_{i}$ and $u^{i'}:u_{i'} \to u^{*}_{i'} \in U_{i'}$ giving the follow commutative diagram:

\[\begin{tikzcd}
	u_{i} &&& {u_{i'}} \\
	\\
	\\
	{u^{*}_{i}} &&& {u^{*}_{i'}}
	\arrow["p_{k}", from=1-1, to=1-4]
	\arrow["u^{i}"',from=1-1, to=4-1]
	\arrow["{u^{i'}}", from=1-4, to=4-4]
	\arrow["{p^{*}_{k}}"', from=4-1, to=4-4]
\end{tikzcd}\]

Then we have $s'_{i'}(u^{i'}) \circ m_{k} = m^{*}_{k} \circ s'_{i}(u^{i})$ where $m^{*}_{k} = \sigma_{k,p^{*}_{k}}$.\\

\item $\sigma_{\id_{i}}$ can equivalently be characterized as natural transformations $s'_{i} \To s''_{i}$ in which case the fiber of $F \to G$ over $i$ is precisely the groupoid assembly $U_{i} \to V_{i}$.\\

\end{itemize}

We will denote $F \to G$ as $\sum_{i \in I} (U_{i} \to V_{i}) \to I$. Given a groupoid assembly $Z$, we can define the constant fibration on $Z$, $\pr{I}: I \times Z \to I$ which we denote as $\Sigma_{i \in I} Z \to I$.\\

 Given an indexing groupoid assembly $I$, fibrations $U:\sum_{i \in I} U_{i} \to I$, $V:\sum_{i \in I} V_{i} \to I$, and $N:\sum_{i \in I} N_{i} \to I$ and a morphism $J: \sum_{i \in I} U_{i} \to \sum_{i \in I} V_{i} $ over $I$, for every groupoid assembly $Z$ first we set $\sum_{i \in I} [\mathrm{nf}]_{i} = \sum_{i \in I} U_{i} \times V_{i} \times Z \times N_{i}$
can construct the following $W$-type with reduction diagram which we refer to as $SOA_{Z}$:

\[\begin{tikzcd}[column sep=small]
	{\sum_{i \in I} U_{i} \times (V_{i} \to Z) + \sum_{i \in I} [\mathrm{nf}]_{i}} && {\sum_{I \in i} U_{i} \times V_{i} \times (V_{i} \to Z)} &&& [.3 cm]{\sum_{i \in I} V_{i} \times (V_{i} \to Z)} \\
	\\
	\\
	& Z &&&&& Z
	\arrow["J^{*} + \const", tail, from=1-1, to=1-3]
	\arrow["\pr{V \times (V \to Z)}", from=1-3, to=1-6]
	\arrow["\app{-}_{J}"', from=1-3, to=4-2]
	\arrow["\app{-}", from=1-6, to=4-7]
\end{tikzcd}\]

where we define the functors used above:

For $J^{*}:\sum_{i \in I} U_{i} \times (V_{i} \to Z) \to \sum_{i \in I} U_{i} \times V_{i} \times (V_{i} \to Z)$:
\begin{itemize}

\item $J^{*}(u_{i},f_{i}) = (u_{i}, J(u_{i}),f_{i})$
\item $J^{*}(p_{k},\alpha_{k}) = (p_{k},J(p_{k}),\alpha_{k})$\\

\end{itemize}

For $\const:\sum_{i \in I} [\mathrm{nf}]_{i} \to \sum_{i \in I} U_{i} \times V_{i} \times (V_{i} \to Z)$:
\begin{itemize}

\item $\const(u_{i},v_{i},z,n_{i}) = (u_{i}, v_{i},\const_{z})$. $\const_{z}$ takes objects to $z$ and morphisms to $\id_{z}$.
\item $\const(p_{k},m_{k},\zeta:z \to z',\nu_{k}) = (p_{k},m_{k},\const_{\zeta})$. $\const_{\zeta}$ takes every morphism to $\zeta$.\\

\end{itemize}

For $\app{-}_{J}:\sum_{I \in i} U_{i} \times V_{i} \times (V_{i} \to Z) \to Z$:
\begin{itemize}

\item $\app{u_{i},v_{i},f_{i}}_{J} = f_{i}(J(u_{i}))$
\item $\app{p_{k},m_{k},\alpha_{k}}_{J} = \alpha_{k,J(p_{k})}$

\end{itemize}

$\pr{V \times (V \to Z)}$ and $\app{-}$ are just a projection operator and function application respectively. $\pr{V \times (V \to Z)}$ is a fibration since every object $(u_{i},v_{i},f_{i})$ over a morphism $(m_{k}:v_{i} \to v_{i'},\alpha_{k}:f_{i} \to f_{i'})$ lifts to a morphism $(\lift(\htpy{0},U )(u_{i},k), m_{k},\alpha_{k})$ where if $m_{k}$ and $\alpha_{k}$ are identity morphisms then $k = \id_{i}$ and so $\lift(\htpy{0},U )(u_{i},k) = \id_{u_{i}}$. One can also verify that $\app{-}_{J} \circ (J^{*} + \const) = \app{-} \circ \pr{V_{i} \times (V_{i} \to Z)} \circ (J^{*} + \const)$. Given a morphism $g: H \to Z$ we modify $SOA_{Z}$ as follows: 

\[\begin{tikzcd}[column sep=small]
	{\sum_{i \in I} U_{i} \times (V_{i} \to Z) + \sum_{i \in I} [\mathrm{nf}]_{i}} & &  [-.1cm] {\sum_{I \in i} U_{i} \times V_{i} \times (V_{i} \to Z)} &&& [.3 cm]{\sum_{i \in I} V_{i} \times (V_{i} \to Z)  +  H} \\
	\\
	\\
	& Z &&&&& [-.8cm] Z
	\arrow["J^{*} + \const", tail, from=1-1, to=1-3]
	\arrow["\pr{V \times (V \to Z)} ", from=1-3, to=1-6]
	\arrow["\app{-}_{J}"', from=1-3, to=4-2]
	\arrow["\app{-} + g", from=1-6, to=4-7]
\end{tikzcd}\]

We refer to this diagram as $SOA_{Z} + g$. The other morphism alongside $\pr{V \times (V \to Z)}$ is the unique morphism $0 \to H$ which is also a fibration since $0$ has no elements. $SOA_{Z} + g$ gives us the following $W$-type with reduction $\omega_{g}: \tilde{W}_{g} \to Z$ with the canonical $W$-type algebra structure denoted as $\alpha$. Furthermore we have the morphism $\overline{g}: H \to  \tilde{W}_{g} $ where $\overline{g}(h) = \alpha(h, 0 \to \tilde{W}_{g})$ and $\overline{g}(\eta:h \to h') = \alpha(\eta,0 \to \mor \tilde{W}_{g})$ such that $g = \omega_{g} \circ \overline{g}$. For $i \in \ob I$, we denote the restriction of $J$ to $i$ as $J_{i}:U_{i} \to V_{i}$. Not only can we show that $\omega_{g}$ has the right lifting property against $J_{i}$ for all $i \in \ob I$, but we can show an even stronger lifting property. First we define the following pullback square over $I$:\\
\[\begin{tikzcd}
	{\sum_{i \in I} (V_{i} \to \tilde{W}_{g})} \\
	\\
	&& {\sum_{i \in I} \Sq(J_{i}, \omega_{g})} &&& {\sum_{i \in I} (U_{i} \to \tilde{W}_{g})} \\
	\\
	\\
	&& {\sum_{i \in I}(V_{i} \to Z)} &&& {\sum_{i \in I}(U_{i} \to Z)}
	\arrow["{J  \pupw  \omega_{g}}"{description}, dashed, from=1-1, to=3-3]
	\arrow["{- \circ J}"{description}, from=1-1, to=3-6]
	\arrow["{\omega_{g} \circ -}"{description},  from=1-1, to=6-3]
	\arrow[from=3-3, to=3-6]
	\arrow[from=3-3, to=6-3]
	\arrow["\lrcorner"{anchor=center, pos=0.125}, draw=none, from=3-3, to=6-6]
	\arrow["{\omega_{g} \circ -}", from=3-6, to=6-6]
	\arrow["{- \circ J}"', from=6-3, to=6-6]
\end{tikzcd}\]

where $- \circ J$ and $\omega_{g}  \circ -$ can be defined explicitly as follows:\\

For $- \circ J$:
\begin{itemize}

\item $f_{i}: V_{i} \to Z$ and $f'_{i}: V_{i} \to \tilde{W}_{g}$ gets sent to $f_{i} \circ J_{i}$ and $f'_{i} \circ J_{i}$.
\item $\alpha_{k}:f_{i} \to f_{i'}$ and $\alpha'_{k}:f'_{i} \to f'_{i'}$ gets sent to $\alpha_{k,J_{k}(-)}$ and $\alpha'_{k,J_{k}(-)}$ where $J_{k}$ takes a morphism $p_{k}:u_{i} \to u_{i'}$ to $J_{k}(p_{k}) = J(p_{k}): J_{i}(u_{i}) \to J_{i'}(u_{i'})$.
\end{itemize}

For $\omega_{g} \circ -$:
\begin{itemize}

\item $f_{i}: U_{i} \to \tilde{W}_{g}$ and $f'_{i}: V_{i} \to \tilde{W}_{g}$ gets sent to $\omega_{g} \circ f_{i}$ and $\omega_{g} \circ f'_{i}$.
\item $\alpha_{k}:f_{i} \to f_{i'}$ and $\alpha'_{k}:f'_{i} \to f'_{i'}$ gets sent to $\omega_{g} \cdot \alpha_{k}$ and $\omega_{g} \cdot \alpha'_{k}$.
\end{itemize}

The groupoid $\sum_{i \in I} \Sq(J_{i}, \omega_{g})$ has as its objects $(c_{i}: U_{i} \to \tilde{W}_{g}, s_{i}: V_{i} \to Z)$ such that $\omega_{g} \circ c_{i} = s_{i} \circ J_{i}$ and as its morphisms $(\gamma_{k}, \sigma_{k}): (c_{i},s_{i}) \to (c_{i'},s_{i'})$ such that $\omega_{g} \cdot \gamma_{k} = \sigma_{k,J_{k}(-)}$. We also have that $\sum_{i \in I} \Sq(J_{i}, \omega_{g}) \to I$ is a fibration however we can only show that after proving the following lemma:

\begin{lem}
\label{liftI}

The functor $J  \pupw  \omega_{g}: \sum_{i \in I} (V_{i} \to \tilde{W}_{g}) \to \sum_{i \in I} \Sq(J_{i}, \omega_{g})$ over $I$ has a section $\lambda(J,\omega_{g})$ over $I$ which gives liftings for squares in $\sum_{i \in I} \Sq(J_{i}, \omega_{g})$. Furthermore, the liftings must satisfy the following normalization property:\\

When $n_{i} \in \ob N_{i}$ and $(c_{i}, s_{i}) \in \ob \sum_{i \in I} \Sq(J_{i}, \omega_{g})$ with $s_{i} = \const_{z}$ for all $u_{i} \in \ob U_{i}$: $$\lambda(J,\omega_{g})(c_{i},s_{i}) = \const_{c_{i}(u_{i})}$$ making $c_{i}$ constant. When $\nu_{k}: n_{i} \to n_{i'}$ exists in $N$ over $k$ and we have $(\gamma_{k},\sigma_{k}): (c_{i}, s_{i}) \to (c_{i'}, s_{i'}) $ with $\sigma_{k} = \const_{\zeta}$ for $\zeta: z \to z'$ for all $p_{k}:u_{i} \to u_{i'}$ over $k:i \to i'$: $$\lambda(J,\omega_{g})(\gamma_{k},\sigma_{k}) = \const_{\gamma_{k,p_{k}}}$$ which makes $\gamma_{k}$ constant.

\end{lem}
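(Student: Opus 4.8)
The plan is to read the required section $\lambda(J,\omega_g)$ directly off the algebra structure $\alpha$ of the $W$-type with reductions $\omega_g \colon \tilde{W}_g \to Z$, interpreting the constructors coming from $SOA_{Z} + g$ as \emph{formal lifts}. Recall that the fibre of $\pr{V \times (V \to Z)}$ over an object $(v_i, f_i)$ of $\sum_{i \in I} V_i \times (V_i \to Z)$ is (a copy of) $U_i$, with $g(u_i, v_i, f_i) = \app{u_i,v_i,f_i}_{J} = f_i(J(u_i))$ and $h(v_i,f_i) = \app{v_i,f_i} = f_i(v_i)$. Given a square $(c_i \colon U_i \to \tilde{W}_g,\, s_i \colon V_i \to Z)$ with $\omega_g \circ c_i = s_i \circ J_i$, I would define $f'_i := \lambda(J,\omega_g)(c_i,s_i) \colon V_i \to \tilde{W}_g$ on objects by
\[
f'_i(v_i) := \alpha\bigl((v_i, s_i),\, [\, u_i \in \ob U_i \mapsto (c_i(u_i), (u_i, v_i, s_i))\,]\bigr),
\]
and analogously on morphisms using the morphism constructor of $\alpha$. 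The indexed family appearing here is a legitimate element of $\poly_{f,g,h}(\omega_g)$ precisely because $\omega_g(c_i(u_i)) = s_i(J_i(u_i)) = g(u_i,v_i,s_i)$, which is the pullback-square hypothesis.

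Two of the three defining properties of the section are then immediate from the algebra equations established around Lemma \ref{wtyper}. First, the $\poly_{f,g,h}$-algebra equation gives $\omega_g(f'_i(v_i)) = h(v_i,s_i) = s_i(v_i)$, so $f'_i$ is a functor over $Z$ with $\omega_g \circ f'_i = s_i$. Second, the reduction attached to the summand $\sum_{i \in I} U_i \times (V_i \to Z)$ of the reduction object, applied at $r = (u_i', s_i)$ with $k(r) = (u_i', J(u_i'), s_i)$ and $f(k(r)) = (J(u_i'), s_i)$, forces
\[
f'_i(J_i(u_i')) = \alpha\bigl((J_i(u_i'), s_i), T\bigr) = T(u_i') = c_i(u_i'),
\]
so $f'_i \circ J_i = c_i$. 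Together these say exactly $(J \pupw \omega_g)(f'_i) = (c_i, s_i)$, i.e.\ $\lambda(J,\omega_g)$ is a section. The morphism part of the section is constructed the same way from the morphism constructor, and its two projections are checked by the morphism versions of the same algebra and reduction equations; this also yields the generalized-naturality squares via Remark \ref{altdepprod}. Finally, functoriality of $\lambda(J,\omega_g)$ and its being a section \emph{over} $I$ both reduce to functoriality of $\alpha$ and the fact that the constructors are natural in the index $i$.

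For the normalization clause I would use the second reduction summand $\sum_{i \in I}[\mathrm{nf}]_i = \sum_{i \in I} U_i \times V_i \times Z \times N_i$, whose reduction map is $\const(u_i, v_i, z, n_i) = (u_i, v_i, \const_z)$. When $s_i = \const_z$, every $r = (u_i, v_i, z, n_i)$ has $f(k(r)) = (v_i, \const_z) = (v_i, s_i)$, so the reduction equation gives $f'_i(v_i) = \alpha((v_i,\const_z),T) = T(u_i) = c_i(u_i)$ for \emph{every} $v_i$ and every $u_i$. Hence $f'_i$ is the constant functor $\const_{c_i(u_i)}$ and, simultaneously, all values $c_i(u_i)$ are identified in $\tilde{W}_g$, which is the stated normalization; the morphism clause follows identically from the morphism reduction using $\nu_k \colon n_i \to n_{i'}$ and $\sigma_k = \const_\zeta$.

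The main obstacle will be the bookkeeping at the morphism level: one must verify that the edge $\lambda(J,\omega_g)(\gamma_k,\sigma_k)$ really is a generalized natural transformation $f'_i \gennattrans f'_{i'}$ with the prescribed projections, and that the classes produced by $\alpha$ are insensitive to the choices (of fibre representatives, and of $n_i \in N_i$) made along the way. This is exactly where the equivalence relation $\sim'$ from the construction of $\omega_g$ does the work, so the argument must be phrased in terms of $\sim'$-equivalence of $W$-type elements rather than on-the-nose equality, paralleling the verifications in Lemma \ref{wtyper}. Checking that these identifications are compatible with composition and inverses in $\sum_{i \in I}(V_i \to \tilde{W}_g)$, and that the section property and naturality over $I$ survive them, is the part requiring genuine care.
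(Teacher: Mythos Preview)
Your proposal is correct and follows essentially the same approach as the paper: define $\lambda(J,\omega_g)(c_i,s_i)(v_i) = \alpha((v_i,s_i),c_i)$ and similarly on morphisms, then read off the section property from the $J^*$-reduction and the normalization property from the $\const$-reduction, with functoriality coming from functoriality of $\alpha$. Your worry about needing to work modulo $\sim'$ is unfounded, however: since $\tilde{W}_g$ is already the quotient, the reduction equations for $\alpha$ hold as literal equalities, so the bookkeeping at the morphism level is no harder than at the object level and the paper carries it out on-the-nose.
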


\begin{proof}

Given $(c_{i},s_{i}) \in \sum_{i \in I} \Sq(J_{i}, \omega_{g})$, we seek to construct a functor $\lambda(J,\omega_{g})(c_{i},s_{i}): V_{i} \to \tilde{W}_{g}$. Note that the condition $s_{i} \circ J_{i} = \omega_{g} \circ c_{i}$ makes $c_{i}$ of the type $\Pi_{u_{i} \in U_{i}} \tilde{W}_{g,s_{i} \circ J_{i}(u_{i})}$, thus for $v_{i} \in \ob V_{i}$, $((v_{i},s_{i}),c_{i})$ is in the domain of $\alpha$. So we set $\lambda(J,\omega_{g})(c_{i},s_{i})(v_{i}) = \alpha((v_{i},s_{i}),c_{i})$ and $\lambda(J,\omega_{g})(c_{i},s_{i})(m_{\id_{i}})  =  \alpha((m_{\id_{i}},\id_{s_{i}}),\id_{c_{i}})$. functoriality of $\lambda(J,\omega_{g})(c_{i},s_{i})$ follows from functoriality of $\alpha$. Given $(\gamma_{k},\sigma_{k}): (c_{i},s_{i}) \to (c_{i'},s_{i'})$ we seek to construct a generalized natural transformation $\lambda(J,\omega_{g})(\gamma_{k},\sigma_{k}): \lambda(J,\omega_{g})(c_{i},s_{i}) \to \lambda(J,\omega_{g})(c_{i'},s_{i'})$. For a moprhism $m_{k}: v_{i} \to v_{i'}$ over $k: i \to i'$, we set $\lambda(J,\omega_{g})(\gamma_{k},\sigma_{k})_{m_{k}} = \alpha((m_{k},\sigma_{k}),\gamma_{k})$. One can verify using functoriality of $\alpha$ that $\lambda(J,\omega_{g})(\gamma_{k},\sigma_{k})$ is a generalized natural transformation from $\lambda(J,\omega_{g})(c_{i},s_{i})$ to $\lambda(J,\omega_{g})(c_{i'},s_{i'})$. We have $$\id_{\lambda(J,\omega_{g})(c_{i},s_{i}),m_{\id_{i}} }= \lambda(J,\omega_{g})(c_{i},s_{i})(m_{\id_{i}}) = \alpha((m_{\id_{i}},\id_{s_{i}}),\id_{c_{i}}) = \lambda(J,\omega_{g})(\id_{c_{i}},\id_{s_{i}})_{m_{\id_{i}}}$$ and for $(\gamma_{k},\sigma_{k}):(c_{i},s_{i}) \to (c_{i'},s_{i'})$ and $(\gamma_{k'},\sigma_{k'}): (c_{i'},s_{i}) \to (c_{i''},s_{i''})$ and for $m_{k' \circ k} = m_{k'} \circ m_{k}$ over $k' \circ k$ we have: 

\begin{align*}
\lambda(J,\omega_{g})(\gamma_{k'} \circ \gamma_{k},\sigma_{k'} \circ \sigma_{k})_{m_{k' \circ k}}  &= \alpha((m_{k' \circ k},\sigma_{k'} \circ \sigma_{k}), \gamma_{k'} \circ \gamma_{k})\\
&= \alpha((m_{k'},\sigma_{k'}),\gamma_{k'}) \circ \alpha((m_{k},\sigma_{k}),\gamma_{k}) \\
&= \lambda(J,\omega_{g})(\gamma_{k'},\sigma_{k'}))_{m_{k'}} \circ \lambda(J,\omega_{g})(\gamma_{k},\sigma_{k}))_{m_{k}} 
\end{align*}

Thus $\lambda(J,\omega_{g})(\gamma_{k'} \circ \gamma_{k},\sigma_{k'} \circ \sigma_{k}) = \lambda(J,\omega_{g})(\gamma_{k'},\sigma_{k'})) \circ \lambda(J,\omega_{g})(\gamma_{k},\sigma_{k}))$. So $\lambda(J,\omega_{g})$ is a functor. It show be obvious by construction that $\lambda(J,\omega_{g})$ is a functor over $I$. To show that it is a section of $J  \pupw  \omega_{g}$ first we are considering $(c_{i},s_{i}) \in \ob \sum_{i \in I} \Sq(J_{i}, \omega_{g}) $. $J  \pupw  \omega_{g} \circ \lambda(J,\omega_{g})(c_{i},s_{i}) = (\lambda(J,\omega_{g})(c_{i},s_{i}) \circ J_{i}, \omega_{g} \circ \lambda(J,\omega_{g})(c_{i},s_{i}))$. Breaking this down:\\

For $\lambda(J,\omega_{g})(c_{i},s_{i}) \circ J_{i}$:
\begin{itemize}
\item $u_{i} \in \ob U_{i} \mapsto \alpha((J_{i}(u_{i}), s_{i}),c_{i}) = c_{i}(u_{i})$
\item $p_{\id_{i}} \in \mor U_{i} \mapsto \alpha((J_{i}(p_{\id_{i}}), \id_{s_{i}}),\id_{c_{i}}) = \id_{c_{i},p_{\id_{i}}} = c_{i}(p_{\id_{i}})$
\end{itemize}
This follows from the reduction induced by the functor $J^{*}$. As for $\omega_{g} \circ \lambda(J,\omega_{g})(c_{i},s_{i})$:
\begin{itemize}
\item $v_{i} \mapsto \omega_{g}(\alpha((v_{i}, s_{i}),c_{i}) ) = s_{i}(v_{i})$
\item $m_{\id_{i}} \mapsto \omega_{g}(\alpha((m_{\id_{i}}, \id_{s_{i}}),\id_{c_{i}}) ) = \id_{s_{i}}(m_{\id_{i}}) = s_{i}(m_{\id_{i}})$

\end{itemize}
This follows from the dependency of $\tilde{W}_{g}$ induced by $\app{-}$. A similar argument can be made for morphisms. Thus $\lambda(J,\omega_{g})$ is a section of $J  \pupw  \omega_{g}$ over $I$. \\

Suppose we have $n_{i} \in \ob N_{i}$ and $(c_{i},s_{i}) \in \sum_{i \in I} \Sq(J_{i}, \omega_{g})$ such that $s_{i} = \const_{z}$ for some $ z \in \ob Z$. For every $u_{i} \in \ob U_{i}$ and $v_{i} \in \ob V_{i}$, $(u_{i},v_{i},z,n_{i}) \in  [\mathrm{nf}]_{i}$, so $\lambda(J,\omega_{g})(c_{i},s_{i})(v_{i}) = \alpha((v_{i},s_{i}),c_{i}) = c_{i}(u_{i})$ for every $u_{i} \in U_{i}$. We also have that $(\id_{u_{i}},m_{\id_{i}}, \id_{z},\id_{n_{i}}) \in [\mathrm{nf}]_{i}$ so  $\lambda(J,\omega_{g})(c_{i},s_{i})(m_{\id_{i}}) = \alpha((m_{\id_{i}},\id_{s_{i}}),\id_{c_{i}}) = \id_{c_{i}}(\id_{u_{i}}) = c_{i}(\id_{u_{i}}) = \id_{c_{i}(u_{i})}$ for every $u_{i} \in \ob U_{i}$. Thus $\lambda(J,\omega_{g})(c_{i},s_{i}) = \const_{c_{i}(u_{i})}$ for every $u_{i} \in U_{i}$.\\

Suppose we have $\nu_{k}: n_{i} \to n_{i'}$ over $k$ in $N$ and $(\gamma_{k},\sigma_{k}) \in \mor \sum_{i \in I} \Sq(J_{i}, \omega_{g})$ such that $\sigma_{k} = \const_{\zeta}$ for $\zeta:z \to z'$ in $Z$. For every $p_{k} \in \mor \sum_{i \in I} U_{i}$ and $m_{k} \in \mor \sum_{i \in I} V_{i}$ over $k$ we have $(p_{k},m_{k},\zeta,\nu_{k})$ over $k$ in $\sum_{i \in I} [\mathrm{nf}]_{i}$, so $\lambda(J,\omega_{g})(\gamma_{k},\sigma_{k})_{m_{k}} = \alpha((m_{k},\sigma_{k}),\gamma_{k}) = \gamma_{k,p_{k}}$ for every $p_{k}$ over $k$. Therefore $\lambda(J,\omega_{g})(\gamma_{k},\sigma_{k}) = \const_{\gamma_{k,p_{k}}}$ for every $p_{k}$ over $k$. This completes the proof.

\end{proof}

\begin{cor}

$\sum_{i \in I} \Sq(J_{i}, \omega_{g}) \to I$ is a fibration.

\end{cor}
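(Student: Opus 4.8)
The plan is to deduce fibrancy directly from the section produced in Lemma \ref{liftI}, transferring the lifting problem along the functor $J\pupw\omega_{g}$. Write $P\colon \sum_{i\in I}\Sq(J_i,\omega_g)\to I$ and $Q\colon \sum_{i\in I}(V_i\to \tilde{W}_g)\to I$ for the two projections. First I would record that $Q$ is a normal isofibration: the groupoid assembly $\sum_{i\in I}(V_i\to\tilde{W}_g)$ is precisely the exponential $V\to(\Sigma_{i\in I}\tilde{W}_g)$ in the cartesian closed category $\Fib(I)$, where $\Sigma_{i\in I}\tilde{W}_g\to I$ is the constant fibration on $\tilde{W}_g$. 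Since $V$ and the constant fibration are fibrations over $I$, cartesian closedness of $\Fib(I)$ makes $Q$ a fibration over $I$.

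Next I would observe that all the functors in sight live over $I$, so that $P\circ(J\pupw\omega_g)=Q$, and that by Lemma \ref{liftI} the section $\lambda(J,\omega_g)$ satisfies $(J\pupw\omega_g)\circ\lambda(J,\omega_g)=\id$ and hence $P=Q\circ\lambda(J,\omega_g)$. To lift an isomorphism $k\colon i\to i'$ in $I$ over an object $(c_i,s_i)$ of $\sum_{i\in I}\Sq(J_i,\omega_g)$, I would set $f_i:=\lambda(J,\omega_g)(c_i,s_i)$, so that $Q(f_i)=i$ and $(J\pupw\omega_g)(f_i)=(c_i,s_i)$; use the fibration structure of $Q$ to lift $k$ to an isomorphism $\tau\colon f_i\to f_{i'}$ over $k$; and finally apply the functor $J\pupw\omega_g$ to obtain $(J\pupw\omega_g)(\tau)\colon (c_i,s_i)\to (J\pupw\omega_g)(f_{i'})$, an isomorphism over $k$ whose domain is $(c_i,s_i)$. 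This is the required lift. Since every ingredient, namely $\lambda(J,\omega_g)$, the lift furnished by $Q$, and the functor $J\pupw\omega_g$, is a morphism of assemblies, these assignments assemble into a section of the relevant $\pullp$ map.

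It then remains to check normality. When $k=\id_i$ and $(c_i,s_i)$ presents an identity square, normality of $Q$ forces the chosen lift of $\id_i$ over $f_i$ to be $\id_{f_i}$, whence $(J\pupw\omega_g)(\id_{f_i})=\id_{(c_i,s_i)}$ by functoriality. Thus $\lift(\htpy{0},P)$ sends each pair of the form $(\nm{(c_i,s_i)},\nm{\id_i})$ to $\nm{\id_{(c_i,s_i)}}$, and $P$ is a normal isofibration.

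I do not expect a genuine obstacle: the real content has been front-loaded into Lemma \ref{liftI}, and this corollary is just the formal principle that a functor over $I$ admitting a section from a fibration over $I$ is itself a fibration over $I$, together with the observation that the normalization clause of Lemma \ref{liftI} is exactly what propagates the normality condition. The only points requiring a little care are identifying $Q$ as a fibration via cartesian closedness of $\Fib(I)$ and verifying that $\lambda(J,\omega_g)$, being a section over $I$, indeed yields $P=Q\circ\lambda(J,\omega_g)$.
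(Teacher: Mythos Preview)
Your argument is correct and follows the paper's proof essentially verbatim: apply $\lambda(J,\omega_g)$ to land in the fibration $Q$, lift there, then push forward along $J\pupw\omega_g$, with normality inherited from normality of $Q$. One small remark: in your closing paragraph you attribute the normality to the normalization clause of Lemma~\ref{liftI}, but that clause concerns constant squares and plays no role here---as your own argument correctly shows, normality comes purely from $Q$ being a normal isofibration and $J\pupw\omega_g$ being a functor.
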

\begin{proof}

Given a morphism $k:i \to i'$ in $I$ and $(c_{i}, s_{i}) \in \ob \sum_{i \in I} \Sq(J_{i}, \omega_{g}) $, $f_{i} = \lambda(J,\omega_{g})(c_{i},s_{i})$ is an object in $\sum_{i \in I} (V_{i} \to \tilde{W}_{g})$ over $i$. Since $\sum_{i \in I} (V_{i} \to \tilde{W}_{g}) \to I$ is a fibration we use the fibration structure to get a lift $\lift(\htpy{0},V \to \tilde{W}_{g})(f_{i},k)$ over $k$ with $\dom(\lift(\htpy{0},V \to \tilde{W}_{g})(f_{i},k)) = f_{i}$. Thus $\beta_{k} = J  \pupw  \omega_{g}(\lift(\htpy{0},V \to \tilde{W}_{g})(f_{i},k))$ is a morphism in $\sum_{i \in I} \Sq(J_{i}, \omega_{g})$ over $k$ with $\dom(\beta_{k}) = (c_{i}, s_{i})$. When $k = \id_{i}$, then $\lift(\htpy{0},V \to \tilde{W}_{g})(f_{i},k) = \id_{f_{i}}$, so $\beta_{\id_{i}} = J  \pupw  \omega_{g}(\id_{f_{i}}) = \id_{(c_{i},s_{i})}$. Thus $\lift(\htpy{0},\Sq(J,\omega_{g}))((c_{i},s_{i}),k) = J  \pupw  \omega_{g}(\lift(\htpy{0},V \to \tilde{W}_{g})( \lambda(J,\omega_{g})(c_{i},s_{i}),k))$ making $\sum_{i \in I} \Sq(J_{i}, \omega_{g}) \to I$ a fibration.
 
\end{proof}

\begin{lem}

The class of $SOA_{Z} + g$ diagrams induces a functorial factorization $[\mathrm{SOA}]:\GrpA{A}^{\arr} \to \GrpA{A}^{\twarr}$.

\end{lem}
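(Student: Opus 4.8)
The plan is to read off the functorial factorization from the initial–algebra description of the $W$-type with reductions produced by the $SOA_{Z}+g$ diagram. On objects, send a morphism $g\colon H\to Z$ to the composable pair $H\xrightarrow{\overline{g}}\tilde{W}_{g}\xrightarrow{\omega_{g}}Z$ in $\GrpA{A}^{\twarr}$. Since the construction of $\overline{g}$ and $\omega_{g}$ already gives $\omega_{g}\circ\overline{g}=g$, composing the two legs recovers $g$, which is exactly the splitting condition required of a functorial factorization; thus all the genuine content lies in the action on morphisms and in checking functoriality, both of which I would deduce from the initiality of $\tilde{W}_{g}$ as the $\poly_{f,g,h,k}$-algebra of Lemma \ref{wtyper}.

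For the action on morphisms, let $(a,b)\colon g\to g'$ be a morphism of $\GrpA{A}^{\arr}$, i.e. $a\colon H\to H'$ and $b\colon Z\to Z'$ with $b\circ g=g'\circ a$. The idea is to transport the algebra $\omega_{g'}$ back along $b$: form the pullback $b^{*}\tilde{W}_{g'}$ with projection $\pi\colon b^{*}\tilde{W}_{g'}\to\tilde{W}_{g'}$ and structure map $b^{*}\omega_{g'}\colon b^{*}\tilde{W}_{g'}\to Z$. I would equip $b^{*}\omega_{g'}$ with a $\poly_{f,g,h,k}$-algebra-with-reductions structure over $Z$: the indexing data $I,U,V,N,J$ are shared between $SOA_{Z}$ and $SOA_{Z'}$, and $b$ induces the postcomposition maps $(V_{i}\to Z)\to(V_{i}\to Z')$; this promotes the square $(a,b)$ to a morphism of $SOA$-diagrams and hence to the base-change comparison needed to turn the algebra $\omega_{g'}$ over $Z'$ into an algebra on $b^{*}\tilde{W}_{g'}$ over $Z$. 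The reduction datum transports because $b\circ g=g'\circ a=\omega_{g'}\circ\overline{g'}\circ a$, so the pair $(g,\overline{g'}\circ a)$ induces a map $H\to b^{*}\tilde{W}_{g'}$ over $Z$. Initiality of $\tilde{W}_{g}$ then yields a unique algebra-with-reductions map $\phi\colon\tilde{W}_{g}\to b^{*}\tilde{W}_{g'}$ over $Z$, and I would set $[\mathrm{SOA}](a,b)=(a,\,\pi\circ\phi,\,b)$. That $\phi$ lies over $Z$ gives $\omega_{g'}\circ(\pi\circ\phi)=b\circ\omega_{g}$, and the reduction-compatibility of $\phi$ gives $(\pi\circ\phi)\circ\overline{g}=\overline{g'}\circ a$, so the two squares of a $\GrpA{A}^{\twarr}$-morphism commute.

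Functoriality is then formal from the uniqueness clause of Lemma \ref{wtyper}. For identities, $\id_{\tilde{W}_{g}}$ solves the universal problem attached to $(\id_{H},\id_{Z})$, so $[\mathrm{SOA}](\id,\id)=\id$. For composable $(a,b)\colon g\to g'$ and $(a',b')\colon g'\to g''$, both the map built directly for $(a'\circ a,\,b'\circ b)$ and the composite $[\mathrm{SOA}](a',b')\circ[\mathrm{SOA}](a,b)$ are $\poly_{f,g,h,k}$-algebra-with-reductions maps solving the same universal problem over $Z$ — here I would use that pullbacks compose, $(b'\circ b)^{*}\cong b^{*}\circ b'^{*}$, coherently with the base-change comparisons — so uniqueness forces them to agree. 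This exhibits $[\mathrm{SOA}]\colon\GrpA{A}^{\arr}\to\GrpA{A}^{\twarr}$ as a functor splitting the composition functor, i.e. a functorial factorization.

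The main obstacle I anticipate is the step hidden inside the morphism construction: verifying in detail that $b^{*}\omega_{g'}$ really is a $\poly_{f,g,h,k}$-algebra with reductions. This requires the base-change comparison $\poly_{f,g,h}^{(Z)}\circ b^{*}\To b^{*}\circ\poly_{f,g,h}^{(Z')}$ (a Beck–Chevalley statement for the dependent product along the shared fibration $f$), its compatibility with the pushout presentation of $\poly_{f,g,h,k}$ out of $\poly_{f,g,h}$ and $\poly^{*}_{k}$, and the verification that the transported reduction equations still hold; one must also confirm at the realizability level that $\pi\circ\phi$ is tracked, which follows since each of the structure maps and $\phi$ is. Once this coherence is in place, the functoriality itself is purely a matter of uniqueness of initial-algebra maps and needs no further computation.
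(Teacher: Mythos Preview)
Your approach is correct and equivalent to the paper's, though the two are presented at different levels of abstraction. The paper defines the middle map $[\mathrm{SOA}](t'_{0},t_{0})\colon \tilde{W}_{g_{0}}\to\tilde{W}_{g_{1}}$ by an explicit induction on the algebra structure of $\tilde{W}_{g_{0}}$, setting $\overline{g_{0}}(x)\mapsto\overline{g_{1}}(t'_{0}(x))$ and $\alpha_{g_{0}}((v_{i},s_{i}),c_{i})\mapsto\alpha_{g_{1}}((v_{i},t_{0}\circ s_{i}),[\mathrm{SOA}](t'_{0},t_{0})\circ c_{i})$, and then checks by hand that the reductions are respected and that identities and composites are preserved. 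If you unfold your abstract construction, the composite $\pi\circ\phi$ computes to exactly these formulas, so the two definitions agree on the nose.

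The trade-off is this: your route makes functoriality formal via uniqueness of initial-algebra maps, but the cost is the coherence package you flag at the end---constructing the algebra-with-reductions structure on $b^{*}\omega_{g'}$ and verifying the Beck--Chevalley-type comparison for the polynomial (which here amounts to checking that postcomposition $b\circ -$ on the $(V_{i}\to Z)$ factor carries both the $J^{*}$-reduction and the $\const$-reduction to themselves, and that the $H$-constants transport along $(a,b)$). The paper sidesteps this abstraction by writing the recursion explicitly and checking the two reductions in one line each; it then pays for functoriality with two short inductive computations. Neither approach is harder, but the paper's concrete version avoids introducing the pullback $b^{*}\tilde{W}_{g'}$ as an intermediate object, which keeps the argument internal to the algebra descriptions already on the page.
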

\begin{proof}

Given a square:

\[\begin{tikzcd}
	{X_{0}} &&& {X_{1}} \\
	\\
	\\
	{Y_{0}} &&& {Y_{1}}
	\arrow["{t'_{0}}", from=1-1, to=1-4]
	\arrow["{g_{0}}"', from=1-1, to=4-1]
	\arrow["{g_{1}}", from=1-4, to=4-4]
	\arrow["{t_{0}}"', from=4-1, to=4-4]\\
\end{tikzcd}\]

we seek to construct a commutative diagram:

\[\begin{tikzcd}
	{X_{0}} &&& {X_{1}} \\
	\\
	{\tilde{W}_{g_{0}}} &&& {\tilde{W}_{g_{1}}} \\
	\\
	{Y_{0}} &&& {Y_{1}}
	\arrow["{t'_{0}}", from=1-1, to=1-4]
	\arrow["{\overline{g_{0}}}"', from=1-1, to=3-1]
	\arrow["{\overline{g_{1}}}", from=1-4, to=3-4]
	\arrow["{[\mathrm{SOA}](t'_{0},t_{0})}"{description}, from=3-1, to=3-4]
	\arrow["{\omega_{g_{0}}}"', from=3-1, to=5-1]
	\arrow["{\omega_{g_{1}}}", from=3-4, to=5-4]
	\arrow["{t_{0}}"', from=5-1, to=5-4]
\end{tikzcd}\]

We define $[\mathrm{SOA}](t'_{0},t_{0})$ inductively using the algebra structure on $\tilde{W}_{g_{0}}$. We set $[\mathrm{SOA}](t'_{0},t_{0})(\overline{g}_{0}(x)) = \overline{g}_{1} \circ t'_{0}(x)$ for object or morphism $x \in X_{0}$. Now given an object $\alpha_{g_{0}}((v_{i},s_{i}:V_{i} \to Y_{0}),c_{i}: U_{i} \to \tilde{W}_{g_{0}})$, $[\mathrm{SOA}](t'_{0},t_{0})(\alpha_{g_{0}}((v_{i},s_{i}),c_{i})) = \alpha_{g_{1}}((v_{i}, t_{0} \circ s_{i}),[\mathrm{SOA}](t'_{0},t_{0}) \circ c_{i})$ and given a morphism $\alpha_{g_{0}}((m_{k},\sigma_{k}:s_{i} \to s_{i'}),\gamma_{k}:c_{i} \to c_{i'})$, $[\mathrm{SOA}](t'_{0},t_{0})(\alpha_{g_{0}}((m_{k},\sigma_{k}),\gamma_{k})) = \alpha_{g_{1}}((m_{k},t_{0} \cdot \sigma_{k}),[\mathrm{SOA}](t'_{0},t_{0}) \cdot \gamma_{k}) $. This clearly functorial. Now we must show that the reductions are preserved. Suppose we have a object $\alpha_{g_{0}}((J_{i}(u_{i}),s_{i}),c_{i})$, then $ [\mathrm{SOA}](t'_{0},t_{0})(\alpha_{g_{0}}((J_{i}(u_{i}),s_{i}),c_{i})) =  \alpha_{g_{1}}((J_{i}(u_{i}), t_{0} \circ s_{i}),[\mathrm{SOA}](t'_{0},t_{0}) \circ c_{i}) = [\mathrm{SOA}](t'_{0},t_{0})(c_{i}(u_{i}))$. A similar argument applies to morphisms. Note that when $s_{i}$ and $\sigma_{k}$ are constant then so are $t_{0} \circ s_{i}$ and $t_{0} \cdot \sigma_{k}$ thus $[\mathrm{SOA}](t'_{0},t_{0})$ respects the normalization reduction. Hence $[\mathrm{SOA}](t'_{0},t_{0})$ is a well-defined functor. The top square commute by construction of $[\mathrm{SOA}](t'_{0},t_{0})$ on $X_{0}$. As for the bottom square, for $\overline{g}_{0}(x)$, $$\omega_{g_{1}}([\mathrm{SOA}](t'_{0},t_{0})(\overline{g}_{0}(x))) = \omega_{g_{1}}(\overline{g}_{1} \circ t'_{0}(x)) = g_{1}(t'_{0}(x)) = t_{0}(g_{0}(x)) = t_{0}(\omega_{g_{0}}(\overline{g}_{0}(x)))$$ For $\alpha_{g_{0}}((v_{i},s_{i}),c_{i})$, $$\omega_{g_{1}}([\mathrm{SOA}](t'_{0},t_{0})(\alpha_{g_{0}}((v_{i},s_{i}),c_{i}))) = \omega_{g_{1}}(\alpha_{g_{1}}((v_{i},t_{0} \circ s_{i}),[\mathrm{SOA}](t'_{0},t_{0}) \circ c_{i})) = t_{0}(s_{i}(v_{i})) = t_{0}(\omega_{g_{0}}(\alpha_{g_{0}}((v_{i},s_{i}),c_{i}))) $$ and the same holds for morphisms. Since $\omega_{g_{0}}$, $\omega_{g_{1}}$ and $[\mathrm{SOA}](t'_{0},t_{0})$ respect the reductions, the bottom square also commutes.\\

Now we seek to show that $[\mathrm{SOA}]$ preserves composition and identities. Identities are straightforward since $[\mathrm{SOA}](\id_{X_{0}},\id_{Y_{0}})(\overline{g}_{0}(x)) = \overline{g}_{0} \circ \id_{X_{0}}(x) = \overline{g}_{0}(x)$ and $$[\mathrm{SOA}](\id_{X_{0}},\id_{Y_{0}})(\alpha_{g_{0}}((v_{i},s_{i}),c_{i})) = \alpha_{g_{0}}((v_{i},s_{i}),[\mathrm{SOA}](\id_{X_{0}},\id_{Y_{0}}) \circ c_{i}) = \alpha_{g_{0}}((v_{i},s_{i}),c_{i})$$ where we assume $c_{i} = [\mathrm{SOA}](\id_{X_{0}},\id_{Y_{0}}) \circ c_{i}$ by induction. A similar argument holds for morphisms. Thus $[\mathrm{SOA}](\id_{X_{0}},\id_{Y_{0}}) = \id_{\tilde{W}_{g_{0}}}$. Given a commutative diagram:
\[\begin{tikzcd}
	{X_{0}} &&& {X_{1}} &&& {X_{2}} \\
	\\
	\\
	{Y_{0}} &&& {Y_{1}} &&& {Y_{2}}
	\arrow["{{t'_{0}}}", from=1-1, to=1-4]
	\arrow["{{g_{0}}}"', from=1-1, to=4-1]
	\arrow["{t'_{1}}", from=1-4, to=1-7]
	\arrow["{{g_{1}}}", from=1-4, to=4-4]
	\arrow["{g_{2}}", from=1-7, to=4-7]
	\arrow["{{t_{0}}}"', from=4-1, to=4-4]
	\arrow["{t_{1}}"', from=4-4, to=4-7]
\end{tikzcd}\]

we seek to show that $[\mathrm{SOA}](t'_{1},t_{1}) \circ [\mathrm{SOA}](t'_{0},t_{0}) = [\mathrm{SOA}](t'_{1} \circ t'_{0},t_{1} \circ t_{0})$. We have $$[\mathrm{SOA}](t'_{1},t_{1}) \circ [\mathrm{SOA}](t'_{0},t_{0})(\overline{g}_{0}(x)) = [\mathrm{SOA}](t'_{1},t_{1})(\overline{g}_{1}(t'_{0}(x))) = \overline{g}_{2}(t'_{1}(t'_{0}(x)))= \overline{g}_{2}(t'_{1} \circ t'_{0}(x)) = [\mathrm{SOA}](t'_{1} \circ t'_{0},t_{1} \circ t_{0})(\overline{g}_{0}(x))$$ Given $\alpha_{g_{0}}((v_{i},s_{i}),c_{i}) \in \tilde{W}_{g_{0}}$ and suppose  $[\mathrm{SOA}](t'_{1},t_{1}) \circ [\mathrm{SOA}](t'_{0},t_{0}) \circ c_{i} = [\mathrm{SOA}](t'_{1} \circ t'_{0},t_{1} \circ t_{0}) \circ c_{i}$ by induction, then 
\begin{align*}
[\mathrm{SOA}](t'_{1},t_{1}) \circ [\mathrm{SOA}](t'_{0},t_{0})(\alpha_{g_{0}}((v_{i},s_{i}),c_{i})) &= \alpha_{g_{2}}((v_{i}, t_{1} \circ t_{0} \circ s_{i}),[\mathrm{SOA}](t'_{1},t_{1}) \circ [\mathrm{SOA}](t'_{0},t_{0}) \circ c_{i})\\
&= \alpha_{g_{2}}((v_{i}, t_{1} \circ t_{0} \circ s_{i}),[\mathrm{SOA}](t'_{1} \circ t'_{0},t_{1} \circ t_{0}) \circ c_{i}) \\
&= [\mathrm{SOA}](t'_{1} \circ t'_{0},t_{1} \circ t_{0})(\alpha_{g_{0}}((v_{i},s_{i}),c_{i}))\\
\end{align*}

A similar argument holds for morphisms. Thus $[\mathrm{SOA}](t'_{1},t_{1}) \circ [\mathrm{SOA}](t'_{0},t_{0}) = [\mathrm{SOA}](t'_{1} \circ t'_{0},t_{1} \circ t_{0})$. Therefore $[\mathrm{SOA}]$ is a functor.

\end{proof}

\begin{lem}

A functor $g_{0}:X_{0} \to Y_{0}$ satisfies the lifting property of Lemma \ref{liftI} if and only if we have a retract diagram:

\[\begin{tikzcd}
	{X_{0}} &&& {\tilde{W}_{g_{0}}} &&& {X_{0}} \\
	\\
	\\
	{Y_{0}} &&& {Y_{0}} &&& {Y_{0}}
	\arrow["{\overline{g_{0}}}", from=1-1, to=1-4]
	\arrow["{g_{0}}"', from=1-1, to=4-1]
	\arrow["l", from=1-4, to=1-7]
	\arrow["{\omega_{g_{0}}}"{description}, from=1-4, to=4-4]
	\arrow["{g_{0}}", from=1-7, to=4-7]
	\arrow["{\id_{Y_{0}}}"', from=4-1, to=4-4]
	\arrow["{\id_{Y_{0}}}"', from=4-4, to=4-7]
\end{tikzcd}\]

\end{lem}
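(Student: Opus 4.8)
The plan is to prove both implications, reading ``satisfies the lifting property of Lemma \ref{liftI}'' as the structure of a normalized section $\lambda(J,g_0)$ of $J \pupw g_0 : \sum_{i \in I}(V_i \to X_0) \to \sum_{i \in I}\Sq(J_i,g_0)$ over $I$, where now $X_0$ and $Y_0$ play the roles that $\tilde{W}_g$ and $Z$ played in Lemma \ref{liftI}. Both directions are governed by the principle driving the algebraic small object argument: such a normalized lifting structure on a map over $Y_0$ is interchangeable with a $\poly_{f,g,h,k}$-algebra structure, via the dictionary $\beta((v_i,s_i),c_i) = \lambda(J,g_0)(c_i,s_i)(v_i)$ read off from the proof of Lemma \ref{liftI}. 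Accordingly, one direction transports the canonical lifting structure on $\omega_{g_0}$ across the retraction, while the other produces the retraction from initiality of $\tilde{W}_{g_0}$.

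For the ``if'' direction, I would assume the retract diagram, so that $l \circ \overline{g_0} = \id_{X_0}$ and $g_0 \circ l = \omega_{g_0}$ (the left square being the factorization $\omega_{g_0}\circ\overline{g_0} = g_0$). Since $\omega_{g_0}$ carries the normalized section $\lambda(J,\omega_{g_0})$ of Lemma \ref{liftI}, I define a section for $g_0$ by $\lambda(J,g_0)(c_i,s_i) := l \circ \lambda(J,\omega_{g_0})(\overline{g_0}\circ c_i,\, s_i)$; this is well-typed because $\omega_{g_0}\circ(\overline{g_0}\circ c_i) = g_0\circ c_i = s_i \circ J_i$, so $(\overline{g_0}\circ c_i, s_i)$ is a genuine square for $\omega_{g_0}$. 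The triangle identities then follow formally: precomposing with $J_i$ and using that $\lambda(J,\omega_{g_0})$ is a section gives $l \circ \overline{g_0}\circ c_i = c_i$, while $g_0 \circ l \circ \lambda(J,\omega_{g_0})(\dots) = \omega_{g_0}\circ\lambda(J,\omega_{g_0})(\dots) = s_i$. Normalization is inherited: when $s_i = \const_z$, Lemma \ref{liftI} forces $\lambda(J,\omega_{g_0})(\overline{g_0}\circ c_i,\const_z)$ to be constant at $(\overline{g_0}\circ c_i)(u_i)$, and post-composing with $l$ yields the constant functor at $c_i(u_i)$; the morphism clause is identical, and fibredness over $I$ is preserved since every map involved lies over $I$.

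For the ``only if'' direction, I would use the section $\lambda(J,g_0)$ to endow $X_0$, regarded as an object of $\GrpA{A}/Y_0$ via $g_0$, with a $\poly_{f,g,h,k}$-algebra structure: the $\poly_{f,g,h}$-part is $\beta((v_i,s_i),c_i) = \lambda(J,g_0)(c_i,s_i)(v_i)$, and the $H$-part is the identity $X_0 \to X_0$ recording the generators. The section equation for $\lambda(J,g_0)$ is precisely the $J^{*}$-reduction $\beta((J_i(u_i),s_i),c_i) = c_i(u_i)$, and the normalization clause is precisely the $\const$-reduction, so this data really is an algebra. Since $\omega_{g_0} = \tilde{W}_{g_0}$ is the initial $\poly_{f,g,h,k}$-algebra by Lemma \ref{wtyper}, there is a unique algebra morphism $l : \tilde{W}_{g_0} \to X_0$ over $Y_0$. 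Being a morphism over $Y_0$ gives $g_0 \circ l = \omega_{g_0}$, and compatibility of $l$ with the $H$-generators $\overline{g_0}$ and $\id_{X_0}$ gives $l \circ \overline{g_0} = \id_{X_0}$; together these assemble the required retract diagram.

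The main obstacle I anticipate is making the dictionary between lifting structures and algebra structures fully rigorous in the ``only if'' direction: checking that the section and normalization conditions on $\lambda(J,g_0)$ match the $J^{*}$- and $\const$-reductions of $SOA_{Y_0}+g_0$ exactly, that the resulting $\beta$ is realized in $\Asm(A)$, and that it respects composition, identities and inverses so as to be a genuine functor over $I$. This amounts to running the explicit computation in the proof of Lemma \ref{liftI} in reverse while tracking fibres over $I$; once that correspondence is pinned down, initiality supplies $l$ and the ``if'' direction is essentially formal.
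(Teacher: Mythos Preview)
Your proposal is correct and the ``if'' direction is identical to the paper's. For the ``only if'' direction you take a slightly more abstract route: you package the lifting data as a $\poly_{f,g,h,k}$-algebra structure on $g_0$ and invoke initiality of $\tilde W_{g_0}$ to obtain $l$, whereas the paper defines $l$ directly by recursion on the $W$-type structure, setting $l(\overline{g_0}(x)) = x$ and $l(\alpha_{g_0}((v_i,s_i),c_i)) = \lambda(J,g_0)(l\circ c_i,\,s_i)(v_i)$, then verifies by hand that this respects the $J^{*}$- and $\const$-reductions. These are the same argument at different levels of abstraction---your algebra-morphism equation $l\circ\alpha_{g_0} = \beta\circ\poly_{f,g,h,k}(l)$ unwinds exactly to the paper's recursive clause---so the ``obstacle'' you flag (matching section/normalization to the two reductions) is precisely what the paper checks explicitly and what your dictionary $\beta((v_i,s_i),c_i) = \lambda(J,g_0)(c_i,s_i)(v_i)$ already handles correctly.
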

\begin{proof}
Given the above retract, we can define a lift $\lambda(J,g_{0})$ where:

\begin{itemize}
\item $\lambda(J,g_{0})(c_{i}:U_{i} \to X_{0}, s_{i}: V_{i} \to Y_{0}) = l \circ \lambda(J,\omega_{g_{0}})(\overline{g_{0}} \circ c_{i}, s_{i})$ 
\item $\lambda(J,g_{0})(\gamma_{k}, \sigma_{k}) = l \cdot \lambda(J,\omega_{g_{0}})(\overline{g_{0}} \cdot \gamma_{k}, \sigma_{k})$ 
\end{itemize}

It is straightforward to show that $\lambda(J,g_{0})$ is a lift that preserves the normalization properties by the fact that $\lambda(J,\omega_{g_{0}})$ is such a lift and by using the retract diagram.\\

Suppose the functor $J  \pupw  g_{0}: \sum_{i \in I} (V_{i} \to X_{0}) \to \sum_{i \in I} \Sq(J_{i}, g_{0})$ over $I$ has a section $\lambda(J,g_{0})$ over $I$, we define $l: \tilde{W}_{g_{0}} \to X_{0}$ as follows

\begin{itemize}
\item For $x \in X_{0}$, $l(\overline{g_{0}}(x)) = x$
\item $l(\alpha_{g_{0}}((v_{i},s_{i}),c_{i}))  =   \lambda(J,g_{0})(l \circ c_{i},s_{i})(v_{i})$
\item $l(\alpha_{g_{0}}((m_{k},\sigma_{k}),\gamma_{k}))  =   \lambda(J,g_{0})(l \cdot \gamma_{k},\sigma_{k})(m_{k})$
\end{itemize}
 $l$ as defined is functorial since $\lambda(J,g_{0})$ is functorial and $\lambda(J,g_{0})(l \cdot \gamma_{k},\sigma_{k})$ is a generalized natural transformation. Now we show that $l$ preserves the reductions. Suppose we have $\alpha_{g_{0}}((J_{i}(u_{i}),s_{i}),c_{i})$ then $$l(\alpha_{g_{0}}((J_{i}(u_{i}),s_{i}),c_{i})) =  \lambda(J,g_{0})(l \circ c_{i},s_{i})(J_{i}(u_{i})) = l \circ c_{i}(u_{i})$$ A similar argument holds for morphisms. Now suppose we have $\alpha_{g_{0}}((v_{i},s_{i}),c_{i}))$ such that $s_{i}$ is constant and $n_{i} \in N_{i}$, then $\lambda(J,g_{0})(l \circ c_{i},s_{i})$ is constant thus for all $u_{i}$ and $v_{i}$, $l(\alpha_{g_{0}}((v_{i},s_{i}),c_{i})) = l \circ c_{i}(u_{i})$. We have a similar thing for morphisms, thus $l$ is well defined. We have $l \circ \overline{g_{0}} = \id_{X_{0}}$ by construction of $l$ and given $\alpha_{g_{0}}((v_{i},s_{i}),c_{i})$, $$g_{0} \circ \lambda(J,g_{0})(l \circ c_{i},s_{i})(v_{i}) = s_{i}(v_{i}) = \omega_{g_{0}}(\alpha_{g_{0}}((v_{i},s_{i}),c_{i}))$$ Thus we have a retract diagram

\[\begin{tikzcd}
	{X_{0}} &&& {\tilde{W}_{g_{0}}} &&& {X_{0}} \\
	\\
	\\
	{Y_{0}} &&& {Y_{0}} &&& {Y_{0}}
	\arrow["{\overline{g_{0}}}", from=1-1, to=1-4]
	\arrow["{g_{0}}"', from=1-1, to=4-1]
	\arrow["l", from=1-4, to=1-7]
	\arrow["{\omega_{g_{0}}}"{description}, from=1-4, to=4-4]
	\arrow["{g_{0}}", from=1-7, to=4-7]
	\arrow["{\id_{Y_{0}}}"', from=4-1, to=4-4]
	\arrow["{\id_{Y_{0}}}"', from=4-4, to=4-7]
\end{tikzcd}\]

\end{proof}

\begin{lem}
\label{natlift}

The $\lambda(J,\omega_{g})$ and $J  \pupw  \omega_{g}$ functors from Lemma \ref{liftI} extend to natural transformations between the functors $\sum_{i \in I} (V_{i} \to \tilde{W}_{-}), \sum_{i \in I} \Sq(J_{i}, \omega_{-}): \GrpA{A}^{\arr} \to \Fib(I)$.

\end{lem}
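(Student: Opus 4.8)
The plan is to verify the two naturality squares by direct computation, reducing each to a single coherence identity already built into the functorial factorization $[\mathrm{SOA}]$. First I would record the functorial actions that the statement presupposes. A morphism $(t'_0,t_0)\colon g_0\to g_1$ in $\GrpA{A}^{\arr}$ produces, via $[\mathrm{SOA}]$, a functor $\phi := [\mathrm{SOA}](t'_0,t_0)\colon \tilde{W}_{g_0}\to \tilde{W}_{g_1}$ satisfying $\omega_{g_1}\circ\phi = t_0\circ\omega_{g_0}$ and $\phi\circ\overline{g_0}=\overline{g_1}\circ t'_0$. Post-composition with $\phi$ is the component $\sum_i(V_i\to\phi)$ of the first functor over $I$, while the morphism-of-arrows $(\phi,t_0)\colon\omega_{g_0}\to\omega_{g_1}$ together with bifunctoriality of $\Sq$ (Remark \ref{square-bifunctor}) gives the component $\sum_i\Sq(J_i,(\phi,t_0))$ of the second; explicitly the latter sends $(c_i,s_i)\mapsto(\phi\circ c_i, t_0\circ s_i)$, which indeed lands in $\sum_i\Sq(J_i,\omega_{g_1})$ since $\omega_{g_1}\circ\phi\circ c_i = t_0\circ\omega_{g_0}\circ c_i = t_0\circ s_i\circ J_i$.

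For naturality of $J\pupw\omega_{-}$ I would evaluate both composites on an object $f_i\colon V_i\to\tilde{W}_{g_0}$ over $i$. The composite $\sum_i\Sq(J_i,(\phi,t_0))\circ(J\pupw\omega_{g_0})$ sends $f_i$ first to $(f_i\circ J_i,\omega_{g_0}\circ f_i)$ and then to $(\phi\circ f_i\circ J_i, t_0\circ\omega_{g_0}\circ f_i)$, whereas $(J\pupw\omega_{g_1})\circ\sum_i(V_i\to\phi)$ sends $f_i$ first to $\phi\circ f_i$ and then to $((\phi\circ f_i)\circ J_i,\omega_{g_1}\circ\phi\circ f_i)$. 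The first components agree on the nose and the second components agree by the identity $\omega_{g_1}\circ\phi=t_0\circ\omega_{g_0}$; the morphism case is identical after replacing $f_i$ by a generalized natural transformation. So this square commutes for formal reasons: it is the usual naturality of a pullback-power in its fibration variable, fed the bottom square of the $[\mathrm{SOA}]$ factorization.

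For naturality of $\lambda(J,\omega_{-})$ the crucial point is that $\phi$ was defined inductively so as to intertwine the two algebra structures, via $\phi(\alpha_{g_0}((v_i,s_i),c_i)) = \alpha_{g_1}((v_i,t_0\circ s_i),\phi\circ c_i)$ and the analogous clause on morphisms. Evaluating the composite $\sum_i(V_i\to\phi)\circ\lambda(J,\omega_{g_0})$ on $(c_i,s_i)\in\sum_i\Sq(J_i,\omega_{g_0})$ at $v_i\in\ob V_i$ gives $\phi(\alpha_{g_0}((v_i,s_i),c_i)) = \alpha_{g_1}((v_i,t_0\circ s_i),\phi\circ c_i)$, while the composite $\lambda(J,\omega_{g_1})\circ\sum_i\Sq(J_i,(\phi,t_0))$ gives $\lambda(J,\omega_{g_1})(\phi\circ c_i, t_0\circ s_i)(v_i) = \alpha_{g_1}((v_i,t_0\circ s_i),\phi\circ c_i)$; these coincide. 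The morphism-level check, on $m_{\id_i}$ and on the components $\alpha((m_k,\sigma_k),\gamma_k)$, is the same verification run through the morphism clause of $[\mathrm{SOA}]$, namely $\phi(\alpha_{g_0}((m_k,\sigma_k),\gamma_k)) = \alpha_{g_1}((m_k,t_0\cdot\sigma_k),\phi\cdot\gamma_k)$.

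The only genuine bookkeeping, and the step I expect to be the main obstacle, is confirming that these fiberwise identities assemble into morphisms over $I$ and disturb no normalization data. Here I would observe that all four maps in each square are functors over $I$ by the corollary following Lemma \ref{liftI} and by the construction of $[\mathrm{SOA}]$, so commutativity checked on objects $v_i$ over $i$ and on morphisms over $k\colon i\to i'$ suffices; and that $[\mathrm{SOA}]$ was already shown to respect the constant clauses, so the $\const$-reductions exploited by $\lambda$ are preserved automatically. Crucially, no lifting or normalization property of $\lambda$ need be re-established, only that the squares commute, so once the two coherence identities above are in hand the proof concludes.
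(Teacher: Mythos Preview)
Your proposal is correct and follows essentially the same approach as the paper: both verify naturality by reducing to the defining inductive clause $[\mathrm{SOA}](t'_0,t_0)(\alpha_{g_0}((v_i,s_i),c_i)) = \alpha_{g_1}((v_i,t_0\circ s_i),[\mathrm{SOA}](t'_0,t_0)\circ c_i)$ of the functorial factorization. You unpack the $J\pupw\omega_{-}$ square somewhat more explicitly than the paper (which simply appeals to $[\mathrm{SOA}]$ being a functorial factorization), and you add a helpful remark about why the normalization clauses require no further checking, but the core argument is identical.
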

\begin{proof}

It is straightforward to exhibit $\sum_{i \in I} (V_{i} \to \tilde{W}_{-})$ and $\sum_{i \in I} \Sq(J_{i}, \omega_{-})$ as functors. A functor $g_{0}:X_{0} \to Y_{0}$ gets taken to $\sum_{i \in I} (V_{i} \to \tilde{W}_{g_{0}})$ and $\sum_{i \in I} \Sq(J_{i}, \omega_{g_{0}})$ and a square $(t'_{0},t_{0}): g_{0} \to g_{1}$ exhibits the following behavior:\\

For $\sum_{i \in I} \Sq(J_{i}, \omega_{(t'_{0},t_{0})})$:
\begin{itemize}
\item $(c_{i}, s_{i})$ gets sent to $([\mathrm{SOA}](t'_{0},t_{0}) \circ c_{i}, t_{0} \circ s_{i})$
\item $(\gamma_{k}, \sigma_{k})$ gets sent to $([\mathrm{SOA}](t'_{0},t_{0}) \cdot \gamma_{k}, t_{0} \cdot \sigma_{k})$
\end{itemize}

Note that $t_{0} \circ s_{i} \circ J_{i} = t_{0} \circ \omega_{g_{0}} \circ c_{i} = \omega_{g_{1}} \circ [\mathrm{SOA}](t'_{0},t_{0}) \circ c_{i}$ thus $([\mathrm{SOA}](t'_{0},t_{0}) \circ c_{i}, t_{0} \circ s_{i})$ is a square in $\sum_{i \in I} \Sq(J_{i}, \omega_{g_{1}})$. Furthermore not only can one verify that $\sum_{i \in I} \Sq(J_{i}, \omega_{(t'_{0},t_{0})})$ is a functor but also that $\sum_{i \in I} \Sq(J_{i}, \omega_{-})$ is functorial. $\sum_{i \in I} (V_{i} \to \tilde{W}_{(t'_{0},t_{0})})$ is simply post composition with $[\mathrm{SOA}](t'_{0},t_{0}) $ which clearly makes both $\sum_{i \in I} (V_{i} \to \tilde{W}_{(t'_{0},t_{0})})$ and $\sum_{i \in I} (V_{i} \to \tilde{W}_{-})$ functors.\\

Naturality of $J  \pupw  \omega_{-}$ follows from the fact that $[\mathrm{SOA}]$ is a functorial factorization. Now we seek to verify the following commutative square over $I$:

\[\begin{tikzcd}
	{\sum_{i \in I} \Sq(J_{i}, \omega_{g_{0}})} &&& {\sum_{i \in I} \Sq(J_{i}, \omega_{g_{1}})} \\
	\\
	\\
	{\sum_{i \in I} (V_{i} \to \tilde{W}_{g_{0}})} &&& {\sum_{i \in I} (V_{i} \to \tilde{W}_{g_{1}})}
	\arrow["{\sum_{i \in I} \Sq(J_{i}, \omega_{(t'_{0},t_{0})})}", from=1-1, to=1-4]
	\arrow["{\lambda(J,\omega_{g_{0}})}"', from=1-1, to=4-1]
	\arrow["{\lambda(J,\omega_{g_{1}})}", from=1-4, to=4-4]
	\arrow["{\sum_{i \in I} (V_{i} \to \tilde{W}_{(t'_{0},t_{0})})}"', from=4-1, to=4-4]\\
\end{tikzcd}\]

which in turn means verifying the following equations: $$[\mathrm{SOA}](t'_{0},t_{0}) \circ \lambda(J,\omega_{g_{0}})(c_{i},s_{i}) = \lambda(J,\omega_{g_{1}})([\mathrm{SOA}](t'_{0},t_{0}) \circ c_{i},t_{0} \circ s_{i})$$ and $$[\mathrm{SOA}](t'_{0},t_{0}) \cdot \lambda(J,\omega_{g_{0}})(\gamma_{k},\sigma_{k}) = \lambda(J,\omega_{g_{1}})([\mathrm{SOA}](t'_{0},t_{0}) \cdot \gamma_{k},t_{0} \cdot \sigma_{k})$$

which both hold by the construction of $\lambda(J,\omega_{g_{0}})$ and $\lambda(J,\omega_{g_{1}})$ which uses the canonical algebra structure on $\omega_{g_{0}}$ and $\omega_{g_{1}}$. Therefore $ \lambda(J,\omega_{-})$ is a natural transformation and a section of $J  \pupw  \omega_{-}$.

\end{proof}

We define $R_{[\mathrm{SOA}]}: \GrpA{A}^{\arr} \to \GrpA{A}^{\arr}$ as the functor $\Bot \circ [\mathrm{SOA}]$ and $L_{[\mathrm{SOA}]}: \GrpA{A}^{\arr} \to \GrpA{A}^{\arr}$ as the functor $\Top \circ [\mathrm{SOA}]$. 

\begin{lem}

$R_{[\mathrm{SOA}]}$ is a monad.

\end{lem}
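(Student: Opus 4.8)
The plan is to mirror the proof that $R_{\Fib}$ is a monad: I will exhibit a unit $\eta_{R_{[\mathrm{SOA}]}} : \Id_{\GrpA{A}^{\arr}} \To R_{[\mathrm{SOA}]}$ and a multiplication $\mu_{R_{[\mathrm{SOA}]}} : R_{[\mathrm{SOA}]} \circ R_{[\mathrm{SOA}]} \To R_{[\mathrm{SOA}]}$, check that both are natural, and then verify the three monad coherence laws by induction on the generating $W$-types. Throughout I write $[\mathrm{SOA}](g_{0}) = (\overline{g_{0}}, \omega_{g_{0}})$, so that $R_{[\mathrm{SOA}]}(g_{0}) = \omega_{g_{0}} : \tilde{W}_{g_{0}} \to Y_{0}$ and $R_{[\mathrm{SOA}]}^{2}(g_{0}) = \omega_{\omega_{g_{0}}} : \tilde{W}_{\omega_{g_{0}}} \to Y_{0}$.

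For the unit I would set $\eta_{R_{[\mathrm{SOA}]}}(g_{0}) = (\overline{g_{0}}, \id_{Y_{0}}) : g_{0} \to \omega_{g_{0}}$; this is a morphism of arrows because $\omega_{g_{0}} \circ \overline{g_{0}} = g_{0}$. Naturality in a square $(t'_{0},t_{0}) : g_{0} \to g_{1}$ is immediate from the top square of the functorial factorization, which gives $[\mathrm{SOA}](t'_{0},t_{0}) \circ \overline{g_{0}} = \overline{g_{1}} \circ t'_{0}$, so that $R_{[\mathrm{SOA}]}(t'_{0},t_{0}) \circ \eta_{R_{[\mathrm{SOA}]}}(g_{0}) = (\overline{g_{1}} \circ t'_{0}, t_{0}) = \eta_{R_{[\mathrm{SOA}]}}(g_{1}) \circ (t'_{0},t_{0})$.

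For the multiplication the key observation is that $\omega_{g_{0}}$ itself carries the lifting structure $\lambda(J,\omega_{g_{0}})$ of Lemma \ref{liftI}, so the preceding retract characterization applies to $\omega_{g_{0}}$ in place of $g_{0}$ and yields a retraction $m_{g_{0}} : \tilde{W}_{\omega_{g_{0}}} \to \tilde{W}_{g_{0}}$ over $Y_{0}$ with $\omega_{g_{0}} \circ m_{g_{0}} = \omega_{\omega_{g_{0}}}$ and $m_{g_{0}} \circ \overline{\omega_{g_{0}}} = \id_{\tilde{W}_{g_{0}}}$. Unwinding that lemma and the definition of $\lambda(J,\omega_{g_{0}})$ gives the explicit recursive description
$$ m_{g_{0}}(\overline{\omega_{g_{0}}}(w)) = w, \qquad m_{g_{0}}(\alpha_{\omega_{g_{0}}}((v_{i},s_{i}),c_{i})) = \alpha_{g_{0}}((v_{i},s_{i}), m_{g_{0}} \circ c_{i}), $$
so that $m_{g_{0}}$ is the flattening of the doubly iterated $W$-type, and I set $\mu_{R_{[\mathrm{SOA}]}}(g_{0}) = (m_{g_{0}}, \id_{Y_{0}})$. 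Because $m_{g_{0}}$ arises from the algebra structure of $\omega_{g_{0}}$ through the initial-algebra property of $\tilde{W}_{\omega_{g_{0}}}$ (Lemma \ref{wtyper}), its well-definedness with respect to both the $J^{*}$-reduction and the $\const$-normalization is automatic. Naturality of $\mu_{R_{[\mathrm{SOA}]}}$ then follows from the naturality of $\lambda(J,\omega_{-})$ established in Lemma \ref{natlift} together with functoriality of $[\mathrm{SOA}]$.

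It remains to verify the three monad laws, each by induction over $\tilde{W}_{g_{0}}$ (respectively $\tilde{W}_{\omega_{g_{0}}}$). The law $\mu_{R_{[\mathrm{SOA}]}} \circ (\eta_{R_{[\mathrm{SOA}]}} \cdot R_{[\mathrm{SOA}]}) = \Id_{R_{[\mathrm{SOA}]}}$ is immediate from $m_{g_{0}} \circ \overline{\omega_{g_{0}}} = \id$. The law $\mu_{R_{[\mathrm{SOA}]}} \circ (R_{[\mathrm{SOA}]} \cdot \eta_{R_{[\mathrm{SOA}]}}) = \Id_{R_{[\mathrm{SOA}]}}$ requires showing $m_{g_{0}} \circ [\mathrm{SOA}](\overline{g_{0}}, \id_{Y_{0}}) = \id_{\tilde{W}_{g_{0}}}$; on a generator $\overline{g_{0}}(x)$ both sides return $\overline{g_{0}}(x)$, and on a constructor $\alpha_{g_{0}}((v_{i},s_{i}),c_{i})$ the recursions for $[\mathrm{SOA}]$ and for $m_{g_{0}}$ reduce the claim to the inductive hypothesis applied to $c_{i}$. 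Associativity similarly amounts to checking that the two flattenings of a triply iterated $W$-type agree, which again follows constructor by constructor from the recursion for $m$. I expect the main obstacle to be precisely these two inductions: one must track how $[\mathrm{SOA}]$ acts on the square $(\overline{g_{0}}, \id_{Y_{0}})$ (respectively on $\mu_{R_{[\mathrm{SOA}]}}$) and confirm at each inductive step that the reductions and normalization conditions built into the $W$-type with reductions are preserved, so that the recursively defined functors are genuinely equal as morphisms of groupoid assemblies rather than merely agreeing on representatives.
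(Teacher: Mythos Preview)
Your proposal is correct and follows essentially the same approach as the paper: the unit is identical, your $m_{g_{0}}$ unwinds to exactly the paper's recursively defined $\mu_{g_{0}}$, and the monad laws are checked by the same induction on generators. The only cosmetic difference is that you justify naturality of $\mu$ by appealing to Lemma~\ref{natlift}, whereas the paper verifies it by a direct induction on $\tilde{W}_{\omega_{g_{0}}}$; these are the same computation since $\lambda(J,\omega_{-})$ is itself defined via the algebra structure $\alpha$.
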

\begin{proof}

Given a functor $g_{0}: X_{0} \to Y_{0}$, the component of the unit on $g_{0}$, $\eta_{R_{[\mathrm{SOA}]},g_{0}}$ is given by the square:\\

\[\begin{tikzcd}
	{X_{0}} &&& {\tilde{W}_{g_{0}}} \\
	\\
	\\
	{Y_{0}} &&& {Y_{0}}
	\arrow["{\overline{g}_{0}}", from=1-1, to=1-4]
	\arrow["{g_{0}}"', from=1-1, to=4-1]
	\arrow["{\omega_{g_{0}}}", from=1-4, to=4-4]
	\arrow["{\id_{Y_{0}}}"', from=4-1, to=4-4]\\
\end{tikzcd}\]

The unit $\eta_{R_{[\mathrm{SOA}]}}$ is a natural transformation by $[\mathrm{SOA}]$ being a functorial factorization. We seek to construct the component of multiplication $\mu_{R_{[\mathrm{SOA}]},g_{0}}$:\\

\[\begin{tikzcd}
	{\tilde{W}_{\omega_{g_{0}}}} &&& {\tilde{W}_{g_{0}}} \\
	\\
	\\
	{Y_{0}} &&& {Y_{0}}
	\arrow["{{\mu_{g_{0}}}}", dashed, from=1-1, to=1-4]
	\arrow["{{\omega_{\omega_{g_{0}}}}}"', from=1-1, to=4-1]
	\arrow["{{\omega_{g_{0}}}}", from=1-4, to=4-4]
	\arrow["{{\id_{Y_{0}}}}"', from=4-1, to=4-4]
\end{tikzcd}\]

For $w \in \tilde{W}_{g_{0}}$, $\mu_{g_{0}}(\overline{\omega}_{g_{0}}(w)) = w$. For $\alpha_{\omega_{g_{0}}}((v_{i},s_{i}:V_{i} \to Y_{0}), c_{i}: U_{i} \to \tilde{W}_{\omega_{g_{0}}})$, $\mu_{g_{0}}(\alpha_{\omega_{g_{0}}}((v_{i},s_{i}),c_{i})) = \alpha_{g_{0}}((v_{i},s_{i}), \mu_{g_{0}} \circ c_{i})$. The construction of $\mu_{g_{0}}$ on morphisms are straightforward as well as showing that $\mu_{g_{0}}$ respects the reductions. To show that $\mu_{R_{[\mathrm{SOA}]}}$ is a natural transformation we need to show that the following diagram is commutative for a square $(t'_{0},t_{0}):g_{0} \to g_{1}$:

\[\begin{tikzcd}
	{\tilde{W}_{\omega_{g_{0}}}} &&& {\tilde{W}_{\omega_{g_{1}}}} \\
	\\
	\\
	{\tilde{W}_{g_{0}}} &&& {\tilde{W}_{g_{1}}}
	\arrow["{{[\mathrm{SOA}]([\mathrm{SOA}](t'_{0},t_{0}),t_{0})}}", from=1-1, to=1-4]
	\arrow["{{\mu_{g_{0}}}}"', from=1-1, to=4-1]
	\arrow["{{\mu_{g_{1}}}}", from=1-4, to=4-4]
	\arrow["{{[\mathrm{SOA}](t'_{0},t_{0})}}"', from=4-1, to=4-4]
\end{tikzcd}\]

For $w \in \tilde{W}_{g_{0}}$, $$\mu_{g_{1}}([\mathrm{SOA}]([\mathrm{SOA}](t'_{0},t_{0}),t_{0})(\overline{\omega}_{g_{0}}(w)) ) = \mu_{g_{1}}(\overline{\omega}_{g_{1}}([\mathrm{SOA}](t'_{0},t_{0})(w))) = [\mathrm{SOA}](t'_{0},t_{0})(w) = [\mathrm{SOA}](t'_{0},t_{0})(\mu_{g_{0}}(\overline{\omega}_{g_{0}}(w)))$$ Suppose we have $\alpha_{\omega_{g_{0}}}((v_{i},s_{i}: V_{i} \to Y_{0}),c_{i}: U_{i} \to \tilde{W}_{\omega_{g_{0}}})$ where $$[\mathrm{SOA}](t'_{0},t_{0}) \circ \mu_{g_{0}} \circ c_{i} = \mu_{g_{1}} \circ [\mathrm{SOA}]([\mathrm{SOA}](t'_{0},t_{0}),t_{0}) \circ c_{i} $$ by induction then 
\begin{align*}
\mu_{g_{1}} \circ [\mathrm{SOA}]([\mathrm{SOA}](t'_{0},t_{0}),t_{0})(\alpha_{\omega_{g_{0}}}((v_{i},s_{i}),c_{i})) &= \mu_{g_{1}}(\alpha_{\omega_{g_{1}}}((v_{i},t_{0} \circ s_{i}),[\mathrm{SOA}]([\mathrm{SOA}](t'_{0},t_{0}),t_{0}) \circ c_{i}))\\
&= \alpha_{g_{1}}((v_{i},t_{0} \circ s_{i}),\mu_{g_{1}} \circ [\mathrm{SOA}]([\mathrm{SOA}](t'_{0},t_{0}),t_{0}) \circ c_{i})\\
&= \alpha_{g_{1}}((v_{i},t_{0} \circ s_{i}),[\mathrm{SOA}](t'_{0},t_{0}) \circ \mu_{g_{0}} \circ c_{i})\\
&= [\mathrm{SOA}](t'_{0},t_{0}) (\alpha_{g_{0}}((v_{i}, s_{i}), \mu_{g_{0}} \circ c_{i}))\\
&=  [\mathrm{SOA}](t'_{0},t_{0}) \circ \mu_{g_{0}}(\alpha_{\omega_{g_{0}}}((v_{i}, s_{i}),  c_{i}))
\end{align*}
As usual morphisms work similarly. Therefore $\mu_{R_{[\mathrm{SOA}]}}$ is a natural transformation. Now we must show that $\mu_{R_{[\mathrm{SOA}]}}$ and $\eta_{R_{[\mathrm{SOA}]}}$ satisfy the following equations:

\begin{itemize}
\item $\mu_{R_{[\mathrm{SOA}]}} \circ (\eta_{R_{[\mathrm{SOA}]}} \cdot R_{[\mathrm{SOA}]}) =  \mu_{R_{[\mathrm{SOA}]}} \circ (R_{[\mathrm{SOA}]} \cdot \eta_{R_{[\mathrm{SOA}]}}) = \Id_{R_{[\mathrm{SOA}]}} $

\item $\mu_{R_{[\mathrm{SOA}]}} \circ (R_{[\mathrm{SOA}]} \cdot \mu_{R_{[\mathrm{SOA}]}}) = \mu_{R_{[\mathrm{SOA}]}} \circ ( \mu_{R_{[\mathrm{SOA}]}} \cdot R_{[\mathrm{SOA}]} )$\\
\end{itemize}
which is tantamount to proving the following for every $g_{0}$:
\begin{itemize}
\item $\mu_{g_{0}} \circ \overline{\omega}_{g_{0}} =  \mu_{g_{0}} \circ [\mathrm{SOA}](\overline{g}_{0},\id_{Y_{0}}) = \id_{\tilde{W}_{g_{0}}} $

\item $\mu_{g_{0}} \circ [\mathrm{SOA}](\mu_{g_{0}},\id_{Y_{0}}) = \mu_{g_{0}} \circ \mu_{\omega_{g_{0}}}$\\

\end{itemize}

We already have $\mu_{g_{0}} \circ \overline{\omega}_{g_{0}}  = \id_{\tilde{W}_{g_{0}}} $ by construction of $\mu_{g_{0}}$.
We also have $\id_{\tilde{W}_{g_{0}}} =  \mu_{g_{0}} \circ [\mathrm{SOA}](\overline{g}_{0},\id_{Y_{0}})$ since $\overline{\omega}_{g_{0}} \circ \overline{g}_{0} = [\mathrm{SOA}](\overline{g}_{0},\id_{Y_{0}}) \circ \overline{g}_{0}$ by the functorial factorization of $[\mathrm{SOA}]$ and thus $$\overline{g}_{0} = \mu_{g_{0}} \circ \overline{\omega}_{g_{0}} \circ \overline{g}_{0} = \mu_{g_{0}} \circ [\mathrm{SOA}](\overline{g}_{0},\id_{Y_{0}}) \circ \overline{g}_{0}$$ The rest follows from induction on $c_{i} = \mu_{g_{0}} \circ [\mathrm{SOA}](\overline{g}_{0},\id_{Y_{0}}) \circ c_{i}$.\\

As for the second equality, for $w \in \tilde{W}_{\omega_{g_{0}}}$ we have $$[\mathrm{SOA}](\mu_{g_{0}},\id_{Y_{0}}) \circ \overline{\omega}_{\omega_{g_{0}}}(w) = \overline{\omega}_{g_{0}} \circ \mu_{g_{0}}(w)$$ by functorial factorization of $[\mathrm{SOA}]$ so we get $$\mu_{g_{0}} \circ [\mathrm{SOA}](\mu_{g_{0}},\id_{Y_{0}}) \circ \overline{\omega}_{\omega_{g_{0}}}(w) = \mu_{g_{0}} \circ \overline{\omega}_{g_{0}} \circ \mu_{g_{0}}(w) =  \mu_{g_{0}}(w) = \mu_{g_{0}} \circ \mu_{\omega_{g_{0}}}\circ \overline{\omega}_{\omega_{g_{0}}}(w)$$ The rest follows from induction on $\mu_{g_{0}} \circ [\mathrm{SOA}](\mu_{g_{0}},\id_{Y_{0}}) \circ c_{i} = \mu_{g_{0}} \circ \mu_{\omega_{g_{0}}} \circ c_{i}$. Therefore, $R_{[\mathrm{SOA}]} $ is a monad.

\end{proof}

\begin{defn}

A functor $g_{0}: X_{0} \to Y_{0}$ is an $R_{[\mathrm{SOA}]}$ algebra when there exists a square $(s_{0},t):\omega_{g_{0}} \to g_{0}$ such that $(s_{0},t) \circ \eta_{R_{[\mathrm{SOA}]},g_{0}} = \id_{g_{0}}$ and $(s_{0},t) \circ \mu_{R_{[\mathrm{SOA}]},g_{0}} = (s_{0},t) \circ R_{[\mathrm{SOA}]}(s_{0},t)$. Since $\eta_{R_{[\mathrm{SOA}]},g_{0}} = (\overline{g}_{0},\id_{Y_{0}})$, $t = \id_{Y_{0}}$. Thus we will present an $R_{[\mathrm{SOA}]}$ algebra as $(g_{0},s_{0})$.

\end{defn}

\begin{defn}

A morphism between two $R_{[\mathrm{SOA}]}$ algebras $(g_{0}:X_{0} \to Y_{0}, s_{0})$ and $(g_{1}:X_{1} \to Y_{1}, s_{1})$ is a square $(t,t'):g_{0} \to g_{1}$ such that $s_{1} \circ [\mathrm{SOA}](t,t') = t \circ s_{0}$.

\end{defn}


Given two $R_{[\mathrm{SOA}]}$ algebras $(g_{0}:X_{0} \to Y_{0}, s_{0})$ and $(h_{0}:Y_{0} \to Z_{0}, s'_{0})$ we construct a new $R_{[\mathrm{SOA}]}$ algebra $(h_{0} \circ g_{0}, s'_{0} \cdot s_{0})$ as follows:\\

We seek to construct a functor $\trans(s_{0},s'_{0}): \tilde{W}_{h_{0} \circ g_{0}} \to \tilde{W}_{g_{0}}$ making the following 2 diagrams commute:

\[\begin{tikzcd}
	{X_{0}} &&& {\tilde{W}_{h_{0} \circ g_{0}}} &&& {\tilde{W}_{g_{0}}} &&& {X_{0}} \\
	\\
	\\
	{Z_{0}} &&& {Z_{0}} &&& {Z_{0}} &&& {Z_{0}}
	\arrow["{\overline{h_{0} \circ g_{0}}}", from=1-1, to=1-4]
	\arrow["{h_{0} \circ g_{0}}"', from=1-1, to=4-1]
	\arrow["{\trans(s_{0},s'_{0})}", dashed, from=1-4, to=1-7]
	\arrow["{{{\omega_{h_{0} \circ g_{0}}}}}", from=1-4, to=4-4]
	\arrow["{s_{0}}", from=1-7, to=1-10]
	\arrow["{h_{0} \circ \omega_{g_{0}}}", from=1-7, to=4-7]
	\arrow["{h_{0} \circ g_{0}}", from=1-10, to=4-10]
	\arrow["{{{\id_{Z_{0}}}}}"', from=4-1, to=4-4]
	\arrow["{{{\id_{Z_{0}}}}}"', from=4-4, to=4-7]
	\arrow["{{{\id_{Z_{0}}}}}"', from=4-7, to=4-10]
\end{tikzcd}\]

where the leftmost square is $\eta_{R_{[\mathrm{SOA}]},h_{0} \circ g_{0}}$ and:

\[\begin{tikzcd}
	{\tilde{W}_{h_{0} \circ g_{0}}} &&& {\tilde{W}_{g_{0}}} \\
	\\
	\\
	{\tilde{W}_{h_{0}}} &&& {Y_{0}}
	\arrow["{\trans(s_{0},s'_{0})}", from=1-1, to=1-4]
	\arrow["{{{[\mathrm{SOA}](g_{0},\id_{Z_{0}})}}}"', from=1-1, to=4-1]
	\arrow["{{{\omega_{g_{0}}}}}", from=1-4, to=4-4]
	\arrow["{{s'_{0}}}"', from=4-1, to=4-4]
\end{tikzcd}\]

where the $[\mathrm{SOA}](g_{0},\id_{Z_{0}})$ comes from  applying $R_{[\mathrm{SOA}]}$ to the square $(g_{0},\id_{Z_{0}}): h_{0} \circ g_{0} \to h_{0}$. 
As usual we construct $\trans(s_{0},s'_{0})$ by induction on $\tilde{W}_{h_{0} \circ g_{0}} $. For $x \in X_{0}$, $\trans(s_{0},s'_{0})(\overline{h_{0} \circ g_{0}}(x)) = \overline{g_{0}}(x)$. Furthermore we have $$s'_{0} \circ [\mathrm{SOA}](g_{0},\id_{Z_{0}})(\overline{h_{0} \circ g_{0}}(x)) = s'_{0} \circ \overline{h_{0}}\circ g_{0}(x) = g_{0}(x) = \omega_{g_{0}} \circ \overline{g_{0}}(x) = \omega_{g_{0}}(\trans(s_{0},s'_{0})(\overline{h_{0} \circ g_{0}}(x)))$$

Given some $\alpha_{h_{0} \circ g_{0}}((v_{i},s_{i}: V_{i} \to Z_{0}),c_{i}: U_{i} \to \tilde{W}_{h_{0} \circ g_{0}})$ we have the following diagram:

\[\begin{tikzcd}
	{U_{i}} &&& {\tilde{W}_{h_{0} \circ g_{0}}} &&& {\tilde{W}_{h_{0}}} \\
	\\
	\\
	{V_{i}} &&& {Z_{0}} &&& {Z_{0}}
	\arrow["{{c_{i}}}", from=1-1, to=1-4]
	\arrow["{{J_{i}}}"', from=1-1, to=4-1]
	\arrow["{{[\mathrm{SOA}](g_{0},\id_{Z_{0}})}}", from=1-4, to=1-7]
	\arrow["{{{{\omega_{h_{0} \circ g_{0}}}}}}", from=1-4, to=4-4]
	\arrow["{{\omega_{h_{0}}}}", from=1-7, to=4-7]
	\arrow["{{s_{i}}}"', from=4-1, to=4-4]
	\arrow["{{{{\id_{Z_{0}}}}}}"', from=4-4, to=4-7]
\end{tikzcd}\]

This gives us a lift of the diagram $\lambda(J, \omega_{h_{0}})([\mathrm{SOA}](g_{0},\id_{Z_{0}}) \circ c_{i},s_{i}):V_{i} \to \tilde{W}_{h_{0}}$. This give us a morphism $s'_{1} \circ \lambda(J, \omega_{h_{0}})([\mathrm{SOA}](g_{0},\id_{Z_{0}}) \circ c_{i},s_{i}):V_{i} \to Y_{0}$. Assume by induction that $$\omega_{g_{0}} \circ \trans(s'_{0},s_{0}) \circ c_{i} = s'_{0} \circ [\mathrm{SOA}](g_{0},\id_{Z_{0}}) \circ c_{i} = s'_{0} \circ \lambda(J, \omega_{h_{0}})([\mathrm{SOA}](g_{0},\id_{Z_{0}}) \circ c_{i},s_{i}) \circ J$$ so that $$\trans(s_{0},s'_{0})(\alpha_{h_{0} \circ g_{0}}((v_{i},s_{i}),c_{i})) = \alpha_{g_{0}}((v_{i}, s'_{0} \circ \lambda(J, \omega_{h_{0}})([\mathrm{SOA}](g_{0},\id_{Z_{0}}) \circ c_{i},s_{i})), \trans(s'_{0},s_{0}) \circ c_{i})$$

For a morphism $\alpha_{h_{0} \circ g_{0}}((m_{k},\sigma_{k}),\gamma_{k})$ we set:
$$\trans(s_{0},s'_{0})(\alpha_{h_{0} \circ g_{0}}((m_{k},\sigma_{k}),\gamma_{k})) = \alpha_{g_{0}}((m_{k}, s'_{0} \cdot \lambda(J, \omega_{h_{0}})([\mathrm{SOA}](g_{0},\id_{Z_{0}}) \cdot \gamma_{k},\sigma_{k})), \trans(s'_{0},s_{0}) \cdot \gamma_{k})$$ assuming by induction $$(\omega_{g_{0}} \circ \trans(s'_{0},s_{0})) \cdot \gamma_{k} = (s'_{0} \circ [\mathrm{SOA}](g_{0},\id_{Z_{0}})) \cdot \gamma_{k}  = s'_{0} \cdot \lambda(J, \omega_{h_{0}})([\mathrm{SOA}](g_{0},\id_{Z_{0}}) \cdot \gamma_{k},\sigma_{k}) \cdot J$$ This construction is functorial by functoriality of $ \lambda(J, \omega_{h_{0}})$. Now we must show that $\trans(s_{0},s'_{0})$ is a morphism over $Z_{0}$ as well as showing that $\trans(s_{0},s'_{0})$ respects the reduction equations and that the second commutative square commutes. \\

For $x \in X_{0}$, $$h_{0} \circ \omega_{g_{0}} \circ \trans(s_{0},s'_{0})(\overline{h_{0} \circ g_{0}}(x)) = h_{0} \circ \omega_{g_{0}} \circ \overline{g_{0}}(x) = h_{0} \circ g_{0}(x) = \omega_{h_{0} \circ g_{0}} \circ \overline{h_{0} \circ g_{0}}(x)$$ Given an object $\alpha_{h_{0} \circ g_{0}}((v_{i},s_{i}),c_{i}))$, we have 

\begin{align*}
h_{0} &\circ \omega_{g_{0}} \circ \trans(s_{0},s'_{0})(\alpha_{h_{0} \circ g_{0}}((v_{i},s_{i}),c_{i})) \\
&= h_{0} \circ \omega_{g_{0}}(\alpha_{g_{0}}((v_{i}, s'_{0} \circ \lambda(J, \omega_{h_{0}})([\mathrm{SOA}](g_{0},\id_{Z_{0}}) \circ c_{i},s_{i})), \trans(s'_{0},s_{0}) \circ c_{i}))\\
&= h_{0}(s'_{0} \circ \lambda(J, \omega_{h_{0}})([\mathrm{SOA}](g_{0},\id_{Z_{0}}) \circ c_{i},s_{i})(v_{i}))\\
&= h_{0}(s'_{0}(\alpha_{h_{0}}((v_{i},s_{i}),[\mathrm{SOA}](g_{0},\id_{Z_{0}}) \circ c_{i})))\\
&= \omega_{h_{0}}(\alpha_{h_{0}}((v_{i},s_{i}),[\mathrm{SOA}](g_{0},\id_{Z_{0}}) \circ c_{i}))\\
&= s_{i}(v_{i})\\
&= \omega_{h_{0} \circ g_{0}}(\alpha_{h_{0} \circ g_{0}}((v_{i},s_{i}),c_{i}))
\end{align*}
A similar argument holds for morphisms thus $\trans(s_{0},s'_{0})$ is a morphism over $Z_{0}$.\\

Now we show that $\trans(s_{0},s'_{0})$ respects the reductions. Suppose we have $\alpha_{h_{0} \circ g_{0}}((J_{i}(u_{i}),s_{i}),c_{i})$, then 

\begin{align*}
\trans(s_{0},s'_{0})(\alpha_{h_{0} \circ g_{0}}((J_{i}(u_{i}),s_{i}),c_{i})) &= \alpha_{g_{0}}((J_{i}(u_{i}), s'_{0} \circ \lambda(J, \omega_{h_{0}})([\mathrm{SOA}](g_{0},\id_{Z_{0}}) \circ c_{i},s_{i})), \trans(s'_{0},s_{0}) \circ c_{i}))\\
&= \trans(s'_{0},s_{0})(c_{i}(u_{i}))
\end{align*}

When $n_{i} \in N_{i}$ and $s_{i}$ is constant, then $\lambda(J, \omega_{h_{0}})([\mathrm{SOA}](g_{0},\id_{Z_{0}}) \circ c_{i},s_{i})$ is constant by Lemma \ref{liftI} and so is $s'_{0} \circ \lambda(J, \omega_{h_{0}})([\mathrm{SOA}](g_{0},\id_{Z_{0}}) \circ c_{i},s_{i})$. So $\trans(s_{0},s'_{0})$ will respect the reduction. Since the argument for morphisms is similar to that of objects, we have that $\trans(s_{0},s'_{0})$ respects the reductions.\\

Now we seek to show that the following square is commutative:

\[\begin{tikzcd}
	{\tilde{W}_{h_{0} \circ g_{0}}} &&& {\tilde{W}_{g_{0}}} \\
	\\
	\\
	{\tilde{W}_{h_{0}}} &&& {Y_{0}}
	\arrow["{\trans(s_{0},s'_{0})}", from=1-1, to=1-4]
	\arrow["{{{[\mathrm{SOA}](g_{0},\id_{Z_{0}})}}}"', from=1-1, to=4-1]
	\arrow["{{{\omega_{g_{0}}}}}", from=1-4, to=4-4]
	\arrow["{{s'_{0}}}"', from=4-1, to=4-4]
\end{tikzcd}\]

We have already demonstrated the cases for $x \in X_{0}$,it suffices to consider $\alpha_{h_{0} \circ g_{0}}((v_{i},s_{i}),c_{i})$.

We have 

\begin{align*}
\omega_{g_{0}} \circ \trans(s_{0},s'_{0})(\alpha_{h_{0} \circ g_{0}}((v_{i},s_{i}),c_{i})) &= \omega_{g_{0}}(\alpha_{g_{0}}((v_{i}, s'_{0} \circ \lambda(J, \omega_{h_{0}})([\mathrm{SOA}](g_{0},\id_{Z_{0}}) \circ c_{i},s_{i})), \trans(s'_{0},s_{0}) \circ c_{i}))\\
&= s'_{0} \circ \lambda(J, \omega_{h_{0}})([\mathrm{SOA}](g_{0},\id_{Z_{0}}) \circ c_{i},s_{i})(v_{i})\\
&= s'_{0}(\alpha_{h_{0}}((v_{i},s_{i}), [\mathrm{SOA}](g_{0},\id_{Z_{0}}) \circ c_{i}, ))\\
&= s'_{0} \circ [\mathrm{SOA}](g_{0},\id_{Z_{0}})(\alpha_{h_{0} \circ g_{0}}((v_{i},s_{i}),c_{i}))
\end{align*}
Therefore, the above square commutes. We construct our potential $R_{[\mathrm{SOA}]}$ algebra structure on $h_{0} \circ g_{0}$ as $(h_{0} \circ g_{0}, s_{0} \circ \trans(s_{0},s'_{0}))$. Now we prove that $(h_{0} \circ g_{0}, s_{0} \circ \trans(s_{0},s'_{0}))$ is an $R_{[\mathrm{SOA}]}$ algebra:

\begin{lem}

$(h_{0} \circ g_{0}, s_{0} \circ \trans(s_{0},s'_{0}))$ is an $R_{[\mathrm{SOA}]}$ algebra.

\end{lem}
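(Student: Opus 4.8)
The plan is to verify the two $R_{[\mathrm{SOA}]}$-algebra axioms directly for the pair $(h_0 \circ g_0,\, s_0 \circ \trans(s_0,s'_0))$, abbreviating $\bar{s} := s_0 \circ \trans(s_0,s'_0) : \tilde{W}_{h_0 \circ g_0} \to X_0$. Since $\eta_{R_{[\mathrm{SOA}]},\,h_0 \circ g_0} = (\overline{h_0 \circ g_0},\,\id_{Z_0})$, the counit/bottom component of the structure square is forced to be $\id_{Z_0}$, so the two axioms reduce to the unit equation $\bar{s}\circ\overline{h_0 \circ g_0} = \id_{X_0}$ together with the single multiplication equation of functors $\tilde{W}_{\omega_{h_0 \circ g_0}} \to X_0$:
\[
\bar{s}\circ\mu_{h_0 \circ g_0} \;=\; \bar{s}\circ[\mathrm{SOA}](\bar{s},\id_{Z_0}).
\]

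First I would dispose of the unit axiom. By the construction of $\trans(s_0,s'_0)$ we have $\trans(s_0,s'_0)(\overline{h_0 \circ g_0}(x)) = \overline{g_0}(x)$ on objects, hence $\bar{s}(\overline{h_0 \circ g_0}(x)) = s_0(\overline{g_0}(x)) = x$, where the last equality is the unit axiom already satisfied by $(g_0,s_0)$; the identical computation on morphisms closes this case.

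The substantive step is the multiplication equation, which I would prove by induction on $\tilde{W}_{\omega_{h_0 \circ g_0}}$, in the same style as the monad and naturality verifications above. For the base case, an element $\overline{\omega_{h_0 \circ g_0}}(w)$ is sent by $\mu_{h_0 \circ g_0}$ to $w$ and by $[\mathrm{SOA}](\bar{s},\id_{Z_0})$ to $\overline{h_0 \circ g_0}(\bar{s}(w))$ (because $[\mathrm{SOA}]$ is a functorial factorization, so it commutes with the units $\overline{(-)}$); postcomposing with $\bar{s}$ and invoking the just-proved unit equation makes both sides equal to $\bar{s}(w)$. For the inductive step, the constructor $\alpha_{\omega_{h_0 \circ g_0}}((v_i,s_i),c_i)$ is carried by $\mu_{h_0 \circ g_0}$ to $\alpha_{h_0 \circ g_0}((v_i,s_i),\,\mu_{h_0 \circ g_0}\circ c_i)$ and by $[\mathrm{SOA}](\bar{s},\id_{Z_0})$ to $\alpha_{h_0 \circ g_0}((v_i,s_i),\,[\mathrm{SOA}](\bar{s},\id_{Z_0})\circ c_i)$; I would then unfold $\bar{s}$ on each side through the definition of $\trans$, whose $V_i$-component is $s'_0 \circ \lambda(J,\omega_{h_0})([\mathrm{SOA}](g_0,\id_{Z_0})\circ(-),s_i)$, and reconcile the two expressions using the inductive hypothesis $\bar{s}\circ\mu_{h_0 \circ g_0}\circ c_i = \bar{s}\circ[\mathrm{SOA}](\bar{s},\id_{Z_0})\circ c_i$ together with the naturality of $\lambda(J,\omega_{-})$ from Lemma \ref{natlift} and the defining commutative squares of $\trans(s_0,s'_0)$ (the one over $Z_0$ and the one with bottom edge $s'_0$). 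The morphism case runs in parallel.

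The main obstacle will be exactly this inductive step, because the inductive hypothesis is stated at the level of $\bar{s} = s_0\circ\trans(s_0,s'_0)$, whereas the two $\alpha_{g_0}$-expressions produced after unfolding differ a priori both in their $c$-component $\trans(s_0,s'_0)\circ(-)\circ c_i$ and in their $V_i$-component. The cleanest way I expect to force them together is to establish, by a parallel induction, a common auxiliary factorization $A:\tilde{W}_{\omega_{h_0 \circ g_0}} \to \tilde{W}_{\omega_{g_0}}$ with $\trans(s_0,s'_0)\circ\mu_{h_0 \circ g_0} = \mu_{g_0}\circ A$ and $\trans(s_0,s'_0)\circ[\mathrm{SOA}](\bar{s},\id_{Z_0}) = [\mathrm{SOA}](s_0,\id_{Y_0})\circ A$, and then to postcompose with $s_0$ and invoke the multiplication law $s_0\circ\mu_{g_0} = s_0\circ[\mathrm{SOA}](s_0,\id_{Y_0})$ of the algebra $(g_0,s_0)$, alongside the corresponding law for $(h_0,s'_0)$ feeding into the $V_i$-component. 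Organizing the bookkeeping so that these two data — the transferred subtree and the lifted section $s'_0\circ\lambda(J,\omega_{h_0})(\dots)$ — are verified compatible separately before recombining is where the real work lies.
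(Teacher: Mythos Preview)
Your proposal is correct and follows essentially the same approach as the paper. The auxiliary factorization $A:\tilde{W}_{\omega_{h_0 \circ g_0}} \to \tilde{W}_{\omega_{g_0}}$ you anticipate is exactly what the paper constructs (calling it $\trans_\mu(s_0,s'_0)$), and the two identities you state for $A$ are precisely the two equations the paper verifies by induction before postcomposing with $s_0$ and invoking the algebra laws for $(g_0,s_0)$ and $(h_0,s'_0)$.
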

\begin{proof}

We already have that $(s_{0} \circ \trans(s_{0},s'_{0}), \id_{Z_{0}}) \circ \eta_{R_{[\mathrm{SOA}]}, h_{0} \circ g_{0}} = \id_{h_{0} \circ g_{0}}$ by the following commutative diagram:
\[\begin{tikzcd}
	{X_{0}} &&& {\tilde{W}_{h_{0} \circ g_{0}}} &&& {\tilde{W}_{g_{0}}} &&& {X_{0}} \\
	\\
	\\
	{Z_{0}} &&& {Z_{0}} &&& {Z_{0}} &&& {Z_{0}}
	\arrow["{\overline{h_{0} \circ g_{0}}}", from=1-1, to=1-4]
	\arrow["{h_{0} \circ g_{0}}"', from=1-1, to=4-1]
	\arrow["{\trans(s_{0},s'_{0})}", from=1-4, to=1-7]
	\arrow["{{{\omega_{h_{0} \circ g_{0}}}}}", from=1-4, to=4-4]
	\arrow["{s_{0}}", from=1-7, to=1-10]
	\arrow["{h_{0} \circ \omega_{g_{0}}}", from=1-7, to=4-7]
	\arrow["{h_{0} \circ g_{0}}", from=1-10, to=4-10]
	\arrow["{{{\id_{Z_{0}}}}}"', from=4-1, to=4-4]
	\arrow["{{{\id_{Z_{0}}}}}"', from=4-4, to=4-7]
	\arrow["{{{\id_{Z_{0}}}}}"', from=4-7, to=4-10]
\end{tikzcd}\]

and by $$s_{0} \circ \trans(s_{0},s'_{0}) \circ \overline{h_{0} \circ g_{0}} = s_{0} \circ \overline{g_{0}} =  \id_{X_{0}}$$
now we need to prove $$(s_{0} \circ \trans(s_{0},s'_{0}), \id_{Z_{0}}) \circ \mu_{R_{[\mathrm{SOA}]}, h_{0} \circ g_{0}}  = (s_{0} \circ \trans(s_{0},s'_{0}), \id_{Z_{0}}) \circ R_{[\mathrm{SOA}]}(s_{0} \circ \trans(s_{0},s'_{0}), \id_{Z_{0}})  $$ which means proving: $$s_{0} \circ \trans(s_{0},s'_{0}) \circ \mu_{h_{0} \circ g_{0}} = s_{0} \circ \trans(s_{0},s'_{0}) \circ [\mathrm{SOA}](s_{0} \circ \trans(s_{0},s'_{0}), \id_{Z_{0}})$$ 

 We wish the construct a functor $\trans_{\mu}(s_{0},s'_{0}): \tilde{W}_{\omega_{h_{0} \circ g_{0}}} \to \tilde{W}_{\omega_{g_{0}}}$ so that $$\trans(s_{0},s'_{0}) \circ \mu_{h_{0} \circ g_{0}} = \mu_{g_{0}} \circ \trans_{\mu}(s_{0},s'_{0})$$ and $$[\mathrm{SOA}](s_{0},\id_{Y_{0}}) \circ \trans_{\mu}(s_{0},s'_{0}) = \trans(s_{0},s'_{0}) \circ [\mathrm{SOA}](s_{0} \circ \trans(s_{0},s'_{0}) , \id_{Z_{0}})$$

We also wish to have the following diagram commute so that $\trans_{\mu}(s_{0},s'_{0})$  is well defined:

\[\begin{tikzcd}
	{\tilde{W}_{\omega_{h_{0} \circ g_{0}}}} &&& {\tilde{W}_{\omega_{g_{0}}}} \\
	\\
	\\
	{\tilde{W}_{h_{0}}} &&& {Y_{0}}
	\arrow["{\trans_{\mu}(s_{0},s'_{0})}", from=1-1, to=1-4]
	\arrow["{{{[\mathrm{SOA}](s'_{0} \circ [\mathrm{SOA}](g_{0},\id_{Z_{0}}),\id_{Z_{0}})}}}"', from=1-1, to=4-1]
	\arrow["{{{\omega_{\omega_{g_{0}}}}}}", from=1-4, to=4-4]
	\arrow["{{s'_{0}}}"', from=4-1, to=4-4]
\end{tikzcd}\]

First for $w \in \tilde{W}_{h_{0} \circ g_{0}}$, $\trans_{\mu}(s_{0},s'_{0})(\overline{\omega_{h_{0} \circ g_{0}}}(w)) = \overline{\omega_{ g_{0}}}(\trans(s_{0},s'_{0})(w))$. We have

\begin{align*}
\omega_{\omega_{g_{0}}} \circ \trans_{\mu}(s_{0},s'_{0})(\overline{\omega_{h_{0} \circ g_{0}}}(w)) &= \omega_{\omega_{g_{0}}} \circ \overline{\omega_{ g_{0}}}(\trans(s_{0},s'_{0})(w))\\
&= \omega_{g_{0}}(\trans(s_{0},s'_{0})(w))\\
&= s'_{0} \circ [\mathrm{SOA}](g_{0},\id_{Z_{0}})(w)\\
&= s'_{0} \circ [\mathrm{SOA}](g_{0},\id_{Z_{0}})\circ \mu_{h_{0} \circ g_{0}} \circ \overline{\omega_{h_{0} \circ g_{0}}}(w)\\
&= s'_{0} \circ \mu_{h_{0}}  \circ [\mathrm{SOA}]([\mathrm{SOA}](g_{0},\id_{Z_{0}}),\id_{Z_{0}}) \circ \overline{\omega_{h_{0} \circ g_{0}}}(w)\\
&= s'_{0} \circ [\mathrm{SOA}](s'_{0},\id_{Z_{0}})  \circ [\mathrm{SOA}]([\mathrm{SOA}](g_{0},\id_{Z_{0}}),\id_{Z_{0}}) \circ \overline{\omega_{h_{0} \circ g_{0}}}(w)\\
&= s'_{0} \circ  [\mathrm{SOA}](s'_{0} \circ [\mathrm{SOA}](g_{0},\id_{Z_{0}}),\id_{Z_{0}}) \circ \overline{\omega_{h_{0} \circ g_{0}}}(w)\\
\end{align*}

Given $\alpha_{\omega_{h_{0} \circ g_{0}}}((v_{i},s_{i}:V_{i} \to Z_{0}),c_{i}: U_{i} \to \tilde{W}_{\omega_{h_{0} \circ g_{0}}})$ we take note of the following commutative diagram:

\[\begin{tikzcd}
	{U_{i}} &&& {\tilde{W}_{\omega_{h_{0} \circ g_{0}}}} &&&& {\tilde{W}_{\omega_{h_{0}}}} &&& {\tilde{W}_{h_{0}}} &&& {Y_{0}} \\
	\\
	\\
	{V_{i}} &&& {Z_{0}} &&&& {Z_{0}} &&& {Z_{0}} &&& {Z_{0}}
	\arrow["{c_{i}}", from=1-1, to=1-4]
	\arrow["{J_{i}}"', from=1-1, to=4-1]
	\arrow["{[\mathrm{SOA}]([\mathrm{SOA}](g_{0},\id_{Z_{0}}),\id_{Z_{0}})}", from=1-4, to=1-8]
	\arrow["{\omega_{\omega_{h_{0} \circ g_{0}}}}"{description}, from=1-4, to=4-4]
	\arrow["{[\mathrm{SOA}](s'_{0},\id_{Z_{0}})}", from=1-8, to=1-11]
	\arrow["{\omega_{\omega_{h_{0}}}}"{description}, from=1-8, to=4-8]
	\arrow["{s'_{0}}", from=1-11, to=1-14]
	\arrow["{\omega_{h_{0}}}"{description}, from=1-11, to=4-11]
	\arrow["{h_{0}}", from=1-14, to=4-14]
	\arrow["{s_{i}}"', from=4-1, to=4-4]
	\arrow["{\id_{Z_{0}}}"', from=4-4, to=4-8]
	\arrow["{\id_{Z_{0}}}"', from=4-8, to=4-11]
	\arrow["{\id_{Z_{0}}}"', from=4-11, to=4-14]
\end{tikzcd}\]

where $[\mathrm{SOA}]([\mathrm{SOA}](g_{0},\id_{Z_{0}}),\id_{Z_{0}}))$ is the result of applying $R_{[\mathrm{SOA}]}$ to the diagram:

\[\begin{tikzcd}
	{\tilde{W}_{h_{0} \circ g_{0}}} &&& {\tilde{W}_{h_{0}}} \\
	\\
	\\
	{Z_{0}} &&& {Z_{0}}
	\arrow["{{{[\mathrm{SOA}](g_{0},\id_{Z_{0}})}}}", from=1-1, to=1-4]
	\arrow["{{{{{\omega_{h_{0} \circ g_{0}}}}}}}"', from=1-1, to=4-1]
	\arrow["{{{\omega_{h_{0}}}}}", from=1-4, to=4-4]
	\arrow["{{{{{\id_{Z_{0}}}}}}}"', from=4-1, to=4-4]
\end{tikzcd}\]

We have a lift $$\lambda(J,\omega_{h_{0}})([\mathrm{SOA}](s'_{0},\id_{Z_{0}}) \circ [\mathrm{SOA}]( [\mathrm{SOA}](g_{0},\id_{Z_{0}}),\id_{Z_{0}}) \circ c_{i},s_{i}): V_{i} \to \tilde{W}_{h_{0}}$$

and thus a functor:
$$l_{c_{i},s_{i}} = s'_{0} \circ \lambda(J,\omega_{h_{0}})([\mathrm{SOA}](s'_{0},\id_{Z_{0}}) \circ [\mathrm{SOA}]( [\mathrm{SOA}](g_{0},\id_{Z_{0}}),\id_{Z_{0}}) \circ c_{i},s_{i}): V_{i} \to Y_{0}$$

Assuming by induction $$\omega_{\omega_{g_{0}}} \circ \trans_{\mu}(s_{0},s'_{0}) \circ c_{i} =  s'_{0} \circ [\mathrm{SOA}](s'_{0} \circ [\mathrm{SOA}](g_{0},\id_{Z_{0}}),\id_{Z_{0}}) \circ c_{i} = l_{c_{i}, s_{i}} \circ J$$ we define:

$$\trans_{\mu}(s_{0},s'_{0})(\alpha_{\omega_{h_{0} \circ g_{0}}}((v_{i},s_{i}),c_{i})) = \alpha_{\omega_{g_{0}}}((v_{i}, l_{c_{i},s_{i}}),\trans_{\mu}(s_{0},s'_{0})\circ c_{i} )$$

And for a morphism $\omega_{h_{0} \circ g_{0}}((m_{k},\sigma_{k}),\gamma_{k})$, we define 

$$\trans_{\mu}(s_{0},s'_{0})(\alpha_{\omega_{h_{0} \circ g_{0}}}((m_{k},\sigma_{k}),\gamma_{k})) = \alpha_{\omega_{g_{0}}}((m_{k}, l_{\gamma_{k},\sigma_{k}}),\trans_{\mu}(s_{0},s'_{0})\cdot \gamma_{k} )$$ where

$$l_{\gamma_{k},\sigma_{k}} = s'_{0} \cdot \lambda(J,\omega_{h_{0}})(([\mathrm{SOA}](s'_{0},\id_{Z_{0}}) \circ [\mathrm{SOA}]( [\mathrm{SOA}](g_{0},\id_{Z_{0}}),\id_{Z_{0}})) \cdot \gamma_{k},\sigma_{k})$$

Showing that $\trans_{\mu}(s_{0},s'_{0})$ is a well defined functor is similar to the proof that $\trans(s_{0},s'_{0})$ is well defined. First we show that $\trans(s_{0},s'_{0}) \circ \mu_{h_{0} \circ g_{0}} = \mu_{g_{0}} \circ \trans_{\mu}(s_{0},s'_{0})$. For $w \in \tilde{W}_{h_{0} \circ g_{0}}$ we have 

\begin{align*}
\mu_{g_{0}} \circ \trans_{\mu}(s_{0},s'_{0})(\overline{\omega_{h_{0} \circ g_{0}}}(w)) &= \mu_{g_{0}} \circ \overline{\omega_{ g_{0}}}(\trans(s_{0},s'_{0})(w))\\
&= \trans(s_{0},s'_{0})(w)\\
&= \trans(s_{0},s'_{0})\circ \mu_{h_{0} \circ g_{0}}(\overline{\omega_{h_{0} \circ g_{0}}}(w)) \\
\end{align*}

Given $\alpha_{\omega_{h_{0} \circ g_{0}}}((v_{i},s_{i}),c_{i})$, assuming $\trans(s_{0},s'_{0}) \circ \mu_{h_{0} \circ g_{0}} \circ c_{i} = \mu_{g_{0}} \circ \trans_{\mu}(s_{0},s'_{0}) \circ c_{i}$

\begin{align*}
\mu_{g_{0}} \circ \trans_{\mu}(s_{0},s'_{0})(\alpha_{\omega_{h_{0} \circ g_{0}}}((v_{i},s_{i}),c_{i})) &= \mu_{g_{0}}(\alpha_{\omega_{g_{0}}}((v_{i}, l_{c_{i},s_{i}}),\trans_{\mu}(s_{0},s'_{0})\circ c_{i} )) \\
&= \alpha_{\omega_{\omega_{g_{0}}}}((v_{i}, l_{c_{i},s_{i}}),\mu_{g_{0}} \circ \trans_{\mu}(s_{0},s'_{0})\circ c_{i} )\\
&= \alpha_{\omega_{\omega_{g_{0}}}}((v_{i}, l_{c_{i},s_{i}}),\trans(s_{0},s'_{0}) \circ \mu_{h_{0} \circ g_{0}} \circ c_{i})\\
\end{align*}

We can break down $l_{c_{i},s_{i}}$ using the fact that $\lambda(J,\omega_{-})$ is a natural transformation as follows:\\
\begin{align*}
l_{c_{i},s_{i}} &= s'_{0} \circ \lambda(J,\omega_{h_{0}})( [\mathrm{SOA}](s'_{0},\id_{Z_{0}}) \circ  [\mathrm{SOA}]( [\mathrm{SOA}](g_{0},\id_{Z_{0}}),\id_{Z_{0}}) \circ c_{i},s_{i})\\
&= s'_{0} \circ [\mathrm{SOA}](s'_{0},\id_{Z_{0}}) \circ  \lambda(J,\omega_{\omega_{h_{0}}})([\mathrm{SOA}]( [\mathrm{SOA}](g_{0},\id_{Z_{0}}),\id_{Z_{0}}) \circ c_{i},s_{i})\\
&= s'_{0} \circ \mu_{h_{0}} \circ  \lambda(J,\omega_{\omega_{h_{0}}})([\mathrm{SOA}]( [\mathrm{SOA}](g_{0},\id_{Z_{0}}),\id_{Z_{0}}) \circ c_{i},s_{i})\\
&= s'_{0} \circ \lambda(J,\omega_{h_{0}})(\mu_{h_{0}} \circ  [\mathrm{SOA}]( [\mathrm{SOA}](g_{0},\id_{Z_{0}}),\id_{Z_{0}}) \circ c_{i},s_{i})\\
&= s'_{0} \circ \lambda(J,\omega_{h_{0}})([\mathrm{SOA}](g_{0},\id_{Z_{0}}) \circ \mu_{h_{0} \circ g_{0}} \circ c_{i},s_{i})\\
\end{align*}

Thus we continue the chain of equalities:
\begin{align*}
\mu_{g_{0}} \circ \trans_{\mu}(s_{0},s'_{0})(\alpha_{\omega_{h_{0} \circ g_{0}}}((v_{i},s_{i}),c_{i})) &= \alpha_{\omega_{\omega_{g_{0}}}}((v_{i}, l_{c_{i},s_{i}}),\trans(s_{0},s'_{0}) \circ \mu_{h_{0} \circ g_{0}} \circ c_{i})\\
&= \trans(s_{0},s'_{0}) (\alpha_{\omega_{\omega_{h_{0} \circ g_{0}}}}((v_{i}, s_{i}), \mu_{h_{0} \circ g_{0}} \circ c_{i}))\\
&= \trans(s_{0},s'_{0}) \circ  \mu_{h_{0} \circ g_{0}}(\alpha_{\omega_{h_{0} \circ g_{0}}}((v_{i},s_{i}),c_{i}))
\end{align*}
Of course we can show a similar thing for morphisms. Thus $\trans(s_{0},s'_{0}) \circ \mu_{h_{0} \circ g_{0}} = \mu_{g_{0}} \circ \trans_{\mu}(s_{0},s'_{0})$. Now we show that $ [\mathrm{SOA}](s_{0},\id_{Y_{0}}) \circ \trans_{\mu}(s_{0},s'_{0}) =  \trans(s_{0},s'_{0}) \circ [\mathrm{SOA}](s_{0} \circ \trans(s_{0},s'_{0}) , \id_{Z_{0}})$. For $w \in \tilde{W}_{h_{0} \circ g_{0}}$ we have:

\begin{align*}
 \trans(s_{0},s'_{0}) \circ [\mathrm{SOA}](s_{0} \circ \trans(s_{0},s'_{0}) , \id_{Z_{0}})(\overline{\omega_{h_{0} \circ g_{0}}}(w)) &= \trans(s_{0},s'_{0}) \circ \overline{h_{0} \circ g_{0}} \circ s_{0} \circ \trans(s_{0},s'_{0})(w)\\
&= \overline{g_{0}} \circ s_{0} \circ \trans(s_{0},s'_{0})(w)\\
&=  [\mathrm{SOA}](s_{0},\id_{Y_{0}}) \circ \overline{\omega_{g_{0}}} \circ \trans(s_{0},s'_{0})(w)\\
&=  [\mathrm{SOA}](s_{0},\id_{Y_{0}}) \circ \trans_{\mu}(s_{0},s'_{0})(\overline{\omega_{h_{0} \circ g_{0}}}(w))\\
\end{align*}

Suppose we have $\alpha_{\omega_{h_{0} \circ g_{0}}}((v_{i},s_{i}),c_{i})$ and assume by induction $$[\mathrm{SOA}](s_{0},\id_{Y_{0}}) \circ \trans_{\mu}(s_{0},s'_{0}) \circ c_{i} =  \trans(s_{0},s'_{0}) \circ [\mathrm{SOA}](s_{0} \circ \trans(s_{0},s'_{0}) , \id_{Z_{0}}) \circ c_{i}$$ first we take note of the following chain of equalities:

\begin{align*}
g_{0} \circ s_{0} \circ \trans(s_{0},s'_{0}) &= \omega_{g_{0}} \circ \trans(s_{0},s'_{0}) \\
&= \omega_{\omega_{g_{0}}} \circ \overline{\omega_{g_{0}}} \circ \trans(s_{0},s'_{0}) \\
&= \omega_{\omega_{g_{0}}} \circ \trans_{\mu}(s_{0},s'_{0})\circ \overline{\omega_{h_{0} \circ g_{0}}}\\
&= s'_{0} \circ [\mathrm{SOA}](s'_{0} \circ [\mathrm{SOA}](g_{0},\id_{Z_{0}}),\id_{Z_{0}}) \circ \overline{\omega_{h_{0} \circ g_{0}}}\\
&= s'_{0} \circ \overline{h_{0}} \circ s'_{0} \circ [\mathrm{SOA}](g_{0},\id_{Z_{0}})\\
&= s'_{0} \circ [\mathrm{SOA}](g_{0},\id_{Z_{0}})\\
\end{align*}

Thus we have $$l_{c_{i},s_{i}} = s'_{0} \circ \lambda(J,\omega_{h_{0}})( [\mathrm{SOA}](g_{0},\id_{Z_{0}}) \circ  [\mathrm{SOA}]( s_{0} \circ \trans(s_{0},s'_{0}) ,\id_{Z_{0}}) \circ c_{i},s_{i})$$

Now we can evaluate the following chain of equalities:
\begin{align*}
[\mathrm{SOA}](s_{0},\id_{Y_{0}}) \circ \trans_{\mu}(s_{0},s'_{0})(\alpha_{\omega_{h_{0} \circ g_{0}}}(&(v_{i},s_{i}),c_{i}))  = \alpha_{ g_{0} }((v_{i},l_{c_{i}, s_{i}}),[\mathrm{SOA}](s_{0},\id_{Y_{0}}) \circ \trans_{\mu}(s_{0},s'_{0}) \circ c_{i})\\
&=  \alpha_{ g_{0} }((v_{i},l_{c_{i}, s_{i}}),\trans(s_{0},s'_{0}) \circ [\mathrm{SOA}](s_{0} \circ \trans(s_{0},s'_{0}) , \id_{Z_{0}}) \circ c_{i})\\
&= \trans(s_{0},s'_{0}) \circ [\mathrm{SOA}](s_{0} \circ \trans(s_{0},s'_{0}) , \id_{Z_{0}})(\alpha_{\omega_{h_{0} \circ g_{0}}}((v_{i},s_{i}),c_{i}))\\
\end{align*}
Of course the argument is similar for morphisms. Thus $$[\mathrm{SOA}](s_{0},\id_{Y_{0}}) \circ \trans_{\mu}(s_{0},s'_{0})  =  \trans(s_{0},s'_{0}) \circ [\mathrm{SOA}](s_{0} \circ \trans(s_{0},s'_{0}) , \id_{Z_{0}})$$

Finally we have:
\begin{align*}
s_{0} \circ \trans(s_{0},s'_{0}) \circ \mu_{h_{0} \circ g_{0}} &= s_{0} \circ \mu_{g_{0}} \circ \trans_{\mu}(s_{0},s'_{0})\\
&= s_{0} \circ [\mathrm{SOA}](s_{0},\id_{Y_{0}})  \circ \trans_{\mu}(s_{0},s'_{0})\\
&= s_{0} \circ \trans(s_{0},s'_{0}) \circ [\mathrm{SOA}](s_{0} \circ \trans(s_{0},s'_{0}) , \id_{Z_{0}})\\
\end{align*}

Therefore, $(h_{0} \circ g_{0}, s_{0} \circ \trans(s_{0},s'_{0}))$ is an $R_{[\mathrm{SOA}]}$ algebra.

\end{proof}

\begin{lem}

If we have morphisms of $R_{[\mathrm{SOA}]}$ algebras: $(t,t'):(g_{0}, s_{0}) \to (g_{1},s_{1})$ and $(t',t''):(h_{0}, s'_{0}) \to (h_{1},s'_{1})$, then $(t,t''):(h_{0} \circ g_{0}, s_{0} \circ \trans(s_{0},s'_{0}) ) \to (h_{1} \circ g_{1}, s_{1} \circ \trans(s_{1},s'_{1}) )$ is an $R_{[\mathrm{SOA}]}$ algebra morphism. This shows that $R_{[\mathrm{SOA}]}$ algebras have a composition structure.

\end{lem}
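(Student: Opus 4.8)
The plan is to reduce the statement to a single naturality identity for the transfer functor $\trans$, and then to verify that identity by induction on the $W$-type with reductions $\tilde{W}_{h_{0} \circ g_{0}}$, exactly as in the constructions of $\trans$ and $\trans_{\mu}$. First I would record what must be checked: by the definition of an $R_{[\mathrm{SOA}]}$ algebra morphism, showing that $(t,t'')$ is a morphism $(h_{0} \circ g_{0}, s_{0} \circ \trans(s_{0},s'_{0})) \to (h_{1} \circ g_{1}, s_{1} \circ \trans(s_{1},s'_{1}))$ amounts to the equation
\[
(s_{1} \circ \trans(s_{1},s'_{1})) \circ [\mathrm{SOA}](t,t'') = t \circ (s_{0} \circ \trans(s_{0},s'_{0})).
\]
I claim this follows formally from the naturality square
\[
\trans(s_{1},s'_{1}) \circ [\mathrm{SOA}](t,t'') = [\mathrm{SOA}](t,t') \circ \trans(s_{0},s'_{0}),
\]
both sides being functors $\tilde{W}_{h_{0} \circ g_{0}} \to \tilde{W}_{g_{1}}$. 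Indeed, granting this identity and then invoking the inner algebra-morphism equation $s_{1} \circ [\mathrm{SOA}](t,t') = t \circ s_{0}$ gives the desired equation via the chain $s_{1} \circ \trans(s_{1},s'_{1}) \circ [\mathrm{SOA}](t,t'') = s_{1} \circ [\mathrm{SOA}](t,t') \circ \trans(s_{0},s'_{0}) = t \circ s_{0} \circ \trans(s_{0},s'_{0})$.

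The core of the argument is the naturality square above. I would prove it by induction on $\tilde{W}_{h_{0} \circ g_{0}}$. On the image of $\overline{h_{0} \circ g_{0}}$ both sides send $\overline{h_{0} \circ g_{0}}(x)$ to $\overline{g_{1}}(t(x))$, using that $\trans(s_{0},s'_{0}) \circ \overline{h_{0} \circ g_{0}} = \overline{g_{0}}$ and that $[\mathrm{SOA}]$ is a functorial factorization. For an element $\alpha_{h_{0} \circ g_{0}}((v_{i},s_{i}),c_{i})$ the second components agree by the inductive hypothesis, so the content is confined to the first components, which reduce to the identity
\[
s'_{1} \circ \lambda(J,\omega_{h_{1}})([\mathrm{SOA}](g_{1},\id_{Z_{1}}) \circ [\mathrm{SOA}](t,t'') \circ c_{i}, t'' \circ s_{i}) = t' \circ s'_{0} \circ \lambda(J,\omega_{h_{0}})([\mathrm{SOA}](g_{0},\id_{Z_{0}}) \circ c_{i}, s_{i}).
\]

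The main obstacle, and the step I would spend the most care on, is this last identity about the lifts. The key observation is that the two squares $(t',t'') \circ (g_{0},\id_{Z_{0}})$ and $(g_{1},\id_{Z_{1}}) \circ (t,t'')$ of arrows $h_{0} \circ g_{0} \to h_{1}$ coincide, since $t' \circ g_{0} = g_{1} \circ t$ is precisely the commutativity of the square $(t,t')$; hence by functoriality of $[\mathrm{SOA}]$ we obtain $[\mathrm{SOA}](t',t'') \circ [\mathrm{SOA}](g_{0},\id_{Z_{0}}) = [\mathrm{SOA}](g_{1},\id_{Z_{1}}) \circ [\mathrm{SOA}](t,t'')$. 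Feeding this into the naturality of $\lambda(J,\omega_{-})$ from Lemma \ref{natlift} rewrites the left-hand lift as $s'_{1} \circ [\mathrm{SOA}](t',t'') \circ \lambda(J,\omega_{h_{0}})([\mathrm{SOA}](g_{0},\id_{Z_{0}}) \circ c_{i}, s_{i})$, and the algebra-morphism equation $s'_{1} \circ [\mathrm{SOA}](t',t'') = t' \circ s'_{0}$ for $(t',t'') \colon (h_{0},s'_{0}) \to (h_{1},s'_{1})$ then converts this into the right-hand side. The morphism case is handled by the same manipulations applied to generalized natural transformations in place of functors. Assembling this lemma with the preceding construction of the composite algebra $(h_{0} \circ g_{0}, s_{0} \circ \trans(s_{0},s'_{0}))$ yields the asserted composition structure on $R_{[\mathrm{SOA}]}$ algebras.
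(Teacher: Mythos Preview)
Your proposal is correct and follows essentially the same approach as the paper: reduce the algebra-morphism condition to the naturality square $\trans(s_{1},s'_{1}) \circ [\mathrm{SOA}](t,t'') = [\mathrm{SOA}](t,t') \circ \trans(s_{0},s'_{0})$ using $s_{1} \circ [\mathrm{SOA}](t,t') = t \circ s_{0}$, then verify that square by induction on $\tilde{W}_{h_{0}\circ g_{0}}$, invoking functoriality of $[\mathrm{SOA}]$ via $g_{1}\circ t = t'\circ g_{0}$, the naturality of $\lambda(J,\omega_{-})$ from Lemma~\ref{natlift}, and the algebra-morphism equation for $(t',t'')$. The paper carries out exactly this chain of equalities.
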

\begin{proof}

We need to prove that $s_{1} \circ \trans(s_{1},s'_{1}) \circ [\mathrm{SOA}](t,t'') = t \circ s_{0} \circ \trans(s_{0},s'_{0})$. Using the $R_{[\mathrm{SOA}]}$ algebra morphism structure we have $t \circ s_{0} \circ \trans(s_{0},s'_{0}) = s_{1} \circ [\mathrm{SOA}](t,t') \circ \trans(s_{0},s'_{0})$. So it suffices to show that $\trans(s_{1},s'_{1}) \circ [\mathrm{SOA}](t,t'') = [\mathrm{SOA}](t,t') \circ \trans(s_{0},s'_{0})$. For $x \in X_{0}$, we have:
\begin{align*}
\trans(s_{1},s'_{1}) \circ [\mathrm{SOA}](t,t'') \circ \overline{h_{0} \circ g_{0}}(x) &= \trans(s_{1},s'_{1}) \circ \overline{h_{1} \circ g_{1}} \circ t(x)\\
&= \overline{g_{1}} \circ t(x)\\
&= [\mathrm{SOA}](t,t') \circ \overline{g_{0}}(x)\\
&= [\mathrm{SOA}](t,t') \circ \trans(s_{0},s'_{0}) \circ \overline{h_{0} \circ g_{0}}(x) 
\end{align*}

It suffices to check $\alpha_{h_{0} \circ g_{0}}((v_{i},s_{i}: V_{i} \to Z_{0}),c_{i}: U_{i} \to \tilde{W}_{h_{0} \circ g_{0}})$ assuming $\trans(s_{1},s'_{1}) \circ [\mathrm{SOA}](t,t'') \circ c_{i}= [\mathrm{SOA}](t,t') \circ \trans(s_{0},s'_{0}) \circ c_{i}$ by induction since the morphism case is similar.
\begin{align*}
\trans&(s_{1},s'_{1}) \circ [\mathrm{SOA}](t,t'')(\alpha_{h_{0} \circ g_{0}}((v_{i},s_{i}),c_{i})) = \trans(s_{1},s'_{1})(\alpha_{h_{1} \circ g_{1}}((v_{i},t'' \circ s_{i}),[\mathrm{SOA}](t,t'') \circ c_{i}))\\
&= \alpha_{g_{1}}((v_{i}, s'_{1} \circ \lambda(J,\omega_{h_{1}})([\mathrm{SOA}](g_{1} ,\id_{Z_{1}}) \circ [\mathrm{SOA}]( t,t'') \circ c_{i},t'' \circ s_{i}) ), \trans(s_{1},s'_{1}) \circ [\mathrm{SOA}](t,t'') \circ c_{i})\\
&= \alpha_{g_{1}}((v_{i}, s'_{1} \circ \lambda(J,\omega_{h_{1}})([\mathrm{SOA}](g_{1} \circ t,t'') \circ c_{i},t'' \circ s_{i}) ), \trans(s_{1},s'_{1}) \circ [\mathrm{SOA}](t,t'') \circ c_{i})\\
&= \alpha_{g_{1}}((v_{i}, s'_{1} \circ \lambda(J,\omega_{h_{1}})([\mathrm{SOA}](g_{1} \circ t,t'') \circ c_{i},t'' \circ s_{i}) ), [\mathrm{SOA}](t,t') \circ \trans(s_{0},s'_{0})  \circ c_{i})\\
&= \alpha_{g_{1}}((v_{i}, s'_{1} \circ \lambda(J,\omega_{h_{1}})([\mathrm{SOA}](t' \circ g_{0} ,t'') \circ c_{i},t'' \circ s_{i}) ), [\mathrm{SOA}](t,t') \circ \trans(s_{0},s'_{0})  \circ c_{i})\\
&= \alpha_{g_{1}}((v_{i}, s'_{1} \circ \lambda(J,\omega_{h_{1}})([\mathrm{SOA}](t' ,t'') \circ [\mathrm{SOA}]( g_{0} ,\id_{Z_{0}}) \circ c_{i},t'' \circ s_{i}) ), [\mathrm{SOA}](t,t') \circ \trans(s_{0},s'_{0})  \circ c_{i})\\
&= \alpha_{g_{1}}((v_{i}, s'_{1} \circ [\mathrm{SOA}](t' ,t'') \circ \lambda(J,\omega_{h_{0}})([\mathrm{SOA}](g_{0} ,\id_{Z_{0}}) \circ c_{i},s_{i}) ), [\mathrm{SOA}](t,t') \circ \trans(s_{0},s'_{0})  \circ c_{i})\\
&= \alpha_{g_{1}}((v_{i}, t' \circ s'_{0} \circ \lambda(J,\omega_{h_{0}})([\mathrm{SOA}](g_{0} ,\id_{Z_{0}}) \circ c_{i},s_{i}) ), [\mathrm{SOA}](t,t') \circ \trans(s_{0},s'_{0})  \circ c_{i})\\
&= [\mathrm{SOA}](t,t')(\alpha_{g_{0}}((v_{i}, s'_{0} \circ \lambda(J,\omega_{h_{0}})([\mathrm{SOA}](g_{0} ,\id_{Z_{0}}) \circ c_{i},s_{i}) ), \trans(s_{0},s'_{0})  \circ c_{i}))\\
&= [\mathrm{SOA}](t,t') \circ \trans(s_{0},s'_{0})(\alpha_{h_{0} \circ g_{0}}((v_{i},s_{i}),c_{i}))
\end{align*}

Therefore $\trans(s_{1},s'_{1}) \circ [\mathrm{SOA}](t,t'') = [\mathrm{SOA}](t,t') \circ \trans(s_{0},s'_{0})$ making $(t,t''):(h_{0} \circ g_{0}, s_{0} \circ \trans(s_{0},s'_{0}) ) \to (h_{1} \circ g_{1}, s_{1} \circ \trans(s_{1},s'_{1}) )$ is an $R_{[\mathrm{SOA}]}$ algebra morphism.

\end{proof}

\begin{cor}
\label{soa}

$([\mathrm{SOA}], R_{[\mathrm{SOA}]}, L_{[\mathrm{SOA}]})$ is an algebraic weak factorization system whose $R_{[\mathrm{SOA}]}$-maps are precisely those morphisms that satisfies the lifting property of Lemma \ref{liftI}.

\end{cor}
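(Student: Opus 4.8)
The plan is to follow exactly the strategy of Corollary \ref{awfsgrpd}. We have already assembled the three structural ingredients needed to invoke the recognition theorem for algebraic weak factorization systems: the functor $[\mathrm{SOA}]$ is a functorial factorization, the induced functor $R_{[\mathrm{SOA}]} = \Bot \circ [\mathrm{SOA}]$ is a monad whose unit $\eta_{R_{[\mathrm{SOA}]},g_0} = (\overline{g}_0,\id)$ lies over the codomain projection $\Bot$, and the $R_{[\mathrm{SOA}]}$-algebras carry a composition structure compatible with algebra morphisms (the composition structure established above). First I would record that $L_{[\mathrm{SOA}]} = \Top \circ [\mathrm{SOA}]$ is dually a comonad over $\Top$, which is immediate from the functoriality of $[\mathrm{SOA}]$ and the shape of $\eta_{R_{[\mathrm{SOA}]}}$. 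With these in hand, Theorem 2.24 of \cite{Riehl2011} produces the canonical distributive law of $L_{[\mathrm{SOA}]}$ over $R_{[\mathrm{SOA}]}$ and hence exhibits $([\mathrm{SOA}], R_{[\mathrm{SOA}]}, L_{[\mathrm{SOA}]})$ as an algebraic weak factorization system, precisely as in Corollary \ref{awfsgrpd}.

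For the characterization of the right class, recall that the $R_{[\mathrm{SOA}]}$-maps are by definition the functors admitting an $R_{[\mathrm{SOA}]}$-algebra structure. Such a structure on $g_0 : X_0 \to Y_0$ is a square $(s_0, \id_{Y_0}) : \omega_{g_0} \to g_0$ satisfying the unit and multiplication laws, and the unit law $s_0 \circ \overline{g}_0 = \id_{X_0}$ is exactly the data of the retract diagram appearing in the preceding lemma. I would therefore invoke that lemma, which already establishes that such a retract diagram exists if and only if $g_0$ satisfies the lifting property of Lemma \ref{liftI}; reading its construction of $l$ from a section $\lambda(J,g_0)$ of $J \pupw g_0$ as producing the candidate algebra structure $s_0 := l$ gives the equivalence in both directions.

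The main obstacle is the coherence gap between \emph{having} a retract/lifting and \emph{being} a genuine monad algebra: the preceding lemma only guarantees the unit law, whereas an $R_{[\mathrm{SOA}]}$-algebra must also satisfy $(s_0,\id_{Y_0}) \circ \mu_{R_{[\mathrm{SOA}]},g_0} = (s_0,\id_{Y_0}) \circ R_{[\mathrm{SOA}]}(s_0,\id_{Y_0})$. The key point I would exploit is that $R_{[\mathrm{SOA}]}$ arises as the algebraically free monad on the pointed endofunctor underlying $[\mathrm{SOA}]$, so that a section of $J \pupw \omega_{g_0}$ (equivalently of $J \pupw g_0$) built uniformly from the canonical $W$-type-with-reductions algebra structure $\alpha_{g_0}$ on each $\omega_{g_0}$ automatically satisfies the multiplication law. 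This is exactly where the normalization clause of Lemma \ref{liftI} and the inductive definition of $\tilde{W}_{g_0}$ do the work, since they force the lift to be assembled from $\alpha_{g_0}$ in the unique way compatible with $\mu_{g_0}$; thus the a priori incoherent lifting data is canonically promoted to a coherent algebra structure. I would present this promotion step carefully and otherwise defer the monad-algebra bookkeeping to Theorem 2.24 of \cite{Riehl2011}, concluding that the $R_{[\mathrm{SOA}]}$-maps coincide with the morphisms satisfying the lifting property of Lemma \ref{liftI}.
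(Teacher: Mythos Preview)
Your approach is correct and matches the paper's: the corollary is stated without proof, and is meant to follow the pattern of Corollary \ref{awfsgrpd} by invoking Theorem 2.24 of \cite{Riehl2011} once the monad structure and composition structure on $R_{[\mathrm{SOA}]}$-algebras have been established, with the characterization of the right class read off from the retract lemma immediately preceding the corollary. Your additional discussion of the coherence gap between a bare retract and a full monad algebra is more careful than the paper itself; the paper simply identifies ``$R_{[\mathrm{SOA}]}$-maps'' with the right class of the underlying weak factorization system (as it did for $R_{\Fib}$-morphisms in Corollary \ref{awfsgrpd}), so the retract lemma suffices directly without needing to promote the retract to a monad algebra.
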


\subsection{Model Structure on $\GrpA{A}$}

Now we will construct the cofibrant algebraic weak factorization system. We define a morphism $J : U \to V$ on fibrations $U$ and $V$ over the assembly $\{0,1,2\}$ as follows: 

\[\begin{tikzcd}
	{U_{0} = 0} &&&& {V_{0} = 1} \\
	{U_{1} = 1} &&&& {V_{1} = \I} \\
	{U_{2} = 1 + 1} &&&& {V_{2} = \I}
	\arrow["{J_{0}}", from=1-1, to=1-5]
	\arrow["{J_{1} = \htpy{0}}", from=2-1, to=2-5]
	\arrow["{J_{2} = \htpy{0} +\htpy{1}}", from=3-1, to=3-5]
\end{tikzcd}\]

Note that any assembly can be treated as the groupoid assembly with only the identity morphisms. Any groupoid over an assembly is a fibration since assemblies only have identity morphisms. We also define a fibration $N$ over $\{0,1,2\}$ where $N_{0} = N_{2} = 0$ and $N_{1} = 1$. This defines a weak factorization system $([\mathrm{Cof}], R_{[\mathrm{Cof}]}, L_{[\mathrm{Cof}]})$ where any $R_{[\mathrm{Cof}]}$ map $g:X \to Y$ is precisely the morphism with the following lift functors:

\[\begin{tikzcd}
	{\Sq(J_{0}, g)} &&&& {1 \to X} \\
	{\Sq(J_{1}, g)} &&&& {\I \to X} \\
	{\Sq(J_{2}, g)} &&&& {\I \to X}
	\arrow["{\lambda(J_{0},g})", from=1-1, to=1-5]
	\arrow["{\lambda(J_{1},g})", from=2-1, to=2-5]
	\arrow["{\lambda(J_{2},g})", from=3-1, to=3-5]
\end{tikzcd}\]

where we treat each $\Sq(J_{n}, g)$ as the groupoid assemblies of square from $J_{n}$ to $g$. This leads us to the following lemma

\begin{lem}

$g$ is an $R_{[\mathrm{Cof}]}$ map if and only if $g$ is a fibration and a groupoid equivalence.

\end{lem}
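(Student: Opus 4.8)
The plan is to prove the two implications separately, after first translating the three lifting properties from the statement into concrete structural conditions on $g\colon X \to Y$. The lift $\lambda(J_{1},g)$ against $J_{1}=\htpy{0}\colon 1 \to \I$, together with the normalization recorded by $N_{1}=1$, says precisely that $g$ has the right lifting property against $\htpy{0}$ and that $\lift(\htpy{0},g)$ carries $(\nm{x},\nm{\id_{g(x)}})$ to $\nm{\id_{x}}$; this is verbatim the definition of a normal isofibration. The lift $\lambda(J_{0},g)$ against $J_{0}\colon 0 \to 1$ produces, as in Example \ref{section-lift}, a section of the object part of $g$, i.e. surjectivity on objects. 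Finally $\lambda(J_{2},g)$ against $J_{2}=\htpy{0}+\htpy{1}\colon 1+1 \to \I$ says that for any objects $x_{0},x_{1}$ and any isomorphism $p\colon g(x_{0}) \to g(x_{1})$ there is $q\colon x_{0} \to x_{1}$ with $g(q)=p$ (iso-fullness), and since $N_{2}=0$ no normalization is imposed there. Thus the fibration half of the statement is immediate from $\lambda(J_{1},g)$, and everything reduces to matching the remaining lifting data with the notion of a groupoid equivalence.

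For the forward direction I would use these three pieces of data to assemble a homotopy inverse. I would take $g^{\ast}\colon Y \to X$ to be the object-section supplied by $\lambda(J_{0},g)$ on objects, and for $p\colon y \to y'$ define $g^{\ast}(p)$ by feeding the square $(\nm{g^{\ast}y},\nm{p})$ through $\lambda(J_{2},g)$; by Lemma \ref{fiberpath} this amounts to transporting fibres along isomorphisms via $\lift(\htpy{0},g)$. The isomorphism $\beta\colon g\circ g^{\ast}\To \id_{Y}$ is then immediate since $g(g^{\ast}p)=p$, and $\alpha\colon g^{\ast}\circ g \To \id_{X}$ is obtained fibrewise using the normal isofibration structure. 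The delicate point — and the step I expect to be the main obstacle — is verifying that $g^{\ast}$ is genuinely functorial: the equalities $g\big(g^{\ast}(p'\circ p)\big)=g\big(g^{\ast}p'\circ g^{\ast}p\big)$ hold by construction, but promoting them to honest equalities $g^{\ast}(p'\circ p)=g^{\ast}p'\circ g^{\ast}p$ requires extracting faithfulness of $g$, rather than merely iso-fullness, from the lifting data. I would attempt this by exploiting the coherence of the lifts furnished by Lemma \ref{liftI} — the lifts are components of a single section over $I$ and so respect the groupoid operations — and using this to force parallel lifts with equal $g$-image to coincide; this is the part of the argument I would expect to be the most subtle, since bare iso-fullness does not by itself yield faithfulness.

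For the converse, suppose $g$ is a normal isofibration and a groupoid equivalence. By Lemma \ref{deformr}, $g$ is then a strong deformation retract: it admits a section $s=\lift(!,g)$ and a natural isomorphism $\alpha(!,g)\colon s\circ g \To \id_{X}$ with $g\cdot\alpha(!,g)=\id_{g}$. From this I would build all three required lifts: $\lambda(J_{0},g)$ is $s$ on objects; $\lambda(J_{1},g)$ is the normal isofibration lift $\lift(\htpy{0},g)$, whose normalization clause is part of the hypothesis; and $\lambda(J_{2},g)$ comes from combining the iso-fullness implied by $g$ being fully faithful with the fibrewise transport of Lemma \ref{fiberpath}, choosing $\lift(\htpy{0},g)$ so that the $J_{1}$-normalization holds and the $J_{0}$- and $J_{2}$-cases cohere over $I$. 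One must then check that these choices satisfy the naturality demanded by the $R_{[\mathrm{Cof}]}$-algebra structure; this is routine once the underlying lifts are in place and mirrors the verification of Lemma \ref{modstr1}, where normal isofibrations were characterized as exactly the maps with the right lifting property against strong deformation sections.
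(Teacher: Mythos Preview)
Your converse direction is essentially the paper's argument: invoke Lemma~\ref{deformr} to get the strong deformation retract data $(g^{*},\alpha)$ with $g\circ g^{*}=\id_{Y}$ and $g\cdot\alpha=\id$, then set $\lambda(J_{0},g)=g^{*}$, set $\lambda(J_{1},g)=\lift(\htpy{0},g)$ (normality is built in), and for $\lambda(J_{2},g)$ send a square $(\{x,x'\},\nm{p})$ to $\alpha_{x'}\circ g^{*}(p)\circ\alpha_{x}^{-1}$. That is exactly what the paper does.

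For the forward direction you are on the right track but you make your life considerably harder than necessary, and the ``delicate point'' you flag is an artifact of this. The key thing you are underusing is that the lifts of Lemma~\ref{liftI} are \emph{functors} on the groupoids $\Sq(J_{i},g)$, not merely assembly-level sections. In particular $\Sq(J_{0},g)=(0\to X)\times_{(0\to Y)}(1\to Y)\cong Y$ and $(1\to X)\cong X$ as \emph{groupoid assemblies}, so $\lambda(J_{0},g)\colon Y\to X$ is already a functor satisfying $g\circ\lambda(J_{0},g)=\id_{Y}$. There is no need to build the morphism part of $g^{*}$ by hand from $\lambda(J_{2},g)$, and hence no need to extract faithfulness to check functoriality. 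The paper then uses $\lambda(J_{2},g)$ only to produce the natural isomorphism $\alpha\colon \id_{X}\cong\lambda(J_{0},g)\circ g$: for $x\in\ob X$ one feeds in the square $(\{x,\lambda(J_{0},g)(g(x))\},\nm{\id_{g(x)}})$ to obtain $\alpha_{x}$, and naturality in $\xi\colon x\to x'$ comes directly from functoriality of $\lambda(J_{2},g)$ applied to the morphism of squares $\big(\{\xi,\lambda(J_{0},g)(g(\xi))\},\,(g(\xi),g(\xi))\colon\id_{g(x)}\to\id_{g(x')}\big)$ in $\Sq(J_{2},g)$. Your appeal to ``coherence of the lifts furnished by Lemma~\ref{liftI}'' is pointing at exactly this functoriality; once you use it for $\lambda(J_{0},g)$ rather than only for $\lambda(J_{2},g)$, the obstacle disappears.
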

\begin{proof}

We start with $g$ as a $R_{[\mathrm{Cof}]}$ map. Note that the lift $\lambda(J_{1},g)$ exhibits $g$ as a normal isofibration since the the property that identity morphisms lift to identity morphism is induced by the fibration $N$. Now we need to show that $g$ is a groupoid equivalence. Unpacking the lift $\lambda(J_{0},g)$, we see that it induces a functor $\lambda(J_{0},g): Y \to X$ since $$\Sq(J_{0}, g) = (0 \to X) \times_{(0 \to Y)} (1 \to Y) \cong 1 \times_{1} Y  \cong Y$$ and $(1 \to X) \cong X$. Furthermore we have that $\id_{Y} = g \circ \lambda(J_{0},g)$ since the functor $J_{0} \pupw g$ is just post composition by $g$. Now we must exhibit $\lambda(J_{0},g) \circ g \cong \id_{X}$. Given $x \in \ob X$, we have $g(x) = g(\lambda(J_{0},g) \circ g(x))$, so using $\lambda(J_{2},g)$ we exhibit the following isomorphism in $X$:

\[\begin{tikzcd}
	x &&&&& {\lambda(J_{0},g) \circ g(x)}
	\arrow["{{\lambda(J_{2},g)(\{x,\lambda(J_{0},g) \circ g(x)\},\id_{g(x)})}}", from=1-1, to=1-6]
\end{tikzcd}\]

we will call the above morphism $\alpha_{x}$. Given $\xi:x \to x' \in \ob X$, we have that $g(\xi) = g(\lambda(J_{0},g) \circ g(\xi))$ so we exhibit the commutative square $\lambda(J_{2},g)(\{\xi,\lambda(J_{0},g) \circ g(\xi)\},(g(\xi), g(\xi)):\id_{g(x)} \to \id_{g(x')} )$ in $X$:

\[\begin{tikzcd}
	x &&& {x'} \\
	\\
	{\lambda(J_{0},g) \circ g(x)} &&& {\lambda(J_{0},g) \circ g(x')}
	\arrow["\xi", from=1-1, to=1-4]
	\arrow["{\alpha_{x}}"', from=1-1, to=3-1]
	\arrow["{\alpha_{x'}}", from=1-4, to=3-4]
	\arrow["{\lambda(J_{0},g) \circ g(\xi)}"', from=3-1, to=3-4]
\end{tikzcd}\]

This gives us a natural isomorphism $\alpha: \lambda(J_{0},g) \circ g \cong \id_{X}$. Therefore $g$ is a groupoid equivalence.\\

Suppose $g$ is a groupoid equivalence and a fibration. By Lemma \ref{deformr}, $g$ is a strong deformation retract. As a reminder of what that is it means that $g$ has as data $g^{*}:Y \to X$, $\beta: g \circ g^{*} \to \id_{Y}$, and $\alpha: g^{*} \circ g \to \id_{X}$ that exhibit $g$ as a groupoid equivalence in addition with the following facts:

\begin{itemize}

\item $\beta \cdot g  = g \cdot \alpha$

\item $g \circ g^{*} = \id_{Y}$

\item $\beta_{y} = \id_{y} $

\end{itemize}

We will use this data to construct $\lambda(J_{0},g)$ and $\lambda(J_{2},g)$. For $\lambda(J_{0},g)$, as shown in the proof of the converse, it suffices to exhibit a section of $g$ which is $g^{*}$. Given the following commutative square:

\[\begin{tikzcd}
	{1 + 1} &&& {X} \\
	\\
	\\
	{\I} &&& {Y}
	\arrow["{\{x, x' \}}", from=1-1, to=1-4]
	\arrow["{J_{2}}"', from=1-1, to=4-1]
	\arrow["{g}", from=1-4, to=4-4]
	\arrow["{\nm{\id_{p}}}"', from=4-1, to=4-4]
\end{tikzcd}\]

we construct a lift:

\[\begin{tikzcd}
	x &&&&& {x'}
	\arrow["{\alpha_{x'} \circ g^{*}(p) \circ \alpha^{-1}_{x}}", from=1-1, to=1-6]
\end{tikzcd}\]

Observe the following:

$$g(\alpha_{x'} \circ g^{*}(p) \circ \alpha^{-1}_{x}) = g(\alpha_{x'}) \circ g(g^{*}(p)) \circ g(\alpha^{-1}_{x})  = \beta_{g(x')} \circ p \circ \beta^{-1}_{g(x)} = p$$

Given a square in $Y$:

\[\begin{tikzcd}
	{g(x)} &&& {g(x')} \\
	\\
	\\
	{g(e)} &&& {g(e')}
	\arrow["{p}", from=1-1, to=1-4]
	\arrow["{g(m)}"', from=1-1, to=4-1]
	\arrow["{g(m')}", from=1-4, to=4-4]
	\arrow["{p'}"', from=4-1, to=4-4]\\
\end{tikzcd}\]

we have the following commutative diagram in $X$:

\[\begin{tikzcd}
	x &&& {g^{*} \circ g(x)} &&& {g^{*} \circ g(x')} &&& {x'} \\
	\\
	\\
	e &&& {g^{*} \circ g(e)} &&& {g^{*} \circ g(e')} &&& {e'}
	\arrow["{\alpha^{-1}_{x}}", from=1-1, to=1-4]
	\arrow["m"{description}, from=1-1, to=4-1]
	\arrow["{{g^{*}(p)}}", from=1-4, to=1-7]
	\arrow["{{g^{*} \circ g(m)}}"{description}, from=1-4, to=4-4]
	\arrow["{\alpha_{x}}", from=1-7, to=1-10]
	\arrow["{{g^{*} \circ g(m')}}"{description}, from=1-7, to=4-7]
	\arrow["{m'}"{description}, from=1-10, to=4-10]
	\arrow["{\alpha^{-1}_{e}}"', from=4-1, to=4-4]
	\arrow["{{g^{*}(p')}}"', from=4-4, to=4-7]
	\arrow["{\alpha_{e'}}"', from=4-7, to=4-10]\\
\end{tikzcd}\]

Thus our lift is functorial once you consider identities and composition. This data gives us the functor $\lambda(J_{2},g)$.\\

Now we construct $\lambda(J_{1},g)$. Since $g$ is a normal isofibration, we need only show that $\lift(\htpy{0},g)$ can be extended to a functor. Given a square in $Y$:

\[\begin{tikzcd}
	{g(x)} &&& {z} \\
	\\
	\\
	{g(e)} &&& {z'}
	\arrow["{p}", from=1-1, to=1-4]
	\arrow["{g(m)}"', from=1-1, to=4-1]
	\arrow["{q}", from=1-4, to=4-4]
	\arrow["{p'}"', from=4-1, to=4-4]\\
\end{tikzcd}\]

This lifts to a square in $X$ :

\[\begin{tikzcd}
	{x} &&& {\lift(p)(x)} \\
	\\
	\\
	{e} &&& {\lift(p')(e)}
	\arrow["{\lift(\htpy{0},g)(x,p)}", from=1-1, to=1-4]
	\arrow["{m}"', from=1-1, to=4-1]
	\arrow["{\lift(\htpy{0},g)(e,p')  \circ  m  \circ  \lift(\htpy{0},g)(x,p)^{-1}}", from=1-4, to=4-4]
	\arrow["{\lift(\htpy{0},g)(e,p')}"', from=4-1, to=4-4]\\
\end{tikzcd}\]

we have $$g(\lift(\htpy{0},g)(e,p')  \circ  m  \circ  \lift(\htpy{0},g)(x,p)^{-1}) = g(\lift(\htpy{0},g)(e,p'))  \circ  g(m)  \circ  g(\lift(\htpy{0},g)(x,p)^{-1}) = p' \circ g(m) \circ p^{-1} = q$$

Furthermore when $p$ and $p'$ are identity morphisms, then $$\lift(\htpy{0},g)(e,p')  \circ  m  \circ  \lift(\htpy{0},g)(x,p)^{-1} = m$$ 

Once again, this gives us the data for the functor $\lambda(J_{1},g)$. \\

Therefore the $R_{[\mathrm{Cof}]}$ map are precisely the functors that are groupoid equivalences and fibrations.

\end{proof}

We seek to characterize the $L_{[\mathrm{Cof}]}$ maps which will serve as the cofibrations of the model structure on $\GrpA{A}$. Fortunately, we know that the $L_{[\mathrm{Cof}]}$ maps are precisely those morphisms that have the left lifting property against the $R_{[\mathrm{Cof}]}$ maps. We eventually want to show that the cofibrations are precisely those morphism which can be expressed as a decidable monomorphism on objects followed by a strong deformation section. We already have that strong deformation sections are $L_{[\mathrm{Cof}]}$ maps since by Lemma \ref{modstr1}, the strong deformation sections have the left lifting property against the normal isofibrations thus they have the left lifting property against the $R_{[\mathrm{Cof}]}$ maps. So we need to show that decidable monomorphisms on objects are $L_{[\mathrm{Cof}]}$ maps:

\begin{lem}

Decidable monomorphisms on objects are $L_{[\mathrm{Cof}]}$ maps.

\end{lem}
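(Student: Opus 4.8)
The plan is to use the characterization, just established, that the $L_{[\mathrm{Cof}]}$ maps are exactly the functors with the left lifting property against every $R_{[\mathrm{Cof}]}$ map, together with the identification of the $R_{[\mathrm{Cof}]}$ maps as the normal isofibrations that are groupoid equivalences. So let $i : A \mono B$ be a decidable monomorphism on objects and let $g : X \to Y$ be an arbitrary $R_{[\mathrm{Cof}]}$ map; I must produce a diagonal filler for every square $(u : A \to X, v : B \to Y)$ with $g \circ u = v \circ i$. By Lemma \ref{deformr}, since $g$ is a normal isofibration and a groupoid equivalence it is in fact a strong deformation retract, so I may fix a section $g^{*} : Y \to X$ with $g \circ g^{*} = \id_{Y}$ and a natural isomorphism $\alpha : g^{*} \circ g \to \id_{X}$ satisfying $g \cdot \alpha = \id_{g}$, i.e. $g(\alpha_{x}) = \id_{g(x)}$ for every object $x$.

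First I would build the lift $z \colon B \to X$ on objects by a case split enabled by decidability: for $b \in \ob B$ in the image of $i$, say $b = i(a)$ with $a$ unique because $i$ is monic, set $z(b) = u(a)$; for every other $b$, set $z(b) = g^{*}(v(b))$. Simultaneously I record a comparison morphism $\rho_{b} : g^{*}(v(b)) \to z(b)$ lying in the fibre over $v(b)$, namely $\rho_{b} = \alpha_{u(a)}$ when $b = i(a)$ (which lands over $v(b)$ precisely because $g(\alpha_{u(a)}) = \id_{g(u(a))} = \id_{v(b)}$) and $\rho_{b} = \id_{g^{*}(v(b))}$ otherwise. On a morphism $m : b \to b'$ I then set $z(m) = \rho_{b'} \circ g^{*}(v(m)) \circ \rho_{b}^{-1}$. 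The decidability of $i$ is exactly what makes this definition realizable in $\GrpA{A}$: the case distinction is performed by the realizer deciding membership in $\im(i)$, and the remaining data is assembled from the realizers of $u$, $v$, $g^{*}$ and $\alpha$ by composition.

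The remaining work is bookkeeping. Functoriality of $z$ follows from functoriality of $g^{*} \circ v$ together with the cancellation $\rho_{b'}^{-1}\rho_{b'} = \id$ inserted between consecutive factors, and $z$ preserves identities since $g^{*}(v(\id_{b}))$ is an identity. The equation $g \circ z = v$ holds on objects by construction and on morphisms because $g(\rho_{b}) = \id_{v(b)}$ and $g \circ g^{*} = \id_{Y}$ give $g(z(m)) = v(m)$. For the triangle $z \circ i = u$, on objects it is immediate, and on a morphism $m : a \to a'$ of $A$ I use naturality of $\alpha$: writing $v(i(m)) = g(u(m))$ from commutativity of the square, $z(i(m)) = \alpha_{u(a')} \circ (g^{*}g)(u(m)) \circ \alpha_{u(a)}^{-1} = u(m)$.

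I expect the main obstacle to be bureaucratic rather than conceptual: one must be careful that the comparison morphisms $\rho_{b}$ genuinely live in the fibre over $v(b)$ (this is where $g \cdot \alpha = \id_{g}$, coming from the strong-equivalence condition in Lemma \ref{deformr}, is indispensable) and that the whole assignment is realized uniformly, which is the sole place the hypothesis of decidability — as opposed to mere monicity — of $i$ on objects is used. Once the strong deformation retract structure of $g$ is in hand, no appeal to the normal isofibration lifting $\lift(\htpy{0},g)$ is needed directly, since $g^{*}$ and $\alpha$ already package the relevant transport.
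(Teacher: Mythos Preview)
Your proof is correct but takes a genuinely different route from the paper's. The paper does not verify the left lifting property against an arbitrary $R_{[\mathrm{Cof}]}$ map; instead it produces directly the retract diagram
\[
\begin{tikzcd}
U \arrow[r,"\id"] \arrow[d,"i"'] & U \arrow[r,"\id"] \arrow[d,"L_{[\mathrm{Cof}]}(i)"] & U \arrow[d,"i"] \\
V \arrow[r,"l"'] & {[\mathrm{Cof}](i)} \arrow[r,"R_{[\mathrm{Cof}]}(i)"'] & V
\end{tikzcd}
\]
by building $l \colon V \to [\mathrm{Cof}](i)$ from the concrete lifting data $\lambda(J_{0},R_{[\mathrm{Cof}]}(i))$ and $\lambda(J_{2},R_{[\mathrm{Cof}]}(i))$ that the SOA factorization supplies, again splitting on whether $b$ lies in the image of $i$. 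In effect the paper exhibits $i$ as an $L_{[\mathrm{Cof}]}$-coalgebra by retracting onto the free one, whereas you solve the generic lifting problem against any trivial fibration using only the strong-deformation-retract structure coming from Lemma~\ref{deformr}. Your argument is more elementary and portable---it is really the classical model-categorical proof that such $i$ lift against acyclic fibrations, needing nothing about the internal shape of $[\mathrm{Cof}](i)$---while the paper's argument stays closer to the algebraic-weak-factorization-system machinery and makes the coalgebra structure explicit. Both rely on decidability at exactly the same point: to realize the case split defining $z(b)$ (your proof) or $l(v)$ (the paper's).
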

\begin{proof}

Given a decidable monomorphism of objects $i: U \to V$, we seek to construct a retract diagram:

\[\begin{tikzcd}
	U &&& U &&& U \\
	\\
	\\
	V &&& {[\mathrm{Cof}](i)} &&& V
	\arrow["{\id_{U}}", from=1-1, to=1-4]
	\arrow["i"', from=1-1, to=4-1]
	\arrow["{\id_{U}}", from=1-4, to=1-7]
	\arrow["{L_{[\mathrm{Cof}]}}"{description}, from=1-4, to=4-4]
	\arrow["i", from=1-7, to=4-7]
	\arrow["l"', from=4-1, to=4-4]
	\arrow["{R_{[\mathrm{Cof}]}}"', from=4-4, to=4-7]
\end{tikzcd}\]

so we construct the functor $l: V \to [\mathrm{Cof}](i)$. For an object $v \in V$, if $i(u) = v$ then $l(v) = L_{[\mathrm{Cof}]}(u)$, else $l(v) = \lambda(J_{0},R_{[\mathrm{Cof}]}(i))(v)$. Before we continue with morphisms, note that for $u \in \ob U$, we have a morphism between $L_{[\mathrm{Cof}]}(u)$, which will denote as $u$, and $\lambda(J_{0},R_{[\mathrm{Cof}]}(i))(i(u))$, which we will denote as $i(u)$, in $[\mathrm{Cof}](i)$:

\[\begin{tikzcd}
	u &&&&& {i(u)}
	\arrow["{{\lambda(J_{2},R_{[\mathrm{Cof}]}(i))(\{u,i(u)\},\id_{i(u)})}}", from=1-1, to=1-6]
\end{tikzcd}\]

We denote this morphism as $t(u,i(u))$. $t(-,i(-))$ is a natural isomorphism from $L_{[\mathrm{Cof}]}$ to $\lambda(J_{0},R_{[\mathrm{Cof}]}(i)) \circ i$ since when we have a morphism $p: u \to u'$ in $U$, we have the following commutative square exhibited by $\lambda(J_{2},R_{[\mathrm{Cof}]}(i))(\{p,i(p)\},(i(p),i(p)):\id_{u} \to \id_{u'} )$ in $[\mathrm{Cof}](i)$:

\[\begin{tikzcd}
	u &&& {u'} \\
	\\
	{i(u)} &&& {i(u')}
	\arrow["p", from=1-1, to=1-4]
	\arrow["{t(u,i(u))}"', from=1-1, to=3-1]
	\arrow["{t(u',i(u'))}", from=1-4, to=3-4]
	\arrow["{i(p)}"', from=3-1, to=3-4]
\end{tikzcd}\]

Thus $t(u',i(u'))^{-1} \circ \lambda(J_{0},R_{[\mathrm{Cof}]}(i))(i(p)) \circ t(u,i(u)) = L_{[\mathrm{Cof}]}(p)$. We will use $t(-,i(-))$ in defining the morphism part of the functor $l$. For a morphism $m \in V$, we have four cases:
\begin{itemize}

\item When $m:i(u) \to i(u')$, $l(m) = t(u',i(u'))^{-1} \circ \lambda(J_{0},R_{[\mathrm{Cof}]}(i))(m) \circ t(u,i(u))$
\item When $m:v \to v'$, $l(m) = \lambda(J_{0},R_{[\mathrm{Cof}]}(i))(m) $
\item When $m:v \to i(u')$, $l(m) = t(u',i(u'))^{-1} \circ \lambda(J_{0},R_{[\mathrm{Cof}]}(i))(m)$
\item When $m:i(u) \to v'$, $l(m) =  \lambda(J_{0},R_{[\mathrm{Cof}]}(i))(m) \circ t(u,i(u))$

\end{itemize}

functoriality of $l$ then follows from functoriality of $\lambda(J_{0},R_{[\mathrm{Cof}]}(i))$. In addition, by the establish property of $t(-,i(-))$, the leftmost square of the retract diagram commutes. Now we must show that $R_{[\mathrm{Cof}]} \circ l = \id_{V}$ but this is straightforward as for $u \in \ob U$, $$R_{[\mathrm{Cof}]} \circ l(i(u)) = R_{[\mathrm{Cof}]}  \circ L_{[\mathrm{Cof}]}(u) = i(u)$$
For $v \in \ob V$ not equal to $i(u)$, $$R_{[\mathrm{Cof}]} \circ l(v) = R_{[\mathrm{Cof}]}  \circ \lambda(J_{0},R_{[\mathrm{Cof}]}(i))(v) = v$$ We have $R_{[\mathrm{Cof}]}(t(u,i(u))) = \id_{i(u)}$ by definition of $t(u,i(u))$. So $l \circ R_{[\mathrm{Cof}]}$ is the identity on morphisms since $\lambda(J_{0},R_{[\mathrm{Cof}]}(i))$ is a section of $R_{[\mathrm{Cof}]}$. Therefore, $R_{[\mathrm{Cof}]} \circ l = \id_{V}$ making $i$ an $L_{[\mathrm{Cof}]}$ map.

\end{proof}

\begin{cor}
\label{cofif}
 Functors $i = s \circ d$ where $s$ is a strong deformation section and $d$ is a decidable monomorphism on objects are $L_{[\mathrm{Cof}]}$ maps.

\end{cor}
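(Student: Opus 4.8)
The plan is to show directly that $i = s \circ d$ has the left lifting property against every $R_{[\mathrm{Cof}]}$ map, and then to invoke the identification---already adopted in the two preceding results---of the $L_{[\mathrm{Cof}]}$ maps with exactly those functors that lift on the left against the entire right class of the factorization system.

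First I would recall that, by the characterization of $R_{[\mathrm{Cof}]}$ maps established above, every $R_{[\mathrm{Cof}]}$ map is in particular a normal isofibration: it is a fibration that happens also to be a groupoid equivalence. Consequently both constituent maps already enjoy the desired lifting property. Since $s$ is a strong deformation section, Lemma \ref{modstr1} shows that $s$ has the left lifting property against all normal isofibrations, hence against all $R_{[\mathrm{Cof}]}$ maps; and since $d$ is a decidable monomorphism on objects, the immediately preceding lemma shows that $d$ is an $L_{[\mathrm{Cof}]}$ map, so $d$ too has the left lifting property against every $R_{[\mathrm{Cof}]}$ map.

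Next I would assemble these into a lifting property for the composite. Fixing an arbitrary $R_{[\mathrm{Cof}]}$ map $g$, the fact that $g$ has the right lifting property against both $d$ and $s$ feeds into the earlier lemma stating that a map with the right lifting property against $i \colon A \to B$ and against $i' \colon B \to C$ also has the right lifting property against $i' \circ i$. Taking $i = d$ and $i' = s$ gives that $g$ has the right lifting property against $s \circ d = i$, equivalently that $i$ has the left lifting property against $g$. As $g$ ranged over all $R_{[\mathrm{Cof}]}$ maps, $i$ lifts on the left against the whole right class and is therefore an $L_{[\mathrm{Cof}]}$ map.

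I do not expect a genuine obstacle here, since the content is purely a bookkeeping assembly of results already in hand; the entire corollary amounts to the closure of the left class of the weak factorization system under composition. The only points requiring care are to align the direction of the composition lemma with the order of the factorization $i = s \circ d$ (the lemma produces the right lifting property against a composite $i' \circ i$ of left maps, which is precisely closure of the left-lifting class under composition), and to note that $R_{[\mathrm{Cof}]}$ maps being normal isofibrations is exactly what is needed to apply Lemma \ref{modstr1} to the strong deformation section $s$.
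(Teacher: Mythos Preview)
Your proof is correct and follows exactly the route the paper intends: the paragraph preceding the lemma on decidable monomorphisms already notes that strong deformation sections are $L_{[\mathrm{Cof}]}$ maps via Lemma \ref{modstr1}, the lemma itself handles $d$, and the corollary is then just closure of the left class under composition, which you spell out via the earlier composition-of-liftings lemma.
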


To show the converse, we define a alternative factorization:\\

For a functor $F: X \to Y$, we define a groupoid $F|Y$ where:

\begin{itemize}
\item $\ob F|Y = \ob X + \ob Y$
\item $F|Y(x,x') = Y(F(x),F(x'))$
\item $F|Y(y,x') = Y(y,F(x'))$
\item $F|Y(x,y') = Y(F(x),y')$
\item $F|Y(y,y') = Y(y,y')$
\end{itemize}

We have a functor $F^{-}:X \to F|Y$ which is just the identity on objects and the functor $F$ on morphisms. We have another morphism $F^{+}: F|Y \to Y$ which is the identity on morphisms and $\ob Y$ and $F$ on $\ob X$. It is straightforward to show that $F^{+} \circ F^{-} = F$. What is also straightforward to show is the following:

\begin{lem}
 $F^{+}$ is a groupoid equivalence.

\end{lem}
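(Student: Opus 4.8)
The plan is to construct an explicit homotopy inverse to $F^+$ and exhibit the required natural isomorphisms directly from the combinatorial description of $F|Y$. First I would define a candidate inverse $G : Y \to F|Y$. The natural choice is the functor that is the identity on objects and morphisms of $Y$, viewing $\ob Y \subseteq \ob (F|Y)$ and $Y(y,y') = F|Y(y,y')$ by the fourth clause in the definition of $F|Y$. Then $F^+ \circ G = \id_Y$ holds on the nose, since $F^+$ is the identity on $\ob Y$ and on all morphisms. This gives one of the two triangle identities for free, so the main content is showing $G \circ F^+ \cong \id_{F|Y}$.

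Next I would construct the natural isomorphism $\theta : G \circ F^+ \To \id_{F|Y}$. The composite $G \circ F^+$ sends an object $x \in \ob X$ to $F(x) \in \ob Y$ and fixes objects $y \in \ob Y$, while acting as the identity on morphisms under the identifications above. So on an object $x$ coming from $X$, I need an isomorphism $\theta_x : F(x) \to x$ in $F|Y$; by the third clause $F|Y(F(x),x) = Y(F(x),F(x))$, so I can simply set $\theta_x = \id_{F(x)}$ regarded as a morphism $F(x) \to x$ in $F|Y$. On objects $y \in \ob Y$ I set $\theta_y = \id_y$, living in $F|Y(y,y) = Y(y,y)$. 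The key step is verifying naturality: for a morphism $m$ in $F|Y$ between any two of the four types of object pairs, I must check $\theta_{\cod(m)} \circ (G \circ F^+)(m) = m \circ \theta_{\dom(m)}$. In each of the four cases this reduces, via the definitional identifications of the hom-sets $F|Y(-,-)$ with hom-sets of $Y$, to an identity between morphisms of $Y$ of the form $\id \circ m = m \circ \id$, so naturality is immediate once the bookkeeping of the identifications is made explicit.

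I would then assemble these into the statement that $F^+$ is an equivalence of groupoids: $G$ is the inverse functor, $F^+ \circ G = \id_Y$ (so certainly $\cong \id_Y$), and $\theta : G \circ F^+ \cong \id_{F|Y}$ is the natural isomorphism just constructed. Since the excerpt works throughout in the internal language of $\Asm(A)$, I would note that all of the above constructions are definable by combinators — $G$, $\theta$, and the verifications are uniform in the object/morphism data and realized by the identity combinator $\iota$ together with the realizers already carried by $F$ and $Y$ — so the argument is valid internally and not merely in $\Set$.

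The main obstacle is not any deep difficulty but the careful case analysis forced by the four-way definition of the hom-sets of $F|Y$ (the clauses $F|Y(x,x')$, $F|Y(y,x')$, $F|Y(x,y')$, $F|Y(y,y')$). The subtlety is that $\theta_x = \id_{F(x)}$ is an isomorphism $F(x)\to x$ only by virtue of the \emph{definitional} equality $F|Y(F(x),x) = Y(F(x),F(x))$, and one must confirm that this morphism really is invertible in $F|Y$ (its inverse being $\id_{F(x)} \in F|Y(x,F(x)) = Y(F(x),F(x))$) and that composition across object-type boundaries is computed exactly as composition in $Y$. Once it is observed that every hom-set of $F|Y$ is literally a hom-set of $Y$ and composition is inherited from $Y$, all four naturality cases collapse to the same trivial verification, so the lemma follows.
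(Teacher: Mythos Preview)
Your proposal is correct and takes essentially the same approach as the paper: define the inverse $G$ (the paper calls it $F^{+*}$) as the identity on objects and morphisms of $Y$, observe $F^+ \circ G = \id_Y$, and build the natural isomorphism $G \circ F^+ \cong \id_{F|Y}$ via $\theta_x = \id_{F(x)}$ and $\theta_y = \id_y$. The paper simply declares the naturality verification ``straightforward'' where you spell out the four-case analysis, so your write-up is more detailed but not meaningfully different.
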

\begin{proof}

$F^{+*}:Y \to F|Y$ is simply the identity on objects and morphisms. Thus we have $F^{+} \circ F^{+*} = \id_{Y}$. We also have $\alpha: F^{+*} \circ F^{+} \cong \id_{F|Y}$ where $\alpha_{x} = \id_{F(x)}:F(x) \to x$ and $\alpha_{y} = \id_{y}:y \to y$. It is straightforward to verify that $\alpha$ is a natural isomorphism.

\end{proof}

Recall that we have a algebraic weak factorization system $(\Fib, R_{\Fib},L_{\Fib})$ where $\Fib$ gives the comma category construction, the $R_{\Fib}$ maps are the normal isofibrations and the $L_{\Fib}$ maps are the strong deformation sections.

\begin{cor}

$R_{\Fib}(F^{+})$ is a groupoid equivalence.

\end{cor}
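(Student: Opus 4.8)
The plan is to invoke the two-out-of-three property for groupoid equivalences, using the factorization produced by $\Fib$. Recall that $R_{\Fib} = \Bot \circ \Fib$, so that for the functor $F^{+}: F|Y \to Y$ we have $R_{\Fib}(F^{+}) = \widehat{F^{+}}: \coe{F^{+}}{Y} \to Y$, and the functorial factorization gives the decomposition
\[
F^{+} = \widehat{F^{+}} \circ \widetilde{F^{+}}.
\]
The strategy is to recognize the other two maps in this composite as known groupoid equivalences and then let two-out-of-three supply the third.

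First I would note that by the preceding corollary, $F^{+}$ itself is a groupoid equivalence. Next, by the lemma asserting that $\tilde{F}$ is always a strong deformation section, the map $\widetilde{F^{+}}$ is a strong deformation section; since every strong deformation section is in particular a strong equivalence of groupoids and hence an equivalence of groupoids, $\widetilde{F^{+}}$ is a groupoid equivalence. Thus in the composite $F^{+} = \widehat{F^{+}} \circ \widetilde{F^{+}}$ both $F^{+}$ and $\widetilde{F^{+}}$ are groupoid equivalences.

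Finally I would apply the lemma stating that equivalences of groupoids satisfy the two-out-of-three principle: with $\widetilde{F^{+}}$ playing the role of $i$, $\widehat{F^{+}}$ the role of $j$, and $F^{+}$ the role of $k$, the case where $k$ and $i$ are groupoid equivalences forces $j = \widehat{F^{+}} = R_{\Fib}(F^{+})$ to be a groupoid equivalence as well. This completes the argument. There is essentially no obstacle here beyond correctly matching the named results; the only point requiring a moment's care is confirming that a strong deformation section qualifies as an equivalence of groupoids so that two-out-of-three applies, which follows immediately by unwinding the definitions.
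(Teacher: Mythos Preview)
Your proof is correct and follows essentially the same approach as the paper: factor $F^{+}$ via $\Fib$, observe that both $F^{+}$ and $L_{\Fib}(F^{+}) = \widetilde{F^{+}}$ are groupoid equivalences, and apply two-out-of-three. The only trivial discrepancy is that the result establishing $F^{+}$ as a groupoid equivalence is stated as a lemma rather than a corollary in the paper.
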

\begin{proof}

We have $F^{+} = R_{\Fib}(F^{+}) \circ L_{\Fib}(F^{+})$ where both $F^{+}$ and $L_{\Fib}(F^{+})$ are groupoid equivalences. Thus by the 2 out of 3 property, $R_{\Fib}(F^{+})$ is one as well.

\end{proof}

Thus $R_{\Fib}(F^{+})$ is a groupoid equivalence and a normal isofibration. At last we can show the following:

\begin{lem}

$i:U \to V$ has the left lifting property against $R_{[\mathrm{Cof}]}$ maps if and only if $i = s \circ d$ where $d$ is a decidable mono on objects and $s$ is a strong deformation section.

\end{lem}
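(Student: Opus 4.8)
The plan is to prove the two directions separately, the forward (``if'') direction being essentially in hand. If $i = s \circ d$ with $d$ a decidable monomorphism on objects and $s$ a strong deformation section, then by Corollary \ref{cofif} $i$ is an $L_{[\mathrm{Cof}]}$ map, and every $L_{[\mathrm{Cof}]}$ map has the left lifting property against every $R_{[\mathrm{Cof}]}$ map by the weak factorization system $([\mathrm{Cof}],R_{[\mathrm{Cof}]},L_{[\mathrm{Cof}]})$ obtained from Corollary \ref{soa}. So only the converse requires work.

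For the converse, suppose $i\colon U \to V$ has the left lifting property against all $R_{[\mathrm{Cof}]}$ maps. First I would apply the $F|Y$ factorization to $F = i$, writing $i = i^{+} \circ i^{-}$ where $i^{-}\colon U \to i|V$ is the identity on objects and hence a decidable monomorphism on objects (its object part is the coproduct inclusion $\ob U \mono \ob U + \ob V$), while $i^{+}\colon i|V \to V$ is a groupoid equivalence. Next I would factor $i^{+}$ through the fibrant algebraic weak factorization system of Corollary \ref{awfsgrpd}, obtaining $i^{+} = \hat{i^{+}} \circ \tilde{i^{+}}$ with $\tilde{i^{+}}\colon i|V \to \coe{i^{+}}{V}$ a strong deformation section and $\hat{i^{+}}\colon \coe{i^{+}}{V} \to V$ a normal isofibration. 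Since $\tilde{i^{+}}$ and $i^{+}$ are both groupoid equivalences, two-out-of-three forces $\hat{i^{+}}$ to be a groupoid equivalence as well, so $\hat{i^{+}}$ is a fibration and an equivalence, i.e.\ an $R_{[\mathrm{Cof}]}$ map; this is precisely the content of the preceding corollary that $R_{\Fib}(F^{+})$ is a groupoid equivalence.

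Now I would solve the lifting problem
\[
\begin{tikzcd}
U \arrow[r, "{\tilde{i^{+}} \circ i^{-}}"] \arrow[d, "i"'] & {\coe{i^{+}}{V}} \arrow[d, "{\hat{i^{+}}}"] \\
V \arrow[r, "{\id_{V}}"'] \arrow[ur, dashed, "\ell"] & V
\end{tikzcd}
\]
which commutes because $\hat{i^{+}} \circ \tilde{i^{+}} \circ i^{-} = i^{+} \circ i^{-} = i$. As $\hat{i^{+}}$ is an $R_{[\mathrm{Cof}]}$ map and $i$ has the assumed left lifting property, a lift $\ell$ exists with $\ell \circ i = \tilde{i^{+}} \circ i^{-}$ and $\hat{i^{+}} \circ \ell = \id_{V}$. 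Reading these equations as a retract diagram in the arrow category exhibits $i$ as a retract of $\tilde{i^{+}} \circ i^{-}$, which is a strong deformation section composed with a decidable monomorphism on objects.

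It remains to upgrade ``retract of $s \circ d$'' to ``$s \circ d$'', and this closure-under-retracts step is where I expect the main difficulty. The strong-deformation-section factor is harmless: by Lemma \ref{modstr1} the strong deformation sections are exactly the maps with the left lifting property against all normal isofibrations, and any class cut out by a left lifting property is automatically closed under retracts; the decidable-monomorphism-on-objects factor is likewise retract-stable at the level of object assemblies. The genuine obstacle is that a retract of a composite need not be a composite of retracts, so one cannot simply retract the middle object $\coe{i^{+}}{V}$. Instead I would reconstruct the factorization of $i$ directly from the retract data: writing $(\tilde{i^{+}})^{i}$ for the retraction of the strong deformation section $\tilde{i^{+}}$, the map $(\tilde{i^{+}})^{i} \circ \ell\colon V \to i|V$ satisfies $((\tilde{i^{+}})^{i} \circ \ell) \circ i = i^{-}$, and I would use this section-like map together with a comma/gluing construction built from $i|V$ and $V$ to manufacture an explicit intermediate groupoid assembly $W$ equipped with a decidable monomorphism $U \to W$ followed by a strong deformation section $W \to V$ whose composite is $i$. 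Checking the strong-equivalence coherences ($\alpha = \id$ and $\beta \cdot s = s \cdot \alpha$) and the decidability of the object inclusion into this $W$ is the technical heart of the argument.
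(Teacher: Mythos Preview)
Your setup through the retract diagram is exactly the paper's approach: factor $i = i^{+}\circ i^{-}$ via the $F|Y$ construction, apply $\Fib$ to $i^{+}$ to get an $R_{[\mathrm{Cof}]}$ map $R_{\Fib}(i^{+}) = \hat{i^{+}}$, lift against it, and read off a retract of $\tilde{i^{+}}\circ i^{-}$. You also correctly identify the remaining obstacle: a retract of $s\circ d$ is not automatically of that form.

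Where your proposal becomes vague is precisely where the paper is concrete, and your proposed ``comma/gluing construction'' is not what is needed. The key observation you are missing is that the lift $\ell\colon V \to (i^{+})\!\downarrow V$ already carries a decidable partition of $\ob V$: since $\ob(i|V) = \ob U + \ob V$, every object of $(i^{+})\!\downarrow V$ is either of the form $(u,v,f\colon i(u)\to v)$ or $(v',v,f\colon v'\to v)$, and the first component of $\ell(v)$ decides which. Write $U_{=}$ for those $v$ with $\ell(v)$ in the $U$-summand and $V_{base}$ for the rest; note $i(u)\in U_{=}$ since $\ell(i(u)) = (u,i(u),\id)$, which also forces $i$ to be monic on objects. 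The intermediate groupoid is then simply the \emph{full subgroupoid} $W = i\oplus V_{base}$ of $V$ on $\im(i_{\ob}) + V_{base}$ --- no gluing or comma object is required. The first factor $i^{\oplus}\colon U\to W$ is a decidable mono on objects by construction.

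For the second factor $i^{=}\colon W\hookrightarrow V$, the paper does not argue via generic retract-closure but builds an explicit $L_{\Fib}$-retract: define $l'\colon V \to (i^{=})\!\downarrow V$ by sending $v\in U_{=}$ to $(i(u),v,f_v)$ using the isomorphism $f_v$ extracted from $\ell(v)$, and $v\in V_{base}$ to $(v,v,\id_v)$. One checks directly that $l'\circ i^{=} = L_{\Fib}(i^{=})$ and $R_{\Fib}(i^{=})\circ l' = \id_V$, so $i^{=}$ is an $L_{\Fib}$ map, i.e.\ a strong deformation section. Your sketch with $(\tilde{i^{+}})^{i}\circ\ell$ is pointing at the right data (it recovers the same partition), but the construction of $W$ and the verification for $i^{=}$ should be made explicit along these lines rather than appealing to an unspecified gluing.
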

\begin{proof}

Corollary \ref{cofif} already gives the if direction, now we must show the only if direction. Suppose $i$ has the left lifting property against $R_{[\mathrm{Cof}]}$ maps, then we have a lifting diagram:
\[\begin{tikzcd}
	U &&& {(i^{+})\downarrow V} \\
	\\
	\\
	V &&& V
	\arrow["{{L_{\Fib}(i^{+}) \circ i^{-}}}", from=1-1, to=1-4]
	\arrow["i"', from=1-1, to=4-1]
	\arrow["{{R_{\Fib}(i^{+})}}", from=1-4, to=4-4]
	\arrow["l"{description}, dashed, from=4-1, to=1-4]
	\arrow["{{\id_{V}}}"', from=4-1, to=4-4]
\end{tikzcd}\]

this gives us a retract diagram:

\[\begin{tikzcd}
	U &&& U &&& U \\
	\\
	\\
	V &&& {(i^{+})\downarrow V} &&& V
	\arrow["{\id_{U}}", from=1-1, to=1-4]
	\arrow["i"', from=1-1, to=4-1]
	\arrow["{\id_{U}}", from=1-4, to=1-7]
	\arrow["{L_{\Fib}(i^{+}) \circ i^{-}}"{description}, from=1-4, to=4-4]
	\arrow["i", from=1-7, to=4-7]
	\arrow["l"', from=4-1, to=4-4]
	\arrow["{R_{\Fib}(i^{+})}"', from=4-4, to=4-7]
\end{tikzcd}\]

First we observe that $i$ a monomorphism on objects since if we have $u,u' \in \ob U$ such that $i(u) = i(u')$, then $$L_{\Fib}(i^{+}) \circ i^{-}(u) = l(i(u)) = l(i(u')) = L_{\Fib}(i^{+}) \circ i^{-}(u')$$ then $(u,i(u), \id_{i(u)}) = (u',i(u'), \id_{i(u')})$ thus $u = u'$. The objects of $(i^{+})\downarrow V$ are either of the form $(u, v, f: i(u) \to v)$ or of the form $(v, v', f: v \to v')$, thus we can split $\ob (i^{+})\downarrow V$ into $V_{base} + U_{=}$ where $U_{=}$ are the elements that $l$ maps into elements $(u, v, f: i(u) \to v)$ and $V_{base}$ are the elements that get mapped into $(v, v', f: v \to v')$. Furthermore for $u \in \ob U$, $i(u) \in U_{=}$ since $L_{\Fib}(i^{+}) \circ i^{-}(u) = (u,i(u), \id_{i(u)}) = l(i(u))$. We define $i \oplus V_{base}$ as the full sub groupoid of $V$ on the objects $\{v \in U_{=}|\exists u \in \ob U( i(u) = v) \} + V_{base}$. This gives us a factorization of $i$:

\[\begin{tikzcd}
	U && {i \oplus V_{base}} && V
	\arrow["{i^{\oplus}}", from=1-1, to=1-3]
	\arrow["{i^{=}}", from=1-3, to=1-5]
\end{tikzcd}\]

where $i^{\oplus}$ is a decidable monomorphism on objects. Now we seek to construct the following retract diagram:

\[\begin{tikzcd}
	i \oplus V_{base} &&& i \oplus V_{base} &&& i \oplus V_{base} \\
	\\
	\\
	V &&& {(i^{=})\downarrow V} &&& V
	\arrow["{\id_{i \oplus V_{base}}}", from=1-1, to=1-4]
	\arrow["i^{=}"', from=1-1, to=4-1]
	\arrow["{\id_{i \oplus V_{base}}}", from=1-4, to=1-7]
	\arrow["{L_{\Fib}(i^{=})}"{description}, from=1-4, to=4-4]
	\arrow["i^{=}", from=1-7, to=4-7]
	\arrow["l'"', from=4-1, to=4-4]
	\arrow["{R_{\Fib}(i^{=})}"', from=4-4, to=4-7]
\end{tikzcd}\]

We define $l'$ as follows:

\begin{itemize}

\item For $v \in U_{=}$, $l'(v) = (\iota_{v} = i(u), v, f_{v}:i(u) \to v)$ where $l(v) = (u, v, f_{v}:i(u) \to v)$ 
\item For $v \in V_{base}$, $l'(v) = (\iota_{v} = v,v, f_{v} = \id_{v})$
\item For a morphism $\nu: v \to v'$ in $V$, $l'(\nu) = (f_{v'}^{-1} \circ \nu \circ f_{v}, \nu)$ giving us the following commutative square:

\[\begin{tikzcd}
	{\iota_{v}} &&& v \\
	\\
	\\
	{\iota_{v'}} &&& {v'}
	\arrow["{f_{v}}", from=1-1, to=1-4]
	\arrow["{f_{v'}^{-1} \circ \nu \circ f_{v}}"', from=1-1, to=4-1]
	\arrow["\nu", from=1-4, to=4-4]
	\arrow["{f_{v'}}"', from=4-1, to=4-4]
\end{tikzcd}\]

\end{itemize}

It should be clear that $l'$ is functorial. Furthermore, $R_{\Fib}(i^{=}) \circ l' = \id_{V}$ since $R_{\Fib}(i^{=})$ projects the second element of a pair of morphisms or a triple. Now we seek to show that the leftmost square commutes. For $i(u)\in i \oplus V_{base}$, $$l'(i^{=}(i(u))) = l'(i(u)) = (i(u), i(u), \id_{i(u)}) = L_{\Fib}(i^{=})(i(u))$$ since $l(i(u)) = (u,i(u),\id_{i(u)})$. Thus for any morphism $\nu:v \to v'$ in $i \oplus V_{base}$, $f_{v} = \id_{v}$ and $f_{v'} = \id_{v'}$, thus $l'(\nu) = (\nu,\nu) = L_{\Fib}(i^{=})(\nu)$. Thus $i^{=}$ is a $L_{\Fib}$ map making it a strong deformation section. Therefore $i = i^{=} \circ i^{\oplus}$ where $i^{\oplus}$ is a decidable monomorphism on objects and $i^{=}$ is a strong deformation section. \\

Thus the $L_{[\mathrm{Cof}]}$ maps are precisely the functors $i = s \circ d$ where $d$ is a decidable monomorphism on objects and $s$ is a strong deformation section.

\end{proof}

\begin{lem}

If $i$ is an $L_{[\mathrm{Cof}]}$ map and a groupoid equivalence, then $i$ is a strong deformation section.

\end{lem}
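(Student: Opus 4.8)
The plan is to exploit the factorization of $L_{[\mathrm{Cof}]}$ maps established in the previous lemma together with the lifting characterization of strong deformation sections from Lemma \ref{modstr1}. Since $i$ is an $L_{[\mathrm{Cof}]}$ map, I would first write $i = s \circ d$, where $d: U \to W$ is a decidable monomorphism on objects and $s: W \to V$ is a strong deformation section, exactly as supplied by the characterization of $L_{[\mathrm{Cof}]}$ maps just proven.

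Next, I would promote $d$ to a strong deformation section. Every strong deformation section is in particular an equivalence of groupoids, so $s$ is a groupoid equivalence; and $i = s \circ d$ is a groupoid equivalence by hypothesis. Applying the $2$-out-of-$3$ principle for groupoid equivalences to the commutative triangle whose legs are $d$ and $s$ and whose composite is $i$, with $d$ playing the role of the unknown map and $s$, $i$ the two known equivalences, I conclude that $d$ is a groupoid equivalence. Since $d$ is also a decidable monomorphism on objects, Lemma \ref{deciequi} then shows that $d$ is itself a strong deformation section.

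Finally, I would close with a lifting argument. By Lemma \ref{modstr1}, both $s$ and $d$, being strong deformation sections, have the left lifting property against every normal isofibration. Fixing an arbitrary normal isofibration $f$, the earlier composition lemma for lifting properties (that $f$ has the right lifting property against $i' \circ i$ whenever it has the right lifting property against both $i$ and $i'$) shows that $f$ has the right lifting property against $i = s \circ d$. As $f$ was an arbitrary normal isofibration, $i$ has the left lifting property against all normal isofibrations, so the ``only if'' direction of Lemma \ref{modstr1} yields that $i$ is a strong deformation section.

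I expect the only point requiring genuine care to be the bookkeeping in the $2$-out-of-$3$ step, namely lining up $d$ with the ``first'' functor in the statement of the groupoid-equivalence $2$-out-of-$3$ principle so that the correct case applies; everything else reduces to direct citations of Lemma \ref{deciequi}, Lemma \ref{modstr1}, and the composition closure of lifting properties.
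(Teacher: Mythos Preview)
Your proposal is correct and follows essentially the same approach as the paper: factor $i = s \circ d$, use $2$-out-of-$3$ and Lemma~\ref{deciequi} to make $d$ a strong deformation section, then conclude by closure of strong deformation sections under composition. The paper phrases the last step as ``left maps of a weak factorization system are closed under composition'' rather than spelling out the lifting argument via Lemma~\ref{modstr1}, but this is the same content.
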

\begin{proof}

Since $i = s \circ d$ where $s$ is a strong deformation section, by the 2 out of 3 property $d$ is a groupoid equivalence. Since $d$ is a decidable monomorphism on objects, by Lemma \ref{deciequi}, $d$ is a strong deformation section. Since strong deformation sections are the left maps of a weak factorization system then they are closed under composition. Therefore, $i$ is a strong deformation section.

\end{proof}

\begin{cor}
\label{mdlstr}

$\GrpA{A}$ has a model structure where 
\begin{itemize}

\item The fibrations are the normal isofibrations.
\item The cofibrations are the decidable monomorphisms on objects followed by the strong deformation sections.
\item The weak equivalences are the groupoid equivalences.
\item $(\Fib, R_{\Fib},L_{\Fib})$ and $([\mathrm{Cof}], R_{[\mathrm{Cof}]}, L_{[\mathrm{Cof}]})$ give the fibrant and  cofibrant factorizations respectively.

\end{itemize}

\end{cor}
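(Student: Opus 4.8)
The plan is to invoke the standard recognition theorem for model structures (in the Joyal--Tierney form): to equip $\GrpA{A}$ with a model structure it suffices to exhibit a class $\mathcal{W}$ of weak equivalences satisfying the two-out-of-three property together with two weak factorization systems $(\mathcal{C}, \mathcal{F} \cap \mathcal{W})$ and $(\mathcal{C} \cap \mathcal{W}, \mathcal{F})$ whose overlaps are compatible, retract-closure of $\mathcal{W}$ then being automatic. Essentially all the required data has already been assembled, so the work is one of bookkeeping: the two weak factorization systems are the underlying systems of the algebraic weak factorization systems $([\mathrm{Cof}], R_{[\mathrm{Cof}]}, L_{[\mathrm{Cof}]})$ of Corollary \ref{soa} and $(\Fib, R_{\Fib}, L_{\Fib})$ of Corollary \ref{awfsgrpd}, and the three distinguished classes are read off from the characterization lemmas proved just above.

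First I would set $\mathcal{W}$ to be the groupoid equivalences, $\mathcal{F}$ the normal isofibrations, and $\mathcal{C}$ the decidable monomorphisms on objects followed by strong deformation sections. Two-out-of-three for $\mathcal{W}$ is exactly the lemma stating that equivalences of groupoids satisfy the $2$-out-of-$3$ principle. For the first factorization system I would identify the underlying classes of $([\mathrm{Cof}], R_{[\mathrm{Cof}]}, L_{[\mathrm{Cof}]})$: the lemma characterizing $R_{[\mathrm{Cof}]}$ maps shows they are precisely the functors that are simultaneously fibrations and groupoid equivalences, i.e.\ $\mathcal{F} \cap \mathcal{W}$, while the lemma characterizing $L_{[\mathrm{Cof}]}$ maps shows these are exactly $\mathcal{C}$. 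Hence the underlying system of this \textsc{awfs} is $(\mathcal{C}, \mathcal{F} \cap \mathcal{W})$, and it furnishes the cofibrant factorization.

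For the second factorization system I would use $(\Fib, R_{\Fib}, L_{\Fib})$, whose right class is the normal isofibrations $\mathcal{F}$ by Corollary \ref{awfsgrpd} and whose left class is the strong deformation sections by Lemma \ref{modstr1}. The key step is then to verify $\mathcal{C} \cap \mathcal{W}$ equals the strong deformation sections: a strong deformation section is a strong equivalence of groupoids, hence lies in $\mathcal{W}$, and it is a cofibration since it factors as a strong deformation section after the identity (a decidable monomorphism on objects), so it lies in $\mathcal{C} \cap \mathcal{W}$; conversely the final lemma above shows that an $L_{[\mathrm{Cof}]}$ map (that is, a cofibration) which is a groupoid equivalence is a strong deformation section, using Lemma \ref{deciequi} and closure of strong deformation sections under composition. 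Thus the underlying system of the second \textsc{awfs} is $(\mathcal{C} \cap \mathcal{W}, \mathcal{F})$, giving the fibrant factorization. With both systems in hand and $\mathcal{W}$ satisfying two-out-of-three, the recognition theorem yields the model structure; retract-closure of all three classes follows either automatically or, for $\mathcal{F}$ and $\mathcal{C}$, from their being the right and left classes of weak factorization systems. The main obstacle is not conceptual but lies in pinning down the overlap $\mathcal{C} \cap \mathcal{W}$ precisely as the strong deformation sections, which is exactly what the last two lemmas deliver, so the remaining argument is a direct citation of the recognition theorem.
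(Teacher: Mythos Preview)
Your proposal is correct and matches the paper's intent: the paper states this as an unproved corollary of the preceding lemmas, and your argument is precisely the bookkeeping that the author leaves implicit, namely feeding the two algebraic weak factorization systems and the established characterizations of their left and right classes into the standard recognition theorem for model structures. The only point worth flagging is that you should make sure the version of the recognition theorem you cite does not require retract-closure of $\mathcal{W}$ as a separate hypothesis, or else note that groupoid equivalences are obviously closed under retracts (a retract of an equivalence is an equivalence by a short direct argument with the unit and counit isomorphisms).
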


\subsection{Localization Modalities}
In this section, we seek to use $W$-types with reductions to construct localization modalities as found in \cite{Rijke_2020}.\\

 Given an indexing groupoid assembly $I$, fibrations $U$ and $V$ over $I$ and a morphism $J:U \to V$ we have the following definitions:

\begin{defn}
We call a groupoid assembly $X$ {\bf (I,J)-local} if we have the following morphism between fibrations:

\[\begin{tikzcd}
	{\sum_{i \in I} (V_{i} \to X)} &&&& {\sum_{i \in I} (U_{i} \to X)} \\
	\\
	&& I
	\arrow["{- \circ J}", from=1-1, to=1-5]
	\arrow["{V \to X}"', from=1-1, to=3-3]
	\arrow["{U \to X}", from=1-5, to=3-3]
\end{tikzcd}\]

is an isomorphism over $I$.
\end{defn}

For this section, we will restrict $I$ to only being an discrete groupoid assembly (i.e. all morphisms are identity morphisms). Recall that any morphism over an assembly is a fibration, thus we can define the fibration $\sum_{i \in I} V_{i} +_{U_{i}} V_{i} \to I$ as the pushout of the following diagram over $I$:

\[\begin{tikzcd}
	{\sum_{i \in I} U_{i}} &&&& {\sum_{i \in I} V_{i}} \\
	\\
	\\
	\\
	{\sum_{i \in I} V_{i}} &&&& {\sum_{i \in I} V_{i} +_{U_{i}} V_{i}}
	\arrow["J", from=1-1, to=1-5]
	\arrow["J"', from=1-1, to=5-1]
	\arrow["{\iota_{0}}", from=1-5, to=5-5]
	\arrow["{\iota_{1}}"', from=5-1, to=5-5]
	\arrow["\lrcorner"{anchor=center, pos=0.125, rotate=180}, draw=none, from=5-5, to=1-1]
\end{tikzcd}\]

We set the morphism $J^{*}: \sum_{i \in I} V_{i} +_{U_{i}} V_{i} \to \sum_{i \in I} V_{i}$ over $I$ as the result of the universal property of pushouts:

\[\begin{tikzcd}
	{\sum_{i \in I} U_{i}} &&&& {\sum_{i \in I} V_{i}} \\
	\\
	\\
	\\
	{\sum_{i \in I} V_{i}} &&&& {\sum_{i \in I} V_{i} +_{U_{i}} V_{i}} \\
	&&&&& {\sum_{i \in I} V_{i}}
	\arrow["J", from=1-1, to=1-5]
	\arrow["J"', from=1-1, to=5-1]
	\arrow["{{\iota_{0}}}", from=1-5, to=5-5]
	\arrow["{\id_{V}}", from=1-5, to=6-6]
	\arrow["{{\iota_{1}}}"', from=5-1, to=5-5]
	\arrow["{\id_{V}}"', from=5-1, to=6-6]
	\arrow["\lrcorner"{anchor=center, pos=0.125, rotate=180}, draw=none, from=5-5, to=1-1]
	\arrow["{J^{*}}", from=5-5, to=6-6]
\end{tikzcd}\]

Now we take the functor $$J + J^{*}:\sum_{i \in I} U_{i} + \sum_{i \in I} V_{i} +_{U_{i}} V_{i}  \to \sum_{i \in I} V_{i} + \sum_{i \in I} V_{i}$$ as a functor over $I + I$. Take the fibration $N$ as the zero fibration, we obtain an algebraic weak factorization system $([\Loc(I,J)], R_{[\Loc(I,J)]},L_{[\Loc(I,J)]})$ such that the $R_{[\Loc(I,J)]}$ maps $g:X \to Y$ are precisely those functors such that for $i \in I$, we have the lifts: 

\[\begin{tikzcd}
	{\Sq(J_{i}, g)} &&&& {V_{i} \to X} \\
	{\Sq(J^{*}_{i}, g)} &&&& {V_{i} \to X}
	\arrow["{{{\lambda(J_{i},g})}}", from=1-1, to=1-5]
	\arrow["{{{\lambda(J^{*}_{i},g})}}", from=2-1, to=2-5]
\end{tikzcd}\]

with no normalization property. We call a groupoid assembly $X$ an $R_{[\Loc(I,J)]}$ object when $!: X \to 1$ is an $R_{[\Loc(I,J)]}$ map. 

\begin{lem}

The $R_{[\Loc(I,J)]}$ objects are precisely the $(I,J)$-local objects.

\end{lem}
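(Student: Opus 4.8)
The plan is to specialize the definition of an $R_{[\Loc(I,J)]}$ map to the terminal map $!\colon X \to 1$ and translate the two prescribed families of lifts into honest sections of morphisms in $\Fib(I)$, which I can then recognize as the two halves of the assertion that $-\circ J$ is invertible. Since $I$ is discrete, everything is fibrewise, so it suffices to treat each index $i \in \ob I$ separately. First I would observe that, with $Y = 1$, the pullback-power $J_i \pupw {!}$ collapses: because $(U_i \to 1)$ and $(V_i \to 1)$ are terminal, the fibre of $\Sq(J_i,!)$ over $i$ is just $(U_i \to X)$ and $J_i \pupw {!}$ becomes the restriction functor $-\circ J_i \colon (V_i \to X) \to (U_i \to X)$. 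Likewise, using that the internal hom $[-,X]$ turns the defining pushout of $\sum_{i} V_i +_{U_i} V_i$ into a pullback, I would identify $(V_i +_{U_i} V_i \to X) \cong (V_i \to X) \times_{(U_i \to X)} (V_i \to X)$, and since $J^{*}_i \circ \iota_0 = J^{*}_i \circ \iota_1 = \id_{V_i}$, the map $J^{*}_i \pupw {!}$ is exactly the diagonal $\Delta_i$ of $-\circ J_i$. Thus the lift $\lambda(J_i,!)$ is a section of $-\circ J_i$ and the lift $\lambda(J^{*}_i,!)$ is a section of $\Delta_i$.

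For the forward direction, suppose $X$ is an $R_{[\Loc(I,J)]}$ object. The section $\lambda(J_i,!)$ exhibits $-\circ J_i$ as a split epimorphism. The diagonal $\Delta_i$ always carries a canonical retraction given by either projection, so it is a split monomorphism; the section $\lambda(J^{*}_i,!)$ makes it a split epimorphism as well, hence $\Delta_i$ is an isomorphism, which is precisely the statement that $-\circ J_i$ is a monomorphism. A morphism in a $1$-category that is simultaneously a split epimorphism and a monomorphism is an isomorphism, so $-\circ J_i$ is invertible for every $i$; since $I$ is discrete these assemble into an isomorphism $-\circ J$ over $I$, i.e.\ $X$ is $(I,J)$-local.

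Conversely, if $X$ is $(I,J)$-local then each $-\circ J_i$ is an isomorphism. Its inverse functor is a section, supplying $\lambda(J_i,!)$; and an isomorphism is in particular a monomorphism, so its diagonal $\Delta_i$ is invertible, and the inverse of $\Delta_i$ supplies the section $\lambda(J^{*}_i,!)$. Because the $\Loc(I,J)$ factorization is built with $N$ the zero fibration, these lifts carry no normalization constraint, so there is nothing further to check and $!\colon X \to 1$ is an $R_{[\Loc(I,J)]}$ map.

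I expect the main obstacle to be bookkeeping rather than conceptual: carefully verifying, inside the cartesian closed structure of $\Fib(I)$ and fibrewise over the discrete $I$, that the lift data $\lambda(J_i,!)$ and $\lambda(J^{*}_i,!)$ really are natural sections of $-\circ J_i$ and $\Delta_i$ (matching the functoriality built into the $W$-type-with-reductions construction of $[\Loc(I,J)]$), together with confirming that the two elementary categorical facts used — that a map is monic iff its diagonal is invertible, and that a split epimorphism which is also monic is an isomorphism — apply verbatim here. Once these identifications are in place the equivalence is immediate.
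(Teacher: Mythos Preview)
Your proof is correct and follows essentially the same strategy as the paper: both arguments identify $-\circ J^{*}_i$ with the diagonal of $-\circ J_i$ via the pushout-to-pullback conversion, and then use the $J$-lift for split-epi and the $J^{*}$-lift for injectivity. The paper carries this out element-wise (taking $t$ and $\lambda(J_i,!)(t\circ J_i)$, packaging them into a map out of the pushout, and using the $J^{*}$-lift to conclude they agree), whereas you invoke the abstract principle ``$f$ is monic iff its diagonal is invertible'' and ``split epi $+$ mono $=$ iso'' directly; this is a cleaner conceptual packaging of the same computation rather than a different route.
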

\begin{proof}

Since the functor $- \to X$ sends limits to colimits, we have:
$$V_{i} +_{U_{i}} V_{i} \to X  \cong (V_{i} \to X)\times_{U_{i} \to X}(V_{i} \to X)$$ Hence the universal property of pushouts extend to natural transformations as well.\\

Suppose $X$ is $(I,J)$-local. Since we are dealing with a morphism $g:X \to 1$, it suffices to find for every $i \in I$ functors:
\[\begin{tikzcd}
	{U_{i} \to X} &&&& {V_{i} \to X} \\
	{V_{i} +_{U_{i}} V_{i} \to X} &&&& {V_{i} \to X}
	\arrow["{{{\lambda(J_{i},g})}}", from=1-1, to=1-5]
	\arrow["{{{\lambda(J^{*}_{i},g})}}", from=2-1, to=2-5]\\
\end{tikzcd}\]

such that $(- \circ J_{i}) \circ \lambda(J_{i},g) = \id_{U_{i} \to X}$ and $(- \circ J^{*}_{i}) \circ \lambda(J^{*}_{i},g) = \id_{V_{i} +_{U_{i}} V_{i} \to X}$. $X$ being $(I,J)$-local means that each $- \circ J_{i}$ is an isomorphism so it suffices to find $\lambda(J^{*}_{i},g)$. Given a morphism $t:V_{i} +_{U_{i}} V_{i} \to X$ by the universal property of pushouts we have $t \circ \iota_{0,i}: V_{i} \to X$ and $t \circ \iota_{1,i}: V_{i} \to X$ such that $t \circ \iota_{0,i} \circ J_{i} = t \circ \iota_{1,i} \circ J_{i}$. Since $- \circ J_{i}$ is an isomorphism, $t \circ \iota_{0,i} = t \circ \iota_{1,i}$. Also by the universal property of pushouts $t = t \circ \iota_{0,i} \circ J^{*}_{i}$ since $J^{*}_{i} \circ \iota_{0,i} = J^{*}_{i} \circ \iota_{1,i} = \id_{V_{i}}$.  We have a similar argument for natural isomorphisms $\tau: t \to t'$. Thus $\lambda(J^{*}_{i},g) = (- \circ \iota_{1,i})$ making $X$ an $R_{[\Loc(I,J)]}$ object.\\

Suppose $X$ is an $R_{[\Loc(I,J)]}$ object, then we have lifts

\[\begin{tikzcd}
	{U_{i} \to X} &&&& {V_{i} \to X} \\
	{V_{i} +_{U_{i}} V_{i} \to X} &&&& {V_{i} \to X}
	\arrow["{{{\lambda(J_{i},g})}}", from=1-1, to=1-5]
	\arrow["{{{\lambda(J^{*}_{i},g})}}", from=2-1, to=2-5]\\
\end{tikzcd}\]
such that $(- \circ J_{i}) \circ \lambda(J_{i},g) = \id_{U_{i} \to X}$ and $(- \circ J^{*}_{i}) \circ \lambda(J^{*}_{i},g) = \id_{V_{i} +_{U_{i}} V_{i} \to X}$. It suffices to show that $\lambda(J_{i},g) \circ (- \circ J_{i}) = \id_{V_{i} \to X}$. Suppose we have $t:V_{i} \to X$, then we have $\lambda(J_{i},g)(t \circ J_{i}): V_{i} \to X$ with $\lambda(J_{i},g)(t \circ J_{i}) \circ J_{i} = t \circ J_{i}$. This gives us a unique $t_{*}:V_{i} \times_{U_{i}} V_{i} \to X$ such that $t^{*} \circ \iota_{0,i} = t$ and $t^{*} \circ \iota_{1,i} = \lambda(J_{i},g)(t \circ J_{i})$. Furthermore we have a lift $\lambda(J^{*}_{i},g)(t_{*}):V_{i} \to X$ such that $\lambda(J^{*}_{i},g)(t_{*}) \circ J^{*} = t_{*}$. We now have the following: $$\lambda(J^{*}_{i},g)(t_{*}) = \lambda(J^{*}_{i},g)(t_{*}) \circ J^{*} \circ \iota_{1,i} = t_{*} \circ \iota_{1,i} = \lambda(J_{i},g)(t \circ J_{i})$$ and 
$$\lambda(J^{*}_{i},g)(t_{*}) = \lambda(J^{*}_{i},g)(t_{*}) \circ J^{*} \circ \iota_{0,i} = t_{*} \circ \iota_{0,i} = t$$ Thus $t = \lambda(J_{i},g)(t \circ J_{i})$. Since the same argument applies to morphisms we have $\lambda(J_{i},g) \circ (- \circ J_{i}) = \id_{V_{i} \to X}$. Thus $(- \circ J_{i})$ is an isomorphism making $X$ $(I,J)$-local.\\

Therefore, the $R_{[\Loc(I,J)]}$ objects are precisely the $(I,J)$-local objects.

\end{proof}

\begin{lem}
\label{jstarlocal}

$(I,J)$-local objects are also $(I,J^{*})$-local.

\end{lem}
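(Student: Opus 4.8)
The plan is to reduce $(I,J^{*})$-locality of $X$ to the fact that each $-\circ J_{i}$ is a monomorphism, which is immediate once $X$ is $(I,J)$-local. First I would recall the identification already used in the proof of the preceding lemma: since the contravariant functor $-\to X$ carries colimits to limits, the pushout square defining $V_{i}+_{U_{i}}V_{i}$ yields a natural isomorphism
$$(V_{i}+_{U_{i}}V_{i}\to X)\;\cong\;(V_{i}\to X)\times_{(U_{i}\to X)}(V_{i}\to X),$$
in which both legs of the cospan are $-\circ J_{i}$. This isomorphism is compatible with the projections to $I$, and under it a functor $s\colon V_{i}+_{U_{i}}V_{i}\to X$ corresponds to the pair $(s\circ\iota_{0,i},\,s\circ\iota_{1,i})$.

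Next I would compute the map $-\circ J^{*}_{i}$ through this isomorphism. Because the defining equations of $J^{*}$ give $J^{*}_{i}\circ\iota_{0,i}=J^{*}_{i}\circ\iota_{1,i}=\id_{V_{i}}$, a functor $t\colon V_{i}\to X$ is sent by $-\circ J^{*}_{i}$ to $t\circ J^{*}_{i}$, which corresponds to the pair
$$(t\circ J^{*}_{i}\circ\iota_{0,i},\,t\circ J^{*}_{i}\circ\iota_{1,i})=(t,t).$$
Thus $-\circ J^{*}_{i}$ is exactly the diagonal map from $(V_{i}\to X)$ into the kernel pair $(V_{i}\to X)\times_{(U_{i}\to X)}(V_{i}\to X)$ of $-\circ J_{i}$.

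The remaining step is purely formal. Since $X$ is $(I,J)$-local, each $-\circ J_{i}$ is an isomorphism, so the projection $\pr{0}$ of its kernel pair, being the pullback of an isomorphism along itself, is an isomorphism; as $\pr{0}$ composed with the diagonal is $\id_{(V_{i}\to X)}$, the diagonal is the inverse of $\pr{0}$ and hence an isomorphism. Therefore $-\circ J^{*}_{i}$ is an isomorphism for every $i\in I$, and because $I$ is discrete these fiberwise isomorphisms assemble into an isomorphism over $I$; the identical computation handles the morphism part (natural isomorphisms $\tau\colon t\to t'$), so $X$ is $(I,J^{*})$-local. I do not expect a genuine obstacle here: the only point demanding care is confirming that $-\circ J^{*}_{i}$ really is the diagonal under the pushout/pullback duality, which rests on the equations $J^{*}_{i}\circ\iota_{0,i}=J^{*}_{i}\circ\iota_{1,i}=\id_{V_{i}}$ and on the naturality over $I$ of the identification above.
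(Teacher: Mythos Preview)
Your proof is correct and takes a different, more direct route than the paper's. The paper argues via the $R_{[\Loc(I,J^{*})]}$ characterization: it computes the codiagonal $J^{**}$ of $J^{*}$ by exhibiting the pushout of $J^{*}$ against itself as $\id_{V}$ (both legs being retractions of $\iota_{0},\iota_{1}$), so $J^{**}=\id_{V}$; together with the lifts against $J^{*}$ already supplied by the preceding lemma (from $X$ being $R_{[\Loc(I,J)]}$), this makes $X$ an $R_{[\Loc(I,J^{*})]}$ object and hence $(I,J^{*})$-local. You instead stay on the mapping-space side and verify the definition of $(I,J^{*})$-locality directly: pushout/pullback duality identifies $-\circ J^{*}_{i}$ with the diagonal into the kernel pair of $-\circ J_{i}$, and the diagonal of an isomorphism's kernel pair is an isomorphism. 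Your argument is more elementary in that it bypasses the $R_{[\Loc]}$ machinery entirely, while the paper's argument sits more naturally inside the lifting framework it has developed and records the structural fact $J^{**}=\id_{V}$ for later use.
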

\begin{proof}

It suffices to show that any $(I,J)$-local object $X$ is an $R_{[\Loc(I,J^{*})]}$ object. However, it is straightforward to show that we have the following diagram:

\[\begin{tikzcd}
	{\sum_{i \in I} V_{i} +_{U_{i}} V_{i} } &&&& {\sum_{i \in I} V_{i}} \\
	\\
	\\
	\\
	{\sum_{i \in I} V_{i}} &&&& {\sum_{i \in I} V_{i} } \\
	&&&&& {\sum_{i \in I} V_{i}}
	\arrow["J^{*}", from=1-1, to=1-5]
	\arrow["J^{*}"', from=1-1, to=5-1]
	\arrow["{{\id_{V}}}", from=1-5, to=5-5]
	\arrow["{\id_{V}}", from=1-5, to=6-6]
	\arrow["{{\id_{V}}}"', from=5-1, to=5-5]
	\arrow["{\id_{V}}"', from=5-1, to=6-6]
	\arrow["\lrcorner"{anchor=center, pos=0.125, rotate=180}, draw=none, from=5-5, to=1-1]
	\arrow["{\id_{V}}", from=5-5, to=6-6]
\end{tikzcd}\]

Hence $J^{**} = \id_{V}$, therefore $X$ is also an $R_{[\Loc(I,J^{*})]}$ object and thus is an $(I,J^{*})$-local object.

\end{proof}

We set $\Loc_{I}(J) = R_{[\Loc(I,J)]}|_{\GrpA{A}/1}$. Thus $\Loc_{I}(J): \GrpA{A} \to \GrpA{A}$ is still a monad with its unit $\eta$ and multiplication $\mu$ being the restrictions of $\eta_{R_{[\Loc(I,J)]}}$ and $\mu_{R_{[\Loc(I,J)]}}$.

\begin{lem}

The $\Loc_{I}(J)$ unit, $\eta_{X}$, is an isomorphism when $X$ is $(I,J)$-local.

\end{lem}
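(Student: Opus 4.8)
The plan is to leverage the preceding lemma, which identifies the $(I,J)$-local objects with the $R_{[\Loc(I,J)]}$-objects, i.e.\ the algebras for the monad $\Loc_{I}(J)$. First I would record that since $X$ is $(I,J)$-local, the terminal map $!\colon X \to 1$ carries an $R_{[\Loc(I,J)]}$-algebra structure. Because the unit is $\eta_{R_{[\Loc(I,J)]},!} = (\eta_{X},\id_{1})$ (exactly as for the small object argument, where $\eta_{R_{[\mathrm{SOA}]},g_{0}} = (\overline{g}_{0},\id_{Y_{0}})$), this algebra structure is a morphism $s\colon \Loc_{I}(J)(X) \to X$ over $1$ satisfying the unit law $s \circ \eta_{X} = \id_{X}$. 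Thus $\eta_{X}$ is a split monomorphism, and the entire problem reduces to proving the reverse identity $\eta_{X} \circ s = \id_{\Loc_{I}(J)(X)}$.

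Next I would observe that $\Loc_{I}(J)(X)$ is itself $(I,J)$-local: the right-hand factor $\omega_{!}\colon \Loc_{I}(J)(X) \to 1$ of the factorization is always an $R_{[\Loc(I,J)]}$-map by construction of the algebraic weak factorization system, so $\Loc_{I}(J)(X)$ is an $R_{[\Loc(I,J)]}$-object, hence local. With both $X$ and $\Loc_{I}(J)(X)$ local, I would invoke the universal property of the reflection: for every local $Z$, precomposition $(-)\circ \eta_{X}\colon \Hom(\Loc_{I}(J)(X),Z) \to \Hom(X,Z)$ is a bijection. Applying this with $Z = \Loc_{I}(J)(X)$, both $\id_{\Loc_{I}(J)(X)}$ and $\eta_{X}\circ s$ restrict along $\eta_{X}$ to the same map, since $\id \circ \eta_{X} = \eta_{X}$ and $(\eta_{X}\circ s)\circ \eta_{X} = \eta_{X}\circ(s\circ\eta_{X}) = \eta_{X}$; injectivity of $(-)\circ\eta_{X}$ then forces $\eta_{X}\circ s = \id_{\Loc_{I}(J)(X)}$, completing the proof that $\eta_{X}$ is an isomorphism.

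The hard part will be justifying the injectivity of $(-)\circ \eta_{X}$ on maps into local objects, which is exactly the statement that $\Loc_{I}(J)$ is idempotent, equivalently that every $\GrpA{A}$-morphism into a local object is automatically an $R_{[\Loc(I,J)]}$-algebra morphism; this is not formally established earlier and is the technical heart. I would establish it concretely in the style of this paper by an induction on the $W$-type with reductions $\Loc_{I}(J)(X)$: one defines $s$ explicitly from the local lifting data $\lambda(J_{i},!)$ and $\lambda(J^{*}_{i},!)$ and verifies $\eta_{X}\circ s = \id$ by case analysis on the generators, showing that the basic generators $\eta_{X}(x)$ are fixed and that each generator $\alpha((v_{i},s_{i}),c_{i})$ collapses, via the reduction relations of the algebraic weak factorization system together with the isomorphisms $(-)\circ J_{i}$ supplied by locality, to the value $\eta_{X}(s(\alpha((v_{i},s_{i}),c_{i})))$. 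Carrying out this inductive collapse over the reduction generators, and checking its compatibility with the groupoid operations, is where the real work lies; the abstract monad-theoretic wrapper above simply packages the conclusion.
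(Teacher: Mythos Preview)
Your final paragraph is exactly the paper's approach: obtain a section $h$ of $\eta_X$ from the $R_{[\Loc(I,J)]}$-algebra structure, then prove $\eta_X \circ h = \id_{\Loc_I(J)(X)}$ by induction over the $W$-type-with-reductions structure of $\Loc_I(J)(X)$. The abstract wrapper in your first two paragraphs is a detour that, as you yourself note, does not close on its own: the ``injectivity of $(-)\circ\eta_X$ into local objects'' you appeal to is precisely the idempotence that the paper deduces \emph{from} this lemma (as its immediate corollary), not something available beforehand. So the actual content of your argument is the induction, and there the plan matches the paper.

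One point worth making explicit in the inductive step, since your sketch is slightly vague here: the paper uses locality of $\Loc_I(J)(X)$ itself (not just of $X$) at the crucial moment. Given a generator $\alpha_X((v_i,!),c_i)$ and the induction hypothesis $\eta_X\circ h\circ c_i = c_i$, one has the canonical extension $k_i: V_i \to \Loc_I(J)(X)$ with $k_i(v_i) = \alpha_X((v_i,!),c_i)$ and $k_i\circ J_i = c_i$; from locality of $X$ one gets $q_i: V_i \to X$ with $q_i\circ J_i = h\circ c_i$. Then $k_i\circ J_i = \eta_X\circ q_i\circ J_i$, and the paper uses the $J^*$-lift on $\Loc_I(J)(X)$ to conclude $k_i = \eta_X\circ q_i$, from which $\eta_X\circ h(\alpha_X((v_i,!),c_i)) = \eta_X\circ h\circ k_i(v_i) = \eta_X\circ q_i(v_i) = k_i(v_i)$ follows. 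You do mention both that $\Loc_I(J)(X)$ is local and ``the isomorphisms $(-)\circ J_i$'', so you have the ingredients; just be aware that it is the \emph{target} object's locality driving this identification, not only the source's.
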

\begin{proof}

Since $X$ is $(I,J)$-local, $\eta_{X}: X \to \Loc_{I}(J)(X)$ has a section $h$. Recall that $\Loc_{I}(J)(X)$ is a $W$-type with reduction. We denote its algebra structure as $\alpha_{X}$. We seek to show $\eta_{X} \circ h = \id_{\Loc_{I}(J)(X)}$.
Obviously this holds for elements $\eta_{X}(x)$. Suppose we have an object $\alpha_{X}((v_{i},!:V_{i} \to 1), c_{i}:U_{i} \to \Loc_{I}(J)(X))$ such that we have $c_{i} = \eta_{X} \circ h \circ c_{i}$ by induction. We have a unique $k_{i}:V_{i} \to \Loc_{I}(J)(X)$ such that $k_{i} \circ J_{i} = c_{i}$ and $k_{i}(v_{i}) = \alpha_{X}((v_{i},!),c_{i})$. Using the fact that $X$ is $(I,J)$-local, we have a $q_{i}: V_{i} \to X$ such that $q_{i}  \circ J_{i} = h \circ c_{i}$. Thus we have $$\eta_{X} \circ q_{i}  \circ J_{i} = \eta_{X} \circ h \circ c_{i} = c_{i} = k_{i} \circ J_{i}$$ This gives us a unique $s: V_{i} +_{U_{i}} V_{i} \to \Loc_{I}(J)(X)$ such that $s \circ \iota_{0,i} = \eta_{X} \circ q_{i} $ and $s \circ \iota_{1,i} = k_{i}$. Thus we have a lift $l:V_{i} \to \Loc_{I}(J)(X)$ such that $s = l \circ J^{*}_{i}$. But this means that $l = k_{i} = \eta_{X} \circ q_{i}$. And so we have $$\eta_{X} \circ h \circ k_{i} = \eta_{X} \circ h \circ \eta_{X} \circ q_{i} = \eta_{X} \circ q_{i} = k_{i}$$ thus $$\eta_{X} \circ h(\alpha_{X}((v_{i},!),c_{i})) = \eta_{X} \circ h \circ k_{i}(v_{i}) = k_{i}(v_{i}) = \alpha_{X}((v_{i},!),c_{i})$$
We have a similar argument for morphisms. Preservation of reductions is obvious. Therefore $\eta_{X} \circ h = \id_{\Loc_{I}(J)(X)}$ making $\eta_{X}$ an isomorphism.

\end{proof}

\begin{cor}

$\mu_{X}$ is an isomorphism for every $X$.

\end{cor}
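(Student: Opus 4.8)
The plan is to deduce this directly from the previous lemma together with one of the monad unit laws, once we observe that $\Loc_{I}(J)(X)$ is itself an $(I,J)$-local object for every $X$. Recall that $\Loc_{I}(J) = R_{[\Loc(I,J)]}|_{\GrpA{A}/1}$ is a monad with unit $\eta$ and multiplication $\mu$, so in particular the left unit law holds componentwise: $\mu_{X} \circ \eta_{\Loc_{I}(J)(X)} = \id_{\Loc_{I}(J)(X)}$. Thus it suffices to show that $\eta_{\Loc_{I}(J)(X)}$ is an isomorphism, for then $\mu_{X}$ is forced to be its two-sided inverse and hence an isomorphism.

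First I would argue that $\Loc_{I}(J)(X)$ is $(I,J)$-local. By construction $\Loc_{I}(J)(X)$ is the middle object of the $([\Loc(I,J)], R_{[\Loc(I,J)]}, L_{[\Loc(I,J)]})$-factorization of the terminal map $! \colon X \to 1$, and the right-hand factor $\Loc_{I}(J)(X) \to 1$ is precisely $R_{[\Loc(I,J)]}(!)$. In an algebraic weak factorization system the right-hand factor of any morphism carries a canonical (free) $R$-algebra structure, so $R_{[\Loc(I,J)]}(!) \colon \Loc_{I}(J)(X) \to 1$ is an $R_{[\Loc(I,J)]}$-map; that is, $\Loc_{I}(J)(X)$ is an $R_{[\Loc(I,J)]}$ object. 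By the lemma identifying the $R_{[\Loc(I,J)]}$ objects with the $(I,J)$-local objects, $\Loc_{I}(J)(X)$ is therefore $(I,J)$-local.

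With this in hand, the previous lemma applies to $Y := \Loc_{I}(J)(X)$ and shows that $\eta_{\Loc_{I}(J)(X)}$ is an isomorphism. Combining this with the unit law $\mu_{X} \circ \eta_{\Loc_{I}(J)(X)} = \id_{\Loc_{I}(J)(X)}$ yields $\mu_{X} = \eta_{\Loc_{I}(J)(X)}^{-1}$, so $\mu_{X}$ is an isomorphism for every $X$. The only genuine content is the observation that $\Loc_{I}(J)(X)$ is local; everything else is formal. I expect the main (and only minor) obstacle to be pinning down precisely why the right factor of $!$ is an $R_{[\Loc(I,J)]}$-map, i.e. invoking the free $R$-algebra structure on the codomain of the $L$-part to exhibit $\Loc_{I}(J)(X) \to 1$ as an $R$-map, rather than any substantive difficulty.
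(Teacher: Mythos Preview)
Your proof is correct and follows essentially the same approach as the paper: use the monad unit law $\mu_{X} \circ \eta_{\Loc_{I}(J)(X)} = \id_{\Loc_{I}(J)(X)}$, observe that $\Loc_{I}(J)(X)$ is $(I,J)$-local so that the previous lemma makes $\eta_{\Loc_{I}(J)(X)}$ an isomorphism, and conclude $\mu_{X} = \eta_{\Loc_{I}(J)(X)}^{-1}$. You give a more explicit justification for why $\Loc_{I}(J)(X)$ is local (via the free $R$-algebra structure on the right factor) than the paper, which simply asserts it, but the argument is the same.
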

\begin{proof}

We have $\mu_{X} \circ \eta_{\Loc_{I}(J)(X)} = \id_{\Loc_{I}(J)(X)}$. Since $\Loc_{I}(J)(X)$ is $(I,J)$-local, $\eta_{\Loc_{I}(J)(X)}$ is an isomorphism, thus $\eta_{\Loc_{I}(J)(X)}^{-1} = \mu_{X}$.

\end{proof}

\begin{cor}

$\Loc_{I}(J)$ is an idempotent monad whose reflective subcategory is the full subcategory of $(I,J)$-local objects. 

\end{cor}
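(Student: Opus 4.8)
The plan is to invoke the standard correspondence between idempotent monads and reflective subcategories. Recall that a monad $(T,\eta,\mu)$ is idempotent precisely when its multiplication $\mu$ is a natural isomorphism, and in that case the Eilenberg--Moore category of $T$ is reflective and is realized as the full subcategory of those objects $X$ for which the unit $\eta_X$ is an isomorphism, with the reflector given by $T$ itself and the reflection unit by $\eta$. Since $\mu$ is the multiplication of the monad $\Loc_{I}(J)$ it is automatically natural, and the preceding corollary shows that every component $\mu_X$ is an isomorphism; hence $\Loc_{I}(J)$ is idempotent.

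It then remains to identify the reflective subcategory $\{X \mid \eta_X \text{ is an isomorphism}\}$ with the full subcategory of $(I,J)$-local objects. First I would recall the earlier lemma, which gives one inclusion: if $X$ is $(I,J)$-local then $\eta_X$ is an isomorphism. For the reverse inclusion I would use that $\Loc_{I}(J)(X)$ is the codomain of the $([\Loc(I,J)], R_{[\Loc(I,J)]}, L_{[\Loc(I,J)]})$-factorization of $! \colon X \to 1$, so that $\omega_{!} \colon \Loc_{I}(J)(X) \to 1$ carries the canonical free $R_{[\Loc(I,J)]}$-algebra structure and is in particular an $R_{[\Loc(I,J)]}$-map. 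Thus $\Loc_{I}(J)(X)$ is an $R_{[\Loc(I,J)]}$ object, and by the lemma identifying these with the $(I,J)$-local objects, $\Loc_{I}(J)(X)$ is $(I,J)$-local. Since $(I,J)$-locality is defined by the invertibility of $- \circ J$ and is therefore invariant under isomorphism, any $X$ for which $\eta_X \colon X \to \Loc_{I}(J)(X)$ is an isomorphism is itself $(I,J)$-local.

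Combining the two inclusions, the full subcategory of objects on which $\eta$ is invertible coincides with the full subcategory of $(I,J)$-local objects, and the idempotent monad $\Loc_{I}(J)$ exhibits the latter as a reflective subcategory of $\GrpA{A}$ with reflector $\Loc_{I}(J)$ and unit $\eta$. I expect the main obstacle to be the reverse inclusion, namely verifying that the free object $\Loc_{I}(J)(X)$ is always $(I,J)$-local; this is exactly the point where the algebraic weak factorization system structure is needed, through the canonical $R_{[\Loc(I,J)]}$-algebra structure on the codomain of the factorization together with the earlier characterization of $R_{[\Loc(I,J)]}$ objects as precisely the $(I,J)$-local ones.
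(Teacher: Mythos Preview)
Your proposal is correct and is precisely the standard argument that the paper leaves implicit: the paper states this corollary without proof, treating it as an immediate consequence of the preceding lemma (that $\eta_X$ is an isomorphism for $(I,J)$-local $X$) and corollary (that $\mu_X$ is always an isomorphism), together with the earlier identification of $R_{[\Loc(I,J)]}$ objects with $(I,J)$-local objects. Your write-up supplies exactly the details one would expect, including the reverse inclusion via the fact that $\Loc_I(J)(X)$ is always $(I,J)$-local.
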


We will refer to the full subcategory of $(I,J)$-local objects as $\GrpA{A}_{(I,J)}$.\\

\begin{lem}

$\GrpA{A}_{(I,J)}$ is finitely complete and cocomplete. It is also an exponential ideal hence $\Loc_{I}(J)$ preserves finite products and $\GrpA{A}_{(I,J)}$ is cartesian closed.

\end{lem}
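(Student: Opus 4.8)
The plan is to treat $\GrpA{A}_{(I,J)}$ purely as the reflective subcategory of the idempotent monad $\Loc_{I}(J)$ established above, and to deduce each claim from the standard behaviour of a reflective subcategory of a finitely bicomplete cartesian closed category (here $\GrpA{A}$, which is cartesian closed by Lemma \ref{ccgrpd} and has finite limits and colimits). Throughout I would use the reformulation already exploited in the characterization of $(I,J)$-local objects: an object $X$ (viewed through $!:X \to 1$) is $(I,J)$-local precisely when for every $i \in \ob I$ the functor $- \circ J_i : (V_i \to X) \to (U_i \to X)$ is an isomorphism of groupoid assemblies.

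First I would establish finite completeness by showing the local objects are closed under finite limits in $\GrpA{A}$. Given a finite diagram of local objects with limit $X = \lim_k X_k$ computed in $\GrpA{A}$, the exponential functors $(V_i \to -)$ and $(U_i \to -)$ are right adjoints (to $- \times V_i$ and $- \times U_i$) and hence preserve this limit; since $- \circ J_i$ is natural in its argument, the map $- \circ J_i : (V_i \to X) \to (U_i \to X)$ is identified with the limit of the isomorphisms $- \circ J_i : (V_i \to X_k) \to (U_i \to X_k)$ and is therefore itself an isomorphism. Thus $X$ is local, the inclusion creates finite limits, and $\GrpA{A}_{(I,J)}$ is finitely complete; in particular $1$ is local, so it is the terminal object of the subcategory.

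For finite cocompleteness I would invoke the general fact that a reflective subcategory of a finitely cocomplete category is finitely cocomplete, with the colimit of a finite diagram computed by applying the reflector $\Loc_{I}(J)$ to the colimit taken in $\GrpA{A}$; this uses only that $\GrpA{A}$ has finite colimits and that $\Loc_{I}(J)$ is left adjoint to the inclusion. Next, to see that $\GrpA{A}_{(I,J)}$ is an exponential ideal, I would take $X$ local and $Y$ arbitrary and consider the exponential $Y \to X$ formed in $\GrpA{A}$. Using the symmetry of exponentiation in a cartesian closed category there are natural isomorphisms $V_i \to (Y \to X) \cong Y \to (V_i \to X)$ and $U_i \to (Y \to X) \cong Y \to (U_i \to X)$, under which $- \circ J_i$ corresponds to the functor $(Y \to -)$ applied to the isomorphism $- \circ J_i : (V_i \to X) \to (U_i \to X)$. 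Since $(Y \to -)$ preserves isomorphisms, $Y \to X$ is local.

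Finally I would deduce that $\Loc_{I}(J)$ preserves finite products by a Yoneda computation: it preserves $1$ because $1$ is local, and for any $A,B$ and any local $Z$ the chain of natural isomorphisms
\[
\Hom(\Loc_{I}(J)(A)\times \Loc_{I}(J)(B),\, Z) \cong \Hom(A \times B,\, Z) \cong \Hom(\Loc_{I}(J)(A \times B),\, Z),
\]
obtained by currying, by repeatedly using the reflection isomorphism $\Hom(\Loc_{I}(J)(W),Z) \cong \Hom(W,Z)$ for $Z$ local, and by using that $(A \to Z)$ and $(B \to Z)$ are local by the exponential ideal property, identifies $\Loc_{I}(J)(A \times B)$ with $\Loc_{I}(J)(A) \times \Loc_{I}(J)(B)$. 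Cartesian closedness of $\GrpA{A}_{(I,J)}$ then follows: finite products are computed as in $\GrpA{A}$, and for local $X,Y$ the ambient exponential $Y \to X$ is local and serves as internal hom, since $\Hom(Z \times Y, X) \cong \Hom(Z, Y \to X)$ restricts to the full subcategory because products agree with those of $\GrpA{A}$. The main obstacle is not any single step — all are standard reflective-subcategory arguments — but rather keeping the fibered locality condition honest: the crux is the reduction of ``isomorphism over $I$'' to the fiberwise statement that each $- \circ J_i$ is invertible, after which stability of this property under limits and under $(Y \to -)$ drives every part of the proof.
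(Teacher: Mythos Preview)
Your proof is correct and essentially parallels the paper's argument, with one genuine difference in the finite-completeness step. The paper establishes closure under finite limits by observing that $(I,J)$-local objects coincide with the $R_{[\Loc(I,J)]}$-algebras over $1$, and then invokes the general fact that $R$-algebras of an algebraic weak factorization system are closed under limits in the arrow category. You instead argue directly at the level of the fiberwise locality criterion: since $(V_i \to -)$ and $(U_i \to -)$ are right adjoints they preserve limits, so $-\circ J_i$ on a limit is the limit of isomorphisms. Your route is more elementary, avoiding the AWFS machinery entirely, while the paper's route leverages structure it has already built; both are clean. For the exponential-ideal step your argument is identical to the paper's (swapping the exponents and applying $(Y \to -)$ to an isomorphism). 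The paper does not spell out the Yoneda derivation that $\Loc_I(J)$ preserves finite products or the resulting cartesian closedness, treating these as standard consequences of being an exponential ideal; your explicit verification is a welcome elaboration rather than a departure.
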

\begin{proof}

Since $\GrpA{A}_{(I,J)}$ is a reflective subcategory, $\Loc_{I}(J)$ is a left adjoint and thus preserves any colimits in $\GrpA{A}$ in particular the ones between $(I,J)$-local objects. Hence $\GrpA{A}_{(I,J)}$ has finite colimits. \\

Since we have an algebraic weak factorization system $([\Loc(I,J)], R_{[\Loc(I,J)]},L_{[\Loc(I,J)]})$, the $R_{[\Loc(I,J)]}$ algebras are closed under all limits that exists in $\GrpA{A}^{\arr}$. Recall that for a $(I,J)$-local object $X$, $\eta_{X}$ is an isomorphism. Hence any $(I,J)$-local object $X$ is an $R_{[\Loc(I,J)]}$ algebra and vice versa when considering $R_{[\Loc(I,J)]}$ algebras $X \to 1$. We concluded from this that $(I,J)$-local objects are closed under finite limits. \\

Given a groupoid assembly $Y$ and an $(I,J)$-local object $X$, we seek to show that for $i \in I$, $$(- \circ J_{i}): V_{i} \to (Y \to X)\to U_{i} \to (Y \to X)$$ is an isomorphism. However we can express $(- \circ J_{i})$ as the morphism: $$(- \circ J_{i})^{Y}:  Y \to (V_{i} \to X)\to Y \to (U_{i} \to X)$$ which is obviously an isomorphism. Thus $(- \circ J_{i})$ is an isomorphism making $(Y \to X)$ $(I,J)$-local. Therefore, $\GrpA{A}_{(I,J)}$ is an exponential ideal.

\end{proof}

Here we will talk about a few notable examples:\\

We have alluded to the notion of discrete groupoid assemblies before, but now we will formally define them:

\begin{defn}

We call $X$ a {\bf discrete groupoid assembly} if the only isomorphisms of $X$ are the identity morphisms.

\end{defn}

We can characterize the discrete groupoid assemblies $X$ as the groupoid assemblies with the morphism:

$$(- \circ  !_{\I}): X \to (\I \to X)$$ being an isomorphism since such a condition basically states that for an isomorphism $p$ in $X$ there exists a unique $x \in \ob X$ such that $p = \id_{x}$. So setting $\Loc_{1}(!_{\I})$ as the localization modality with respect to $!_{\I}:\I \to 1$, $\GrpA{A}_{(1,!_{\I})} \cong \Asm(A)$.\\

Recall that an assembly $(X,\phi)$ is modest when for $x, x' \in X$, if $x \neq x'$ then $\phi(x) \cap \phi(x') = \emptyset$.
Also recall that we have a functor $\nabla: \Set \to \Asm(A)$ which takes a set $X$ to $(X, \const_{A})$. Setting $2 = \{0,1\}$, we have the assembly $\nabla(2)$. We can now characterize the modest assemblies as follows:

\begin{lem}
$(X,\phi)$ is a modest assembly if and only if the morphism $$(- \circ !_{\nabla(2)}): (X,\phi) \to (\nabla(2) \to (X,\phi))$$ is an isomorphism.

\end{lem}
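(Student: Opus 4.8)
The plan is to reduce everything to a computation of the exponent $(\nabla(2) \to (X,\phi))$ together with the map $(-\circ !_{\nabla(2)})$, and then to observe that modesty is exactly the condition making the underlying function a bijection. Since $(X,\phi)$ and $\nabla(2) = (\{0,1\},\const_{A})$ are both assemblies, viewed as discrete groupoid assemblies, every functor out of $\nabla(2)$ is just a function on objects and every natural isomorphism between two such functors is forced to be an identity; hence the exponent in $\GrpA{A}$ coincides with the exponent in $\Asm(A)$, and I would work there. Unwinding the definition of the exponent of assemblies, the underlying set of $(\nabla(2)\to(X,\phi))$ is the set of functions $g:\{0,1\}\to X$ admitting a realizer $r$ with $r\cdot a \in \phi(g(i))$ for every $i\in\{0,1\}$ and every $a\in A$; since $\const_{A}(i)=A$, such an $r$ exists exactly when $\phi(g(0))\cap\phi(g(1))\neq\emptyset$ (take $r = k\cdot c$ for $c$ in the intersection, and conversely evaluate any realizer at $\iota$). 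Under the canonical iso $(1\to(X,\phi))\cong(X,\phi)$, the map $(-\circ !_{\nabla(2)})$ sends $x\in X$ to the constant function $\const_{x}$, and it is realized by $\lmbd{a}{k\cdot a}$, since $(k\cdot a)\cdot b = a \in \phi(x)$ whenever $a\in\phi(x)$.

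Next I would record the key observation. The underlying function of $(-\circ !_{\nabla(2)})$ is always injective, as $\const_{x}(0)=x$, and it is surjective exactly when every $g$ in the exponent is constant, i.e.\ when $\phi(g(0))\cap\phi(g(1))\neq\emptyset$ forces $g(0)=g(1)$. Rephrasing over all pairs $x,x'\in X$, this says $\phi(x)\cap\phi(x')\neq\emptyset \imply x=x'$, whose contrapositive is precisely the defining condition of a modest assembly. Thus the map $(-\circ !_{\nabla(2)})$ is a bijection on underlying sets if and only if $(X,\phi)$ is modest.

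For the ``only if'' direction, I would use that any isomorphism in $\Asm(A)$ is in particular a bijection on underlying carriers, its set-level inverse being the underlying function of the inverse morphism; combined with the previous paragraph this yields modesty at once. For the converse, assuming $(X,\phi)$ modest, the map is a bijection whose set-theoretic inverse is $g\mapsto g(0)$, well defined since every $g$ in the exponent is now constant. This inverse is tracked: if $u$ realizes $g$ then $u\cdot a \in \phi(g(0))$ for all $a$, so in particular $u\cdot\iota\in\phi(g(0))$, and the inverse is realized by $\lmbd{u}{u\cdot\iota}$. Having exhibited realizers for both directions of a bijection, $(-\circ !_{\nabla(2)})$ is an isomorphism of assemblies.

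The routine part here is the realizability bookkeeping; the only steps requiring genuine care are the exponent computation---pinning down that a realizer of $g$ must send all of $A$ into the single set $\phi(g(0))\cap\phi(g(1))$, so that inhabitation of the realizer set is equivalent to nonemptiness of that intersection---and the standard fact that an isomorphism of assemblies is a bijection on underlying sets, which drives the ``only if'' direction. Everything else is a direct translation of the bijectivity criterion into the modesty condition.
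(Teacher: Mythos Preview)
Your proof is correct and follows essentially the same approach as the paper's: both identify that a realizer for $g:\nabla(2)\to(X,\phi)$ forces $\phi(g(0))\cap\phi(g(1))\neq\emptyset$, so modesty is exactly the condition making every such $g$ constant, and both exhibit the inverse $g\mapsto g(0)$ with the same realizer $\lmbd{u}{u\cdot\iota}$. Your version is slightly more explicit in justifying why the exponent in $\GrpA{A}$ reduces to the one in $\Asm(A)$ for discrete groupoids and in isolating the bijectivity criterion as a separate observation, but the underlying argument is the same.
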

\begin{proof}

Suppose $(X,\phi)$ is a modest assembly, then if we have a morphism $f:\nabla(2) \to (X,\phi)$, $f$ is realized by $r_{f}$ in that for $i \in 2$ and $a \in A$, $r_{f} \cdot a \in \phi(f(i))$. This means that $\phi(f(0)) \cap \phi(f(1))$ is inhabited, so $f(0) = f(1)$. So $f = \nm{f(0)} \circ !_{\nabla(2)}$ where $\nm{f(0)}: 1 \to (X,\phi)$ picks out $f(0)$. So we have a morphism $s: (\nabla(2) \to (X,\phi)) \to (X,\phi)$ with $(- \circ !_{\nabla(2)}) \circ s = \id_{(\nabla(2) \to (X,\phi))}$ where $s(f) =  f(0)$ realized by $\lmbd{z}{z \cdot \iota}$ where $\iota \in A$ is the identity combinator. $s \circ (- \circ !_{\nabla(2)}) = \id_{(X,\phi)}$ is straightforward thus $(- \circ !_{\nabla(2)}) $ is an isomorphism.\\

Suppose $(- \circ !_{\nabla(2)})$ is an isomorphism, then take $x, x' \in X$ such that $x \neq x'$. If $a \in \phi(x) \cap \phi(x')$, Then we can define $f:\nabla(2) \to (X,\phi)$ with $f(0) = x$ and $f(1) = x'$ realized by $k \cdot a$. This is a contradiction since $f = \nm{x''} \circ !_{\nabla(2)}$ meaning $x' = x'' = x $. Thus  $\phi(x) \cap \phi(x')$ is empty making $(X,\phi)$ a modest assembly.\\

Therefore, $(X,\phi)$ is modest if and only if $(- \circ !_{\nabla(2)}): (X,\phi) \to (\nabla(2) \to (X,\phi))$ is an isomorphism.

\end{proof}

Returning to groupoid assemblies, we have $\nabla(2)$ as a discrete groupoid assembly and a functor $!_{\nabla(2)}:\nabla(2) \to 1$. This gives as a localization modality $\Loc_{1}(!_{\nabla(2)})$ where the $!_{\nabla(2)}$ local objects $X$ are the ones with the following isomorphisms:

$$(- \circ !_{\nabla(2)}):X \cong ({\nabla(2)} \to X)$$ $$(- \circ !_{\nabla(2)}):(\I \to X) \cong ({\nabla(2)} \to (\I \to X))$$

This means that both the object and morphism assemblies of $X$ are modest assemblies. This gives us the definition of the {\bf modest groupoid assemblies}. We can combine $!_{\nabla(2)}$ and $!_{\I}$ to obtain a localization modality $\Loc_{1 + 1}(!_{\nabla(2)}, !_{\I})$ whose local objects are precisely the modest assemblies.\\

Given a partial combinatory algebra $A$ and a partial function $f: A \rightharpoonup A$, we can construct a localization modality such that the local objects view $f$ as ``realizable". Setting $\dom(f)$ as the domain of $f$ we have the morphism $\id_{\dom(f)}: (\dom(f), [a \mapsto \{[a,f(a)]\}]) \to (\dom(f), [a \mapsto \{a\}])$ realized by $p_{0}$ which is the  first component projection combinator. This gives a localization modality $\Loc_{1}(f)$ whose local objects are the groupoid assemblies whose object and morphism assemblies are elements of $\Asm(A[f])$ where one can think of $A[f]$ as being the partial combinatory algebra that makes $f$ realizable with respect to $A$. The details of the construction of $A[f]$ are found in Section 1.7 of \cite{vanOosten2008}.\\

Finally we acknowledge the $0$-truncation and $-1$-truncation modalities, but we will cover those in a later section.


\section{$\GrpA{A}$ as a Model of Type Theory}
The section will be the final section in exhibiting $\GrpA{A}$ as a model of type theory. First we will show the existence of univalent universes of discrete groupoid assemblies. Building on that, we will then show the existence of a impredicative univalent universe of modest assemblies. Finally, we show that $\GrpA{A}$ has the structure of a $\pi$-tripos making it a model of type theory.\\


\subsection{Fibration Univalent and Impredicative Universes}
This section contains the construction of univalent universes of discrete assemblies and the impredicative univalent universe of modest assemblies.\\

\begin{defn}

We call a fibration $F:X \to Y$ a {\bf discrete fibration} when each $X_{y}$ is a discrete groupoid assembly.

\end{defn}

\begin{lem}

For a discrete fibration $F:X \to Y$, isomorphism $p:y \to y'$ in $Y$ and $x \in X_{y}$ and $x' \in X_{y'}$, there is at most one isomorphism $q:x \to x'$ over $p$. 

\end{lem}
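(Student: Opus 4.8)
The plan is to argue by a direct uniqueness computation inside the groupoid $X$. Suppose $q, q' : x \to x'$ are two isomorphisms with $F(q) = F(q') = p$. The goal is to show $q = q'$, and the natural candidate to examine is the composite $q'^{-1} \circ q : x \to x$.

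First I would verify that this composite lies in the fiber over $y$. Since $F$ is a functor, $F(q'^{-1} \circ q) = F(q')^{-1} \circ F(q) = p^{-1} \circ p = \id_{y}$, so $q'^{-1} \circ q$ is a morphism $x \to x$ whose image under $F$ is $\id_{y}$; by the definition of the fiber $F^{-1}(y)$ given before Lemma \ref{fiberpath}, this is precisely a morphism of $F^{-1}(y) = X_{y}$. Moreover, because $X$ is a groupoid every morphism is invertible, and the inverse $q^{-1} \circ q'$ also maps to $p^{-1} \circ p = \id_{y}$, so $q'^{-1} \circ q$ is in fact an isomorphism of the groupoid $X_{y}$, not merely an endomorphism.

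The conclusion then follows from discreteness: since $F$ is a discrete fibration, $X_{y}$ is a discrete groupoid assembly, whose only isomorphisms are identities. Hence $q'^{-1} \circ q = \id_{x}$, and composing with $q'$ on the left gives $q = q'$. This establishes that any two isomorphisms $x \to x'$ over $p$ coincide, i.e. there is at most one.

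There is no serious obstacle here: the argument is purely equational, using only functoriality of $F$, the groupoid structure (invertibility of morphisms and the composition laws), and the defining property of discreteness. The only point deserving care is the bookkeeping that $q'^{-1} \circ q$ genuinely represents a morphism \emph{of the fiber} $X_{y}$ and an isomorphism \emph{there}, so that discreteness applies; and since each of these steps is expressible in the internal language of $\Asm(A)$, the reasoning transfers verbatim to the internal groupoid setting without any realizer-level complication.
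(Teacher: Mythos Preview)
Your proposal is correct and follows essentially the same approach as the paper: form the composite $q'^{-1} \circ q$, observe that it lies over $\id_{y}$ and hence in the discrete fiber $X_{y}$, and conclude it must be the identity. The paper additionally checks the symmetric composite $q^{-1} \circ q'$ in $X_{y'}$, but as you note this is redundant once $q'^{-1} \circ q = \id_{x}$ is established.
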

\begin{proof}

Suppose we have two isomorphisms $q,q':x \to x'$ over $p$, then $q'^{-1} \circ q:x \to x$ exists over $\id_{y}$ and $q^{-1} \circ q':x' \to x'$ exists over $\id_{y'}$. Then $q'^{-1} \circ q  \in X_{y}$ and $q^{-1} \circ q' \in X_{y'}$ thus $q'^{-1} \circ q  = \id_{x}$ and $q^{-1} \circ q'  = \id_{x'}$. Therefore $q' = q$.

\end{proof}

\begin{lem}
\label{discfiberpath}
 When $F:X \to Y$ is a discrete fibration, for an isomorphism $p:y \to y'$ in $Y$, the induced equivalence $\lift(p):X_{y} \to X_{y'}$ is an isomorphism with inverse $\lift(p^{-1})$. Furthermore when we have an isomorphism $p' \circ p$ in $Y$, then $\lift(p' \circ p) = \lift(p') \circ \lift(p)$.

\end{lem}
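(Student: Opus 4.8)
The plan is to upgrade the natural isomorphisms produced in Lemma \ref{fiberpath} to strict equalities, exploiting the hypothesis that every fiber of a discrete fibration is a discrete groupoid assembly. The engine of the whole argument is the following elementary principle, which I would record first: if $G, H : C \to D$ are functors into a discrete groupoid assembly $D$ and $\eta : G \To H$ is a natural isomorphism, then $\eta$ is the identity natural transformation and $G = H$. Indeed, each component $\eta_{c} : G(c) \to H(c)$ is an isomorphism in the discrete groupoid $D$, hence an identity morphism, which forces $G(c) = H(c)$ on objects; and for a morphism $m : c \to c'$ the naturality equation $\eta_{c'} \circ G(m) = H(m) \circ \eta_{c}$ together with $\eta_{c'} = \id$ and $\eta_{c} = \id$ gives $G(m) = H(m)$ on morphisms. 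This reduces both assertions to simply invoking the existing natural isomorphisms over the right (discrete) fibers.

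For the second assertion I would apply this principle directly. By Lemma \ref{fiberpath} applied to $p : y \to y'$ and $p' : y' \to y''$ there is a natural isomorphism $\lift(p,p') : \lift(p') \circ \lift(p) \cong \lift(p' \circ p)$ of functors $X_{y} \to X_{y''}$. Since $F$ is a discrete fibration, the codomain $X_{y''}$ is discrete, so the principle above yields $\lift(p' \circ p) = \lift(p') \circ \lift(p)$ on the nose.

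For the first assertion the same principle applies, together with the normality clause $\lift(\id_{y}) = \id_{X_{y}}$ of Lemma \ref{fiberpath}. Taking $p : y \to y'$ and its inverse $p^{-1} : y' \to y$ gives a natural isomorphism $\lift(p, p^{-1}) : \lift(p^{-1}) \circ \lift(p) \cong \lift(p^{-1} \circ p) = \lift(\id_{y}) = \id_{X_{y}}$ of endofunctors of the discrete groupoid $X_{y}$, whence $\lift(p^{-1}) \circ \lift(p) = \id_{X_{y}}$. Symmetrically, $\lift(p^{-1}, p) : \lift(p) \circ \lift(p^{-1}) \cong \lift(p \circ p^{-1}) = \lift(\id_{y'}) = \id_{X_{y'}}$ over the discrete groupoid $X_{y'}$ gives $\lift(p) \circ \lift(p^{-1}) = \id_{X_{y'}}$. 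Hence $\lift(p)$ is an isomorphism of groupoid assemblies with strict inverse $\lift(p^{-1})$.

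I do not expect a genuine obstacle here; the work is entirely in the bookkeeping. The only point requiring care is the correct identification of the codomain of each natural isomorphism as a discrete fiber and the routine naturality computation that promotes the componentwise identities to equality of functors on morphisms. As an alternative route avoiding Lemma \ref{fiberpath} altogether, one could argue directly from the uniqueness of lifts recorded in the immediately preceding lemma: the composite $\lift(\htpy{0},F)(\lift(p)(x),p^{-1}) \circ \lift(\htpy{0},F)(x,p)$ lies over $p^{-1} \circ p = \id_{y}$, hence is a morphism in the discrete fiber $X_{y}$ and therefore an identity, which simultaneously forces $\lift(p^{-1})(\lift(p)(x)) = x$ and pins down the corresponding morphism data; I would mention this as a sanity check but carry out the proof via the cleaner natural-isomorphism argument above.
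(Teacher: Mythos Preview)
Your proposal is correct and follows essentially the same approach as the paper: invoke the natural isomorphism $\lift(p,p')$ from Lemma \ref{fiberpath}, observe that its components lie in a discrete fiber and hence are identities, conclude $\lift(p'\circ p) = \lift(p')\circ\lift(p)$, and then specialize to $p$ and $p^{-1}$ together with the normality clause $\lift(\id_y)=\id_{X_y}$ to get the isomorphism statement. Your explicit formulation of the ``natural isomorphism into a discrete groupoid implies equality of functors'' principle, including the check on morphisms, is slightly more careful than the paper's one-line appeal to the components being identities, but the argument is the same.
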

\begin{proof}

By Lemma \ref{fiberpath}, we have a natural isomorphism $\lift(j,j'): \lift(p') \circ \lift(p) \cong \lift(p' \circ p)$. Since each fiber of $F$ is a discrete groupoid assembly, then each $\lift(j,j')(x)$ is the identity thus we have $\lift(p' \circ p) = \lift(p') \circ \lift(p)$. For $p:y \to y'$, we also have $\id_{X_{y}} = \lift(\id_{y}) = \lift(p^{-1}) \circ \lift(p)$ and $\id_{X_{y'}} = \lift(\id_{y'}) = \lift(p) \circ \lift(p^{-1})$. So $\lift(p)$ is an isomorphism with inverse $\lift(p^{-1})$.

\end{proof}

Given an object classifier $U$ in $\Asm(A)$, we extend $U$ into a groupoid $U^{grpd}$ where for $\ob U^{grpd} = U$ and for $X,Y \in U$, we have the assembly $U^{grpd}(X,Y) := \{f \in \Hom(X,Y)| f \text{ is an isomorphism}\}$. We define the groupoid $U^{grpd}_{*}$ where $\ob U^{grpd}_{*} = \Sigma_{X \in U} X$ and for $(X,x), (Y,y) \in \ob U^{grpd}_{*} $, $U^{grpd}_{*}((X,x), (Y,y)) := \{f \in U^{grpd}(X,Y)| f(x) = y\}$. We have an obvious projection functor $\pr{U}:U^{grpd}_{*} \to U^{grpd}$. We call a discrete fibration $F$ {\bf U-valued} when the fibers of $F$ are contained in $U$.

\begin{lem}
\label{disuni}

$\pr{U}$ classifies the $U$-valued discrete fibrations up to isomorphism.

\end{lem}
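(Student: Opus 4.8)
The plan is to establish a correspondence, mediated by pullback of $\pr{U}$, between $U$-valued discrete fibrations over a base $B$ and functors $B \to U^{grpd}$, and to check it is realized so that it lives in $\GrpA{A}$. I would proceed in four moves: verify that $\pr{U}$ is itself a $U$-valued discrete fibration; show its pullbacks stay $U$-valued discrete fibrations; build a classifying functor from an arbitrary such fibration; and exhibit the fibration as the pullback of $\pr{U}$ along it, finally noting uniqueness up to isomorphism.

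First I would check that $\pr{U}\colon U^{grpd}_{*} \to U^{grpd}$ is a $U$-valued discrete fibration. Given $(X,x) \in \ob U^{grpd}_{*}$ and an isomorphism $p\colon X \to Y$ in $U^{grpd}$, the lift is $p\colon (X,x) \to (Y,p(x))$, and identities lift to identities, so $\pr{U}$ is a normal isofibration; the fiber over $X$ is $\{(X,x)\mid x \in X\}$, whose only morphisms over $\id_{X}$ are identities, so it is a discrete groupoid with underlying assembly $X \in U$. Since normal isofibrations are stable under pullback (being the right class of the weak factorization system of Corollary~\ref{awfsgrpd}) and fibers are preserved by pullback, any pullback $\chi^{*}(\pr{U})$ along a functor $\chi\colon B \to U^{grpd}$ is again a $U$-valued discrete fibration.

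Next, given a $U$-valued discrete fibration $F\colon E \to B$, I would define the classifying functor $\chi_{F}\colon B \to U^{grpd}$ by $\chi_{F}(b) = F^{-1}(b)$ on objects, which is an element of $U$ because $F$ is $U$-valued and its fibers are discrete, and by $\chi_{F}(p) = \lift(p)$ on an isomorphism $p\colon b \to b'$, where $\lift(p)\colon F^{-1}(b) \to F^{-1}(b')$ is the transport equivalence of Lemma~\ref{fiberpath}. By Lemma~\ref{discfiberpath} each $\lift(p)$ is an isomorphism of the discrete fibers, $\lift(p' \circ p) = \lift(p') \circ \lift(p)$, and $\lift(\id_{b}) = \id_{F^{-1}(b)}$ by normality, so $\chi_{F}$ is a functor. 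Its object part is realized trivially since the object assembly of $U^{grpd}$ carries the constant realizability of the object classifier, and its morphism part is realized by the realizer of the lifting operation $\lift(\htpy{0},F)$ used to define $\lift(p)$.

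Then I would produce an isomorphism $E \cong \chi_{F}^{*}(U^{grpd}_{*})$ over $B$. On objects this is the evident bijection $e \mapsto (F(e),e)$, since an object of the pullback over $b$ is a pair $(b,x)$ with $x \in \chi_{F}(b) = F^{-1}(b)$. On morphisms the crucial input is the uniqueness-of-lifts lemma immediately preceding Lemma~\ref{discfiberpath}: any $q\colon e \to e'$ over $p$ must have $e' = \lift(p)(e)$ and coincide with $\lift(\htpy{0},F)(e,p)$, so morphisms of $E$ over $p$ correspond exactly to the pairs $(p,\lift(p))$ of the pullback with $\lift(p)(x)=y$; this correspondence is functorial and realized. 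For uniqueness, any $\chi$ with $\chi^{*}(U^{grpd}_{*}) \cong F$ over $B$ must send each $b$ to an object isomorphic to $F^{-1}(b)$ and each $p$ to its transport, hence $\chi \cong \chi_{F}$. The main obstacle will be this morphism-level comparison in the total-space isomorphism: one must fuse the discreteness of the fibers, which yields uniqueness of lifts, with the transport functors $\lift$, and confirm that the resulting bijection is implemented by a single combinator, so that it is a genuine morphism of $\GrpA{A}$ rather than merely a bijection on underlying sets.
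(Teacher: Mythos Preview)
Your proposal is correct and follows essentially the same route as the paper: you verify $\pr{U}$ is a $U$-valued discrete fibration, build the classifying functor via $\lift(p)$ using Lemmas~\ref{fiberpath} and~\ref{discfiberpath}, exhibit the total-space isomorphism using uniqueness of lifts in a discrete fibration, and argue uniqueness of the classifier up to isomorphism. The paper's proof spells out the inverse functors $\alpha$ and $\beta$ between $E$ and the pullback more explicitly and gives a more detailed uniqueness argument via componentwise restriction of the comparison isomorphism, but your outline matches it step for step.
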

\begin{proof}

First we observe that $\pr{U}$ is a $U$-valued discrete fibration since the fibration lifting is given by the applying the isomorphism to an element in its domain. Hence any pullback of $\pr{U}$ will be a $U$-valued discrete fibration. Suppose we have a $U$-valued discrete fibration $F:X \to Y$, then we define a functor $\nm{F}: Y \to U^{grpd}$ where $\nm{F}(y) = X_{y}$ and for $p:y \to y'$, $\nm{F}(p) = \lift(p)$. $\nm{F}$ is a well defined functor since $F$ is $U$-valued and by Lemma \ref{discfiberpath}. This gives us a pullback:

\[\begin{tikzcd}
	{[F]} &&& {U^{grpd}_{*}} \\
	\\
	\\
	Y &&& {U^{grpd}}
	\arrow[from=1-1, to=1-4]
	\arrow["{F^{*}}"', from=1-1, to=4-1]
	\arrow["\lrcorner"{anchor=center, pos=0.125}, draw=none, from=1-1, to=4-4]
	\arrow["{\pr{U}}", from=1-4, to=4-4]
	\arrow["{\nm{F}}"', from=4-1, to=4-4]
\end{tikzcd}\]

We seek to show exhibit an isomorphism $X \cong [F]$ over $Y$. First we construct $\alpha: X \to [F]$. For $x \in \ob X$, $\alpha(x) = (F(x),(X_{F(x)},x))$ and for $q: x \to x'$, $\alpha(q) = (F(q),\lift(F(q)) )$. To see that $\alpha(q): \alpha(x) \to \alpha(x')$ works first observe that we have a isomorphism $\lift(\htpy{0},F)(x,F(q)):x \to \lift(F(q))(x)$ over $F(q)$ where $x' , \lift(F(q))(x) \in X_{F(x')}$. We then have an isomorphism $q': x' \to \lift(F(q))(x) $ in $X_{F(x')}$, but that makes $q'$ an identity morphism so $x' = \lift(F(q))(x)$. It straightforward to show that $\alpha$ is functorial. Now for $\beta: [F] \to X$. $\beta(y,(X_{y},x)) = x$ and for $(p,\lift(p)): (y,(X_{y},x)) \to (y',(X_{y'},x'))$, $\beta(p,\lift(p)) = \lift(\htpy{0},F)(x,p)$. $\beta$ is functorial since $F$ is normal isofibration so identities lift to identities and we have $\lift(\htpy{0},F)(x,p' \circ p) = \lift(\htpy{0},F)(\lift(p)(x),p') \circ \lift(\htpy{0},F)(x,p)$ since there exist at most one isomorphism between $x$ and $\lift(p' \circ p)(x) = \lift(p') \circ \lift(p)(x)$ over $p' \circ p$. \\

First we show $\alpha \circ \beta = \id_{[F]}$. For $(y,(X_{y},x)) \in [F]$, since $F(x) = y$, we have $$\alpha \circ \beta(y,(X_{y},x)) = \alpha(x) = (y,(X_{y},x))$$ and for a morphism $(p,\lift(p)) \in [F]$, $$\alpha \circ \beta(p,\lift(p)) = \alpha(\lift(\htpy{0},F)(x,p)) = (F(\lift(\htpy{0},F)(x,p)), \lift(F(\lift(\htpy{0},F)(x,p)))) = (p,\lift(p))$$

For showing $\beta \circ \alpha = \id_{X}$, for $x \in X$, $$\beta \circ \alpha(x) = \beta(F(x),(X_{F(x)},x)) = x$$ for a morphism $q:x \to x'$ since $q$ is the unique morphism from $x$ to $x'$ over $p$, then $q = \lift(\htpy{0},F)(x,F(q))$ so we have $$ \beta \circ \alpha(q) = \beta(F(q),\lift(F(q))) =  \lift(\htpy{0},F)(x,F(q)) = q$$ So $\alpha$ and $\beta$ are isomorphism pairs and it is straightforward to show that they are functors over $Y$, hence $F$ is the pullback of $\pr{U}$ over $\nm{F}$.\\

Given the following pullback square 

\[\begin{tikzcd}
	{[f]} &&& {U^{grpd}_{*}} \\
	\\
	\\
	Y &&& {U^{grpd}}
	\arrow[from=1-1, to=1-4]
	\arrow["{F}"', from=1-1, to=4-1]
	\arrow["\lrcorner"{anchor=center, pos=0.125}, draw=none, from=1-1, to=4-4]
	\arrow["{\pr{U}}", from=1-4, to=4-4]
	\arrow["{f}"', from=4-1, to=4-4]
\end{tikzcd}\]

with $\alpha:[F] \cong [f]$ over $Y$, we seek to show that $\nm{F} \cong f$. For $y \in \ob Y$, we can restrict $\alpha$ to an isomorphism $\alpha'_{y}:[F]_{y} \to [f]_{y}$. For $(y,(X_{y},x))$, $\alpha'_{y}(y,(X_{y},x)) = (y,(f(y),\alpha_{y}(x)))$ and for $(\id_{y},\id_{X_{y}})$, $\alpha'_{y}(\id_{y},\id_{X_{y}})= (\id_{y},\id_{f(y)})$. We can induce from this an isomorphism $\alpha_{y}: X_{y} \cong f(y)$. Now for $p:y \to y'$, we want to exhibit the following commutative square:

\[\begin{tikzcd}
	{X_{y}} &&& {f(y)} \\
	\\
	\\
	{X_{y'}} &&& {f(y')}
	\arrow["{\alpha_{y}}", from=1-1, to=1-4]
	\arrow["{\nm{F}(p)}"', from=1-1, to=4-1]
	\arrow["{f(p)}", from=1-4, to=4-4]
	\arrow["{{\alpha_{y'}}}"', from=4-1, to=4-4]
\end{tikzcd}\]

Observe that for $x \in X_{y}$, we have an isomorphism $$\nm{F}(p): (y,(X_{y},x)) \to (y', (X_{y'}, \nm{F}(p)(x)) )$$ in $[F]$. Applying $\alpha$ gives us an isomorphism $$\alpha(\nm{F}(p)): (y,(f(y),\alpha_{y}(x))) \to (y', (f(y'), \alpha_{y'}(\nm{F}(p)(x))) )$$ where $\alpha(\nm{F}(p)) = f(p)$ since $\alpha$ is a morphism over $Y$. So $f(p) \circ \alpha_{y}(x) = \alpha_{y'} \circ \nm{F}(p)(x)$ for all $x \in X_{y}$ thus $f(p) \circ \alpha_{y} = \alpha_{y'} \circ \nm{F}(p)$. Therefore $\nm{F} \cong f$ which completes the proof.

\end{proof}

Since $\pr{U}$ is a fibration, so are the functors $\pr{U} \times U^{grpd}$ and $U^{grpd} \times \pr{U}$. We now have a new fibration $\pr{U} \times U^{grpd} \to U^{grpd} \times \pr{U} $ which we can explicitly characterize using our previous characterization of exponent fibrations in Section \ref{soasec}:

\begin{itemize}

\item The objects over a pair $(X,Y)$ are functors $s_{X,Y}: U^{grpd}_{*,X}\times \{Y\} \to \{X\} \times U^{grpd}_{*,Y}$. Since $U^{grpd}_{*,X} = X$ by definition for every $X \in \ob U^{grpd}$, this means we have a functor $s_{X,Y}:X \to Y$.\\

\item Morphisms over $(i,i'):(X,Y) \cong (X',Y')$ are generalized natural transformations $\sigma_{(i,i')}: s_{X,Y} \rightsquigarrow s_{X',Y'}$. \\

\item This means that for a morphism $p_{i,i'} = (i:(X,x) \to (X',x'),i')$ over $(i,i')$, we have a morphism $\sigma_{(i,i'),p_{i,i'}}:(i,i':(Y,s_{X,Y}(x)) \to (Y',s_{X',Y'}(y)))$. This suffices to characterize $\sigma_{(i,i')}$ $i$ and $i'$ are isomorphisms between discrete groupoid assemblies.\\

\item In addition when $\sigma_{(i,i')}:s_{X,Y} \to s_{X',Y'}$ exists, it is unique since there is at most one morphism between objects in $U^{grpd}_{*}$.\\

\item Hence the unique morphism over $(i,i')$ between $s_{X,Y}$ and $s_{X',Y'}$ exists when $s_{X,Y} \circ i'  = i \circ s_{X',Y'}$ and is simply just the pair $(i,i')$.\\

\end{itemize}

We define $\Equ(U^{grpd}) \subseteq (\pr{U} \times U^{grpd} \to U^{grpd} \times \pr{U} )$ as the full subcategory of groupoid equivalences. This makes the description of $\Equ(U^{grpd})$ analogous to the description of the groupoid $\I \to U^{grpd}$ where instead of equivalences of discrete groupoids, we have the isomorphisms of $U^{grpd}$ which are isomorphisms of discrete groupoids. In fact, we can construct a canonical functor $univ: (\I \to U^{grpd}) \to \Equ(U^{grpd})$ which is the inclusion. With that in mind, we have the following definition:

\begin{defn}

We say $U^{grpd}$ is {\bf univalent} when $univ: (\I \to U^{grpd}) \to  \Equ(U^{grpd})$ is an equivalence over $U^{grpd} \times U^{grpd}$.
\end{defn}

\begin{lem}
\label{disuniv}

The groupoid equivalences between discrete groupoid assemblies are precisely the isomorphisms, hence $U^{grpd}$ is univalent.

\end{lem}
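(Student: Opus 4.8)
The plan is to prove the two assertions in turn, the first carrying the real content and the second following almost formally. First I would take a groupoid equivalence $F : A \to B$ between discrete groupoid assemblies, with quasi-inverse $F^{i} : B \to A$ and natural isomorphisms $\alpha : F^{i} \circ F \cong \id_{A}$ and $\beta : F \circ F^{i} \cong \id_{B}$. The decisive observation is that discreteness forces each component $\alpha_{a} : F^{i}(F(a)) \to a$, being an isomorphism in $A$, to be an identity; in particular $F^{i}(F(a)) = a$ on objects. Naturality of $\alpha$ at a morphism $m : a \to a'$ then reads $\alpha_{a'} \circ F^{i}(F(m)) = m \circ \alpha_{a}$, and since both components are identities this collapses to $F^{i}(F(m)) = m$. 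Thus $F^{i} \circ F = \id_{A}$ on the nose, and the symmetric argument with $\beta$ gives $F \circ F^{i} = \id_{B}$. Hence $F$ is an isomorphism, and since the converse is immediate, the groupoid equivalences between discrete groupoid assemblies are exactly the isomorphisms.

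Second, I would apply this to the fibers of $\pr{U}$, which are discrete groupoid assemblies by construction. Over a pair $(X,Y)$ the objects of $\Equ(U^{grpd})$ are the groupoid equivalences $X \to Y$, which by the first part are precisely the isomorphisms of discrete groupoid assemblies $X \to Y$; and these are exactly the morphisms $X \to Y$ in $U^{grpd}$, i.e. the objects of $\I \to U^{grpd}$ lying over $(X,Y)$ via $(\Dom,\Cod)$. The functor $univ$, being the inclusion, is therefore a bijection on objects over each $(X,Y)$. For the morphisms I would compare the two descriptions directly: a morphism of $\I \to U^{grpd}$ over $(i : X \to X', i' : Y \to Y')$ is a natural isomorphism of the associated functors $\I \to U^{grpd}$, which exists uniquely exactly when the square $i' \circ f = g \circ i$ commutes, while a morphism of $\Equ(U^{grpd})$ over $(i,i')$ is the generalized natural transformation recorded just above, which likewise exists uniquely as the pair $(i,i')$ precisely when the same square commutes. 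So $univ$ is bijective on hom-assemblies as well, i.e. it is an isomorphism of categories over $U^{grpd} \times U^{grpd}$, in particular an equivalence, which is exactly univalence of $U^{grpd}$.

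I expect the only genuinely delicate step to be the morphism-matching in the second paragraph: one must confirm that the generalized-natural-transformation datum defining morphisms of $\Equ(U^{grpd})$ coincides, under the identification of equivalences with isomorphisms supplied by the first part, with the ordinary natural-isomorphism datum of $\I \to U^{grpd}$, and in particular that both are \emph{uniquely} witnessed whenever the relevant square commutes. This uniqueness is what makes the square-commuting condition sufficient rather than merely necessary, and it rests on the fact that $U^{grpd}_{*}$ admits at most one morphism between any two objects. The first paragraph, by contrast, is routine once the remark that every isomorphism in a discrete groupoid assembly is an identity is in hand.
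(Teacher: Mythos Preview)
Your proposal is correct and follows essentially the same approach as the paper: discreteness forces the components of $\alpha$ and $\beta$ to be identities, making $F$ an isomorphism, and then univalence follows because the descriptions of $\Equ(U^{grpd})$ and $\I \to U^{grpd}$ coincide. Your treatment is more detailed than the paper's (which dispatches the second part in a single sentence), and your explicit verification of the morphism-level matching via the commuting-square condition and the at-most-one-morphism property of $U^{grpd}_{*}$ is exactly the content the paper's terse argument relies on.
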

\begin{proof}

When we have a groupoid equivalence $F: X \to Y$ with the data $\alpha: F^{i} \circ F \cong \id_{X}$ and $\beta: F \circ F^{i} \cong \id_{Y}$, when $X$ and $Y$ are discrete then $\alpha$ and $\beta$ are fibred identity morphisms hence $F^{i} \circ F = \id_{X}$ and $F \circ F^{i} = \id_{Y}$ making $F$ an isomorphism.\\

Univalence of $U^{grpd}$ follows from the fact that the descriptions of $\Equ(U^{grpd})$ and $\I \to U^{grpd}$ are exactly the same.

\end{proof}

Now we wish to consider the univalent universe of modest assemblies. Recall from Lemma \ref{modu} that any modest assembly $(X,\phi)$ is isomorphic to an assembly of the form $(\{\phi(x)| x \in X\}, \id)$. Hence the underlying sets of the modest assemblies are bound by the cardinality of $\pow{\pow{A}}$. Given a Grothendieck universe $V_{\pow{\pow{A}}}$ which contains $\pow{\pow{A}}$, we have the object classifier of $V_{\pow{\pow{A}}}$-assemblies $V'_{\pow{\pow{A}}}$, which in turn gives us the object classifier $V'^{grpd}_{\pow{\pow{A}}}$. Every discrete fibration with modest assembly fibers, which we will refer to as a {\bf modest fibration}, is classified by $V'^{grpd}_{\pow{\pow{A}}}$. \\

Recall that the modest assemblies are the local objects of the localization modality $\Loc_{1 + 1}(!_{\nabla(2)}, !_{\I})$. Here we will characterize modalities of a similar form to $\Loc_{1 + 1}(!_{\nabla(2)}, !_{\I})$:\\

\begin{defn}

We call a localization modality $\Loc_{I}(J)$ a {\bf nullification modality} if for all $i \in I$, we have $J_{i}:U_{i} \to 1$.

\end{defn}

\begin{lem}

Suppose $\Loc_{I}(J)$ is a nullification modality and each $U_{i}$ is inhabited. If $X$ is $(I,J)$-local and $i:T \mono X$ is a monomorphism, then $T$ is $(I,J)$-local.

\end{lem}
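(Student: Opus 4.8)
The plan is to check, for each $i \in I$, that the component of the locality map $(- \circ J_i)\colon (V_i \to T) \to (U_i \to T)$ is an isomorphism, and then assemble these over $I$. Because $\Loc_{I}(J)$ is a nullification modality we have $V_i = 1$ and hence $(V_i \to T) \cong T$, and under this identification $(- \circ J_i)$ is the constant-functor map $\const_T\colon T \to (U_i \to T)$ sending an object $t$ to the constant functor $\const_t$ and a morphism to the corresponding constant natural isomorphism. I would construct an explicit two-sided inverse to $\const_T$.

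Since $U_i$ is inhabited, its assembly of objects is nonempty, so (every realizability predicate of an assembly being inhabited) $U_i$ carries a global point $\nm{u_0}\colon 1 \to U_i$, which is automatically a realized morphism. Evaluation at $u_0$ gives a realized functor $\mathrm{ev}_{u_0}\colon (U_i \to T) \to T$, $g \mapsto g(u_0)$, and $\mathrm{ev}_{u_0} \circ \const_T = \id_T$ is immediate. The crux is the other composite: I must show that every functor $g\colon U_i \to T$ is in fact the constant functor $\const_{g(u_0)}$, so that $\const_T \circ \mathrm{ev}_{u_0} = \id_{(U_i \to T)}$. This is exactly where the locality of $X$ and the monicity of $i$ enter. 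Postcomposing $g$ with $i$ gives an object $i \circ g$ of $(U_i \to X)$; since $X$ is $(I,J)$-local the map $\const_X\colon X \to (U_i \to X)$ is an isomorphism, in particular a bijection on objects, so $i \circ g$ must be a constant functor, necessarily $\const_x$ with $x = i(g(u_0))$. Thus $i \circ g = \const_{i(g(u_0))} = i \circ \const_{g(u_0)}$, and cancelling the monomorphism $i$ yields $g = \const_{g(u_0)}$. Hence $\const_T$ is an isomorphism with inverse $\mathrm{ev}_{u_0}$.

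Finally these fibrewise inverses are natural in $i$ — the chosen points $u_0$ constitute a section of the fibration $\sum_{i \in I} U_i \to I$, which is what the (uniform) inhabitedness hypothesis supplies — so they assemble into an isomorphism $\sum_{i \in I}(V_i \to T) \to \sum_{i \in I}(U_i \to T)$ over $I$, inverse to $(- \circ J)$. This is precisely the condition that $T$ be $(I,J)$-local. The one point needing genuine care, rather than routine checking, is this last realizability bookkeeping: the per-$i$ argument produces a realized inverse because a global point of the assembly $U_i$ always comes with a realizer, but to conclude an isomorphism over all of $I$ at once one must know the evaluation map is realized uniformly in $i$, i.e.\ that $\sum_{i \in I} U_i \to I$ admits a realized section; this is immediate in the motivating examples (e.g.\ $I = 1+1$, $U_i \in \{\nabla(2), \I\}$) and is the substantive content of the inhabitedness assumption.
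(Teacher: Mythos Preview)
Your argument is essentially the paper's: show that every $g\colon U_i \to T$ is forced to be constant because $i \circ g$ is constant (by locality of $X$) and $i$ is a monomorphism, using an inhabitant of $U_i$ to name the constant value. The paper packages this slightly differently, proving that the naturality square
\[
\begin{tikzcd}
T \ar[r,"- \circ J_i"] \ar[d,"i"'] & (U_i \to T) \ar[d,"i \circ -"] \\
X \ar[r,"\cong"'] & (U_i \to X)
\end{tikzcd}
\]
is a pullback rather than directly exhibiting an inverse via evaluation at a basepoint, but the content is the same and the paper also invokes a chosen $u_\ast \in \ob U_i$ to define the diagonal fill.

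One gap in your write-up: you verify $\const_T \circ \mathrm{ev}_{u_0} = \id$ only on \emph{objects} of $(U_i \to T)$ (functors $g$). You also need the argument on \emph{morphisms}: a natural isomorphism $\gamma\colon g \cong g'$ must be shown to be constant, which follows because $i \cdot \gamma$ is a morphism of $(U_i \to X)$, hence constant by the locality isomorphism, and $i$ is mono on morphisms. The paper does this case explicitly. Your closing concern about realizing the evaluation uniformly in $i$ is legitimate and applies equally to the paper's proof; in the intended applications $I$ is a finite discrete assembly, so a section of $\sum_i U_i \to I$ trivially exists.
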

\begin{proof}

The monomorphisms of groupoid assemblies are precisely those functors that are monomorphisms on objects and morphisms. Take note of the following commutative square:

\[\begin{tikzcd}
	{T} &&& {U_{i} \to T} \\
	\\
	\\
	{X} &&& {U_{i} \to X}
	\arrow["{- \circ J_{i}}", from=1-1, to=1-4]
	\arrow["{i}"', from=1-1, to=4-1]
	\arrow["{i \circ -}", from=1-4, to=4-4]
	\arrow["{- \circ J_{i}}"', from=4-1, to=4-4]
\end{tikzcd}\]

we will show that this square is a pullback. Suppose we have the commutative diagram

\[\begin{tikzcd}
	Z \\
	& T &&& {U_{i} \to T} \\
	\\
	\\
	& X &&& {U_{i} \to X}
	\arrow["{d_{0}}", from=1-1, to=2-5]
	\arrow["{d_{1}}"', from=1-1, to=5-2]
	\arrow["{{- \circ J_{i}}}", from=2-2, to=2-5]
	\arrow["{{i}}"', from=2-2, to=5-2]
	\arrow["{{i \circ -}}", from=2-5, to=5-5]
	\arrow["{{- \circ J_{i}}}"', from=5-2, to=5-5]
\end{tikzcd}\]

we will define a functor $d:Z \to T$. For an object $z \in Z$, we have $d_{0}(z): U_{i} \to T$ such that $i \circ d_{0}(z) = \const_{d_{1}(z)}$. Since $i$ is a monomorphism, for any $u, u' \in \ob U_{i}$, $d_{0}(z)(u) = d_{0}(z)(u')$ so $d_{0}(z)$ is constant. Since there exists a $u_{*} \in \ob U_{i}$, we have a $d_{0}(z)(u_{*}) \in \ob T$ with $i(d_{0}(z)(u_{*})) = d_{1}(z)$. So we set $d(z) = d_{0}(z)(u_{*})$. Suppose we have a morphism $\zeta: z \to z'$, this induces a natural transformation $d_{0}(\zeta): d_{0}(z) \to d_{0}(z')$ so that for all $u \in \ob U_{i}$, $i(d_{0}(\zeta)_{u}) = d_{1}(\zeta)$. Since $i$ is mono, $d_{0}(\zeta)$ is constant. Since $U_{i}$ is inhabited, there exists a $d_{0}(\zeta)_{u'}$, so we set $d(\zeta) = d_{0}(\zeta)_{u'}$. $d$ is a well-defined functor since $d_{0}$ picks out constant functors and natural transformations and $d_{0}$ is a functor. In addition we have $d_{1} = i \circ d$ and $d_{0} = (- \circ J_{i}) \circ d$ and is unique because of $d_{0}$. Thus the above square is a pullback square so since the bottom morphism is an isomorphism so is the top morphism. Therefore, $T$ is $(I,J)$-local.

\end{proof}

We set $V^{mod}$ as the full subgroupoid of $V'^{grpd}_{\pow{\pow{A}}}$ of modest assemblies. We can also regard $V^{mod}$ as the reflective subuniverse of $V'^{grpd}_{\pow{\pow{A}}}$ induced by $\Loc_{1 + 1}(!_{\nabla(2)}, !_{\I})$. We derive $\pr{V}:V_{*}^{mod} \to V^{mod}$ from the following pullback:

\[\begin{tikzcd}
	{V^{mod}_{*}} &&& {V'^{grpd}_{\pow{\pow{A}}},*} \\
	\\
	\\
	{V^{mod}} &&& {V'^{grpd}_{\pow{\pow{A}}}}
	\arrow[from=1-1, to=1-4]
	\arrow["{\pr{V}}"', from=1-1, to=4-1]
	\arrow["\lrcorner"{anchor=center, pos=0.125}, draw=none, from=1-1, to=4-4]
	\arrow["{{\pr{V_{\pow{\pow{A}}}}}}", from=1-4, to=4-4]
	\arrow[tail, from=4-1, to=4-4]
\end{tikzcd}\]

\begin{lem}

$\pr{V}$ classifies the modest fibrations and $V^{mod}$ is univalent.

\end{lem}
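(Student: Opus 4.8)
The plan is to obtain both assertions by transporting the corresponding facts about the ambient discrete universe $V'^{grpd}_{\pow{\pow{A}}}$---Lemma \ref{disuni} and Lemma \ref{disuniv}---across the full inclusion $V^{mod} \mono V'^{grpd}_{\pow{\pow{A}}}$ that defines $V^{mod}$.

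For the classification, I would first note that a modest fibration $F \colon X \to Y$ is in particular a $V_{\pow{\pow{A}}}$-valued discrete fibration: each fiber $X_y$ is a modest assembly, whose underlying set is represented by a subset of $\pow{\pow{A}} \subseteq V_{\pow{\pow{A}}}$ via Lemma \ref{modu}. Lemma \ref{disuni} then supplies a classifying functor $\nm{F} \colon Y \to V'^{grpd}_{\pow{\pow{A}}}$ together with an isomorphism $X \cong [\nm{F}]$ over $Y$. The crucial observation is that $\nm{F}$ factors through $V^{mod}$: on objects $\nm{F}(y) = X_y$ is modest by assumption, and on an isomorphism $p \colon y \to y'$ the value $\nm{F}(p) = \lift(p)$ is an isomorphism of the modest assemblies $X_y, X_{y'}$ by Lemma \ref{discfiberpath}, which lies in $V^{mod}$ precisely because the inclusion is full. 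This produces a functor $\nm{F}' \colon Y \to V^{mod}$ lifting $\nm{F}$. I would then paste the square witnessing $X \cong [\nm{F}]$ onto the defining pullback of $\pr{V}$: since $\pr{V}$ is by construction the pullback of $\pr{V_{\pow{\pow{A}}}}$ along $V^{mod} \mono V'^{grpd}_{\pow{\pow{A}}}$, pullback pasting exhibits $X$ as the pullback of $\pr{V}$ along $\nm{F}'$. The converse is immediate: any pullback of $\pr{V}$ is a pullback of $\pr{V_{\pow{\pow{A}}}}$, hence a discrete fibration, and its fibers are the assemblies named by objects of $V^{mod}$, hence modest.

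For univalence, rather than rerun the argument of Lemma \ref{disuniv} I would invoke it structurally. Because $V^{mod}$ is a \emph{full} subgroupoid of $V'^{grpd}_{\pow{\pow{A}}}$, for modest $X,Y$ the hom-assemblies and hence the isomorphism-assemblies of $V^{mod}$ coincide with those computed in $V'^{grpd}_{\pow{\pow{A}}}$. Consequently the groupoid $\I \to V^{mod}$ and the groupoid $\Equ(V^{mod})$ of equivalences admit literally the same description as their ambient counterparts. Moreover every groupoid equivalence between modest assemblies is an isomorphism, since modest assemblies are discrete groupoid assemblies and this is the content of Lemma \ref{disuniv}; thus $\Equ(V^{mod})$ records exactly the isomorphisms, which is also the description of $\I \to V^{mod}$. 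The canonical inclusion $univ \colon (\I \to V^{mod}) \to \Equ(V^{mod})$ is therefore an equivalence over $V^{mod} \times V^{mod}$, giving univalence of $V^{mod}$.

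The step I expect to be the main obstacle is the classification bookkeeping: checking that $\nm{F}$ genuinely factors through $V^{mod}$ as a functor (not merely objectwise), which relies on fullness of the inclusion together with $\lift(p)$ being an honest isomorphism of modest assemblies, and then confirming that the pullback-pasting argument legitimately descends ``classification up to isomorphism'' from $\pr{V_{\pow{\pow{A}}}}$ to $\pr{V}$. By contrast the univalence half is essentially formal once it is recast as the statement that fullness forces all relevant hom-, isomorphism-, path-, and equivalence-assemblies to agree with the ambient universe.
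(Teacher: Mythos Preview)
Your proposal is correct and follows essentially the same approach as the paper: both arguments reduce classification to the ambient universe $V'^{grpd}_{\pow{\pow{A}}}$ via the cardinality bound on modest assemblies and then factor through the full subgroupoid $V^{mod}$, and both obtain univalence from fullness by observing that $\Equ(V^{mod})$ and $\I \to V^{mod}$ inherit identical descriptions from the ambient universe. Your version is simply more explicit about the pullback pasting and the role of Lemma~\ref{discfiberpath} in ensuring the classifying map lands in $V^{mod}$ as a functor, which is exactly the bookkeeping the paper elides with the phrase ``restriction to its modest fibers.''
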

\begin{proof}

It has already been observed that all modest fibrations are classified by $\pr{V_{\pow{\pow{A}}}}$ since the cardinality of the fibers of a modest fibration are bound by $\pow{\pow{A}}$. $\pr{V}$ is simply the restriction of $\pr{V_{\pow{\pow{A}}}}$ to its modest fibers hence $\pr{V}$ classifies the modest fibrations. Since $V^{mod}$ is a full subcategory, $V^{mod}$ has as its isomorphisms all of the isomorphisms between modest assemblies. In addition, it is straightforward to show that $\Equ(V^{mod})$ is just the full subcategory of $\Equ(V'^{grpd}_{\pow{\pow{A}}})$ of isomorphisms between modest assemblies. Hence $\Equ(V^{mod})$ is still identical to $\I \to V^{mod}$. Therefore $V^{mod}$ is univalent.

\end{proof}

\begin{defn}

We say that $\pr{U}$ is {\bf impredicative} if for any fibration $F:X \to Y$, if $\pr{U}$ classifies $G:Z \to X$ then $\pr{U}$ classifies $\Pi_{F}(G)$. 

\end{defn}

We seek to show that $\pr{V}$ is impredicative. To do this we will use the fact that the modest assemblies are characterized by the modality $\Loc_{1 + 1}(!_{\nabla(2)}, !_{\I})$. Recall that a localization modality $\Loc_{I}(J)$ is the restriction of a functor $R_{[\Loc(I,J)]}$ which is the right functor of an algebraic weak factorization system: $([\Loc(I,J)], R_{[\Loc(I,J)]},L_{[\Loc(I,J)]})$.

\begin{defn}

We call a fibration $F:X \to Y$ an {\bf (I,J)-local fibration} if is also an $R_{[\Loc(I,J)]}$-map.

\end{defn}

\begin{lem}
\label{ijlocalfib}

Suppose $\Loc_{I}(J)$ is a nullification modality and each $U_{i}$ is inhabited. Then the $R_{[\Loc(I,J)]}$-maps are precisely those functors with $(I,J)$-locals fibers.

\end{lem}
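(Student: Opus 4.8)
The plan is to exploit the nullification hypothesis to reduce the lifting data for $g$ entirely to lifting data inside the individual fibers of $g$. Since $\Loc_{I}(J)$ is a nullification modality we have $V_{i}=1$ and $J_{i}:U_{i}\to 1$ for every $i$, so an element of $\Sq(J_{i},g)$ is a functor $c:U_{i}\to X$ together with a point $s:1\to Y$ picking an object $y$, subject to $g\circ c=\const_{y}$; in particular $c$ factors through the fiber $g^{-1}(y)=X_{y}$, which is exactly the pullback of $g$ along $\nm{y}:1\to Y$. The same remark applies to $J^{*}_{i}:V_{i}+_{U_{i}}V_{i}\to 1$. Thus lifting problems for $g$ against $J_{i}$ and $J^{*}_{i}$ are precisely lifting problems against the terminal maps $!:X_{y}\to 1$, and I will match the chosen lifts on the two sides.

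For the forward direction I would restrict the lifts to each fiber. Every object $y$ of $Y$ carries a realizer and hence defines a realized point $\nm{y}:1\to Y$, so I would pull $g$ back along $\nm{y}$; the right maps of the weak factorization system are closed under pullback (a square for the pulled-back map transports along $\nm{y}$ to a square for $g$, is solved there by $\lambda(J_{i},g)$, and the solution transports back through the universal property of $X_{y}=X\times_{Y}1$, all realizably), so $!:X_{y}\to 1$ inherits lifts against both $J_{i}$ and $J^{*}_{i}$. By the established characterization of $R_{[\Loc(I,J)]}$-objects as $(I,J)$-local objects, this says exactly that $X_{y}$ is $(I,J)$-local. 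As $y$ ranges over all objects of $Y$, every fiber of $g$ is $(I,J)$-local.

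For the reverse direction I would build the lifts for $g$ by evaluation at a chosen inhabitant. Using that each $U_{i}$ is inhabited, pick a realized point $u_{*,i}:1\to U_{i}$ and set $\lambda(J_{i},g)(c,s):=c\circ u_{*,i}\in (V_{i}\to X)$, and similarly $\lambda(J^{*}_{i},g)(c',s'):=c'\circ\iota_{1}$ with $\iota_{1}$ the canonical point of $V_{i}+_{U_{i}}V_{i}$. Because the relevant fiber $X_{y}$ is $(I,J)$-local the map $(-\circ J_{i}):X_{y}\to (U_{i}\to X_{y})$ is an isomorphism on the nose, so $c=\ell\circ J_{i}$ for a unique $\ell$, i.e.\ $c$ is strictly the constant functor at $\ell=c\circ u_{*,i}$; this gives $\ell\circ J_{i}=c$ and $g\circ\ell=s$, a genuine section of $J_{i}\pupw g$. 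For the $J^{*}_{i}$ lift I would invoke Lemma \ref{jstarlocal} to see that $X_{y}$ is also $(I,J^{*})$-local, so $(-\circ J^{*}_{i})$ is an isomorphism and $c'$ is constant as well. Functoriality of both assignments is immediate from functoriality of evaluation, and there is no normalization clause to verify, so $g$ is an $R_{[\Loc(I,J)]}$-map.

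The main obstacle I anticipate is the realizability and uniformity of the reverse construction: $(c,s)\mapsto c\circ u_{*,i}$ must define a single realized functor $\sum_{i\in I}\Sq(J_{i},g)\to\sum_{i\in I}(V_{i}\to X)$ over $I$, not merely a fiberwise bijection. Here the hypotheses carry the weight — a realizer $r_{u_{*,i}}$ for the inhabitant supplies the uniform realizer $\lmbd{z}{(p_{0}\cdot z)\cdot r_{u_{*,i}}}$ for the lift, and since $I$ is discrete these assemble over $I$ — while the on-the-nose (rather than up-to-equivalence) definition of $(I,J)$-locality is exactly what forces the evaluated object to reconstruct $c$ strictly, so that the section equations hold definitionally.
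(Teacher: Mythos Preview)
Your overall strategy matches the paper's---reduce lifts for $g$ to lifts for the terminal map on each fiber, and define the global lift by evaluation at a chosen inhabitant $u_{*,i}$. The forward direction via closure of right maps under pullback is exactly what the paper does. But in the reverse direction you have a genuine gap: you only verify the section equation $(J_{i}\pupw g)\circ\lambda=\id$ on \emph{objects}, and ``functoriality of evaluation'' does not finish the job.

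The issue is on morphisms of $\Sq(J_{i},g)$. Such a morphism is a pair $(\gamma,p)$ where $p:y\to y'$ in $Y$ and $\gamma:c\Rightarrow c'$ is a natural transformation with each component $\gamma_{u}$ lying over $p$. Your lift sends this to $\gamma_{u_{*}}$, and the section equation demands $\const_{\gamma_{u_{*}}}=\gamma$, i.e.\ $\gamma_{u}=\gamma_{u_{*}}$ for every $u$. Your argument ``$(-\circ J_{i}):X_{y}\to(U_{i}\to X_{y})$ is an isomorphism'' only applies when the target lies in a single fiber, which happens for $c$ and $c'$ but \emph{not} for $\gamma$ when $p$ is a non-identity morphism: the components $\gamma_{u}$ are morphisms in $X$ over $p$, not morphisms inside any $X_{y}$. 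The paper closes this gap by observing that $\gamma\circ(\const_{\gamma_{u_{*}}})^{-1}$ and $(\const_{\gamma_{u_{*}}})^{-1}\circ\gamma$ lie over $\id_{y'}$ and $\id_{y}$ respectively, hence do live inside single fibers; locality then forces them to be constant, and since they are the identity at $u_{*}$ they are identity transformations, whence $\gamma=\const_{\gamma_{u_{*}}}$.

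A minor second point: the paper handles $J^{*}_{i}$ not via Lemma~\ref{jstarlocal} but by noting that inhabitedness of $U_{i}$ gives $1+_{U_{i}}1\cong 1$, so $J^{*}_{i}$ is (equivalent to) an identity and lifting against it is automatic. Your route through $(I,J^{*})$-locality is also viable, but it needs the same morphism-level argument as above.
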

\begin{proof}
Since the $R$-maps of an algebraic weak factorization system are preserved by pullbacks, then $R_{[\Loc(I,J)]}$-maps have $(I,J)$ local fibers. So it suffices to show the opposite direction. Furthermore, since each $U_{i}$ is inhabited then $1 +_{U_{i}} 1 \cong 1$ so it suffices to show that the functors with $(I,J)$-local fibers have a functorial right lifting property against each $!_{U_{i}}$. Given a functor $F:X \to Y$ with $(I,J)$-local fibers observe that a square

\[\begin{tikzcd}
	{U_{i}} &&& {X} \\
	\\
	\\
	{1} &&& {Y}
	\arrow["{c}", from=1-1, to=1-4]
	\arrow["{J_{i}}"', from=1-1, to=4-1]
	\arrow["{F}", from=1-4, to=4-4]
	\arrow["{\nm{y}}"', from=4-1, to=4-4]
\end{tikzcd}\]

is precisely the square

\[\begin{tikzcd}
	{U_{i}} &&& {X_{y}} \\
	\\
	\\
	{1} &&& {1}
	\arrow["{c}", from=1-1, to=1-4]
	\arrow["{J_{i}}"', from=1-1, to=4-1]
	\arrow[from=1-4, to=4-4]
	\arrow[ from=4-1, to=4-4]
\end{tikzcd}\]

So we have a lift $\nm{\lift(J_{i})(c,y)}:1 \to X$ such that $c = \const_{\lift(J_{i})(c,y)}$. Suppose we have a morphism $p:y \to y'$ in $Y$, functors $c:U_{i} \to X_{y}$ and $c':U_{i} \to X_{y'}$ and a natural transformation into $X$, $\gamma: c \to c'$ where for each $u \in U_{i}$ $\gamma_{u}$ is over $p$, since there exists $u^{*} \in \ob U_{i}$ and $c = \const_{\lift(J_{i})(c,y)}$ and $c' = \const_{\lift(J_{i})(c',y')}$, then we have a constant natural transformation $\const_{\gamma_{u^{*}}}:c \to c'$ over $p$ which induces natural transformations $$\gamma \circ (\const_{\gamma_{u^{*}}})^{-1}: c' \to c'$$ and $$\const_{\gamma_{u^{*}}}^{-1} \circ \gamma :c \to c$$ over $\id_{y'}$ and $\id_{y}$ respectively. This means that the above natural transformations are in $X_{y'}$ and $X_{y}$ respectively and since both $X_{y'}$ and $X_{y}$ are $(I,J)$ local both $\gamma \circ (\const_{\gamma_{u^{*}}})^{-1}$ and $\const_{\gamma_{u^{*}}} \circ \gamma^{-1}$ are constant. We also know that $$(\gamma \circ (\const_{\gamma_{u^{*}}})^{-1})_{u^{*}} = \gamma_{u^{*}} \circ \gamma_{u^{*}}^{-1} = \id_{\lift(J_{i})(y')}$$ and $$(\const_{\gamma_{u^{*}}}^{-1} \circ \gamma)_{u^{*}} = \gamma_{u^{*}}^{-1} \circ \gamma_{u^{*}} = \id_{\lift(J_{i})(y)}$$
Hence $\const_{\gamma_{u^{*}}}^{-1} \circ \gamma = \id_{c}$ and $\gamma \circ (\const_{\gamma_{u^{*}}})^{-1} = \id_{c'}$ therefore $\gamma = \const_{\gamma_{u^{*}}}$. Hence we set $\lift(J_{i})(\gamma,p) = \gamma_{u^{*}}$. $\lift(J_{i})$ is functorial since each $c$ and $\gamma:c \to c'$ is constant. Hence $F$ has the functorial right lifting property against $J_{i}$. Therefore $F$ is a $R_{[\Loc(I,J)]}$-map.

\end{proof}


\begin{lem}
\label{nulldfib}

When $\Loc_{I}(J)$ is a nullification modality, $R_{[\Loc(I,J)]}$-maps are preserved by dependent products along fibrations.

\end{lem}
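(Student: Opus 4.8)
The plan is to reduce the claim to a fiberwise statement and then invoke the characterization of $R_{[\Loc(I,J)]}$-maps from Lemma \ref{ijlocalfib}. By that lemma (in force since $\Loc_I(J)$ is a nullification modality with each $U_i$ inhabited), a functor is an $R_{[\Loc(I,J)]}$-map precisely when all of its fibers are $(I,J)$-local. So, given a normal isofibration $F:X \to Y$ and an $R_{[\Loc(I,J)]}$-map $G:Z \to X$, whose fibers $Z_x = G^{-1}(x)$ are therefore $(I,J)$-local, it suffices to show that every fiber of $\Pi_F(G)$ is $(I,J)$-local. Reading off the construction of $\Pi_F$, the fiber of $\Pi_F(G)$ over $m \in \ob Y$ is the groupoid $S$ of sections $T : X_m \to Z$ of $G$ over the fiber $X_m = F^{-1}(m)$ (with $G \circ T$ the inclusion), the morphisms being the natural isomorphisms of such sections lying over identities.

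Next I would show directly that $S$ is $(I,J)$-local. Since the modality is a nullification one, $V_i = 1$, so $(I,J)$-locality of $S$ amounts to the constant-functor comparison $c_i = (- \circ J_i) : S \to (U_i \to S)$ being an isomorphism for each $i \in I$. Given a functor $\phi : U_i \to S$, I would evaluate it fiberwise: for each $x \in \ob X_m$ the assignment $u \mapsto \phi(u)(x)$ is a functor $U_i \to Z_x$, and because $Z_x$ is $(I,J)$-local this functor is constant, producing a well-defined value $s(x) \in Z_x$ and forcing the components $\phi(\rho)_x$ (for $\rho$ a morphism of $U_i$) to be identities. To assemble the $s(x)$ into a section $s \in S$, I would fix a base point $u_0 \in U_i$ (available since $U_i$ is inhabited) and set $s = \phi(u_0)$; then $\phi = c_i(s)$, so $c_i$ is bijective on objects, with inverse given by evaluation at $u_0$. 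The same fiberwise-then-naturality pattern handles morphisms of $(U_i \to S)$, i.e. natural transformations between two such functors $\phi,\psi$: each component family is constant in $U_i$, yielding a single morphism of $S$, so $c_i$ is bijective on morphisms as well, hence an isomorphism.

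The main obstacle is the bookkeeping for the action of $\phi$ on a morphism $m : x \to x'$ of $X_m$: such $\phi(u)(m)$ lies over $m$ in $Z$, not inside a single local fiber, so locality of the $Z_x$ does not directly pin it down. The way through is naturality of $\phi$ itself: for $\rho : u \to u'$ in $U_i$ the naturality square of $\phi(\rho) : \phi(u) \to \phi(u')$ evaluated at $m$ reads $\phi(u')(m) \circ \phi(\rho)_x = \phi(\rho)_{x'} \circ \phi(u)(m)$, and since the fiber components $\phi(\rho)_x, \phi(\rho)_{x'}$ were already shown to be identities, this collapses to $\phi(u')(m) = \phi(u)(m)$, giving the required independence of $u$ and the functoriality of $s$. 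An alternative route would be the Frobenius-style argument via Proposition 2.4 of \cite{GAMBINO20173027}, as used for normal isofibrations after Lemma \ref{froben}, reducing instead to showing that pullback along $F$ preserves $L_{[\Loc(I,J)]}$-maps; but the fiberwise argument above is more direct given Lemma \ref{ijlocalfib}.
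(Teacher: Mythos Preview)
Your approach is close in spirit to the paper's, but there is a real gap. The naturality argument you give for $\phi(u)(m)$ only yields $\phi(u)(m)=\phi(u')(m)$ when there exists some $\rho:u\to u'$ in $U_i$, i.e.\ constancy on connected components of $U_i$. When $U_i$ is disconnected---as for $\nabla(2)$ in the modest modality, which is the principal application---this does not give global constancy, so $\phi=\const_{\phi(u_0)}$ is not yet established. The repair is to invoke the \emph{morphism-level} part of the locality of $Z_{x'}$: the family $\alpha_u:=\phi(u)(m)\circ\phi(u_0)(m)^{-1}\in Z_{x'}(z_{x'},z_{x'})$ defines a realized natural transformation $\const_{z_{x'}}\To\const_{z_{x'}}$ in $(U_i\to Z_{x'})$ (your naturality argument is exactly the check that $\alpha$ is natural), hence by locality equals $\const_p$ for a unique $p$; evaluating at $u_0$ gives $p=\id$, so $\phi(u)(m)=\phi(u_0)(m)$ for all $u$. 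This is the same trick used in the proof of Lemma~\ref{ijlocalfib}.

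A second, smaller point: you assume each $U_i$ is inhabited in order to invoke Lemma~\ref{ijlocalfib} and to choose $u_0$, but inhabitedness is \emph{not} part of the paper's definition of nullification modality and is not a hypothesis of Lemma~\ref{nulldfib}. The paper's proof sidesteps both issues by not passing through the fiberwise characterization at all: it works directly with the functorial lifting structure of the $R_{[\Loc(I,J)]}$-map $F$ against $!_{U_i^{*}}$ (for $U_i^{*}\in\{U_i,\,1+_{U_i}1\}$). A square $c:U_i^{*}\to\Pi_G(F)$ over $z$ is unwound, for each $y\in Y_z$, to a square $c_y$ against $F$; the lift for $\Pi_G(F)$ is then assembled from the lifts $l(c_y)$ and, for morphisms $q:y\to y'$ in $Y_z$, from $l(c_q)$, using that the lift $l$ is a \emph{functor} on $\Sq(J_i,F)$ and hence already forces $c_q=\const_{l(c_q)}$.
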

\begin{proof}

Suppose we have an $(I,J)$-local fibration $F:X \to Y$ and a fibration $G:Y \to Z$ and suppose we have a diagram 

\[\begin{tikzcd}
	{U^{*}_{i}} &&& {\Pi_{G}(F)} \\
	\\
	\\
	{1} &&& {Z}
	\arrow["{c}", from=1-1, to=1-4]
	\arrow["{!_{U^{*}_{i}}}"', from=1-1, to=4-1]
	\arrow[ from=1-4, to=4-4]
	\arrow["{\nm{z}}"', from=4-1, to=4-4]
\end{tikzcd}\]

where either $U^{*}_{i} = U_{i}$ or $U^{*}_{i} = 1 +_{U_{i}} 1$. We know that $c$ maps $U^{*}_{i}$ into the fiber over $z$. This means that for a morphism $p_{i}: u_{i} \to u'_{i} \in U_{i}$, $c(p_{i}): c(u_{i}) \to c(u'_{i})$ is a natural transformation. We seek to construct an element $\lift(!_{U^{*}_{i}})(c,z):Y_{z} \to X$. Given $y \in \ob Y_{z}$ we have a square:

\[\begin{tikzcd}
	{U^{*}_{i}} &&& {X} \\
	\\
	\\
	{1} &&& {Y}
	\arrow["{c_{y}}", from=1-1, to=1-4]
	\arrow["{!_{U^{*}_{i}}}"', from=1-1, to=4-1]
	\arrow["F", from=1-4, to=4-4]
	\arrow["{\nm{y}}"', from=4-1, to=4-4]
\end{tikzcd}\]

where $c_{y}(u_{i}) = c(u_{i})(y)$ for $u_{i} \in \ob U^{*}_{i}$ and $c_{y}(p_{i}) = c(p_{i})_{y}$ for $p_{i} \in \mor U^{*}_{i}$. Let $l$ be the lift functor for $F$. This gives us a lift $l(c_{y}):1 \to X$ such that $c_{y} = \const_{l(c_{y})}$. So we set $\lift(!_{U^{*}_{i}})(c,z)(y) = l(c_{y})$. Given a morphism $q:y \to y'$, we have a natural transformation $c_{q}: c_{y} \to c_{y'}$ where $c_{q,u_{i}} = c(u_{i})(q)$. This gives us a morphism $l(c_{q}):l(c_{y}) \to l(c_{y'})$ where $c_{q} = \const_{l(c_{q})}$ and so we set $\lift(!_{U^{*}_{i}})(c,z)(q) = l(c_{q})$. It is straightforward to show that $\lift(!_{U^{*}_{i}})(c,z)(q)$ is a functor. In addition, $c = \const_{\lift(!_{U^{*}_{i}})(c,z)(q)}$ by construction.\\

Suppose we have an isomorphism $\zeta: z \to z'$,  functors $c: U^{*}_{i} \to \Pi_{G}(F)_{z}$ and $c': U^{*}_{i} \to \Pi_{G}(F)_{z'}$ and a natural transformation $\gamma: c \to c'$, we seek to construct a generalized natural transformation $\lift(!_{U^{*}_{i}})(\gamma,\zeta):\lift(!_{U^{*}_{i}})(c,z) \to \lift(!_{U^{*}_{i}})(c',z')$. We can treat $\gamma$ as a generalized natural transformation $\gamma'$ where for $p_{i} \in \mor U^{*}_{i}$, $\gamma'_{p_{i}} = c'(p_{i}) \circ \gamma_{\dom(p_{i})}$. Given $\sigma:y \to y' \in (\mor Y)_{\zeta}$, we have a generalized natural transformation $\gamma'_{\sigma}: c_{y} \to c'_{y'}$ where for $p_{i} \in \mor U^{*}_{i}$, $\gamma'_{\sigma,p_{i}} = \gamma'_{p_{i},\sigma}: c(\dom(p_{i}))(y) \to c'(\cod(p_{i}))(y')$. This induces a morphism $\lift(!_{U^{*}_{i}})(\gamma,\zeta)(\sigma) = l(\gamma'_{\sigma})$ where $\gamma'_{\sigma} = \const_{\lift(!_{U^{*}_{i}})(\gamma,\zeta)(\sigma)}$ and $l(\gamma'_{\sigma}): l(c_{y}) \to l(c'_{y})$. It is straightforward to show that $\lift(!_{U^{*}_{i}})(\gamma,\zeta)$ is a generalized natural transformation using the fact that $\gamma'$ and each $\gamma'_{p_{i}}$ is a generalized natural transformation. Plus we have $\gamma' = \const_{\lift(!_{U^{*}_{i}})(\gamma,\zeta)}$ by construction. \\

Since $c: U^{*}_{i} \to \Pi_{G}(F)_{z} = \const_{l(c)}$ then $\id_{c} = \const_{\id_{l(c)}}$ so $\lift(!_{U^{*}_{i}})(\id_{c},\id_{z}) = \id_{l(c)}$. We can show a similar thing for composition. Thus $\lift(!_{U^{*}_{i}})$ is functorial, therefore $\Pi_{G}(F)$ is $R_{[\Loc(I,J)]}$-map.

\end{proof}

\begin{cor}

The $(1 + 1, \{!_{\nabla(2)}, !_{\I}\})$-local fibrations are precisely the modest fibrations and are preserved by dependent products along fibrations.

\end{cor}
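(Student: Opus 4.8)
The plan is to recognize this corollary as a direct specialization of the two preceding lemmas, once the relevant modality is checked to satisfy their hypotheses. First I would verify that $\Loc_{1+1}(!_{\nabla(2)}, !_{\I})$ meets the standing assumptions of both Lemma \ref{ijlocalfib} and Lemma \ref{nulldfib}. The indexing family consists of the two maps $!_{\nabla(2)} : \nabla(2) \to 1$ and $!_{\I} : \I \to 1$, both of which have codomain the terminal object $1$, so the modality is a nullification modality. Moreover $\nabla(2)$ and $\I$ are each inhabited, so the additional inhabitedness hypothesis required by Lemma \ref{ijlocalfib} also holds.

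With these verifications in place, Lemma \ref{ijlocalfib} says that the $R_{[\Loc(1+1, \{!_{\nabla(2)}, !_{\I}\})]}$-maps are exactly the functors all of whose fibers are $(1 + 1, \{!_{\nabla(2)}, !_{\I}\})$-local. Since, by definition, an $(I,J)$-local fibration is a fibration that is also an $R_{[\Loc(I,J)]}$-map, this identifies the $(1 + 1, \{!_{\nabla(2)}, !_{\I}\})$-local fibrations with the fibrations whose fibers are $(1 + 1, \{!_{\nabla(2)}, !_{\I}\})$-local objects. The remaining step is then to match these fibers with the modest fibers: a fiber is $(1 + 1, \{!_{\nabla(2)}, !_{\I}\})$-local precisely when it is local against both $!_{\I}$ and $!_{\nabla(2)}$, that is, when it is a discrete groupoid assembly (locality against $!_{\I}$, by the earlier characterization of discreteness) whose object assembly is a modest assembly (locality against $!_{\nabla(2)}$). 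Because a discrete groupoid assembly has its morphism assembly isomorphic to its object assembly, modesty of the object assembly already forces modesty of the morphism assembly, so such fibers are exactly the fibers of a modest fibration, i.e. discrete fibers with modest underlying assembly.

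The second assertion is then immediate. Lemma \ref{nulldfib} states that for any nullification modality the $R_{[\Loc(I,J)]}$-maps are closed under dependent products along fibrations; specializing this to $\Loc_{1+1}(!_{\nabla(2)}, !_{\I})$ and using the identification just established yields that the modest fibrations are preserved by dependent products along fibrations.

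I do not anticipate a genuine obstacle, as the mathematical content is entirely carried by Lemma \ref{ijlocalfib} and Lemma \ref{nulldfib}. The only care-requiring point is the bookkeeping identification of the $(1 + 1, \{!_{\nabla(2)}, !_{\I}\})$-local fibers with the modest fibers, which relies on invoking the earlier description of the modest groupoid assemblies as the local objects of $\Loc_{1+1}(!_{\nabla(2)}, !_{\I})$ and on the observation that, for a discrete groupoid assembly, modesty of the object assembly transfers along the identity isomorphism to the morphism assembly.
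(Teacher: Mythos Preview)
Your proposal is correct and matches the paper's intended argument: the paper states this corollary without proof immediately after Lemmas \ref{ijlocalfib} and \ref{nulldfib}, treating it as a direct specialization. Your only minor omission is that for the preservation claim you also implicitly need that fibrations themselves are preserved under dependent products along fibrations (established earlier in the paper), since an $(I,J)$-local fibration must remain both a fibration and an $R_{[\Loc(I,J)]}$-map; this is routine and the paper likewise leaves it implicit.
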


\begin{cor}
\label{impreduniv}

$\pr{V}$ is impredicative, thus making $V^{mod}$ an impredicative univalent universe.

\end{cor}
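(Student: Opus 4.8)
The plan is to read this corollary off the machinery already in place, since all the conceptual content lives in Lemma \ref{nulldfib} together with the characterization of the modest fibrations. First I would unwind the definition of impredicativity for $\pr{V}$: given any fibration $F\colon X \to Y$ and any $G\colon Z \to X$ classified by $\pr{V}$, the goal is to show that $\Pi_{F}(G)$ is again classified by $\pr{V}$. By the classification lemma for $\pr{V}$ proved just above, $\pr{V}$ classifies exactly the modest fibrations, so the statement to be proved is equivalent to the assertion that the class of modest fibrations is closed under dependent product along an arbitrary fibration.

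Next I would invoke the identification of the modest fibrations with the $(1+1,\{!_{\nabla(2)},!_{\I}\})$-local fibrations. Since $\Loc_{1+1}(!_{\nabla(2)},!_{\I})$ is a nullification modality whose domains $\nabla(2)$ and $\I$ are both inhabited, Lemma \ref{ijlocalfib} says that these local fibrations are precisely the fibrations all of whose fibers are $(1+1,\{!_{\nabla(2)},!_{\I}\})$-local, i.e.\ modest assemblies. Hence ``$G$ is classified by $\pr{V}$'' is interchangeable with ``$G$ is an $(1+1,\{!_{\nabla(2)},!_{\I}\})$-local fibration.''

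Then I would apply Lemma \ref{nulldfib}, which states that for a nullification modality the $R_{[\Loc(I,J)]}$-maps are preserved by dependent product along fibrations. Specializing to $I = 1+1$ and $J = \{!_{\nabla(2)},!_{\I}\}$, this gives exactly that $\Pi_{F}(G)$ is again a local fibration, hence a modest fibration, hence classified by $\pr{V}$. This is precisely the impredicativity of $\pr{V}$, and combined with the univalence of $V^{mod}$ established immediately beforehand, it follows that $V^{mod}$ is an impredicative univalent universe.

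The only point demanding care, and hence the main obstacle, is verifying that the two supporting lemmas apply verbatim to the specific pair $(1+1,\{!_{\nabla(2)},!_{\I}\})$: that it is genuinely a nullification modality with inhabited domains (so that the simplification $1 +_{U_i} 1 \cong 1$ used in the proof of Lemma \ref{nulldfib} is legitimate), and that classification by $\pr{V}$ coincides with being a local fibration rather than merely having local fibers. Both conditions were arranged deliberately in the construction of $V^{mod}$, so no fresh computation is needed; the corollary is simply an orchestration of Lemmas \ref{ijlocalfib} and \ref{nulldfib} with the classification lemma for $\pr{V}$.
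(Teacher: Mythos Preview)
Your proposal is correct and matches the paper's approach: the corollary is read off directly from the preceding corollary identifying modest fibrations with $(1+1,\{!_{\nabla(2)},!_{\I}\})$-local fibrations and their closure under dependent product, which in turn packages Lemmas \ref{ijlocalfib} and \ref{nulldfib} exactly as you describe. One small misattribution: the inhabited-domain hypothesis and the simplification $1 +_{U_i} 1 \cong 1$ are used in Lemma \ref{ijlocalfib}, not in Lemma \ref{nulldfib} (the latter only needs nullification), but this does not affect your argument.
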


\subsection{$\pi$-tribes}

At last we will fulfill the titular purpose of the paper which is to exhibit $\GrpA{A}$ as a model of type theory. There are several categorical good axiomatizations of a model of type theory such as in \cite{GAMBINO_2021}, we choose to use the notion of $\pi$-tribe in \cite{joyal2017notesclanstribes}. The reason for this is to establish certain properties of $\HGA{A}$ as will be used later in the paper.\\

First we define $\pi$-tribe:

\begin{defn}

We call a category $\cC$ a {\bf $\bf \pi$-tribe} if we have the following structure on $\cC$:

\begin{itemize}

\item There exists an class of fibrations $\mathcal{E} \subseteq \cC$ containing isomorphisms and maps $X \to 1$ for every object $X \in \cC$ as well as being closed under composition and pullbacks.

\item Dependent products of fibrations along fibrations exists and is still a fibration.

\item $\cC$ has a weak factorization system where the right maps are the fibrations and the pullback of a left map along a fibration is still a left map.

\item The dependent product functor along a fibration takes left maps between fibrations to left maps between fibrations.

\end{itemize}

\end{defn}

For $\GrpA{A}$, it already satisfies 3 out of 4 conditions for being a $\pi$-tribe, so it suffices to show the final condition:

\begin{lem}

For a normal isofibration $F: X \to Y$, the dependent product functor $\Pi_{F}: \Fib(Y) \to  \Fib(X)$ preserves strong deformation sections between normal isofibrations.

\end{lem}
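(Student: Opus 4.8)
The plan is to recognize this as precisely the fourth and final defining clause of a $\pi$-tribe --- that pushforward along a fibration carries left maps between fibrations to left maps between fibrations --- and to derive it from the Frobenius property already established in Lemma \ref{froben}. Recall from Corollary \ref{awfsgrpd} that the fibrant algebraic weak factorization system has the normal isofibrations as its $R$-maps and the strong deformation sections as its $L$-maps, and from Lemma \ref{modstr1} that these two classes are exactly the maps with the right (resp.\ left) lifting property against one another. First I would observe that, by the corollary establishing that $\Pi_F$ preserves normal isofibrations, $\Pi_F$ already restricts to a functor $\Fib(X) \to \Fib(Y)$; hence for a strong deformation section $j \colon P \to Q$ between normal isofibrations over $X$, the domain and codomain of $\Pi_F(j)$ are automatically normal isofibrations over $Y$, and the only remaining content is that $\Pi_F(j)$ is itself a strong deformation section.

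The main route I would take is to apply Proposition 2.4 of \cite{GAMBINO20173027} to Lemma \ref{froben}, exactly as in the proof that $\Pi_F$ preserves normal isofibrations. Lemma \ref{froben} states that the pullback of a strong deformation section ($L$-map) along a normal isofibration ($R$-map) is again a strong deformation section; this is precisely the Frobenius condition for the weak factorization system of Corollary \ref{awfsgrpd}. The Gambino--Sattler machinery packages this Frobenius condition together with the statement that the pushforward functor $\Pi_F$ along any fibration $F$ sends $L$-maps between fibrations to $L$-maps between fibrations. Feeding Lemma \ref{froben} into that equivalence yields exactly the claim that $\Pi_F$ preserves strong deformation sections between normal isofibrations.

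Should a self-contained argument be preferred, I would instead argue through lifting properties and the adjunction $\Delta_F \dashv \Pi_F$. Using Lemma \ref{modstr1}, it suffices to show that $\Pi_F(j)$ has the left lifting property against every normal isofibration; I would transpose such a lifting problem along the adjunction into a lifting problem for $j$ against a pulled-back map, and then invoke both that $\Delta_F$, being pullback along a fibration, preserves normal isofibrations, and that $j$, as a strong deformation section, lifts against all normal isofibrations. The delicate point in either approach --- and the step I expect to be the main obstacle --- is the bookkeeping of the Leibniz/pullback-power transposition: one must check that the map against which $j$ is required to lift is genuinely a fibration (this is exactly where the Frobenius property of Lemma \ref{froben} does the real work) and that the resulting lifts are natural enough to assemble, over $Y$, into the retraction and the coherent homotopy with $\alpha = \id$ that witness $\Pi_F(j)$ as a strong deformation section.
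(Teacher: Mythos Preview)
Your main route contains a genuine error. Proposition~2.4 of \cite{GAMBINO20173027} says that the Frobenius condition (pullback along fibrations preserves $L$-maps, which is exactly Lemma~\ref{froben}) is equivalent to the statement that $\Pi_F$ preserves \emph{$R$-maps}, not $L$-maps. This is why the paper invokes that proposition to prove the earlier corollary that $\Pi_F$ preserves normal isofibrations; it does not, and cannot, be reused here to conclude that $\Pi_F$ preserves strong deformation sections. You have the direction of the equivalence reversed.

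Your alternative route via the adjunction $\Delta_F \dashv \Pi_F$ has the same underlying problem. That adjunction identifies $\Hom_X(\Delta_F(C),P)$ with $\Hom_Y(C,\Pi_F(P))$, so a square with $\Pi_F(j)$ on the \emph{left} does not transpose to a square with $j$ on the left against a pulled-back map; the transposition works when $\Pi_F$ appears on the \emph{right} of the square, which is again the argument for preservation of $R$-maps. There is no cheap adjoint transposition that produces a lifting problem for $j$ from one for $\Pi_F(j)$.

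The paper's proof avoids all of this and is much more direct: a strong deformation section $N\colon G\to H$ comes with explicit data --- a retraction $M$ with $M\circ N=\id_G$, a homotopy $\beta\colon N\circ M\to\id_H$, and the coherence $\beta\cdot N=\id_N$. Since $\Pi_F$ is a functor, $\Pi_F(M)\circ\Pi_F(N)=\id_{\Pi_F(G)}$ on the nose, and whiskering $\beta$ through the explicit description of $\Pi_F$ on objects $(m,T)\mapsto(m,N\circ T)$ yields a natural transformation $\beta\cdot(-)\colon \Pi_F(N)\circ\Pi_F(M)\to\id_{\Pi_F(H)}$ with $(\beta\cdot(-))\cdot\Pi_F(N)=\id$. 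No lifting or Frobenius machinery is needed; the argument is a two-line check using the concrete construction of $\Pi_F$ given just above.
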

\begin{proof}

Suppose we have a strong deformation section between fibrations over $Y$, $N: G \to H$, then that gives us a morphism $M: H \to G$ with natural transformations $\alpha: M \circ N \to \id_{G}$ and $\beta: N \circ M \to \id_{H}$ satisfying the following properties:

\begin{itemize}

\item $\beta \cdot N = N \cdot \alpha$

\item $\alpha = \id_{\id_{G}}$, thus $M \circ N = \id_{G}$

\end{itemize}

Recall that the functor $\Pi_{F}(N): \Pi_{F}(G) \to \Pi_{F}(H)$ sends an object $(m,T)$ to $(m, N \circ T)$ and a morphism $(f,\eta)$ to $(f,[w \mapsto N(\eta(w))])$. Recall that we can regard $\alpha$ and $\beta$ as generalized natural transformations, so we have obvious natural transformations $\alpha \cdot (-): \Pi_{F}(N) \circ \Pi_{F}(M) \to \id_{\Pi_{F}(G)}$ and $\beta \cdot (-): \Pi_{F}(M) \circ \Pi_{F}(N) \to \id_{\Pi_{F}(H)}$ satisfying the above conditions where $\alpha \cdot (m,T) = (\id_{m}, \alpha \cdot T):(m, N \circ M \circ T) \to (m,T)$ and $\beta \cdot (m,T) = (\id_{m}, \beta \cdot T):(m, N \circ M \circ T) \to (m,T)$. Therefore, $\Pi_{F}(N)$ is a strong deformation section.

\end{proof}

\begin{cor}
\label{pitribe}

$\GrpA{A}$ equipped with the class of normal isofibrations and the algebraic weak factorization system $(\Fib, R_{\Fib},L_{\Fib})$ is a $\pi$-tribe hence it is a model of Martin-L\"{o}f type theory with $\Sigma$, $\Pi$, and $\Id$ types.

\end{cor}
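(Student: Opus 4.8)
The plan is to verify directly the four clauses in the definition of $\pi$-tribe, taking the class $\mathcal{E}$ to be the normal isofibrations, and then to invoke \cite{joyal2017notesclanstribes} for the semantic conclusion. Nearly every clause has already been established earlier in this section, so the argument is largely a matter of assembling the relevant results and checking that the relevant classes of maps coincide.

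For the first clause I would observe that the normal isofibrations contain every isomorphism and every map $X \to 1$: in both cases the section $\lift(\htpy{0},-)$ is forced and the normalization condition holds trivially, since the only path one must lift is either an identity or has a unique filler (for an isomorphism $f$ one lifts a path $j$ by $f^{-1}(j)$, and $f^{-1}(\id) = \id$). Closure under composition is the composition structure on $R_{\Fib}$-algebras established above, and closure under pullback follows because the normal isofibrations are the right maps of the algebraic weak factorization system of Corollary \ref{awfsgrpd} and right maps are pullback-stable. The second clause is exactly the existence of the right adjoint $\Pi_{F}$ to $\Delta_{F}$ along a normal isofibration $F$, together with the corollary that $\Pi_{F}$ preserves normal isofibrations.

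The third clause is supplied by Corollary \ref{awfsgrpd}: the underlying weak factorization system of $(\Fib, R_{\Fib}, L_{\Fib})$ has the normal isofibrations as its right class, and by Lemma \ref{modstr1} the left class is precisely the strong deformation sections. The required stability — that the pullback of a left map along a fibration is again a left map — is then exactly Lemma \ref{froben}. Finally, the fourth clause is the immediately preceding lemma, which shows that $\Pi_{F}$ carries a strong deformation section between normal isofibrations to a strong deformation section. Having checked all four clauses, $\GrpA{A}$ is a $\pi$-tribe, and the statement that it models Martin-L\"{o}f type theory with $\Sigma$-, $\Pi$- and $\Id$-types follows from the general theory of $\pi$-tribes in \cite{joyal2017notesclanstribes}.

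I expect the main obstacle to be not any single computation but the matching of classes: clauses three and four are phrased in terms of an abstract left/right weak factorization system, whereas the available inputs (Lemma \ref{froben} and the preceding lemma) are stated specifically for strong deformation sections. The key step is therefore the appeal to Lemma \ref{modstr1}, which identifies the left class of the weak factorization system with exactly the strong deformation sections (and, via retract-closure of the lifting property, identifies retracts of strong deformation sections with the same class), so that the Frobenius-type pullback stability and the dependent-product preservation apply verbatim to the left maps of the $\pi$-tribe structure.
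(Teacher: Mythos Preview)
Your proposal is correct and follows the same approach as the paper: the paper states just before the corollary that three of the four $\pi$-tribe conditions are already in place and then proves the fourth via the immediately preceding lemma, so the corollary has no separate proof. Your write-up simply makes explicit which earlier results supply each clause, including the identification of the left class with strong deformation sections via Lemma \ref{modstr1}, which is exactly the implicit reasoning the paper relies on.
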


We also want to show that reflective subcategories of nullification modalities give a model of type theory. Given a pair $(I,J)$ where $I$ is a discrete groupoid assembly and $J: U \to V$ is a morphism between fibrations over $I$, we can construct an algebraic weak factorization system $([\Fib(I,J)], R_{[\Fib(I,J)]},L_{[\Fib(I,J)]})$ where the right maps are precisely the morphisms that are normal isofibrations and right maps of $([\Loc(I,J)], R_{[\Loc(I,J)]},L_{[\Loc(I,J)]})$ (i.e. the (I,J)-local fibrations). Hence the fibrant objects are the $(I,J)$-local objects.

\begin{lem}
When $M$ is an $(I,J)$-local object, then $(\I \to M) \to M\times M$ is an $(I,J)$-local fibration.

\end{lem}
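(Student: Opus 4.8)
The plan is to use the decomposition, already fixed in this subsection, of an $(I,J)$-local fibration as a map that is simultaneously a normal isofibration and a right map of the localization factorization $([\Loc(I,J)], R_{[\Loc(I,J)]}, L_{[\Loc(I,J)]})$, and to supply each property separately. The normal isofibration part is immediate: by Lemma~\ref{pathspace} the codomain projection $g := \big((\I \to M) \to M \times M\big)$ is a normal isofibration, with no further work. It then remains only to exhibit, for each $i \in I$, the two lifting functors $\lambda(J_i, g)$ and $\lambda(J_i^*, g)$ witnessing $g$ as an $R_{[\Loc(I,J)]}$-map; recall that this factorization was built with $N$ the zero fibration, so no normalization condition on these lifts is imposed.

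The key observation is that, by cartesian closure (Lemma~\ref{ccgrpd}), a functor $U_i \to (\I \to M)$ is the same datum as a morphism of the groupoid assembly $(U_i \to M)$, namely a natural isomorphism $\phi \colon F \To G$ between functors $F, G \colon U_i \to M$, and under $g$ such a functor projects to the pair $(F, G) = (\dom \phi, \cod \phi)$. Consequently a square from $J_i$ to $g$ consists of a natural isomorphism $\phi \colon s^0 \circ J_i \To s^1 \circ J_i$ together with the two components $s^0, s^1 \in (V_i \to M)$ of the bottom map $s \colon V_i \to M \times M$, and a lift is precisely a natural isomorphism $\psi \colon s^0 \To s^1$ with $\psi \cdot J_i = \phi$. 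Since $M$ is $(I,J)$-local, the functor $- \circ J_i \colon (V_i \to M) \to (U_i \to M)$ is an isomorphism of groupoid assemblies over $I$; in particular it is a bijection on hom-sets, so the required $\psi$ exists and is unique, namely $\psi = (- \circ J_i)^{-1}(\phi)$. Taking $\lambda(J_i, g)$ to be this assignment, both functoriality and the section property follow from functoriality of the inverse isomorphism $(- \circ J_i)^{-1}$.

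The case of $J_i^* \colon V_i +_{U_i} V_i \to V_i$ is handled identically, now invoking Lemma~\ref{jstarlocal}, which guarantees that $M$ is also $(I, J^*)$-local, so that $- \circ J_i^* \colon (V_i \to M) \to (V_i +_{U_i} V_i \to M)$ is likewise an isomorphism; inverting it produces the section $\lambda(J_i^*, g)$. Combined with the normal isofibration structure from Lemma~\ref{pathspace}, this exhibits $g$ as an $(I,J)$-local fibration.

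I expect the main obstacle to be the careful bookkeeping of the cartesian-closure identifications — verifying that a functor into the arrow groupoid $\I \to M$ really corresponds to a single morphism of the functor groupoid $(U_i \to M)$, that the projection to $M \times M$ matches the domain/codomain pair, and that restriction along $J_i$ corresponds to the whiskering $\psi \cdot J_i$ — together with confirming that the resulting assignments $\lambda(J_i, g)$ and $\lambda(J_i^*, g)$ are genuine functorial sections over $I$ of the pullback-power maps $J_i \pupw g$ and $J_i^* \pupw g$, rather than merely pointwise lifts. Since each is defined by applying the inverse of a groupoid isomorphism, this functoriality should be formal, but it is precisely the step where the strictness of the $(I,J)$-locality isomorphism, as opposed to a mere equivalence, is essential.
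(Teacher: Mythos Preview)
Your proposal is correct and follows essentially the same route as the paper: reduce to Lemma~\ref{pathspace} for the normal isofibration part, unpack a square from $K_i$ (for $K_i \in \{J_i, J_i^*\}$) to $g$ via cartesian closure as a natural isomorphism $\alpha \colon c_0 \circ K_i \To c_1 \circ K_i$, and use that $(- \circ K_i)$ is an isomorphism---invoking Lemma~\ref{jstarlocal} for $K_i = J_i^*$---to obtain the unique lift $\gamma \colon c_0 \To c_1$ with $\gamma \cdot K_i = \alpha$. The paper spells out the morphism-of-squares case (your functoriality concern) a bit more explicitly, but the argument is the same.
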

\begin{proof}

Recall that by Lemma \ref{jstarlocal}, $M$ is also $(I,J^{*})$ local where $J^{*}: \sum_{i \in I} V_{i} +_{U_{i}} V_{i} \to \sum_{i \in I} V_{i}$ over $I$. For $i \in I$, let us have $K_{i}: X_{i} \to V_{i}$ such that either $K_{i} = J_{i}$ or $K_{i} = J^{*}_{i}$. A square of the form:

\[\begin{tikzcd}
	{X_{i}} &&& {\I \to M} \\
	\\
	\\
	{V_{i}} &&& {M \times M}
	\arrow["{\alpha}", from=1-1, to=1-4]
	\arrow["{K_{i}}"', from=1-1, to=4-1]
	\arrow["{- \circ [\htpy{0},\htpy{1}]}",from=1-4, to=4-4]
	\arrow["{c}"', from=4-1, to=4-4]
\end{tikzcd}\]

corresponds to having functors $c_{0}, c_{1}:V_{i} \to M$ and a natural isomorphism $\alpha: a_{0} \to a_{1}$ such that $c_{0} \circ K_{i} = a_{0}$ and $c_{1} \circ K_{i} = a_{1}$. Since $M$ is $(I,J)$-local, $\alpha$ corresponds uniquely to a natural transformation $\gamma:c_{0} \to c_{1}$ such that $\gamma \cdot K_{i} = \alpha$. Given natural transformations $\gamma_{0}: c_{0} \to c'_{0}$ and $\gamma_{1}: c_{1} \to c'_{1}$ and a commutative square

\[\begin{tikzcd}
	{a_{0}} &&& {a_{1}} \\
	\\
	\\
	{a'_{0}} &&& {a'_{1}}
	\arrow["\alpha", from=1-1, to=1-4]
	\arrow["{\alpha_{0}}"', from=1-1, to=4-1]
	\arrow["{\alpha_{1}}", from=1-4, to=4-4]
	\arrow["{\alpha'}"', from=4-1, to=4-4]
\end{tikzcd}\]

such that $\gamma_{0} \cdot K_{i} = \alpha_{0}$ and $\gamma_{1} \cdot K_{i} = \alpha_{1}$. Since $M$ is $(I,J)$ local this corresponds unique to a commutative square
\[\begin{tikzcd}
	{c_{0}} &&& {c_{1}} \\
	\\
	\\
	{c'_{0}} &&& {c'_{1}}
	\arrow["\gamma", from=1-1, to=1-4]
	\arrow["{\gamma_{0}}"', from=1-1, to=4-1]
	\arrow["{\gamma_{1}}", from=1-4, to=4-4]
	\arrow["{\gamma'}"', from=4-1, to=4-4]
\end{tikzcd}\]

where $\gamma' \cdot K_{i} = \alpha'$ and $\gamma\cdot K_{i} = \alpha$. Functoriality of this lift is induced by the isomorphism pair of $(- \circ K_{i}): (V_{i} \to M)  \to (X_{i} \to M)$. Thus, $(\I \to M) \to M\times M$ is an $R_{[\Loc(I,J)]}$ map. By Lemma \ref{pathspace}, $(\I \to M) \to M\times M$ is a normal isofibration. Therefore, $(\I \to M) \to M\times M$ is an $(I,J)$-local fibration.

\end{proof}

\begin{lem}

Given an $L_{[\Fib(I,J)]}$-map between $(I,J)$ local objects $i:N \to M$, $i$ is a strong deformation section.

\end{lem}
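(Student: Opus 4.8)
The plan is to realize $i$ as a strong deformation section by producing, via two lifting problems, exactly the data that the definition demands: a retraction $i^{r}:M\to N$ with $i^{r}\circ i=\id_{N}$, a natural isomorphism $\beta:i\circ i^{r}\to\id_{M}$, and the requirement that the companion isomorphism $\alpha:i^{r}\circ i\to\id_{N}$ be taken to be $\id_{\id_{N}}$ while $\beta\cdot i=i\cdot\alpha=\id_{i}$. The only input I would use is that, being an $L_{[\Fib(I,J)]}$-map, $i$ has the left lifting property against every $(I,J)$-local fibration, together with the fact that both $!_{N}:N\to 1$ and the path fibration $(\I\to M)\to M\times M$ are $(I,J)$-local fibrations. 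The former holds because $N$ is $(I,J)$-local, so $!_{N}$ is an $R_{[\Loc(I,J)]}$-map, and every map to the terminal object is a normal isofibration; the latter is exactly the preceding lemma, which shows $(\I\to M)\to M\times M$ is an $(I,J)$-local fibration whenever $M$ is $(I,J)$-local.

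First I would build the retraction. Lifting $i$ against the $(I,J)$-local fibration $!_{N}:N\to 1$ in the square whose top edge is $\id_{N}:N\to N$ and whose bottom edge is $!_{M}:M\to 1$ produces a functor $i^{r}:M\to N$ with $i^{r}\circ i=\id_{N}$, so $i^{r}$ is a retraction of $i$ and we may set $\alpha=\id_{\id_{N}}$. Next I would construct the deformation. Consider the square with right edge the path fibration $(\I\to M)\to M\times M$, bottom edge $(i\circ i^{r},\id_{M}):M\to M\times M$, and top edge the functor $N\to(\I\to M)$ sending each object $n$ to the identity isomorphism $\nm{\id_{i(n)}}$. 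This square commutes: traversing it along the bottom sends $n$ to $(i\circ i^{r}\circ i(n),\,i(n))=(i(n),i(n))$ since $i^{r}\circ i=\id_{N}$, and traversing along the top gives the endpoints $(i(n),i(n))$ of the constant identity path. Since $i$ is an $L_{[\Fib(I,J)]}$-map, a lift $\lambda:M\to(\I\to M)$ exists; because its endpoint projection is $(i\circ i^{r},\id_{M})$, the functor $\lambda$ is precisely a natural isomorphism $\beta:i\circ i^{r}\to\id_{M}$, and because $\lambda\circ i$ equals the constant identity path on $i$, we get $\beta_{i(n)}=\id_{i(n)}$, i.e.\ $\beta\cdot i=\id_{i}$.

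With these in hand the assembly is immediate: $i$ and $i^{r}$ form an equivalence of groupoids (both $\alpha=\id_{\id_{N}}$ and $\beta$ are isomorphisms), the identity $\beta\cdot i=\id_{i}=i\cdot\alpha$ upgrades it to a strong equivalence, and since $i^{r}$ is a retraction with $\alpha=\id_{\id_{N}}$, all conditions in the definition of strong deformation section are met. The step I expect to be the crux is the second lifting problem: one must check both that the chosen top edge (the constant identity path on $i$) makes the square commute and that the resulting lift has the correct strong-deformation normalization, namely that restricting the homotopy along $i$ returns identities rather than merely some isomorphism. Everything else is formal once the two $(I,J)$-local fibrations are identified and Lemma~\ref{pathspace} and the preceding path-fibration lemma are invoked.
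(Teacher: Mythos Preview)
Your proof is correct and takes essentially the same approach as the paper: two lifting problems against the $(I,J)$-local fibrations $!_{N}:N\to 1$ and $(\I\to M)\to M\times M$, yielding the retraction $i^{r}$ and the homotopy $\beta$ with $\beta\cdot i=\id_{i}$. The paper's proof is slightly terser and orders the endpoints as $[\id_{M},\,i\circ i^{*}]$ rather than $(i\circ i^{r},\,\id_{M})$, but the argument is identical.
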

\begin{proof}

This follows from the following two diagrams:

\[\begin{tikzcd}
	N &&& N \\
	\\
	\\
	M
	\arrow["{\id_{N}}", from=1-1, to=1-4]
	\arrow["i"', from=1-1, to=4-1]
	\arrow["{i^{*}}"', dashed, from=4-1, to=1-4]
\end{tikzcd}
\quad\quad
\begin{tikzcd}
	N &&& {(\I \to M)} \\
	\\
	\\
	M &&& {M \times M}
	\arrow["{{(i \circ -) \circ \id^{N}_{-}}}", from=1-1, to=1-4]
	\arrow["i"', from=1-1, to=4-1]
	\arrow[from=1-4, to=4-4]
	\arrow["\beta"{description}, dashed, from=4-1, to=1-4]
	\arrow["{{[\id_{M}, i \circ i^{*}]}}"', from=4-1, to=4-4]
\end{tikzcd}\]

where $\id^{N}_{-}: N \to (\I \to N)$ takes $n$ to the identity morphism on $n$ and $(i \circ -) : (\I \to N) \to (\I \to M)$ is post composition. Thus we have $i^{*} \circ i = \id_{N}$, $\beta: i \circ i^{*} \cong \id_{M}$ and $ \beta \cdot i = i \cdot \id_{\id_{N}}$. Setting $\alpha = \id_{\id_{N}}: i^{*} \circ i  \cong \id_{N}$, This makes $i$ a strong deformation section.

\end{proof}

\begin{cor}

The $L_{[\Fib(I,J)]}$-map between $(I,J)$ local objects are precisely the strong deformation sections between $(I,J)$ local objects.

\end{cor}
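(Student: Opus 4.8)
The forward direction---that every $L_{[\Fib(I,J)]}$-map between $(I,J)$-local objects is a strong deformation section---is precisely the content of the preceding lemma, so the remaining task is the converse. The plan is to show that any strong deformation section between $(I,J)$-local objects already has the left lifting property against every $R_{[\Fib(I,J)]}$-map, which is exactly what it means to be an $L_{[\Fib(I,J)]}$-map. The argument is essentially an assembly of results already established, so I expect it to be short.

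First I would recall the definition of the algebraic weak factorization system $([\Fib(I,J)], R_{[\Fib(I,J)]}, L_{[\Fib(I,J)]})$: its right maps are exactly those functors that are simultaneously normal isofibrations and right maps of $([\Loc(I,J)], R_{[\Loc(I,J)]}, L_{[\Loc(I,J)]})$, i.e. the $(I,J)$-local fibrations. In particular, every $R_{[\Fib(I,J)]}$-map is a normal isofibration. Then, letting $i : N \to M$ be a strong deformation section with $N$ and $M$ both $(I,J)$-local, I would invoke Lemma \ref{modstr1}, which identifies the strong deformation sections with exactly the functors having the left lifting property against all normal isofibrations. Hence $i$ lifts against every normal isofibration, and since each $R_{[\Fib(I,J)]}$-map is such a normal isofibration, $i$ has the left lifting property against every $R_{[\Fib(I,J)]}$-map and is therefore an $L_{[\Fib(I,J)]}$-map. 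Combining this with the preceding lemma yields the claimed coincidence between $L_{[\Fib(I,J)]}$-maps and strong deformation sections among maps of $(I,J)$-local objects.

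The one point requiring care---and the closest thing to an obstacle---is the identification of ``$L_{[\Fib(I,J)]}$-map'' with the left class of the underlying weak factorization system, namely the maps with the left lifting property against the right maps, rather than with the strictly smaller class of $L$-coalgebras. This causes no real difficulty: Lemma \ref{modstr1} identifies the strong deformation sections with the \emph{entire} class of maps having the left lifting property against normal isofibrations, a class that is automatically closed under retracts, and the $R_{[\Fib(I,J)]}$-maps only refine the normal isofibrations, so the lifting property transfers verbatim with no further bookkeeping. I would also note that the restriction to $(I,J)$-local domains and codomains is needed only in the forward direction, via the preceding lemma; the converse implication in fact holds for strong deformation sections between arbitrary objects.
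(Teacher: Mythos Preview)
Your proposal is correct and follows essentially the same approach as the paper: the forward direction is the preceding lemma, and the converse holds because every $R_{[\Fib(I,J)]}$-map is in particular a normal isofibration, so by Lemma~\ref{modstr1} any strong deformation section lifts against it. Your additional remark about $L$-coalgebras versus maps with the left lifting property is a fair point of care, though the paper does not pause on it.
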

\begin{proof}
This holds from the pervious lemma and the fact that any strong deformation section is an $L_{[\Fib(I,J)]}$-map since any $R_{[\Fib(I,J)]}$-map is a normal isofibration.

\end{proof}

\begin{lem}
\label{lpt2}

In the category $\GrpA{A}_{(I,J)}$, pullbacks of $L_{[\Fib(I,J)]}$ maps along $R_{[\Fib(I,J)]}$ maps are still $L_{[\Fib(I,J)]}$ maps.

\end{lem}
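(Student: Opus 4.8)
The plan is to reduce the statement to the Frobenius property established in Lemma \ref{froben}, using the characterizations of the left and right classes of $([\Fib(I,J)], R_{[\Fib(I,J)]},L_{[\Fib(I,J)]})$ between local objects. Concretely, suppose we are given a pullback square in $\GrpA{A}_{(I,J)}$

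\[
\begin{tikzcd}
Z \arrow[r, "\pr{0}"] \arrow[d, "\pr{1}"'] & W \arrow[d, "j"] \\
X \arrow[r, "F"'] & Y
\end{tikzcd}
\]

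in which $j: W \to Y$ is an $L_{[\Fib(I,J)]}$-map and $F: X \to Y$ is an $R_{[\Fib(I,J)]}$-map, so that $\pr{1}: Z \to X$ is the pullback of $j$ along $F$. I want to show $\pr{1}$ is again an $L_{[\Fib(I,J)]}$-map.

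First I would record the relevant identifications. Since $W$ and $Y$ are $(I,J)$-local and $j$ is an $L_{[\Fib(I,J)]}$-map between them, the corollary immediately preceding this lemma tells us $j$ is a strong deformation section. Since $F$ is an $R_{[\Fib(I,J)]}$-map it is, by definition of that class, a normal isofibration (it is also an $(I,J)$-local fibration, but only the normal isofibration part is needed here). Because $\GrpA{A}_{(I,J)}$ is a full reflective subcategory closed under finite limits, the pullback $Z = X \times_{Y} W$ is computed exactly as in $\GrpA{A}$, and $Z$ is again $(I,J)$-local.

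With these in hand the argument is immediate: Lemma \ref{froben} says the pullback of a strong deformation section along a normal isofibration is a strong deformation section, so $\pr{1}: Z \to X$ is a strong deformation section. Since both $Z$ and $X$ are $(I,J)$-local, the retraction $X \to Z$ and the deforming natural isomorphism both live in the full subcategory $\GrpA{A}_{(I,J)}$, so $\pr{1}$ is a strong deformation section between local objects; applying the preceding corollary once more, $\pr{1}$ is an $L_{[\Fib(I,J)]}$-map.

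I do not expect a genuine obstacle here, since the real work is already packaged in Lemma \ref{froben}. The only points requiring care are bookkeeping: confirming that the pullback taken in $\GrpA{A}_{(I,J)}$ agrees with the one in $\GrpA{A}$ (which follows from closure of the local objects under finite limits), and confirming that a strong deformation section whose domain and codomain are local counts as an $L_{[\Fib(I,J)]}$-map, which is exactly the content of the corollary just proved.
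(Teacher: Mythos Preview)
Your proposal is correct and follows essentially the same route as the paper's proof: identify the $L_{[\Fib(I,J)]}$-map as a strong deformation section and the $R_{[\Fib(I,J)]}$-map as a normal isofibration, apply Lemma~\ref{froben} to conclude the pullback is a strong deformation section, and use closure of $\GrpA{A}_{(I,J)}$ under finite limits together with the preceding corollary to deduce the pullback is again an $L_{[\Fib(I,J)]}$-map. The paper's version is terser and does not name Lemma~\ref{froben} explicitly, but the argument is the same.
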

\begin{proof}

Any $L_{[\Fib(I,J)]}$ maps in $\GrpA{A}_{(I,J)}$ will be a strong deformation section and any $R_{[\Fib(I,J)]}$ map is a fibration, thus the pullback of an  $L_{[\Fib(I,J)]}$ map along an $R_{[\Fib(I,J)]}$ map in $\GrpA{A}_{(I,J)}$ will be a strong deformation section between $(I,J)$-local objects since $\GrpA{A}_{(I,J)}$ is closed under finite limits. Therefore, it will be a $L_{[\Fib(I,J)]}$ map.

\end{proof}

\begin{lem}
\label{lpt1}

When $\Loc_{I}(J)$ is a nullification modality, the dependent product of $(I,J)$-local fibrations along $(I,J)$-local fibrations are $(I,J)$-local fibrations. Hence dependent of $(I,J)$-local fibrations along $(I,J)$-local fibrations exists in $\GrpA{A}_{(I,J)}$.

\end{lem}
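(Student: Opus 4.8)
The plan is to observe that, by the definition preceding Lemma \ref{ijlocalfib}, an $(I,J)$-local fibration is nothing more than a functor which is \emph{simultaneously} a normal isofibration and an $R_{[\Loc(I,J)]}$-map. Consequently, to prove the first assertion it suffices to verify these two properties separately for the dependent product, and each of them is an instance of a preservation result already established. So given $(I,J)$-local fibrations $F\colon X \to Y$ (the base we push along) and $G\colon Z \to X$ (the fibration being pushed forward), I would split the argument into showing that $\Pi_{F}(G)$ is a normal isofibration and that it is an $R_{[\Loc(I,J)]}$-map.

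For the first, I would note that both $F$ and $G$ are in particular normal isofibrations, and invoke the corollary (a consequence of the Frobenius property, Lemma \ref{froben}) that the right adjoint of pullback along a normal isofibration preserves normal isofibrations; this gives that $\Pi_{F}(G)$ is a normal isofibration. For the second, I would apply Lemma \ref{nulldfib}: since $\Loc_{I}(J)$ is a nullification modality, $R_{[\Loc(I,J)]}$-maps are preserved under dependent products along fibrations. Here $G$ is an $(I,J)$-local fibration, hence an $R_{[\Loc(I,J)]}$-map, and $F$ is a fibration, so $\Pi_{F}(G)$ is again an $R_{[\Loc(I,J)]}$-map. Combining the two conclusions shows $\Pi_{F}(G)$ is an $(I,J)$-local fibration. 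The only point demanding care here is bookkeeping: in Lemma \ref{nulldfib} the functor being pushed forward plays the role of the $R$-map and the functor pushed along is merely a fibration, so I must be sure to match $G$ with the former and $F$ with the latter rather than inadvertently swapping them.

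For the concluding clause, that the dependent product exists inside $\GrpA{A}_{(I,J)}$, I would argue that both the domain and codomain of $\Pi_{F}(G)$ are $(I,J)$-local objects. The codomain $Y$ is local by hypothesis. For the domain $\Pi_{F}(Z)$, I would use that a local object $Y$ has $Y \to 1$ an $(I,J)$-local fibration: it is a fibration whose unique fiber $Y$ is local, so by Lemma \ref{ijlocalfib} it is an $R_{[\Loc(I,J)]}$-map. Since the $(I,J)$-local fibrations are the right class of an algebraic weak factorization system, they are closed under composition, so the composite $\Pi_{F}(Z) \to Y \to 1$ is again an $(I,J)$-local fibration. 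Its unique fiber is $\Pi_{F}(Z)$, and applying Lemma \ref{ijlocalfib} once more yields that $\Pi_{F}(Z)$ is $(I,J)$-local. Hence $\Pi_{F}(G)$ is a morphism of $\GrpA{A}_{(I,J)}$ whose construction coincides with the dependent product computed in the ambient category, so the dependent product is realized inside the reflective subcategory.

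I do not expect a genuine obstacle, since the substantive work is already carried by Lemma \ref{nulldfib} and the Frobenius corollary; this lemma is essentially an assembly of those two facts. The most delicate step is the verification that $\Pi_{F}(Z)$ is local, where I rely on the closure of $(I,J)$-local fibrations under composition together with the fiberwise characterization of Lemma \ref{ijlocalfib}; getting that characterization to apply requires the standing assumption (present throughout this subsection) that $\Loc_{I}(J)$ is a nullification modality with each $U_{i}$ inhabited, which I would flag explicitly when citing Lemma \ref{ijlocalfib}.
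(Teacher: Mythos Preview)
Your proposal is correct and follows essentially the same approach as the paper: invoke Lemma \ref{nulldfib} for preservation of the $R_{[\Loc(I,J)]}$-map property and the Frobenius corollary for preservation of normal isofibrations, then use closure of $(I,J)$-local fibrations under composition to show the domain of $\Pi_F(G)$ is local. One minor inefficiency: you do not need Lemma \ref{ijlocalfib} (nor its inhabited-fibers hypothesis) at all, since ``$Y$ is $(I,J)$-local'' is by definition the statement that $Y\to 1$ is an $R_{[\Loc(I,J)]}$-map, and likewise for concluding that $\Pi_F(Z)$ is local from $\Pi_F(Z)\to 1$ being an $R_{[\Loc(I,J)]}$-map; the paper's argument proceeds directly from these definitions.
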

\begin{proof}

By Lemma \ref{nulldfib}, dependent products of $(I,J)$-local fibration along any fibration is a $(I,J)$-local fibration. Furthermore, when we have an $(I,J)$-local fibration $F: X \to Y$ with $Y$ local, then $X$ is local so the dependent product of $(I,J)$-local fibrations between $(I,J)$-local fibrations when all objects are $(I,J)$-local result in $(I,J)$-local fibrations between $(I,J)$-local objects. Hence dependent of $(I,J)$-local fibrations along $(I,J)$-local fibrations exists in $\GrpA{A}_{(I,J)}$.

\end{proof}

\begin{cor}

When $\Loc_{I}(J)$ is a nullification modality, $\GrpA{A}_{(I,J)}$ equipped with the class of $(I,J)$-local fibrations and the algebraic weak factorization system $([\Fib(I,J)], R_{[\Fib(I,J)]},L_{[\Fib(I,J)]})$ restricted to $\GrpA{A}_{(I,J)}$ is a $\pi$-tribe.

\end{cor}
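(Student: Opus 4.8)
The plan is to verify the four defining clauses of a $\pi$-tribe for $\GrpA{A}_{(I,J)}$ directly, with the class of fibrations taken to be the $(I,J)$-local fibrations, assembling the lemmas proven earlier in this subsection. The unifying observation is that, by Lemma \ref{ijlocalfib}, when $\Loc_{I}(J)$ is a nullification modality the $(I,J)$-local fibrations are exactly the functors whose fibers are $(I,J)$-local objects, and these are precisely the $R_{[\Fib(I,J)]}$-maps of the algebraic weak factorization system $([\Fib(I,J)], R_{[\Fib(I,J)]},L_{[\Fib(I,J)]})$ restricted to $\GrpA{A}_{(I,J)}$; so I would phrase every clause in terms of this right class.

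For the first and third clauses, I would argue as follows. Because the $(I,J)$-local fibrations form the right class of an algebraic weak factorization system, they automatically contain the isomorphisms and are closed under composition and under pullback; moreover, for any $(I,J)$-local object $X$ the map $!\colon X \to 1$ is a normal isofibration and an $R_{[\Loc(I,J)]}$-map by the very definition of locality, hence an $(I,J)$-local fibration, which supplies the terminal maps required in the first clause. The weak factorization system itself is the restriction of $([\Fib(I,J)], R_{[\Fib(I,J)]},L_{[\Fib(I,J)]})$ to $\GrpA{A}_{(I,J)}$, and the stability of left maps under pullback along fibrations is exactly Lemma \ref{lpt2}. The second clause, existence and closure of dependent products of fibrations along fibrations, is Lemma \ref{lpt1}: the dependent product of an $(I,J)$-local fibration along an $(I,J)$-local fibration is again an $(I,J)$-local fibration, and such products exist inside $\GrpA{A}_{(I,J)}$ since the domain of a local fibration over a local base is itself local.

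The remaining and genuinely substantive clause is the fourth, namely that $\Pi_{F}$ along an $(I,J)$-local fibration $F$ sends left maps between fibrations to left maps between fibrations. Here I would combine two facts. First, by the corollary immediately preceding Lemma \ref{lpt2}, the $L_{[\Fib(I,J)]}$-maps between $(I,J)$-local objects are exactly the strong deformation sections between such objects. Second, an $(I,J)$-local fibration is in particular a normal isofibration, and the dependent product $\Pi_{F}$ computed in $\GrpA{A}_{(I,J)}$ agrees with the ambient $\Pi_{F}$ used in Lemma \ref{lpt1}; so the lemma established earlier in this subsection, that $\Pi_{F}$ preserves strong deformation sections between normal isofibrations, applies verbatim. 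The hard part will be the bookkeeping of these two translations: one must check that a left map between local fibrations, reinterpreted as a strong deformation section, is carried by the ambient $\Pi_{F}$ to a strong deformation section whose source and target are again local fibrations (which holds by Lemma \ref{lpt1}), so that it is recognized back in $\GrpA{A}_{(I,J)}$ as an $L_{[\Fib(I,J)]}$-map between local objects via the preceding corollary. Once that compatibility is in place, the four clauses together establish that $\GrpA{A}_{(I,J)}$ is a $\pi$-tribe.
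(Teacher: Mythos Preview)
Your proposal is correct and follows essentially the same route as the paper: the first three conditions are dispatched via Lemmas \ref{lpt1} and \ref{lpt2} together with the existence of the algebraic weak factorization system, and the fourth condition is reduced to the earlier lemma that $\Pi_{F}$ along a normal isofibration preserves strong deformation sections between normal isofibrations, using the characterization of $L_{[\Fib(I,J)]}$-maps in $\GrpA{A}_{(I,J)}$ as exactly the strong deformation sections. Your write-up is more explicit about the bookkeeping (closure properties of the right class, terminal maps, and the back-and-forth translation for the fourth clause), but the logical skeleton is identical to the paper's.
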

\begin{proof}
 The first 3 conditions for $\pi$-tribes are proven by Lemmas \ref{lpt1} and \ref{lpt2}, and the existence of the algebraic weak factorization system for $(I,J)$-local fibrations. The final condition follows from the fact that dependent products are constructed in $\GrpA{A}_{(I,J)}$ the same way they are in $\GrpA{A}$ and the $L_{[\Fib(I,J)]}$ maps are precisely the strong deformation sections in $\GrpA{A}_{(I,J)}$.
\end{proof}

\section{Homotopy Category of Groupoid Assemblies}
In this section, we expound on the bicategorical properties of $\GrpA{A}$: first we characterize the homotopy category of groupoid assemblies, then we construct a bicoreflective subcategory of $\GrpA{A}$, $\pGrA{A}$, via the bifunctor $\Part(-)$. We will show that both $\GrpA{A}$ and its subcategory has finite bilimits and bicolimits and are both cartesian closed up to homotopy. Finally we will induce a right biadjoint $\Clus(-)$ of $\Part(-)$ and prove that the homotopy category of the $0$-types of $\pGrA{A}$ is $\RT{A}$.\\

\subsection{$\HGA{A}$ Preliminaries}

In this section, we will construct the homotopy category of groupoid assemblies $\HGA{A}$ which we will show is the localization of $\GrpA{A}$ with respect to the groupoid equivalences. We will show that $\HGA{A}$ has finite bilimits and bicolimits, which are understood has homotopy limits and colimits,  and we will characterize the -1 and 0 types of $\HGA{A}$.\\


Here we will borrow definitions from \cite{joyal2017notesclanstribes}. 

\begin{defn}

Given a groupoid assembly $X$, we call a quadruple $(PX,\htpy{0}^{X}, \htpy{1}^{X}, \sigma_{X})$ a {\bf path object} when we have the following factorization:

\[\begin{tikzcd}
	X &&&& {X \times X} \\
	\\
	&& PX
	\arrow["{\Delta_{X}}", from=1-1, to=1-5]
	\arrow["{\sigma_{X}}"', from=1-1, to=3-3]
	\arrow["{(\htpy{0}^{X},\htpy{1}^{X})}"', from=3-3, to=1-5]
\end{tikzcd}\]

where $\sigma_{X}$ is a strong deformation section and $(\htpy{0}^{X},\htpy{1}^{X})$ is a normal isofibration.

\end{defn}

\begin{lem}

$(\I \to X)$ induces a path object on $X$.

\end{lem}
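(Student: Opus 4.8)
The plan is to take $PX = (\I \to X)$, with $(\htpy{0}^{X}, \htpy{1}^{X}) \colon (\I \to X) \to X \times X$ the functor sending an isomorphism (object of $\I \to X$) to its domain and codomain, and $\sigma_{X} \colon X \to (\I \to X)$ the functor sending an object $x$ to the identity path $\id_{x}$ and a morphism $m \colon x \to x'$ to the natural isomorphism $(m,m) \colon \id_{x} \To \id_{x'}$. The required factorization $\Delta_{X} = (\htpy{0}^{X}, \htpy{1}^{X}) \circ \sigma_{X}$ is immediate, since $\sigma_{X}(x) = \id_{x}$ has both domain and codomain equal to $x$. That $(\htpy{0}^{X}, \htpy{1}^{X})$ is a normal isofibration is exactly Lemma \ref{pathspace}, so the only real work is to show that $\sigma_{X}$ is a strong deformation section.

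To do this I would exhibit the strong deformation section data directly, since $\sigma_{X}$ need not be a decidable monomorphism on objects and so Lemma \ref{deciequi} does not apply. Take the retraction $\sigma_{X}^{i} = \Dom \colon (\I \to X) \to X$ (evaluation at the source endpoint), so that $\sigma_{X}^{i} \circ \sigma_{X} = \id_{X}$ strictly and we may set $\alpha = \id_{\id_{X}}$. For the homotopy $\beta \colon \sigma_{X} \circ \sigma_{X}^{i} \To \id_{(\I \to X)}$, note that for an object $p \colon a \to b$ of $\I \to X$ we have $\sigma_{X} \circ \sigma_{X}^{i}(p) = \id_{a}$, and I would define $\beta_{p} = (\id_{a}, p) \colon \id_{a} \To p$, which is a natural isomorphism between the functors $\id_{a}, p \colon \I \to X$ with inverse $(\id_{a}, p^{-1})$.

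The main verification, and the step I expect to require the most care, is checking that this $\beta$ is natural and satisfies the strong equivalence identity $\beta \cdot \sigma_{X} = \sigma_{X} \cdot \alpha$. Naturality against a morphism $q = (q_{0}, q_{1}) \colon p \to p'$ of $\I \to X$ (so that $q_{1} \circ p = p' \circ q_{0}$) reduces, after computing components and using $\sigma_{X} \circ \sigma_{X}^{i}(q) = (q_{0}, q_{0})$, to the equality $(q_{0},\, q_{1} \circ p) = (q_{0},\, p' \circ q_{0})$, which is precisely that naturality condition. For the whiskering identity, since $\alpha = \id_{\id_{X}}$ both $\beta \cdot \sigma_{X}$ and $\sigma_{X} \cdot \alpha$ must be the identity natural transformation on $\sigma_{X}$, and indeed $\beta_{\sigma_{X}(x)} = \beta_{\id_{x}} = (\id_{x}, \id_{x}) = \id_{\id_{x}} = \id_{\sigma_{X}(x)}$. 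All realizers involved are built from the identity combinator and the object and morphism realizers of $X$, so the assembly structure presents no difficulty. Combined with Lemma \ref{pathspace}, this exhibits $(\I \to X,\, \htpy{0}^{X},\, \htpy{1}^{X},\, \sigma_{X})$ as a path object on $X$.
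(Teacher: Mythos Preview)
Your proposal is correct and takes essentially the same approach as the paper: the same choice of $\sigma_X$, the same retraction $\sigma_X^i = \Dom = \htpy{0}^X$, and the same homotopy $\beta_p = (\id_{\dom p}, p)$, with the fibration condition handled by Lemma~\ref{pathspace}. You actually spell out the naturality of $\beta$ more carefully than the paper does, but otherwise the arguments coincide.
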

\begin{proof}

We set $\sigma_{X}:X \to (\I \to X)$ as taking an object $x \in \ob X$ to $\id_{x}:x \to x$ and a morphism in $X$, $p:x \to x'$, to the square:

\[\begin{tikzcd}
	{x} &&& {x'} \\
	\\
	\\
	{x} &&& {x'}
	\arrow["p", from=1-1, to=1-4]
	\arrow["{\id_{x}}"', from=1-1, to=4-1]
	\arrow["{\id_{x'}}", from=1-4, to=4-4]
	\arrow["{p}"', from=4-1, to=4-4]
\end{tikzcd}\]

We set $\htpy{0}^{X}, \htpy{1}^{X}: (\I \to X) \to X$ as the functors that project the domain and codomain of a path in $(\I \to X)$ so that $(\htpy{0}^{X},\htpy{1}^{X}): (\I \to X) \to X \times X$ is the normal isofibration in Lemma \ref{pathspace}. It should also be straightforward that $(\htpy{0}^{X},\htpy{1}^{X}) \circ \sigma_{X}$ is the diagonal on $X$. Now we must show that $\sigma_{X}$ is a strong deformation section, but this is realized by taking $\sigma_{X}^{i} = \htpy{0}^{X}$ where $\htpy{0}^{X}$ is a retract of $\sigma_{X}$ and $\beta: \sigma_{X} \circ \htpy{0}^{X} \cong \id_{(\I \to X)}$ is realized by assigning $\beta_{p}$ for $p:x \to x'$ the square 

\[\begin{tikzcd}
	{x} &&& {x} \\
	\\
	\\
	{x} &&& {x'}
	\arrow["\id_{x}", from=1-1, to=1-4]
	\arrow["{\id_{x}}"', from=1-1, to=4-1]
	\arrow["{p}", from=1-4, to=4-4]
	\arrow["{p}"', from=4-1, to=4-4]
\end{tikzcd}\]

It should also be straightforward that $\beta_{\sigma_{X}(x)} = \sigma_{X}(\id_{x})$. Hence $\sigma_{X}$ is a strong deformation section making $((\I \to X),\htpy{0}^{X}, \htpy{1}^{X}, \sigma_{X})$ a path object.

\end{proof}

\begin{defn}

For $f,g:X \to Y$, we say that $f$ and $g$ are {\bf homotopic}, denoted as $f \sim g$, if we have a functor $H:X \to (\I \to Y)$ making the following diagram commute:

\[\begin{tikzcd}
	&&& Y \\
	\\
	X &&& {\I \to Y} \\
	\\
	&&& Y
	\arrow["f", from=3-1, to=1-4]
	\arrow["H"{description}, from=3-1, to=3-4]
	\arrow["g"', from=3-1, to=5-4]
	\arrow["{\htpy{0}^{Y}}"', from=3-4, to=1-4]
	\arrow["{\htpy{1}^{Y}}", from=3-4, to=5-4]
\end{tikzcd}\]

\end{defn}

Lemma 3.3.2 of \cite{joyal2017notesclanstribes} implies that even if we defined the homotopy relation $\sim$ using a general path object, it makes no difference. Note that $f \sim g$ if and only if $f \cong g$. With that in mind, we have the following corollary:

\begin{cor}
 $\sim$ is an equivalence relation on $\GrpA{A}(X,Y)$. Furthermore, when $f \sim g$ then $u \circ f \sim u \circ g$ and $f \circ v \sim  g \circ v$ for composable $u$ and $v$. Hence when we have $u \sim u':X \to Y$ and $v \sim v':Y \to Z$, then $v \circ u \sim v' \circ u'$.

\end{cor}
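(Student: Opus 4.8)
The plan is to reduce every assertion to the equivalence already noted just before the statement, namely that $f \sim g$ if and only if $f \cong g$, where $f \cong g$ means there is a natural isomorphism $\alpha : f \To g$. Once homotopy is recast as the existence of a natural isomorphism, all three claims become routine bookkeeping in $\GrpA{A}$ regarded as a $2$-category, using that every natural transformation between functors into a groupoid is automatically componentwise invertible. No appeal to the path-object presentation of $\sim$ is needed beyond this stated equivalence.

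First I would establish that $\sim$ is an equivalence relation on $\GrpA{A}(X,Y)$. Reflexivity is witnessed by the identity natural transformation $\id_{f} : f \To f$. For symmetry, given a natural isomorphism $\alpha : f \To g$, its componentwise inverse $\alpha^{-1} : g \To f$ is again a natural isomorphism, so $g \sim f$. For transitivity, given natural isomorphisms $\alpha : f \To g$ and $\beta : g \To h$, the vertical composite $\beta \circ \alpha : f \To h$ is a natural isomorphism, giving $f \sim h$.

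Next I would verify compatibility with composition. Suppose $f \sim g$ via a natural isomorphism $\alpha : f \To g$. For composable $u$ on the left, left whiskering produces $u \cdot \alpha : u \circ f \To u \circ g$, whose components $u(\alpha_{x})$ are isomorphisms, so $u \circ f \sim u \circ g$. For composable $v$ on the right, right whiskering produces $\alpha \cdot v : f \circ v \To g \circ v$, with components $\alpha_{v(x)}$, again a natural isomorphism, so $f \circ v \sim g \circ v$.

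Finally, the combined statement follows by chaining the one-sided compatibilities with transitivity: given $u \sim u' : X \to Y$ and $v \sim v' : Y \to Z$, post-composition with $v$ yields $v \circ u \sim v \circ u'$ from $u \sim u'$, while pre-composition with $u'$ yields $v \circ u' \sim v' \circ u'$ from $v \sim v'$, and transitivity then gives $v \circ u \sim v' \circ u'$. The only point requiring any care — and the closest thing to an obstacle — is justifying that the whiskered transformations remain natural \emph{isomorphisms}; this is immediate here because the target of each functor is a groupoid, so invertibility of all components is automatic.
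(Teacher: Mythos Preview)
Your proposal is correct and matches the paper's intended approach: the paper states this corollary without proof, immediately after noting that $f \sim g$ if and only if $f \cong g$, and you have supplied exactly the routine 2-categorical verification that this observation implies. One small remark: the point you flag as requiring care---that whiskered transformations remain natural isomorphisms---does not actually depend on the target being a groupoid, since whiskering a natural isomorphism by any functor always yields a natural isomorphism (functors preserve isomorphisms), so the argument is even more straightforward than you suggest.
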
 

Now we define $\HGA{A}$, the {\bf homotopy category of groupoid assemblies}, to be the category whose objects are groupoid assemblies and for groupoid assemblies $X$ and $Y$, $$\HGA{A}(X,Y) = \GrpA{A}(X,Y)/\sim$$. For $[f]:X \to Y$ and $[g]:Y \to Z$, $[g] \circ [f] = [g \circ f]$ and the identity of $X$ in $\HGA{A}$ is $[\id_{X}]$. Now we have the following lemma:

\begin{lem}

$\HGA{A}$ is the localization category of $\GrpA{A}$ with respect to the groupoid equivalences.

\end{lem}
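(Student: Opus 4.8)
The plan is to verify the universal property of the localization directly, using the functor $\gamma \colon \GrpA{A} \to \HGA{A}$ that is the identity on objects and sends a functor $f$ to its homotopy class $[f]$; this is well-defined and functorial by the corollary immediately preceding the statement. First I would check that $\gamma$ inverts groupoid equivalences. If $W \colon A \to B$ is a groupoid equivalence with quasi-inverse $W^{i}$ and natural isomorphisms $W^{i} \circ W \cong \id_{A}$ and $W \circ W^{i} \cong \id_{B}$, then because $f \sim g$ holds exactly when $f \cong g$, we obtain $[W^{i}] \circ [W] = [\id_{A}]$ and $[W] \circ [W^{i}] = [\id_{B}]$, so $[W]$ is an isomorphism in $\HGA{A}$.

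Next I would establish the universal property. Let $F \colon \GrpA{A} \to \cD$ be any functor sending groupoid equivalences to isomorphisms. The key observation is that such an $F$ must identify the two endpoint projections $\htpy{0}^{Y}, \htpy{1}^{Y} \colon (\I \to Y) \to Y$ of the path object. Indeed, $\sigma_{Y}$ is a strong deformation section, hence a groupoid equivalence, and both $\htpy{0}^{Y} \circ \sigma_{Y} = \id_{Y}$ and $\htpy{1}^{Y} \circ \sigma_{Y} = \id_{Y}$ hold. By the two-out-of-three property for groupoid equivalences, $\htpy{0}^{Y}$ and $\htpy{1}^{Y}$ are themselves groupoid equivalences, so $F(\sigma_{Y})$, $F(\htpy{0}^{Y})$ and $F(\htpy{1}^{Y})$ are all isomorphisms; the two identities then force $F(\htpy{0}^{Y}) = F(\sigma_{Y})^{-1} = F(\htpy{1}^{Y})$.

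From this I would conclude that $F$ is constant on homotopy classes: whenever $f \sim g \colon X \to Y$, a chosen homotopy $H \colon X \to (\I \to Y)$ with $\htpy{0}^{Y} \circ H = f$ and $\htpy{1}^{Y} \circ H = g$ yields $F(f) = F(\htpy{0}^{Y}) \circ F(H) = F(\htpy{1}^{Y}) \circ F(H) = F(g)$. Hence $F$ factors uniquely as $F = \overline{F} \circ \gamma$ through the functor $\overline{F} \colon \HGA{A} \to \cD$ given by $\overline{F}(X) = F(X)$ and $\overline{F}[f] = F(f)$; this $\overline{F}$ is well-defined by the preceding computation and functorial since $\gamma$ preserves composition, while uniqueness holds because $\gamma$ is the identity on objects and surjective on every hom-set. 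Together with the first step, this exhibits $(\HGA{A}, \gamma)$ as the localization of $\GrpA{A}$ at the class of groupoid equivalences.

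The hard part will not be any single computation but rather keeping the argument free of circularity: the endpoint-identification step should invoke only the concrete path object $(\I \to Y)$ built above, and I would lean on Lemma 3.3.2 of \cite{joyal2017notesclanstribes} (which makes the homotopy relation independent of the chosen path object) together with the fact $f \sim g \iff f \cong g$. I should also take care to confirm that any $F$ inverting the equivalences genuinely inverts $\sigma_{Y}$, which is legitimate precisely because strong deformation sections are groupoid equivalences by definition, and not merely left maps of the factorization system.
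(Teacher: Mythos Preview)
Your argument is correct. The paper, however, does not prove this lemma directly: its entire proof is the single line ``This is proposition 3.3.8 of \cite{joyal2017notesclanstribes}.'' So you are not reproducing the paper's proof but rather unpacking the cited result. Your direct verification of the universal property via the path-object trick (forcing $F(\htpy{0}^{Y}) = F(\htpy{1}^{Y})$ from $F(\sigma_{Y})$ being invertible) is the standard argument and is essentially what Joyal does in the general tribe setting; the advantage of writing it out is that it makes the paper self-contained at this point and shows exactly which ingredients are used (the concrete path object, the fact that $\sigma_{Y}$ is a strong deformation section hence an equivalence, and 2-out-of-3), while the paper's approach keeps the exposition short by deferring to the general theory.
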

\begin{proof}

This is proposition 3.3.8 of \cite{joyal2017notesclanstribes}.

\end{proof}

Since $\GrpA{A}$ is a $\pi$-tribe, we have the following result regarding $\HGA{A}$:

\begin{lem}

$\HGA{A}$ is cartesian closed and the products and exponents are constructed in $\HGA{A}$ as in $\GrpA{A}$.

\end{lem}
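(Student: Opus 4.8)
The plan is to show that the cartesian-closed structure already present on $\GrpA{A}$ by Lemma \ref{ccgrpd} descends verbatim to $\HGA{A}$: the same objects $X \times Y$ and $X \to Y$, together with the projections $\pr{0},\pr{1}$ and the evaluation functor $\app{-}$, serve as product and exponential after passing to homotopy classes. The one fact that makes everything go through is that $f \sim g$ holds precisely when $f \cong g$, and that the structural operations of the cartesian-closed structure send natural isomorphisms to natural isomorphisms, so that they are well defined on $\sim$-classes. Concretely, if $\alpha\colon f \cong f'$ and $\beta\colon g\cong g'$ are natural isomorphisms then $\langle \alpha,\beta\rangle\colon \langle f,g\rangle \cong \langle f',g'\rangle$ and $\alpha \times \beta \colon f\times g \cong f'\times g'$ are again natural isomorphisms, which I would record first.

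For products, given $[f]\colon Z \to X$ and $[g]\colon Z \to Y$ in $\HGA{A}$, I would take the $\GrpA{A}$-pairing $\langle f,g\rangle$ of chosen representatives; the previous observation shows $[\langle f,g\rangle]$ is independent of the choices, and $[\pr{0}]\circ[\langle f,g\rangle] = [f]$, $[\pr{1}]\circ[\langle f,g\rangle]=[g]$ follow from the $\GrpA{A}$-identities together with the compatibility of $\sim$ with composition. For uniqueness up to homotopy, suppose $[h]$ satisfies $[\pr{0}]\circ[h]=[f]$ and $[\pr{1}]\circ[h]=[g]$; since $h = \langle \pr{0}\circ h, \pr{1}\circ h\rangle$ strictly in $\GrpA{A}$ and $\pr{0}\circ h \cong f$, $\pr{1}\circ h \cong g$, the well-definedness of the pairing gives $h \cong \langle f,g\rangle$, i.e. $[h]=[\langle f,g\rangle]$. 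This exhibits $X\times Y$ as a product in $\HGA{A}$, computed as in $\GrpA{A}$.

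For exponentials I would argue that the $\GrpA{A}$-transpose bijection $(-)^{\vee}\colon \GrpA{A}(Z\times X, Y)\to \GrpA{A}(Z, X\to Y)$ descends to a bijection on $\sim$-classes, natural in $Z$ and $Y$. Its inverse $k \mapsto \app{-}\circ(k\times \id_X)$ respects $\sim$ because $\app{-}\circ(-)$ respects composition and $-\times\id_X$ respects $\sim$ by the first paragraph; the existence of a transpose and the universal equation $[\app{-}]\circ([\lambda]\times[\id_X])=[h]$ then come from the corresponding strict equation in $\GrpA{A}$, using that $[\lambda]\times[\id_X]=[\lambda\times\id_X]$ since products agree. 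Naturality is inherited from the $\GrpA{A}$-adjunction, and uniqueness follows by transposing: any competitor $[\mu]$ satisfies $\mu \cong (\app{-}\circ(\mu\times\id_X))^{\vee} \cong h^{\vee}=\lambda$.

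The main obstacle, and the only genuinely non-formal step, is to verify that transposition respects natural isomorphism: given a natural isomorphism $\theta\colon k \cong k'$ between functors $Z\times X \to Y$, I must build a natural isomorphism $\Theta\colon k^{\vee}\cong k'^{\vee}$ of functors $Z \to (X\to Y)$. Unwinding the construction of $(-)^{\vee}$ from Lemma \ref{ccgrpd}, a morphism in $X\to Y$ is itself a natural isomorphism of functors $X\to Y$, so I would set $\Theta_z$ to be the natural isomorphism $k^{\vee}(z)\cong k'^{\vee}(z)$ whose component at $a\in \ob X$ is $\theta_{(z,a)}\colon k(z,a)\to k'(z,a)$; naturality of this family in $a$ and then in $z$ both reduce to naturality of $\theta$ in the respective variable. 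Granting this, the transpose and its inverse are mutually inverse $\sim$-preserving maps, hence induce inverse bijections on homotopy classes, completing the proof that $\HGA{A}$ is cartesian closed with products and exponents as in $\GrpA{A}$.
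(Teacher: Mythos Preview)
Your argument is correct, but it takes a different route from the paper. The paper simply cites Proposition~3.8.5 of Joyal's \emph{Notes on Clans and Tribes}: since $\GrpA{A}$ has already been shown to be a $\pi$-tribe (Corollary~\ref{pitribe}), Joyal's general result guarantees that the homotopy category of any $\pi$-tribe is cartesian closed, with products and exponentials computed as in the tribe itself.

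Your approach is a direct, hands-on verification that the pairing and currying operations of Lemma~\ref{ccgrpd} respect the relation $\sim$, without invoking the general $\pi$-tribe machinery. The key step you identify---that a natural isomorphism $\theta\colon k \cong k'$ of functors $Z\times X \to Y$ transposes to a natural isomorphism $\Theta\colon k^\vee \cong k'^\vee$ by setting $(\Theta_z)_a = \theta_{(z,a)}$---is exactly the content of the later Lemma~\ref{hexpon}, which the paper proves independently for a different purpose. So your proof is self-contained and anticipates that lemma, whereas the paper's appeal to Joyal is shorter but relies on the reader accepting the $\pi$-tribe framework as a black box. Both are valid; yours would be preferable in a setting where one wants to avoid the external reference.
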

\begin{proof}

This is proposition 3.8.5 of \cite{joyal2017notesclanstribes}.

\end{proof}

\begin{lem}

$\HGA{A}$ has coproducts.

\end{lem}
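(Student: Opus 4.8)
The plan is to show that the coproduct $A + B$ computed in $\GrpA{A}$ (which exists since $\GrpA{A}$ has finite colimits) already serves as the coproduct of $A$ and $B$ in $\HGA{A}$, with injections the homotopy classes $[i_{0}]$ and $[i_{1}]$ of the coproduct inclusions $i_{0}: A \to A+B$ and $i_{1}: B \to A+B$. Concretely, for every groupoid assembly $Z$ I would consider the comparison map
\[
\Phi: \HGA{A}(A+B, Z) \to \HGA{A}(A,Z) \times \HGA{A}(B,Z), \qquad [h] \mapsto ([h \circ i_{0}], [h \circ i_{1}]),
\]
which is well defined because $\sim$ is compatible with composition (by the corollary preceding this lemma), and then prove that $\Phi$ is a bijection. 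Surjectivity is immediate: given classes $[f]: A \to Z$ and $[g]: B \to Z$ with chosen representatives $f, g$, the copairing $\langle f, g \rangle: A + B \to Z$ coming from the $1$-categorical universal property of the coproduct satisfies $\langle f,g\rangle \circ i_{0} = f$ and $\langle f,g\rangle \circ i_{1} = g$ on the nose, so $\Phi([\langle f,g\rangle]) = ([f],[g])$.

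The substance is injectivity. Suppose $h, h': A+B \to Z$ satisfy $h \circ i_{0} \sim h' \circ i_{0}$ and $h \circ i_{1} \sim h' \circ i_{1}$. By the remark that $f \sim g$ if and only if $f \cong g$, I may replace these homotopies by natural isomorphisms $\eta_{0}: h \circ i_{0} \cong h' \circ i_{0}$ and $\eta_{1}: h \circ i_{1} \cong h' \circ i_{1}$, and it suffices to produce a natural isomorphism $\eta: h \cong h'$. Here I would invoke the explicit description of the coproduct of groupoid assemblies established in the finite-colimits lemma: the object assembly of $A+B$ is $\ob A + \ob B$ and its morphism assembly is $\mor A + \mor B$, so every object of $A+B$ lies in the image of exactly one inclusion, and likewise for morphisms — there are no ``mixed'' morphisms between the $A$-part and the $B$-part. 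Consequently a natural transformation out of $A+B$ is exactly a pair of natural transformations, one on $A$ and one on $B$: I define $\eta$ to agree with $\eta_{0}$ on objects coming from $A$ and with $\eta_{1}$ on objects coming from $B$, and the naturality squares of $\eta$ reduce to those of $\eta_{0}$ (for morphisms from $A$) and of $\eta_{1}$ (for morphisms from $B$). Since each of $\eta_{0}, \eta_{1}$ is a componentwise isomorphism, so is $\eta$, giving $h \cong h'$, hence $h \sim h'$ and $[h] = [h']$.

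I expect the main obstacle to be bookkeeping rather than genuine difficulty: one must confirm that the glued assignment $\eta$ is a legitimate morphism in $\Asm(A)$ — its realizer is obtained by case analysis on the two summands, exactly as the copairing $\langle f,g\rangle$ and the inclusions $i_{0}, i_{1}$ are realized in the coproduct construction — and that it is natural and invertible in the internal language. Once $\Phi$ is shown bijective, its naturality in $Z$ is routine, and this exhibits $A+B$ together with $[i_{0}], [i_{1}]$ as the coproduct of $A$ and $B$ in $\HGA{A}$; the empty coproduct (initial object) is handled identically using the initial object of $\GrpA{A}$.
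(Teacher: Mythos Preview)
Your proposal is correct and follows essentially the same approach as the paper: both show that the $1$-categorical coproduct $A+B$ with the classes of its inclusions is already the coproduct in $\HGA{A}$, with surjectivity of $\Phi$ immediate from copairing and injectivity the only real content. The one stylistic difference is that the paper packages the gluing step more slickly: rather than appealing to the explicit description $\mor(A+B)=\mor A+\mor B$ and assembling $\eta$ by cases, it identifies a natural isomorphism $h\cong h'$ with a morphism $A+B\to(\I\to Z)$ having the correct boundary, and then obtains this morphism directly from $\eta_{0},\eta_{1}$ via the $1$-categorical universal property of $A+B$ applied to maps into the path object. This bypasses the realizer bookkeeping you flagged, but the underlying argument is the same.
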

\begin{proof}
 Suppose we have groupoid assemblies $X$ and $Y$, the coproduct in $\HGA{A}$ is simply $X + Y$ equipped with the canonical inclusions $[\iota_{0}]:X \to X + Y $ and $[\iota_{1}]:Y \to X + Y$. To demonstrate this, suppose we have a groupoid assembly $Z$ and morphisms $[h_{0}]:X \to Z $ and $[h_{1}]:Y \to Z$, then we have morphisms $h_{0}:X \to Z$ and $h_{1}:Y \to Z$ in $\GrpA{A}$. Thus we have a unique morphism $h:X + Y \to Z$ such that $h_{0} =  h \circ \iota_{0}$ and $h_{1} =  h \circ \iota_{1}$. If we have $h': X + Y \to Z$ such that we have $\alpha_{0}:h_{0} \cong  h' \circ \iota_{0}$ and $\alpha_{1}:h_{1} \cong  h' \circ \iota_{1}$, these correspond to morphisms $\alpha_{0}:X \to (\I \to Z)$ and $\alpha_{1}:Y \to (\I \to Z)$. This gives us a new morphism $\alpha: X + Y \to (\I \to Z)$ with $\alpha_{0} = \alpha \circ \iota_{0}$ and $\alpha_{1} = \alpha \circ \iota_{1}$. Since we have $$\htpy{0}^{Z} \circ \alpha_{0} = \htpy{0}^{Z} \circ \alpha \circ \iota_{0} = h_{0}$$ and  $$\htpy{1}^{Z} \circ \alpha_{0} = \htpy{1}^{Z} \circ \alpha \circ \iota_{0} = h' \circ \iota_{1}$$ and similar thing for $\alpha_{1}$ by the universal property of coproducts, we have $\htpy{0}^{Z} \circ \alpha = h$ and $\htpy{1}^{Z} \circ \alpha = h'$. So we have $\alpha:h \cong h'$. Thus $[h]:X + Y \to Z$ is the unique morphism such that $[h_{0}] =  [h] \circ [\iota_{0}]$ and $[h_{1}] =  [h] \circ [\iota_{1}]$.

\end{proof}

So we have that $\HGA{A}$ is bicartesian closed. However, $\HGA{A}$ does not necessarily have limits and colimits. The proper notion of limit and colimit we seek is that of bilimit and bicolimit in $\GrpA{A}$. First we must first enrich $\GrpA{A}$ in $\Cat$. This is straightforward as we have a functor $\Gamma: \GrpA{A} \to \Grp$ which forgets the realizer structure of the groupoid assemblies. This functor is mentioned in Remark \ref{grpnabla}. This induces a functor \[\Hom_{\Cat}: (\GrpA{A})^{\op} \times\GrpA{A} \to \Cat\] where $\Hom_{\Cat}(X,Y) :=  \Gamma(X \to Y)$ and for $f:X \to X'$ and $g:Y' \to Y$, $\Hom_{\Cat}(f,g) = g \circ - \circ f: \Hom_{\Cat}(X,Y) \to \Hom_{\Cat}(X',Y')$. 
We give the definition of weighted bilimit and bicolimit from \cite{CTGDC_1980__21_2_111_0}. This definition makes use of the notion of bicategory, homomorphism of bicategories and transformations of homomorphisms which are define in \cite{leinster1998basicbicategories}.

\begin{lem}
\label{hexpon}

In particular, we have $\Hom_{\Cat}(X \times Y, Z) \cong \Hom_{\Cat}(X, Y \to Z)$.

\end{lem}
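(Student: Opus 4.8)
The plan is to reduce the statement to the internal exponential law of the cartesian closed category $\GrpA{A}$ and then transport it across the functor $\Gamma$. First I would invoke Lemma \ref{ccgrpd}, which gives that $\GrpA{A}$ is cartesian closed; in particular, for every groupoid assembly $W$ there is a natural bijection $\GrpA{A}(W \times Y, Z) \cong \GrpA{A}(W, Y \to Z)$, and the cartesian product is associative up to natural isomorphism.

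The key step is to upgrade the exponential law from a bijection of hom-sets to a genuine internal isomorphism $(X \times Y) \to Z \cong X \to (Y \to Z)$ in $\GrpA{A}$ itself. This is the standard Yoneda argument: the object $(X \times Y) \to Z$ represents the functor $W \mapsto \GrpA{A}(W \times (X \times Y), Z)$, while $X \to (Y \to Z)$ represents
\[
W \mapsto \GrpA{A}(W \times X, Y \to Z) \cong \GrpA{A}((W \times X) \times Y, Z) \cong \GrpA{A}(W \times (X \times Y), Z),
\]
where the final isomorphism uses associativity of the product. These two representable functors are naturally isomorphic, so by the Yoneda lemma the representing objects are isomorphic in $\GrpA{A}$. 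Finally, since $\Gamma$ is a functor (it forgets the realizer structure, as recorded in Remark \ref{grpnabla}) and every functor preserves isomorphisms, applying $\Gamma$ yields
\[
\Hom_{\Cat}(X \times Y, Z) = \Gamma((X \times Y) \to Z) \cong \Gamma(X \to (Y \to Z)) = \Hom_{\Cat}(X, Y \to Z),
\]
which is precisely the claim.

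The only point demanding care — and the nearest thing to an obstacle — is the insistence that the exponential law hold \emph{internally} in $\GrpA{A}$ rather than merely at the level of global sections, since $\Gamma$ can only be relied upon to send genuine isomorphisms of groupoid assemblies to isomorphisms of their underlying groupoids. Once the Yoneda argument is phrased with $W$ ranging over all of $\GrpA{A}$ this internality is automatic. Alternatively, I could bypass Yoneda and exhibit the comparison functor and its inverse explicitly, by currying and uncurrying the evaluation functors $\app{-}$ produced in the proof of Lemma \ref{ccgrpd} and checking that the two composites equal the respective identities; this is a short but entirely routine computation that I would not spell out in full.
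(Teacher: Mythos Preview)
Your proposal is correct but takes a genuinely different route from the paper. The paper argues directly at the level of the groupoids $\Hom_{\Cat}(X\times Y,Z)$ and $\Hom_{\Cat}(X,Y\to Z)$: cartesian closedness of $\GrpA{A}$ already gives a bijection on objects (functors), and the author then extends this by hand to morphisms, writing down the explicit formula $\alpha(t)_x = [y\mapsto t_{(x,y)}]$ for a natural isomorphism $t:f\cong f'$ and checking that this and its inverse are mutually inverse functors.

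Your approach is cleaner: rather than working componentwise, you observe that the internal exponential law $(X\times Y)\to Z \cong X\to(Y\to Z)$ holds as an isomorphism of objects in $\GrpA{A}$ itself (a one-line Yoneda argument available in any cartesian closed category), and then simply push it through $\Gamma$. This buys you the result with no explicit bookkeeping at the level of natural transformations. The paper's more concrete argument, on the other hand, hands you the actual formula for how the isomorphism acts on $2$-cells, which is occasionally convenient when one needs to compute with it later. Both are perfectly valid; your version is the more conceptual of the two.
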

\begin{proof}

We have that $\GrpA{A}$ is cartesian closed so we have an isomorphism on the object level. Thus we must extend the isomorphism on the object level to an isomorphism on the morphism level. We denote the potential isomorphism as $\alpha$. Suppose we have $t:f \cong f':X \times Y \to Z$, we define $\alpha(t):\alpha(f) \cong \alpha(f'):X \to (Y \to Z)$ where $\alpha(t)_{x} = [y \in \ob Y \mapsto t_{(x,y)}]$ and for $t':g \cong g':X \to (Y \to Z)$ we define $\alpha^{-1}(t')$ where $\alpha^{-1}(t')_{(x,y)} = t'_{x,y}$. It should be clear from this that $\alpha$ and $\alpha^{-1}$ are functors and are isomorphism pairs. This completes the proof.

\end{proof}

\begin{defn}

We say that a $\Cat-enriched$ category $\cC$ has {\bf weighted bilimits} when for each bicategory $D$ and homomorphisms $W:D \to \Cat$ and $J:D \to \cC$, there exists an $(W \multimap J) \in \ob \cC$ such that we have an equivalence of categories $$\cC(X,(W \multimap J)) \simeq [D \to \Cat](W, \cC(X,J(-)))$$ for each $X \in \ob \cC$. We say that $\cC$ has {\bf weighted bicolimits}  when for each bicategory $D$ and homomorphisms $W:D^{\op} \to \Cat$ and $J:D \to \cC$, there exists an $W \otimes J \in \ob \cC$ such that we have an equivalence of categories $$\cC(W \otimes J,X) \simeq [D^{\op} \to \Cat](W, \cC(J(-), X))$$ for each $X \in \ob \cC$.

\end{defn}

\begin{defn}

Given a $\Cat-enriched$ category $\cC$, for $X \in \ob  \cC$ and $C \in  \ob \Cat$, if there exists an object $X \otimes C$ such that for all $Y \in \ob \cC$  $$(C \to \cC(X,Y))  \simeq \cC(X \otimes C, Y)$$ we call $X \otimes C$ the {\bf tensor product} of $X$ and $C$. When we have an object $C \multimap X$ such that such that for all $Y \in \ob \cC$,  $$(C \to \cC(Y,X))  \simeq \cC(Y, C \multimap X)$$ we call $C \multimap X $ the {\bf cotensor product} of $X$ with $C$.

\end{defn}

Now we give the following definitions and constructions:

The {\bf biproduct} and {\bf bicoproduct} is just the product and coproduct. Given a diagram 

\[\begin{tikzcd}
	X &&&& Y
	\arrow["{f}", shift left=3, from=1-1, to=1-5]
	\arrow["{g}"', shift right=3, from=1-1, to=1-5]
\end{tikzcd}\]

in $\GrpA{A}$, we construct the {\bf biequalizer} of the above diagram as resulting from the following pullback:

\[\begin{tikzcd}
	{E_{f,g}} &&& {\I \to Y} \\
	\\
	\\
	{X } &&& {Y \times Y}
	\arrow["\alpha", from=1-1, to=1-4]
	\arrow["{e_{f,g}}"', from=1-1, to=4-1]
	\arrow["\lrcorner"{anchor=center, pos=0.125}, draw=none, from=1-1, to=4-4]
	\arrow["{{(\htpy{0}^{Y},\htpy{1}^{Y})}}", from=1-4, to=4-4]
	\arrow["{{(f,g)}}"', from=4-1, to=4-4]
\end{tikzcd}\]

where $e_{f,g}:E_{f,g} \to X$ is the biequalizer morphism. The {\bf bicoequalizer} is constructed as follows:

\[\begin{tikzcd}
	{X + X} &&&& {X \times \I} \\
	\\
	\\
	{Y } &&&& {CE_{f,g}}
	\arrow["{[X \times \htpy{0}, X \times \htpy{1}]}", from=1-1, to=1-5]
	\arrow["{[f,g]}"', from=1-1, to=4-1]
	\arrow["\alpha", from=1-5, to=4-5]
	\arrow["{c_{f,g}}"', from=4-1, to=4-5]
	\arrow["\lrcorner"{anchor=center, pos=0.125, rotate=180}, draw=none, from=4-5, to=1-1]
\end{tikzcd}\]

where $c_{f,g}:Y \to CE_{f,g} $ is the bicoequalizer morphism. Now we prove the following:

\begin{lem}
\label{tensor}

$\GrpA{A}$ has tensors and cotensors for finite categories $C$.

\end{lem}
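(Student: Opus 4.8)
The plan is to build both the cotensor $C \multimap X$ and the tensor $X \otimes C$ as finite limits and finite colimits in $\GrpA{A}$, exploiting that $\GrpA{A}$ has all finite limits and colimits together with the cartesian closed structure of Lemma~\ref{ccgrpd} and the path object $(\I \to X)$. For the cotensor, since $C$ is finite I would present $C \multimap X$ as the groupoid assembly of internal $C$-shaped diagrams in $X$: take one factor $X$ for each object of $C$ and one factor $(\I \to X)$ for each morphism of $C$ (the interval replaces the walking arrow precisely because every morphism of the groupoid $X$ is invertible), and then cut out the subobject on which the source and target of each chosen path agree with the appropriate vertex factors via $\Dom,\Cod \colon (\I \to X) \to X$, on which identities are sent to degenerate paths via the section $\sigma_X \colon X \to (\I \to X)$, and on which composition is respected via path composition $(\I \to X)\times_X(\I \to X) \to (\I \to X)$. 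As $C$ has finitely many objects and morphisms these are finitely many equalizer and pullback conditions, so $C \multimap X$ exists as a finite limit. Dually, $X \otimes C$ is assembled as a finite colimit: one copy of $X$ per object of $C$, a cylinder $X \times \I$ per morphism, glued along the endpoint inclusions $X \times \htpy{0}, X \times \htpy{1}$ and quotiented by the relations encoding identities and composition; this generalizes the bicoequalizer construction and exists since $\GrpA{A}$ is finitely cocomplete.

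Next I would verify the defining equivalences. At the level of objects the correspondence is immediate from cartesian closedness and the path object: a functor $Y \to (C \multimap X)$ unwinds, by the limit description and Lemma~\ref{hexpon}, into a family of functors $Y \to X$ indexed by $\ob C$ together with functors $Y \to (\I \to X)$ indexed by $\mor C$, i.e.\ natural isomorphisms in $\Gamma(Y \to X)$, subject to exactly the equations making them a functor $C \to \Gamma(Y \to X)$; hence $\ob \Gamma(Y \to (C \multimap X)) \cong \ob (C \to \Gamma(Y \to X))$, and symmetrically for the tensor. To promote this to an equivalence of categories I would bootstrap off the object level: the morphisms of $\Gamma(Y \to (C \multimap X))$ are, by the path object characterization of homotopy, the functors $Y \to (\I \to (C \multimap X))$, and since $(\I \to -) = \I \multimap -$ is a right adjoint it preserves the defining limit, giving $\I \to (C \multimap X) \cong C \multimap (\I \to X)$; applying the object-level bijection to $\I \to X$ in place of $X$ then identifies these morphisms with the natural transformations of $(C \to \Gamma(Y \to X))$. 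The dual argument, using that $- \times \I$ preserves the colimit defining $X \otimes C$, handles the tensor, and naturality in $Y$ is built into the constructions.

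The main obstacle will be the faithful encoding of composition and associativity in $C$ inside these finite (co)limits, and confirming that the resulting comparison is a genuine equivalence of hom-categories rather than a mere bijection on objects. On the cotensor side this is manageable because over-determination by the composition and identity equations is absorbed harmlessly by equalizers; the delicate case is the tensor, where colimits are less rigid and one must check that gluing cylinders along composites neither collapses nor over-identifies data, so that $\Gamma((X \otimes C) \to Y) \simeq (C \to \Gamma(X \to Y))$ genuinely holds. Here the groupoidal hypothesis is essential: it is what makes the walking-arrow data interchangeable with the invertible interval $\I$, and what guarantees that every natural transformation in the relevant hom-category is invertible, so that the path-object bootstrap from the object level to the morphism level applies.
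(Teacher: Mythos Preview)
Your approach is valid and genuinely different from the paper's. Rather than assembling $C \multimap X$ and $X \otimes C$ from finite (co)limits, the paper constructs a single representing groupoid assembly $C_{g,A}$---the free groupoid on $C$ equipped with a realizer structure obtained from finite enumerations of $\ob C$ and each $C(c,c')$---and then sets $X \otimes C := X \times C_{g,A}$ and $C \multimap X := (C_{g,A} \to X)$. The substance of the paper's proof is a lengthy realizer calculation: showing that any functor $C \to \Gamma(X \to Y)$ lifts to a tracked functor $C_{g,A} \to (X \to Y)$, which requires encoding natural numbers as combinators and using $\fixf$ to recursively traverse nested finite tables of realizers indexed by the enumerations. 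Your route is more categorical and sidesteps the explicit PCA combinatorics, relying only on finite (co)completeness, cartesian closure, and the path object; the cotensor-as-finite-limit is clean, and the object-to-morphism bootstrap does go through (note that $\ob(C \to \Hom_{\Cat}(Y,\I \to X))$ really does unpack to $\mor(C \to \Hom_{\Cat}(Y,X))$, since a functor into an arrow category is exactly a pair of functors together with a natural transformation between them). Where the paper's approach wins is uniformity: once $C_{g,A}$ is built, tensor and cotensor both fall out of cartesian closure with a strict isomorphism of hom-categories. In your approach the tensor must be handled separately by a colimit, and the encoding of composition relations there---the point you correctly flag as delicate---is where your outline is thinnest; making that gluing precise without over-collapsing is real work, though your bootstrap would again reduce the morphism-level verification to the object level once that is done.
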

\begin{proof}

First we observe that since we have a partial combinatory algebra $A$ we can encode the natural numbers as combinators $\overline{n}$ where $\overline{0} = \iota$ the identity combinator and $\overline{n + 1} = [\overline{k}, \overline{n}]$. Recall that we have a combinator $p_{0} = \lmbd{z}{z \cdot k}$ which is the first projector of $[-,-]$. Hence we have $$p_{0} \cdot \overline{0} = \overline{0} \cdot k = \iota \cdot k = k$$ $$p_{0} \cdot \overline{n + 1} = p_{0} \cdot [\overline{k}, \overline{n}] = \overline{k}$$ Representing $k$ as $\text{``true"}$ and $\overline{k}$ as $\text{``false"}$, we can represent $Z = p_{0}$ as the combinator that distinguishes $0$ from non-zero numbers. In addition, we have a predecessor combinator: $$P = \lmbd{w}{\ifthen{Z \cdot w}{\iota}{p_{1} \cdot w}}$$

Denoting $(n) = \{i \in \nat|  i < n\}$, for any finite category $C$, we can construct a category object in $\Asm(A)$, $C_{A}$ as follows:

\begin{itemize}

\item Given an enumeration $\phi:\ob C \cong (n)$, we construct an assembly $\ob C_{A} = (\ob C, \overline{\phi})$ where $\overline{\phi}(c) = \{\overline{\phi(c)}\}$.

\item When for $c, c' \in \ob C$, we have an enumeration $\psi_{c,c'}:C(c,c') \cong (m)$ such that $\psi_{c,c}(\id_{c}) = 0$, we have $\mor C_{A} = (\mor C, \overline{\psi})$ where for $t \in C(c,c')$, we construct $\overline{\psi}(t)$ as follows:
\begin{itemize}
\item $[k,[[\overline{\phi(c)},\overline{\phi(c')}],[k, \overline{\psi_{c,c'}(t)}]]] \in \overline{\psi}(t)$

\item When we have $t_{0} \in C(c, c_{*})$ and $t_{1} \in C(c_{*}, c')$ such that $t_{1} \circ t_{0} = t$, for $r \in \overline{\psi}(t_{0})$ and $r' \in \overline{\psi}(t_{1})$, we have $[k,[[\overline{\phi(c)},\overline{\phi(c')}],[\overline{k},[r',r]]]] \in \overline{\psi}(t)$

\end{itemize}
\end{itemize}

So for every $w \in \overline{\psi}(t)$, $w = [k,[[\overline{\phi(c)},\overline{\phi(c')}],[t,p]]] $ where $t = k$ and $p = \overline{n}$ or $t = \overline{k}$ and $p = [r,r']$. It is straightforward to show that this gives a category object in $\Asm(A)$. From $C$, we can construct a groupoid $C_{g}$ as follows:

\begin{itemize}

\item We have an inclusion $i_{C}:C \mono C_{g}$ where the identities of $C_{g}$ are the identities of C.

\item For $c,c' \in \ob C$, $t \in C_{g}(c,c')$, we have $t^{-1} \in C_{g}(c',c)$ with $t^{-1}$ the inverse of $t$.

\item For $t \in C_{g}(c,c')$ and $t' \in C_{g}(c', c'')$, $t' \circ t \in C_{g}(c,c'')$ where the equalities for identities and inverses hold.

\end{itemize}

We can show that $C_{g}$ has the the following universal property:\\

For a functor $F:C \to G$ where $G$ is a groupoid, there exists a unique functor $\tilde{F}:C_{g} \to G$ allowing the following diagram to commute:

\[\begin{tikzcd}
	C &&& G \\
	\\
	\\
	{C_{g}}
	\arrow["F", from=1-1, to=1-4]
	\arrow["{i_{C}}"', from=1-1, to=4-1]
	\arrow["{\tilde{F}}"', from=4-1, to=1-4]
\end{tikzcd}\]

Furthermore, we can extend the realizer structure of $C_{A}$ to $C_{g}$ constructing a groupoid assembly $C_{g,A}$ as follows:

\begin{itemize}

\item $\ob C_{g,A} = \ob C_{A}$.

\item We have $\mor C_{g,A} = (\mor C_{g}, \overline{\psi}_{g})$ where

\begin{itemize}

\item For $t \in C_{A}(c,c')$, $ \overline{\psi}(t) \subseteq \overline{\psi}_{g}(i_{C}(t))$ so that we have a functor $i_{C}:C_{A} \to C_{g,A}$ realized by $[\iota,\iota]$.

\item For $[k,[[\overline{\phi(c)},\overline{\phi(c')}],[t,p]]] \in \overline{\psi}_{g}(t)$, we have $[\overline{k},[[\overline{\phi(c')},\overline{\phi(c)}],[t,p]]] \in \overline{\psi}_{g}(t^{-1})$.

\item For $t \in C_{g,A}(c,c')$ and $t' \in C_{g,A}(c',c'')$ with $r \in \overline{\psi}_{g}(t)$ and $r' \in \overline{\psi}_{g}(t')$, $[k,[[\overline{\phi(c)},\overline{\phi(c'')}],[\overline{k},[r',r]]]] \in \overline{\psi}_{g}(t' \circ t)$.

\end{itemize}

\end{itemize}

Given a groupoid assembly $X$, we define $X \otimes C := X \times C_{g,A}$ and $C \multimap X := C_{g,A} \to X$. Hence, if suffices to prove $$(C \to \Hom_{\Cat}(X,Y)) \cong \Hom_{\Cat}(C_{g,A}, X \to Y) $$ for groupoid assemblies $X$ and $Y$. We have a straightforward functor $$J: \Hom_{\Cat}(C_{g,A}, X \to Y) \to (C \to \Hom_{\Cat}(X,Y))$$ since a functor $f:C_{g,A} \to (X \to Y)$ induces a functor $\Gamma(f):C_{g} \to \Hom_{\Cat}(X,Y)$ and so we have a functor $J(f) = \Gamma(f) \circ i_{C}: C \to \Hom_{\Cat}(X,Y)$. A natural isomorphism $\alpha:f \cong f'$ in $C_{g,A} \to (X \to Y)$ induces a natural isomorphism $\alpha: \Gamma(f) \cong \Gamma(f')$ by forgetting the realizer structure, and hence we have $J(\alpha) = \alpha \cdot i_{C}$. \\

Now we see to construct a functor $$J_{*}: (C \to \Hom_{\Cat}(X,Y)) \to \Hom_{\Cat}(C_{g,A}, X \to Y) $$
Suppose we have a functor $g:C \to \Hom_{\Cat}(X,Y)$, since $\Hom_{\Cat}(X,Y)$ is a groupoid then there exists a unique $\tilde{g}:C_{g} \to \Hom_{\Cat}(X,Y)$ such that $g = \tilde{g} \circ i_{C}$. So we will extend this functor to a functor $\tilde{g}: C_{g,A} \to (X \to Y)$. First we show that the object part of $\tilde{g}$ is realized. For $c \in \ob C_{g,A}$, $\tilde{g}(c) = g(c)$ is realized. Given our enumeration $\phi: \ob C \cong (n)$, let $r_{i}$ be a realizer for $\tilde{g}(c)$ where $\phi(c) = i$. Recall that we have a realizer $\kappa = \lmbd{w,u,v}{u \cdot ((w \cdot w) \cdot u) \cdot v}$ and can define a realizer $\text{fixf} = \kappa \cdot \kappa$ so that $$\text{fixf} \cdot f = (\kappa \cdot \kappa) \cdot f = \lmbd{v}{f \cdot ((\kappa \cdot \kappa) \cdot f) \cdot v} = \lmbd{v}{f \cdot (\text{fixf} \cdot f) \cdot v}$$ We inductively define pairs $\pi_{i}$ for $i \in (n)$ where $\pi_{0} = [r_{n - 1},\iota]$ and $\pi_{j + 1} = [r_{n - j - 2},\pi_{j}]$ and define the realizers $$\text{ifthenN} = \lmbd{f,l,\overline{n'}}{\ifthen{Z \cdot \overline{n'}}{p_{0} \cdot l}{(f \cdot (p_{1} \cdot l)) \cdot (P \cdot \overline{n'})}}$$ and $$\text{base}_{\ob} = (\text{fixf} \cdot \text{ifthenN} ) \cdot \pi_{n - 1}$$ so that 

\begin{align*}
\text{base}_{\ob} &= (\text{fixf} \cdot \text{ifthenN} ) \cdot \pi_{n - 1}\\
&= (\lmbd{v}{ \text{ifthenN} \cdot (\text{fixf} \cdot  \text{ifthenN}) \cdot v}) \cdot \pi_{n - 1}\\
&= \text{ifthenN} \cdot (\text{fixf} \cdot  \text{ifthenN}) \cdot \pi_{n - 1}\\
&=  \lmbd{\overline{n'}}{\ifthen{Z \cdot \overline{n'}}{p_{0} \cdot \pi_{n - 1}}{((\text{fixf} \cdot  \text{ifthenN})  \cdot (p_{1} \cdot \pi_{n - 1})) \cdot (P \cdot \overline{n'})}}\\
&=  \lmbd{\overline{n'}}{\ifthen{Z \cdot \overline{n'}}{r_{0}}{((\text{fixf} \cdot  \text{ifthenN})  \cdot (\pi_{n - 2})) \cdot (P \cdot \overline{n'})}}
\end{align*}

$\text{base}_{\ob} $ is the realizer for the object part of $\tilde{g}$. Note that when $\ob C$ is empty, the object part is vacuously realized. Constructing the realizer for the morphism part of $\tilde{g}$ is a bit more complicated. Reusing the enumerations $\phi: \ob C \cong (n)$ and $\psi_{c,c'}: C(c,c') \cong (n_{c,c'})$ let $r_{(i,j,k)}$ be a realizer for $g(t)$ for $t:c \to c'$ so that $\phi(c) = i$, $\phi(c') = j$, and $\psi_{c,c'}(t) = k$. For $i,j \in (n)$, We define $\pi^{i,j}_{k}$ for $0 \leq k \leq n_{c,c'}$ inductively as follows: $\pi^{i,j}_{0} = \iota$ and $\pi^{i,j}_{m + 1} = [r_(i,j,n_{c,c'} - 1 - m),\pi^{i,j}_{t}]$. We define $\pi^{i,j} = \pi^{i,j}_{n_{c,c'}}$. Then we define $\pi^{i}$: for $j \in (n)$: $\pi^{i}_{0} = [\pi^{i,0},\iota]$ and $\pi^{i}_{m + 1} = [\pi^{i,m + 1}, \pi^{i}_{m}]$. So we have $\pi^{i} = \pi^{i}_{n - 1}$. Finally we define $\pi$: $\pi_{0} = [\pi^{0},\iota]$ and $\pi_{m + 1} = [\pi^{m + 1}, \pi_{m}]$ so that $\pi = \pi_{n - 1}$. \\

Similarly with $\text{base}_{\ob}$, we will define two combinators: $\text{base}_{1}$ and $\text{base}_{\mor}$. First we define a combinator $$\text{ifthen1} = \lmbd{f,l,p}{\ifthen{Z \cdot (p_{0} \cdot p)}{(\text{fixf} \cdot \text{ifthenN} ) \cdot (p_{0} \cdot l) \cdot (p_{1} \cdot p)}{f \cdot (p_{1} \cdot l) \cdot ([P \cdot (p_{0} \cdot p), p_{1} \cdot p])}}$$ so that we have $\text{base}_{1} = \text{fixf} \cdot \text{ifthen1} $. Now we will define a combinator: $$\text{ifthen}_{\mor} = \lmbd{f,l,p}{\ifthen{Z \cdot (p_{0} \cdot (p_{0} \cdot p))}{\text{base}_{1} \cdot (p_{0} \cdot l) \cdot [(p_{0} \cdot (p_{1} \cdot p)), (p_{1} \cdot p)]}{f \cdot (p_{1} \cdot l) \cdot ([[P \cdot (p_{0} \cdot (p_{0} \cdot p)), p_{0} \cdot (p_{1} \cdot p))], p_{1} \cdot p])}}$$ so that $\text{base}_{\mor} = \text{fixf} \cdot \text{ifthen}_{\mor} \cdot \pi$. When $\phi(c) = i$, $\phi(c') = j$, and $\phi_{c,c'}(t) = k$, $\text{base}_{\mor} \cdot [[\overline{i},\overline{j}],\overline{k}] = r_{(i,j,k)}$. \\

Recall that for a morphism $t \in \mor C_{g,A}$, a realizer $r \in \overline{\psi}_{g}(t)$ is of the form $[t_{0},[[\overline{\phi(c)},\overline{\phi(c')}],[t_{1},p]]]$ where both $t_{0}$ and $t_{1}$ are either $k$ or $\overline{k}$ and when $t_{1} = k$, $p = \overline{\phi_{c,c'}(t)}$ and when $t_{1} = \overline{k}$, $p = [r_{0}, r_{1}]$ where $r_{0}$ and $r_{1}$ realizes $s_{0}$ and $s_{1}$ and $s_{1} \circ s_{0} = t$. Furthermore when $t_{0} = k$, then $[\overline{k},[[\overline{\phi(c)},\overline{\phi(c')}],[t_{1},p]]] \in \overline{\psi}_{g}(t^{-1})$. With that in mind, we can begin to construct a realizer for the morphism part of $\tilde{g}$. First let $comp$ and $inv$ be the realizers for the composition and inverse morphism of $X \to Y$. This means that when $r$ realizes a morphism $t$, then $inv \cdot r$ realizes $t^{-1}$ and when $r$ and $r'$ realizes $t$ and $t'$, then $comp \cdot [r',r]$ realizes $t' \circ t$. First we construct a realizer, $\text{comppart}$:

$$\lmbd{f,w}{\ifthen{p_{0} \cdot (p_{1}  \cdot (p_{1} \cdot w))}{\text{base}_{\mor} \cdot [[p_{1}  \cdot (p_{1} \cdot w))],p_{1} \cdot (p_{1}  \cdot (p_{1} \cdot w))]}{comp \cdot [f \cdot (p_{1} \cdot (p_{1} \cdot (p_{1}  \cdot (p_{1} \cdot w)))) ,f \cdot (p_{0} \cdot (p_{1} \cdot (p_{1}  \cdot (p_{1} \cdot w))))] }}$$

Next we construct the following realizer:

$$\text{invpart} = \lmbd{f,w}{\ifthen{p_{0} \cdot w}{(\text{comppart} \cdot f) \cdot w}{inv \cdot (f \cdot [k, p_{1} \cdot w] ) }}$$

Finally, we construct the realizer for the morphism part of $\tilde{g}$, $\text{r}_{mor} = \text{fixf} \cdot \text{invpart}$. Thus $\tilde{g}: C_{g,A} \to (X \to Y)$ is realized by $[\text{base}_{\ob},\text{r}_{mor}]$ and $J_{*}(\tilde{g}) = \tilde{g}$. \\

The morphism part is straightforward, since $\ob C = \ob C_{g}$, any natural isomorphism $\alpha: f \cong f': C \to \Hom_{\Cat}(X,Y)$ becomes a natural isomorphism $\alpha:\tilde{f} \cong \tilde{f'}$. Realizing $\alpha$ can be done in a similar way to realizing the object part of $\tilde{g}$. Hence $J_{*}(\alpha) = \alpha$. It should be clear that $J$ and $J_{*}$ are isomorphisms, thus $$(C \to \Hom_{\Cat}(X,Y)) \cong \Hom_{\Cat}(C_{g,A}, X \to Y) $$ 

Therefore $\GrpA{A}$ has tensors and cotensors with finite categories. 
\end{proof}

Using the construction of bilimits in section 1.24 of \cite{CTGDC_1980__21_2_111_0}, suppose we have a bicategory $C$ such that $\ob C$ is finite and $C(x,y)$ is a finite category for $x, y \in \ob C$ with homomorphisms $W:C \to \Cat$ and $J:C \to \GrpA{A}$ with each $W(x)$ being a finite category, first observe that we have a functor $\phi_{x,y}:W(x) \times C(x,y) \to W(y)$ where $\phi_{x,y}(w_{x},f) = W(f)(w_{x})$ for objects and $\phi_{x,y}(m_{x},\alpha:f \to f') = W(\alpha)_{\cod(m_{x})} \circ W(f)(m_{x})$ for morphisms. We construct the weighted bilimit $W \multimap J$ as being the biequalizer of the following diagram:

\[\begin{tikzcd}
	\Pi_{x \in \ob C} (W(x) \multimap J(x)) &&&& \Pi_{f \in \mor C} ((W(\dom(f))\times C(\dom(f),\cod(f)) )\multimap J(\cod(f)))
	\arrow["{\pi_{0}}", shift left=3, from=1-1, to=1-5]
	\arrow["{\pi_{1}}"', shift right=3, from=1-1, to=1-5]
\end{tikzcd}\]

where $$\pi_{0}(p) = [f \in \mor C \mapsto  J(f) \circ p_{\dom(f)} \circ \phi_{\dom(f),\dom(f)}]$$ and $$\pi_{1}(p) = [f \in \mor C \mapsto p_{\cod(f)} \circ \phi_{\dom(f),\cod(f)}]$$ 

The weighted bicolimit $W \otimes J$ is the bicoequalizer of the following diagram:

\[\begin{tikzcd}
	\Sigma_{f \in \mor C} ((W(\dom(f))\times C(\dom(f),\cod(f)) )\otimes J(\dom(f))) &&&& \Sigma_{x \in \ob C} (W(x) \otimes J(x))
	\arrow["{\sigma_{0}}", shift left=3, from=1-1, to=1-5]
	\arrow["{\sigma_{1}}"', shift right=3, from=1-1, to=1-5]
\end{tikzcd}\]

where $$\sigma_{0}((w_{x},f),j_{x}) = (w_{x},j_{x})$$ and $$\sigma_{1}((w_{x},f),j_{x}) = (\phi_{\dom(f),\cod(f)}(w_{x},f), J(f)(j_{x}))$$ for objects and $$\sigma_{1}((m_{x},\alpha:f \to f'),k_{x}) = (\phi_{\dom(f),\cod(f)}(m_{x},\alpha), J(\alpha)_{\cod(k_{x})} \circ J(f)(k_{x}))$$ for morphisms. We will refer to these as {\bf finite bilimits and bicolimits}. This leads us to the following lemma:

\begin{lem}
\label{bilim}

$\GrpA{A}$ has finite bilimits and bicolimits.

\end{lem}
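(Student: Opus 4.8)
The plan is to assemble the finite bilimit and bicolimit out of the three classes of weighted bi(co)limits already shown to exist—finite biproducts and bicoproducts (Lemma on finite limits and colimits), biequalizers and bicoequalizers, and (co)tensors with finite categories from Lemma \ref{tensor}—following the decomposition of an arbitrary finite weighted bilimit into products of cotensors glued along a biequalizer, as in \cite{CTGDC_1980__21_2_111_0}. Since the candidate objects $W \multimap J$ and $W \otimes J$ have already been written down explicitly just above, as the biequalizer of $\pi_{0},\pi_{1}$ and the bicoequalizer of $\sigma_{0},\sigma_{1}$ respectively, the entire content of the lemma is to verify that these candidates satisfy the defining equivalences of categories.

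For the bilimit I would first invoke Lemma \ref{tensor} to obtain each cotensor $W(x) \multimap J(x)$, then form the finite products $\Pi_{x \in \ob C}(W(x) \multimap J(x))$ and $\Pi_{f \in \mor C}(\cdots)$; the maps $\pi_{0},\pi_{1}$ are built from the functors $\phi_{\dom(f),\cod(f)}$ together with the action of $J$ on morphisms, and their biequalizer $W \multimap J$ exists by the pullback-against-the-path-object construction of biequalizers given above. The universal property is then checked by transporting a functor $X \to W \multimap J$ through the biequalizer and product equivalences to a family of functors $X \to W(x) \multimap J(x)$ compatible with the $\phi_{x,y}$ up to coherent isomorphism, and then through the cotensor adjunction $\Hom_{\Cat}(X, W(x) \multimap J(x)) \simeq (W(x) \to \Hom_{\Cat}(X, J(x)))$ of Lemma \ref{tensor} to a pseudonatural transformation $W \To \Hom_{\Cat}(X, J(-))$, i.e. to an object of $[C \to \Cat](W, \Hom_{\Cat}(X, J(-)))$. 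Tracking 2-cells on both sides shows this correspondence is an equivalence of categories, giving $\Hom_{\Cat}(X, W \multimap J) \simeq [C \to \Cat](W, \Hom_{\Cat}(X, J(-)))$.

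The bicolimit is handled dually: using the tensors $W(x) \otimes J(x)$ from Lemma \ref{tensor}, the finite coproducts $\Sigma_{x \in \ob C}(W(x) \otimes J(x))$, and the bicoequalizer of $\sigma_{0},\sigma_{1}$ constructed above, one verifies $\Hom_{\Cat}(W \otimes J, X) \simeq [C^{\op} \to \Cat](W, \Hom_{\Cat}(J(-), X))$ by the mirror-image argument, now invoking the tensor adjunction and the universal property of the bicoequalizer in place of their bilimit counterparts.

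The main obstacle will be the bicategorical bookkeeping in the universal-property verification rather than any new construction: because biequalizers and bicoequalizers only capture cones up to coherent isomorphism, I must confirm that the path-object data built into the biequalizer precisely records the invertible structure 2-cells of a pseudonatural transformation, so that the comparison is a genuine \emph{equivalence} (not an isomorphism) of hom-categories, with 2-cells composing correctly. Rather than grind through this coherence check by hand, I would appeal to the general result of \cite{CTGDC_1980__21_2_111_0} that biproducts, biequalizers, and cotensors together generate all weighted bilimits (and dually bicoproducts, bicoequalizers, and tensors generate all weighted bicolimits), so that the explicit formulas above automatically inherit the required universal properties once each ingredient—supplied here by finite products and coproducts, the biequalizer and bicoequalizer constructions, and Lemma \ref{tensor}—is in place.
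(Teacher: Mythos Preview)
Your proposal is correct and follows the same approach as the paper: the lemma is stated without proof, relying on the explicit biequalizer and bicoequalizer formulas given just above (which cite section 1.24 of \cite{CTGDC_1980__21_2_111_0}) together with the finite products, coproducts, and the (co)tensors of Lemma~\ref{tensor}. Your elaboration of the universal-property verification and your final appeal to Street's decomposition theorem match exactly what the paper leaves implicit.
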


Homotopy limits and colimits can be understood weighted bilimits and bicolimits. There are two notable examples of finite bilimits and bicolimits being bipullbacks and the circle type:\\

Given a diagram in $\GrpA{A}$:

\[\begin{tikzcd}
	&&& X \\
	\\
	\\
	Z &&& Y
	\arrow["f", from=1-4, to=4-4]
	\arrow["g"', from=4-1, to=4-4]
\end{tikzcd}\]

and a weighted diagram in $\Cat$:

\[\begin{tikzcd}
	&&& 1 \\
	\\
	\\
	1 &&& \I
	\arrow["{\htpy{1}}", from=1-4, to=4-4]
	\arrow["{\htpy{0}}"', from=4-1, to=4-4]
\end{tikzcd}\]

We have the following diagram:\\
\[\begin{tikzcd}
	X \times (\I \to Y)\times Z &&&& Y \times Y
	\arrow["{\beta_{0} = [(x,p,z)   \mapsto (f(x),\dom(p))]}", shift left=3, from=1-1, to=1-5]
	\arrow["{\beta_{1} = [(x,p,z) \mapsto (g(z),\cod(p))]}"', shift right=3, from=1-1, to=1-5]
\end{tikzcd}\]

we construct the weighted bilimit as follows:

\[\begin{tikzcd}
	{P_{f,g}} &&& {\I \to (Y\times Y)} \\
	\\
	\\
	{X \times (\I \to Y)\times Z} &&& {(Y \times Y) \times (Y \times Y)}
	\arrow["\alpha", from=1-1, to=1-4]
	\arrow["{(p_{0},\alpha,p_{1})}"', from=1-1, to=4-1]
	\arrow["\lrcorner"{anchor=center, pos=0.125}, draw=none, from=1-1, to=4-4]
	\arrow["{{(\htpy{0}^{Y\times Y},\htpy{1}^{Y\times Y})}}", from=1-4, to=4-4]
	\arrow["{(\beta_{0},\beta_{1})}"', from=4-1, to=4-4]
\end{tikzcd}\]

The resultant bipullback is the morphism $p_{1}:P_{f,g} \to Z$. The {\bf circle type} $\Circ$ which is the result of the following bicoequalizer of the diagram:

\[\begin{tikzcd}
	1 &&&& 1
	\arrow[ shift left=3, from=1-1, to=1-5]
	\arrow[ shift right=3, from=1-1, to=1-5]
\end{tikzcd}\]

in $\GrpA{A}$ which is constructed as:

\[\begin{tikzcd}
	{1 + 1} &&&& { \I} \\
	\\
	\\
	{1} &&&& {\Circ}
	\arrow["{[ \htpy{0}, \htpy{1}]}", from=1-1, to=1-5]
	\arrow["{[!_{1},!_{1}]}"', from=1-1, to=4-1]
	\arrow["\text{loop}", from=1-5, to=4-5]
	\arrow["{\text{base}}"', from=4-1, to=4-5]
	\arrow["\lrcorner"{anchor=center, pos=0.125, rotate=180}, draw=none, from=4-5, to=1-1]
\end{tikzcd}\]

explicitly, $\Circ$ is generated by a single object $\text{base}$ and a non-trivial isomorphism $\text{loop}:\text{base} \to \text{base}$. Though we only discuss examples here, the general theory of bilimits and bicolimits is discussed in \cite{Lack_2009} .\\


Now we can talk about $0$ and $-1$ types, in particular, we can characterize $0$-truncation and $-1$ truncation.

\begin{defn}

We call a groupoid assembly $X$ a {\bf 0-type} if the morphism $- \circ !_{\Circ}:X \to (\Circ \to X)$ is a groupoid equivalence.  We call $X$ a {\bf -1-type} if the morphism $- \circ !_{1 + 1}:X \to (1 + 1 \to X)$ is a groupoid equivalence.

\end{defn}

The following lemmas allow us the characterize the $0$ and $-1$ types:

\begin{lem}
The following statements are equivalent:

\begin{enumerate}

\item $- \circ !_{\Circ}:X \to (\Circ \to X)$ is a groupoid equivalence. 
\item $- \circ !_{\Circ}:X \to (\Circ \to X)$ is an isomorphism.
\item There is at most one morphism between any two objects in $X$. 

\end{enumerate}

\end{lem}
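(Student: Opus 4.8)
The plan is to first pin down an explicit description of the functor groupoid $(\Circ \to X)$ together with the comparison map $- \circ !_{\Circ}$, and then run the cycle of implications (ii) $\Rightarrow$ (i) $\Rightarrow$ (iii) $\Rightarrow$ (ii).

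First I would unwind $\Circ$ using its bicoequalizer presentation: $\Circ$ is the free groupoid on a single object $\text{base}$ together with one automorphism $\text{loop}$, so that $\Circ(\text{base},\text{base})$ is free on $\text{loop}$. Consequently a functor $F \colon \Circ \to X$ is exactly the data of an object $x = F(\text{base}) \in \ob X$ together with an automorphism $\phi = F(\text{loop}) \in X(x,x)$, and a natural isomorphism from $(x,\phi)$ to $(x',\phi')$ is a morphism $\eta \colon x \to x'$ satisfying the naturality condition $\phi' \circ \eta = \eta \circ \phi$ (naturality need only be checked on $\text{loop}$, since it generates). Under this description the comparison functor $- \circ !_{\Circ}$ sends an object $x$ to the constant functor $(x,\id_x)$ and a morphism $m\colon x \to x'$ to itself, viewed as a natural isomorphism $(x,\id_x) \to (x',\id_{x'})$.

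With this in hand, (ii) $\Rightarrow$ (i) is immediate, since an isomorphism of groupoid assemblies is in particular a groupoid equivalence. For (i) $\Rightarrow$ (iii) I would use that a groupoid equivalence is essentially surjective: every object $(x,\phi)$ of $(\Circ \to X)$ is isomorphic to one in the image of $- \circ !_{\Circ}$, i.e.\ to some $(y,\id_y)$. But an isomorphism $\eta\colon (x,\phi) \to (y,\id_y)$ satisfies $\id_y \circ \eta = \eta \circ \phi$, hence $\eta = \eta \circ \phi$, and cancelling the isomorphism $\eta$ forces $\phi = \id_x$. Thus every automorphism in $X$ is an identity; given parallel morphisms $m, m' \colon x \to x'$, the composite $m'^{-1} \circ m$ is then an automorphism of $x$, hence $\id_x$, so $m = m'$, which is (iii). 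For (iii) $\Rightarrow$ (ii) I would build an explicit inverse: under (iii) each $X(x,x) = \{\id_x\}$, so every object of $(\Circ \to X)$ is forced to be of the form $(x,\id_x)$, and the naturality condition on any morphism $\eta \colon (x,\id_x) \to (x',\id_{x'})$ becomes vacuous, so $- \circ !_{\Circ}$ is already bijective on objects and on morphisms. The candidate inverse sends $(x,\id_x) \mapsto x$ and $\eta \mapsto \eta$, and one checks the two composites are the respective identities.

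The main obstacle, as I see it, is twofold, and both parts are bookkeeping rather than conceptual. First, rigorously deducing the ``free groupoid on $\text{base}$ and $\text{loop}$'' description of $(\Circ \to X)$ from the bicoequalizer construction of $\Circ$ given earlier, including that a natural transformation is determined by its single component at $\text{base}$. Second, tracking realizers in (iii) $\Rightarrow$ (ii): I must confirm that the componentwise inverse is a genuine morphism in $\GrpA{A}$, its realizer being obtained by projecting the object and morphism parts out of the realizer $[r_{F,o},r_{F,m}]$ of a functor $\Circ \to X$ in the enrichment, so that (ii) is a bona fide isomorphism of groupoid assemblies and not merely a bijective-on-objects-and-morphisms functor between the underlying groupoids.
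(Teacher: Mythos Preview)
Your proposal is correct and follows essentially the same route as the paper: both argue (ii)$\Rightarrow$(i) trivially, deduce (iii) from (i) by using essential surjectivity of the comparison to conjugate an arbitrary automorphism $\phi$ (the paper takes $\phi = q^{-1}\circ p$ directly) into an identity via the naturality square, and obtain (ii) from (iii) by noting that under (iii) every object of $(\Circ \to X)$ is $(x,\id_x)$ and naturality is vacuous. Your write-up is somewhat more explicit about the description of $(\Circ \to X)$ and about the realizer bookkeeping for the inverse in (iii)$\Rightarrow$(ii), which the paper leaves implicit.
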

\begin{proof}

We have that $(ii)$ implies $(i)$ and $(iii)$ implies $(ii)$ since $(iii)$ implies that the only morphism $p:x \to x$ is the identity morphism, thus we must show that $(i)$ implies $(iii)$. Suppose that we have $p,q:x \to x'$ in $X$, then we have $q^{-1} \circ p:x \to x$ and $p^{-1} \circ q:x' \to x'$. Since $- \circ !_{\Circ}:X \to (\Circ \to X)$ is a groupoid equivalence, there exists $x_{0},x'_{0} \in \ob X$ with $\alpha:x \to x_{0}$ and $\alpha':x' \to x'_{0}$ with $\alpha = \alpha \circ q^{-1} \circ p$ and $\alpha' = \alpha' \circ p^{-1} \circ q$. But then $p^{-1} \circ q = \id_{x'}$ and $q^{-1} \circ p = \id_{x}$. Therefore $p = q$. Thus $(i)$ implies $(iii)$ holds. This completes the proof.

\end{proof}

\begin{lem}
The following statements are equivalent:

\begin{enumerate}

\item $- \circ !_{1 + 1}:X \to (1 + 1 \to X)$ is a groupoid equivalence.
\item $- \circ [ \htpy{0}, \htpy{1}]:(\I \to X) \to (1 + 1 \to X)$ is an isomorphism.
\item For $x, x' \in \ob X$, there exists a unique morphism $p:x \to x'$.

\end{enumerate}

\end{lem}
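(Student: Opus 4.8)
The plan is to prove the three implications cyclically, as $(iii) \Rightarrow (ii) \Rightarrow (i) \Rightarrow (iii)$, after first recording the identifications that make the maps concrete. I would begin by noting that $(1 + 1 \to X) \cong X \times X$: a functor out of the discrete two-object groupoid $1 + 1$ is just a pair of objects, and a natural isomorphism between two such is just a pair of morphisms. Under this identification $- \circ !_{1 + 1}$ becomes the diagonal $\Delta_{X} : X \to X \times X$, while $- \circ [\htpy{0},\htpy{1}]$ becomes the endpoint projection $(\htpy{0}^{X},\htpy{1}^{X}) : (\I \to X) \to X \times X$. These are exactly the two legs of the path-object factorization $\Delta_{X} = (\htpy{0}^{X},\htpy{1}^{X}) \circ \sigma_{X}$ already established, with $\sigma_{X}$ a strong deformation section.

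For $(iii) \Rightarrow (ii)$ I would construct an explicit inverse $s : X \times X \to (\I \to X)$ of $(\htpy{0}^{X},\htpy{1}^{X})$. Send a pair $(x,x')$ to the unique morphism $u(x,x') : x \to x'$ guaranteed by $(iii)$; it is automatically an isomorphism since $X$ is a groupoid, so it is an object of $\I \to X$. Send a morphism $(p,p') : (x,x') \to (y,y')$ to the square with top $u(x,x')$, bottom $u(y,y')$ and sides $p,p'$; this square commutes because $p' \circ u(x,x')$ and $u(y,y') \circ p$ are two morphisms $x \to y'$, hence equal by uniqueness. Both composites with $(\htpy{0}^{X},\htpy{1}^{X})$ are identities, again by uniqueness. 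For $(ii) \Rightarrow (i)$ I would observe that an isomorphism is a groupoid equivalence and that $\sigma_{X}$, being a strong deformation section, is a strong equivalence and hence a groupoid equivalence, so $\Delta_{X} = (\htpy{0}^{X},\htpy{1}^{X}) \circ \sigma_{X}$ is a composite of groupoid equivalences and therefore a groupoid equivalence.

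The remaining implication $(i) \Rightarrow (iii)$ is the substantive one. For existence of a morphism $x \to x'$, write $\Delta_{X}^{i}$ for the inverse functor and $\beta : \Delta_{X} \circ \Delta_{X}^{i} \cong \id_{X \times X}$; with $z = \Delta_{X}^{i}(x,x')$ the component $\beta_{(x,x')}$ is an isomorphism $(z,z) \to (x,x')$, whose two coordinates $z \to x$ and $z \to x'$ compose to a morphism $x \to x'$. For uniqueness I would establish the general fact that every groupoid equivalence $W$ is fully faithful: faithfulness follows from naturality of $\alpha : W^{i}W \cong \id$, since $W(f) = W(g)$ gives $f \circ \alpha_{a} = \alpha_{a'} \circ W^{i}W(f) = \alpha_{a'} \circ W^{i}W(g) = g \circ \alpha_{a}$; fullness follows by taking the candidate preimage $\alpha_{a'} \circ W^{i}(h) \circ \alpha_{a}^{-1}$ of $h : Wa \to Wa'$, checking $W^{i}W(f) = W^{i}(h)$ and applying the (symmetrically faithful) functor $W^{i}$. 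Applying this to $\Delta_{X}$, the map $\Hom_{X}(x,x') \to \Hom_{X}(x,x') \times \Hom_{X}(x,x')$, $f \mapsto (f,f)$, is a bijection; its surjectivity forces any pair $(g,h)$ of morphisms $x \to x'$ to satisfy $g = h$, which is uniqueness.

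The step I expect to be the main obstacle is $(i) \Rightarrow (iii)$, precisely making the full-faithfulness argument go through internally to $\Asm(A)$: one must check that faithfulness and fullness are witnessed by realizers built from the equivalence data $(\Delta_{X}^{i}, \alpha, \beta)$, and that the resulting collapse $g = h$ is uniform in the pair $(x,x')$. I would handle this exactly as the analogous $0$-type lemma does, reasoning in the internal language where the functors, the natural isomorphisms and their naturality squares are all realized, so that the equational manipulations above are legitimate.
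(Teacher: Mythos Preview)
Your proof is correct. The overall strategy differs from the paper's in two ways. First, the paper argues $(ii)\Leftrightarrow(iii)$ directly (calling it straightforward) and then proves $(i)\Leftrightarrow(iii)$; you instead run the cycle $(iii)\Rightarrow(ii)\Rightarrow(i)\Rightarrow(iii)$, which has the advantage that $(ii)\Rightarrow(i)$ drops out immediately from the path-object factorization $\Delta_{X}=(\htpy{0}^{X},\htpy{1}^{X})\circ\sigma_{X}$ already established in the paper. Second, for the uniqueness part of $(i)\Rightarrow(iii)$ the paper does a direct diagram chase: given $q:x\to x$ it applies naturality of $\alpha$ to the morphism $(\id_{x},q):(x,x)\to(x,x)$ in $X\times X$, obtaining a square that forces $q=\id_{x}$, and then deduces uniqueness of parallel morphisms $x\to x'$ by considering $j\circ j'^{-1}$. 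Your route packages the same naturality argument into the general lemma that every groupoid equivalence is fully faithful, and then reads off uniqueness from surjectivity of the diagonal $\Hom_{X}(x,x')\to\Hom_{X}(x,x')\times\Hom_{X}(x,x')$. Both arguments are internally valid in $\Asm(A)$ for the reason you state: the equivalence data are realized, so the equational manipulations go through uniformly. Your version is slightly more conceptual and reusable; the paper's is marginally more self-contained.
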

\begin{proof}

It is straightforward to show that $(ii)$ and $(iii)$ are equivalent. So it suffices to show that $(i)$ and $(iii)$ are equivalent. Suppose $(i)$ holds and $p: (1 + 1 \to X) \to X$, $\alpha: (- \circ !_{1 + 1}) \circ p \cong \Id_{(1 + 1 \to X)}$, and $\beta: p \circ (- \circ !_{1 + 1})  \cong \id_{X}$ exhibit $- \circ !_{1 + 1}$ as a groupoid equivalence, we have $x, x' \in \ob X$, then we have an element $p(x,x') \in \ob X$ with isomorphisms $\alpha_{(x,x'),0}:p(x,x') \to x$ and $\alpha_{(x,x'),1}:p(x,x') \to x'$ giving us an isomorphism $\alpha_{(x,x'),1} \circ \alpha_{(x,x'),0}^{-1}:x \to x'$. If we have $q:x \to x$, then we can exhibit the following diagram: 

\[\begin{tikzcd}
	x &&& x \\
	\\
	\\
	{p(x,x)} &&& {p(x,x)} \\
	\\
	\\
	x &&& x
	\arrow["q", from=1-1, to=1-4]
	\arrow["{\alpha_{(x,x),0}}", from=4-1, to=1-1]
	\arrow["{p(\id_{x},q)}"{description}, from=4-1, to=4-4]
	\arrow["{\alpha_{(x,x),1}}"', from=4-1, to=7-1]
	\arrow["{\alpha_{(x,x),0}}"', from=4-4, to=1-4]
	\arrow["{\alpha_{(x,x),1}}", from=4-4, to=7-4]
	\arrow["{\id_{x}}"', from=7-1, to=7-4]
\end{tikzcd}\]

resulting in the following square:

\[\begin{tikzcd}
	{x} &&& {x} \\
	\\
	\\
	{x} &&& {x}
	\arrow["q", from=1-1, to=1-4]
	\arrow["{\alpha_{(x,x),1} \circ \alpha_{(x,x),0}^{-1}}"', from=1-1, to=4-1]
	\arrow["{\alpha_{(x,x),1} \circ \alpha_{(x,x),0}^{-1}}", from=1-4, to=4-4]
	\arrow["{\id_{x}}"', from=4-1, to=4-4]
\end{tikzcd}\]

Which implies that $q = \id_{x}$. This means that for $j, j':x \to x'$, $j \circ j'^{-1}$ and $j' \circ j^{-1}$ are identities thus $j = j'$. Therefore $(iii)$ holds.\\

Suppose $(iii)$ holds, then it suffices to construct a functor $p: (1 + 1 \to X) \to X$ since every two objects in $X$ have a unique isomorphism between them so verifying the natural isomorphisms is trivial. However, it is obvious that a functor $p: (1 + 1 \to X) \to X$ exists, Therefore, $(i)$ holds therefore, $(iii)$ and $(i)$ are equivalent. This completes the proof.
\end{proof}

So we can characterize $0$-types and $-1$-types as being the local types of $([\Loc(*,!_{\Circ})],R_{[\Loc(*,!_{\Circ})]},L_{[\Loc(*,!_{\Circ})]})$ and $([\Loc(*,[ \htpy{0}, \htpy{1}])],R_{[\Loc(*,[ \htpy{0}, \htpy{1}])]},L_{[\Loc(*,[ \htpy{0}, \htpy{1}])]})$ respectively. Hence $\Loc_{*}(!_{\Circ})$ and $\Loc_{*}([ \htpy{0}, \htpy{1}])$ gives us $0$-truncation and $-1$-truncation respectively.

\subsection{Partitioned Groupoid Assemblies and Regular Equivalences}
For this section and every other section after, we will use the fact that partitioned assemblies are projective which is also equivalent to Axiom of Choice in $\Set$. We introduce regular equivalences and partitioned groupoid assemblies and prove that regular equivalences over partitioned groupoid assemblies are groupoid equivalences.\\

\begin{defn}
\label{regeq}
Suppose we have a functor $F:X \to Y$ in $\GrpA{A}$ whose object part is $F_{\ob}: (X_{\ob},\phi_{\ob}) \to (Y_{\ob},\psi_{\ob})$ and whose morphism part is $F_{\mor}: (X_{\mor},\phi_{\mor}) \to (Y_{\mor},\psi_{\mor})$. We call $F$ a {\bf regular equivalence} iff it satisfies the following three conditions:

\begin{enumerate}

\item The proposition $\forall y \in Y_{\ob}(\psi_{\ob}(y) \imply \exists x \in X_{\ob}\exists f \in Y_{\mor}(F(x),y)(\phi_{\ob}(x)\  \land \ \psi_{\mor}(f)) )$ is realized. (F is essentially surjective)

\item The proposition $\forall x, x' \in X_{\ob}\forall f \in Y_{\mor}(F(x),F(x'))(\phi_{\ob}(x)\ \land \phi_{\ob}(x')\ \land \psi_{\mor}(f) \imply \exists g \in X_{\mor}(x,x')(\phi_{\mor}(g)\ \land \ (F(g) = f))) $ is realized. (F is full)

\item The proposition $\forall x,x' \in X_{\ob}\forall f,f' \in X_{\mor}(x,x')((F(f) = F(f'))\ \land \ \phi_{\mor}(f) \imply (f = f'))$ is realized. (F is faithful)

\end{enumerate}

We call $\RE$ the class of regular equivalences.

\end{defn}

As a reminder for a set $X$, the equality predicate for $X$, $( - = -)\in \pow{A}^{X \times X}$ is defined as $(x = x') = A$ when $x = x'$ and $(x = x') = \emptyset$ otherwise.\\

\begin{lem}

Groupoid equivalences are regular equivalences.

\end{lem}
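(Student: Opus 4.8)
The plan is to fix a groupoid equivalence $F : X \to Y$ together with its chosen inverse data: a functor $F' : Y \to X$ and natural isomorphisms $\alpha : F' \circ F \cong \Id_X$ and $\beta : F \circ F' \cong \Id_Y$, with realizers $r_F, r_{F'}, r_\alpha, r_\beta$ for $F, F', \alpha, \beta$ respectively. I would then verify each of the three realizability propositions of Definition \ref{regeq} by exhibiting an explicit realizer assembled from these. Before starting, I would first correct $\beta$ so that $(F, F', \alpha, \beta)$ becomes a \emph{strong} equivalence, i.e. $\beta \cdot F = F \cdot \alpha$, equivalently $\beta_{F(x)} = F(\alpha_x)$ for all $x$; the usual adjoint-equivalence correction writes the new $\beta$ as a composite of whiskerings of $\alpha$, $\beta$, $F$, $F'$ (as already exploited in the proof of Lemma \ref{deformr}), each of which is realized, so the corrected $\beta$ remains realized. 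This identity is exactly what turns the fullness computation into a genuine equality.

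For essential surjectivity, condition (i), given $y \in Y_{\ob}$ with $a \in \psi_{\ob}(y)$ I would take $x := F'(y)$ and $f := \beta_y : F(F'(y)) \to y$. Since $F'$ is a functor we have $r_{F'} \cdot a \in \phi_{\ob}(F'(y))$, and since $\beta$ is realized we have $r_\beta \cdot a \in \psi_{\mor}(\beta_y)$; hence $\lmbd{a}{[r_{F'} \cdot a, r_\beta \cdot a]}$ realizes the proposition. For faithfulness, condition (iii), given $f, f' : x \to x'$ with $F(f) = F(f')$, naturality of $\alpha : F' \circ F \to \Id_X$ yields $f = \alpha_{x'} \circ F'(F(f)) \circ \alpha_x^{-1}$ and the same formula for $f'$; since $F(f) = F(f')$ this forces $f = f'$. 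Thus whenever the hypotheses are inhabited the predicate $(f = f')$ equals all of $A$, and the implication is realized trivially, e.g. by $\iota$.

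For fullness, condition (ii), given $f : F(x) \to F(x')$ I would set $g := \alpha_{x'} \circ F'(f) \circ \alpha_x^{-1} : x \to x'$. Naturality of $\beta$ gives $F(F'(f)) = \beta_{F(x')}^{-1} \circ f \circ \beta_{F(x)}$, so that $F(g) = F(\alpha_{x'}) \circ \beta_{F(x')}^{-1} \circ f \circ \beta_{F(x)} \circ F(\alpha_x)^{-1}$; substituting the strong-equivalence identity $\beta_{F(x)} = F(\alpha_x)$ and using that $F$ preserves inverses collapses both flanking factors to identities, giving $F(g) = f$ exactly. A realizer for $\phi_{\mor}(g)$ is then built from $r_\alpha$ and $r_{F'}$ together with the composition and inversion combinators of the groupoid assembly $X$, applied to the realizers of $x$, $x'$, and $f$; the $(F(g) = f)$ component is again all of $A$.

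The main obstacle I anticipate is precisely this fullness step: with a bare equivalence the candidate $g = \alpha_{x'} \circ F'(f) \circ \alpha_x^{-1}$ only gives $F(g)$ conjugate to $f$ by the automorphism $\beta_{F(x)} \circ F(\alpha_x)^{-1}$, so to produce the literal equality demanded by the inhabitedness of $(F(g) = f)$ I must either pre-correct to a strong equivalence (my preferred route) or conjugate $f$ to absorb that automorphism, and in either case I must confirm the correction stays realized. The remaining work — checking that each chosen witness lands in the correct predicate and packaging the nested $\forall$/$\exists$/$\land$/$\imply$ realizers via the pairing and $\lambda$-combinators of Section 2 — is routine realizer bookkeeping.
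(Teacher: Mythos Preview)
Your proposal is correct and tracks the paper's proof closely for essential surjectivity and faithfulness. The one substantive difference is in the fullness step: you pre-correct the equivalence data to an adjoint (strong) equivalence so that the triangle identity $\beta_{F(x)} = F(\alpha_x)$ holds on the nose, which makes $F(g) = f$ immediate for $g = \alpha_{x'} \circ F'(f) \circ \alpha_x^{-1}$. The paper instead sets $g = \beta_{x'} \circ F^{i}(f) \circ \beta_x^{-1}$ (the same formula, with the paper's $\beta$ playing the role of your $\alpha$) and asserts $F(g) = f$ ``by naturality of $\beta$''; unpacked, naturality of $\beta$ applied to $g$ itself gives $F^{i}(F(g)) = F^{i}(f)$, and then faithfulness of $F^{i}$ (which follows from naturality of the other isomorphism by the same argument used for faithfulness of $F$) yields $F(g) = f$ without any correction to the data. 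Both routes are standard and realizable; yours is more explicit about the subtlety you correctly flagged, while the paper's avoids modifying $\beta$ at the cost of a slightly hidden use of faithfulness of $F^{i}$.
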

\begin{proof}

Let $F:X \to Y$ be a groupoid equivalences. Using the notation in Definition \ref{regeq}, let $r_{F_{\ob}}$ and $r_{F_{\mor}}$ be the realizer for the object and morphism parts of $F$ respectively and $r_{F^{i}_{\ob}}$ and $r_{F^{i}_{\mor}}$ be the realizer for the object and morphism parts of $F^{i}:Y \to X$ respectively. Let $r_{\alpha}$ and $r_{\beta}$ realize $\alpha: F \circ F^{i} \cong \id_{Y}$ and $\beta: F^{i} \circ F \cong \id_{X}$ respectively.\\

First we show that $F$ is essentially surjective. Suppose $y \in \ob Y$ and $r_{y}$ is a realizer for $y$, then $r_{F^{i}_{\ob}} \cdot r_{y}$ is a realizer for $F^{i}(y)$ and $r_{\alpha} \cdot r_{y}$ realizes $\alpha_{y}:F(F^{i}(y)) \to y$. Therefore, $\lmbd{z}{[r_{F^{i}_{\ob}} \cdot z,  r_{\alpha} \cdot z]}$ realizes $F$ being essentially surjective.\\

Now we show fullness. Suppose we have $f:F(x) \to F(x')$ in $Y$ with $r_{x}$, $r_{x'}$, and $r_{f}$ being realizers for $x$, $x'$, and $f$ respectively. It can be shown using naturality of $\beta$ that $$F(\beta_{x'} \circ F^{i}(f) \circ \beta^{-1}_{x})  = f$$. Let $r_{comp,X}$ be the realizer for composition in $X$ and $r_{inv,X}$ be a realizer for inverse morphism in $X$, so that since $r_{\beta} \cdot r_{x'}$, $r_{\beta} \cdot r_{x}$, and $r_{F^{i}_{\mor}} \cdot r_{f}$ realizes $\beta_{x'}$, $\beta_{x}$, $F^{i}(f)$ respectively then $$r_{comp,X} \cdot [r_{comp,X} \cdot [r_{\beta} \cdot r_{x'},r_{F^{i}_{\mor}} \cdot r_{f} ], r_{inv,X} \cdot (r_{\beta} \cdot r_{x})]$$
 realizes $\beta_{x'} \circ F^{i}(f) \circ \beta^{-1}_{x}$. Hence fullness of $F$ is realized.\\

It suffices to prove faithfulness internally for $F$. Suppose we have $f, f':x \to x'$ in $X$ such that $F(f) = F(f')$. Then $f = f'$ follows from naturality of $\beta$. Therefore, $F$ is faithful.\\

Therefore, $F$ is a regular equivalence.\\

\end{proof}

\begin{rmk}

It suppose be clear from the previous lemma that the regular equivalences are the just the fully faithful and essentially surjective morphisms of $\GrpA{A}$. The name regular equivalences comes from the essentially surjective aspect since the surjections of a category are synonymous with the regular epimorphisms. Though in $\Grp$ under the assumption of Axiom of Choice in $\Set$, the fully faithful and essentially surjective functors are precisely the groupoid equivalences, this is not the case in $\GrpA{A}$ for a non-trivial pca $A$ as we will see later.

\end{rmk}




\begin{defn}

We call a groupoid assembly $G$ a {\bf partitioned groupoid assembly} when $\ob G$ is a partitioned assembly.
\end{defn}

\begin{lem} 

Suppose we have a regular equivalence $F:X \to Y$ in $\GrpA{A}$, for a partitioned groupoid assembly $Z$, and a functor $J: Z \to Y$, there exists a unique functor $G:Z \to X$ up to equivalence such that $F \circ G \cong J$.

\end{lem}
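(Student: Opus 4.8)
The plan is to build $G$ together with a witnessing isomorphism $\alpha \colon F\circ G \cong J$ in two stages, objects first and then morphisms, using essential surjectivity together with projectivity of $\ob Z$ for the object part, and fullness together with faithfulness for the morphism part.

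First I would handle objects. Form the assembly $E$ whose underlying set is $\{(y,x,f)\mid y\in \ob Y,\ x\in\ob X,\ f\colon F(x)\to y \text{ an isomorphism in } Y\}$, with existence predicate $\varepsilon(y,x,f)=\{[b,c]\mid b\in\phi_{\ob}(x),\ c\in\psi_{\mor}(f)\}$, and let $\pi\colon E\to (\ob Y,\psi_{\ob})$ be the projection $(y,x,f)\mapsto y$, which is an assembly morphism since a realizer of $f$ computes a realizer of its codomain $y$. The first clause of Definition \ref{regeq} (essential surjectivity of $F$) is exactly the statement that $\pi$ is surjective with $\psi_{\ob}\bimp \exists_{\pi}(\varepsilon)$, so $\pi$ is a regular epimorphism. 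Pulling $\pi$ back along $J_{\ob}\colon \ob Z\to(\ob Y,\psi_{\ob})$ gives a regular epimorphism $J_{\ob}^{*}(\pi)\to \ob Z$; since $\ob Z$ is a partitioned assembly it is projective, so this map admits a section $s$. The section assigns to each $z\in\ob Z$ a pair $(G(z),\alpha_z)$ with $\alpha_z\colon F(G(z))\to J(z)$ an isomorphism, and being an assembly morphism it does so realizably, so $G$ is a genuine object-assembly morphism and $\alpha$ is realized on objects.

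Next I would extend $G$ to morphisms. For $m\colon z\to z'$ in $Z$ the composite $\alpha_{z'}^{-1}\circ J(m)\circ \alpha_z\colon F(G(z))\to F(G(z'))$ lands between objects in the image of $F$, so fullness (clause (ii)) produces some $g\colon G(z)\to G(z')$ in $X$ with $F(g)=\alpha_{z'}^{-1}\circ J(m)\circ \alpha_z$, and faithfulness (clause (iii)) makes $g$ unique; I set $G(m):=g$. Uniqueness via faithfulness immediately yields functoriality, since $F\big(G(m')\circ G(m)\big)=F\big(G(m'\circ m)\big)$ and likewise for identities and inverses, while the defining equation $\alpha_{z'}\circ F(G(m))=J(m)\circ \alpha_z$ is precisely naturality of $\alpha$. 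The realizer for the morphism part of $G$ is obtained by feeding the fullness realizer the composite of the realizers of $\alpha_z$, $J(m)$ and $\alpha_{z'}^{-1}$ (extracting the realizers of $z,z'$ from that of $m$ via the domain/codomain structure of $Z$), and here — in contrast to the object case — no choice is needed because faithfulness pins $G(m)$ down uniquely.

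Finally I would prove uniqueness up to equivalence. Given another $G'$ with $\beta\colon F\circ G'\cong J$, for each $z$ apply fullness to $\beta_z^{-1}\circ\alpha_z\colon F(G(z))\to F(G'(z))$ to get $\theta_z\colon G(z)\to G'(z)$ with $F(\theta_z)=\beta_z^{-1}\circ\alpha_z$; faithfulness shows $\theta$ is natural and invertible, so $G\cong G'$, and the same fullness-realizer recipe makes $\theta$ realized. The main obstacle throughout is bookkeeping of realizers: one must check at each stage that the $\Set$-level constructions assemble into bona fide morphisms of $\GrpA{A}$ rather than mere functors of the underlying groupoids. The object stage is where this is genuinely delicate and forces the partitioned hypothesis, since only projectivity lets the chosen lift $z\mapsto(G(z),\alpha_z)$ be realized uniformly; the morphism and uniqueness stages are realizer-routine once fullness and faithfulness are invoked, precisely because faithfulness removes any ambiguity in the witnesses.
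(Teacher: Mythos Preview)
Your proposal is correct and follows essentially the same approach as the paper: form the assembly of triples $(y,x,f\colon F(x)\cong y)$, use projectivity of the partitioned $\ob Z$ against the regular epimorphism arising from essential surjectivity to obtain the object part of $G$ together with $\alpha$, then use fullness and faithfulness to define the morphism part and establish uniqueness. The only cosmetic difference is that you pull back the surjection along $J_{\ob}$ before sectioning, whereas the paper lifts $J_{\ob}$ directly through the surjection; these are equivalent uses of projectivity.
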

\begin{proof}
Given a functor $J:Z \to Y$, we need to construct a functor $G: Z \to X$. We have a surjection $F_{surj}: \sum_{x \in \ob X}\sum_{y \in \ob Y}Y(F(x),y) \to \ob Y$ such that $F_{surj}(x,y,f)  = y$. Since $\ob Z$ is a partitioned groupoid assembly, there exists a morphism $g_{0}: \ob Z \to \sum_{x \in \ob X}\sum_{y \in \ob Y}Y(F(x),y) $ such that $J_{\ob} = F_{surj}  \circ g_{0}$. \\

If $[r_{j,o},r_{j,m}]$ realizes $J$, then $u(r_{j,o}) = \lmbd{b}{[[p_{0} \cdot (ess \cdot (r_{j,o} \cdot b)), r_{j,o} \cdot b],p_{1} \cdot (ess \cdot (r_{j,o} \cdot b))]}$ realizes $g_{0}$ where $ess$ realizes $F$ being essentially surjective. So we set $G_{\ob}(z) = \pr{0}(g_{0}(z))$ which is realized by $\gamma_{0}(r_{j,o}) = \lmbd{b}{p_{0} \cdot (p_{0} \cdot(u(r_{j,o}) \cdot b))}$.\\

For $f \in Z(z,z')$, we define $G_{\mor}(f)$ as the unique morphism over the morphism $\pr{2}(g_{0}(z'))^{-1} \circ J(f) \circ \pr{2}(g_{0}(z)):F(G_{\ob}(z))\to F(G_{\ob}(z')) $. We know that this morphism exists since $F$ is full. If $ful$ is the realizer of $F$ being full, then $G_{\mor}: \mor Z\to \mor X$ is realized by $\gamma_{1}(r_{j,o},r_{j,m}) = \lmbd{b}{p_{0} \cdot (q \cdot [[\gamma_{0}(r_{j,o}) \cdot (d_{z} \cdot b),\gamma_{0}(r_{j,o}) \cdot (c_{z} \cdot b)],r_{j,m} \cdot b])}$ where $d_{z}$ is the realizer for  $\dom: \mor Z \to \ob Z$ and $c_{y}$ is the realizer for  $\cod: \mor Z \to \ob Z$.\\

$G$ preserves identity since for $z \in \ob Z$, $\id_{G(z)}$ exists over $\pr{2}(g_{0}(z))^{-1} \circ J(f) \circ \pr{2}(g_{0}(z))= \id_{F(G(z))}$, so $G(\id_{z}) = \id_{G(z)}$. $G$ preserves composition since for $i:z \to z'$ and $j:z' \to z''$, we have that $G(j) \circ G(i)$ exists over $\pr{2}(g_{0}(z''))^{-1} \circ J(j) \circ \pr{2}(g_{0}(z')) \circ \pr{2}(g_{0}(y'))^{-1} \circ J(i) \circ \pr{2}(g_{0}(z)) = \pr{2}(g_{0}(z''))^{-1} \circ J(j) \circ J(i) \circ \pr{2}(g_{0}(z))$, so $G(j) \circ G(i) = G(j \circ i)$. Thus $G$ is a functor.\\

Now we construct $\alpha : F \circ G \cong J$ where $\alpha(z) = \pr{2}(g_{0}(z))$. $\alpha$ is realized by $k(r_{j,o}) = \lmbd{b}{(p_{1} \cdot (u(r_{j,o})  \cdot b))}$. $\alpha$ is a natural isomorphism since for $f:z \to z'$, $G(f)$ is defined so that $F(G(f)) = \alpha(z')^{-1} \circ J(f) \circ \alpha(z)$.\\

Suppose we have another functor $G':Z \to X$ such that $F \circ G' \cong J$ exhibited by $\beta$, we seek to define $\delta \in \Pi_{z \in Z_{0}}X(G(z),G'(z))$ that will exhibit an equivalence $G \cong G'$. We set $\delta(z)$ as the unique morphism over $\beta(z)^{-1} \circ \alpha(z): F \circ G(z) \to F \circ G'(z)$. $\delta$ is realized by $\lmbd{b}{ful_{F} \cdot [r_{G,obj} \cdot b,r_{G',obj} \cdot b,com_{Y} \cdot [ inv_{Y} \cdot (r_{\beta} \cdot b) ,r_{\alpha} \cdot b]]}$. Suppose we have $f \in Z(z ,z')$, then 
\begin{align*}
F(\delta(z') \circ G(f)) &=  \beta(z')^{-1} \circ \alpha(z') \circ  \alpha(z')^{-1} \circ J(f) \circ \alpha(z)\\
& = \beta(z')^{-1} \circ J(f) \circ \alpha(z) \\
& =  \beta(z')^{-1} \circ J(f) \beta(z) \circ \beta(z)^{-1} \circ \alpha(z) \\
& = F(G'(f)) \circ F(\delta(z)) = F(G'(f) \circ \delta(z))\\
\end{align*}
$\delta(z') \circ G(f) = G'(f) \circ \delta(z)$ since $F$ is faithful. Therefore, $G \cong G'$.








\end{proof}

\begin{cor}
If $F: X \to Y$ is a regular equivalence with $Y$ a partitioned groupoid assembly, then $F$ is a groupoid equivalence.

\end{cor}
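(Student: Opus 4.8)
The plan is to use the preceding lemma to manufacture a homotopy inverse together with one of the two required natural isomorphisms, and then to exploit the full faithfulness of $F$ to produce the second. First I would apply the preceding lemma to the partitioned groupoid assembly $Z := Y$ (this is exactly where the hypothesis that $Y$ is partitioned enters) together with the functor $J := \id_Y : Y \to Y$. This yields a functor $F^{i} : Y \to X$ and a natural isomorphism $\beta : F \circ F^{i} \cong \id_Y$. This $F^{i}$ is my candidate for the homotopy inverse, and $\beta$ is one of the two natural isomorphisms demanded by the definition of a groupoid equivalence.

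It then remains to build a natural isomorphism $\alpha : F^{i} \circ F \cong \id_X$. Here I would use that a regular equivalence is, by conditions (ii) and (iii) of Definition \ref{regeq}, full and faithful in the internal language of $\Asm(A)$. For each object $x$ of $X$, the component $\beta_{F(x)} : F(F^{i}(F(x))) \to F(x)$ is a morphism of $Y$ between objects in the image of $F$; by fullness it lifts to a morphism $\alpha_x : F^{i}(F(x)) \to x$ with $F(\alpha_x) = \beta_{F(x)}$, and by faithfulness this lift is unique. Since $X$ is a groupoid, $\alpha_x$ is automatically an isomorphism. Naturality of $\alpha$ then reduces, after applying $F$ and invoking faithfulness, to naturality of $\beta$ at the morphisms $F(f)$: for $f : x \to x'$ one has $F(\alpha_{x'} \circ F^{i}(F(f))) = \beta_{F(x')} \circ F(F^{i}(F(f))) = F(f) \circ \beta_{F(x)} = F(f \circ \alpha_x)$, whence $\alpha_{x'} \circ (F^{i} \circ F)(f) = f \circ \alpha_x$.

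The main obstacle is not this bookkeeping but checking that $\alpha$ is genuinely a morphism of $\GrpA{A}$, i.e. that the family $(\alpha_x)$ is realized uniformly and that it is well defined. Faithfulness (condition (iii)) guarantees that $\alpha_x$ is the unique morphism lying over $\beta_{F(x)}$, so $\alpha$ is a well-defined function on objects and morphisms; and a realizer for $\alpha$ is obtained exactly as the realizer for $\delta$ was obtained in the preceding lemma, by feeding the realizer witnessing fullness (from condition (ii)) the realizer of $x$ and the realizer of $\beta_{F(x)}$ (the latter produced from the realizer of $\beta$ together with the morphism realizer of $F$), and then projecting out the lifted morphism. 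With both $\alpha$ and $\beta$ in hand, the triple $(F^{i},\alpha,\beta)$ witnesses $F$ as a groupoid equivalence, which completes the proof.
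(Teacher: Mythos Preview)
Your proposal is correct and follows essentially the same approach as the paper: apply the preceding lemma with $Z=Y$ and $J=\id_Y$ to obtain the quasi-inverse and one natural isomorphism, then use full faithfulness of $F$ to lift $\beta_{F(x)}$ componentwise and obtain the other. The paper's proof is terser (and swaps the names $\alpha$ and $\beta$), but the construction and the realizer for the lifted natural isomorphism are the same as yours.
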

\begin{proof}
We know that there exists a $G:Y \to X$ such that $F \circ G \cong \id_{Y}$. Let $\alpha$ exhibit $F \circ G \cong \id_{Y}$. Now we need to construct $\beta: G \circ F \cong \id_{X}$. We construct $\beta \in \Pi_{x \in X_{0}} X(G(F(x)),x)$ where $\beta(x)$ is the unique morphism over $\alpha(F(x)): F(G(F(x))) \to F(x)$. $\beta$ is realized by $\lmbd{z}{ful_{F} \cdot ([r_{F \circ G,obbj} \cdot z,z,r_{\alpha} \cdot (r_{F,obj} \cdot z)])}$. $\beta$ is a natural isomorphism by virtue of the fact that $\alpha$ is as well as $F$ being fully faithful. Therefore $F$ is a groupoid equivalence.\\
\end{proof}

Recall that $\HGA{A}$ is the localization of $\GrpA{A}$ with respect to $\GE$. We have the canonical functor $Q:\HGA{A} \to \EGA{A}$ where $Q$ is the identity on objects which follows from the fact that $\GE \subseteq \RE$. \\

We call $\pGrA{A} \mono \GrpA{A}$ the full subcategory of partitioned groupoid assemblies with the full subcategory $\Ho(\pGrA{A}) \mono \HGA{A}$ of partitioned groupoid assemblies being the homotopy category of $\pGrA{A}$ as the result of inverting the groupoid equivalences.

\subsection{$\Ho(\pGrA{A})$ forms a Coreflective Subcategory of $\HGA{A}$}
In this section, we show that $\pGrA{A}$ is a bicoreflective subcategory of $\GrpA{A}$ via the bifunctor $\Part(-)$. Using this, we prove that $\pGrA{A}$ has bilimits and homotopy exponents and that $\Ho(\pGrA{A})$ is a coreflective subcategory of $\HGA{A}$.\\

\begin{lem}
\label{partition}

Given a functor of groupoid assemblies $F:G \to H$ and realizers for the object and morphism parts $r_{o}$ and $r_{m}$, we can construct a functor $\Part(F,r_{o}):\Part(G) \to \Part(H)$ such that the following holds: 

\begin{enumerate}
\item $\Part(G)$ and $\Part(H)$ are partitioned groupoid assemblies.
\item When $F$ is a regular equivalence, so is $\Part(F,r_{o})$. 
\item Given another choice of realizers for the object and morphism part of $F$, $s_{o}$ and $s_{m}$ respectively, we have that $\Part(F,r_{o}) \cong \Part(F,s_{o})$

\end{enumerate} 

\end{lem}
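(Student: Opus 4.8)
The plan is to first make the construction $\Part(-)$ explicit and then dispatch the three clauses, the bulk of the work being the bookkeeping of realizers rather than any categorical subtlety. Given a groupoid assembly $G$ with object assembly $(\ob G,\phi)$ and morphism assembly $(\mor G,\phi')$, I would take the objects of $\Part(G)$ to be the partitioning $(\ob G.\phi,\phi^{*})$ of $(\ob G,\phi)$ (so an object is a pair $(x,a)$ with $a\in\phi(x)$, carrying the singleton realizer $\{a\}$), and declare a morphism $(x,a)\to(x',a')$ to be precisely a morphism $m\in G(x,x')$, equipped with the realizer set $\{[a,[a',r]]\mid r\in\phi'(m)\}$. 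Composition, identities and inverses are inherited from $G$, and they are realizable because each morphism's realizer records the realizers $a,a'$ of its endpoints together with a realizer of the underlying morphism of $G$; hence $\dom,\cod,\id$, composition and inversion are all realized by combinators built from $p_{0},p_{1}$ and the structure combinators of $G$. On objects the functor $\Part(F,r_{o})$ sends $(x,a)$ to $(F(x),r_{o}\cdot a)$ — well defined since $r_{o}$ realizes the object part of $F$ — and on morphisms it sends $m$ to $F(m)$; it is realized by a combinator assembled from $r_{o}$ and $r_{m}$.

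Clause (i) is then immediate: the object assembly of $\Part(G)$ is by construction a partitioning, hence partitioned, so $\Part(G)$ (and likewise $\Part(H)$) is a partitioned groupoid assembly. The only remaining point is that $\Part(G)$ really is an internal groupoid, which amounts to the realizability remarks above together with the groupoid axioms; these hold because the axioms are pulled back verbatim from $G$ along the faithful projection $\pr{G}\colon\Part(G)\to G$, $(x,a)\mapsto x$, $m\mapsto m$.

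For clause (ii) I would record the strictly commuting square
\[
\begin{tikzcd}
\Part(G) \arrow[r,"{\Part(F,r_{o})}"] \arrow[d,"{\pr{G}}"'] & \Part(H) \arrow[d,"{\pr{H}}"] \\
G \arrow[r,"F"'] & H
\end{tikzcd}
\]
and observe that $\pr{G}$ and $\pr{H}$ are regular equivalences: they are essentially surjective because their object parts are the regular epimorphisms $\pr{\ob G}$ and $\pr{\ob H}$, and fully faithful by construction, the re-tagging combinators $p_{0},p_{1}$ providing the witnesses. I would then verify the three conditions of Definition \ref{regeq} for $\Part(F,r_{o})$ directly, transporting the witnesses for $F$ across the square. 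Faithfulness and fullness follow from those of $F$ since morphisms of $\Part(G)$ and $\Part(H)$ are exactly the morphisms of $G$ and $H$, so the corresponding realizers for $F$ serve, after re-tagging by $p_{0},p_{1}$, as realizers for $\Part(F,r_{o})$; essential surjectivity is obtained by feeding a realizer $b\in\psi(y)$ of an object $(y,b)$ of $\Part(H)$ into the essential-surjectivity realizer of $F$, which returns an $x$, a realizer $a\in\phi(x)$, and a realizer of an isomorphism $F(x)\to y$, yielding the object $(x,a)$ together with an isomorphism $\Part(F,r_{o})(x,a)=(F(x),r_{o}\cdot a)\to(y,b)$.

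Finally, for clause (iii) I would exhibit the natural isomorphism $\eta\colon\Part(F,r_{o})\To\Part(F,s_{o})$ whose component at $(x,a)$ is $\id_{F(x)}$, regarded as a morphism $(F(x),r_{o}\cdot a)\to(F(x),s_{o}\cdot a)$ of $\Part(H)$. Naturality is automatic, since both composites $\eta_{(x',a')}\circ\Part(F,r_{o})(m)$ and $\Part(F,s_{o})(m)\circ\eta_{(x,a)}$ reduce to the single morphism $F(m)$; the only content is that $\eta$ and its similarly defined inverse are realized, for which a realizer of the shape $\lmbd{z}{[r_{o}\cdot z,[s_{o}\cdot z,\ldots]]}$ built from $r_{o}$, $s_{o}$ and the identity combinator of $H$ suffices. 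The main obstacle throughout is not the categorical content — which is just the familiar fact that partitioning is functorial and well defined up to equivalence — but the careful choice of combinators making every assignment an honest morphism of assemblies; clause (ii) is where this bookkeeping is heaviest, since one must manufacture explicit realizers witnessing essential surjectivity, fullness and faithfulness of $\Part(F,r_{o})$ out of those for $F$.
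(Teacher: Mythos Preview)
Your proposal is correct and follows essentially the same approach as the paper: the paper defines $\Part(G)$ with the partitioned object assembly and morphism realizers $\{[[a,a'],t]\mid t\in\phi_{\mor}(f)\}$ (your encoding $[a,[a',r]]$ is an immaterial variant), defines $\Part(F,r_{o})$ exactly as you do, verifies clause (ii) by directly transporting the essential-surjectivity, fullness and faithfulness realizers for $F$ to realizers for $\Part(F,r_{o})$, and exhibits $\id_{F(x)}$ as the natural isomorphism for clause (iii). The commuting square you record with the projections $\pr{G},\pr{H}$ is a helpful organizing remark not made explicit in the paper, but your actual verification of (ii) proceeds the same way.
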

\begin{proof}

Given a groupoid assembly $G$, we will construct the partitioned groupoid assembly $\Part(G)$.  The object part of $\Part(G)$ is $\prt{G_{\ob}}{\phi_{\ob}}$ where $(G_{\ob},\phi_{\ob})$ is the assembly of objects of $G$. The set of morphisms is defined as $\Part(G)((x,a),(x',a')) = G(x,x')$ where for $f \in \Part(G)((x,a),(x',a'))$, the set of realizers of $f$ is defined as $\phi_{\mor}^{*}(f) = \{[[a,a'],t]| t \in \phi_{\mor}(f)\}$. The morphisms $\dom,\cod: \mor\Part(G) \to \ob\Part(G)$ are realized by $d_{G} = \lmbd{z}{(p_{0} \cdot (p_{0} \cdot z))}$ and $c_{G} = \lmbd{z}{(p_{1} \cdot (p_{0} \cdot z))}$ respectively. The morphism $\id : \ob\Part(G) \to \mor\Part(G)$ is realized by $i_{G} = \lmbd{z}{[[z,z],i \cdot z]}$ where $i$ realizes $\id : \ob G \to \mor G$. Composition in $\Part(G)$ is the same as composition in $G$ and is realized by $com_{G} = \lmbd{z}{[[d_{G} \cdot (p_{0} \cdot z),c_{G} \cdot (p_{1} \cdot z)],com \cdot [p_{1} \cdot (p_{0} \cdot z), p_{1} \cdot (p_{1} \cdot z)]]}$ where $com$ realizes composition in $G$. Thus, $\Part(G)$ is a partitioned groupoid assembly.\\

The construction of $\Part(F,r_{o})$ is straightforward. $\Part(F,r_{o})(x,a) = (F(x), r_{0} \cdot a)$ and $\Part(F,r_{o})(f) = F(f)$ for $f \in \Part(G)((x,a),(x',a'))$. The object part of $\Part(F,r_{o})$ is realized by $r_{o}$ and the morphism part is realized by $\lmbd{z}{[[r_{o} \cdot (d_{G} \cdot z),r_{o} \cdot (c_{G} \cdot z)],r_{m} \cdot (p_{1} \cdot z)]}$. functoriality of $\Part(F,r_{o})$ follows from functoriality of $F$.\\

Suppose $F$ is a regular equivalence. First suppose that the realizer $ess \in A $ realizes that $F$ is essentially surjective.  Suppose we have $(y,b) \in \ob \Part(H)$, then $ess \cdot b$ realizes $\exists x \in \ob G\exists f \in H(F(x),y)(\phi_{\ob}^{*}(x) \land \psi_{\mor}^{*}(f))$ where $(H_{\mor}, \psi_{\mor})$ is the assembly of morphisms of $H$. Then $p_{0} \cdot (ess \cdot b)$ realizes some $x \in G_{\ob}$ and $p_{1} \cdot (ess \cdot b)$ realizes some $f \in H(F(x),y)$. So $p_{0} \cdot (ess \cdot b)$ realizes $(x,p_{0} \cdot (ess \cdot b)) \in \ob \Part(G)$ and $f \in \Part(H)(\Part(F,r_{o})(x,p_{0} \cdot (ess \cdot b)) ,(y,b))$ and is realized by $[[r_{o} \cdot (p_{0} \cdot (ess \cdot b)),b],p_{1} \cdot (ess \cdot b)]$. So $\Part(F,r_{o})$ is essentially surjective and this fact is realized by $\lmbd{z}{[p_{0} \cdot (ess \cdot z),[[r_{o} \cdot (p_{0} \cdot (ess \cdot z)),z],p_{1} \cdot (ess \cdot z)]]}$.\\

Suppose that $ful \in A$ realizes $F$ being full. Given $x, x' \in G_{\ob}$, $f \in H(F(x),F(x'))$, $a \in \phi_{\ob}(x)$, $a' \in \phi_{0}(x')$, and $t \in \psi_{1}(f)$, then $p_{0} \cdot (ful \cdot [[a,a'],t])$ realizes some $g \in G(x,x')$ and $F(g) = f$. Then $[[a,a'],p_{0} \cdot (ful \cdot [[a,a'],t])]$ realizes $g \in \Part(G)((x,a),(x',a'))$ with $f \in \Part(H)((F(x), r_{o} \cdot a),(F(x'), r_{o} \cdot a'))$ realized by $[[r_{o} \cdot a, r_{o} \cdot a'], t]$. Then $\Part(F,r_{o})$ is full and that fact is realized by $\lmbd{z}{[[p_{0} \cdot z,p_{0} \cdot (ful \cdot [p_{0} \cdot z, p_{1} \cdot (p_{1} \cdot z)])],p_{1} \cdot (ful \cdot [p_{0} \cdot z, p_{1} \cdot (p_{1} \cdot z)])]}$.\\

Suppose $fai \in A$ realizes $F$ being faithful. Take $f, f' \in G((x,a),(x',a'))$, an realizer $b \in (\Part(F,r_{o})(f) = \Part(F,r_{o})(f'))$ and $[[a,a'],t] \in \phi_{1}^{*}(f)$. This means that $b \in (F(f) = F(f'))$ and so $fai \cdot [b,t] \in (f = f')$. Thus $\Part(F,r_{o})$ is faithful and that fact is realized by $\lmbd{z}{fai \cdot [p_{0} \cdot z, p_{1} \cdot (p_{1} \cdot z) ]}$.\\

Therefore, if $F$ is a regular equivalence, so is $\Part(F,r)$.\\

Suppose we have another choice of realizers for the object and morphism part of $F$, $s_{o}$ and $s_{m}$ respectively and we construct $\Part(F,s_{o}):\Part(G) \to \Part(H)$. We define $\alpha: \Pi_{(x,a) \in \Part(G)_{0}} \Part(H)((F(x),r_{o} \cdot a),(F(x),s_{o} \cdot a))$ where $\alpha(x,a) = \id_{F(x)}$. $\alpha$ is realized by $\lmbd{z}{[[r_{o} \cdot z, s_{o} \cdot z],s_{m} \cdot (p_{1} \cdot (i_{G} \cdot z))]}$. $\alpha$ clearly exhibits a natural isomorphism between $\Part(F,r_{o})$ and $\Part(F,s_{o})$ since composition in $\Part(H)$ is the same as in $H$. Therefore, $\Part(F,r_{o}) \cong \Part(F,s_{o})$.
 
\end{proof}

\begin{cor}

Given a functor of groupoid assemblies $F:G \to H$ and realizers for the object and morphism parts $r_{o}$ and $r_{m}$, if $F$ is a regular equivalence, then $\Part(F,r_{o})$ is a groupoid equivalence.

\end{cor}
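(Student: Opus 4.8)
The plan is to combine the immediately preceding lemma and corollary, since the statement is essentially a direct consequence of both. First I would observe that, by part (ii) of Lemma \ref{partition}, the hypothesis that $F$ is a regular equivalence guarantees that $\Part(F,r_{o}) \colon \Part(G) \to \Part(H)$ is itself a regular equivalence. This is the substantive input, but it has already been discharged in that lemma, so here it is simply invoked.

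Next I would record that, by part (i) of Lemma \ref{partition}, the codomain $\Part(H)$ is a partitioned groupoid assembly. This places us exactly in the situation of the previous corollary, which asserts that any regular equivalence whose codomain is a partitioned groupoid assembly is in fact a groupoid equivalence. Applying that corollary to the regular equivalence $\Part(F,r_{o})$ with partitioned codomain $\Part(H)$ yields the conclusion that $\Part(F,r_{o})$ is a groupoid equivalence.

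Concretely, the previous corollary produces a homotopy inverse by first extracting, from the regular equivalence $\Part(F,r_{o})$ and the projectivity of $\Part(H)_{\ob}$, a functor $G' \colon \Part(H) \to \Part(G)$ with $\Part(F,r_{o}) \circ G' \cong \id_{\Part(H)}$, and then using fullness and faithfulness of the regular equivalence to promote the one-sided equivalence to a two-sided one. Since all of this machinery is already established upstream, I do not expect any genuine obstacle: the only thing to verify is that the two parts of Lemma \ref{partition} and the corollary compose, which they do directly.

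I would therefore write the proof in essentially two lines: apply Lemma \ref{partition} to get that $\Part(F,r_{o})$ is a regular equivalence into the partitioned groupoid assembly $\Part(H)$, and then apply the preceding corollary to conclude it is a groupoid equivalence. The main point to flag, rather than a difficulty, is that the choice of realizer $r_{o}$ is harmless here: by part (iii) of Lemma \ref{partition} any two choices give equivalent functors, so being a groupoid equivalence is independent of that choice.
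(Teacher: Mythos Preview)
Your proposal is correct and matches the paper's intended argument: the paper states this as a corollary with no proof, the implicit reasoning being exactly what you spell out—Lemma~\ref{partition}(i)--(ii) gives that $\Part(F,r_o)$ is a regular equivalence into the partitioned groupoid assembly $\Part(H)$, and the earlier corollary (regular equivalences into partitioned codomains are groupoid equivalences) finishes it.
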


\begin{cor}

For a groupoid assembly $G$, $\id_{\Part(G)} \cong \Part(\id_{G}, r_{o})$ where $(r_{o}, r_{m})$ realizes $\id_{G}$.

\end{cor}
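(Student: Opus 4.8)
The plan is to recognize this corollary as an immediate instance of part (iii) of Lemma \ref{partition}. The crucial observation is that the identity combinator $\iota$ realizes both the object and morphism parts of $\id_{G}$: since $\iota \cdot a = a$, whenever $a \in \phi_{\ob}(x)$ we get $\iota \cdot a \in \phi_{\ob}(\id_{G}(x)) = \phi_{\ob}(x)$, and likewise for $\phi_{\mor}$. Thus $(\iota,\iota)$ is one legitimate choice of realizers for $\id_{G}$, and $\Part(\id_{G},\iota)$ is defined.

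Next I would unwind the definition of $\Part(-,-)$ for this choice to check that $\Part(\id_{G},\iota)$ is \emph{literally} $\id_{\Part(G)}$, not merely isomorphic to it. On objects, $\Part(\id_{G},\iota)(x,a) = (\id_{G}(x), \iota \cdot a) = (x,a)$; on morphisms, $\Part(\id_{G},\iota)(f) = \id_{G}(f) = f$. Since the underlying functor of $\Part(F,r_{o})$ acts on morphisms simply as $F$ (independently of the realizer data) and on objects only twists the second coordinate by $r_{o}\cdot(-)$, these computations show that the underlying functor of $\Part(\id_{G},\iota)$ coincides exactly with $\id_{\Part(G)}$.

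Finally I would invoke Lemma \ref{partition}(iii) with the two realizer choices $s_{o} = \iota$ and the given $r_{o}$, both realizing the object part of $\id_{G}$, to obtain $\Part(\id_{G},\iota) \cong \Part(\id_{G},r_{o})$. Combining this with the previous step yields $\id_{\Part(G)} = \Part(\id_{G},\iota) \cong \Part(\id_{G},r_{o})$, as desired.

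There is essentially no hard step here; the only point requiring care is verifying that $\iota$ is an admissible realizer and that the second-coordinate twist by $\iota$ collapses to the identity, so that the appeal to (iii) is legitimate. If a self-contained argument were preferred instead, I would directly exhibit the natural isomorphism $\alpha(x,a) = \id_{x} \in \Part(G)((x,a),(x, r_{o}\cdot a))$, realized by $\lmbd{z}{[[z, r_{o} \cdot z], i \cdot z]}$ with $i$ realizing $\id\colon \ob G \to \mor G$, and check naturality using the fact that composition in $\Part(G)$ is composition in $G$; this is exactly the specialization of the argument already carried out at the end of Lemma \ref{partition}.
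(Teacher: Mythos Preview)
Your proposal is correct and follows exactly the same approach as the paper: observe that $\iota$ realizes $\id_{G}$, note that $\Part(\id_{G},\iota)$ is literally $\id_{\Part(G)}$, and then apply part (iii) of Lemma~\ref{partition} to compare with an arbitrary realizer $r_{o}$. The paper's proof is a one-line version of your argument, omitting the explicit verification and the alternative direct construction you sketch at the end.
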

\begin{proof}

This follows from that fact that $\id_{\Part(G)} = \Part(\id_{G}, \iota)$ where $\iota$ is the identity combinator which realizes both the object and morphism part of $\id_{G}$.

\end{proof}

\begin{cor}

Given a functor of groupoid assemblies $F:G \to H$ and realizers for the object and morphism parts of $F$, $r_{o}$ and $r_{m}$ and another functor of groupoid assemblies $J: H \to K$ and realizers for the object and morphism parts of $J$, $s_{o}$ and $s_{m}$. If $t_{o}$ and $t_{m}$ realize the object and morphism parts of $J \circ F$ respectively, then $\Part(J \circ F, t_{o}) \cong \Part(J,s_{o}) \circ \Part(F,r_{o})$.

\end{cor}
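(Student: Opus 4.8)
The plan is to reduce the claim to part (iii) of Lemma \ref{partition}, by showing that the composite $\Part(J,s_{o}) \circ \Part(F,r_{o})$ is literally equal, as a morphism of $\GrpA{A}$, to $\Part(J \circ F, u_{o})$ for a suitable object realizer $u_{o}$ of $J \circ F$. First I would compute both functors explicitly using the construction in Lemma \ref{partition}, where $\Part(F,r_{o})$ sends an object $(x,a)$ of $\Part(G)$ to $(F(x), r_{o} \cdot a)$ and a morphism $f$ to $F(f)$, and similarly for $\Part(J,s_{o})$. On an object $(x,a)$ this gives $\Part(J \circ F, t_{o})(x,a) = (J(F(x)), t_{o} \cdot a)$ on the one hand and $\Part(J,s_{o}) \circ \Part(F,r_{o})(x,a) = (J(F(x)), s_{o} \cdot (r_{o} \cdot a))$ on the other, while on a morphism $f$ both sides return $J(F(f))$. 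Thus the two functors agree on underlying objects and on underlying morphisms, differing only in the realizer component attached to objects.

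Next I would set $u_{o} := \lmbd{z}{s_{o} \cdot (r_{o} \cdot z)}$ and check that $u_{o}$ realizes the object part of $J \circ F$: if $a$ realizes $x$ then $r_{o} \cdot a$ realizes $F(x)$ (since $r_{o}$ realizes the object part of $F$), hence $s_{o} \cdot (r_{o} \cdot a)$ realizes $J(F(x))$ (since $s_{o}$ realizes the object part of $J$). With this choice one has $\Part(J \circ F, u_{o})(x,a) = (J(F(x)), s_{o} \cdot (r_{o} \cdot a))$ and $\Part(J \circ F, u_{o})(f) = (J \circ F)(f) = J(F(f))$, so the underlying functors of $\Part(J \circ F, u_{o})$ and $\Part(J,s_{o}) \circ \Part(F,r_{o})$ coincide on the nose. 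Since a morphism of $\GrpA{A}$ is determined by its underlying functor and the particular realizer witnessing realizability is not part of the data, this yields the genuine equality $\Part(J,s_{o}) \circ \Part(F,r_{o}) = \Part(J \circ F, u_{o})$.

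Finally, both $t_{o}$ and $u_{o}$ are realizers of the object part of the single functor $J \circ F$, so part (iii) of Lemma \ref{partition} provides a natural isomorphism $\Part(J \circ F, t_{o}) \cong \Part(J \circ F, u_{o})$. Composing with the equality above gives $\Part(J \circ F, t_{o}) \cong \Part(J,s_{o}) \circ \Part(F,r_{o})$, as required. This mirrors exactly the argument used for the preceding corollary on identities, where $\id_{\Part(G)} = \Part(\id_{G}, \iota)$ was compared with $\Part(\id_{G}, r_{o})$.

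The only point requiring any care — and it is the crux rather than a genuine obstacle — is verifying that $u_{o}$ is an admissible object realizer and that the morphism maps of the two composites truly coincide. The latter is immediate because $\Part(-,-)$ acts as the identity on underlying morphisms, so no reconciliation of morphism realizers is needed; everything else is a direct appeal to Lemma \ref{partition}(iii).
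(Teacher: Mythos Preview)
Your proposal is correct and follows essentially the same approach as the paper: identify the composite $\Part(J,s_{o}) \circ \Part(F,r_{o})$ with $\Part(J \circ F, \lmbd{z}{s_{o} \cdot (r_{o} \cdot z)})$ on the nose, then invoke Lemma~\ref{partition}(iii) to pass to the given realizer $t_{o}$. The paper's proof is simply terser, stating the equality directly without spelling out the object and morphism computations you carried out.
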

\begin{proof}

It is easy to see that $\Part(J,s_{o}) \circ \Part(F,r_{o}) = \Part(J \circ F, \lmbd{z}{s_{o} \cdot (r_{o} \cdot z)})$ where $\lmbd{z}{s_{o} \cdot (r_{o} \cdot z)}$ realizes the object part of $J \circ F$ and $\lmbd{z}{s_{m} \cdot (r_{m} \cdot z)}$ realizes the morphism part of $J \circ F$. Thus $\Part(J \circ F, t_{o})  \cong \Part(J,s_{o}) \circ \Part(F,r_{o})$.

\end{proof}

\begin{lem}
\label{natatom}

Given a groupoid assembly $G$, we have a regular equivalence $\atom_{G}: \Part(G) \to G$. Furthermore, if $F:G \to H$ is a functor of groupoid assemblies realized by $[r_{o},r_{m}]$ then $F \circ \atom_{G} \cong \atom_{H} \circ \Part(F,r_{o})$.

\end{lem}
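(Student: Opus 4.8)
The plan is to define $\atom_{G}\colon \Part(G) \to G$ as the projection functor that forgets the chosen realizer data. Recalling from Lemma~\ref{partition} that the objects of $\Part(G)$ are pairs $(x,a)$ with $x \in G_{\ob}$ and $a \in \phi_{\ob}(x)$ carrying the singleton realizer set $\{a\}$, and that $\Part(G)((x,a),(x',a')) = G(x,x')$ with realizer set $\phi_{\mor}^{*}(f) = \{[[a,a'],t] \mid t \in \phi_{\mor}(f)\}$, I would set $\atom_{G}(x,a) = x$ on objects and $\atom_{G}(f) = f$ on morphisms. The object part is realized by $\iota$, since the realizer $a$ of $(x,a)$ already lies in $\phi_{\ob}(x)$; the morphism part is realized by $p_{1}$, which extracts $t \in \phi_{\mor}(f)$ from $[[a,a'],t]$. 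Functoriality is immediate because composition, identities and inverses in $\Part(G)$ are defined exactly as in $G$, so $\atom_{G}$ strictly commutes with all the structure maps.

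Next I would verify that $\atom_{G}$ is a regular equivalence in the sense of Definition~\ref{regeq}. For essential surjectivity, given $y \in G_{\ob}$ with realizer $a \in \phi_{\ob}(y)$, the object $(y,a) \in \Part(G)$ satisfies $\atom_{G}(y,a) = y$ and is witnessed by the identity $\id_{y} \in G(y,y)$; this is realized by $\lmbd{z}{[z, i \cdot z]}$, where $i$ realizes $\id\colon \ob G \to \mor G$. For fullness, a morphism $f \in G(\atom_{G}(x,a),\atom_{G}(x',a')) = G(x,x')$ is literally an element of $\Part(G)((x,a),(x',a'))$ that maps onto itself, and a realizer $[[a,a'],t]$ for it is assembled directly from the data $a,a',t$ supplied in the premise. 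Faithfulness is trivial, since $\atom_{G}$ is the identity on hom-sets, so $\atom_{G}(f) = \atom_{G}(f')$ already forces $f = f'$.

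Finally, for the naturality statement I would simply compute both composites on objects and morphisms. On an object $(x,a)$ we get $F(\atom_{G}(x,a)) = F(x)$, while $\atom_{H}(\Part(F,r_{o})(x,a)) = \atom_{H}(F(x), r_{o}\cdot a) = F(x)$; on a morphism $f$ both composites return $F(f)$. Hence $F \circ \atom_{G}$ and $\atom_{H} \circ \Part(F,r_{o})$ coincide as underlying functors, both being realized as composites of realized functors, so the identity natural transformation witnesses $F \circ \atom_{G} \cong \atom_{H} \circ \Part(F,r_{o})$. The only point requiring genuine care---and the closest thing to an obstacle---is the realizer bookkeeping ensuring that each of the three regular-equivalence conditions is tracked by an explicit combinator; the underlying categorical content is entirely formal, because $\atom_{G}$ alters neither the morphisms of $G$ nor the underlying objects (it merely forgets the realizer coordinate).
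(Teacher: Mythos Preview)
Your proposal is correct and follows essentially the same approach as the paper: define $\atom_{G}$ as the projection forgetting the realizer coordinate, verify the three regular-equivalence conditions with the same witnesses (in particular the essential-surjectivity realizer $\lmbd{z}{[z,i\cdot z]}$), and observe that $F\circ\atom_{G}$ and $\atom_{H}\circ\Part(F,r_{o})$ agree on the nose, so the identity transformation $\alpha(x,a)=\id_{F(x)}$ provides the isomorphism. The only minor addition in the paper is an explicit realizer $\lmbd{z}{r_{m}\cdot(i\cdot z)}$ for this identity natural transformation, which you fold into your closing remark about realizer bookkeeping.
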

\begin{proof}

$\atom_{G}$ takes $(x,a)$ to x and is the identity on morphisms. The fact that $\atom_{G}$ is a functor is straightforward. For showing that $\atom_{G}$ is essentially surjective, suppose we have $x \in \ob G$ and $a \in A$ a realizer of $x$, then $a$ is the unique realizer for $(x,a)$ and $i \cdot a$ realizes $\id_{x}$ where $i$ realizes $\id_{-}:\ob G \to \mor G$. Thus $\atom_{G}$ is essentially surjective and is realized by $\lmbd{z}{[z,i \cdot z]}$. Showing $\atom_{G}$ is straightforward since if we have objects $(x,a)$ and $(x',a')$ in $\Part(G)$ and a morphism $f:x \to x'$ realized by $t$, $f$ is a morphism from $(x,a)$ to $(x,a')$ and is realized by $[[a,a'],t]$. So $\atom_{G}$ being full is realized by the identity combinator. $\atom_{G}$ is plainly faithful since the morphisms $\Part(G)((x,a),(x',a'))$ are precisely the morphisms $G(x,x')$. Therefore, $\atom_{G}$ is a regular equivalence.\\

We define $\alpha: \Pi_{(x,a) \in \ob \Part(G)} H(F \circ \atom_{G}(x,a) , \atom_{H} \circ \Part(F,r_{o})(x,a))$ where $\alpha(x,a) = \id_{F(x)}$. We note that $F \circ \atom_{G}(x,a)  = \atom_{H} \circ \Part(F,r)(x,a) = F(x)$, furthermore $\alpha$ is realized by $\lmbd{z}{r_{m} \cdot (i \cdot z)}$. $\alpha$ is clearly a natural isomorphism between $F \circ \atom_{G}$ and $\atom_{H} \circ \Part(F,r_{o})$ since both functors are identities on morphisms. Therefore, $F \circ \atom_{G} \cong \atom_{H} \circ \Part(F,r_{o})$.

\end{proof}

\begin{rmk}
\label{noheexpl}

Given a non trivial partial combinatory algebra $A$, define a discrete groupoid assembly $A_{diff} = (A,[a \mapsto A/\{a\}])$ and take the regular equivalence $\atom_{A_{diff}}:\Part(A_{diff}) \to A_{diff}$ in $\GrpA{A}$. Suppose $\atom_{A_{diff}}$ is a groupoid equivalence and $f: (A,[a \mapsto A/\{a\}])  \to  \ob \Part(A_{diff})$ is the object part of the equivalence pair of $\atom_{A_{diff}}$. Then $\atom_{A_{diff},\ob} \circ f = \id_{(A,[a \mapsto A/\{a\}])}$ since $A_{diff}$ is discrete. Since $$ \ob \Part(A_{diff}) = (\{(a,a')| a \neq a'\}, [(a,a') \mapsto \{a'\}])$$ we have a realizer $r_{f}$ for $f$ such that for $b,b' \in A/\{a\}$, $(r_{f} \cdot b) \neq a$ and $r_{f} \cdot b = r_{f} \cdot b'$. Since $A$ is non trivial, $A$ has at least three distinct elements: $k$, $s$ and $\iota$. Then $r_{f} \cdot s = r_{f} \cdot \iota$ are distinct from $k$ but without loss of generality, if $r_{f} \cdot s \neq \iota$ then $r_{f} \cdot \iota \neq r_{f} \cdot s$ which is a contradiction. Thus $\atom_{A_{diff}}$ is not a groupoid equivalence. This also show why for non trivial $A$, 
$\HGA{A}$ and $\EGA{A}$ are not equivalent categories.

\end{rmk}

\begin{cor}
\label{corefl}

Given a groupoid assembly $G$, we have that $\atom_{\Part(G)}: \Part(\Part(G)) \to \Part(G)$ is a groupoid equivalence.

\end{cor}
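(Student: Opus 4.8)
The plan is to recognize this corollary as an immediate consequence of two results already established: Lemma \ref{natatom}, which produces the regular equivalence $\atom_{-}$, and the corollary stating that any regular equivalence whose codomain is a partitioned groupoid assembly is in fact a groupoid equivalence. The whole proof is a matter of instantiating these at the right object, so I do not expect any genuine obstacle; the only thing to be careful about is applying each result to the correct groupoid assembly.

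First I would apply Lemma \ref{natatom} not to $G$ itself but to the groupoid assembly $\Part(G)$. That lemma asserts that for \emph{any} groupoid assembly $H$ the functor $\atom_{H}\colon \Part(H) \to H$ is a regular equivalence; taking $H = \Part(G)$ yields that $\atom_{\Part(G)}\colon \Part(\Part(G)) \to \Part(G)$ lies in $\RE$. Next I would invoke Lemma \ref{partition}, which guarantees that $\Part(G)$ is a partitioned groupoid assembly (its underlying assembly of objects is the partitioning $\prt{G_{\ob}}{\phi_{\ob}}$, which is partitioned by construction).

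Finally, I would combine these two observations with the corollary established just before Remark \ref{noheexpl}: if $F\colon X \to Y$ is a regular equivalence and $Y$ is a partitioned groupoid assembly, then $F$ is a groupoid equivalence. Here the codomain of $\atom_{\Part(G)}$ is $\Part(G)$, which is partitioned, so the hypotheses of that corollary are met, and we conclude that $\atom_{\Part(G)}$ is a groupoid equivalence.

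The step that carries all the content is really the earlier corollary, whose proof used projectivity of partitioned assemblies (equivalently the axiom of choice in $\Set$) to lift $\mathrm{id}_{\Part(G)}$ through the essentially surjective, full, faithful functor $\atom_{\Part(G)}$ and build the homotopy inverse together with the required natural isomorphisms. Since that work is already done, the present statement requires no new calculation; the sole "difficulty," if any, is simply bookkeeping to ensure the codomain $\Part(G)$—and not $G$—is the object whose partitioned-ness we are exploiting. It is worth emphasizing, in light of Remark \ref{noheexpl}, that this is genuinely where the partitioning is needed: $\atom_{G}$ itself need not be a groupoid equivalence for general $G$, and it is precisely the extra application of $\Part(-)$ that makes the codomain partitioned and hence forces $\atom_{\Part(G)}$ to be a groupoid equivalence.
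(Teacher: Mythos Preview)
Your proof is correct. Both you and the paper use the same underlying fact---that a regular equivalence with partitioned codomain is a groupoid equivalence---but you apply it slightly differently. The paper argues that since $\atom_{G}$ is a regular equivalence, $\Part(\atom_{G},r_{o}):\Part(\Part(G))\to\Part(G)$ is a groupoid equivalence (invoking the earlier corollary that $\Part$ sends regular equivalences to groupoid equivalences), implicitly relying on the identification $\atom_{\Part(G)} = \Part(\atom_{G},\iota)$ that is only spelled out later. Your route is more direct: you instantiate Lemma~\ref{natatom} at $H=\Part(G)$ to see that $\atom_{\Part(G)}$ itself is a regular equivalence, observe that its codomain $\Part(G)$ is partitioned, and conclude immediately. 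This avoids any need to compare $\atom_{\Part(G)}$ with $\Part(\atom_{G},r_{o})$, so your argument is marginally cleaner, though the two are morally the same.
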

\begin{proof}

This follows from the fact that $\atom_{G}$ is a regular equivalence, and thus for any pair of realizers $(r_{o},r_{m})$ for $\atom_{G}$, $\Part(\atom_{G},r_{o}):\Part(\Part(G)) \to \Part(G)$ is a groupoid equivalence.

\end{proof}

\begin{thm}
There is an equivalence of categories, $\Ho(\pGrA{A}) \cong \EGA{A}$.

\end{thm}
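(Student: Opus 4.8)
The plan is to exhibit mutually inverse equivalences $I \colon \Ho(\pGrA{A}) \rightleftarrows \EGA{A} \colon P$ built from the inclusion $\pGrA{A} \hookrightarrow \GrpA{A}$ and from $\Part$, with the two required natural isomorphisms supplied by the counit $\atom$. First I would construct $I$. Recall that $\EGA{A} = \loc{\GrpA{A}}{\RE}$ comes with a localization functor $L \colon \GrpA{A} \to \EGA{A}$ inverting every regular equivalence, and that $\Ho(\pGrA{A})$ is the full subcategory of $\HGA{A}$ on the partitioned groupoid assemblies, i.e. the localization of $\pGrA{A}$ at the groupoid equivalences. Since $\GE \subseteq \RE$, the composite $\pGrA{A} \hookrightarrow \GrpA{A} \xrightarrow{L} \EGA{A}$ sends every groupoid equivalence to an isomorphism, so by the universal property of the localization $\pGrA{A} \to \Ho(\pGrA{A})$ it factors through a unique functor $I \colon \Ho(\pGrA{A}) \to \EGA{A}$, identity on objects.

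Next I would construct $P$. The assignment $G \mapsto \Part(G)$ is only pseudofunctorial, since $\Part(F,r_{o})$ depends on a choice of realizer; however, Lemma \ref{partition} shows different choices give naturally isomorphic functors, and in a homotopy category naturally isomorphic functors determine the same morphism, so $\overline{\Part}(F) := [\Part(F,r_{o})]$ is a well-defined assignment on hom-sets. The corollaries following Lemma \ref{partition} give $\overline{\Part}(\id) = \id$ and $\overline{\Part}(J \circ F) = \overline{\Part}(J) \circ \overline{\Part}(F)$, so $\overline{\Part} \colon \GrpA{A} \to \Ho(\pGrA{A})$ is an honest functor. Because $\Part$ carries a regular equivalence to a groupoid equivalence (the corollary to Lemma \ref{partition}), $\overline{\Part}$ inverts $\RE$, and by the universal property of $L$ it factors as $P \circ L$ for a unique $P \colon \EGA{A} \to \Ho(\pGrA{A})$.

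Then I would produce the two natural isomorphisms from $\atom$. For partitioned $G$, the map $\atom_{G} \colon \Part(G) \to G$ is a regular equivalence (Lemma \ref{natatom}), hence a groupoid equivalence since $G$ is partitioned (the corollary that regular equivalences into partitioned groupoid assemblies are groupoid equivalences), so $[\atom_{G}]$ is invertible in $\Ho(\pGrA{A})$; naturality in $G$ follows from the naturality square of Lemma \ref{natatom}, yielding $P \circ I \cong \Id_{\Ho(\pGrA{A})}$. Dually, for arbitrary $H$ the map $L(\atom_{H}) \colon \Part(H) \to H$ is an isomorphism in $\EGA{A}$ (as $\atom_{H} \in \RE$), and the same naturality square, checked on the generators $L(f)$ of $\EGA{A}$ and then automatically on the formal inverses of regular equivalences, gives $I \circ P \cong \Id_{\EGA{A}}$. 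Together these establish the claimed equivalence.

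I expect the main obstacle to be the careful bookkeeping that turns the ``up to isomorphism'' statements of Lemmas \ref{partition} and \ref{natatom} into honest equalities and genuine naturality inside the two localized categories. One must verify that homotopic, equivalently naturally isomorphic, functors are identified after localization --- which holds because the path-object sections $\sigma_{Y}$ are groupoid equivalences, so $L$ collapses $\htpy{0}^{Y}$ and $\htpy{1}^{Y}$ to a common inverse of $L(\sigma_{Y})$ --- and that the components $[\atom_{G}]$ and $L(\atom_{H})$ assemble into natural transformations $P I \To \Id$ and $I P \To \Id$ rather than mere pointwise isomorphisms. The remaining verifications, namely functoriality of $\overline{\Part}$ and the commutativity of the naturality squares, are routine given the cited results.
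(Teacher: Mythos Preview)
Your proposal is correct and follows essentially the same approach as the paper's proof: both construct the functor $I$ (called $S$ in the paper) from the inclusion and the canonical map $Q \colon \HGA{A} \to \EGA{A}$, construct $P$ by first defining $\overline{\Part} \colon \GrpA{A} \to \Ho(\pGrA{A})$ via equivalence classes $[\Part(F,r_o)]$ and then descending through the localization, and exhibit both natural isomorphisms using $[\atom_{-}]$ together with Lemma~\ref{natatom}. Your discussion of the bookkeeping issues (well-definedness of $\overline{\Part}$, naturality on formal inverses, identification of naturally isomorphic functors under $L$) is more explicit than the paper's treatment, but the underlying argument is the same.
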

\begin{proof}

We have a functor $S: \Ho(\pGrA{A}) \to \EGA{A}$ which is the result of the composition of the inclusion $\Ho(\pGrA{A}) \mono \HGA{A}$ and the functor $Q:\HGA{A} \to \EGA{A}$. Explicitly, $S$ is the identity on objects and takes an equivalence class of morphisms, $[f]:G \to H$, in $\Ho(\pGrA{A})$ to its corresponding equivalence class in $\EGA{A}$. We define a functor $\Part(-):\GrpA{A} \to \Ho(\pGrA{A})$ where $\Part(G)$ is defined as in Lemma $\ref{partition}$ and $\Part(F)$ is the equivalence class of $\Part(F,r_{o})$ where $(r_{o},r_{m})$ realizes $F$. As shown in previous lemmas and corollaries, $\Part(-)$ is a well-defined functor. Furthermore, $\Part(-)$ takes regular equivalences in $\GrpA{A}$ to isomorphisms in $\Ho(\pGrA{A})$, thus this extends to a functor $\Part(-): \EGA{A} \to \Ho(\pGrA{A})$. \\

First we show that $\Part(S(-)) \cong \Id_{\Ho(\pGrA{A}) }$. We claim that $[\atom_{-}]: \Part(S(-)) \To \Id_{\Ho(\pGrA{A}) }$ is a natural isomorphism. If $G$ is a partitioned groupoid assembly, then $\atom_{G}$ is a groupoid equivalence since $\atom_{G}$ is always a regular equivalence. Naturality of $[\atom_{-}]$ follows from Lemma $\ref{natatom}$.\\

Now we will show that $S(\Part(-)) \cong \Id_{\EGA{A}}$ is exhibited by $[\atom_{-}]$ in $\EGA{A}$. Since $\atom_{G}$ is a regular equivalence for groupoid assembly $G$, then $[\atom_{G}]$ is an isomorphism in $\EGA{A}$. Once again naturality of $[\atom_{-}]$ follows from Lemma $\ref{natatom}$.\\

Therefore, $\Ho(\pGrA{A}) \cong \EGA{A}$.

\end{proof}

We also have a functor $\Part(-):\HGA{A} \to \HGA{A}$ and a naturally transformation $[\atom_{-}]: \Part(-) \To \Id_{\HGA{A}}$ withe following property:

\begin{lem}
$[\atom_{\Part(-)}] = \Part([\atom_{-}])$

\end{lem}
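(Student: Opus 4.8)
The plan is to reduce the asserted equality of natural transformations to a componentwise isomorphism of functors, and then to establish that isomorphism using Lemma \ref{natatom} together with the uniqueness clause of the lifting property of regular equivalences against partitioned groupoid assemblies. Since both $[\atom_{\Part(-)}]$ and $\Part([\atom_{-}])$ are natural transformations $\Part(-) \circ \Part(-) \To \Part(-)$ on $\HGA{A}$, equality may be checked componentwise. At a groupoid assembly $X$ the two components are $[\atom_{\Part(X)}]$ and $[\Part(\atom_X, r_o)]$, both morphisms $\Part(\Part(X)) \to \Part(X)$ in $\HGA{A}$, where $(r_o,r_m)$ is a chosen pair of realizers for $\atom_X \colon \Part(X) \to X$; here I use that $\Part([\atom_X]) = [\Part(\atom_X, r_o)]$ and that this class is independent of the choice of realizer by Lemma \ref{partition}. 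Thus it suffices to exhibit an isomorphism $\atom_{\Part(X)} \cong \Part(\atom_X, r_o)$.

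First I would apply Lemma \ref{natatom} to the functor $F = \atom_X \colon \Part(X) \to X$ (so $G = \Part(X)$ and $H = X$ in the notation of that lemma), which yields
\[
\atom_X \circ \atom_{\Part(X)} \;\cong\; \atom_X \circ \Part(\atom_X, r_o).
\]
This is exactly the naturality square of $[\atom_{-}] \colon \Part(-) \To \Id_{\HGA{A}}$ evaluated at the morphism $\atom_X$. Writing $J := \atom_X \circ \atom_{\Part(X)} \colon \Part(\Part(X)) \to X$, both $\atom_{\Part(X)}$ and $\Part(\atom_X, r_o)$ are then functors $G \colon \Part(\Part(X)) \to \Part(X)$ satisfying $\atom_X \circ G \cong J$.

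Next I would invoke the uniqueness lemma for lifts against a regular equivalence. The functor $\atom_X$ is a regular equivalence (Lemma \ref{natatom}), its domain $\Part(\Part(X))$ is a partitioned groupoid assembly (Lemma \ref{partition}), and $J$ is a functor into the codomain $X$; hence there is a functor $G \colon \Part(\Part(X)) \to \Part(X)$ with $\atom_X \circ G \cong J$, and this $G$ is unique up to isomorphism. Since $\atom_{\Part(X)}$ and $\Part(\atom_X, r_o)$ are two such lifts, uniqueness gives $\atom_{\Part(X)} \cong \Part(\atom_X, r_o)$, so that $[\atom_{\Part(X)}] = \Part([\atom_X])$ in $\HGA{A}$. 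As $X$ was arbitrary, this establishes the claimed equality of natural transformations.

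The main obstacle is the cancellation of $\atom_X$ from the left in the isomorphism $\atom_X \circ \atom_{\Part(X)} \cong \atom_X \circ \Part(\atom_X, r_o)$. One cannot simply precompose with a homotopy inverse, because $\atom_X$ is only a regular equivalence and need not be a groupoid equivalence: Remark \ref{noheexpl} exhibits an $X$ for which $\atom_X$ fails to be a groupoid equivalence, so it is not invertible in $\HGA{A}$. The correct device is precisely the uniqueness of the lift of $J$ through the regular equivalence $\atom_X$, which is available here only because the domain $\Part(\Part(X))$ is partitioned; this is the sole point at which projectivity of partitioned assemblies enters the argument.
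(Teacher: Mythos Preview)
Your proof is correct, but it takes a considerably more roundabout route than the paper. The paper simply observes that $\atom_{\Part(G)}$ is \emph{literally} equal to $\Part(\atom_G,\iota)$, where $\iota$ is the identity combinator: an object of $\Part(\Part(G))$ has the form $((x,a),a)$ (since the unique realizer of $(x,a)$ in $\Part(G)$ is $a$), and $\atom_{\Part(G)}$ sends this to $(x,a)$, while $\Part(\atom_G,\iota)$ sends it to $(\atom_G(x,a),\iota\cdot a)=(x,a)$; on morphisms both are the identity. The desired isomorphism then follows from Lemma~\ref{partition}(iii), which says that different realizer choices yield isomorphic $\Part(F,-)$.

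Your argument instead factors through Lemma~\ref{natatom} and the uniqueness clause of the lifting lemma for regular equivalences against partitioned domains. This is valid---$\atom_X$ is a regular equivalence, $\Part(\Part(X))$ is partitioned, and both candidates lift $J = \atom_X\circ\atom_{\Part(X)}$ through $\atom_X$---but it imports machinery (projectivity of partitioned assemblies, the full lifting lemma) that the direct computation avoids entirely. The ``main obstacle'' you identify, cancelling $\atom_X$ from the left, is an artifact of your indirect route; the paper never needs to cancel anything. Your approach does have the virtue of being realizer-free at the top level, relying only on the formal properties of $\Part$ and $\atom$, but for this particular lemma the explicit one-line check is both shorter and more transparent.
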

\begin{proof}
For a groupoid assembly $G$, $\atom_{\Part(G)}$ takes an object $((x,a),a) \in \ob \Part(\Part(G))$ to $(x,a) \in \ob \Part(G)$ and is the identity on morphisms. Given a pair of realizers $(r_{o},r_{m})$ for $\atom_{G}$, the functor $\Part(\atom_{G},r)$ takes an object $((x,a),a) \in \Part(\Part(G))$ to $(x, r_{o} \cdot a) \in \Part(G)$ and is the identity on morphisms. It is clear that $\atom_{\Part(G)} = \Part(\atom_{G},\iota)$ where $\iota$ is the identity combinator. Thus $\atom_{\Part(G)} \cong \Part(\atom_{G},r)$. Therefore, $[\atom_{\Part(-)}] = \Part([\atom_{-}])$.

\end{proof}

\begin{lem}
\label{adjin}
$\Ho(\pGrA{A})$ is the coreflective subcategory of $\HGA{A}$ induced by the idempotent comonad $\Part(-) :\HGA{A} \to \HGA{A}$.

\end{lem}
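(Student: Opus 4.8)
The plan is to use the standard correspondence between idempotent comonads and coreflective subcategories: an idempotent comonad on $\HGA{A}$ amounts to an endofunctor $W$ together with a counit $\varepsilon : W \To \Id_{\HGA{A}}$ whose two whiskerings $\varepsilon W$ and $W\varepsilon$ coincide and are natural isomorphisms, and the coreflective subcategory it induces is the full subcategory of objects $G$ on which $\varepsilon_{G}$ is invertible. I would therefore first exhibit $\Part(-)$ as such a comonad with counit $[\atom_{-}]$, and then show that the full subcategory of objects with invertible counit component is exactly $\Ho(\pGrA{A})$, with the coreflector $\Part(-) : \HGA{A} \to \Ho(\pGrA{A})$ (which lands in partitioned groupoid assemblies by the construction of $\Part$).

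For the comonad structure I would take the counit to be $\varepsilon := [\atom_{-}]$, which is already a natural transformation $\Part(-) \To \Id_{\HGA{A}}$. The one substantive input is that the whiskerings agree and are invertible: by the preceding lemma $[\atom_{\Part(-)}] = \Part([\atom_{-}])$, so $\varepsilon\Part = \Part\varepsilon$, and by Corollary \ref{corefl} each component $[\atom_{\Part(G)}]$ is an isomorphism in $\HGA{A}$, whence this common natural transformation is invertible. It is then standard that an endofunctor carrying a counit whose two whiskerings coincide and are invertible underlies a unique idempotent comonad, with comultiplication $\delta = (\varepsilon\Part)^{-1} = (\Part\varepsilon)^{-1}$; the counit and coassociativity laws are forced and purely formal. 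Implicit here is that $\Part(-)$ is a genuine strict endofunctor of $\HGA{A}$: the coherence isomorphisms $\Part(\id)\cong\id$ and $\Part(J)\circ\Part(F)\cong\Part(J\circ F)$ established above become equalities once one passes to the homotopy category, where homotopic functors are identified.

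It remains to identify the coreflective subcategory. If $G$ is a partitioned groupoid assembly, then $\atom_{G}$ is a regular equivalence by Lemma \ref{natatom}, and since its codomain $G$ is partitioned, the corollary that a regular equivalence with partitioned codomain is a groupoid equivalence forces $\atom_{G}$ to be a groupoid equivalence; hence $\varepsilon_{G} = [\atom_{G}]$ is invertible in $\HGA{A}$. Conversely, if $\varepsilon_{G} = [\atom_{G}]$ is invertible then $G$ is isomorphic in $\HGA{A}$ to $\Part(G)$, which is a partitioned groupoid assembly, so $G$ lies in the essential image of $\Ho(\pGrA{A}) \mono \HGA{A}$. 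Thus the full subcategory of fixed points of the comonad coincides, up to the evident equivalence, with $\Ho(\pGrA{A})$, and the coreflection is precisely the inclusion together with right adjoint $\Part(-)$ and counit $[\atom_{-}]$.

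The routine verifications are the comonad axioms, which are formal consequences of idempotency; all the genuine content — that $\Part(G)$ is always partitioned, that $[\atom_{\Part(G)}]$ is invertible, and that regular equivalences into partitioned groupoid assemblies are groupoid equivalences — has been proven above. The main obstacle, such as it is, is bookkeeping rather than mathematics: one must check that $\Part(-)$ and $[\atom_{-}]$ descend to a strictly functorial, strictly natural pair on $\HGA{A}$, so that the idempotent-comonad machinery applies on the nose; this is exactly where the passage to the homotopy category, quotienting by the homotopy relation $\sim$, is doing the essential work.
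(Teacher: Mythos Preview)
Your proposal is correct and takes a route that differs from the paper's in its second half. Both you and the paper establish the idempotent comonad structure in the same way, using the identity $[\atom_{\Part(-)}] = \Part([\atom_{-}])$ to see that the two whiskerings of the counit agree and are invertible, and defining the comultiplication as their common inverse. Where you diverge is in identifying the coreflective subcategory: you invoke the general fact that an idempotent comonad has as coreflective subcategory the full subcategory of objects on which the counit is invertible, and then argue that this coincides (up to equivalence, and containing the image of $\Part$) with $\Ho(\pGrA{A})$. The paper instead verifies the adjunction directly: for each $[F]\colon G \to H$ with $G$ partitioned it explicitly constructs a lift $F'\colon G \to \Part(H)$ with $\atom_{H}\circ F' = F$, by setting $F'(g) = (F(g), r_{o}\cdot a_{g})$ using the unique realizer $a_{g}$ of $g$, and then checks uniqueness up to homotopy. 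Your approach is cleaner and emphasizes that everything follows formally once the idempotent comonad is in place; the paper's approach is more hands-on and has the side benefit of producing the explicit description of the adjunction unit that is reused in the very next lemma (the categorical equivalence $\Hom_{\Cat}(G,\Part(H))\simeq\Hom_{\Cat}(G,H)$).
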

\begin{proof}

Firstly, for $\Part(-):\HGA{A} \to \HGA{A}$ we have the counit $\epsilon_{-} = [\atom_{-}]:\Part(-) \To \Id_{\HGA{A}}$ and comultiplication $\nu_{-} = \Part([\atom_{-}])^{-1}:\Part(-) \To \Part(\Part(-))$. $\Part([\atom_{G}])$ is an isomorphism,, since $\atom_{G}$ is always a regular equivalence. Furthermore since $\epsilon_{\Part(-)} = \Part(\epsilon_{-})$ and $\nu_{-} = \Part(\epsilon_{-})^{-1}$, we also have the counit condition, $\id_{\Part(-)} = \epsilon_{\Part(-)} \circ \nu_{-} = \Part(\epsilon_{-}) \circ \nu_{-} $. The condition for comultiplication, $\nu_{-} \circ \nu_{\Part(-)} = \nu_{-} \circ \Part(\nu_{-})$ follows once again from that the fact that $\epsilon_{\Part(-)} = \Part(\epsilon_{-})$ and $\nu_{-} = \Part(\epsilon_{-})^{-1}$. Thus $\Part(-)$ is an idempotent comonad.\\

Suppose we have a functor $[F]: G \to H$ where $G$ is a partitioned groupoid assembly, given a realizer for $F$, $[r_{o},r_{m}]$ we can construct a functor $F': G \to \Part(H)$. Without loss of generality we assume that for $\ob G = (G_{\ob},\phi)$, $\phi(g) = \{a_{g}\}$ is always a singleton, then for $g \in \ob G$, $F'(g) = (F(g), r_{o} \cdot a_{g})$ and $m \in G(g,g')$, $F'(m) = F(m)$. The object part of $F'$ is realized by $r_{o}$ and the morphism part by $$\lmbd{z}{[[r_{o} \cdot (r_{G,\dom} \cdot z),r_{o}\cdot( r_{G,\cod} \cdot z)],r_{m} \cdot z ]}$$. We clearly have $\atom_{H} \circ F' = F$. Suppose we have a $F^{*}: G \to \Part(H)$ realized by $(s_{o},s_{m})$ such that we have $\beta: \atom_{H} \circ F^{*} \cong F$, we define $\alpha : \Pi_{g \in \ob G} \Part(H)(F'(g),F^{*}(g))$ where $\alpha(g) = \beta(g)$. $\alpha$ is realized by $\lmbd{z}{[[r_{o} \cdot z, s_{o} \cdot z],b \cdot z]}$ where $b$ is a realizer for $\beta$. $\alpha$ exhibits a natural isomorphism since $\beta$ does as well and the morphism in $\Part(H)$ are the same as in $H$, thus $F' \cong F^{*}$.\\

What this means for $\HGA{A}$ is that for a functor $[F]: G \to H$ where $G$ is a partitioned groupoid assembly, we can construct a unique functor $[F']: G \to \Part(H)$ such that $[F] = [\atom_{H}] \circ [F']$. This means we have an adjunction $\incl \adj \Part(-): \Ho(\pGrA{A}) \to \HGA{A}$. Therefore, $\Ho(\pGrA{A})$ is the coreflective subcategory of $\HGA{A}$ induced by $\Part(-)$.

\end{proof}

\begin{lem}
\label{biadjin}

 Given a partitioned groupoid assembly $G$ and groupoid assembly $H$, we have an equivalence of categories $\atom_{H} \circ - : \Hom_{\Cat}(G,\Part(H)) \simeq  \Hom_{\Cat}(G,H) $.

\end{lem}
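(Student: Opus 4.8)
The plan is to show that the post-composition functor $\atom_{H} \circ - = \Hom_{\Cat}(\id_{G}, \atom_{H})$ is an equivalence of the groupoids $\Hom_{\Cat}(G, \Part(H))$ and $\Hom_{\Cat}(G,H)$ by verifying that it is essentially surjective and fully faithful; since both hom-objects are the underlying groupoids obtained by applying $\Gamma$ to the internal homs, and $\atom_{H}\circ-$ is automatically a functor between them (being the image of $\atom_{H}$ under $\Hom_{\Cat}(G,-)$), this suffices. The two key inputs are Lemma \ref{natatom}, which says $\atom_{H}: \Part(H) \to H$ is a regular equivalence, together with the lifting lemma for regular equivalences along functors out of a partitioned groupoid assembly.

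For essential surjectivity I would take an object $J: G \to H$ of $\Hom_{\Cat}(G,H)$. Because $G$ is a partitioned groupoid assembly and $\atom_{H}$ is a regular equivalence, the lifting lemma produces a functor $K: G \to \Part(H)$ with $\atom_{H} \circ K \cong J$ in $\GrpA{A}$. Then $K$ is an object of $\Hom_{\Cat}(G, \Part(H))$, and the witnessing isomorphism $\atom_{H} \circ K \cong J$ is a realizable morphism of $\Hom_{\Cat}(G,H)$, so $\atom_{H} \circ -$ hits $J$ up to isomorphism.

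For full faithfulness I would use the explicit form of $\atom_{H}$: on objects it sends $(x,a) \mapsto x$, and on morphisms it is the identity, since $\Part(H)((x,a),(x',a')) = H(x,x')$ by construction. Hence for realizable functors $K, K': G \to \Part(H)$ a natural isomorphism $\phi: K \To K'$ has components $\phi_{g} \in \Part(H)(K(g), K'(g)) = H(\atom_{H}K(g), \atom_{H}K'(g))$, and $\atom_{H} \cdot \phi$ consists of the very same components read in $H$. Faithfulness is then immediate, while for fullness a natural isomorphism $\theta: \atom_{H}\circ K \To \atom_{H}\circ K'$ has components $\theta_{g}$ which already live in $\Part(H)(K(g),K'(g))$; setting $\phi_{g} := \theta_{g}$ gives the lift, and the realizer description of $\Part(H)$ (a realizer $t$ of $\theta_{g}$ together with realizers $a, a'$ of $K(g), K'(g)$ assemble to the realizer $[[a,a'],t]$ of $\phi_{g}$) shows $\phi$ is a realizable natural isomorphism with $\atom_{H}\cdot\phi = \theta$; naturality and invertibility of $\phi$ are reflected from those of $\theta$ by faithfulness of $\atom_{H}$.

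The main obstacle is bookkeeping realizability rather than any conceptual difficulty: I must ensure that the lifted functor $K$ and the lifted natural isomorphism $\phi$ are genuine morphisms of $\GrpA{A}$, hence elements of the relevant $\Gamma$ of an internal hom, and not merely functors and transformations of underlying groupoids. The lifting lemma supplies a realizable $K$, and the explicit realizers of $\Part(H)$ supply a realizable $\phi$; once these are in place the equivalence follows formally from $\atom_{H}$ being essentially surjective along $G$-shaped diagrams and an isomorphism on underlying hom-sets.
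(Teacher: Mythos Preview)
Your proposal is correct, but it takes a different route from the paper's proof. The paper builds an explicit pseudo-inverse functor $L : \Hom_{\Cat}(G,H) \to \Hom_{\Cat}(G,\Part(H))$ by choosing a realizer $[r_{f,o},r_{f,m}]$ for each $f$, setting $L(f)(g) = (f(g), r_{f,o} \cdot r_{g})$ and $L(f)(m) = f(m)$ (using that $G$ is partitioned so $r_{g}$ is unique), then verifies $\atom_{H} \circ L = \id$ strictly and $L \circ (\atom_{H} \circ -) \cong \id$ via the identity-component isomorphism $\alpha_{f,g} = \id_{\pr{0}(f(g))}$. You instead decompose into essential surjectivity (citing the regular-equivalence lifting lemma applied to $\atom_{H}$) plus full faithfulness (using that $\atom_{H}$ is literally the identity on hom-sets of $\Part(H)$), and then invoke the fact that a fully faithful, essentially surjective functor between groupoids in $\Set$ is an equivalence under the ambient Axiom of Choice assumed in this section. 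Your essential surjectivity argument is in fact exactly what the paper does in the proof of Lemma~\ref{adjin}, so you are effectively reusing that step rather than reproving it. The paper's explicit $L$ buys a concrete inverse functor (useful if one later needs to track the biadjunction data or coherence), while your approach is more modular and avoids re-deriving the lift construction.
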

\begin{proof}
Given a realizer $[r_{f,o},r_{f,m}]$ for each $f:G \to H$, we construct the functor $L:\Hom_{\Cat}(G,H) \to \Hom_{\Cat}(G,\Part(H))$ in $\Cat$. Recall that for $f:G \to H$, we have a functor $L(f):G \to \Part(H)$ where $L(f)(g) = (f(g), r_{f,o} \cdot r_{g})$ where $g \in \ob G$ and $r_{g}$ is the unique realizer for $g$, $L(f)(m) = f(m)$ realized by $$\lmbd{z}{[[r_{f,o} \cdot (r_{G,\dom} \cdot z),r_{f,o}\cdot( r_{G,\cod} \cdot z)],r_{f,m} \cdot z ]}$$ and $\atom_{H} \circ L(f) = f$. If we have $t:f \cong f': G \to H$, then we have $t: L(f) \cong L(f')$ realized by $\lmbd{z}{[[r_{f,o} \cdot z, r_{f',o} \cdot z],r_{t} \cdot z]}$ where $r_{t}$ realizes $t$. This clearly induces a functor $L:\Hom_{\Cat}(G,H) \to \Hom_{\Cat}(G,\Part(H))$ where $\atom_{H} \circ L(-) = \id_{\Hom_{\Cat}(G,H)}$. So we must show that $L(\atom_{H} \circ -) \cong \id_{\Hom_{\Cat}(G,\Part(H))}$. \\

Take $f:G \to \Part(H)$ realized by $[s_{f,o},s_{f,m}]$ where $j = \atom_{H} \circ f:G \to H $ is realized by $[r_{j,o},r_{j,m}]$, then the functor $L(j):G \to \Part(H)$ can be explicitly constructed where $L(j)(g) = (\pr{0}(f(g)), r_{f,o} \cdot r_{g})$ and $L(j)(m) = f(m)$. So we have a natural isomorphism $\alpha_{f}: L(j) \cong f$ where $\alpha_{f,g} = \id_{\pr{0}(f(g))}$ realized by $$\lmbd{z}{[[r_{j,o} \cdot z, s_{f,o} \cdot z], r_{H, \id} \cdot (s_{f,o} \cdot z)  ]}$$

Suppose we have a natural isomorphism $t:f \cong f':G \to \Part(H)$, then $\atom_{H} \cdot t = t$, thus $L(\atom_{H} \cdot t) = t$ making naturality of $\alpha$ obvious since $\alpha$ is defined using the identity morphisms in $H$. Therefore, we have $\alpha: L(\atom_{H} \circ -) \cong \id_{\Hom_{\Cat}(G,\Part(H))}$ making $$\atom_{H} \circ - : \Hom_{\Cat}(G,\Part(H)) \cong  \Hom_{\Cat}(G,H) $$ an equivalence.

\end{proof}

We now have the following corollary:

\begin{cor}
$\pGrA{A}$ is a bicoreflective subcategory of $\GrpA{A}$.

\end{cor}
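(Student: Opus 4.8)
The plan is to exhibit $\Part(-)$ as a right biadjoint to the inclusion $\incl \colon \pGrA{A} \hookrightarrow \GrpA{A}$, since a full $\Cat$-enriched subcategory is bicoreflective precisely when its inclusion admits a right biadjoint. As $\pGrA{A}$ is by construction the full subcategory of partitioned groupoid assemblies, and both categories are enriched in $\Cat$ via $\Hom_{\Cat}$, the data of such a biadjunction amounts to producing, for each partitioned groupoid assembly $G$ and each groupoid assembly $H$, an equivalence of categories
\[
\Hom_{\Cat}(\incl(G), H) \simeq \Hom_{\Cat}(G, \Part(H))
\]
that is pseudonatural in both $G$ and $H$, together with a counit.

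First I would take the equivalence to be the one furnished by Lemma \ref{biadjin}, namely $\atom_{H} \circ (-) \colon \Hom_{\Cat}(G, \Part(H)) \simeq \Hom_{\Cat}(G, H)$, where $\atom_{H} \colon \Part(H) \to H$ is the regular equivalence of Lemma \ref{natatom}; this supplies the hom-category equivalence, with the family $\atom_{-}$ serving as the counit of the biadjunction. The pseudofunctoriality of $\Part(-)$ needed for this to typecheck has already been assembled in the preceding corollaries: $\Part(\id_{G}, r_{o}) \cong \id_{\Part(G)}$, the composition isomorphism $\Part(J \circ F, t_{o}) \cong \Part(J, s_{o}) \circ \Part(F, r_{o})$, and the independence of $\Part(F, r_{o})$ from the chosen realizer up to isomorphism.

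Next I would verify pseudonaturality of this family. For naturality in the variable $H$, given a functor $F \colon H \to H'$ realized by $[r_{o}, r_{m}]$, the square comparing $\atom_{H} \circ (-)$ with $\atom_{H'} \circ (-)$ commutes up to the canonical isomorphism $F \circ \atom_{H} \cong \atom_{H'} \circ \Part(F, r_{o})$ established in Lemma \ref{natatom}; whiskering this isomorphism against an arbitrary $G \to \Part(H)$ yields the required invertible comparison cell. For naturality in $G$, precomposition with a functor $G' \to G$ of partitioned groupoid assemblies commutes strictly with $\atom_{H} \circ (-)$, so that variable is handled on the nose.

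The step I expect to be the main obstacle is confirming that these comparison $2$-cells cohere into a genuine pseudonatural equivalence of hom-functors, rather than merely existing objectwise. However, because the counit $\atom$ is already strictly natural where it matters (Lemma \ref{natatom}) and the equivalence of Lemma \ref{biadjin} is given by an explicit whiskering, every coherence cell that arises is an instance of the canonical isomorphism $F \circ \atom_{G} \cong \atom_{H} \circ \Part(F, r_{o})$. Thus the verification reduces to routine bookkeeping with those fixed isomorphisms and introduces no genuinely new construction, so the corollary follows from Lemma \ref{biadjin} together with the functoriality corollaries for $\Part(-)$.
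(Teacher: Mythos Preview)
Your proposal is correct and takes essentially the same approach as the paper: both use Lemma~\ref{biadjin} to obtain the hom-category equivalence with $\atom_{-}$ as counit, thereby exhibiting $\Part(-)$ as a right biadjoint to the inclusion. The paper additionally makes explicit that $\atom_{X}$ is a groupoid equivalence whenever $X$ is partitioned (since $\atom_{X}$ is always a regular equivalence and regular equivalences into partitioned groupoid assemblies are groupoid equivalences), whereas you fold this into the general principle that a full $\Cat$-enriched subcategory with a right biadjoint is automatically bicoreflective; both are valid.
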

\begin{proof}

The previous lemma shows that $\Part(-)$ is a right biadjoint to the inclusion functor. Corollary \ref{corefl} states that $\atom_{-}: \Part(-) \to (-)$, which is the unit of the biadjoint, is a regular equivalence. So when $X$ is a partitioned groupoid assembly, $\atom_{X}$ is an equivalence. Hence $\atom_{-}$ is a fibred groupoid equivalence on partitioned groupoid equivalences which completes the proof.

\end{proof}

\begin{lem}
\label{pbilim}
$\pGrA{A}$ has finite bilimits.

\end{lem}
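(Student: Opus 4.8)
The plan is to exploit the bicoreflective structure established in the Corollary following Lemma~\ref{biadjin}: the inclusion $\pGrA{A} \mono \GrpA{A}$ has $\Part(-)$ as a right biadjoint, with counit $\atom_{-}$. Since $\GrpA{A}$ already has finite bilimits by Lemma~\ref{bilim}, the general principle that a bicoreflective subcategory inherits bilimits by applying the bicoreflector suggests the candidate: form the bilimit in $\GrpA{A}$ and apply $\Part(-)$ to land back in $\pGrA{A}$. Concretely, given a bicategory $D$ with finitely many objects and finite hom-categories, a weight $W:D \to \Cat$ with each $W(x)$ finite, and a homomorphism $J:D \to \pGrA{A}$, I would first regard $J$ as a homomorphism into $\GrpA{A}$ through the inclusion and form the weighted bilimit $W \multimap J$ in $\GrpA{A}$, which exists by Lemma~\ref{bilim}. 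By Lemma~\ref{partition}(i), $\Part(W \multimap J)$ is a partitioned groupoid assembly, and the claim is that it is the weighted bilimit of $J$ weighted by $W$ in $\pGrA{A}$.

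To verify the universal property, I would fix a partitioned groupoid assembly $X$. Because $\pGrA{A}$ is a full subcategory of $\GrpA{A}$, its $\Cat$-enrichment is the restriction of $\Hom_{\Cat}$, so it suffices to exhibit an equivalence of categories between $\Hom_{\Cat}(X,\Part(W \multimap J))$ and $[D \to \Cat](W,\Hom_{\Cat}(X,J(-)))$, pseudonatural in $X$. This I would assemble by composing two equivalences. First, since $X$ is partitioned, Lemma~\ref{biadjin} gives the equivalence $\atom_{W \multimap J} \circ - : \Hom_{\Cat}(X,\Part(W \multimap J)) \simeq \Hom_{\Cat}(X,W \multimap J)$. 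Second, the defining property of the bilimit $W \multimap J$ in $\GrpA{A}$ (Lemma~\ref{bilim}) supplies the equivalence $\Hom_{\Cat}(X,W \multimap J) \simeq [D \to \Cat](W,\Hom_{\Cat}(X,J(-)))$. Composing them yields the required equivalence, and since $J$ is valued in $\pGrA{A}$ the weighted cone category $[D \to \Cat](W,\Hom_{\Cat}(X,J(-)))$ is computed identically whether we regard $J$ as landing in $\pGrA{A}$ or in $\GrpA{A}$.

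The main obstacle will not be the construction itself but the coherence bookkeeping: I must check that both equivalences in the chain are pseudonatural in $X$ — for the first this is the naturality built into the biadjunction $\incl \adj \Part(-)$, and for the second it is part of what it means for $W \multimap J$ to be a genuine bilimit rather than a mere pointwise equivalence — and that composites of pseudonatural equivalences remain pseudonatural, so that the composite genuinely witnesses $\Part(W \multimap J)$ as a weighted bilimit in the sense of the definition. Once this is in place, applying the construction to the explicit finite weighted diagrams underlying biproducts, biequalizers, and bipullbacks shows that all finite bilimits exist in $\pGrA{A}$, completing the proof.
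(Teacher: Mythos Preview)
Your proposal is correct and follows essentially the same approach as the paper: form the weighted bilimit $W \multimap J$ in $\GrpA{A}$, apply $\Part(-)$, and verify the universal property via the chain $\Hom_{\Cat}(X,\Part(W \multimap J)) \simeq \Hom_{\Cat}(X, W \multimap J) \simeq [D \to \Cat](W, \Hom_{\Cat}(X,J(-)))$ using Lemma~\ref{biadjin} and Lemma~\ref{bilim}. The paper's proof is terser and does not dwell on the pseudonaturality bookkeeping you flag, but the argument is the same.
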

\begin{proof}

Suppose we have finite bilimit diagrams  $W:D \to \Cat$ and $J:D \to \pGrA{A}$, since $\pGrA{A}$ is a full subcategory of $\GrpA{A}$ and $\GrpA{A}$ has finite bilimits, there exists a groupoid assembly $W \multimap J \in \ob \GrpA{A}$ such that we have an equivalence of categories $$\Hom_{\Cat}(X,W \multimap J) \simeq [D \to \Cat](W, \Hom_{\Cat}(X, J(-)))$$ for each $X \in \ob \GrpA{A}$. We now show that $\Part(W \multimap J)$ is the bilimit of $W$ and $J$ in $\pGrA{A}$ by exhibiting the following chain of equivalences making use of the fact that $\pGrA{A}$ is a full subcategory of $\GrpA{A}$ and given a partitioned groupoid assembly X:

\begin{align*}
\Hom_{\Cat}(X,\Part(W \multimap J)) &\simeq \Hom_{\Cat}(X, W \multimap J)\\
&\simeq [D \to \Cat](W,  \Hom_{\Cat}(X,J(-)))\\
\end{align*}

\end{proof}

\begin{lem}

$\Ho(\pGrA{A})$ is cartesian closed.

\end{lem}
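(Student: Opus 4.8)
The plan is to transport the cartesian closed structure of $\HGA{A}$ across the coreflection of Lemma \ref{adjin}. First I would record the terminal object and binary products in $\Ho(\pGrA{A})$. The terminal groupoid assembly is isomorphic to a partitioned groupoid assembly, and the product $X \times Y$ of two partitioned groupoid assemblies is again partitioned, since $\ob(X \times Y) = \ob X \times \ob Y$ and products of partitioned assemblies are partitioned (as in the $\pAsm(A)$ case). Because $\Ho(\pGrA{A})$ is the full subcategory of $\HGA{A}$ on partitioned groupoid assemblies and $\HGA{A}$ has finite products computed as in $\GrpA{A}$, these objects serve as a terminal object and binary products in $\Ho(\pGrA{A})$; alternatively this is the product case of the finite bilimits of Lemma \ref{pbilim}.

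For exponentials, given partitioned groupoid assemblies $X$ and $Y$, I would propose $\Part(X \to Y)$ as the exponential $X \Rightarrow Y$, where $X \to Y$ is the $\GrpA{A}$-exponential of Lemma \ref{ccgrpd}. The verification is the chain of isomorphisms, for each partitioned groupoid assembly $Z$:
\[
\Ho(\pGrA{A})(Z, \Part(X \to Y)) \cong \HGA{A}(Z, X \to Y) \cong \HGA{A}(Z \times X, Y) = \Ho(\pGrA{A})(Z \times X, Y).
\]
The first isomorphism is the coreflection adjunction $\incl \adj \Part(-)$ of Lemma \ref{adjin}; the second is cartesian closedness of $\HGA{A}$; and the final equality holds because both $Z \times X$ (a product of partitioned groupoid assemblies) and $Y$ are partitioned, so the $\HGA{A}$-hom is the $\Ho(\pGrA{A})$-hom. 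Naturality in $Z$ is inherited step by step from the three constituent isomorphisms, giving the desired adjunction $(- \times X) \dashv (X \Rightarrow -)$ in $\Ho(\pGrA{A})$.

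The bookkeeping to watch is that the product appearing on the right-hand side of the exponential adjunction is genuinely the $\HGA{A}$-product and lands inside $\Ho(\pGrA{A})$, so that the final equality is legitimate; this is precisely the closure of partitioned groupoid assemblies under products. I do not expect a serious obstacle, since the substantive content is already packaged into the cartesian closedness of $\HGA{A}$ and the coreflection $\Part(-)$: the only points requiring care are confirming that the product and terminal object of $\Ho(\pGrA{A})$ coincide with those of $\HGA{A}$ and that the three displayed isomorphisms assemble into a single natural isomorphism in $Z$.
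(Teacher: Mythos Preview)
Your proposal is correct and takes essentially the same approach as the paper: define the exponential as $\Part(X \to Y)$ and verify it via the chain $\Ho(\pGrA{A})(Z, \Part(X \to Y)) \cong \HGA{A}(Z, X \to Y) \cong \HGA{A}(Z \times X, Y) \cong \Ho(\pGrA{A})(Z \times X, Y)$, using the coreflection of Lemma~\ref{adjin}, cartesian closedness of $\HGA{A}$, and closure of partitioned groupoid assemblies under products. Your write-up is in fact more detailed than the paper's, which omits the explicit discussion of the terminal object and the naturality check.
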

\begin{proof}
$\Ho(\pGrA{A})$ having products follows from the fact that products of partitioned assemblies are still partitioned assemblies. For partitioned groupoid assemblies $X$ and $Y$, we define the exponent to be $\Part(X \to Y)$. The following chain of equivalences verifies this fact:

\begin{align*}
\Ho(\pGrA{A})(Z, \Part(X \to Y)) &\cong \HGA{A}(Z, X \to Y)\\
&\cong  \HGA{A}(Z \times X,  Y)\\
&\cong \Ho(\pGrA{A})(Z \times X, Y)
\end{align*}

\end{proof}

\begin{lem}
\label{phexpon}

For partitioned groupoid assemblies $X$, $Y$, and $Z$, we have $$\Hom_{\Cat}(X \times Y, Z) \simeq \Hom_{\Cat}(X, \Part(Y \to Z))$$

\end{lem}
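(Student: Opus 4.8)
The plan is to obtain this equivalence by simply composing two results already established in the excerpt: the Cat-enriched cartesian closed structure of $\GrpA{A}$ (Lemma \ref{hexpon}) and the bireflection equivalence (Lemma \ref{biadjin}). The key observation is that $\Part(-)$ interacts with the hom-categories exactly so as to let us slide the partitioning past the exponential.

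First I would start from the right-hand side. Since $X$ is a partitioned groupoid assembly and $Y \to Z$ is an ordinary groupoid assembly, Lemma \ref{biadjin} applies with $G = X$ and $H = Y \to Z$, giving an equivalence of categories
\[
\atom_{Y \to Z} \circ - \colon \Hom_{\Cat}(X, \Part(Y \to Z)) \simeq \Hom_{\Cat}(X, Y \to Z).
\]
Next I would invoke Lemma \ref{hexpon}, which provides the (strict) isomorphism $\Hom_{\Cat}(X \times Y, Z) \cong \Hom_{\Cat}(X, Y \to Z)$ coming from cartesian closedness of $\GrpA{A}$ enriched over $\Cat$. Composing the isomorphism of Lemma \ref{hexpon} with the inverse of the equivalence from Lemma \ref{biadjin} then yields
\[
\Hom_{\Cat}(X \times Y, Z) \cong \Hom_{\Cat}(X, Y \to Z) \simeq \Hom_{\Cat}(X, \Part(Y \to Z)),
\]
which is precisely the desired equivalence, since the composite of an equivalence of categories with an isomorphism of categories is again an equivalence.

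There is essentially no genuine obstacle here: all the real work has already been discharged in Lemmas \ref{hexpon} and \ref{biadjin}, and the present statement is their formal composite. The only points worth checking are bookkeeping ones — that the hypothesis of Lemma \ref{biadjin} is met (it is, since $X$ is partitioned), and that $Y \to Z$ may be treated as the groupoid assembly $H$ in that lemma (which is fine, as $\GrpA{A}$ is cartesian closed by Lemma \ref{ccgrpd}). If one wanted the equivalence to be natural in $X$, $Y$, $Z$, that would follow from the naturality already implicit in the constructions $\atom_{-}$ and the exponential transpose, but for the stated claim a bare equivalence of categories suffices.
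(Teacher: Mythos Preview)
Your proposal is correct and takes essentially the same approach as the paper: the paper's proof is exactly the chain $\Hom_{\Cat}(X \times Y, Z) \simeq \Hom_{\Cat}(X, Y \to Z) \simeq \Hom_{\Cat}(X, \Part(Y \to Z))$, invoking Lemma~\ref{hexpon} and Lemma~\ref{biadjin} in turn, just as you do.
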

\begin{proof}
We have following chain of equivalences:
\begin{align*}
\Hom_{\Cat}(X \times Y, Z)  &\simeq \Hom_{\Cat}(X ,Y \to Z) \\
&\simeq  \Hom_{\Cat}(X ,\Part(Y \to Z))\\
\end{align*}

\end{proof}

\subsection{Right Adjoint to Partitioning}
In this section, we construct a right biadjoint $\Clus(-)$ for $\Part(-)$. As a consequence of this we prove that $\pGrA{A}$ has finite bicolimits and that $\Part(-) \adj \Clus(-)$ is an adjunction on $\HGA{A}$.\\

\begin{defn}

For a nonempty subset $U \subseteq A$, we define the groupoid assembly $\nabla(U)$ where the object assembly is $U_{*} = (U, [u \mapsto \{u\}])$ and the morphism assembly is $U_{*} \times U_{*}$. The domain and codomain functions are the first and second projections respectively, the identity morphism of $u$ is $(u,u)$ and $(v,w) \cdot (u,v) = (u,w) $.

\end{defn}

For groupoid assembly $G$, we can define $\Clus(G)$ as follows: 

\begin{enumerate}

\item the objects are functors $F:\nabla(U) \to G$ for non-empty $U \subseteq A$; we define the set of realizers for $F$ as $\phi_{*}(F) = \{[r,u]| \text{r realizes } F\text{ and }u \in U \}$

\item the morphisms $H:F \to F'$ are morphisms of assemblies $H: U_{*} \times U'_{*} \to \mor G$ such that $\dom(H(u,u')) = F(u)$, $\cod(H(u,u')) = F'(u')$, and for $u,v \in U$ and $u',v' \in U'$ we have the following commutative square in $G$:

\[\begin{tikzcd}
	{F(u)} &&& {F'(u')} \\
	\\
	{F(v)} &&& {F'(v')}
	\arrow["{H(u,u')}", from=1-1, to=1-4]
	\arrow["{F(u,v)}"', from=1-1, to=3-1]
	\arrow["{F'(u',v')}", from=1-4, to=3-4]
	\arrow["{H(v,v')}"', from=3-1, to=3-4]
\end{tikzcd}\]

The realizers of $H$ are the realizers of the morphisms $H: U_{*} \times U'_{*} \to \mor G$.

\end{enumerate}

The identity morphism $\id_{F}$ is the morphism part of $F$. Given morphisms $H:F \to F'$ and $J:F' \to F''$ and $u,v \in U$, $u',v' \in U'$, and $u'', v'' \in U''$ we have the following commutative square:
\[\begin{tikzcd}
	{F(u)} &&& {F'(u')} &&& {F''(u'')} \\
	\\
	{F(v)} &&& {F'(v')} &&& {F''(v'')}
	\arrow["{H(u,u')}", from=1-1, to=1-4]
	\arrow["{F(u,v)}"', from=1-1, to=3-1]
	\arrow["{J(u',u'')}", from=1-4, to=1-7]
	\arrow["{F'(u',v')}"', from=1-4, to=3-4]
	\arrow["{F''(u'',v'')}"', from=1-7, to=3-7]
	\arrow["{H(v,v')}"', from=3-1, to=3-4]
	\arrow["{J(v',v'')}"', from=3-4, to=3-7]
\end{tikzcd}\]

When $u = v$ and $u'' = v''$, we have $J(u',u'') \circ H(u,u') = J(v',v'') \circ H(v,v')$. Thus for $u', v' \in U'$, $J(u',u'') \circ H(u,u') = J(v',u'') \circ H(u,v')$. This gives a function $J \circ H: U_{*} \times U''_{*} \to \mor G$ where $J \circ H(u,u'') = J(v',u'') \circ H(u,v')$ for some $v' \in U$. $J \circ H$ is realized by $\lmbd{z}{c_{G} \cdot ([r_{J} \cdot [v', p_{1} \cdot z] , r_{H} \cdot [p_{0} \cdot z, v']])}$ where $c_{G}$ realizes composition of morphism in $G$, $r_{J}$ and $r_{H}$ realizes $J$ and $H$, and $v' \in U'$. Groupoid assemblies equivalent to $\Clus(G)$ are called {\bf clustered groupoid assemblies}.

\begin{lem}
\label{clusadj2}
For groupoid assemblies $E$ and $F$, $$\Hom_{\HGA{A}}(\Part(E),F) \cong \Hom_{\HGA{A}}(E,\Clus(F))$$.
 
\end{lem}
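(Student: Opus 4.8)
The plan is to realize the stated bijection as the hom-set component of a biadjunction $\Part(-) \adj \Clus(-)$ on $\HGA{A}$: I will build assignments $\Phi \mapsto \Psi_{\Phi}$ and $\Psi \mapsto \Phi_{\Psi}$ on representative functors, show each is a realized functor in $\GrpA{A}$, that the two round trips are naturally isomorphic to the identity (hence become identities after passing to $\sim$-classes), and that the whole correspondence is natural in $E$ and $F$.

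First I would treat the forward map. Given a functor $\Phi : \Part(E) \to F$ realized by $[\rho_{o}, \rho_{m}]$, define $\Psi_{\Phi} : E \to \Clus(F)$ as follows. For $x \in \ob E$ with set of realizers $\phi_{\ob}(x)$, send $x$ to the cluster $G_{x} : \nabla(\phi_{\ob}(x)) \to F$ with $G_{x}(u) = \Phi(x,u)$ on objects and $G_{x}(u,u') = \Phi(\id_{x})$ read as a morphism $(x,u) \to (x,u')$ on morphisms; this is a functor precisely because $\nabla(\phi_{\ob}(x))$ has a unique isomorphism between any two of its objects and $\Phi$ preserves composition and identities. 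For $f : x \to x'$ in $E$, send $f$ to the cluster morphism $H_{f}(u,u') = \Phi(f) : (x,u) \to (x',u')$, whose coherence square is exactly functoriality of $\Phi$. Since $u$ is the unique realizer of $(x,u)$ in $\Part(E)$, the object $\Phi(x,u)$ is realized by $\rho_{o} \cdot u$, so $G_{x}$ is realized by $\rho_{o}$ and the object part of $\Psi_{\Phi}$ is realized by $\lmbd{a}{[\rho_{o}, a]}$; the morphism part is realized by a similar combinator built from $\rho_{m}$.

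Next I would treat the backward map. Given $\Psi : E \to \Clus(F)$ realized by $[\sigma_{o}, \sigma_{m}]$, each value $\Psi(x)$ is a cluster $\nabla(U_{x}) \to F$ whose realizer $\sigma_{o} \cdot a$ has the form $[r, u]$ with $u \in U_{x}$; I use this to pick an evaluation point $u = p_{1} \cdot (\sigma_{o} \cdot a) \in U_{x}$ and set $\Phi_{\Psi}(x,a) = \Psi(x)(u)$, and on a morphism $f : (x,a) \to (x',a')$ of $\Part(E)$ set $\Phi_{\Psi}(f) = \Psi(f)(u,u')$ with $u,u'$ the evaluation points of $a,a'$. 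This is again realized, dualizing the combinators above. Both assignments respect $\sim$: a natural isomorphism between two functors induces, componentwise, cluster isomorphisms and hence a natural isomorphism between the transported functors.

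For the round trips, the composite $\Phi_{\Psi_{\Phi}}$ is literally $\Phi$ when one uses the canonical realizer $\lmbd{a}{[\rho_{o}, a]}$ for $\Psi_{\Phi}$, since then the object realizer of $\Psi_{\Phi}$ sends $a$ to $[\rho_{o}, a]$, the chosen evaluation point is $p_{1} \cdot [\rho_{o}, a] = a$, and so $\Phi_{\Psi_{\Phi}}(x,a) = \Psi_{\Phi}(x)(a) = \Phi(x,a)$. The composite $\Psi_{\Phi_{\Psi}}(x)$ is a cluster indexed by $\phi_{\ob}(x)$ that factors through the image of the original cluster $\Psi(x)$; applying the functor $\Psi(x)$ to the unique arrows of its indexing groupoid assembles into an isomorphism $\Psi_{\Phi_{\Psi}}(x) \cong \Psi(x)$ in $\Clus(F)$, natural in $x$. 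I expect this second round trip to be the main obstacle: it requires constructing a realized natural isomorphism between clusters indexed by different subsets of $A$, verifying the clustering coherence squares, and then checking naturality of the whole bijection in both $E$ and $F$ so that it upgrades to the biadjunction used in the following corollaries.
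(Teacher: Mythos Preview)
Your proposal is correct and follows essentially the same approach as the paper: the forward map $\Phi \mapsto \Psi_{\Phi}$ and the backward map $\Psi \mapsto \Phi_{\Psi}$ you describe are exactly the maps $\alpha$ and $(-)^{\prime}$ in the paper's proof, including the use of $p_{1} \cdot (\sigma_{o} \cdot a)$ to extract an evaluation point from a cluster realizer. The only cosmetic difference is that you organize the argument as two round trips, whereas the paper phrases the same computations as surjectivity and injectivity of $\alpha$; your observation that $\Phi_{\Psi_{\Phi}}$ is literally $\Phi$ for the canonical realizer is a mild sharpening of the paper's injectivity step, and your sketch of the second round trip is precisely the isomorphism $\gamma : \alpha(g') \cong g$ the paper writes out.
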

\begin{proof}
Suppose we have $[f]:\Part(E) \to F$, we define $\alpha(f):E \to \Clus(F)$ where for $ e \in \ob E$ and $\phi(e)$ the set of realizers for $e$, we have $\alpha(f)(e):\nabla(\phi(e)) \to F$ where $\alpha(f)(e)(r_{e}) = f(e,r_{e})$ and $\alpha(f)(e)(r_{e},r'_{e}) = f(\id_{e})$ where $\id_{e}: (e,r_{e}) \to (e,r'_{e})$. If $[r_{f,o},r_{f,m}]$ realizes $f$, then $r_{f,o}$ realizes the object part of $\alpha(f)(e)$ and for a realizer for $\id_{e}$, $r_{i,e}$, $\lmbd{z}{r_{f,m} \cdot [z,r_{i,e}]}$ realizes morphism part of $\alpha(f)(e)$. For a morphism $t:e \to e'$, we have $\alpha(f)(t):\phi(e) \times \phi(e') \to \mor F$, where $\alpha(f)(t)(r_{e},r_{e'}) = f(t:(e,r_{e}) \to (e',r_{e'}))$. $\alpha(f)(t)$ is realized by $\lmbd{z}{r_{f,m} \cdot [z,r_{t}]}$ where $r_{t}$ realizes $t$. $\alpha(f)(t): \alpha(f)(e) \to \alpha(f)(e')$ is a valid morphism by functoriality of $f$. It should be clear that $\alpha(f)$ is a functor. If we have $\beta:f \cong f'$, then $\alpha(\beta):\alpha(f) \cong \alpha(f')$ is exhibited by $\alpha(\beta)(e)(r_{e},r'_{e}) = f'(\id_{e}:(e,r_{e}) \to (e,r'_{e})) \circ \beta(e,r_{e}) = \beta(e,r'_{e}) \circ f(\id_{e}:(e,r_{e}) \to (e,r'_{e})) $. $\alpha(\beta)$ being a natural transformation follows from the fact that $\beta$ is one as well as the fact that $\alpha(\beta)$ uses $\id_{e}$ in its construction. This gives a function $\alpha:\Hom_{\HGA{A}}(\Part(E),F) \to \Hom_{\HGA{A}}(E,\Clus(F))$ where $\alpha([f]) = [\alpha(f)]$.\\

Given $[g]:E \to \Clus(F)$ and $[r_{f,o},r_{f,m}]$ being a realizer for $g$, We define $g':\Part(E) \to F$. $g'(e,r_{e}) = g(e)(p_{1} \cdot (r_{f,o} \cdot r_{e})))$ and for $t:(e,r_{e}) \to (e',r_{e'})$ we set $g'(t) = g(t)(p_{1} \cdot (r_{f,o} \cdot r_{e}),p_{1} \cdot (r_{f,o} \cdot r_{e'}))$. $g'$ is clearly a functor and is realized by $\lmbd{z}{(p_{0} \cdot (r_{f,o} \cdot z)) \cdot (p_{1} \cdot (r_{f,o} \cdot z))}$ for objects and $\lmbd{z}{(r_{f,m} \cdot (p_{1} \cdot z)) \cdot (p_{0} \cdot z)}$ for morphisms. We have  $\alpha(g')(e)(r_{e}) = g'(e,r_{e}) = g(e)(p_{1} \cdot (r_{f,o} \cdot r_{e}))$ and $\alpha(g')(e)(r_{e},r'_{e}) = g'(\id_{e}) = g(\id_{e})(p_{1} \cdot (r_{f,o} \cdot r_{e}),p_{1} \cdot (r_{f,o} \cdot r'_{e})) $ and for $t: e \to e'$ we have $\alpha(g')(t)(r_{e},r'_{e}) = g'(t:(e,r_{e}) \to (e',r'_{e})) = g(t)(p_{1} \cdot (r_{f,o} \cdot r_{e}),p_{1} \cdot (r_{f,o} \cdot r_{e'}))$. Now we show that $\gamma: \alpha(g') \cong g$ by exhibiting $\gamma(e)(r_{e},u_{e}) = g(\id_{e})(p_{1} \cdot (r_{f,o} \cdot r_{e'}),u_{e})$. Given $t:e \to e'$, we have $\alpha(g')(t)(r_{e},r_{e'}) = g(t)(p_{1} \cdot (r_{f,o} \cdot r_{e}),p_{1} \cdot (r_{f,o} \cdot r_{e'}))$ so the following square:

\[\begin{tikzcd}
	{\alpha(g')(e)(r_{e})} &&& {\alpha(g')(e')(r_{e'})} \\
	\\
	{g(e)(u_{e})} &&& {g(e')(u_{e'})}
	\arrow["{\alpha(g')(t)(r_{e},r_{e'}) }", from=1-1, to=1-4]
	\arrow["{\gamma(e)(r_{e},u_{e}) }"', from=1-1, to=3-1]
	\arrow["{\gamma(e)(r_{e'},u_{e'})} ", from=1-4, to=3-4]
	\arrow["{g(t)(u_{e},u_{e'}) }"', from=3-1, to=3-4]
\end{tikzcd}\]

is commutative by virtue how composition works in $\Clus(F)$. Thus $\alpha$ is surjective.\\

Suppose we have $f':\Part(E) \to F$ such that $\alpha(f') \cong g$, we need to show that $f' \cong g'$. Let $\beta_{0}:\alpha(f') \cong g$. We define  $\beta \in \Pi_{(e,r_{e}) \in \ob E} F(f'(e,r_{e}),g'(e,r_{e}))$ where $\beta(e,r_{e}) = \beta_{0}(e)(r_{e},p_{1} \cdot (r_{f,o} \cdot r_{e}))$. $\beta$ is realized by $\lmbd{z}{(r_{\beta} \cdot z) \cdot [z, p_{1} \cdot (r_{f,o} \cdot z)]}$ where $r_{\beta}$ is a realizer for $\beta$. Reminder that $\alpha(f')(e)(r_{e}) = f'(e,r_{e})$ and $g(e,p_{1} \cdot (r_{f,o} \cdot r_{e})) = g'(e,r_{e})$. Given $t:(e,r_{e}) \to (e',r_{e'})$, $f'(t) = \alpha(f')(t)(r_{e},r_{e'})$ and $g(t)(p_{1} \cdot (r_{f,o} \cdot r_{e}),p_{1} \cdot (r_{f,o} \cdot r_{e'})) = g'(t)$. Hence $\beta$ is a natural transformation by virtue of $\beta_{0}$ being one. Therefore $\alpha$ is injective.\\

Therefore, $\alpha$ exhibits $\Hom_{\HGA{A}}(\Part(E),F) \cong \Hom_{\HGA{A}}(E,\Clus(F))$.

\end{proof}

\begin{lem}
\label{clusadj1}
We have an equivalence of categories $e : \Hom_{\Cat}(\Part(G),H) \simeq  \Hom_{\Cat}(G,\Clus(H)) $.
\end{lem}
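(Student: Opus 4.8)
The plan is to promote the object-level correspondence of Lemma~\ref{clusadj2} to an equivalence of the underlying groupoids $\Hom_{\Cat}(\Part(G),H) = \Gamma(\Part(G) \to H)$ and $\Hom_{\Cat}(G,\Clus(H)) = \Gamma(G \to \Clus(H))$. The difference between this statement and Lemma~\ref{clusadj2} is that the earlier lemma was a bijection of $\Hom_{\HGA{A}}$-sets, in which natural isomorphisms are quotiented away, whereas here we must retain them as the morphisms of each groupoid. Recall, however, that the proof of Lemma~\ref{clusadj2} already supplies, for each functor $f \colon \Part(G) \to H$, a functor $\alpha(f) \colon G \to \Clus(H)$, and for each natural isomorphism $\beta \colon f \cong f'$ a natural isomorphism $\alpha(\beta) \colon \alpha(f) \cong \alpha(f')$; dually it supplies, for each $g \colon G \to \Clus(H)$, a functor $g' \colon \Part(G) \to H$. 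First I would set $e(f) = \alpha(f)$ on objects and $e(\beta) = \alpha(\beta)$ on morphisms, and set $e'(g) = g'$ on objects with the analogous assignment on morphisms. Since $\alpha(\beta)$ and $g'$ are constructed pointwise from $f$, $\beta$, $g$ by postcomposing with structural morphisms of $\Clus(H)$ (respectively precomposing with the canonical morphisms $\id_{e}$ of $\Part(G)$), the equalities $e(\id_{f}) = \id_{\alpha(f)}$ and $e(\beta' \circ \beta) = e(\beta') \circ e(\beta)$ follow from functoriality of $f$ together with the description of composition in $\Clus(H)$, and likewise for $e'$. Thus $e$ and $e'$ are genuine functors of groupoids. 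Because $\Gamma$ forgets the realizer structure, no realizability bookkeeping is needed to see that $e$ and $e'$ are well defined; the realizers exhibited in Lemma~\ref{clusadj2} serve only to keep every functor and natural isomorphism inside $\GrpA{A}$.

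Next I would exhibit the two natural isomorphisms witnessing the equivalence. The isomorphism $\gamma \colon \alpha(g') \cong g$ already constructed in Lemma~\ref{clusadj2} gives $e \circ e' \cong \Id_{\Hom_{\Cat}(G,\Clus(H))}$ on objects, and its components are natural in $g$ because they are obtained by applying $g$ to the structural morphisms of $\Clus(H)$, exactly as in the commutative square of that proof. For the unit I would produce a natural isomorphism $e' \circ e \cong \Id_{\Hom_{\Cat}(\Part(G),H)}$, that is $(\alpha(f))' \cong f$: unwinding the definitions, $(\alpha(f))'(e,r_{e}) = f(e,s)$ for a canonically chosen realizer $s$ of $e$, so the component at $(e,r_{e})$ is $f$ applied to the canonical morphism $\id_{e} \colon (e,s) \to (e,r_{e})$ of $\Part(G)$; naturality in $f$ and in $(e,r_{e})$ then follows from functoriality of $f$. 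Being built uniformly from the structural morphisms of $\Part(G)$ and $\Clus(H)$, these two families assemble into natural transformations.

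The main obstacle I anticipate is not conceptual but organizational: everything needed at the object level is already present in Lemma~\ref{clusadj2}, so the real work is to verify coherently, and now for $2$-cells as well, that $e$ and $e'$ preserve composition of natural isomorphisms and that the unit and counit are natural. These checks amount to repeated use of the single commutative square governing morphisms in $\Clus(H)$ together with functoriality of the functors involved, exploiting that both $\alpha(-)$ and $(-)'$ are defined pointwise. Consequently $e$ and $e'$ form an equivalence of groupoids, giving $\Hom_{\Cat}(\Part(G),H) \simeq \Hom_{\Cat}(G,\Clus(H))$ as claimed.
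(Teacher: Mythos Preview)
Your proposal is correct and follows essentially the same approach as the paper: both define $e$ and its quasi-inverse by reusing the object-level constructions $\alpha(-)$ and $(-)'$ from Lemma~\ref{clusadj2}, extend them to $2$-cells pointwise, and exhibit the same unit and counit components (the paper's $\alpha_{f,(g,r_g)} = f(\id_g \colon (g,s) \to (g,r_g))$ is exactly your description, and its $\beta$ is your $\gamma$). The paper carries out the functoriality and naturality verifications explicitly where you defer them to ``functoriality of $f$ together with the description of composition in $\Clus(H)$'', but the substance is identical.
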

\begin{proof}
Given $f:\Part(G) \to H$, recall that we can construct a functor $e(f):G \to \Clus(H)$ where $e(f)(g):\nabla(\phi(g)) \to H$ where $e(f)(g)(r_{g}) = f(g,r_{g})$ and $e(f)(g)(r_{g},r'_{g}) = f(\id_{g}: (g,r_{g}) \to (g,r'_{g}))$ so that if $[r_{f,o},r_{f,m}]$ realizes $f$, then $r_{f,o}$ realizes the object part of $e(f)(g)$ and for a realizer for $\id_{g}$, $r_{i,g}$, $\lmbd{z}{r_{f,m} \cdot [z,r_{i,g}]}$ realizes morphism part of $e(f)(g)$. For a morphism $t:g \to g'$, we have $e(f)(t):\phi(g) \times \phi(g') \to \mor F$, where $e(f)(t)(r_{g},r_{g'}) = f(t:(g,r_{g}) \to (g',r_{g'}))$. $e(f)(t)$ is realized by $\lmbd{z}{r_{f,m} \cdot [z,r_{t}]}$ where $r_{t}$ realizes $t$. \\

If we have a natural isomorphism $j:f \cong f':\Part(G) \to H$, we can construct a natural isomorphism $e(j):e(f) \cong e(f')$ where $e(j)_{g}:\phi(g) \times \phi(g) \to H$ with $$e(j)_{g}(r_{g},r'_{g}) = f'(\id_{g}:(g,r_{g}) \to (g,r'_{g})) \circ j_{(g,r_{g})}$$ realized by $$\lmbd{z}{r_{comp,H} \cdot [r_{f',m} \cdot [z,r_{\id_{g}}], r_{j} \cdot (p_{0} \cdot z) ]}$$

Suppose we have a morphism $u:g \to g'$ in $G$, with $r_{g}, r'_{g} \in \phi(g)$ and $r_{g'},r'_{g'} \in \phi(g')$ then we have:

\begin{align*}
e(f')(t)(r'_{g},r_{g'}) \circ e(j)_{g}(r_{g},r'_{g}) &=   f'(t:(g,r'_{g}) \to (g',r_{g'})) \circ f'(\id_{g}:(g,r_{g}) \to (g,r'_{g})) \circ j_{(g,r_{g})}\\
&=  f'(t:(g,r_{g}) \to (g',r_{g'})) \circ j_{(g,r_{g})}\\
&= j_{(g',r_{g'})} \circ f(t:(g,r_{g}) \to (g',r_{g'}))\\
&=  j_{(g',r_{g'})} \circ f(\id_{g'}: (g',r'_{g'}) \to (g',r_{g'})) \circ f(t:(g,r_{g}) \to (g',r'_{g'}))\\
&= f'(\id_{g'}: (g',r'_{g'}) \to (g',r_{g'})) \circ j_{(g',r'_{g'})} \circ  f(t:(g,r_{g}) \to (g',r'_{g'}))\\
&= e(j)_{g'}(r'_{g'},r_{g'}) \circ e(f)(t)(r_{g},r'_{g'}) \\
\end{align*}

Hence $$e(f')(t) \circ e(j)_{g} = e(j)_{g'} \circ e(f)(t)$$ making $e(j)$ a natural isomorphism. If we have $\id_{f}:f \cong f:\Part(G) \to H$, then for $e(\id_{f})_{g}:\phi(g) \times \phi(g) \to H$ and $r_{g}, r'_{g} \in \phi(g)$ we have $$e(\id_{f})_{g}(r_{g}, r'_{g}) =  f(\id_{g}:(g,r_{g}) \to (g,r'_{g})) \circ (\id_{f})_{(g,r_{g})} = f(\id_{g}:(g,r_{g}) \to (g,r'_{g})) = e(f)(g)(r_{g},r'_{g})$$ Hence $e(\id_{f}) = \id_{e(f)}$. For $j:f \cong f'$, $j':f' \cong f''$ and $r_{g}, r'_{g}, r''_{g} \in \phi(g)$, we have
\begin{align*}
e(j')_{g}(r''_{g},r'_{g}) \circ e(j)_{g}(r'_{g},r_{g}) &= f''(\id_{g}:(g,r'_{g}) \to (g,r''_{g})) \circ j'_{(g,r'_{g})} \circ f'(\id_{g}:(g,r_{g}) \to (g,r'_{g})) \circ j_{(g,r_{g})}\\
&= f''(\id_{g}:(g,r'_{g}) \to (g,r''_{g}))  \circ f''(\id_{g}:(g,r_{g}) \to (g,r'_{g})) \circ j'_{(g,r_{g})} \circ j_{(g,r_{g})}\\
&= f''(\id_{g}:(g,r_{g}) \to (g,r''_{g})) \circ (j' \circ j)_{(g,r_{g})}\\
&= e(j' \circ j)(r''_{g}, r_{g})\\ 
\end{align*}
So $e(j') \circ e(j) = e(j' \circ j)$ making $e$ a functor.\\

For $f:G \to \Clus(H)$ we have a realizer $[r_{f,o},r_{f,m}]$. Using this, we construct a functor $e^{i}: \Hom_{\Cat}(G,\Clus(H)) \to  \Hom_{\Cat}(\Part(G),H)$. First we have $e^{i}(f):\Part(G) \to H$ where  $e^{i}(f)(g,r_{g}) = f(g)(p_{1} \cdot (r_{f,o} \cdot r_{g})))$ and for $t:(g,r_{g}) \to (g',r_{g'})$ we set $e^{i}(f)(t) = f(t)(p_{1} \cdot (r_{f,o} \cdot r_{g}),p_{1} \cdot (r_{f,o} \cdot r_{g'}))$. Furthermore $e^{i}(f)$ is realized by $\lmbd{z}{(p_{0} \cdot (r_{f,o} \cdot z)) \cdot (p_{1} \cdot (r_{f,o} \cdot z))}$ for objects and $\lmbd{z}{(r_{f,m} \cdot (p_{1} \cdot z)) \cdot (p_{0} \cdot z)}$ for morphisms. Given $k:f \cong f':G \to \Clus(H)$, we construct $e^{i}(k):e^{i}(f) \cong e^{i}(f')$ where $$e^{i}(k)_{(g,r_{g})} = k_{g}(p_{1} \cdot (r_{f,o} \cdot r_{g}), p_{1} \cdot (r_{f',o} \cdot r_{g}))$$ realized by $$\lmbd{z}{(r_{k} \cdot z) \cdot [p_{1} \cdot (r_{f,o} \cdot z), p_{1} \cdot (r_{f',o} \cdot z)] }$$ Suppose we have $t:(g,r_{g}) \to (g',r_{g'})$, then we have the following commutative square by the property of $k_{g}$ and $k_{g'}$:

\[\begin{tikzcd}
	{e^{i}(f)(g,r_{g})} &&& {e^{i}(f')(g',r_{g})} \\
	\\
	{e^{i}(f)(g,r_{g'})} &&& {e^{i}(f')(g',r_{g'})}
	\arrow["{e^{i}(k)_{(g,r_{g})} }", from=1-1, to=1-4]
	\arrow["{e^{i}(f)(t)}"', from=1-1, to=3-1]
	\arrow["{e^{i}(f')(t)} ", from=1-4, to=3-4]
	\arrow["{e^{i}(k)_{(g',r_{g'})}}"', from=3-1, to=3-4]
\end{tikzcd}\]

so $e^{i}(k)$ is natural. It is straightforward to show that $e^{i}$ is a functor.\\

So we need to exhibit $\alpha: e^{i} \circ e \cong \id_{\Hom_{\Cat}(\Part(G),H)}$ and $\beta: e \circ e^{i} \cong \id_{ \Hom_{\Cat}(G,\Clus(H))}$.\\

Suppose we have $f:\Part(G) \to H$, then we define $\alpha_{f}: e^{i} \circ e(f) \cong f$, where $$\alpha_{f,(g,r_{g})} = f(\id_{g}:(g,p_{1} \cdot (r_{e(f),o} \cdot r_{g}))  \to (g,r_{g}))$$ This works since $e^{i} \circ e(f)(g,r_{g}) = e(f)(g)(p_{1} \cdot (r_{e(f),o} \cdot r_{g})) = f(g,p_{1} \cdot (r_{e(f),o} \cdot r_{g}))$ and $\alpha_{f}$ is a natural isomorphism since it is defined using $f$ and $\id_{g}$. Furthermore $\alpha_{f}$ is realized by $$\lmbd{z}{s_{f,o} \cdot [[p_{1} \cdot (r_{e(f),o} \cdot z), z],r_{\id_{g}}]}$$ where $s_{f,o}$ is a realizer for the object part of $f$. Suppose we have $j:f \cong f':\Part(G) \to H$, then $$e^{i} \circ e(j)_{(g,r_{g})} = e(j)_{g}(p_{1} \cdot (r_{e(f),o} \cdot r_{g}), p_{1} \cdot (r_{e(f'),o} \cdot r_{g})) = f'(\id_{g}: (g,p_{1} \cdot (r_{e(f),o} \cdot r_{g}) ) \to (g, p_{1} \cdot (r_{e(f'),o} \cdot r_{g}))) \circ j_{(g,p_{1} \cdot (r_{e(f),o} \cdot r_{g}))}$$
So we have 

\begin{align*}
j_{(g,r_{g})} \circ &\alpha_{f,(g,r_{g})}  = j_{(g,r_{g})} \circ f(\id_{g}:(g,p_{1} \cdot (r_{e(f),o} \cdot r_{g}))  \to (g,r_{g}))\\
&= f'(\id_{g}:(g,p_{1} \cdot (r_{e(f),o} \cdot r_{g}))  \to (g,r_{g})) \circ j_{(g,p_{1} \cdot (r_{e(f),o} \cdot r_{g}))}\\
&= f'(\id_{g}: (g, p_{1} \cdot (r_{e(f'),o} \cdot r_{g})) \to (g,r_{g})) \circ f'(\id_{g}: (g,p_{1} \cdot (r_{e(f),o} \cdot r_{g}) ) \to (g, p_{1} \cdot (r_{e(f'),o} \cdot r_{g})) ) \circ j_{(g,p_{1} \cdot (r_{e(f),o} \cdot r_{g}))}\\
&= \alpha_{f',(g,r_{g})} \circ (e^{i} \circ e(j)_{(g,r_{g})})\\
\end{align*}
Hence $j \circ \alpha_{f} = \alpha_{f'} \circ (e^{i} \circ e(j))$ so we have $\alpha: e^{i} \circ e \cong \id_{\Hom_{\Cat}(\Part(G),H)}$.\\

Suppose we have $f: G \to \Clus(H)$, we define $\beta_{f}: e \circ e^{i}(f) \cong f$, so we have $\beta_{f,g}:\phi(g)\times U_{g} \to \mor H$ for $f(g):\Delta(U_{g}) \to H$ and for $u_{g} \in U_{g}$ and $r_{g} \in \phi(g)$ we have $\beta_{f,g}(r_{g},u_{g}) = f(\id_{g})(p_{1} \cdot (r_{f,o} \cdot r_{g}), u_{g})$. This works since $e \circ e^{i}(f)(g)_{\ob} = [r_{g} \in \phi(g) \mapsto  e^{i}(f)(g,r_{g})]$ and $ e^{i}(f)(g,r_{g}) = f(g)(p_{1} \cdot (r_{f,o} \cdot r_{g}))$. $\beta_{f,g}$ is realized by $$\lmbd{z}{(r_{f,\mor} \cdot r_{\id_{g}}) \cdot [p_{1} \cdot (r_{f,o} \cdot  (p_{0} \cdot z)),   p_{1} \cdot z]}$$ Suppose we have $t:f \cong f': G \to \Clus(H)$, we need to show that $t \circ \beta_{f} = \beta_{f'} \circ (e \circ e^{i}(t)) $. For $g \in \ob G$ with $f(g):\Delta(U_{g}) \to H$ and $f'(g):\Delta(U'_{g}) \to H$ and $u_{g} \in U_{g}$, $u'_{g} \in U'_{g}$, and $r_{g} \in \phi(g)$ we have a commutative diagram derived from $t$:

\[\begin{tikzcd}
	{f(g)(p_{1} \cdot (r_{f,o} \cdot r_{g}))} &&&& {f'(g)(p_{1} \cdot (r_{f',o} \cdot r_{g}))} \\
	\\
	{f(g)(u_{g})} &&&& {f'(g)(u'_{g})}
	\arrow["{{t_{g}(p_{1} \cdot (r_{f,o} \cdot r_{g}),p_{1} \cdot (r_{f',o} \cdot r_{g})) }}", from=1-1, to=1-5]
	\arrow["{{f(g)(p_{1} \cdot (r_{f,o} \cdot r_{g}), u'_{g}) }}"', from=1-1, to=3-1]
	\arrow["{f'(g)(p_{1} \cdot (r_{f',o} \cdot r_{g}), u'_{g})}", from=1-5, to=3-5]
	\arrow["{{t_{g}(u_{g},u'_{g})}}"', from=3-1, to=3-5]
\end{tikzcd}\]

giving us the following chain of equalities:

\begin{align*}
t_{g}(u_{g},u'_{g}) \circ \beta_{f,g}(r_{g},u_{g}) &= t_{g}(u_{g},u'_{g}) \circ f(\id_{g})(p_{1} \cdot (r_{f,o} \cdot r_{g}), u_{g})\\
&= t_{g}(u_{g},u'_{g}) \circ f(g)(p_{1} \cdot (r_{f,o} \cdot r_{g}), u_{g})\\
&= f'(g)(p_{1} \cdot (r_{f',o} \cdot r_{g}), u'_{g}) \circ t_{g}(p_{1} \cdot (r_{f,o} \cdot r_{g}), p_{1} \cdot (r_{f',o} \cdot r_{g}))\\
&= f'(\id_{g})(p_{1} \cdot (r_{f',o} \cdot r_{g}), u'_{g}) \circ t_{g}(p_{1} \cdot (r_{f,o} \cdot r_{g}), p_{1} \cdot (r_{f',o} \cdot r_{g}))\\
&= \beta_{f',g}(r_{g},u'_{g}) \circ (e \circ e^{i}(t))_{g}(r_{g},r_{g})\\
\end{align*}

This proves that $t \circ \beta_{f} = \beta_{f'} \circ (e \circ e^{i}(t)) $ exhibiting $\beta:e \circ e^{i} \cong \id_{ \Hom_{\Cat}(G,\Clus(H))}$. Therefore, $e$ is an equivalence.

\end{proof}

\begin{lem}
\label{pbicolim}
$\pGrA{A}$ has finite bicolimits.

\end{lem}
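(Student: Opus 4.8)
The plan is to mirror the proof of Lemma \ref{pbilim}, but to use the right biadjoint $\Clus(-)$ in place of the bilimit universal property, exploiting that $\Part(-)$ is a \emph{left} biadjoint (Lemma \ref{clusadj1}) and so ought to preserve bicolimits. Concretely, given a finite bicolimit diagram consisting of homomorphisms $W : D^{\op} \to \Cat$ and $J : D \to \pGrA{A}$, I would first regard $J$ as a diagram $\incl \circ J : D \to \GrpA{A}$ via the inclusion, form its bicolimit $W \otimes \incl J$ in $\GrpA{A}$ (which exists by Lemma \ref{bilim}), and then take as the candidate bicolimit in $\pGrA{A}$ the object $\Part(W \otimes \incl J)$, which is a partitioned groupoid assembly by Lemma \ref{partition}.

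To verify the universal property, I would establish for every partitioned groupoid assembly $X$ a chain of equivalences of categories
\begin{align*}
\Hom_{\Cat}(\Part(W \otimes \incl J), X)
&\simeq \Hom_{\Cat}(W \otimes \incl J, \Clus(X)) \\
&\simeq [D^{\op} \to \Cat]\bigl(W, \Hom_{\Cat}(\incl J(-), \Clus(X))\bigr) \\
&\simeq [D^{\op} \to \Cat]\bigl(W, \Hom_{\Cat}(J(-), X)\bigr),
\end{align*}
where the first equivalence is Lemma \ref{clusadj1}, the second is the defining property of the bicolimit $W \otimes \incl J$ in $\GrpA{A}$ taken with test object $\Clus(X)$, and the third is the heart of the matter: for each $d \in \ob D$ the object $J(d)$ is partitioned, so the counit $\atom_{J(d)} : \Part(J(d)) \to J(d)$ is a regular equivalence between partitioned groupoid assemblies and hence a groupoid equivalence. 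Precomposition with it, together with Lemma \ref{clusadj1} applied to $J(d)$, yields $\Hom_{\Cat}(J(d), \Clus(X)) \simeq \Hom_{\Cat}(\Part(J(d)), X) \simeq \Hom_{\Cat}(J(d), X)$. Reading off the composite exhibits $\Part(W \otimes \incl J)$ as the weighted bicolimit $W \otimes J$ in $\pGrA{A}$.

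The main obstacle will be coherence rather than any individual construction. The third equivalence must be assembled into an equivalence living inside the functor-bicategory $[D^{\op} \to \Cat]$, so I must check that the family $\Hom_{\Cat}(J(d), \Clus(X)) \simeq \Hom_{\Cat}(J(d), X)$ is pseudonatural in $d$, and that the whole chain is pseudonatural in $X$. This is exactly where the naturality of $\atom_{-}$ recorded in Lemma \ref{natatom} does the work, guaranteeing that the chosen equivalences commute, up to coherent isomorphism, with the action of the morphisms of $D$ and with the weight $W$. I would also emphasize that one cannot simply take $W \otimes \incl J$ itself as the bicolimit in $\pGrA{A}$: bicoequalizers in $\GrpA{A}$ are built from quotient-type colimits (as in the bicoequalizer and circle constructions preceding Lemma \ref{bilim}), and such quotients do not in general preserve the partitioned property, so applying the bicoreflector $\Part(-)$ is genuinely necessary. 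Once the pseudonaturality bookkeeping is settled, the displayed chain completes the proof that $\pGrA{A}$ has finite bicolimits.
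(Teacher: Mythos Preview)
Your proposal is correct and follows essentially the same approach as the paper's proof: form the bicolimit $W \otimes J$ in $\GrpA{A}$, apply $\Part(-)$, and verify the universal property via the chain of equivalences using Lemma \ref{clusadj1}, the bicolimit property in $\GrpA{A}$, and the fact that $\atom_{J(d)}$ is a groupoid equivalence for partitioned $J(d)$. Your additional remarks on coherence and on why $\Part(-)$ is genuinely needed are sound elaborations, but the core argument is identical.
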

\begin{proof}

Suppose we have finite bilimit diagrams  $W:D^{\op} \to \Cat$ and $J:D \to \pGrA{A}$, since $\pGrA{A}$ is a full subcategory of $\GrpA{A}$ and $\GrpA{A}$ has finite bicolimits, there exists a groupoid assembly $W \otimes J \in \ob \GrpA{A}$ such that we have an equivalence of categories  $$\cC(W \otimes J,X) \simeq [D^{\op} \to \Cat](W, \cC(J(-), X))$$ for each $X \in \ob \GrpA{A}$. We now show that $\Part(W \otimes J)$ is the bicolimit of $W$ and $J$ in $\pGrA{A}$ by exhibiting the following chain of equivalences making use of the fact that $\pGrA{A}$ is a full subcategory of $\GrpA{A}$:

\begin{align*}
\Hom_{\Cat}(\Part(W \otimes J),X) &\simeq \Hom_{\Cat}(W \otimes J,\Clus(X))\\
&\simeq [D^{\op} \to \Cat](W, \Hom_{\Cat}(J(-), \Clus(X)))\\
&\simeq [D^{\op} \to \Cat](W, \Hom_{\Cat}(\Part(J(-)), X))\\
&\simeq [D^{\op} \to \Cat](W, \Hom_{\Cat}(J(-), X))\\
\end{align*}

The final line follows from the fact that each $J(d)$ is a partitioned groupoid assembly and we have a groupoid equivalence $\atom_{J(d)}:\Part(J(d)) \to J(d)$.

\end{proof}

\begin{rmk}

It is also possible to show that since $\pGrA{A}$ is a bicoreflective subcategory of $\GrpA{A}$, $\Part(-)$ creates bicolimits in $\pGrA{A}$ so $\Part(W \otimes J) \simeq (W \otimes J)$.

\end{rmk}

\begin{lem}

$\Clus(-)$ extends to an endofunctor on $\GrpA{A}$ where for $F \cong F':G \to H$, we have $\Clus(F) \cong \Clus(F')$.

\end{lem}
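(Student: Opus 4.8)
The plan is to define $\Clus$ on morphisms in a way analogous to how $\Clus$ was defined on objects, and then verify functoriality and the homotopy-invariance property stated in the lemma. First I would construct, for a functor $F:G \to H$ together with a chosen realizer $[r_{F,o},r_{F,m}]$, an induced functor $\Clus(F,r_{F,o}):\Clus(G) \to \Clus(H)$. On an object of $\Clus(G)$, which is a functor $T:\nabla(U) \to G$ for nonempty $U \subseteq A$, I would send it to the composite $F \circ T:\nabla(U) \to H$; this is realized by post-composing the realizer of $T$ with $r_{F,o}$ and $r_{F,m}$ appropriately, so $\Clus(F,r_{F,o})(T)$ again has inhabited realizer set $\phi_{*}(F \circ T)$. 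On a morphism $H':T \To T'$, which is a map of assemblies $U_{*} \times U'_{*} \to \mor H$ satisfying the naturality squares, I would send it to $F \circ H' := [(\,u,u') \mapsto F(H'(u,u'))]$, whose commuting squares in $H$ follow by applying $F$ to the commuting squares in $G$ and using functoriality of $F$. Functoriality of $\Clus(F,r_{F,o})$ (preservation of identities, composition, and inverses) then follows directly from functoriality of $F$ together with the explicit description of composition in $\Clus(H)$ given above.

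Next I would record the dependence on the choice of realizer, exactly as was done for $\Part$ in Lemma \ref{partition}: given two realizers $[r_{F,o},r_{F,m}]$ and $[s_{F,o},s_{F,m}]$ for the same $F$, I would exhibit a natural isomorphism $\Clus(F,r_{F,o}) \cong \Clus(F,s_{F,o})$ built from identity morphisms in $H$, realized by combining $r_{F,o}$, $s_{F,o}$, and the identity combinator of $H$. Having this, I would argue that $\Clus$ preserves identities and composition up to natural isomorphism, so that $\Clus(-)$ descends to a well-defined endofunctor on $\HGA{A}$ (and, after forgetting realizers, an endofunctor on $\GrpA{A}$ in the $\Cat$-enriched sense); the argument mirrors the corollaries following Lemma \ref{partition}, where $\Clus(\id_{G}) \cong \id_{\Clus(G)}$ and $\Clus(J \circ F) \cong \Clus(J) \circ \Clus(F)$ are checked by comparing the two realizer-labelled constructions.

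Finally, for the homotopy-invariance claim itself, I would suppose $\alpha:F \To F'$ is a natural isomorphism between $F,F':G \to H$ and construct a natural isomorphism $\Clus(\alpha):\Clus(F) \To \Clus(F')$. On an object $T:\nabla(U) \to G$ of $\Clus(G)$, the component $\Clus(\alpha)_{T}:F \circ T \To F' \circ T$ is the whiskering $\alpha \cdot T$, namely the morphism of $\Clus(H)$ given by $[(u,u') \mapsto F'(T(u,u')) \circ \alpha_{T(u)}] = [(u,u') \mapsto \alpha_{T(u')} \circ F(T(u,u'))]$, where the two expressions agree by naturality of $\alpha$. I would check that this is a legitimate morphism of $\Clus(H)$ by verifying its defining commuting squares (again from naturality of $\alpha$ and functoriality), realize it by combining a realizer of $\alpha$ with the realizers of $F$, $F'$, and $T$, and confirm naturality of $\Clus(\alpha)$ in $T$. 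The main obstacle I anticipate is purely bookkeeping: producing an explicit combinator that uniformly realizes $\Clus(\alpha)$ across all the variable-domain objects $\nabla(U)$ — since the domain assemblies $U_{*}$ vary, one must be careful that the whiskering construction is realized by a single combinator independent of $U$, just as the composition combinator $\lmbd{z}{c_{H} \cdot [\,r_{J} \cdot [v',p_{1} \cdot z],\, r_{H'} \cdot [p_{0} \cdot z,v']\,]}$ was shown to work uniformly in the definition of $\Clus$; the categorical content is otherwise immediate from naturality of $\alpha$.
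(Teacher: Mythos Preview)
Your proposal is correct and follows the same basic approach as the paper: define $\Clus(F)$ by post-composition on objects and on morphisms, and build $\Clus(\alpha)$ by whiskering. However, you over-complicate matters by importing the realizer-parametrization from the $\Part$ construction. The point of $\Part(F,r_o)$ depending on $r_o$ is that an object of $\Part(G)$ is a pair $(x,a)$ carrying a \emph{specific} realizer, so $\Part(F,r_o)(x,a) = (F(x), r_o \cdot a)$ genuinely depends on $r_o$. By contrast, an object of $\Clus(G)$ is a functor $T:\nabla(U) \to G$, and $\Clus(F)(T) = F \circ T$ is just composition of functors---the realizer of $F$ is only used to witness that $\Clus(F)$ is a morphism of assemblies, not to define its value. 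So your $\Clus(F,r_{F,o})$ literally does not depend on $r_{F,o}$, and the ``independence of realizer choice'' and ``functoriality up to iso'' steps you propose are strict equalities: $\Clus(\id_G) = \id_{\Clus(G)}$ and $\Clus(J \circ F) = \Clus(J) \circ \Clus(F)$ on the nose. The paper exploits this and simply declares $\Clus(-)$ a strict endofunctor in one line, reserving the only nontrivial content for the whiskering $\alpha'(b)(u,u') = F'(b(u,u')) \circ \alpha(b(u))$, which is exactly your $\Clus(\alpha)_T$.
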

\begin{proof}

Suppose $F:G \to H$, We define $\Clus(F)$ to take functors $f: \nabla(U) \to G$ to $F \circ f: \nabla(U) \to H$ and takes morphisms $J: U_{*} \times U'_{*} \to \mor G$ to $F_{1} \circ J: U_{*} \times U'_{*} \to \mor H$ where $F_{1}$ is the morphism part of $F$. It it not only obvious that $\Clus(F)$ is realized but also that $\Clus(-)$ is a functor.\\

If we have $F \cong F':G \to H$ exhibited by $\alpha$, then we have $\alpha':\Clus(F) \cong \Clus(F')$ where for $b:\nabla(U) \to G$, we have $\alpha'(b):U \times U \to \mor H$ where $\alpha'(b)(u,u') = F'(b(u,u')) \circ \alpha(b(u))$. Realizer data is straightforward. Therefore, $\Clus(-):\GrpA{A} \to \GrpA{A}$ is an endofunctor preserving equivalent functors.

\end{proof}

\begin{cor}
$\Clus(-):\GrpA{A} \to \GrpA{A}$ extends to a functor $$\Clus(-):\HGA{A} \to \HGA{A}$$ where $\Clus([f]) = [\Clus(f)]$.

\end{cor}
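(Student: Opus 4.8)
The plan is to verify that the endofunctor $\Clus(-)\colon\GrpA{A}\to\GrpA{A}$ from the preceding lemma descends to the homotopy category, using the fact that $\HGA{A}$ is the localization of $\GrpA{A}$ at the groupoid equivalences, with hom-sets $\HGA{A}(X,Y)=\GrpA{A}(X,Y)/\sim$ where $f\sim g$ holds precisely when $f\cong g$. Since $\Clus$ is the identity on objects of $\HGA{A}$ (i.e. it sends a groupoid assembly $G$ to $\Clus(G)$, exactly as in $\GrpA{A}$), the only thing that genuinely requires argument is that the assignment on morphisms $\Clus([f])\coloneqq[\Clus(f)]$ is well defined, and that it respects identities and composition.

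First I would establish well-definedness. Suppose $f,f'\colon G\to H$ are two representatives of the same class $[f]$ in $\HGA{A}$, so that $f\sim f'$, which is to say $f\cong f'$. The preceding lemma states exactly that $F\cong F'$ implies $\Clus(F)\cong\Clus(F')$; applying it gives $\Clus(f)\cong\Clus(f')$, hence $\Clus(f)\sim\Clus(f')$ and therefore $[\Clus(f)]=[\Clus(f')]$ in $\HGA{A}$. Thus the rule $[f]\mapsto[\Clus(f)]$ does not depend on the chosen representative, and $\Clus$ is a well-defined function on each hom-set of $\HGA{A}$.

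Next I would check functoriality, which transfers directly from the already-established functoriality of $\Clus(-)$ on $\GrpA{A}$. For identities, $\Clus([\id_{G}])=[\Clus(\id_{G})]=[\id_{\Clus(G)}]=\id_{\Clus(G)}$ since $\Clus$ preserves identities in $\GrpA{A}$. For composition, given $[f]\colon G\to H$ and $[g]\colon H\to K$, I would compute using the definition of composition in $\HGA{A}$ as $[g]\circ[f]=[g\circ f]$, obtaining
\[
\Clus([g]\circ[f])=[\Clus(g\circ f)]=[\Clus(g)\circ\Clus(f)]=[\Clus(g)]\circ[\Clus(f)]=\Clus([g])\circ\Clus([f]),
\]
where the second equality is functoriality of $\Clus$ in $\GrpA{A}$ and the third is again the definition of composition in $\HGA{A}$.

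There is no real obstacle here: the entire content is packaged into the previous lemma, whose conclusion that $\Clus$ sends naturally isomorphic functors to naturally isomorphic functors is precisely the compatibility condition needed for $\Clus$ to pass to the quotient. The only point deserving a moment's care is to note that $\sim$ coincides with natural isomorphism $\cong$ (as recorded earlier in this section), so that the hypothesis of the previous lemma matches the equivalence relation defining $\HGA{A}$; once that identification is made explicit, the descent of $\Clus(-)$ to $\HGA{A}\to\HGA{A}$ is immediate.
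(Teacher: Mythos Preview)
Your proposal is correct and matches the paper's approach: the corollary is stated without proof in the paper because it follows immediately from the preceding lemma, and you have simply spelled out the routine verification that a functor preserving natural isomorphism descends to the homotopy category.
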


\begin{cor}

$\Part(-):\HGA{A} \to \HGA{A}$ has a right adjoint $\Clus(-):\HGA{A} \to \HGA{A}$.

\end{cor}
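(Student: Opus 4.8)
The plan is to obtain the adjunction directly from the hom-set bijection of Lemma \ref{clusadj2}, upgraded to a natural isomorphism. Both $\Part(-)$ and $\Clus(-)$ are already known to be well-defined endofunctors of $\HGA{A}$ (the former from the coreflection of Lemma \ref{adjin}, the latter from the preceding corollary, where $\Clus([f]) = [\Clus(f)]$). Lemma \ref{clusadj2} supplies, for each pair $E,F$, a bijection $\alpha \colon \Hom_{\HGA{A}}(\Part(E),F) \cong \Hom_{\HGA{A}}(E,\Clus(F))$ sending $[f]$ to $[\alpha(f)]$, where $\alpha(f)(e)(r_{e}) = f(e,r_{e})$ on objects and $\alpha(f)(t)(r_{e},r_{e'}) = f(t\colon (e,r_{e})\to(e',r_{e'}))$ on morphisms. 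Hence the only thing left to verify is that this family of bijections is natural in both variables.

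Naturality in $F$ is immediate and strict. Given $[h]\colon F \to F'$, the functor $\Clus(h)$ acts by post-composition, sending $b\colon\nabla(U)\to F$ to $h\circ b$, so one computes $\Clus(h)\circ\alpha(f) = \alpha(h\circ f)$ on the nose: both send $(e,r_{e})$ to $h(f(e,r_{e}))$ and behave identically on morphisms. I would simply record this equality and pass to equivalence classes.

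Naturality in $E$ is the substantive step. Given $[g]\colon E'\to E$ with realizer $r_{o}$ for its object part, I must compare $\alpha(f\circ\Part(g,r_{o}))$ with $\alpha(f)\circ g$ as morphisms $E'\to\Clus(F)$. These do not agree strictly, since the former is defined over the realizer-cluster $\nabla(\phi_{E'}(e'))$ while the latter is built from $\alpha(f)(g(e'))$, defined over $\nabla(\phi_{E}(g(e')))$. I would exhibit a natural isomorphism between them using the morphism structure of $\Clus(F)$ — concretely, a member of $\prod_{e'}\Clus(F)\big(\alpha(f\circ\Part(g,r_{o}))(e'),\,(\alpha(f)\circ g)(e')\big)$ given fibrewise by identity morphisms of $F$ reindexed along $r_{o}$ — and check it is realized, exactly as in the construction of $\alpha$ in Lemma \ref{clusadj2}. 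This yields $[\alpha(f\circ\Part(g,r_{o}))] = [\alpha(f)\circ g]$ in $\HGA{A}$, completing naturality and hence the adjunction.

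Alternatively, and more conceptually, the statement follows by applying $\pi_{0}$ to the $\Cat$-enriched equivalence $e\colon \Hom_{\Cat}(\Part(G),H)\simeq\Hom_{\Cat}(G,\Clus(H))$ of Lemma \ref{clusadj1}: since $f\sim g \iff f\cong g$ we have $\HGA{A}(X,Y)=\pi_{0}\Hom_{\Cat}(X,Y)$, an equivalence of hom-categories induces a bijection on $\pi_{0}$, and the pseudonaturality of $e$ descends to strict naturality of the induced hom-set bijection. The main obstacle in either route is the same: controlling the mismatch of realizer data when pre-composing with $\Part$, which is handled by the reindexing isomorphism above, equivalently by the pseudonaturality of $e$.
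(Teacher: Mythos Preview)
Your proposal is correct and supplies precisely the details the paper omits: the paper states this corollary without proof, treating it as immediate from Lemma~\ref{clusadj2} together with the preceding corollary establishing $\Clus(-)$ as an endofunctor on $\HGA{A}$. Your verification of naturality (and the alternative $\pi_{0}$ argument via Lemma~\ref{clusadj1}) fills in exactly what is implicit in the paper's placement of this result.
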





\subsection{0-types of Partitioned Assemblies}
In this section, we prove that the homotopy category of $0$-types of $\pGrA{A}$ is $\RT{A}$, the realizability topos on $A$.\\


\begin{defn}
We define the category $\PEq{A}$ whose objects are pairs $(X, \sim_{X})$ where $\sim_{X}:X \times X \to \pow{A}$ is a partial equivalence relation and the morphisms $f: (X,\sim_{X}) \to (Y,\sim_{Y})$ are functions $f: X \to Y$ such that there is a realizer for $ \forall x,x' \in X (x \sim_{X} x' \leq f(x) \sim_{Y} f(x') )$.

\end{defn}

Similar to $\GrpA{A}$, $\PEq{A}$ also has a notion of regular equivalence:

\begin{defn}
\label{regeq}
Suppose we have a morphism $f: (X,\sim_{X}) \to (Y,\sim_{Y})$ in $\PEq{A}$, we call $f$ a {\bf regular equivalence} iff it satisfies the following conditions:

\begin{enumerate}

\item The proposition $\forall y \in Y(y \sim_{Y} y \imply \exists x \in X (x \sim_{X} x \land f(x) \sim_{Y} y) )$ is realized. (f is surjective)

\item The proposition $\forall x, x' \in X (x \sim_{X} x \land x' \sim_{X} x' \land f(x) \sim_{Y} f(x') \imply x \sim_{X} x' ) $ is realized. (f is injective )

\end{enumerate}

This is referred to as weak equivalence in \cite{frey2022categories}.

\end{defn}

\begin{defn}

We define the full subcategory $\EqRA{A} \mono \GrpA{A}$ whose objects are the 0-types which are precisely the groupoid assemblies for which there exists at most one morphism between objects. 

\end{defn}

\begin{lem}

$\EqRA{A}$ is equivalent to the full subcategory of $\PEq{A}$ whose objects are the pairs $(X,\sim_{X})$ such that for $x \in X$, $x \sim_{X} x$ is inhabited. We call such $(X,\sim_{X})$ an {\bf equivalence relation}. Furthermore the notions of regular equivalence coincides in both categories.

\end{lem}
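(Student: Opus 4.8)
The plan is to construct an equivalence of categories by exhibiting functors back and forth between $\EqRA{A}$ and the full subcategory of equivalence relations in $\PEq{A}$, and then checking that these functors are mutually quasi-inverse. The natural functor $\Phi$ sends a $0$-type groupoid assembly $G$ to the pair $(\ob G, \sim_G)$ where $\ob G = (G_{\ob}, \phi_{\ob})$ is the object assembly and $x \sim_G x'$ is defined to be the assembly of realizers witnessing the existence of the (necessarily unique) isomorphism $x \to x'$ in $G$; explicitly, $x \sim_G x' = \exists_{f \in G(x,x')}(\phi_{\ob}(x) \land \phi_{\ob}(x') \land \psi_{\mor}(f))$ where $(G_{\mor}, \psi_{\mor})$ is the morphism assembly. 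First I would verify that $\sim_G$ is a genuine partial equivalence relation: symmetry comes from inversion of morphisms (realized by the inverse combinator), transitivity from composition, and reflexivity $x \sim_G x$ holds exactly when $x$ is realized, so this yields an equivalence relation in the sense defined. Functoriality of $\Phi$ on morphisms is immediate since a functor $G \to H$ acts on objects and preserves isomorphisms, giving a morphism of partial equivalence relations with the same realizer.

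In the reverse direction I would build a functor $\Psi$ sending an equivalence relation $(X, \sim_X)$ to the groupoid assembly whose object assembly is $(X, [x \mapsto x \sim_X x])$ and whose morphisms $x \to x'$ form the assembly with underlying set $\{* \mid x \sim_X x' \text{ inhabited}\}$ realized by $x \sim_X x'$; composition, identities, and inverses are forced because there is at most one morphism between any two objects, and the groupoid laws hold trivially at the level of the underlying set while the realizers are supplied by the composition, reflexivity, and symmetry realizers of $\sim_X$. This is manifestly a $0$-type, so $\Psi$ lands in $\EqRA{A}$. I would then check that $\Psi \circ \Phi \cong \Id_{\EqRA{A}}$ and $\Phi \circ \Psi \cong \Id$ on the equivalence-relation subcategory; the composite on objects returns essentially the same data up to the identification of ``a unique isomorphism exists'' with ``$x \sim x'$ is inhabited,'' and the natural isomorphisms are the evident identity-on-objects comparison functors, which are groupoid equivalences because both sides are $0$-types.

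For the final clause, that the two notions of regular equivalence coincide, I would compare Definition~\ref{regeq} for $\GrpA{A}$ with the one for $\PEq{A}$ under the translation $\Phi$. A functor $F$ between $0$-types is essentially surjective precisely when the first $\PEq{A}$ condition (surjectivity of $\Phi(F)$) is realized, since in a $0$-type the existence of an isomorphism $F(x) \to y$ is exactly the statement $F(x) \sim y$; fullness becomes vacuous in the $0$-type setting because any realizer witnessing $F(x) \sim_Y F(x')$ already witnesses the existence of the unique morphism $x \to x'$, so the $\GrpA{A}$ fullness condition collapses into the $\PEq{A}$ injectivity condition; and faithfulness is automatic since there is at most one morphism between two objects. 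Matching the realizers of the respective conditions termwise then shows that $F$ is a regular equivalence in $\GrpA{A}$ if and only if $\Phi(F)$ is a regular equivalence in $\PEq{A}$.

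The main obstacle I anticipate is bookkeeping the realizers carefully enough to confirm that fullness and faithfulness in the groupoid sense really do degenerate correctly into the single injectivity condition on partial equivalence relations, and that no realizer data is lost when passing between the pairing $[a,b]$ used to realize morphisms in $\GrpA{A}$ and the flat realizer $x \sim_X x'$ used in $\PEq{A}$. In particular, the subtle point is that uniqueness of morphisms in a $0$-type must be invoked to guarantee $F(g) = f$ holds on the nose rather than merely up to a $2$-cell, so I would make explicit that faithfulness is trivially realized (by any constant combinator) and that this is what makes the correspondence of regular equivalences exact rather than merely up to homotopy.
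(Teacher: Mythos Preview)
Your proposal is correct and follows essentially the same approach as the paper: define mutually inverse functors between $\EqRA{A}$ and the full subcategory of equivalence relations in $\PEq{A}$, then translate the regular equivalence conditions under this correspondence. The paper's definition of $\sim_G$ is slightly leaner---it sets $g \sim_G g' = \phi_{\mor G}(t)$ for the unique $t : g \to g'$ rather than packaging in object realizers---but this is bi-implication equivalent to yours since $\dom$, $\cod$, and $\id$ are realized. Your analysis of why fullness collapses to injectivity (uniqueness of morphisms forces $F(g) = f$ automatically) and why faithfulness is trivially realized is correct and is exactly what the paper uses, though the paper states it more tersely.
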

\begin{proof}
Given a $0$-type $G$, we can construct the pair $(\ob G, \sim_{G})$ where $g \sim_{G} g' = \phi_{\mor G}(t)$ where $t:g \to g'$ in $G$. For a functor of $0$-types $F:G \to H$, $f:(\ob G, \sim_{G}) \to (\ob H, \sim_{H})$ is just the object part of $F$ which is realized since the morphism part of $F$ is realized. Let $| - |$ be the functor representing this construction\\

For an equivalence relation $(X,\sim_{X})$, we can construct a $0$-type $X^{*}$ where the object part is $(X,\phi_{X})$ where $\phi_{X}(x) = x \sim_{X} x$ and the morphism part is $(X', \phi_{X'})$ where $X' = \{(x,x')| x \sim_{X} x' \text{ is inhabited}\}$ and $\phi_{X}(x,x') = x \sim_{X} x'$. $X^{*}$ being a groupoid assembly is straightforward. Given a morphism $f:(X,\sim_{X}) \to (Y,\sim_{Y})$, we have $f^{*}:X^{*} \to Y^{*}$ where $f^{*}(x) = f(x)$ and $f^{*}(x,x') = (f(x),f(x'))$. Similarly, the fact that this is realized is straightforward. $(-)^{*}$ be the functor representing this construction\\

The functor $| (-)^{*} |$ is precisely the identity on equivalence relations and morphisms. The functor $(| - |)^{*}$ is plainly isomorphic to $\Id_{\EqRA{A}}$ since on the level of set the functors are isomorphic and for $0$-type $G$, we have $\forall x \in \ob G(\phi_{\ob G}(x) \bimp x \sim_{G} x)$ by virtue of the fact that $\dom$, $\cod$, and $\id_{-}$ are realized for $G$ and $\mor G$  and $\mor (| G |)^{*}$ are isomorphic. Hence $\EqRA{A}$ is equivalent to the full subcategory of $\PEq{A}$ of equivalence relations.\\

Since an functor between $0$-types, $F:G \to H$ is faithful we can simply the definition of regular equivalences between $0$-types by applying the definitions of $\sim_{G}$ and $\sim_{H}$ :

\begin{enumerate}

\item The proposition $\forall y \in \ob H (\phi_{\ob H}(y) \imply \exists x \in \ob G(\phi_{\ob G}(x) \land  F(x) \sim_{G} y  ) )$ is realized. 

\item The proposition $\forall x, x' \in \ob G (\phi_{\ob G}(x) \land \phi_{\ob G}(x') \land F(x) \sim_{H} F(x') \imply x \sim_{H} x' ) $ is realized.

\end{enumerate}

This definition is essentially the same as the one for $\PEq{A}$. Thus the definitions of regular equivalence in $\EqRA{A}$ and $\PEq{A}$ on equivalence relations coincide.

\end{proof}

\begin{lem}

We have an idempotent comonad $ER: \PEq{A} \to \PEq{A}$ such that the coreflective subcategory of $ER$ is the full subcategory of equivalence relations and the counit $\epsilon: ER \To \Id_{\PEq{A}}$ is a natural regular equivalence.

\end{lem}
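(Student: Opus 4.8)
The plan is to exhibit $ER$ as the comonad $i \circ R$ arising from a right adjoint $R$ to the inclusion $i$ of the full subcategory of equivalence relations. First I would define $ER$ on objects by restricting each partial equivalence relation to its reflexive support: for $(X,\sim_{X})$ set $X' = \{x \in X \mid (x \sim_{X} x) \neq \emptyset\}$ and $ER(X,\sim_{X}) = (X', \sim_{X}|_{X' \times X'})$. Since every $x \in X'$ satisfies $x \sim_{X} x$ inhabited by construction, $ER(X,\sim_{X})$ is an equivalence relation. On morphisms, given $f:(X,\sim_{X}) \to (Y,\sim_{Y})$ realized by $r_{f}$, I would set $ER(f)$ to be the restriction of $f$ to $X'$; this lands in $Y'$ because $r_{f}$ realizes $x \sim_{X} x \leq f(x) \sim_{Y} f(x)$, so $ER(f)$ is realized by the same $r_{f}$, and functoriality is then immediate.

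Next I would define the counit $\epsilon_{(X,\sim_{X})}: ER(X,\sim_{X}) \to (X,\sim_{X})$ to be the inclusion $X' \hookrightarrow X$, which is a morphism realized by $\iota$ since $x \sim_{X} x' \leq x \sim_{X} x'$. Naturality of $\epsilon$ in $(X,\sim_{X})$ is clear because $ER(f)$ and $f$ agree on $X'$. The key simplification is that $ER$ is strictly idempotent: the underlying set of $ER(ER(X,\sim_{X}))$ is $\{x \in X' \mid (x \sim_{X} x) \neq \emptyset\} = X'$, so $ER \circ ER = ER$ on the nose. I would therefore take the comultiplication $\nu$ to be the identity, for which the counit and coassociativity laws hold trivially, making $(ER, \epsilon, \nu)$ an idempotent comonad.

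Then I would verify that each $\epsilon_{(X,\sim_{X})}$ is a regular equivalence in the PER sense. Surjectivity asks that $\forall y \in X(y \sim_{X} y \imply \exists x \in X'(x \sim_{X} x \land \epsilon(x) \sim_{X} y))$ be realized; given a realizer $a$ of $y \sim_{X} y$ one simply takes $x = y$, so $\lmbd{a}{[a,a]}$ works. Injectivity asks that $\forall x, x' \in X'(x \sim_{X} x \land x' \sim_{X} x' \land \epsilon(x) \sim_{X} \epsilon(x') \imply x \sim_{X} x')$ be realized, and since $\epsilon$ is the inclusion this is realized by the projection extracting the third conjunct (e.g.\ via $p_{1}$). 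Together with the naturality established above, this shows $\epsilon$ is a natural regular equivalence.

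Finally, the coreflective subcategory of an idempotent comonad consists of exactly those objects on which the counit is an isomorphism. Here $\epsilon_{(X,\sim_{X})}$ is an isomorphism precisely when $X' = X$, i.e.\ when $(x \sim_{X} x)$ is inhabited for every $x \in X$, which is exactly the condition that $(X,\sim_{X})$ be an equivalence relation; equivalently one checks directly that $\Hom_{\PEq{A}}(D, C) \cong \Hom_{\PEq{A}}(D, ER(C))$ for $D$ an equivalence relation, exhibiting $ER$ as right adjoint to the inclusion. I expect the only mildly delicate points to be the uniform realizability bookkeeping for $\epsilon$ and $ER(f)$; the comonad axioms themselves are essentially free because of the strict idempotence $ER \circ ER = ER$, so the main conceptual step is simply recognizing restriction-to-reflexive-support as the correct coreflector.
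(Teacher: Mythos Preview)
Your proposal is correct and follows essentially the same approach as the paper: define $ER$ by restriction to the reflexive support $X' = \{x \in X \mid (x \sim_{X} x) \text{ inhabited}\}$, take the counit to be the inclusion (with surjectivity realized by $\lmbd{z}{[z,z]}$ and injectivity by a projection), and use the strict idempotence $ER \circ ER = ER$ to make the comultiplication the identity. The only minor differences are presentational---you frame things explicitly via the right-adjoint-to-inclusion perspective and spell out the coreflective-subcategory characterization, whereas the paper simply observes that $ER$ is the identity on equivalence relations---but the underlying argument is the same.
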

\begin{proof}
Suppose we have a partial equivalence relation $(X, \sim_{X})$, We define $ER(X, \sim_{X}) = (X', \sim_{X'})$ where $X' = \{x \in X|  x \sim_{X} x\text{ is inhabited}\}$ and $x \sim_{X'} x' = x \sim_{X} x'$. Given a morphism $f:(X,\sim_{X}) \to (Y,\sim_{Y})$ if $ x' \sim_{X} x$ is inhabited then $f(x') \sim_{X} f(x)$ is as well, so we define $ER(f)(x) = f(x)$. $ER$ is clearly a functor. \\

We construct $\epsilon_{(X, \sim_{X})}:(X',\sim_{X'}) \to (X,\sim_{X})$ where $\epsilon_{(X, \sim_{X})}(x) = x$. $\epsilon$ is clearly a natural transformation. Given $x \in X$, if $x \sim_{Y} x$ is inhabited then $x \in X'$ and $x \sim_{X'} x$ is inhabited, thus surjectivity of $\epsilon_{(X, \sim_{X})}$ is realized by $\lmbd{z}{[z,z]}$. Injectivity is straightforward since for $x, x' \in X'$, $x \sim_{X'} x' = x \sim_{X} x'$. Thus $\epsilon$ is a natural regular equivalence. \\

Observe that $ER$ is the identity on equivalence relations so comultiplication is the identity $\Id_{ER}$. Thus $ER$ is an idempotent comonad. Furthermore, this also means that coreflective subcategory induced by $ER$ is the full subcategory of equivalence relations.

\end{proof}

Define $\Ho(\PEq{A})$ and $\Ho(\EqRA{A})$ to be the localization of $\PEq{A}$ and $\EqRA{A}$ on the regular equivalences. 

\begin{lem}

$\RT{A} \cong \Ho(\PEq{A}) \cong \Ho(\EqRA{A})$

\end{lem}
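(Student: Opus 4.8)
The plan is to prove the two equivalences separately, deducing $\Ho(\PEq{A}) \cong \Ho(\EqRA{A})$ from the comonad $ER$ just constructed and reserving the real work for $\RT{A} \cong \Ho(\PEq{A})$. For the first equivalence I would mirror the argument already used to establish $\Ho(\pGrA{A}) \cong \EGA{A}$ via $\Part(-)$: I have shown that $\EqRA{A}$ is equivalent, compatibly with the two notions of regular equivalence, to the full subcategory $\mathrm{EqRel} \subseteq \PEq{A}$ of equivalence relations, and that $ER$ is an idempotent comonad with coreflective subcategory $\mathrm{EqRel}$ whose counit $\epsilon$ is a natural regular equivalence. Since $ER$ sends regular equivalences to regular equivalences (on domains the relations are unchanged) and $\epsilon$ becomes a natural isomorphism once regular equivalences are inverted, $ER$ and the inclusion descend to mutually quasi-inverse functors between $\Ho(\PEq{A})$ and $\Ho(\mathrm{EqRel}) \simeq \Ho(\EqRA{A})$.

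For the main equivalence I would first define an identity-on-objects functor $\Phi: \PEq{A} \to \RT{A}$ sending a tracked function $f: (X,\sim_{X}) \to (Y,\sim_{Y})$ to the isomorphism class $[F_{f}]$ of the functional relation $F_{f}(x,y) \bimp (x \sim_{X} x) \land (f(x) \sim_{Y} y)$. I would check that $F_{f}$ satisfies the four functional-relation conditions (strictness and totality from reflexivity of the PERs, congruence and single-valuedness from the fact that $f$ is tracked together with symmetry and transitivity of $\sim_{Y}$), and that $\Phi$ preserves composition and identities: the composite $\exists y(F_{f}(x,y) \land F_{g}(y,z))$ simplifies to $(x \sim_{X} x) \land (g(f(x)) \sim_{Z} z) = F_{g \circ f}(x,z)$, while $F_{\id} \bimp \sim_{X}$. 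The crucial point is that $\Phi$ inverts regular equivalences: the candidate inverse of $[F_{f}]$ is the reversed relation, and verifying that it is again a functional relation amounts exactly to surjectivity of $f$ (yielding totality) and injectivity of $f$ (yielding single-valuedness) in the sense of Definition \ref{regeq}.

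By the universal property of localization, $\Phi$ then factors as $\overline{\Phi}: \Ho(\PEq{A}) \to \RT{A}$, which is bijective on objects. Fullness I would obtain by a graph-span construction: given a functional relation $[F]: (X,\sim_{X}) \to (Y,\sim_{Y})$, set $Z = X \times Y$ with $(x,y) \sim_{Z} (x',y') \bimp F(x,y) \land F(x',y') \land (x \sim_{X} x')$. The projections define tracked functions $p = \pr{X}$ and $q = \pr{Y}$, where the functional-relation axioms make $q$ tracked (congruence and single-valuedness of $F$) and make $p$ a regular equivalence (surjectivity from totality of $F$, injectivity immediate from the definition of $\sim_{Z}$). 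A short computation of $[F_{q}] \circ [F_{p}]^{-1}$ then shows that the morphism $[q] \circ [p]^{-1}$ of $\Ho(\PEq{A})$ is sent by $\overline{\Phi}$ to $[F]$.

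The main obstacle is faithfulness of $\overline{\Phi}$, which requires a concrete handle on morphisms of the localization. I would establish that the regular equivalences of $\PEq{A}$ admit a calculus of fractions, so that every morphism of $\Ho(\PEq{A})$ is represented by a span whose left leg is a regular equivalence, with two spans agreeing precisely when they admit a common refinement; the graph-span above then exhibits these spans as being in bijection with functional relations, hence with $\RT{A}$-morphisms. This span calculus and its identification with functional relations is exactly the localization description of realizability toposes in \cite{frey2022categories}, so rather than reprove it I would import those results, after which both faithfulness and a clean second proof of fullness follow formally. Combining, $\overline{\Phi}$ is an equivalence, and with the first reduction this yields $\RT{A} \cong \Ho(\PEq{A}) \cong \Ho(\EqRA{A})$.
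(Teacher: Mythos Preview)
Your proposal is correct and aligns with the paper's proof. For $\Ho(\PEq{A}) \cong \Ho(\EqRA{A})$ you use exactly the argument the paper uses: the idempotent comonad $ER$ with its counit a natural regular equivalence, descended to the localizations. For $\RT{A} \cong \Ho(\PEq{A})$ the paper simply cites Theorem~5.5 of \cite{frey2022categories}; you unpack more of the content---the explicit functor $\Phi$ via graph relations, inversion of regular equivalences, and the span description for fullness---but you correctly identify that the faithfulness step (the calculus of fractions for regular equivalences) is precisely what that reference supplies, and you defer to it there. So your argument is a more explicit gloss on the same citation rather than a genuinely different route.
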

\begin{proof}
The second equivalence follows from the fact that $\epsilon:ER \To \Id_{\PEq{A}}$ is a natural regular equivalence and $\EqRA{A}$ is the coreflective subcategory of $\PEq{A}$ induced by $ER$. The first equivalence is a consequence of Theorem 5.5 of \cite{frey2022categories}.

\end{proof}

\begin{lem}

If $G$ is a $0$-type, then so is $\Part(G)$.

\end{lem}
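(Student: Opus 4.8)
The plan is to recall the characterization of $0$-types established just before this lemma: a groupoid assembly $G$ is a $0$-type precisely when there is at most one morphism between any two objects (equivalently, when $- \circ !_{\Circ} : G \to (\Circ \to G)$ is a groupoid equivalence). So it suffices to show that $\Part(G)$ has at most one morphism between any two of its objects, using the explicit description of $\Part(G)$ from Lemma \ref{partition}.

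Recall from the construction of $\Part(G)$ that $\ob \Part(G) = \prt{G_{\ob}}{\phi_{\ob}}$, so objects are pairs $(x,a)$ with $x \in \ob G$ and $a \in \phi_{\ob}(x)$, and that the hom-sets are defined by $\Part(G)((x,a),(x',a')) = G(x,x')$. That is, the morphisms from $(x,a)$ to $(x',a')$ in $\Part(G)$ are literally the morphisms from $x$ to $x'$ in $G$, only the realizer structure differs. First I would take two morphisms $f, g : (x,a) \to (x',a')$ in $\Part(G)$. By the definition of the hom-sets, both $f$ and $g$ are morphisms $x \to x'$ in $G$. Since $G$ is assumed to be a $0$-type, there is at most one morphism $x \to x'$ in $G$, so $f = g$. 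Hence there is at most one morphism between any two objects of $\Part(G)$, which means $\Part(G)$ is a $0$-type.

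The proof is essentially immediate once the right characterization is invoked, so there is no genuine obstacle here; the only thing to be careful about is that the $0$-type property is a property of the underlying set-level morphisms, and it is preserved because $\Part(-)$ leaves the hom-sets on the nose unchanged (it merely repartitions the realizers on objects and tags morphism realizers with endpoint data). I would phrase the argument directly in the internal language to match the convention used elsewhere: the faithfulness-type condition ``for all objects $(x,a),(x',a')$ and all $f,f' \in \Part(G)((x,a),(x',a'))$, $f = f'$'' reduces to the same condition for $G$ because $\Part(G)((x,a),(x',a')) = G(x,x')$ as underlying sets. Since $G$ being a $0$-type gives exactly this for $G$, we conclude $\Part(G)$ is a $0$-type.
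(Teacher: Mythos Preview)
Your proof is correct and follows essentially the same approach as the paper: both use the characterization of $0$-types as groupoid assemblies with at most one morphism between any two objects, and observe that since $\Part(G)((x,a),(x',a')) = G(x,x')$, the property transfers immediately.
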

\begin{proof}

The objects of $\Part(G)$ are pairs $(g,r_{g})$ where $r_{g}$ realizes $g \in \ob G$. We have that $$\Part(G)((g,r_{g}),(g',r_{g'})) = G(g,g')$$ Hence $\Part(G)$ is a $0$-type.

\end{proof}

\begin{cor}
\label{zerotype}

The homotopy category of $0$-types of $\pGrA{A}$ is the realizability topos on $A$, $\RT{A}$.

\end{cor}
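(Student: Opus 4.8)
The plan is to reduce the statement to the equivalence $\RT{A} \cong \Ho(\EqRA{A})$ already established, by showing that the homotopy category of $0$-types of $\pGrA{A}$ computes the same localization $\loc{\EqRA{A}}{\RE}$. Write $\EqRA{A}_{p} \mono \EqRA{A}$ for the full subcategory of those $0$-types whose underlying object assembly is partitioned; by definition the homotopy category of $0$-types of $\pGrA{A}$ is the localization of $\EqRA{A}_{p}$ at the groupoid equivalences, which by the corollary that a regular equivalence with partitioned codomain is already a groupoid equivalence equals $\loc{\EqRA{A}_{p}}{\RE}$. Since $\RT{A} \cong \Ho(\EqRA{A}) = \loc{\EqRA{A}}{\RE}$ by the previous lemma, it suffices to produce an equivalence $\loc{\EqRA{A}_{p}}{\GE} \cong \loc{\EqRA{A}}{\RE}$.

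First I would restrict the machinery used to prove $\Ho(\pGrA{A}) \cong \EGA{A}$ to the subcategory of $0$-types. The three ingredients all respect $0$-types: the functor $S = Q \circ \incl$ is the identity on objects and hence sends a partitioned $0$-type to the same object viewed in $\EGA{A}$; the functor $\Part(-)$ sends a $0$-type $G$ to $\Part(G)$, which is again a $0$-type by the lemma immediately preceding this corollary and whose underlying object assembly is partitioned; and the natural transformation $[\atom_{-}]$ is, on each $0$-type $G$, the regular equivalence $\atom_{G}\colon \Part(G) \to G$ between $0$-types. Consequently the mutually inverse equivalences $S$ and $\Part(-)$, together with the natural isomorphisms witnessing $S \circ \Part(-) \cong \Id$ and $\Part(-) \circ S \cong \Id$, all restrict to the $0$-types, giving an equivalence between $\loc{\EqRA{A}_{p}}{\GE}$ and the full subcategory of $\EGA{A} = \loc{\GrpA{A}}{\RE}$ spanned by the $0$-types.

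It then remains to identify that full subcategory of $\loc{\GrpA{A}}{\RE}$ on $0$-types with the localization $\loc{\EqRA{A}}{\RE} = \Ho(\EqRA{A})$. This is exactly the place where the distinctive features of partitioned objects are needed. On the one hand, every $0$-type $G$ is connected to the partitioned $0$-type $\Part(G)$ by the regular equivalence $\atom_{G}$, so after inverting $\RE$ every $0$-type becomes isomorphic to a partitioned one; on the other hand, again by the corollary that a regular equivalence with partitioned codomain is a groupoid equivalence, no genuine fractions are needed between partitioned representatives, so the hom-sets computed in $\loc{\GrpA{A}}{\RE}$ between $0$-types agree with those computed in $\loc{\EqRA{A}}{\RE}$. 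Combining these, the inclusion $\EqRA{A}_{p} \mono \EqRA{A}$ induces an equivalence $\loc{\EqRA{A}_{p}}{\GE} \cong \loc{\EqRA{A}}{\RE}$, and chaining with $\RT{A} \cong \Ho(\EqRA{A})$ completes the argument.

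The main obstacle is precisely this last identification: verifying that taking the full subcategory of a localization on the $0$-types and localizing the full subcategory of $0$-types at regular equivalences yield the same category. The verification must show the hom-sets match, and this rests entirely on the two structural facts above, namely that $\atom$ replaces any $0$-type by a partitioned one and that regular equivalences into partitioned objects are groupoid equivalences. This is why the whole apparatus of $\Part(-)$, $\atom_{-}$ and the coreflection of $\pGrA{A}$ in $\GrpA{A}$ is brought to bear rather than a direct comparison of $\PEq{A}$ with groupoid assemblies.
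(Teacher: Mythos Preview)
Your proposal is correct and takes essentially the same approach as the paper: both arguments restrict the equivalence $\Ho(\pGrA{A}) \cong \EGA{A}$ to the $0$-types using that $\Part(-)$ preserves $0$-types, and then invoke $\RT{A} \cong \Ho(\EqRA{A})$. The paper's proof is a single sentence citing exactly those two facts, whereas you unpack the identification of the full subcategory of $0$-types in $\EGA{A}$ with $\loc{\EqRA{A}}{\RE}$ and flag it as the nontrivial step; this extra care is warranted, since the paper leaves that verification implicit.
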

\begin{proof}
This is a consequence of $\Ho(\pGrA{A}) \cong \EGA{A}$ and the fact that $\Part(-):\EGA{A} \to \Ho(\pGrA{A})$ preserves $0$-types.

\end{proof}

\begin{rmk}

Note that for nontrivial partial combinatory algebra $A$, the groupoid assembly $A_{diff}$ from Remark \ref{noheexpl} is a $0$-type; hence $\Part(A_{diff})$ is one as well and $\atom_{A_{diff}}:\Part(A_{diff}) \to A_{diff}$ is a regular equivalence between $0$-types. Since $\atom_{A_{diff}}$ is not a regular equivalence, it is not inverted in $\Ho(\GrpA{A})$ and so the homotopy category of $0$ types in $\Ho(\GrpA{A})$ is not equivalent to $\RT{A}$.

\end{rmk}


\section{Conclusion and Future Problems}

Here we express the key results of this thesis:

\begin{thm}

The following holds for $\GrpA{A}$:

\begin{itemize}

\item $\GrpA{A}$ equipped with the algebraic weak factorization system $(\Fib,R_{\Fib}, L_{\Fib})$ and the class of normal isofibrations is a $\pi$-tribe and thus a model of type theory. (Corollary \ref{pitribe})\\

\item $\GrpA{A}$ has a model structure where the weak equivalences are the groupoid equivalences, the fibrations are the normal isofibrations, and the cofibrations are the decidable monomorphisms on objects followed by the strong deformations sections. (Corollary \ref{mdlstr})\\

\item $\GrpA{A}$ has $W$-types with reductions and for a map of fibrations $J:U \to V$ over a groupoid assembly $I$, we can construct an algebraic weak factorization system $([\SOA], R_{\SOA},L_{\SOA})$ whose $R$-maps are precisely those maps that have a functorial right lifting property against $J$ as described in Lemma \ref{liftI}. (Lemma \ref{wtyper} and Corollary \ref{soa})\\

\item $\GrpA{A}$ has an impredicative univalent universe of modest assemblies and a univalent universe of assemblies. (Lemmas \ref{disuni} and \ref{disuniv} and Corollary \ref{impreduniv})\\

\end{itemize}

\end{thm}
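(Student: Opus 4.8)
The plan is to observe that this is a summary theorem whose four assertions have each been established earlier in the text; the real work is to verify that the separate structures cohere and to lay bare the dependencies among them. I would organize the argument as four invocations, ordered according to which earlier machinery each one consumes.

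First, for the $\pi$-tribe assertion I would recall that by Corollary \ref{awfsgrpd} the triple $(\Fib, R_{\Fib}, L_{\Fib})$ is an algebraic weak factorization system whose right maps are the normal isofibrations and whose left maps are the strong deformation sections. Three of the four $\pi$-tribe axioms are then immediate: the normal isofibrations contain isomorphisms and terminal maps and are closed under composition and pullback; dependent products along normal isofibrations exist, via the adjunction $\Delta_{F} \adj \Pi_{F}$, and again yield normal isofibrations by the Frobenius property (Lemma \ref{froben} together with the cited Proposition of \cite{GAMBINO20173027}); and the left class pulls back correctly along fibrations since it is the left class of a weak factorization system. The remaining axiom, that $\Pi_{F}$ carries strong deformation sections between fibrations to strong deformation sections between fibrations, is exactly the content of the lemma immediately preceding Corollary \ref{pitribe}. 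Hence the first item follows.

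Second, I would assemble the model structure of the second item from the two algebraic weak factorization systems in hand. The fibrant factorization is $(\Fib, R_{\Fib}, L_{\Fib})$; the cofibrant factorization $([\mathrm{Cof}], R_{[\mathrm{Cof}]}, L_{[\mathrm{Cof}]})$ is produced by the small object argument of the third item applied to the generating map $J$ over $\{0,1,2\}$. One then checks that the weak equivalences are precisely the groupoid equivalences, using their two-out-of-three property, and reads off that the $R_{[\mathrm{Cof}]}$-maps are exactly the normal isofibrations that are also groupoid equivalences while the $L_{[\mathrm{Cof}]}$-maps are the decidable monomorphisms on objects followed by strong deformation sections. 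This is Corollary \ref{mdlstr}, and I note that this item is logically \emph{downstream} of the third, since the cofibrant factorization is built from the $\SOA$ machinery.

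The genuine technical weight sits in the third item, the construction of $W$-types with reductions (Lemma \ref{wtyper}) and the resulting uniform normalized small object argument (Corollary \ref{soa}); I would treat Lemma \ref{wtyper} as the linchpin, since both the cofibrant factorization of item two and the localization machinery behind item four rest on it. Its proof proceeds through \textbf{WISC} in $\Asm(A)$ (Lemma \ref{wisc}), the decoration of an ordinary $W$-type, and the delicate fibred partial equivalence relation $\sim'$ that forces functoriality and naturality of hereditary graph morphisms. The hard part will be precisely this last step: the algebra structure and the initiality each require a unique-filler argument that tracks realizers through the covering-and-collection squares, so establishing Lemma \ref{wtyper} is the main obstacle, everything else being formal assembly. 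Finally, for the fourth item I would invoke the object-classifier constructions directly: Lemma \ref{disuni} gives that $\pr{U}$ classifies the $U$-valued discrete fibrations, Lemma \ref{disuniv} gives that groupoid equivalences between discrete groupoid assemblies are isomorphisms and hence univalence of the universe of assemblies, while the modest case uses that the modest assemblies are the $(1+1, \{!_{\nabla(2)}, !_{\I}\})$-local objects of a nullification modality together with Lemma \ref{nulldfib} (closure of local fibrations under dependent products along fibrations) to secure impredicativity, which is Corollary \ref{impreduniv}. Collecting these four strands completes the theorem.
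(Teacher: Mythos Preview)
Your proposal is correct and matches the paper's approach: this theorem appears in the conclusion section and is given no proof beyond the parenthetical citations to the earlier results, so any ``proof'' amounts to exactly the kind of invocation-and-dependency summary you have written. Your tracing of the logical dependencies (in particular that item two consumes the $\SOA$ machinery of item three, and that item four rests on the nullification-modality results) is accurate and more explicit than anything the paper provides.
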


\begin{thm}

The following holds for $\GrpA{A}$ and $\pGrA{A}$:

\begin{itemize}

\item $\GrpA{A}$ and $\pGrA{A}$ have finite bilimits and bicolimits as well as homotopy exponents. (Lemmas \ref{bilim} and \ref{hexpon} and Lemmas \ref{pbilim}, \ref{pbicolim} and \ref{phexpon})\\

\item There is a biadjunction between $\pGrA{A}$ and $\GrpA{A}$ which induces an adjunction $\incl \adj \Part(-)$ between $\Ho(\pGrA{A})$ and $\Ho(\GrpA{A})$ making $\Ho(\pGrA{A})$  a coreflective subcategory of $\Ho(\GrpA{A})$. (Lemmas \ref{biadjin} and  \ref{adjin}) \\

\item There is a biadjunction on $\GrpA{A}$ which induces a adjunction $\Part(-) \adj \Clus(-)$ on $\Ho(\GrpA{A})$. (Lemmas \ref{clusadj1} and \ref{clusadj2})\\

\item The homotopy category of the $0$-types of $\pGrA{A}$ is $\RT{A}$. (Corollary \ref{zerotype})\\

\end{itemize}

\end{thm}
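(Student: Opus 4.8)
The plan is to realize the homotopy category of $0$-types of $\pGrA{A}$ as the full subcategory of $\Ho(\pGrA{A})$ spanned by the $0$-type objects, transport it across the equivalence $\Ho(\pGrA{A}) \cong \EGA{A}$ to the full subcategory of $0$-types in $\EGA{A}$, and finally identify the latter with $\Ho(\EqRA{A})$, which is $\cong \RT{A}$ by the established chain $\RT{A} \cong \Ho(\PEq{A}) \cong \Ho(\EqRA{A})$.

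First I would note that, since morphisms in $\Ho(\pGrA{A})$ are honest homotopy classes $\pGrA{A}(X,Y)/\!\sim$ and this relation is computed identically on any full subcategory, the homotopy category of $0$-types of $\pGrA{A}$ is exactly the full subcategory of $\Ho(\pGrA{A})$ on the partitioned groupoid assemblies that are $0$-types. I would then invoke the equivalence $\Part(-)\colon \EGA{A} \to \Ho(\pGrA{A})$, quasi-inverse to $S$. Both $\Part(-)$ and $S$ preserve $0$-types: the functor $S$ is the identity on objects (it is the composite of the inclusion $\Ho(\pGrA{A}) \hookrightarrow \HGA{A}$ with $Q\colon \HGA{A} \to \EGA{A}$), so it sends a $0$-type to the same underlying groupoid assembly, while $\Part(-)$ preserves $0$-types by the preceding lemma. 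Since an equivalence whose quasi-inverse restricts between two full subcategories restricts to an equivalence on those subcategories, $S$ and $\Part(-)$ restrict to an equivalence between the full subcategory of $0$-types in $\EGA{A}$ and the homotopy category of $0$-types of $\pGrA{A}$.

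It then remains to identify the full subcategory of $0$-types in $\EGA{A} = \loc{\GrpA{A}}{\RE}$ with $\Ho(\EqRA{A})$. Here I would use that the $0$-types are closed under regular equivalence in both directions: if $F\colon X \to Y$ lies in $\RE$, then fullness and faithfulness give $X(x,x') \cong Y(F(x),F(x'))$, so $X$ has at most one morphism between any two objects precisely when $Y$ does, and essential surjectivity transports this property back and forth. Since $\EqRA{A}$ is by definition the full subcategory of $0$-types in $\GrpA{A}$ and this closure holds, the canonical functor $\Ho(\EqRA{A}) \to \EGA{A}$ induced by the inclusion is fully faithful with essential image exactly the $0$-type objects, so the full subcategory of $0$-types in $\EGA{A}$ is $\Ho(\EqRA{A}) \cong \RT{A}$. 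Composing the three equivalences yields the corollary.

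The step I expect to be the main obstacle is this last identification: showing that localizing the full subcategory $\EqRA{A}$ at regular equivalences produces precisely the full subcategory of $0$-types in the ambient localization $\EGA{A}$, with no spurious extra morphisms arising from zig-zags of regular equivalences that pass through non-$0$-type groupoid assemblies. The closure of $0$-types under regular equivalence is what rules this out, but making the full faithfulness of $\Ho(\EqRA{A}) \to \EGA{A}$ precise is the delicate point; the cleanest route I would take is to rectify any such zig-zag using the coreflection $\Part(-)$, which preserves $0$-types and whose counit $\atom_{-}$ is a regular equivalence, so that every connecting regular equivalence can be replaced by one between partitioned $0$-types.
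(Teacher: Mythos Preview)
Your proposal is correct and follows essentially the same route as the paper's proof of Corollary~\ref{zerotype}, which reads in full: ``This is a consequence of $\Ho(\pGrA{A}) \cong \EGA{A}$ and the fact that $\Part(-):\EGA{A} \to \Ho(\pGrA{A})$ preserves $0$-types.'' You have unpacked exactly these two ingredients together with the already-established chain $\RT{A} \cong \Ho(\PEq{A}) \cong \Ho(\EqRA{A})$.

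Your worry about the identification of $\Ho(\EqRA{A})$ with the full subcategory of $0$-types in $\EGA{A}$ is legitimate in principle but dissolves once you use the concrete model you already have: $\EGA{A} \cong \Ho(\pGrA{A})$, and morphisms in $\Ho(\pGrA{A})$ are honest homotopy classes of functors between partitioned groupoid assemblies, with no zig-zags to rectify. Since every $0$-type $G$ is isomorphic in $\EGA{A}$ to the partitioned $0$-type $\Part(G)$ via the regular equivalence $\atom_G$, the full subcategory of $0$-types in $\EGA{A}$ is equivalent to its full subcategory on partitioned $0$-types, and the latter transports directly to the homotopy category of partitioned $0$-types under the equivalence with $\Ho(\pGrA{A})$. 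So the delicate full-faithfulness argument you anticipate is already absorbed into the equivalence $\Ho(\pGrA{A}) \cong \EGA{A}$ and the preservation of $0$-types by $\Part(-)$, which is precisely what the paper invokes.
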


The first theorem exhibits $\GrpA{A}$ as a model of type theory with $W$-types, localization modalities including truncation modalities, a univalent universe of assemblies and an impredicative univalent universe of modest assemblies. $\GrpA{A}$ also has a model structure whose fibrations are the normal isofibrations and weak equivalences are groupoid equivalences. There still is a question of whether every reflective subcategory induced by a localization modality is a model of type theory, however we will leave that as the following conjecture:

\begin{conj}

Every reflective subcategory induced by a localization modality on $\GrpA{A}$ is a model of type theory.

\end{conj}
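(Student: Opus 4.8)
The plan is to mirror the two successful cases established in the paper, namely the $\pi$-tribe structure on $\GrpA{A}$ itself (Corollary \ref{pitribe}) and on the nullification-modality reflective subcategories $\GrpA{A}_{(I,J)}$. The strategy would be to verify the four $\pi$-tribe axioms for an arbitrary reflective subcategory $\GrpA{A}_{(I,J)}$ arising from a localization modality $\Loc_{I}(J)$, equipping it with the class of $(I,J)$-local fibrations and the restriction of the algebraic weak factorization system $([\Fib(I,J)], R_{[\Fib(I,J)]}, L_{[\Fib(I,J)]})$. First I would isolate exactly which structural lemmas from the nullification case genuinely used the nullification hypothesis $J_i\colon U_i \to 1$, as opposed to merely a general localization $J\colon U \to V$. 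The existence of the algebraic weak factorization system $([\Fib(I,J)], R_{[\Fib(I,J)]}, L_{[\Fib(I,J)]})$ itself does not require nullification, so axioms (i)--(iii) (closure of fibrations, the factorization, and stability of left maps under pullback along fibrations, echoing Lemma \ref{lpt2}) should transfer with care; the characterization of $L_{[\Fib(I,J)]}$-maps between local objects as strong deformation sections is the engine that makes the pullback-stability argument go through.

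The genuinely delicate axiom is the existence and fibrancy of dependent products, i.e.\ the analogue of Lemma \ref{lpt1}: that the dependent product of an $(I,J)$-local fibration along an $(I,J)$-local fibration is again $(I,J)$-local. In the nullification case this rested squarely on Lemma \ref{nulldfib} (and its prerequisite Lemma \ref{ijlocalfib}), whose proofs repeatedly invoke that each $U_i$ is inhabited and that $J_i$ maps into the terminal object, so that $(I,J)$-local fibers are characterized by a \emph{constancy} condition on functors out of $U_i$. For general $J\colon U \to V$ there is no such constancy phenomenon, and the lifting data $\lambda(J_i)$ cannot be reduced to evaluating at a single point $u^{*} \in U_i$. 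Thus the main obstacle is precisely showing $R_{[\Loc(I,J)]}$-maps (hence $(I,J)$-local fibrations) are closed under dependent products along fibrations \emph{without} the nullification hypothesis. I would attempt this by a direct construction of the lift functor $\lift(!_{U_i^{*}})$ for $\Pi_G(F)$ along the lines of Lemma \ref{nulldfib}, but replacing the constancy argument with the uniqueness of lifts guaranteed by the isomorphism $(-\circ J_i)$ on the relevant hom-fibrations; the key input would be that for a local object the map $(-\circ J_i)\colon (V_i \to X) \to (U_i \to X)$ is an isomorphism, so lifting problems against $J_i$ and $J^{*}_i$ have canonical solutions, and these should be shown to be preserved by the dependent product along a fibration.

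If that obstacle cannot be surmounted in full generality, the fallback plan is to prove the conjecture under the additional hypothesis that $\Loc_I(J)$ is \emph{lex}, i.e.\ that $\Loc_I(J)$ preserves finite limits, since lex modalities are exactly the ones whose local objects form a subtopos-like reflective subcategory closed under the relevant type-theoretic operations; for a lex localization the reflector $\Loc_I(J)$ commutes with $\Pi$ and $\Id$ up to the unit, which would furnish the dependent products in $\GrpA{A}_{(I,J)}$ directly by reflecting those of $\GrpA{A}$ (available by Corollary \ref{pitribe}). The final assembly of the argument would then mirror the last corollary of Section 5: once axioms (i)--(iii) and the dependent-product axiom are in hand, the fourth axiom (that $\Pi_F$ sends left maps between fibrations to left maps between fibrations) follows, as in the nullification case, from the observations that dependent products are computed in $\GrpA{A}_{(I,J)}$ exactly as in $\GrpA{A}$ and that the $L_{[\Fib(I,J)]}$-maps between local objects are the strong deformation sections, so the preservation statement reduces to the already-proved fact that $\Pi_F$ preserves strong deformation sections between normal isofibrations.
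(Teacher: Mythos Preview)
The statement you are addressing is not a theorem but an open \emph{conjecture}: the paper explicitly says ``we will leave that as the following conjecture'' and provides no proof. So there is no argument in the paper to compare your proposal against.

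That said, your proposal is not a proof either, and you are candid about this. You correctly locate the single genuine obstruction: the closure of $(I,J)$-local fibrations under dependent products along fibrations. In the paper this is Lemma~\ref{nulldfib}, and its proof (together with Lemma~\ref{ijlocalfib}) does use the nullification hypothesis in exactly the way you describe, namely to reduce the lifting problem to a constancy statement that can be solved by evaluating at an inhabitant $u^{*}\in U_i$. Your suggested replacement---using that $(-\circ J_i)\colon (V_i\to X)\to (U_i\to X)$ is an isomorphism to obtain canonical lifts and then showing these are preserved by $\Pi_G$---is the natural thing to try, but you have not carried it out, and there is no indication in the paper that this succeeds; indeed the fact that the author states the general case as a conjecture suggests they did not see how to push the argument through without nullification. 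Your fallback to lex modalities is reasonable but changes the statement: lex localizations are a strictly smaller class than general localizations, so this would establish a weaker result than the conjecture as stated.

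In short: your diagnosis of where the difficulty lies is accurate and matches the paper's implicit reasoning for leaving this open, but neither you nor the paper has a proof.
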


The second serves as a prelude to uncovering the potential homotopical properties of $\pGrA{A}$. We believe that $\pGrA{A}$ serves as the bicategorical analogue of the elementary $(\infty,1)$-topos. First we seek to expand our definitions of $0$-types and $-1$-types to a proper notion of $n$-truncated maps in $\GrpA{A}$ so we express the following conjecture:

\begin{conj}
The following holds for $\pGrA{A}$:

\begin{itemize}

\item Bipullbacks along a morphism $f:X \to Y$ induces a bifunctor $f_{h}^{*}:\pGrA{A}/Y \to \pGrA{A}/X$ which has a right and left biadjoint. We refer to the right biadjoint as a {\bf bipushforward}.\\
\item There exists a morphism $\top:1 \to \Omega$ such that it induces a biequivalence between the hom-category $\Hom_{\Cat}(X,\Omega)$ and the category of $-1$-truncated morphism over $X$. We call this a {\bf sub-object biclassifier}.\\

\item For every Grothendieck universe $V$, there exists a morphism $\pr{V}:U^{*}[V] \to U[V]$ which induces a biequivalence between the hom-category $\Hom_{\Cat}(X,U[V])$ and the category of $V$-fibred $0$-truncated morphisms over $X$. We refer to this as a {\bf object biclassifier}.\\

\item Furthermore, we have a biclassifier for all modest $0$-truncated morphisms which class of morphisms is closed under bipushforward along any morphism.\\

\end{itemize}

\end{conj}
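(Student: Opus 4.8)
The plan is to obtain each of the four structures by transporting the fibrational (strict) structure of $\GrpA{A}$ across the $\pi$-tribe structure of Corollary \ref{pitribe} and the bicoreflection $\Part(-) \adj \Clus(-)$, and then correcting into $\pGrA{A}$ along the counit $\atom_{-}$. For the bipullback bifunctor and its biadjoints, I would first replace a given $f \colon X \to Y$ by a normal isofibration via the fibrant factorization $\Fib$, so that the strict pullback functor $\Delta_{\hat{f}}$ models $f^{*}_{h}$ on fibrations; the left biadjoint (bidependent sum) is post-composition $\Sigma_{f} = f \circ -$, and the right biadjoint (the bipushforward) is $\Pi_{\hat{f}}$ from the dependent-product adjunction, whose existence and fibration-preservation are already supplied by the $\pi$-tribe structure together with the Frobenius property (Lemma \ref{froben} and its corollaries). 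To land these functors in $\pGrA{A}$ I would use the biequivalence $\Hom_{\Cat}(G,\Part(H)) \simeq \Hom_{\Cat}(G,H)$ of Lemma \ref{biadjin} and its dual $\Part(-) \adj \Clus(-)$ of Lemma \ref{clusadj1}; since $\pGrA{A}$ is bicoreflective, slices over partitioned objects embed into $\GrpA{A}/Y$ up to biequivalence, and composing the ambient biadjunction with $\Part(-)$ produces biadjoints on $\pGrA{A}$.

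For the sub-object biclassifier, I would use that the $-1$-types are the local objects of the $-1$-truncation modality $\Loc_{*}([\htpy{0},\htpy{1}])$, define $\Omega$ as the full subgroupoid of $U^{grpd}$ on the propositions, and take $\Part(\Omega)$ for the partitioned version. Univalence of $U^{grpd}$ (Lemma \ref{disuniv}) should restrict to $\Omega$ and upgrade the object-level classification of monos into a biequivalence $\Hom_{\Cat}(X,\Omega) \simeq (\text{$-1$-truncated morphisms over } X)$: the universe of propositions is automatically univalent, so the classifying map is pinned down up to a contractible space of choices, which is precisely what a biclassifier requires.

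For the object biclassifiers, the essential subtlety is that $0$-truncated morphisms have fibres that are $0$-types, i.e.\ internal equivalence relations in the sense of Corollary \ref{zerotype} and the $\PEq{A}$ description, and these are strictly more general than the discrete fibrations classified by $\pr{U}$ in Lemma \ref{disuni}. I would therefore build a universe $U[V]$ of $V$-small $0$-types, with points the setoids $(X,\sim_{X})$ and total space $U^{*}[V]$, and prove classification up to biequivalence by combining the discrete universe $U^{grpd}$ with the $0$-truncation modality $\Loc_{*}(!_{\Circ})$, so that a $0$-truncated fibration is reconstructed from its fibrewise setoid data. For the last bullet I would specialize to modest fibres using the impredicative univalent universe $V^{mod}$ (Corollary \ref{impreduniv}), and deduce closure under the bipushforward from the closure of $(I,J)$-local fibrations under dependent products along fibrations (Lemma \ref{nulldfib}); intersecting the modest localization with the $0$-truncation modality then yields an impredicative biclassifier for modest $0$-truncated maps closed under the bipushforward of the first part.

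The main obstacle is twofold. The harder analytic step is promoting each classification from an equivalence of groupoids of objects to a genuine biequivalence of hom-categories that is moreover pseudonatural in $X$; this is the bicategorical univalence statement, and although $U^{grpd}$ is univalent, transporting univalence through the $0$-truncation modality and through $\Part(-)$, so that partitioned-ness does not destroy the classifying property, is delicate — exactly as Remark \ref{noheexpl} warns that $\atom_{-}$ need not be a groupoid equivalence away from partitioned objects. The second obstacle is checking that the bipushforward of the first bullet restricts to the modest $0$-truncated class, which requires $\Pi_{\hat{f}}$ to interact with both the modest-fibration localization and the $0$-truncation localization at once; I would handle this by first showing these two nullification modalities commute with dependent products (upgrading Lemma \ref{nulldfib}) and only then intersecting them.
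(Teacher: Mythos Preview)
The statement you are attempting to prove is labeled as a \emph{conjecture} in the paper and appears in Section 7 (Conclusion and Future Problems); the paper does not supply a proof. It is explicitly presented as an open problem, with the surrounding discussion offering only motivation and partial context (e.g.\ the intended notion of modest type, the remark that ``I am not quite sure what the proper notion of `modest fibration' or `modest morphism' is''). So there is no proof in the paper to compare your proposal against.

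That said, your outline is a plausible strategy and aligns with the tools the paper makes available: using the $\pi$-tribe structure and fibrant replacement to model bipullbacks and their adjoints, transporting along the bicoreflection $\Part(-)$, and leveraging the univalent universes $U^{grpd}$ and $V^{mod}$ together with the nullification modalities. You correctly identify the two genuine obstacles the paper leaves open: (i) upgrading the discrete-fibration classification of Lemma~\ref{disuni} to a classification of $0$-truncated maps (setoid-fibred maps), which the paper does not carry out, and (ii) ensuring pseudonaturality and biequivalence at the level of hom-categories after passing through $\Part(-)$, where Remark~\ref{noheexpl} shows the counit $\atom_{-}$ can fail to be a groupoid equivalence. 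Your proposal does not resolve either obstacle; it names them. In particular, the paper's universes only classify \emph{discrete} fibrations, and the gap between discrete fibrations and general $0$-truncated maps in $\pGrA{A}$ is precisely where the realizability-topos structure (Corollary~\ref{zerotype}) enters, and the paper gives no construction of a universe of setoids that is both univalent and closed under the required operations. So your proposal is a reasonable research plan, but it is not a proof, and the paper does not claim one.
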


Adding additional context to the above conjecture, the notion of modest type we use are types $X$ such that the morphism $$- \circ !_{\nabla(2)}:X \to (\nabla(2) \to X)$$ is an equivalence. I am not quite sure what the proper notion of ``modest fibration" or ``modest morphism" is but I do know that it must coincide in the of a map into the terminal object with the above notion of modest type. In addition we require the two object biclassifiers mentioned in the conjecture to satisfy some notion of univalence which should state that the fibration of paths of the biclassifier $U$ are equivalent to the fibration of equivalences between objects in $U$. When all of these requirements are satisfied, I believe this should give a proper example of what should be called a {\bf realizability elementary bitopos} where an {\bf elementary bitopos} $\cC$ only needs to satisfy the following conditions:

\begin{itemize}

\item $\cC$ has finite bilimits and bicolimits.\\
\item $\cC$ has homotopy exponents.\\
\item The homotopy category of the $0$-types of $\cC$ is an elementary topos.\\
\item Bipullbacks of $\cC$ have a right and left biadjoint.\\
\item$\cC$ has a sub-object biclassifier.\\

\item $\cC$ has enough univalent object biclassifiers for $0$-truncated maps.\\

\end{itemize}

$\pGrA{A}$ already satisfies the first three conditions so we only need to check the rest of the conditions. This will require fleshing out the definitions of $n$-truncated morphism as well as that of ``modest morphisms". On a related note, there is also the question of whether $\pGrA{A}$ has the bicategorical analogue of $W$-types with reductions. This is important since this includes $W$-types as well as localization modalities and weak factorization systems  which might give a way of characterizing  $n$-truncated morphisms and  ``modest morphisms". We will refer to these as {\bf bicategorical W-types with reductions}. There is an argument to be made that the existence of bicategorical W-types with reductions should be added to the definition of elementary bitopos since such constructions include localization modalities and $W$-types like the natural numbers object. We leave that as a conjecture:

\begin{conj}
$\pGrA{A}$ has bicategorical $W$-types with reductions and is an elementary bitopos.

\end{conj}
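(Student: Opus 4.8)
The plan is to verify, in the following order, the clauses of the definition of elementary bitopos that were not already dispatched for $\pGrA{A}$ (the first three follow from Lemmas \ref{bilim}, \ref{hexpon}, \ref{pbilim}, \ref{pbicolim}, \ref{phexpon} and Corollary \ref{zerotype}), and then to establish bicategorical $W$-types with reductions, throughout transporting the strict constructions for $\GrpA{A}$ across the biadjunction $\incl \adj \Part(-)$ of Lemma \ref{biadjin}. First I would set up the theory of $n$-truncated morphisms. The $0$- and $(-1)$-types are already characterized as the local objects of $\Loc_{*}(!_{\Circ})$ and $\Loc_{*}([\htpy{0},\htpy{1}])$, and a morphism $f\colon X\to Y$ should be declared $n$-truncated when each of its homotopy (bipullback) fibers is an $n$-type; dually a \emph{modest morphism} is one whose fibers are modest groupoid assemblies, i.e.\ $\Loc_{1}(!_{\nabla(2)})$-local. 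The basic lemma to prove is that both classes are stable under bipullback, which follows because bipullbacks of fibrant objects are computed as in $\GrpA{A}$ and the relevant localization-modality fibrations are closed under pullback of fibrations by Lemmas \ref{ijlocalfib} and \ref{nulldfib}.

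Next I would construct the bipushforward. Given $f\colon X\to Y$ I would first replace it by a normal isofibration via the fibrant factorization $\Fib$, invoke the strict dependent-product adjoint triple $\Sigma_F \adj \Delta_F \adj \Pi_F$ along normal isofibrations constructed in Section 3, and then apply $\Part(-)$ to return to $\pGrA{A}$. Because $\Part(-)$ is right biadjoint to the inclusion and $\atom_{-}$ is a regular (hence on partitioned objects a groupoid) equivalence, the strict adjunctions descend to biadjunctions $f_{h}^{*}\adj \Pi_{f}$ and $\Sigma_{f}\adj f_{h}^{*}$ on the homotopy slices, with $\Sigma_{f}$ just postcomposition. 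The content here is checking the triangle identities only up to coherent natural isomorphism, which I would extract from the path-object structure of Lemma \ref{pathspace} together with the $\pi$-tribe structure of Corollary \ref{pitribe}.

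For the classifiers I would reuse the univalent universes already built. The sub-object biclassifier $\top\colon 1\to\Omega$ would take $\Omega$ to be the full subuniverse of $(-1)$-types inside the discrete univalent universe $U^{grpd}$ of Lemma \ref{disuniv}; the biequivalence between $\Hom_{\Cat}(X,\Omega)$ and the category of $(-1)$-truncated maps over $X$ is obtained by upgrading the strict classification of Lemma \ref{disuni} to a biequivalence using Lemma \ref{hexpon} and the characterization of $(-1)$-types. For object biclassifiers of $0$-truncated maps I would restrict $\pr{U}$ of Lemma \ref{disuni} to its $0$-truncated part, and for the modest case use $\pr{V}$ of Corollary \ref{impreduniv}, whose impredicativity supplies closure under bipushforward; univalence is inherited from Lemma \ref{disuniv} and the univalence of $V^{mod}$.

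The main obstacle will be the bicategorical $W$-types with reductions. The strict construction of Lemma \ref{wtyper} is already delicate, resting on WISC in $\Asm(A)$ (Lemma \ref{wisc}), decoration of trees, and a carefully generated fibred partial equivalence relation $\sim'$; to bicategorify it I would rerun the same $\SOA$-style $W$-type-with-reductions recipe, but now permitting the coherence conditions to hold only up to specified isomorphism, and then partition the result. I expect the genuinely hard part to be proving the initial-algebra universal property \emph{bicategorically} — that the comparison functor into any other algebra is unique up to a contractible space of coherence data — since the uniqueness argument in Lemma \ref{wtyper} proceeds by strict equality of \emph{attempts}, and replacing this by uniqueness-up-to-equivalence forces one to track higher coherences that the $1$-groupoidal (rather than higher-groupoidal) setting only barely accommodates. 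Once bicategorical $W$-types with reductions are available, a uniform treatment of $n$-truncation and of modest morphisms falls out by specializing the reduction data, and the remaining bitopos clauses verified above combine to complete the proof.
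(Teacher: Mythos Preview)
This statement is a \emph{conjecture} in the paper, not a theorem: the paper explicitly writes ``We leave that as a conjecture'' and provides no proof. There is therefore no argument in the paper to compare your proposal against. Your write-up is not a proof but a research plan, and you yourself flag the genuine obstacles (the bicategorical uniqueness of attempts for $W$-types with reductions, the precise definition of $n$-truncated and modest morphisms). Those are exactly the gaps the paper identifies as open: the surrounding discussion in Section~7 says that verifying the remaining bitopos clauses ``will require fleshing out the definitions of $n$-truncated morphism as well as that of `modest morphisms','' and the existence of bicategorical $W$-types with reductions is posed as a question, not a result.

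So your proposal is a reasonable outline of how one might attack the conjecture, and it correctly leans on the biadjunction $\incl \adj \Part(-)$ and the strict structures already built in $\GrpA{A}$, but it does not close the problem. In particular, transporting the strict dependent-product and classifier constructions across $\Part(-)$ to obtain genuine biadjunctions and biequivalences requires coherence work that neither you nor the paper carries out, and the bicategorical initiality argument for $W$-types with reductions remains, as you note, the hard part.
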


We also seek to learn more about $\Clus(-)$ and $\Part(-)$. First we note that the construction of $\Clus(-)$ resembles branching monads like the powerset operator. In fact I believe that the best first step to understanding $\Clus(-)$ is to construct a branching monad $\Clus_{b}(-)$ on $\Asm(A)$ where for an assembly $(X,\phi)$, either $$\Clus_{b}(X,\phi) = (\{f:(U, [u \mapsto \{u\}])  \to (X,\phi)|  f\text{ is realized and }U \subseteq A\},[f \mapsto \{r_{f}| r_{f}\text{ realizes }f\} ])$$ or
\begin{align*}\Clus_{b}(X,\phi) = (\{f:(U, [u \mapsto \{u\}])  \to (X,\phi)|  f\text{ is realized and non-empty }U \subseteq A\},[f \mapsto \{[r_{f},u] | r_{f}\text{ realizes }f\text{ and }u \in U_{f}\} ])\end{align*}

This will give $\Clus(-)$ an analogue in $\Asm(A)$ as $\Part(-)$ has with the subcategories of partitioned assemblies. Of course a natural question is what is the relationship of the category of algebras of $\Clus_{b}$ to $\RT{A}$? With regard to $\Part(-)$ and $\pGrA{A}$, I believe that a good first step is considering the relationship between $\pAsm(A)$ and $\pGrA{A}$. First I believe it should be possible to show that $\GrpA{A}$ is the $2$-reg completion of $\Asm(A)$, but in light of the previous conjecture stating that $\pGrA{A}$ is an elementary bitopos, one might consider if $\pGrA{A}$ is some bicategorical or $2$-categorical ex/lex completion of $\pAsm(A)$. In fact the definition of partitioned groupoid assemblies seems to be similar to the explicit construction of the object of the ex/lex completion of $\pAsm(A)$. Finally, one ought to    consider how $\Part(-)$ is related to the reg/lex-completion as the unit of the biadjoint $\atom_{-}: \Part(-) \to \Id(-)$ makes use of the fact that every assembly is covered by a partitioned assembly.\\

Finally we discuss the potential insights that come from this thesis to the model of type theory on cubical (and possibly simplicial) assemblies and on the potential realizability $(\infty,1)$-topos. First we conjecture that the construction of $W$-types with reductions and the small object argument in $\GrpA{A}$ can be applied to cubical assemblies. From there we construct a fibrant algebraic weak factorization system of uniform normal fibrations in cubical assemblies. Uniform normal fibrations are discussed in \cite{awodey2016cubicalmodelhomotopytype}. This will be an important first step in constructing a model of type theory for cubical assemblies whether we use $\pi$-tribes or the model of type theory in \cite{GAMBINO_2021}. We should also be able construct models of type theory on reflexive subcategories induced by nullification modalities. In addition, once we begin to construct a univalent universe of modest cubical assemblies, using the idea in Lemma \ref{nulldfib}, we can show that said universe is impredicative.\\

Of course, we should also be able to construct a cofibrant algebraic weak factorization system and a potential model structure. \\

As for the potential realizability $(\infty,1)$-topos, first we ought to isolate the subcategory of fibrant cubical assemblies. The question is whether one can construct a analogous version of $\Part(-)$. Recall that for a groupoid assembly $X$, $\Part(X)$ makes the object part of $X$ into a partitioned assembly. For a cubical assembly $X$, there is an assembly $X_{n}$ for every $n$ comprising $X$. There is an interesting question about whether there is some $\Part_{n}(-)$ such that for a cubical assembly $X$, $\Part_{n}(X)$ makes the first $n + 1$ layers into partitioned assemblies. The presumed idea behind each $\Part_{n}$ is that the full subcategory of $n + 1$ types of the reflective subcategory of $\Part_{n}$ is the $n + 1$ topos analogue of the $(\infty,1)$-topos. If each $\Part_{n}$ exists, it is conjectured that the cubical assembly analogue of $\Part(-)$, $\Part_{\infty}(-)$ is the limit of the following chain of ``morphisms":

\[\begin{tikzcd}
	\dotsb && {\Part_{n}(-)} && \dotsb && {\Part_{1}(-)} && {\Part_{0}(-)} \\
	\\
	&&&& {\Part_{\infty}(-)}
	\arrow[from=1-1, to=1-3]
	\arrow[from=1-3, to=1-5]
	\arrow[from=1-5, to=1-7]
	\arrow[from=1-7, to=1-9]
	\arrow[from=3-5, to=1-1]
	\arrow[from=3-5, to=1-3]
	\arrow[from=3-5, to=1-5]
	\arrow[from=3-5, to=1-7]
	\arrow[from=3-5, to=1-9]
\end{tikzcd}\]

where $\Part_{\infty}(X)$ makes every $X_{n}$ a partitioned assembly. Finally, it is believed that the full subcategory of fibrant cubical assemblies $X$ whose $X_{n}$ is partitioned for every $n$ will result in the realizability $(\infty,1)$-topos. However, it is probably more likely that the subcategory corresponding to $\Part_{1}$ will give us the realizability $(\infty,1)$-topos since the homotopy category of its subcategory of $0$-types will likely be the realizability topos on $A$. This would then beg the question of whether $\Part_{n}$ even gives elementary $(\infty,1)$-toposes.

\printbibliography

\end{document}
